\documentclass[12pt]{article}

\usepackage{ogonek}
\usepackage{mathrsfs}

\usepackage{graphicx,multicol}
\usepackage{amssymb, amsthm, amsfonts, amsxtra, amsmath}
\usepackage{latexsym}
\usepackage[all]{xy}
\usepackage{graphics}
\usepackage{makeidx}
\usepackage{color}

\usepackage{syntonly}
\usepackage{nomencl}
\usepackage{pdfpages}

\usepackage{parskip} 
\makeatletter 
\def\thm@space@setup{%
  \thm@preskip=\parskip \thm@postskip=0pt
}
\makeatother

\usepackage{wasysym}

\theoremstyle{change}

\DeclareMathOperator{\id}{id}

\DeclareMathOperator{\alg}{\mathrm{alg}}
\DeclareMathOperator{\atimes}{\stimes{\alg}}
\DeclareMathOperator{\aotimes}{\stimes{\alg}}
\DeclareMathOperator{\op}{\mathrm{op}}
\DeclareMathOperator{\X}{\mathbb{X}}
\DeclareMathOperator{\Y}{\mathbb{Y}}
\DeclareMathOperator{\R}{\mathbb{R}}
\DeclareMathOperator{\C}{\mathbb{C}}
\DeclareMathOperator{\N}{\mathbb{N}}
\DeclareMathOperator{\Hsp}{\mathcal{H}}
\DeclareMathOperator{\G}{\mathbb{G}}
\DeclareMathOperator{\Mor}{\mathrm{Mor}}
\DeclareMathOperator{\Ad}{\mathrm{Ad}}
\DeclareMathOperator{\rd}{\mathrm{d}\!}
\DeclareMathOperator{\Tr}{\mathrm{Tr}}
\DeclareMathOperator{\red}{\mathrm{red}}
\DeclareMathOperator{\Ker}{\mathrm{Ker}}
\DeclareMathOperator{\Z}{\mathbb{Z}}

\DeclareMathOperator{\stab}{\mathrm{stab}}

\newcommand{\mult}{\mathrm{mult}}

\newcommand{\sgn}{\mathrm{sgn}}

\newcommand{\stimes}[1]{\underset{#1}{\otimes}}
\newcommand{\Ind}{\mathrm{Ind}}

\newtheorem{Theorem}{Theorem}[section]
\newtheorem{Lem}[Theorem]{Lemma}

\newtheorem{Prop}[Theorem]{Proposition}
\newtheorem{Cor}[Theorem]{Corollary}

\theoremstyle{definition}
\newtheorem{Def}[Theorem]{Definition}
\newtheorem{Rem}[Theorem]{Remark}

\newtheorem{Exa}[Theorem]{Example}
\newtheorem{Not}[Theorem]{Notation}

\newtheorem{Term}[Theorem]{Terminology}
\newtheorem{Exe}[Theorem]{Exercise}

\date{}

\numberwithin{equation}{section}

\title{Actions of compact quantum groups}
\author{Kenny De Commer\thanks{Department of Mathematics, Vrije Universiteit Brussel, VUB, B-1050 Brussels, Belgium, email: {\tt kenny.de.commer@vub.ac.be}} \thanks{Supported by the FWO grant G.0251.15N.}}
\date{}

\begin{document}
\maketitle
\date

\begin{abstract} \noindent These lecture notes deal with aspects of the theory of actions of compact quantum groups on C$^*$-algebras (locally compact quantum spaces). After going over the basic notions of isotypical components and reduced and universal completions, we look at crossed and smash product C$^*$-algebras, up to the statement of the Takesaki-Takai-Baaj-Skandalis duality. We then look at two special types of actions, namely homogeneous actions and free actions. We study the actions which combine both types, the quantum torsors, and show that more generally any homogeneous action can be completed to a free action with a discrete, classical set of `quantum orbits'. We end with a combinatorial description of the homogeneous actions for the free orthogonal quantum groups.  
\end{abstract}

\hyphenation{re-pre-sen-ta-tions -re-pre-sen-ta-tions e-qui-va-ri-ant}

\section*{Introduction}

These lecture notes are intended to bring together some general results on actions of compact quantum groups on C$^*$-algebras. Their scope is quite modest: apart from basic material, we will treat the following topics:

\begin{itemize}
\item reduced and universal actions,
\item crossed and smash products, and
\item free and homogeneous actions.
\end{itemize}

We will not touch upon the theory of actions of compact quantum groups on von Neumann algebras, which is very similar to and slightly easier than the one for C$^*$-algebras, nor will we deal in depth with the Tannaka-Kre$\breve{\textrm{\i}}$n theory for compact quantum groups and their actions, although some comments will be spent on it.  

We make however the deliberate choice to treat in general and in depth the case of compact quantum group actions on \emph{locally compact} quantum spaces, that is, non-unital C$^*$-algebras. Indeed, this is a necessity in the last part of these notes, where homogeneous and free actions are put into correspondence with each other. Overall, this theory is not much harder than the one for actions on compact quantum spaces, and certainly much more manageable than the theory of actions of \emph{locally compact} quantum groups on locally compact quantum spaces. Nevertheless, we hope that these notes will provide the student of operator algebraic quantum groups with a good starting point towards the latter.   

Apart from the preliminary material on compact quantum groups and C$^*$-algebras and some results in the later sections, I have tried to make these notes as self-contained as possible. Most arguments have been written out completely, as to present the reader with a good appreciation of the techniques involved. Although none of the material in these notes is truly original, we hope that it will nevertheless prove beneficial to have a single treatment on compact quantum group actions from a unified point of view.

The precise content of these notes is as follows. 

In the \emph{first section}, we briefly recall the basics of the theory of compact quantum groups, mainly to introduce the notation that we will adhere to. In the \emph{second section}, we introduce Podle\'{s} notion of \emph{action of a compact quantum group on a C$^*$-algebra}, and discuss some elementary properties. In the \emph{third section}, we look at the \emph{algebraic core} of an action and the closely related \emph{isotypical components}. In the \emph{fourth section}, we treat the results of H. Li on completions of the algebraic core. In the \emph{fifth section}, we look at the different C$^*$-algebraic completions of the \emph{crossed} and \emph{smash} product associated respectively to \emph{actions} and \emph{coactions} of compact quantum groups, and we briefly discuss the Takesaki-Takai-Baaj-Skandalis duality. 

We then shift our focus to special classes of actions. In the \emph{sixth section}, we discuss the compact quantum group analogue of a \emph{homogeneous action}, treating in detail the theory of F. Boca, and in the \emph{seventh section} we look at the analogue in this context of a \emph{free action}. In the \emph{eighth section}, we discuss those compact quantum group actions which are at the same time free and homogeneous, the so-called \emph{compact quantum torsors}. In the final \emph{ninth section}, we show how general quantum homogeneous actions can be put into correspondence with free actions whose quantum orbit space is discrete and classical, and we have a look at quantum homogeneous actions for a particular class of compact quantum groups, namely the \emph{free orthogonal quantum groups} of Van Daele and Wang.

\emph{Acknowledgments}: These notes are based on the lecture series on compact quantum groups actions that I presented at the Summer school "Topological quantum groups", B\k{e}dlewo 2015. I would like to thank Uwe Franz, Adam Skalski and Piotr So\l tan for the invitation and the excellent organisation.

\section{Preliminaries}\label{SecPrel}

These notes will presuppose a good working knowledge of the basic theory concerning \emph{C$^*$-algebras} and \emph{Hopf algebras}, and more specifically \emph{compact quantum groups}. If the reader lacks any of the prior knowledge, he can consult the following sources:

\begin{itemize}
\item C$^*$-algebras: \cite{Mur90},
\item Hilbert C$^*$-modules: \cite{Lan95},
\item Hopf algebras: \cite{Swe69},
\item Compact quantum groups: \cite{Wor95,MVD98}.
\end{itemize}

We also refer to the introductory notes on compact quantum groups in this volume.

Let us comment on our conventions and notations. Will use basic notation as presented in the introductory notes of this volume, such as leg numbering notation and Sweedler notation, although we will write the latter without the summation sign for even more notational reduction, so \[\Delta(h) = h_{(1)}\otimes h_{(2)}.\] 

For $S$ a subset of a normed space, we write \[\lbrack S \rbrack = \textnormal{closed linear span of }S.\] For $X,Y$ closed subspaces of a Banach algebra, we write \[\lbrack XY \rbrack= \left\lbrack \{xy \mid x\in X,y\in Y\}\right\rbrack,\] and the same notation will be used for Banach modules.

We will write a general C$^*$-algebra as $C_0(\X)$, and refer to the underlying symbol $\X$ as the underlying \emph{locally compact quantum space}. When the C$^*$-algebra is unital, we write it $C(\X)$ and refer to $\X$ as the underlying \emph{compact quantum space}. When $\X=X$ is an actual (locally) compact Hausdorff space, $C(X)$ (resp.~ $C_0(X)$) will be the function algebra of complex continuous functions (vanishing at infinity) on $X$. For C$^*$-algebras, the symbol $\otimes$ will always refer to the \emph{minimal} tensor product. For general algebras over the ground field $\C$, we write $\atimes$ for the algebraic tensor product over $\C$.

In the rest of this section, we will recall some basic results and introduce notation concerning Hopf algebras and compact quantum groups, mainly to reconcile the approaches from pure algebra and operator algebra.

For $H = (H,\Delta)$ a Hopf algebra, we write $\varepsilon$ for the counit, and $S$ for the antipode. If $H$ is a Hopf $^*$-algebra, one automatically has that $\varepsilon$ is a $^*$-homomorphism, while $S$ satisfies the important formula \[S(S(h)^*)^* = h, \qquad \forall h\in H.\]

For the definition of a compact quantum group, we refer to the introductory notes of this volume. We will however add to the definition the requirement that the comultiplication is injective. Indeed, there are at the moment no known examples of compact quantum groups with the comultiplication not injective!

The most important result to get all of the theory started is the existence of the \emph{Haar state} \[\varphi_{\G}: C(\G)\rightarrow \C\] for a compact quantum group $\G$, which we will also refer to as the \emph{Haar measure} for the compact quantum group. We denote the GNS-construction for $\varphi_{\G}$ by \[(L^2(\G),\pi_{\red},\xi_{\G}),\] and we write \[C(\G_{\red}) = \pi_{\red}(C(\G)).\] The coproduct on $C(\G)$ then descends to a coproduct \[\Delta: C(\G_{\red}) \rightarrow C(\G_{\red})\otimes C(\G_{\red}).\] We refer to $\G_{\red}$ as the \emph{reduced} compact quantum group associated to $\G$. We note that \[x\in \Ker(\pi_{\red}) \quad \Leftrightarrow \quad \varphi_{\G}(x^*x)=0,\] and consequently the Haar measure on $\G_{\red}$, which is given as the vector state associated to $\xi_{\G}$, is faithful.

The notion of \emph{representation} of a compact quantum group is all-important. By $\G$-representation $\pi$ for a compact quantum group $\G$ we will always mean a finite-dimensional unitary corepresentation $U_{\pi} \in B(\Hsp_{\pi})\otimes C(\G)$ with $\Hsp_{\pi}$ a finite-dimensional Hilbert space.

\begin{Not} For $\pi$ a $\G$-representation and $\xi,\eta\in \Hsp_{\pi}$, we write \[U_{\pi}(\xi,\eta) = (\xi^*\otimes 1)U(\eta\otimes 1) \in C(\G).\]
\end{Not}

Here we interpret $\Hsp_{\pi} \cong B(\C,\Hsp_{\pi})$, so that \[\xi^* T \eta = \langle \xi,T\eta\rangle,\quad \xi,\eta\in \Hsp_{\pi}, T\in B(\Hsp_{\pi}). \]

\begin{Theorem} Let \[\mathscr{O}(\G) = \{U_{\pi}(\xi,\eta)\mid \pi \textnormal{ a } \G\textnormal{\textit{-representation, }}\xi,\eta\in \Hsp_{\pi}\}.\]
Then 
\begin{itemize}
\item $(\mathscr{O}(\G),\Delta)$ is a Hopf $^*$-algebra,
\item $\mathscr{O}(\G)$ is dense in $C(\G)$ for the operator norm, and
\item the map \[\delta_{\pi}: \Hsp_{\pi}\rightarrow \Hsp_{\pi}\otimes \mathscr{O}(\G),\quad \xi\mapsto U_{\pi}(\xi\otimes 1)\] is an \emph{$\mathscr{O}(\G)$-comodule}, by which we mean
\begin{itemize}
\item[$\bullet$] $(\id\otimes \Delta)\circ\delta_{\pi} = (\delta_{\pi}\otimes \id)\circ\delta_{\pi}$,
\item[$\bullet$] $(\id_{\Hsp_{\pi}}\otimes \varepsilon)\delta_{\pi}= \id_{\Hsp_{\pi}}$.
\end{itemize}
\end{itemize}
\end{Theorem}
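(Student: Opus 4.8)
The plan is to treat the three assertions in sequence, with essentially all of the algebraic content flowing from a single computation based on the corepresentation identity $(\id\otimes\Delta)U_\pi = (U_\pi)_{12}(U_\pi)_{13}$; the only genuinely hard input will be density, which is where the Haar state enters. First I would check that $\mathscr{O}(\G)$ is a unital $^*$-subalgebra of $C(\G)$. Multiplicativity comes from tensoring representations: for $\G$-representations $\pi,\rho$ the unitary $(U_\pi)_{13}(U_\rho)_{23}$ is again a $\G$-representation $\pi\otimes\rho$ on $\Hsp_\pi\otimes\Hsp_\rho$, and a direct check gives $U_\pi(\xi,\eta)\,U_\rho(\xi',\eta') = U_{\pi\otimes\rho}(\xi\otimes\xi',\eta\otimes\eta')$. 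The unit is supplied by the trivial representation $U=1\otimes 1$, for which $U(\xi,\eta)=\langle\xi,\eta\rangle 1$. For closure under the adjoint I would use the conjugate representation $\bar\pi$, a standard construction for compact quantum groups: writing $U_\pi=\sum_{k,l}e_ke_l^*\otimes u_{kl}$ in an orthonormal basis, the adjoints $u_{kl}^*$ are exactly the matrix coefficients of $\bar\pi$, so that $U_\pi(\xi,\eta)^*\in\mathscr{O}(\G)$.

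Second, the coalgebra structure and the comodule property are two readings of one and the same computation. Inserting the resolution $\sum_i e_ie_i^*$ of $\id_{\Hsp_\pi}$ into the corepresentation identity yields \[\Delta(U_\pi(\xi,\eta)) = \sum_i U_\pi(\xi,e_i)\otimes U_\pi(e_i,\eta),\] so that $\Delta$ restricts to a coproduct $\mathscr{O}(\G)\to\mathscr{O}(\G)\atimes\mathscr{O}(\G)$. Since $\delta_\pi(\xi)=\sum_i e_i\otimes U_\pi(e_i,\xi)$ lands in $\Hsp_\pi\atimes\mathscr{O}(\G)$, the very same identity, rewritten for $\delta_\pi$, is precisely the comodule coassociativity $(\id\otimes\Delta)\delta_\pi=(\delta_\pi\otimes\id)\delta_\pi$ of the third bullet. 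The counit is characterized by $(\id\otimes\varepsilon)U_\pi=\id_{\Hsp_\pi}$, i.e. $\varepsilon(U_\pi(\xi,\eta))=\langle\xi,\eta\rangle$; I would derive $(\id\otimes\varepsilon)U_\pi=\id_{\Hsp_\pi}$ from the corepresentation axioms and check well-definedness, which simultaneously yields the counit axiom $(\id_{\Hsp_\pi}\otimes\varepsilon)\delta_\pi=\id_{\Hsp_\pi}$. The antipode is then forced by $(\id\otimes S)U_\pi=U_\pi^{-1}=U_\pi^*$, giving $S(U_\pi(\xi,\eta))=U_\pi(\eta,\xi)^*$; existence of conjugates keeps $S$ inside $\mathscr{O}(\G)$, and the $^*$-Hopf identity $S(S(h)^*)^*=h$ is then a routine verification on matrix coefficients.

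The hard part will be density of $\mathscr{O}(\G)$ in $C(\G)$, the one place where the Haar state $\varphi_\G$ is indispensable. My approach would be to pass to the GNS space $L^2(\G)$ and establish the Peter-Weyl orthogonality relations: using invariance of $\varphi_\G$ one decomposes $L^2(\G)$ into isotypical blocks, each spanned by the vectors $\pi_\red(U_\pi(\xi,\eta))\xi_\G$, and identifies the closure of $\mathscr{O}(\G)\xi_\G$ with all of $L^2(\G)$. Upgrading this Hilbert-space density to norm density in $C(\G)$ is the real obstacle; it rests on the nondegeneracy (cancellation) properties of $\Delta$ together with the faithfulness of the Haar measure on $\G_\red$ noted above, from which one shows that every element of $C(\G)$ is approximated in norm by slices of representations, that is, by elements of $\mathscr{O}(\G)$. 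The algebraic bookkeeping is thus routine; the substance lies entirely in the orthogonality relations and the spanning statement, which require the full force of the Haar state and of the existence and complete reducibility of $\G$-representations.
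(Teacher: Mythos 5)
Note first that the paper itself offers no proof of this statement: it is recalled as background in Section \ref{SecPrel}, with the reader referred to \cite{Wor95,MVD98}. Your proposal must therefore be measured against the standard Woronowicz/Maes--Van Daele argument, and at the level of architecture you do reproduce it (tensor products for multiplicativity, conjugates for the adjoint, the corepresentation identity for the coproduct and the comodule property, the Haar state for density); however, two steps you label routine are not, and the density sketch is missing its key mechanism.

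On the algebraic side, the Haar state cannot be quarantined to the density proof as you suggest. The conjugate corepresentation, with matrix entries $U_{\pi}(e_k,e_l)^*$, is in general \emph{not} unitary, whereas $\mathscr{O}(\G)$ is defined using unitary representations; to close $\mathscr{O}(\G)$ under the adjoint one must know the conjugate is invertible and unitarizable, which is exactly what the positive operators $Q_{\pi}$ of Theorem \ref{TheoWorChar} provide --- the paper itself constructs the unitary $\bar{\pi}$ ``by the orthogonality relations''. Similarly, $\varepsilon$ and $S$ are prescribed by formulas on matrix coefficients, and a given element of $\mathscr{O}(\G)$ admits many expressions as a sum of such coefficients; well-definedness requires linear independence of coefficients of inequivalent irreducible representations, which again comes from the orthogonality relations. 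So your ``check well-definedness'' is not bookkeeping but a genuine appeal to the Haar state, and it forces a different logical order from the one you propose.

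On density, the crucial assertion --- that the closure of $\pi_{\red}(\mathscr{O}(\G))\xi_{\G}$ is all of $L^2(\G)$ --- is precisely what has to be proven; the orthogonality relations give orthogonality of the isotypical blocks, not that they fill $L^2(\G)$. The missing idea is the compactness argument: with $V$ the regular corepresentation on $L^2(\G)$ and $p$ a rank-one projection, the operator $Q = (\id\otimes\varphi_{\G})(V(p\otimes 1)V^*)$ is a nonzero compact positive operator commuting with $V$ (the same averaging trick the paper uses to prove that isometric corepresentations are unitary), so its finite-rank spectral projections yield finite-dimensional invariant subspaces, and applying this to the orthogonal complement of the span of all matrix coefficients forces that complement to vanish. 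Moreover, your bridge from $L^2$-density to norm density is misidentified: faithfulness of $\varphi_{\G}$ on $C(\G_{\red})$ plays no role here, and any argument run entirely inside $B(L^2(\G))$ can only yield density of $\pi_{\red}(\mathscr{O}(\G))$ in $C(\G_{\red})$, which is weaker than the claim since $\pi_{\red}$ may have a kernel. The correct bridge is the cancellation property $\lbrack (C(\G)\otimes 1_{\G})\Delta(C(\G))\rbrack = C(\G)\otimes C(\G)$ combined with the Cauchy--Schwarz estimate \[\|(\varphi_{\G}\otimes\id)((b^*\otimes 1_{\G})\Delta(a))\|^2 \leq \varphi_{\G}(b^*b)\,\varphi_{\G}(a^*a),\] which shows the slice maps are norm-continuous from $L^2(\G)$ into the \emph{full} algebra $C(\G)$ and send $\mathscr{O}(\G)$ into itself; only with this in hand does $L^2$-density transport to norm density of $\mathscr{O}(\G)$ in $C(\G)$.
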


One can then show that $\pi_{\red}$ is injective on $\mathscr{O}(\G)$, and that in fact $\mathscr{O}(\G) = \mathscr{O}(\G_{\red})$. In particular, one obtains the following lemma.

\begin{Lem}\label{LemPhiFaith} The state $\varphi_{\G}$ is faithful on $\mathscr{O}(\G)$:\[\forall h\in \mathscr{O}(\G), \quad \varphi_{\G}(h^*h)=0\quad \Rightarrow \quad h=0.\]
In fact, if $h\in \mathscr{O}(\G)$ is positive in $C(\G)$ and $\varphi_{\G}(h)=0$, then $h=0$.
\end{Lem}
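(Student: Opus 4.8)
The plan is to reduce everything to the two facts recorded immediately above the lemma: the kernel characterization
\[\forall x\in C(\G),\qquad x\in\Ker(\pi_{\red})\ \Leftrightarrow\ \varphi_{\G}(x^*x)=0,\]
and the injectivity of $\pi_{\red}$ on the dense Hopf $^*$-subalgebra $\mathscr{O}(\G)$ (equivalently, the fact $\mathscr{O}(\G)=\mathscr{O}(\G_{\red})$ together with faithfulness of the Haar state on $C(\G_{\red})$). I would prove the stronger ``in fact'' statement first and deduce the main assertion as a corollary.

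For the positivity statement, let $h\in\mathscr{O}(\G)$ be positive in $C(\G)$ with $\varphi_{\G}(h)=0$. Taking the positive square root $h^{1/2}\in C(\G)$, which is self-adjoint, we have $\varphi_{\G}\bigl((h^{1/2})^*h^{1/2}\bigr)=\varphi_{\G}(h)=0$, so by the kernel characterization $h^{1/2}\in\Ker(\pi_{\red})$. Since $\pi_{\red}$ is a $^*$-homomorphism this gives $\pi_{\red}(h)=\pi_{\red}(h^{1/2})^2=0$, i.e.\ $h\in\Ker(\pi_{\red})$. But $h$ itself lies in $\mathscr{O}(\G)$, so the injectivity of $\pi_{\red}$ on $\mathscr{O}(\G)$ forces $h=0$.

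For the faithfulness statement, given $h\in\mathscr{O}(\G)$ with $\varphi_{\G}(h^*h)=0$, observe that $h^*h\in\mathscr{O}(\G)$ is positive in $C(\G)$ and has Haar value zero; the part just proved yields $h^*h=0$, and in a C$^*$-algebra this forces $h=0$. (Alternatively one argues directly: $\varphi_{\G}(h^*h)=0$ says $h\in\Ker(\pi_{\red})$, and injectivity on $\mathscr{O}(\G)$ gives $h=0$ at once.)

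There is no serious obstacle here, given the facts assumed just before the statement; the only subtlety worth flagging is that the square root $h^{1/2}$ need \emph{not} lie in $\mathscr{O}(\G)$, so one cannot invoke injectivity of $\pi_{\red}$ on $h^{1/2}$ directly. One must instead use the general kernel characterization on $h^{1/2}$ to conclude $\pi_{\red}(h)=0$, and only then apply injectivity to $h$, which does belong to $\mathscr{O}(\G)$.
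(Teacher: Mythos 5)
Your proof is correct and follows essentially the route the paper intends: the paper states Lemma \ref{LemPhiFaith} as an immediate consequence of the kernel characterization $x\in\Ker(\pi_{\red})\Leftrightarrow\varphi_{\G}(x^*x)=0$ together with the injectivity of $\pi_{\red}$ on $\mathscr{O}(\G)$, which are exactly the two facts you use. Your handling of the positive case via the square root $h^{1/2}\in C(\G)$ (applying the kernel characterization to $h^{1/2}$, then injectivity only to $h\in\mathscr{O}(\G)$) is the right way to fill in the detail the paper leaves implicit, and your flagged subtlety is precisely the point.
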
 

The following property is called the \emph{strong left invariance} of the Haar state.

\begin{Lem} For $a\in C(\G)$ and $h\in \mathscr{O}(\G)$, we have \[(\id_{\G}\otimes \varphi_{\G})(\Delta(a)(1_{\G}\otimes h)) = S^{-1} (\id_{\G}\otimes \varphi_{\G})((1_{\G}\otimes a)\Delta(h)).\]
\end{Lem}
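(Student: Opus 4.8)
The plan is to reduce the statement to a purely algebraic identity inside the Hopf $^*$-algebra $\mathscr{O}(\G)$ and then to prove that identity. Since $h\in\mathscr{O}(\G)$ and $(\mathscr{O}(\G),\Delta)$ is a Hopf $^*$-algebra, $\Delta(h)$ is a finite sum $\sum_i h^{(i)}_{(1)}\otimes h^{(i)}_{(2)}$ of elementary tensors of elements of $\mathscr{O}(\G)$, so the right-hand side equals $\sum_i S^{-1}(h^{(i)}_{(1)})\,\varphi_{\G}(a\,h^{(i)}_{(2)})$; in particular $S^{-1}$ is only ever applied to elements of $\mathscr{O}(\G)$, and both sides make sense in $C(\G)$ for arbitrary $a\in C(\G)$. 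Each coefficient $a\mapsto\varphi_{\G}(a\,h^{(i)}_{(2)})$ is a bounded functional, so the right-hand side is norm-continuous in $a$, taking values in the fixed finite-dimensional span of the $S^{-1}(h^{(i)}_{(1)})$. The left-hand side $a\mapsto(\id_{\G}\otimes\varphi_{\G})(\Delta(a)(1_{\G}\otimes h))$ is a composition of the bounded maps $\Delta$, right multiplication by $1_{\G}\otimes h$, and the slice $\id_{\G}\otimes\varphi_{\G}$, hence is also norm-continuous. As $\mathscr{O}(\G)$ is dense in $C(\G)$, it suffices to treat $a\in\mathscr{O}(\G)$.

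For $a,h\in\mathscr{O}(\G)$ the claim reads, in Sweedler notation, $a_{(1)}\varphi_{\G}(a_{(2)}h)=S^{-1}(h_{(1)})\varphi_{\G}(a\,h_{(2)})$, or equivalently, after applying the antipode $S$ (which is bijective by the formula $S(S(h)^*)^*=h$),
\[S(a_{(1)})\,\varphi_{\G}(a_{(2)}h)=h_{(1)}\,\varphi_{\G}(a\,h_{(2)}).\]
To prove this I would use that on $\mathscr{O}(\G)$ the Haar state is two-sided invariant, $(\id\otimes\varphi_{\G})\Delta=\varphi_{\G}(\cdot)\,1=(\varphi_{\G}\otimes\id)\Delta$, that $S$ obeys the antipode relations $a_{(1)}S(a_{(2)})=\varepsilon(a)1=S(a_{(1)})a_{(2)}$, and that $\varphi_{\G}$ is faithful.

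The key is that the pairing $(x,y)\mapsto\varphi_{\G}(xy)$ on $\mathscr{O}(\G)$ is non-degenerate: if $\varphi_{\G}(xy)=0$ for all $y$, then in particular $\varphi_{\G}(xx^*)=\varphi_{\G}((x^*)^*x^*)=0$, whence $x^*=0$ and $x=0$ by Lemma~\ref{LemPhiFaith}. Thus it is enough to check that both sides agree under $\varphi_{\G}(\,\cdot\,c)$ for every $c\in\mathscr{O}(\G)$, i.e.\ to prove the scalar identity $\varphi_{\G}(S(a_{(1)})c)\,\varphi_{\G}(a_{(2)}h)=\varphi_{\G}(h_{(1)}c)\,\varphi_{\G}(a\,h_{(2)})$. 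I would establish this by writing each side as a value of $\varphi_{\G}\otimes\varphi_{\G}$ and collapsing one of the two integrations via the invariance relations, the twist being absorbed by the antipode identities. Alternatively—and this is the most concrete route given the material already set up—one may expand $a$ and $h$ into matrix coefficients, use $\Delta(U_{\pi}(\xi,\eta))=\sum_k U_{\pi}(\xi,e_k)\otimes U_{\pi}(e_k,\eta)$ and the explicit action of $S$ on matrix coefficients, and reduce everything, through the Schur orthogonality relations for $\varphi_{\G}$, to a finite-dimensional linear-algebra identity within each irreducible block.

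The genuine content lies in the algebraic identity, not the reduction. Its difficulty is twofold: the antipode must be introduced by hand through the counit/antipode relations, each insertion consuming an extra coproduct leg and making the leg-numbering delicate; and $\varphi_{\G}$ is \emph{not} tracial, so one cannot freely commute factors inside $\varphi_{\G}$, meaning the modular behaviour of $\varphi_{\G}$ has to cancel precisely against the twist produced by $S$. Faithfulness is exactly what lets one sidestep these subtleties, converting the identity into a symmetric scalar statement to which the invariance of $\varphi_{\G}$ applies directly.
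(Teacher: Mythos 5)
There is a genuine gap: your argument never proves the identity it has so carefully isolated. All of your reductions are correct --- the norm-continuity/density step, the application of the bijective antipode, and the use of faithfulness of $\varphi_{\G}$ (Lemma \ref{LemPhiFaith}) to pass to the scalar statement $\varphi_{\G}(S(a_{(1)})c)\,\varphi_{\G}(a_{(2)}h)=\varphi_{\G}(h_{(1)}c)\,\varphi_{\G}(a\,h_{(2)})$ for all $c$ --- but at that point you write ``I would establish this by writing each side as a value of $\varphi_{\G}\otimes\varphi_{\G}$ and collapsing one of the two integrations,'' which is a gesture, not a proof. The pairing with $c$ does not make the statement any more accessible: it is literally the original identity tested against $c$, and two-sided invariance of $\varphi_{\G}$ by itself cannot produce it, because invariance says nothing about how $S$ interacts with $\varphi_{\G}$. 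The antipode must still be introduced by an explicit manipulation, and that manipulation \emph{is} the whole content of the lemma; your closing claim that ``faithfulness is exactly what lets one sidestep these subtleties'' is therefore misleading --- faithfulness changes the target of the computation, not the computation. The alternative route via matrix coefficients and Schur orthogonality would work, but it too is only named, not carried out, and it needs real care (the orthogonality relations in Theorem \ref{TheoWorChar} involve adjoints, so one must first convert $\varphi_{\G}(a_{(2)}h)$ using $U_{\pi}(\zeta,\chi)^*=U_{\bar\pi}((Q_{\pi}^{-1}\zeta)^*,\chi^*)$ and keep track of the $Q_{\pi}$'s).

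For comparison, the paper's proof is a direct three-line computation requiring neither density nor faithfulness: insert $1_{\G}\otimes h=h_{(2)}S^{-1}(h_{(1)})\otimes h_{(3)}$ (the $S^{-1}$-form of the antipode axiom), use multiplicativity of $\Delta$ to rewrite
\[
\Delta(a)\bigl(h_{(2)}S^{-1}(h_{(1)})\otimes h_{(3)}\bigr)=\Delta(ah_{(2)})\bigl(S^{-1}(h_{(1)})\otimes 1_{\G}\bigr),
\]
and then apply $(\id_{\G}\otimes\varphi_{\G})$ together with invariance, $(\id_{\G}\otimes\varphi_{\G})\Delta(ah_{(2)})=\varphi_{\G}(ah_{(2)})1_{\G}$, to land on $S^{-1}(h_{(1)})\varphi_{\G}(ah_{(2)})$. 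Note that this works for arbitrary $a\in C(\G)$ from the start, since Sweedler legs are only ever taken of $h$; your density reduction is forced on you only because you chose to apply $S$ to the $a$-leg. If you want to salvage your outline, the honest fix is to perform exactly this insertion-of-$\varepsilon$ trick inside your scalar identity --- at which point the detour through faithfulness becomes superfluous and you should simply delete it.
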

\begin{proof}We have \begin{eqnarray*} (\id_{\G}\otimes \varphi_{\G})(\Delta(a)(1_{\G}\otimes h)) &=& (\id_{\G}\otimes \varphi_{\G})(\Delta(a)(h_{(2)}S^{-1}(h_{(1)})\otimes h_{(3)})) \\ &=& (\id_{\G}\otimes \varphi_{\G})(\Delta(ah_{(2)})(S^{-1}(h_{(1)})\otimes 1_{\G})) \\ &=&  S^{-1}((\id_{\G}\otimes \varphi_{\G})((1_{\G}\otimes a)\Delta_{\G}(h))).\end{eqnarray*}
\end{proof} 

For $(\mathscr{O}(\G),\Delta)$, we have the following formulas.

\begin{Lem} With $\{e_i\}$ an orthonormal basis of $\Hsp_{\pi}$, we have \[\Delta(U_{\pi}(\xi,\eta)) = \sum_i U_{\pi}(\xi,e_i)\otimes U_{\pi}(e_i,\eta),\quad \xi,\eta\in \Hsp_{\pi}.\]  Moreover, \[\varepsilon(U_{\pi}(\xi,\eta)) = \langle \xi,\eta\rangle,\quad S(U_{\pi}(\xi,\eta))^* = U_{\pi}(\eta,\xi).\]
\end{Lem}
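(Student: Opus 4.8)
The plan is to read off all three formulas from the structure of $U_\pi$ as a unitary corepresentation, using the comodule axioms of the preceding theorem for the coproduct and counit, and unitarity of $U$ for the antipode. First I would record the basic expansion: writing $\delta_\pi(\eta) = \sum_i e_i \otimes c_i$ in the orthonormal basis $\{e_i\}$ and pairing the first leg against $e_i$ gives $c_i = (e_i^*\otimes 1)U(\eta\otimes 1) = U_\pi(e_i,\eta)$, so that
\[\delta_\pi(\eta) = \sum_i e_i \otimes U_\pi(e_i,\eta).\]
This identity is the workhorse for the first two claims.

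For the coproduct, I would apply the first comodule axiom $(\id\otimes\Delta)\circ\delta_\pi = (\delta_\pi\otimes\id)\circ\delta_\pi$ to $\eta$. The left-hand side is $\sum_i e_i\otimes\Delta(U_\pi(e_i,\eta))$, while expanding $\delta_\pi$ twice on the right gives $\sum_{i,j}e_i\otimes U_\pi(e_i,e_j)\otimes U_\pi(e_j,\eta)$; comparing the coefficients of $e_i$ yields the stated formula for $\xi = e_i$. Since $U_\pi(\xi,\eta)$ is conjugate-linear in $\xi$ and the right-hand side is as well, the formula extends to arbitrary $\xi$. For the counit I would feed the same expansion into the second comodule axiom $(\id\otimes\varepsilon)\delta_\pi = \id$: the left-hand side becomes $\sum_i e_i\,\varepsilon(U_\pi(e_i,\eta))$, which must equal $\eta = \sum_i\langle e_i,\eta\rangle e_i$, forcing $\varepsilon(U_\pi(e_i,\eta)) = \langle e_i,\eta\rangle$, and again one extends by (conjugate-)linearity.

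The antipode formula is where the real work lies, since it is the only one that uses unitarity of $U$. I would set $v_{ij} = U_\pi(e_i,e_j)$ and view $v = (v_{ij})$ as an element of $M_n(\mathscr{O}(\G))$, with $n = \dim\Hsp_\pi$. By the coproduct and counit formulas just proved, $\Delta(v_{ij}) = \sum_k v_{ik}\otimes v_{kj}$ and $\varepsilon(v_{ij}) = \delta_{ij}$, so the antipode axioms give $\sum_k S(v_{ik})v_{kj} = \delta_{ij} = \sum_k v_{ik}S(v_{kj})$; in other words $(S(v_{ij}))$ is the two-sided matrix inverse of $v$ in $M_n(\mathscr{O}(\G))$. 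The main point is then to check that $w_{ij} := v_{ji}^*$ is also a matrix inverse of $v$: reading off the matrix entries of $U$ and $U^*$ in the basis $\{e_i\}$, the relations $U^*U = UU^* = 1$ translate precisely into $\sum_k v_{ki}^*v_{kj} = \delta_{ij}$ and $\sum_k v_{ik}v_{jk}^* = \delta_{ij}$, i.e.\ $wv = vw = I$. Uniqueness of the inverse in the ring $M_n(\mathscr{O}(\G))$ then forces $S(v_{ij}) = v_{ji}^*$, hence $S(U_\pi(e_i,e_j))^* = U_\pi(e_j,e_i)$, and a final (anti)linearity argument in each slot promotes this to the general statement $S(U_\pi(\xi,\eta))^* = U_\pi(\eta,\xi)$. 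The only delicate bookkeeping is keeping the conjugation and transposition straight in the passage from $U^*$ to its matrix entries; everything else is formal.
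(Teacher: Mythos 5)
Your proof is correct. Note that the paper itself offers no proof of this lemma: it sits in the preliminaries (Section \ref{SecPrel}), where the basic Woronowicz theory is recalled with references to the literature, so there is no argument of the author's to compare against. What you wrote is the standard derivation, and all three steps are sound: the expansion $\delta_\pi(\eta)=\sum_i e_i\otimes U_\pi(e_i,\eta)$ feeds correctly into the two comodule axioms to give the coproduct and counit formulas on basis vectors, and your extension by (conjugate-)linearity is legitimate since both sides of each identity are conjugate-linear in $\xi$ and linear in $\eta$. For the antipode, the Hopf-algebra axioms do make $\bigl(S(v_{ij})\bigr)_{ij}$ a two-sided inverse of $v=(v_{ij})$ in $M_n(\mathscr{O}(\G))$, unitarity of $U$ translates exactly into $\sum_k v_{ki}^*v_{kj}=\delta_{ij}$ and $\sum_k v_{ik}v_{jk}^*=\delta_{ij}$, i.e.\ $(v_{ji}^*)_{ij}$ is also a two-sided inverse, and uniqueness of inverses in a unital ring closes the argument; the final sign-check that $S(U_\pi(\xi,\eta))^*$ is linear in $\xi$ and conjugate-linear in $\eta$, matching $U_\pi(\eta,\xi)$, is exactly the bookkeeping needed.
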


We can further define direct sums $\pi_1\oplus \pi_2$ and tensor products $\pi_1\otimes \pi_2$ of $\G$-representations $\pi_1$ and $\pi_2$, in a unique way such that $\Hsp_{\pi_1\oplus \pi_2} = \Hsp_{\pi_1}\oplus \Hsp_{\pi_2}$ and $\Hsp_{\pi_1\otimes \pi_2} = \Hsp_{\pi_1}\otimes \Hsp_{\pi_2}$, and \[U_{\pi_1\oplus \pi_2}(\xi_1\oplus \xi_2,\eta_1\oplus \eta_2) = U_{\pi_1}(\xi_1,\eta_1)+ U_{\pi_2}(\xi_2,\eta_2),\]\[ U_{\pi_1\otimes \pi_2}(\xi_1\otimes \xi_2,\eta_1\otimes \eta_2) = U_{\pi_1}(\xi_1,\eta_1)U_{\pi_2}(\xi_2,\eta_2).\]

Let us comment on the following, alternative view on unitary $\G$-representations. 

\begin{Def} Let $\G$ be a compact quantum group. A \emph{finite-dimensional unitary left $\mathbb{G}$-module}, also called \emph{finite-dimensional unitary right $(C(\G),\Delta)$-comodule}, consists of 
\begin{itemize}
\item a finite-dimensional Hilbert space $\Hsp$ and
\item a linear map \[\delta: \Hsp \rightarrow \Hsp \otimes C(\mathbb{G})\]
\end{itemize} 
such that 
\begin{itemize}
\item the \emph{right comodule property} is satisfied,  \[(\id_{\Hsp}\otimes \Delta)\circ\delta = (\delta\otimes \id_{\G})\circ\delta,\]
\item $\delta$ is isometric, \[\delta(\xi)^*\delta(\eta) =   \langle \xi,\eta\rangle 1_{\mathbb{G}}.\]
\end{itemize}
\end{Def}

\begin{Lem} There is a one-to-one correspondence between unitary $\G$-repre-sentations and unitary $\G$-modules by means of the correspondence \[U \mapsto \delta_U,\quad \delta_U(\xi) = U(\xi\otimes 1).\]
\end{Lem}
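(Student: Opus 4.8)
The plan is to build the inverse correspondence explicitly on matrix coefficients and check that the two passages are mutually inverse; the only genuinely non-formal point is that an isometric corepresentation is automatically unitary. I would organise the argument as (i) well-definedness of $U\mapsto\delta_U$, (ii) injectivity, and (iii) surjectivity.

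For (i), writing $\delta_U(\xi)=U(\xi\otimes 1_\G)$ with $\xi\in\Hsp\cong B(\C,\Hsp)$, the comodule property is a leg-numbering computation: the corepresentation identity $(\id\otimes\Delta)U=U_{12}U_{13}$ gives $(\id_\Hsp\otimes\Delta)\delta_U(\xi)=U_{12}U_{13}(\xi\otimes 1\otimes 1)$, while unravelling $(\delta_U\otimes\id_\G)$ on $\delta_U(\xi)$ produces the same expression $U_{12}U_{13}(\xi\otimes 1\otimes 1)$. The isometry is the one-line identity $\delta_U(\xi)^*\delta_U(\eta)=(\xi^*\otimes 1)U^*U(\eta\otimes 1)=\langle\xi,\eta\rangle 1_\G$, using $U^*U=1$. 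Both computations are reversible, so these module axioms encode exactly $(\id\otimes\Delta)U=U_{12}U_{13}$ and $U^*U=1$ (the latter upon letting $\xi,\eta$ range over $\Hsp$).

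Step (ii) is then immediate: fixing an orthonormal basis $\{e_i\}$ of $\Hsp$, the map $\delta_U$ is determined by $\delta_U(e_j)=\sum_i e_i\otimes u_{ij}$, i.e.\ by the coefficients $u_{ij}=U_\pi(e_i,e_j)$, and these recover $U=\sum_{ij}e_{ij}\otimes u_{ij}$ (with $e_{ij}$ the matrix units for $\{e_i\}$). For (iii), given a unitary $\G$-module $(\Hsp,\delta)$ I would run this in reverse: define $u_{ij}\in C(\G)$ by $\delta(e_j)=\sum_i e_i\otimes u_{ij}$ and set $U=\sum_{ij}e_{ij}\otimes u_{ij}$, so that $\delta_U=\delta$ by construction. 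The comodule property becomes $\Delta(u_{ij})=\sum_k u_{ik}\otimes u_{kj}$ (equivalently $(\id\otimes\Delta)U=U_{12}U_{13}$) and the isometry becomes $U^*U=1$.

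The main obstacle is the last step of (iii): promoting $U^*U=1$ to the full unitarity $UU^*=1$ demanded of a $\G$-representation, since the module axioms supply only the one-sided relation. I would resolve this through the regularity of matrix coefficients: the identity $\Delta(u_{ij})=\sum_k u_{ik}\otimes u_{kj}$ exhibits $\mathrm{span}\{u_{ij}\}$ as a finite-dimensional subcoalgebra of $(C(\G),\Delta)$, whence $u_{ij}\in\mathscr{O}(\G)$, so that the counit and antipode may be applied to them. Then $U$ is a genuine $\G$-representation, and the formula $S(U_\pi(\xi,\eta))^*=U_\pi(\eta,\xi)$, i.e.\ $S(u_{ij})=u_{ji}^*$, together with the antipode identity $\sum_k u_{ik}S(u_{kj})=\varepsilon(u_{ij})1_\G=\delta_{ij}1_\G$, yields $\sum_k u_{ik}u_{jk}^*=\delta_{ij}1_\G$, that is $UU^*=1$. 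Equivalently $(\id\otimes S)(U)=U^*$ is a two-sided inverse of $U$, so the isometry $U$ is invertible and hence unitary.
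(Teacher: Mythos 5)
Your steps (i) and (ii), together with the formal part of (iii), are correct and are exactly what the paper dismisses in one sentence ("it is clear that the above sets up a bijective correspondence between unitary $\G$-modules and isometries satisfying the corepresentation property"). The entire content of the lemma is the remaining step, that an isometric corepresentation is automatically unitary, and that is precisely where your argument has a genuine gap. The claim you hinge everything on --- that a finite-dimensional subspace $V\subseteq C(\G)$ with $\Delta(V)\subseteq V\otimes V$ must be contained in $\mathscr{O}(\G)$ --- is not a citable standard fact, and it is not innocuous: $C(\G)$ is an arbitrary completion, whose Haar state $\varphi_{\G}$ need not be faithful. The natural Peter--Weyl argument (cut $a\in V$ down by the isotypical projections $a\mapsto (\id\otimes\varphi_{\G})(\Delta(a)(1_{\G}\otimes \chi_{\pi}^*))$; only finitely many of these are non-zero because they lie in $V$; call their sum $b\in\mathscr{O}(\G)$) only yields $\varphi_{\G}((a-b)^*(a-b))=0$, i.e.\ $a=b$ modulo $\Ker(\pi_{\red})$, which is exactly not enough when $C(\G)\neq C(\G_{\red})$. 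Worse, for the specific subcoalgebra you consider your claim is \emph{equivalent} to the lemma: if the coefficients of an isometric corepresentation lie in $\mathscr{O}(\G)$ then, as your own last sentence shows, $U$ is invertible hence unitary; conversely, once $U$ is unitary its coefficients lie in $\mathscr{O}(\G)$ by definition. So the word "whence" assumes the whole difficulty away. (Two smaller points: the counit identity $\varepsilon(u_{ij})=\delta_{ij}$ also requires an argument --- it holds because the scalar matrix $(\varepsilon(u_{ij}))_{ij}$ is idempotent by the corepresentation property and isometric by $U^*U=1$ --- and the formula $S(u_{ij})=u_{ji}^*$ is a property of \emph{unitary} representations, so invoking it is circular; your alternative phrasing via two-sided invertibility of $U$ avoids this.)

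The paper's proof supplies exactly the analytic input your argument is missing. Setting $P=UU^*$ and $Q=(\id\otimes\varphi_{\G})P$, invariance of the Haar state gives $Q\otimes 1_{\G}=U(Q\otimes 1_{\G})U^*$, hence $Q\otimes 1_{\G}$ commutes with $U$ and $Q\otimes 1_{\G}\leq P$; restricting to $\Ker(Q)$ produces an isometric corepresentation $U'$ with $(\id\otimes\varphi_{\G})(U'U'^*)=0$, so $(\id\otimes\pi_{\red})U'^*=0$, contradicting isometry because $\pi_{\red}$ is unital; therefore $Q$ is invertible, and the projection $P$, dominating an invertible positive operator, must equal $1$. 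If you want to keep your algebraic route it can be repaired, but only by incorporating arguments of this kind: prove your subcoalgebra claim for $\G_{\red}$ (where $\varphi_{\G}$ is faithful, so the Peter--Weyl argument does close), conclude that $(\id\otimes\pi_{\red})U$ is unitary and hence $Q=(\id\otimes\varphi_{\G})(UU^*)=1$, and then you still need the invariance identity $Q\otimes 1_{\G}=U(Q\otimes 1_{\G})U^*$ to transport unitarity back from $C(\G_{\red})$ to $C(\G)$. As written, the proposal omits the one step that makes the lemma true.
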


\begin{proof} It is clear that the above sets up a bijective correspondence between unitary $\G$-modules and \emph{isometries} $U \in B(\Hsp)\otimes C(\G)$ satisfying the corepresentation property. It is hence sufficient to show that the latter are automatically unitary.

But let $U$ be an isometry satisfying the corepresentation property. Write \[P=UU^*,\quad Q = (\id_{B(\Hsp)}\otimes \varphi_{\G})P.\] Since $U$ is a corepresentation, we find \[(\id_{B(\Hsp)}\otimes \Delta)P = U_{12}P_{13}U_{12}^*,\] so applying $\varphi_{\G}$ to the last leg gives \[\label{EqMajQ} Q\otimes 1_{\G} =U(Q\otimes 1_{\G})U^*.\] Multiplying to the right with $U$ shows that $Q$ commutes with $U$. Since $Q$ is positive, we find that $U$ is a direct sum of isometric corepresentations on the eigenspaces of $Q$. If we can now show that the eigenspace $\Ker(Q)$ is zero, we are done, since the inequality \[Q\otimes 1_{\G} =U(Q\otimes 1_{\G})U^* \leq UU^* = P\] shows that $P$ is then a projection bounded from below by an invertible operator, and must hence be the unit. 

But if $\Ker(Q)\neq \{0\}$, we may in fact suppose that $Q=0$. Then by definition $(\id_{B(\Hsp)}\otimes \varphi_{\G})(UU^*)=0$, so $(\id_{B(\Hsp)}\otimes \pi_{\red})U^*=0$. But this is a contradiction since $U$ is isometric. 
\end{proof}

We remark that the above lemma still holds for locally compact quantum groups, see \cite[Corollary 4.16]{BDS13}.

In the following, if $\pi$ is a representation of $\G$, we denote by $\delta_{\pi}$ the associated unitary $\G$-module. We will also use the Sweedler notation for comodules, so \[\delta(\xi) = \xi_{(0)}\otimes \xi_{(1)},\quad (\id_{\Hsp}\otimes \Delta)\delta(\xi) = \xi_{(0)}\otimes \xi_{(1)}\otimes \xi_{(2)},\quad \ldots\]

In the next theorem, we will use the convolution $^*$-algebra structure on the linear dual $\mathscr{O}(\G)^*$ of $\mathscr{O}(\G)$, \[(\omega*\theta)(x) = (\omega\otimes \theta)\Delta(x),\quad \omega^*(x) = \overline{\omega(S(x)^*)},\quad \omega,\theta \in \mathscr{O}(\G)^*,x\in \mathscr{O}(\G).\] We then further write \[\omega*a  = (\id_{\G}\otimes \omega)\Delta(a),\quad a*\omega = (\omega\otimes \id_{\G})\Delta(a),\quad a\in \mathscr{O}(\G),\omega \in \mathscr{O}(\G)^*.\] 

\begin{Theorem}\label{TheoWorChar} There exists a unique convolution invertible functional $f\in \mathscr{O}(\G)^*$ such that \begin{enumerate}
\item $Q_{\pi} = (\id_{\Hsp_{\pi}}\otimes f)U_{\pi}$ is positive for each representation $\pi$, 
\item $f^{-1}*a*f = S^2(a)$ for all $a\in \mathscr{O}(\G)$, 
\item with $\sigma(a) = f*a*f$, one has that $\sigma$ is an algebra isomorphism and \[\varphi_{\G}(ab) = \varphi_{\G}(b\sigma(a)),\quad \forall a,b\in \mathscr{O}(\G).\] One calls $\sigma$ the \emph{modular} (or \emph{Nakayama}) \emph{automorphism} for $\varphi_{\G}$.
\item  For all irreducible $\pi$, one has \[\varphi_{\G}(U_{\pi}(\xi,\eta)U_{\pi}(\zeta,\chi)^*) = \frac{\langle \xi,\zeta\rangle \langle \chi,Q_{\pi}\eta\rangle}{\mathrm{Tr}(Q_{\pi})},\]\[\varphi_{\G}(U_{\pi}(\xi,\eta)^*U_{\pi}(\zeta,\chi)) = \frac{\langle \zeta,Q_{\pi}^{-1}\xi\rangle \langle \eta,\chi\rangle}{\mathrm{Tr}(Q_{\pi})}.\]
\end{enumerate}
\end{Theorem}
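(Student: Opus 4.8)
The plan is to build the functional $f$ out of the \emph{Schur orthogonality relations}: I would prove (4) first, read off the positive operators $Q_{\pi}$ directly from it, and only afterwards assemble these into a single functional on $\mathscr{O}(\G)$ and verify (1)--(3). The whole construction is driven by the same averaging-plus-Schur mechanism already used in the proof that isometric corepresentations are unitary.

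Concretely, I would fix an irreducible $\pi$ and introduce the two averaging maps on $B(\Hsp_{\pi})$,
\[T_L(X) = (\id\otimes \varphi_{\G})(U_{\pi}(X\otimes 1)U_{\pi}^*),\qquad T_R(X) = (\id\otimes \varphi_{\G})(U_{\pi}^*(X\otimes 1)U_{\pi}).\]
Using $(\id\otimes\Delta)U_{\pi}=(U_{\pi})_{12}(U_{\pi})_{13}$ together with the invariance of $\varphi_{\G}$ (exactly as in the computation $(\id\otimes\Delta)P=U_{12}P_{13}U_{12}^*$ in the unitarity lemma), one checks that $T_L(X)\otimes 1$ and $T_R(X)\otimes 1$ commute with $U_{\pi}$, so that $T_L(X)$ and $T_R(X)$ intertwine $\pi$ with itself. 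Since $\pi$ is irreducible, Schur forces $T_L(X)=\ell(X)1$ and $T_R(X)=r(X)1$ for scalar functionals $\ell,r$, which I write as $\ell(X)=\Tr(G_LX)$, $r(X)=\Tr(G_RX)$. Positivity of $\varphi_{\G}$ gives $G_L,G_R\geq 0$ at once, and evaluating on rank-one operators $|\eta\rangle\langle\chi|$ reproduces the two displayed formulas of (4), with $Q_{\pi}$ proportional to $G_L$ and to $G_R^{-1}$.

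The step I expect to be the main obstacle is showing that the two a priori unrelated positive operators $G_L$ and $G_R$ are genuinely inverse to one another up to the common normalisation $\Tr(Q_{\pi})$, so that a \emph{single} positive $Q_{\pi}$ governs both relations, and that this same operator implements $S^2$. One must be careful here: partial traces are not cyclic, so there is no cheap identity forcing $G_LG_R\propto 1$, and the reconciliation really requires the algebraic side of the theory. I would identify the measure-theoretic operator $Q_{\pi}$ coming from the Haar state with the algebraic operator arising from the conjugate representation $\bar\pi$ and the square of the antipode, using $S(U_{\pi}(\xi,\eta))^*=U_{\pi}(\eta,\xi)$. The target identity is $S^2(U_{\pi}(\xi,\eta))=U_{\pi}(Q_{\pi}\xi,Q_{\pi}^{-1}\eta)$, whose appearance of both $Q_{\pi}$ and $Q_{\pi}^{-1}$ is precisely what pins the two orthogonality operators against each other; the normalisation $\Tr(Q_{\pi})$ is then fixed by taking $X=1$.

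Finally I would assemble $f$ by declaring $(\id\otimes f)U_{\pi}=Q_{\pi}$ on each irreducible $\pi$ and extending linearly via the Peter--Weyl decomposition of $\mathscr{O}(\G)$; well-definedness is automatic because $Q_{\pi}$ is canonical, hence compatible with unitary equivalences. The character property $f(ab)=f(a)f(b)$ amounts to $Q_{\pi\otimes\rho}=Q_{\pi}\otimes Q_{\rho}$, which I would deduce from the behaviour of $Q$ under tensor products (equivalently, from its conjugate-representation description). Convolution invertibility is then immediate: setting $(\id\otimes f^{-1})U_{\pi}=Q_{\pi}^{-1}$ and applying $(\id\otimes\Delta)U_{\pi}=(U_{\pi})_{12}(U_{\pi})_{13}$ gives $f*f^{-1}=f^{-1}*f=\varepsilon$. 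A short computation on matrix coefficients yields $a*f=U_{\pi}(Q_{\pi}\xi,\eta)$ and $f^{-1}*a=U_{\pi}(\xi,Q_{\pi}^{-1}\eta)$, whence $f^{-1}*a*f=S^2(a)$, giving (2); because $f$ is multiplicative, $\sigma(a)=f*a*f$ is then an algebra automorphism, and the modular identity $\varphi_{\G}(ab)=\varphi_{\G}(b\sigma(a))$ in (3) follows by substituting the orthogonality relations (4) and matching coefficients. Uniqueness is read off from (1)--(2): any functional satisfying them must send each $U_{\pi}$ to a positive operator conjugating $S^2$ in the prescribed way, which forces it to equal the $Q_{\pi}$ constructed above.
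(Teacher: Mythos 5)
The paper states Theorem \ref{TheoWorChar} without proof---it is recalled as standard Woronowicz theory in the preliminaries---so there is no internal argument to compare yours against, and your proposal must stand on its own. Its skeleton (Haar averaging plus Schur's lemma to produce two positive operators $G_L$, $G_R$ governing the two orthogonality relations, then assembling $f$ irreducible by irreducible) is the classical route and is sound in outline; the averaging computation, the reading-off of (4) from rank-one operators, and the derivation of convolution invertibility from $(\id\otimes\Delta)U_{\pi}=(U_{\pi})_{12}(U_{\pi})_{13}$ are all correct.

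The problem is that the entire content of the theorem sits in the step you explicitly defer, and your proposal never supplies an argument for it. The formula $S(U_{\pi}(\xi,\eta))^*=U_{\pi}(\eta,\xi)$, which is the only tool you cite, cannot pin $G_L$ against $G_R$: paired with the $G_L$-relation inside an invariance computation it merely reproduces itself. What does close the gap, using only what the paper provides, is the strong left invariance lemma of Section \ref{SecPrel} applied in both chiralities: strong left invariance combined with the $G_R$-relation yields $S^2(U_{\pi}(\xi,\eta))=U_{\pi}(G_R^{-1}\xi,G_R\eta)$, while the mirror-image right-handed identity $(\varphi_{\G}\otimes\id)((h\otimes 1)\Delta(a))=S^{-1}\bigl((\varphi_{\G}\otimes\id)(\Delta(h)(a\otimes 1))\bigr)$ (proved by the same two-line Sweedler computation) combined with the $G_L$-relation yields $S^2(U_{\pi}(\xi,\eta))=U_{\pi}(G_L\xi,G_L^{-1}\eta)$; pairing the two expressions against arbitrary coefficients via (4) forces $G_LG_R\in\C 1$, and the normalisation follows from $\Tr(G_L)=\Tr(G_R)=1$. (Alternatively one can unitarize the conjugate corepresentation and analyse invariant vectors of $\pi\otimes\bar{\pi}$ and $\bar{\pi}\otimes\pi$.) Since your proofs of (1), (2) and the precise form of (4) all quote the target identity $S^2(U_{\pi}(\xi,\eta))=U_{\pi}(Q_{\pi}\xi,Q_{\pi}^{-1}\eta)$, everything downstream is conditional on this missing argument. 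Note also that even writing $G_R^{-1}$ presupposes invertibility, which positivity does not give; you need faithfulness of $\varphi_{\G}$ on $\mathscr{O}(\G)$ (Lemma \ref{LemPhiFaith}): if $G_R\xi=0$ then $\varphi_{\G}(U_{\pi}(\xi,\eta)^*U_{\pi}(\xi,\eta))=0$ for all $\eta$, hence $U_{\pi}(\xi,\eta)=0$, hence $\xi=0$.

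Two further points. First, the multiplicativity of $f$, on which your proof of (3) rests, is asserted circularly (``deduce $Q_{\pi\otimes\rho}=Q_{\pi}\otimes Q_{\rho}$ from the behaviour of $Q$ under tensor products''); with $Q_{\pi}$ defined through orthogonality this is a genuine theorem, normally proved from the characterisation of $Q_{\pi}$ as the positive operator implementing $S^2$ with $\Tr(Q_{\pi})=\Tr(Q_{\pi}^{-1})$, together with the observation that $Q_{\pi}\otimes Q_{\rho}$ implements $S^2$ on coefficients of $\pi\otimes\rho$ with the correct normalisation on each irreducible summand. Second, your uniqueness argument is wrong as stated: conditions (1) and (2) alone determine each $Q_{\pi}$ only up to a positive scalar, because replacing $Q_{\pi}$ by $\lambda_{\pi}Q_{\pi}$ with $\lambda_{\pi}>0$ preserves positivity and leaves $U_{\pi}(Q_{\pi}\xi,Q_{\pi}^{-1}\eta)$ unchanged (the scalars cancel by antilinearity of $U_{\pi}(\,\cdot\,,\,\cdot\,)$ in its first argument, and the convolution inverse rescales by $\lambda_{\pi}^{-1}$). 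Uniqueness requires invoking (3) (the rescaling multiplies $\sigma$ by $\lambda_{\pi}^2$ on the $\pi$-coefficients, contradicting the modular identity since $\varphi_{\G}(aa^*)>0$ there) or (4) (which determines $Q_{\pi}/\Tr(Q_{\pi})$ and $Q_{\pi}^{-1}/\Tr(Q_{\pi})$, hence $Q_{\pi}$, outright).
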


Note that each $Q_{\pi}$ is invertible, as $f$ is convolution invertible. If then $\pi$ is a unitary representation of $\G$, one obtains by the orthogonality relations a unitary representation of $\G$ on the dual $\Hsp_{\bar{\pi}} = \Hsp_{\pi}^*$ by endowing $\Hsp_{\pi}^*$ with the new inner product \[\langle \langle \xi^*,\eta^* \rangle \rangle = \langle \eta,Q_{\pi}\xi\rangle\] and the comodule structure \[\delta_{\bar{\pi}}(\xi^*) = \delta_{\pi}(\xi)^*.\] Note that by this definition \[U_{\pi}(\xi,\eta)^* = U_{\bar{\pi}}((Q_{\pi}^{-1}\xi)^*,\eta^*),\quad S(U_{\pi}(\xi,\eta)) = U_{\bar{\pi}}((Q_{\pi}^{-1}\eta)^*,\xi^*),\] while \[ Q_{\bar{\pi}}\eta^* = (Q_{\pi}^{-1}\eta)^*.\]

For $\pi$ an arbitrary representation, the number $\dim_q(\pi) = \mathrm{Tr}(Q_{\pi})$ is called the \emph{quantum dimension} of $\pi$. One then has for any two $\G$-representations $\pi,\pi'$ that \begin{itemize}
\item $\dim_q(\pi\oplus \pi') = \dim_q(\pi) +\dim_q(\pi')$,
\item $\dim_q(\pi\otimes \pi') = \dim_q(\pi)\dim_q(\pi')$,
\item  $\dim_q(\pi) = \dim_q(\bar{\pi})$.
\end{itemize}

One can in fact define for each $z\in \C$ a functional $f^z$ on $\mathscr{O}(\G)$ such that \[(\id\otimes f^z)U_{\pi}= Q_{\pi}^z.\] The functionals $f^z$, called the \emph{Woronowicz characters}, satisfy \[f^z*f^w = f^{z+w},\quad (f^z)^* = f^{\bar{z}},\quad f^z(ab) = f^z(a)f^z(b),\quad a,b\in \mathscr{O}(\G).\] One can then also define for each $z\in \C$ automorphisms \[\sigma_z(a) = f^{iz} *a*f^{iz},\quad \tau_z(a) = f^{-iz}*a*f^{iz},\] so that $\sigma_{-i} = \sigma$ and $\tau_{-i} = S^2$. We have that $\sigma_t$ and $\tau_t$ are $^*$-preserving for $t\in \R$, while in general \[\tau_z(g)^* = \tau_{\bar{z}}(g^*),\quad \sigma_z(g)^* = \sigma_{\bar{z}}(g^*),\quad z\in \C,g\in \mathscr{O}(\G).\]  Further $\tau_z\circ \tau_w = \tau_{z+w}$, $\sigma_{z}\circ \sigma_w = \sigma_{z+w}$ and  \[\varphi_{\G} \circ \sigma_z = \varphi_{\G}\circ \tau_z  = \varphi_{\G}, \quad \forall z\in \C.\] 

Note that for a  general $\mathscr{O}(\G)$-comodule structure $(V,\delta)$ on a finite-dimensional vector space $V$, there exists at least one Hilbert space structure on $V$ for which it is a unitary $\G$-representation. For example, choosing an arbitrary Hilbert space structure $(V,\langle\,\cdot\,,\,\cdot\,\rangle)$, one can define \[\langle\langle \xi,\eta\rangle\rangle = \varphi_{\G}(\delta(\xi)^*\delta(\eta)),\] with respect to which $\delta$ becomes a unitary comodule structure.

\begin{Def}[Space of intertwiners]\label{DefIntSpa}  Let $\pi_1$ and $\pi_2$ be two $\G$-representa-tions. We define the space of \emph{intertwiners} (or \emph{morphisms}) between $\pi_1$ and $\pi_2$ as \begin{eqnarray*} \Mor(\pi_1,\pi_2)&=& \{T: \Hsp_1\rightarrow \Hsp_2\mid \delta_{\pi_2}\circ T = (T\otimes \id_{\G})\circ \delta_{\pi_1}\}\\ &=& \{T: \Hsp_1\rightarrow \Hsp_2\mid U_{\pi_2}(T\otimes 1_{\G}) = (T\otimes 1_{\G})U_{\pi_1}\}\\ &\subseteq& B(\Hsp_1,\Hsp_2).\end{eqnarray*}
\end{Def} 
\begin{Lem}  Let $\pi_1$ and $\pi_2$ be two $\G$-representations.
\begin{itemize}
\item If $T \in \Mor(\pi_1,\pi_2)$ and $T'\in \Mor(\pi_2,\pi_3)$, then $T'\circ T\in \Mor(\pi_1,\pi_3)$. 
\item If $T\in \Mor(\pi_1,\pi_2)$, then $T^*\in \Mor(\pi_2,\pi_1)$.
\end{itemize}
\end{Lem}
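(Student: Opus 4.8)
The plan is to verify each of the two closure properties directly from the intertwiner condition stated in Definition~\ref{DefIntSpa}. I will work with the comodule formulation $\delta_{\pi_2}\circ T = (T\otimes \id_{\G})\circ \delta_{\pi_1}$, since composition statements read off most cleanly there, but for the adjoint it will be more convenient to switch to the equivalent corepresentation formulation $U_{\pi_2}(T\otimes 1_{\G}) = (T\otimes 1_{\G})U_{\pi_1}$. Both formulations are available by the definition, so I may pick whichever is more economical for each bullet.

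For the first bullet, suppose $T\in \Mor(\pi_1,\pi_2)$ and $T'\in \Mor(\pi_2,\pi_3)$. I would simply compose: starting from $\delta_{\pi_3}\circ (T'\circ T)$, apply the intertwining identity for $T'$ to pull $\delta_{\pi_3}$ past $T'$, producing $(T'\otimes \id_{\G})\circ \delta_{\pi_2}\circ T$, then apply the identity for $T$ to rewrite $\delta_{\pi_2}\circ T$ as $(T\otimes \id_{\G})\circ \delta_{\pi_1}$. Collecting the tensor factors via $(T'\otimes \id_{\G})\circ (T\otimes \id_{\G}) = (T'\circ T)\otimes \id_{\G}$ gives exactly $((T'\circ T)\otimes \id_{\G})\circ \delta_{\pi_1}$, which is the desired condition for $T'\circ T\in \Mor(\pi_1,\pi_3)$.

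For the second bullet, let $T\in \Mor(\pi_1,\pi_2)$, so $U_{\pi_2}(T\otimes 1_{\G}) = (T\otimes 1_{\G})U_{\pi_1}$. I would take the adjoint of this operator identity in $B(\Hsp_1,\Hsp_2)\otimes C(\G)$, giving $(T^*\otimes 1_{\G})U_{\pi_2}^* = U_{\pi_1}^*(T^*\otimes 1_{\G})$. The key point is then to eliminate the adjoints on the corepresentation operators, which is where unitarity enters: since $\pi_1$ and $\pi_2$ are \emph{unitary} representations, $U_{\pi_i}$ are unitaries, so I can multiply the adjointed identity on the left by $U_{\pi_1}$ and on the right by $U_{\pi_2}$ and use $U_{\pi_i}U_{\pi_i}^* = 1$ to rearrange into $U_{\pi_1}(T^*\otimes 1_{\G}) = (T^*\otimes 1_{\G})U_{\pi_2}$, which is precisely the condition for $T^*\in \Mor(\pi_2,\pi_1)$.

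The only genuinely non-trivial ingredient is the use of unitarity of $U_{\pi_1}$ and $U_{\pi_2}$ in the adjoint statement; this is guaranteed because every $\G$-representation in our convention is by definition a unitary corepresentation (equivalently, the associated $\G$-module is isometric, as established in the correspondence lemma above). Everything else is a formal manipulation, so I expect no real obstacle — the main care is just to keep track of which leg the tensor factors act on and to apply the adjoint consistently across both the operator and the coefficient algebra $C(\G)$.
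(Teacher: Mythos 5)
Your proof is correct: the composition step is the routine chaining of the two intertwining identities, and in the adjoint step you correctly identify the one nontrivial ingredient, namely that unitarity of $U_{\pi_1}$ and $U_{\pi_2}$ lets you pass from $(T^*\otimes 1_{\G})U_{\pi_2}^* = U_{\pi_1}^*(T^*\otimes 1_{\G})$ to $U_{\pi_1}(T^*\otimes 1_{\G}) = (T^*\otimes 1_{\G})U_{\pi_2}$ by multiplying on the appropriate sides. The paper states this lemma without any proof, so there is nothing to diverge from; your argument is exactly the standard verification it implicitly expects.
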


The Haar state is also the key to proving the semisimplicity of compact quantum groups.

\begin{Def} Let $\G$ be a compact quantum group. A $\G$-representation is called \emph{indecomposable} if it is not isomorphic to a direct sum of two non-zero representations. 

A non-zero $\G$-representation $\pi$ is called \emph{irreducible} if for any non-zero representation $\pi'$ the existence of $T\in \Mor(\pi',\pi)$ with $T^*T =\id_{\Hsp_{\pi'}}$ implies $TT^*= \id_{\Hsp_{\pi}}$.
\end{Def} 

\begin{Prop} Let $\mathbb{G}$ be a compact quantum group.
\begin{itemize}
\item A $\G$-representation is indecomposable if and only if it is irreducible. 
\item Any $\G$-representation is isomorphic to a  direct sum of irreducible representations.
\end{itemize}
\end{Prop}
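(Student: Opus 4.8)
The plan is to reduce everything to formal manipulations of intertwiner spaces, using the two closure properties just recorded (that $\Mor$ is closed under composition and under taking adjoints) together with the unitarity of corepresentations; the genuine analytic input — the Haar state — has already been spent in proving that isometric corepresentations are unitary, and hence that $\Mor(\pi_2,\pi_1)$ contains $T^*$ whenever $\Mor(\pi_1,\pi_2)$ contains $T$. I would first establish the equivalence in the first bullet, and then deduce the second bullet by induction on $\dim\Hsp_{\pi}$.

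For the equivalence I would prove both implications by contraposition. To see that irreducibility forces indecomposability, suppose $\pi\cong\pi_1\oplus\pi_2$ with both summands nonzero. The canonical isometry $\iota\colon\Hsp_{\pi_1}\to\Hsp_{\pi}$ onto the first summand lies in $\Mor(\pi_1,\pi)$ and satisfies $\iota^*\iota=\id_{\Hsp_{\pi_1}}$, yet $\iota\iota^*$ is the orthogonal projection onto the first summand, which is not $\id_{\Hsp_{\pi}}$ because $\pi_2\neq 0$; this contradicts irreducibility applied to $\pi'=\pi_1$. Conversely, if $\pi$ is \emph{not} irreducible, there is a nonzero $\pi'$ and an isometry $T\in\Mor(\pi',\pi)$ with $TT^*\neq\id_{\Hsp_{\pi}}$. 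By the closure lemma $T^*\in\Mor(\pi,\pi')$, so $p:=TT^*\in\Mor(\pi,\pi)$ is a self-adjoint idempotent with $0\neq p\neq\id_{\Hsp_{\pi}}$.

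Since $p\in\Mor(\pi,\pi)$ means precisely that $p\otimes 1_{\G}$ commutes with $U_{\pi}$, the unitary $U_{\pi}$ is block-diagonal with respect to the orthogonal decomposition $\Hsp_{\pi}=p\Hsp_{\pi}\oplus(1-p)\Hsp_{\pi}$, and each block is again an isometric, hence (by the earlier lemma) unitary, corepresentation. This exhibits $\pi$ as a direct sum of two nonzero representations, so $\pi$ is decomposable. The one point requiring care is exactly this last verification — that cutting $U_{\pi}$ down by $p\otimes 1_{\G}$ really yields genuine unitary subrepresentations and not merely corepresentations on the summands — but it follows immediately from block-diagonality together with the automatic unitarity of isometric corepresentations.

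For the second bullet I would argue by strong induction on $n=\dim\Hsp_{\pi}$. Representations with $n\leq 1$ admit no decomposition into two nonzero summands and are therefore irreducible by the first bullet. For $n>1$, either $\pi$ is already irreducible, or, by the equivalence, it is decomposable, say $\pi\cong\pi_1\oplus\pi_2$ with $0<\dim\Hsp_{\pi_i}<n$; applying the induction hypothesis to each $\pi_i$ and regrouping expresses $\pi$ as a direct sum of irreducibles. I expect no serious obstacle beyond the bookkeeping. Should one prefer a route that does not lean on contraposing the definition of irreducibility, the averaging operator
\[
\bar{S}\;=\;(\id\otimes\varphi_{\G})\bigl(U_{\pi_2}(S\otimes 1_{\G})U_{\pi_1}^{*}\bigr)\in\Mor(\pi_1,\pi_2),\qquad S\in B(\Hsp_{\pi_1},\Hsp_{\pi_2}),
\]
whose intertwining property is a direct consequence of the invariance of the Haar state, shows that $\Mor(\pi,\pi)$ is a nonzero finite-dimensional $C^*$-subalgebra of $B(\Hsp_{\pi})$, which therefore contains a nontrivial projection whenever it is not $\C\,\id_{\Hsp_{\pi}}$, yielding the same splitting and making the role of the Haar state in the semisimplicity fully explicit.
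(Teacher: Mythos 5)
Your proof is correct. For context: the paper states this Proposition in its preliminaries \emph{without proof}, so there is no in-paper argument to compare against; what you give is the standard argument, and it uses exactly the two facts the paper records just beforehand, namely the closure of intertwiner spaces under composition and adjoints (which makes $\Mor(\pi,\pi)$ a unital finite-dimensional C$^*$-subalgebra of $B(\Hsp_{\pi})$) and the automatic unitarity of isometric corepresentations. Two small points are worth shoring up. First, in the direction ``decomposable $\Rightarrow$ not irreducible'' you implicitly take the isomorphism $\pi\cong\pi_1\oplus\pi_2$ to be unitary; if ``isomorphic'' is read as the existence of an invertible intertwiner $V$, you should unitarize first, replacing $V$ by $V(V^*V)^{-1/2}$, which is again an intertwiner because $\Mor(\sigma,\sigma)$ is a C$^*$-algebra and hence closed under continuous functional calculus. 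Second, your closing remark misattributes a role: that $\Mor(\pi,\pi)$ is a nonzero finite-dimensional C$^*$-algebra follows already from the closure lemma (it contains $\id_{\Hsp_{\pi}}$), not from the averaging operator $\bar{S}$, and $\bar{S}$ is never needed in your main argument; indeed the cut-down blocks $U_{\pi}(p\otimes 1_{\G})$ and $U_{\pi}((\id_{\Hsp_{\pi}}-p)\otimes 1_{\G})$ are corners of a unitary commuting with $p\otimes 1_{\G}$ and are therefore unitary outright, so even the isometric-implies-unitary lemma is invoked only for convenience. This is consistent with your opening observation: for finite-dimensional \emph{unitary} corepresentations the decomposition theory is essentially formal, the Haar state having been spent in the preliminary lemmas.
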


\begin{Def} For $\pi$ an irreducible $\G$-representation, we define \[C(\G)_{\pi} = \textrm{linear span } \{U_{\pi}(\xi,\eta)\mid \xi,\eta\in \Hsp_{\pi}\}.\]
\end{Def}

By semisimplicity, we have that $\mathscr{O}(\G)$ is the direct sum of all $C(\G)_{\pi}$.

\begin{Lem}
Each $C(\mathbb{G})_{\pi}$ is a finite-dimensional coalgebra of dimension $\dim(\Hsp_{\pi})^2$, and for $\pi,\pi'$ non-equivalent irreducible representations, one has that $C(\G)_{\pi}$ and $C(\G)_{\pi'}$ are orthogonal with respect to \[\langle g,h\rangle = \varphi_{\G}(g^*h).\] 
\end{Lem}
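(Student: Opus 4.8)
The plan is to treat the three assertions — that $C(\G)_\pi$ is a coalgebra, that it has dimension $\dim(\Hsp_\pi)^2$, and that inequivalent isotypical pieces are orthogonal — in increasing order of difficulty, the first two being essentially immediate from the formulas already recorded and the last requiring a genuine argument.

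For the coalgebra claim I would simply invoke the comultiplication formula $\Delta(U_\pi(\xi,\eta)) = \sum_i U_\pi(\xi,e_i)\otimes U_\pi(e_i,\eta)$: every summand lies in $C(\G)_\pi\otimes C(\G)_\pi$, so by (conjugate-)bilinearity $\Delta$ restricts to $C(\G)_\pi$, and together with $\varepsilon(U_\pi(\xi,\eta)) = \langle\xi,\eta\rangle$ this exhibits $C(\G)_\pi$ as a finite-dimensional subcoalgebra, spanned by the $n^2$ elements $U_\pi(e_i,e_j)$ for an orthonormal basis $\{e_i\}_{i=1}^n$ of $\Hsp_\pi$. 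To pin the dimension down to exactly $n^2$ I would show these spanning elements are linearly independent. Since $\varphi_{\G}$ is faithful on $\mathscr{O}(\G)$ (Lemma \ref{LemPhiFaith}), the form $\langle g,h\rangle = \varphi_{\G}(g^*h)$ is a genuine inner product, so independence is equivalent to nonsingularity of the Gram matrix, and the second orthogonality relation of Theorem \ref{TheoWorChar} computes it as $\varphi_{\G}(U_\pi(e_i,e_j)^*U_\pi(e_k,e_l)) = \Tr(Q_\pi)^{-1}\langle e_k,Q_\pi^{-1}e_i\rangle\,\delta_{jl}$. This is, up to the scalar $\Tr(Q_\pi)^{-1}$, a Kronecker product of $Q_\pi^{-1}$ with the identity, hence invertible because $Q_\pi$ is; thus $\dim C(\G)_\pi = n^2$.

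The substantial point is the orthogonality of $C(\G)_\pi$ and $C(\G)_{\pi'}$ for inequivalent irreducibles, since the relations quoted in Theorem \ref{TheoWorChar} only concern matrix coefficients of a single representation. Here I would run the standard Haar-averaging argument producing intertwiners. For a linear map $X\colon \Hsp_{\pi'}\to\Hsp_\pi$ set $\Phi(X) = (\id\otimes\varphi_{\G})(U_\pi(X\otimes 1)U_{\pi'}^*)$. Using $(\id\otimes\Delta)U_\pi = (U_\pi)_{12}(U_\pi)_{13}$ and the analogous identity for $U_{\pi'}^*$, one computes $(\id\otimes\Delta)(U_\pi(X\otimes 1)U_{\pi'}^*) = (U_\pi)_{12}\,(U_\pi(X\otimes 1)U_{\pi'}^*)_{13}\,(U_{\pi'}^*)_{12}$; applying $\varphi_{\G}$ to the last leg and invoking the right invariance $(\id\otimes\varphi_{\G})\Delta = \varphi_{\G}(\,\cdot\,)1$ yields $\Phi(X)\otimes 1 = U_\pi(\Phi(X)\otimes 1)U_{\pi'}^*$, that is $\Phi(X)\in\Mor(\pi',\pi)$. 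As $\pi$ and $\pi'$ are inequivalent irreducibles, Schur's lemma (a consequence of the semisimplicity discussed above) gives $\Mor(\pi',\pi)=0$, so $\Phi(X)=0$ for every $X$. Reading off matrix coefficients with the rank-one choice $X=\eta\chi^*$ then produces $\varphi_{\G}(U_\pi(\xi,\eta)U_{\pi'}(\zeta,\chi)^*)=0$ for all vectors.

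Finally I would convert this into the form demanded by the inner product, namely $\varphi_{\G}(g^*h)=0$ for $g\in C(\G)_\pi$, $h\in C(\G)_{\pi'}$. Using the recorded identity $U_\pi(\xi,\eta)^* = U_{\bar{\pi}}((Q_\pi^{-1}\xi)^*,\eta^*)$, the adjoint $g^*$ is a matrix coefficient of $\bar{\pi}$, and writing $h$ as the adjoint of a matrix coefficient of $\bar{\pi'}$ turns $\varphi_{\G}(g^*h)$ into an expression of the shape $\varphi_{\G}(U_{\bar{\pi}}(\,\cdot\,)U_{\bar{\pi'}}(\,\cdot\,)^*)$; since conjugation is an involution on equivalence classes, $\pi\not\cong\pi'$ forces $\bar{\pi}\not\cong\bar{\pi'}$, and the previous paragraph applies. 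The main obstacle is exactly this cross-representation step: the quoted orthogonality relations do not see two distinct representations, so the heart of the matter is the averaging computation showing $\Phi(X)$ is an intertwiner, after which inequivalence plus Schur does the work. I expect the only real care needed beyond that to be the adjoint/conjugate bookkeeping at the end, which is unavoidable because the naive averaging of $U_\pi^*(\,\cdot\,)U_{\pi'}$ does \emph{not} yield an intertwiner — the comultiplied corepresentation factors end up on the outside of the integrated leg.
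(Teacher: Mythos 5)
Your proof is correct, but there is no in-paper argument to compare it against: this lemma sits in the preliminaries and the paper states it without proof, as recalled standard Peter--Weyl theory for compact quantum groups (cf.\ \cite{Wor95,MVD98}). On its own merits your argument is the standard one and it is sound. The coalgebra claim and the spanning set $\{U_\pi(e_i,e_j)\}$ are immediate from the recorded formulas, and your Gram-matrix computation using the second orthogonality relation of Theorem \ref{TheoWorChar} together with faithfulness of $\varphi_{\G}$ on $\mathscr{O}(\G)$ (Lemma \ref{LemPhiFaith}) correctly pins the dimension at $\dim(\Hsp_\pi)^2$: up to the scalar $\Tr(Q_\pi)^{-1}$ and reindexing, the Gram matrix is the Kronecker product of the transpose of $Q_\pi^{-1}$ with the identity, hence invertible. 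The essential step, cross-orthogonality, is also handled correctly: the identity $(\id\otimes\Delta)\bigl(U_\pi(X\otimes 1)U_{\pi'}^*\bigr) = (U_\pi)_{12}\bigl(U_\pi(X\otimes 1)U_{\pi'}^*\bigr)_{13}(U_{\pi'}^*)_{12}$ is right, invariance of the Haar state then gives $\Phi(X)\in\Mor(\pi',\pi)$, and your reduction of $\varphi_{\G}(g^*h)$ to an expression of the form $\varphi_{\G}(ab^*)$ with $a\in C(\G)_{\bar{\pi}}$ and $b\in C(\G)_{\overline{\pi'}}$, via $U_\pi(\xi,\eta)^* = U_{\bar{\pi}}((Q_\pi^{-1}\xi)^*,\eta^*)$, is the correct bookkeeping. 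Two ingredients you invoke without proof are genuinely needed but standard, and fair to assume at this level: Schur's lemma in the form $\Mor(\pi',\pi)=\{0\}$ for inequivalent irreducibles (the paper never states it explicitly, though it follows from the closure properties of intertwiner spaces and semisimplicity), and the fact that $\bar{\pi}$ is again irreducible with $\pi\mapsto\bar{\pi}$ involutive on equivalence classes, without which the final application of the averaging argument to the conjugate pair would not be justified. Your closing remark is also accurate: averaging $U_\pi^*(\,\cdot\,)U_{\pi'}$ does not produce an intertwiner on the nose, which is precisely why the second orthogonality relation carries the operator $Q_\pi^{-1}$.
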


\section{Actions of compact quantum groups}\label{SecAc}

Let $\mathscr{O}(\X)$ be an algebra, and $(\mathscr{O}(\G),\Delta)$ a Hopf algebra. We recall that a \emph{coaction} of $(\mathscr{O}(\G),\Delta)$ on $\mathscr{O}(\X)$ is an algebra homomorphism \[\alpha: \mathscr{O}(\X)  \rightarrow \mathscr{O}(\X)\atimes \mathscr{O}(\G)\] such that 
\begin{itemize}
\item $(\id_{\X}\otimes \Delta)\circ \alpha = (\alpha\otimes \id_{\G})\circ \alpha$,
\item $(\id_{\X}\otimes \varepsilon)\circ \alpha = \id_{\X}$. 
\end{itemize}

In case we are dealing with $^*$-algebras, we ask that $\alpha$ is $^*$-preserving. We will also use Sweedler notation for coactions, \[\alpha(a) = a_{(0)}\otimes a_{(1)},\quad (\id_{\X}\otimes \Delta)\alpha(a)=a_{(0)}\otimes a_{(1)}\otimes a_{(2)},\quad\ldots\] 

Turning to C$^*$-algebras and compact quantum groups, we can a priori not state in general the counit condition. But, as in the definition of a compact quantum group this condition can be substituted by a density condition leading to the following definition introduced in \cite[Definition 1.4]{Pod95}.

\begin{Def}\label{DefActPod} A (continuous) \emph{right action} $\mathbb{X}\curvearrowleft \mathbb{G}$ consists of
\begin{itemize}
\item a compact quantum group $\mathbb{G}$,
\item a locally compact quantum space $\mathbb{X}$, and
\item a $^*$-homomorphism, called \emph{right coaction}, \[\alpha: C_0(\mathbb{X}) \rightarrow C_0(\mathbb{X})\otimes C(\mathbb{G})\]
\end{itemize}
such that
\begin{itemize}
\item the \emph{coaction property} holds,  \[(\alpha\otimes \id_{\G})\circ \alpha = (\id_{\X}\otimes \Delta)\circ \alpha,\] and
\item the following density condition, called \emph{Podle\'{s} condition}, holds,\[\lbrack \alpha(C_0(\mathbb{X}))(1_{\X}\otimes C(\mathbb{G}))\rbrack = C_0(\mathbb{X})\otimes C(\mathbb{G}).\]
\end{itemize}
\end{Def} 

In this case, we also write $\mathbb{G}\curvearrowright C_0(\mathbb{X})$, where sides are changed to mimick the contravariant nature of taking function algebras. 

Unlike for the comultiplication, we do not assume from the outset that the coaction map $\alpha$ is injective. Indeed, in this case it is easy to give examples where the injectivity does not hold, see for instance Example \ref{ExSolNI}.

Of course, one can as well define the notion of \emph{left} action of a compact quantum group on a locally compact quantum space. If $\G$ is a compact quantum group, denote by $\G^{\mathrm{op}}$ the compact quantum group determined by \[C(\G^{\mathrm{cop}}) = C(\G),\quad \Delta_{\G^{\mathrm{cop}}} = \Delta_{\G}^{\mathrm{op}} = \varsigma \circ \Delta,\] where \[\varsigma: C(\G)\otimes C(\G)\rightarrow C(\G)\otimes C(\G),\quad g\otimes h\mapsto h\otimes g.\] Then we have a one-to-one correspondence 
 \[\G \overset{\alpha}{\curvearrowright} \X\quad \leftrightarrow \quad \X\overset{\alpha^{\mathrm{op}}}{\curvearrowleft} \G^{\mathrm{cop}},\] where $\alpha^{\mathrm{op}} = \varsigma\circ \alpha$.

We now give several examples.

\begin{Exa} Let $\G$ be a compact quantum group. Then $\G\overset{\Delta}{\curvearrowleft} \G$ by \[\Delta: C(\G)\rightarrow C(\G)\otimes C(\G).\]
\end{Exa}

The following lemma shows that compact quantum group actions reduce to the ordinary notion of `continuous group action on a C$^*$-algebra' in the case of $\G$ classical.

\begin{Lem} Let $G$ be a compact Hausdorff group with a continuous (left) action $\alpha$ on a C$^*$-algebra $C_0(\mathbb{X})$, that is,
\begin{itemize}
\item the map \[G\times A \rightarrow A, \quad (g,a)\mapsto \alpha_g(a)\] is continuous,
\item for all $g,h\in G$, $\alpha_{gh}= \alpha_g\circ \alpha_h$,
\item each $\alpha_g$ is a $^*$-automorphism.
\end{itemize}
Then $\mathbb{X}\curvearrowleft G$ is a continuous right action (in the sense of Definition \ref{DefActPod}) by \[\alpha: C_0(\mathbb{X})\rightarrow C_0(\mathbb{X})\otimes C(G) \cong C(G,C_0(\mathbb{X})),\quad a\mapsto \left(\alpha(a): g\mapsto \alpha_g(a)\right).\] 

Conversely, all actions $\X \curvearrowleft G$ arise in this way.
\end{Lem}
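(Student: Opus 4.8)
The plan is to verify both implications directly, working throughout with the canonical identification $C_0(\X)\otimes C(G)\cong C(G,C_0(\X))$, which holds because $C(G)$ is a commutative unital C$^*$-algebra with $G$ compact. Under Gelfand duality the characters of $C(G)$ are exactly the evaluations $\mathrm{ev}_g$ at points $g\in G$; the comultiplication is $(\Delta h)(g,k)=h(gk)$, so that $(\mathrm{ev}_g\otimes \mathrm{ev}_k)\circ\Delta=\mathrm{ev}_{gk}$ and the counit is $\varepsilon=\mathrm{ev}_e$. These three facts are what translate the group law into the coaction axioms.

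For the forward direction, I would first check that $\alpha(a)\colon g\mapsto \alpha_g(a)$ really lands in $C(G,C_0(\X))$: continuity in $g$ for fixed $a$ is immediate from the joint continuity hypothesis, and compactness of $G$ makes the resulting function bounded, hence an element of $C_0(\X)\otimes C(G)$. That $\alpha$ is a $^*$-homomorphism is pointwise, since multiplication, adjoint and linear structure in $C(G,C_0(\X))$ are computed pointwise, where they reduce to the corresponding properties of the individual $^*$-automorphisms $\alpha_g$. The coaction property is the conceptual heart but is short: evaluating $(\alpha\otimes \id_G)\alpha(a)$ and $(\id_{\X}\otimes \Delta)\alpha(a)$ at a pair $(g,k)\in G\times G$ gives $\alpha_g(\alpha_k(a))$ and $\alpha_{gk}(a)$ respectively, so the coaction identity is precisely the homomorphism law $\alpha_{gk}=\alpha_g\circ\alpha_k$ (note $\alpha_e=\id_{\X}$, as $\alpha_e$ is an idempotent automorphism).

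The one genuinely analytic point in the forward direction is the Podle\'s density condition. Since elements $b\otimes f$, viewed as functions $g\mapsto f(g)b$, have dense linear span in $C(G,C_0(\X))$, it suffices to approximate each such function by elements $\alpha(a)(1_{\X}\otimes f)$, which evaluate to $g\mapsto f(g)\alpha_g(a)$. My plan is a localization/partition-of-unity argument: given $\epsilon>0$, for each $g_0$ put $a_{g_0}=\alpha_{g_0^{-1}}(b)$, so that $\alpha_g(a_{g_0})=\alpha_{gg_0^{-1}}(b)$ lies within $\epsilon$ of $\alpha_e(b)=b$ for $g$ in a neighbourhood $V_{g_0}$ of $g_0$ (using continuity of $k\mapsto\alpha_k(b)$ at $e$); cover the compact $G$ by finitely many $V_{g_1},\dots,V_{g_n}$, take a subordinate partition of unity $\chi_i$ with $\mathrm{supp}\,\chi_i\subseteq V_{g_i}$, and form $\sum_i\alpha(a_{g_i})(1_{\X}\otimes f\chi_i)$. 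Using $\sum_i\chi_i=1$, this approximates $g\mapsto f(g)b$ uniformly to within $\|f\|_\infty\,\epsilon$, yielding the density.

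For the converse, given an abstract action I would define $\alpha_g=(\id_{\X}\otimes \mathrm{ev}_g)\circ\alpha$, each a $^*$-endomorphism of $C_0(\X)$; applying $\id_{\X}\otimes \mathrm{ev}_g\otimes \mathrm{ev}_k$ to the coaction identity and using $(\mathrm{ev}_g\otimes \mathrm{ev}_k)\Delta=\mathrm{ev}_{gk}$ gives $\alpha_g\circ\alpha_k=\alpha_{gk}$. The main obstacle is to upgrade this to a genuine action by proving $\alpha_e=\id_{\X}$, since the Podle\'s condition has replaced the counit axiom and $\alpha_e$ is a priori only an idempotent $^*$-endomorphism (from $\alpha_e\circ\alpha_e=\alpha_e$). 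Here I would apply the bounded map $\id_{\X}\otimes \mathrm{ev}_e$ to the Podle\'s equality: its image of the right-hand side is all of $C_0(\X)$, while its image of the left-hand side is contained in $\overline{\alpha_e(C_0(\X))}$, so $\alpha_e$ has dense range; but the image of a $^*$-homomorphism of C$^*$-algebras is closed, hence $\alpha_e(C_0(\X))=C_0(\X)$, and an idempotent that is the identity on its own range must be $\id_{\X}$. It then follows that each $\alpha_g$ is a $^*$-automorphism with inverse $\alpha_{g^{-1}}$; joint continuity of $(g,a)\mapsto\alpha_g(a)$ follows by combining the separate continuity in $g$ (from $\alpha(a)\in C(G,C_0(\X))$) with the uniform bound $\|\alpha_g\|=1$; and by construction the coaction reconstructed from $\{\alpha_g\}$ is the original $\alpha$, closing the equivalence.
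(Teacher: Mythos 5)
Your proof is correct, and its overall skeleton (translate the axioms pointwise via evaluation characters; identify the crux as ``Podle\'s condition $\Leftrightarrow$ $\alpha_e = \id_{\X}$'') matches the paper's, but the two key steps are executed differently. The paper channels \emph{both} directions through a single operator: it defines $\widetilde{\alpha}: C_0(\X)\otimes C(G)\rightarrow C_0(\X)\otimes C(G)$, $a\otimes f\mapsto \alpha(a)(1\otimes f)$, i.e.\ $\widetilde{\alpha}(F)(g)=\alpha_g(F(g))$, observes that the Podle\'s condition is equivalent to surjectivity of $\widetilde{\alpha}$ (the range of a $^*$-homomorphism being closed), and then: for the forward direction it writes down the \emph{exact} inverse $\widetilde{\beta}(F)(g)=\alpha_{g^{-1}}(F(g))$, while for the converse it argues contrapositively that if $\alpha_e\neq\id_{\X}$ then every $\alpha_g$ has range inside the proper closed subalgebra $C_0(\X_e)=\alpha_e(C_0(\X))$, so constant functions valued outside $C_0(\X_e)$ cannot lie in the range of $\widetilde{\alpha}$. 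You instead prove density in the forward direction by a partition-of-unity approximation (translating $b$ by $\alpha_{g_i^{-1}}$ and patching), and in the converse you slice the Podle\'s identity by $\id_{\X}\otimes\mathrm{ev}_e$ to get dense range of $\alpha_e$, then invoke closedness of the range of a $^*$-homomorphism and the fact that a surjective idempotent is the identity. What each approach buys: the paper's $\widetilde{\alpha}$-trick is slicker, yields exact (not just approximate) surjectivity, and treats both implications uniformly; your forward argument is more hands-on and is the prototype of density arguments that survive in situations where no exact inverse is available, and your converse isolates cleanly the single C$^*$-algebraic fact (closed range) on which the paper's contrapositive also secretly rests. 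Both arguments are complete; the only point worth flagging is that you assert rather than prove the identification $C_0(\X)\otimes C(G)\cong C(G,C_0(\X))$, which the paper establishes with a partition-of-unity argument --- acceptable here, since it is a standard fact and is already invoked in the statement of the lemma.
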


\begin{proof} Assume first that $G$ is just a compact Hausdorff space, without any group structure. Then the collection of continuous maps $G\rightarrow C_0(\X)$ forms a C$^*$-algebra $C(G,C_0(\X))$ by pointwise multiplication and $^*$-structure, with the uniform norm. By basic functional analysis, one then has a one-to-one correspondence between 
\begin{itemize}
\item continuous maps \[G\times C_0(\X)\rightarrow C_0(\X), \quad(g,a)\mapsto \alpha_g(a)\] for which each $\alpha_g$ is a linear endomorphism, and
\item continuous linear maps \[\alpha:C_0(\X)\rightarrow C(G,C_0(\X)),\quad a\mapsto (g\mapsto \alpha_g(a)).\]
\end{itemize}
Moreover, one easily sees that $\alpha$ is a $^*$-homomorphism if and only if each $\alpha_g$ is a $^*$-endomorphism. 

Using a partition of unity for $G$ and the definition of the minimal tensor product, we furthermore get a $^*$-isomorphism of C$^*$-algebras 
\[C_0(\mathbb{X})\otimes C(G) \overset{\cong}{\rightarrow} C(G,C_0(\mathbb{X})), \quad a\otimes f \mapsto (g\mapsto f(g)a).\] 

Assume now that $G$ is a compact Hausdorff group. Since we also have a $^*$-isomorphism \[C(\mathbb{X})\otimes C(G)\otimes C(G)\cong C(G\times G,C(\mathbb{X})),\]
it follows from the above and an easy verification that we get a one-to-one correspondence between 
\begin{itemize}
\item continuous maps $\alpha:G\times C_0(\X) \rightarrow C_0(\X)$ for which each $\alpha_g$ is a $^*$-endomorphism, and $\alpha_g\alpha_h = \alpha_{gh}$ for all $g,h\in G$, and
\item $^*$-homomorphisms $\alpha: C_0(\mathbb{X})\rightarrow C_0(\mathbb{X})\otimes C(\G)$ such that the coaction property holds. 
\end{itemize}

What remains to be done is to relate the Podle\'{s} condition to $G$ acting by $^*$-automorphisms. The latter is easily seen to be equivalent to $\alpha_e$ acting by the identity (where $e$ is the unit of $G$). But assume that $(g,a)\rightarrow \alpha_g(a)$ is a continuous action by $^*$-endomorphisms. Then we have a $^*$-homomorphism \[\widetilde{\alpha}:C_0(\mathbb{X})\otimes C(G)\rightarrow C_0(\mathbb{X})\otimes C(G),\quad a\otimes f\mapsto \alpha(a)(1\otimes f),\] and we see that the Podle\'{s} condition is satisfied if and only if $\widetilde{\alpha}$ is surjective.

Now on the level of $C_0(\X)\otimes C(G)\cong C(G,C_0(\mathbb{X}))$, we have \[\forall F\in C(G,C(\X)),\quad \widetilde{\alpha}(F)(g) = \alpha_g(F(g)).\] So assume first that $\alpha_e= \id_{\mathbb{X}}$. Then $\widetilde{\alpha}$ has the inverse $\widetilde{\beta}$, \[\widetilde{\beta}(F)(g) = \alpha_{g^{-1}}(F(g)).\] Hence $\widetilde{\alpha}$ is surjective. 

Conversely, assume $\alpha_e\neq  \id_{\mathbb{X}}$. This implies $\alpha_e$ is a non-trivial idempotent $^*$-endomorphism. Put $C_0(\mathbb{X}_e) = \alpha_e(C_0(\mathbb{X}))\neq C_0(\mathbb{X})$. Then \[\forall g\in G,\quad \alpha_g(C_0(\mathbb{X})) = \alpha_e(\alpha_g(C_0(\mathbb{X})))\subseteq C_0(\mathbb{X}_e).\] But for $a\notin C_0(\mathbb{X}_e)$, we then have that the constant map $g\mapsto a$ is not in the range of $\widetilde{\alpha}$, and hence $\widetilde{\alpha}$ is not surjective.
\end{proof}

\begin{Exa} Assume $G$ is a compact Hausdorff group, and $X$ a locally compact Hausdorff space. Then \[X\curvearrowleft G \textnormal{ continuously} \quad \Leftrightarrow \quad G\curvearrowright C_0(X),\quad \alpha_g(f)(x) = f(x\cdot g).\]
\end{Exa}

\begin{Exa} Consider the sphere \[S^{N-1} = \{z=(z_1,\ldots,z_N)\in \R^N\mid \sum_i z_i^2 = 1\},\] and let $O(N)$ be the orthogonal group, \[O(N) = \{u \in M_N(\R)\mid u^tu = I_N = uu^t\}.\] Then $S^{N-1} \curvearrowleft O(N)$ by \[(z,u) \mapsto zu.\]
 \end{Exa}  

 \begin{Exa}\label{ExaCuntz} Consider the \emph{Cuntz algebra}, \[\mathcal{O}_N = C^*(V_1,\ldots,V_N\mid V_i^*V_j = \delta_{ij}, \sum_i V_iV_i^* = 1),\] where $C^*(\cdot)$ means `the universal C$^*$-algebra generated by'.
 Let $U(N)$ be the unitary group, \[U(N) = \{u \in M_N(\C)\mid u^*u = I_N = uu^*\}.\]
Then $U(N) \curvearrowright \mathcal{O}_N$ by \[\alpha_u(V_i) = \sum_{j} u_{ji} V_j.\]
In particular, by restriction $S^1 \curvearrowright \mathcal{O}_N$, \[\alpha_z(V_i) = zV_i.\]
\end{Exa}

\begin{Exa}[\cite{BaG10}]\label{ExaFreeSphere} Consider the \emph{free sphere} \[C(S^{N-1}_+) = C^*(V_1,\ldots, V_N\mid V_i = V_i^*, \sum_i V_i^2 = 1).\] Then $ S^{N-1}_+\curvearrowleft O(N)$ by \[\alpha_u(V_i) = \sum_{j} u_{ji} V_j.\]
\end{Exa}

\begin{Exa} Let $\G$ be a compact quantum group, and $\pi$ a $\G$-representation. Then $\G\curvearrowright B(\Hsp_{\pi})$ by the \emph{adjoint action} \[\mathrm{Ad}_{\pi}: B(\Hsp_{\pi})\rightarrow B(\Hsp_{\pi})\otimes C(\G),\]\[ \xi\eta^* \mapsto \delta_{\pi}(\xi)\delta_{\pi}(\eta)^* = U_{\pi}(\xi\eta^*\otimes 1_{\G})U_{\pi}^*.\]
\end{Exa}

Note that, for $G$ a compact Hausdorff group with representation $\pi$, \[(\mathrm{Ad}_{\pi})_g(x) = \pi_gx\pi_g^*,\quad x\in B(\Hsp_{\pi}).\]

\begin{Exa} If $\X\overset{\alpha}{\curvearrowleft} \G$, then we can let $\G$ act on the `Alexandroff compactification' $C(\X^{\bullet}) = C_0(\X)\oplus \C$ by extending the coaction unitally.
\end{Exa}

A general method to construct examples of compact quantum group actions is to complete, in a C$^*$-algebraic sense, purely algebraic examples. We have already seen instances of this procedure in Example \ref{ExaCuntz} and Example \ref{ExaFreeSphere}.

Let us first recall in more detail the notion of \emph{universal C$^*$-envelope}.

 \begin{Def} For $\mathscr{O}(\X)$ a $^*$-algebra, we say that $\mathscr{O}(\X)$ \emph{admits a universal C$^*$-envelope} if for each $a\in \mathscr{O}(\X)$ there exists $C_a \geq 0$ such that \[\|\pi(a)\|\leq C_a\] for all $^*$-representations of $\mathscr{O}(\X)$ as (bounded) operators on some Hilbert space. 
 \end{Def}
 
 We then write $\|a\|_u$ for the infimum of all possible $C_a$. One can show that $\|\cdot\|_u$ is a submultiplicative norm on $\mathscr{O}(\X)/I$, where $I$ is the ideal of all elements $a$ with $\|a\|_u= 0$, and that $\|\cdot\|_u$ satisfies the C$^*$-identity.

\begin{Def}  We denote by $C_0(\X_u)$ the completion of $\mathscr{O}(\X)/I$ with respect to $\|\cdot\|_u$, and call it the \emph{universal C$^*$-envelope} of $\mathscr{O}(\X)$. \end{Def}

\begin{Exa} Let $\G$ be a compact quantum group. Since \[\|\rho\left(U(\xi,\eta)\right)\| = \|(\xi^*\otimes 1)(\id\otimes \pi)(U_{\pi})(\eta\otimes 1)\|\leq \|\xi\|\|\eta\|\] for any $^*$-representation $\rho$ of $\mathscr{O}(\G)$ and any representation $\pi$ of $\G$, it follows that $\mathscr{O}(\G)$ admits a universal C$^*$-algebraic completion, which we write $C_u(\G) = C(\G_u)$. 

As $\mathscr{O}(\G)$ embeds into $C(\G)$, it also  
embeds into $C(\G_u)$. Moreover, by its universal property, $C(\G_u)$ inherits a coproduct from $\mathscr{O}(\G)$, and $\G_u$ becomes a compact quantum group in its own right. We then have canonical, $\Delta$-preserving, surjective $^*$-homomorphisms \[C(\G_u)\overset{\pi_u}{\twoheadrightarrow} C(\G)\overset{\pi_{\red}}{\twoheadrightarrow} C(\G_{\red}).\]
Note that we still have $\mathscr{O}(\G_u) = \mathscr{O}(\G)$.
\end{Exa}

\begin{Lem}\label{LemAlgtoAn} Let $\mathscr{O}(\X)$ be a $^*$-algebra with a Hopf $^*$-algebraic coaction \[\alpha: \mathscr{O}(\X) \rightarrow \mathscr{O}(\X)\underset{\alg}{\otimes} \mathscr{O}(\G).\] Assume $\mathscr{O}(\X)$ admits a universal C$^*$-envelope. \vspace{0.2cm}

Then $\alpha$ extends to coaction \[\alpha_u: C_0(\X_u)\rightarrow C_0(\X_u)\otimes C(\G_u)\] satisfying the Podle\'{s} condition.
\end{Lem}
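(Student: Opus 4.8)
The plan is to produce $\alpha_u$ from the universal property of $C_0(\X_u)$, to verify the coaction identity by a density argument, and to derive the Podle\'{s} condition from the bijectivity of an algebraic ``Galois'' map built from the antipode of $\mathscr{O}(\G)$. For the first step, I compose $\alpha$ with the canonical $^*$-homomorphisms $\mathscr{O}(\X)\to C_0(\X_u)$ and $\mathscr{O}(\G)\hookrightarrow C(\G_u)$ (recall $\mathscr{O}(\G_u)=\mathscr{O}(\G)$), obtaining a $^*$-homomorphism $\widetilde{\alpha}\colon \mathscr{O}(\X)\to C_0(\X_u)\otimes C(\G_u)$ into a C$^*$-algebra. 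Fixing a faithful representation $\iota$ of $C_0(\X_u)\otimes C(\G_u)$ on a Hilbert space $\Hsp$, the composite $\iota\circ\widetilde{\alpha}$ is a $^*$-representation of $\mathscr{O}(\X)$ on $\Hsp$, so by definition of the universal norm $\|\iota(\widetilde{\alpha}(a))\|\le\|a\|_u$; as $\iota$ is isometric, $\|\widetilde{\alpha}(a)\|\le\|a\|_u$ for every $a$. Hence $\widetilde{\alpha}$ annihilates the ideal $I$, descends to $\mathscr{O}(\X)/I$, and extends by continuity to a $^*$-homomorphism $\alpha_u\colon C_0(\X_u)\to C_0(\X_u)\otimes C(\G_u)$ agreeing with $\alpha$ on $\mathscr{O}(\X)/I$.

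For the coaction identity, the two $^*$-homomorphisms $(\alpha_u\otimes\id_{\G})\circ\alpha_u$ and $(\id_{\X}\otimes\Delta)\circ\alpha_u$ from $C_0(\X_u)$ into $C_0(\X_u)\otimes C(\G_u)\otimes C(\G_u)$ coincide on the dense image of $\mathscr{O}(\X)$: there $\Delta$ restricts to the Hopf-algebra coproduct on $\mathscr{O}(\G)$, so the required equality is precisely the algebraic coaction property of $\alpha$. Continuity of both maps then forces equality on all of $C_0(\X_u)$.

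The substance lies in the Podle\'{s} condition, which I reduce to the purely algebraic statement that the map $T\colon \mathscr{O}(\X)\atimes\mathscr{O}(\G)\to\mathscr{O}(\X)\atimes\mathscr{O}(\G)$, $a\otimes h\mapsto\alpha(a)(1\otimes h)=a_{(0)}\otimes a_{(1)}h$, is a linear bijection with inverse $a\otimes h\mapsto a_{(0)}\otimes S(a_{(1)})h$; this follows from a direct computation using coassociativity, the antipode axioms, and the counit condition $(\id_{\X}\otimes\varepsilon)\circ\alpha=\id_{\X}$. Consequently the linear span of $\{\alpha(a)(1\otimes h)\}$ is all of $\mathscr{O}(\X)\atimes\mathscr{O}(\G)$. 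Since $\alpha$ descends to a coaction on $\mathscr{O}(\X)/I$, the same identity holds on the quotient; and since $(\mathscr{O}(\X)/I)\atimes\mathscr{O}(\G)$ is dense in $C_0(\X_u)\otimes C(\G_u)$, the closed span $[\alpha_u(C_0(\X_u))(1_{\X}\otimes C(\G_u))]$ already contains this dense subalgebra and hence equals $C_0(\X_u)\otimes C(\G_u)$.

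I expect the principal subtleties to be twofold. First, one must check that $\alpha$ genuinely descends to a well-defined coaction on $\mathscr{O}(\X)/I$ (using that the algebraic tensor product of the dense subalgebras embeds isometrically into the minimal C$^*$-tensor product, so $\widetilde{\alpha}(a)=0$ in the C$^*$-algebra forces $(q\otimes\id)\alpha(a)=0$ algebraically); this is needed both for the coaction identity and for transporting the Galois bijection to the quotient. Second, in the non-unital setting one must confirm that the products $\alpha_u(a)(1_{\X}\otimes g)$ stay inside $C_0(\X_u)\otimes C(\G_u)$ rather than merely its multiplier algebra, which holds because $\alpha_u(a)$ itself already lies in the algebra. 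The algebraic bijectivity of $T$ is the conceptual heart of the argument, but it is a standard antipode computation rather than a genuine obstacle.
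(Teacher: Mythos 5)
Your proof is correct and follows essentially the same route as the paper: extension of $\alpha$ via the universal property, the coaction identity by density and continuity, and the Podle\'{s} condition from the antipode computation $\alpha(a_{(0)})(1_{\X}\otimes S(a_{(1)})h) = a\otimes h$, which shows the algebraic span $\alpha(\mathscr{O}(\X))(1_{\X}\otimes\mathscr{O}(\G))$ is all of $\mathscr{O}(\X)\atimes\mathscr{O}(\G)$. The only difference is that you spell out the details (descent to $\mathscr{O}(\X)/I$, full bijectivity of the Galois-type map where only surjectivity is needed) that the paper leaves implicit.
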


\begin{proof} The existence of $\alpha_u$ as a $^*$-homomorphism is clear by universality. The fact that $\alpha_u$ satisfies the coaction property is then clear by continuity. 

To see that $\alpha_u$ satisfies the Podle\'{s} condition we compute for $a\in \mathscr{O}(\X)$\begin{eqnarray*} \alpha(a_{(0)})(1_{\X}\otimes S(a_{(1)})) &=& a_{(0)}\otimes a_{(1)}S(a_{(2)})\\ &=& a_{(0)}\otimes \varepsilon(a_{(1)})1_{\G}\\  &=& a\otimes 1_{\G}.\end{eqnarray*} Hence \[\alpha(\mathscr{O}(\X))(1_{\X}\otimes \mathscr{O}(\G)) =\mathscr{O}(\X)\aotimes \mathscr{O}(\G),\] and \[\lbrack \alpha_u(C_0(\X_u))(1_{\X_u}\otimes C(\G_u))\rbrack =C_0(\X_u)\otimes C(\G_u).\]
\end{proof}

We now construct several further actions of compact quantum groups.

Let us first return to actions on the Cuntz algebras from a more coordinate-free perspective. 

\begin{Def} Let $\Hsp$ be a finite-dimensional Hilbert space. The Cuntz C$^*$-algebra $\mathcal{O}(\Hsp)$ is defined by the following universal properties:
\begin{itemize}
\item $\Hsp\subseteq \mathcal{O}(\Hsp)$ linearly,
\item $\mathcal{O}(\Hsp)$ is generated by $\Hsp$ as a C$^*$-algebra,
\item $\xi^*\eta = \langle \xi,\eta\rangle1_{\mathcal{O}(\Hsp)}$ for $\xi,\eta\in \Hsp$,
\item $\sum_i e_ie_i^*  = 1$ for $\{e_i\}$ an orthonormal basis.
\end{itemize}
\end{Def}

For example, we then have $\mathcal{O}_N = \mathcal{O}(\C^N)$.  The next example was introduced in \cite{KNW92}, see also \cite{Gab14}.

\begin{Exa} Let $\G$ be a compact quantum group, and $\pi$ a $\G$-representation. Then we have an action $\G\curvearrowright \mathcal{O}(\mathcal{H}_{\pi})$ by \[\alpha_{\pi}: \mathcal{O}(\mathcal{H}_{\pi}) \rightarrow \mathcal{O}(\mathcal{H}_{\pi})\otimes C(\G),\quad \xi\mapsto \delta_{\pi}(\xi).\]
\end{Exa}

\begin{Def}[\cite{VDW96}] The \emph{free orthogonal quantum group} $O^+_N$ is defined as the universal C$^*$-algebra \[C(O^+_N)=C^*(U_{ij}\mid 1\leq i,j\leq N, U_{ij}^* = U_{ij} \textnormal{\textit{ and }} U=(U_{ij})_{i,j}\textnormal{\textit{ unitary}})\] with the coproduct $\Delta$ characterised by 
\[\Delta(U_{ij}) = \sum_k U_{ik}\otimes U_{kj}.\]
\end{Def}

It is easy to show that $O^+_N$ is indeed a compact quantum group. 

\begin{Exa} The compact quantum group $O^+_N$ acts on the free quantum sphere $S_+^{N-1}$ by \[\alpha(V_i) = \sum_{j} V_j\otimes U_{ji}.\]
\end{Exa}

Non-classical quantum groups can also act on classical spaces.

\begin{Def}[\cite{Wan98}] The \emph{free permutation group} $\mathrm{Sym}_N^+$ is defined as the universal C$^*$-algebra \[C^*(U_{ij}\mid 1\leq i,j\leq N, U_{ij}^* = U_{ij}=U_{ij}^2 \textnormal{\textit{ and }} U=(U_{ij})_{i,j}\textnormal{\textit{ unitary}})\]equipped with the coproduct \[\Delta(U_{ij}) = \sum_k U_{ik}\otimes U_{kj}.\]
\end{Def}

It is again easy to show that $\mathrm{Sym}_N^+$ is a compact quantum group.

\begin{Exa} Let $X_N= \{1,2,\ldots, N\}$. Then $X_N \curvearrowleft \mathrm{Sym}_N^+$ by \[\alpha: C(X_N)\rightarrow C(X_N)\otimes C(\mathrm{Sym}_N^+),\quad \delta_i \mapsto \sum_j \delta_j\otimes U_{ji}.\] Here the $\delta_j$ denote the Dirac functions $\delta_j(i) = \delta_{j,i}$. 
\end{Exa}

The above phenomenon of quantum groups acting on classical spaces is however much rarer than that of classical groups acting on quantum spaces, as shown by the work of D. Goswami and collaborators, see in particular \cite{GoJ13}. Some other examples can be found in \cite{BBC07,Hua13}. 

As a final example, let us consider \emph{duals} of discrete groups. 

\begin{Def} Let $\Gamma$ be a discrete group. We define the compact quantum group $\widehat{\Gamma}_u$ as the universal group C$^*$-algebra $C(\widehat{\Gamma}_u) = C^*_u(\Gamma)$ equipped with the coproduct \[\Delta(\lambda_g)= \lambda_g\otimes \lambda_g,\] where $\lambda_g$ for $g\in \Gamma$ denote the generators of $C^*_u(\Gamma)$. 
\end{Def}

\begin{Exa}[C$^*$-algebraic bundles and $\Gamma$-graded C$^*$-algebras]
Assume that we have the following data:
\begin{itemize}
\item a discrete group $\Gamma$,
\item Banach spaces $A_g=\{a_g\}$ with associative contractive multiplications \[A_g\times A_h\rightarrow A_{gh},\]
\item antilinear, involutive, isometric maps $*: A_g \rightarrow A_{g^{-1}}$
\end{itemize}
such that 
\begin{itemize} 
\item $\|b^*b\| = \|b\|^2$ for $b\in A_g$, 
\item $b^*b\geq 0$ in (the C$^*$-algebra) $A_e$ for $b\in A_g$.
\end{itemize}
Then $\widehat{\Gamma}_u\curvearrowright A$, the universal C$^*$-envelope of $\oplus_{g} A_g$ with its obvious $^*$-algebra structure, by \[\alpha: A \rightarrow A\otimes C(\widehat{\Gamma}_u),\quad a_g \mapsto a_g\otimes \lambda_g.\]
\end{Exa}

Still further important examples will be introduced throughout the remainder of this article.

\section{Isotypical components and algebraic core}

The basic results and notions in this section can be found in \cite{Pod95}. 

\begin{Def} Let $\mathbb{X}\overset{\alpha}{\curvearrowleft} \mathbb{G}$. We define the \emph{quantum orbit space} \[\mathbb{Y} = \mathbb{X}/\mathbb{G}\] by the C$^*$-algebra \[C_0(\mathbb{Y}) = C_0(\mathbb{X})^{\mathbb{G}}= \{a\in C_0(\mathbb{X})\mid \alpha(a) = a\otimes 1_{\mathbb{G}}\}.\]
\end{Def}

\begin{Exa}
If $G\overset{\alpha}{\curvearrowright} C_0(\mathbb{X})$, then \[C_0(\mathbb{Y}) = C_0(\mathbb{X})^{G} =  \{a \in C_0(\mathbb{X})\mid \alpha_g(a) = a \textrm{ for all }g\in G\}.\]
\end{Exa}
\begin{Exa}
 If $X\overset{\alpha}{\curvearrowleft} G$, then \begin{eqnarray*} C_0(X)^G &=& \{G\textrm{-constant continuous functions on }X\textrm{ vanishing at infinity}\}\\ &\cong& \{\textrm{continuous functions on }Y=X/G\textrm{ vanishing at infinity}\}.\end{eqnarray*}
\end{Exa}

Other examples of quantum orbit spaces can be constructed from representation theory.

\begin{Exa} Let $\pi$ be a $\G$-representation, and let $\Ad_{\pi}$ be the adjoint action on $B(\Hsp_{\pi})$. Then
\[B(\Hsp_{\pi})^{\Ad_{\pi}} = \Mor(\pi,\pi).\]
Indeed, this follows immediately from the formula \[\Ad_{\pi}(x) = U_{\pi}(x\otimes 1)U_{\pi}^*.\]
\end{Exa}

Since the original group action has been quotiented out, quantum orbit spaces will not have an action anymore by the original quantum group. However, if there was more symmetry to begin with, taking into account the quantum group action, the extra symmetry will pass to the quotient.

\begin{Def} Let $\X\overset{\alpha}{\curvearrowleft}\G$ and $\mathbb{H}\overset{\beta}{\curvearrowright} \X$. We say that the actions $\alpha$ and $\beta$ \emph{commute} if
\[(\beta\otimes \id_{\G})\alpha = (\id_{\mathbb{H}}\otimes \alpha)\beta.\]
\end{Def}

\begin{Exa}\label{ExaHom} Assume that $\X\overset{\alpha}{\curvearrowleft}\G$ and $\mathbb{H}\overset{\beta}{\curvearrowright} \X$ commute. Then we have a continuous action $\mathbb{H}\curvearrowright \X/\G$ by \[\beta_{\mid C_0(\X/\G)}: C_0(\X/\G)\rightarrow C(\mathbb{H})\otimes C_0(\X/\G).\]
\end{Exa} 

To prove the latter statement, we have to make use of the natural \emph{conditional expectation} from $C_0(\X)$ onto $C_0(\X/\G)$, whereby one `integrates the action out on the fibers over the quotient space'. 

\begin{Def} Let $\mathbb{X}\overset{\alpha}{\curvearrowleft} \mathbb{G}$ and $\mathbb{Y} = \mathbb{X}/\mathbb{G}$. The \emph{natural conditional expectation} onto $C_0(\mathbb{Y})$ is the map \[E_{\mathbb{Y}}: C_0(\mathbb{X})\rightarrow C_0(\mathbb{X}),\quad a \mapsto (\id_{\X}\otimes \varphi_{\G})\alpha(a).\] 
\end{Def}

\begin{Lem}\label{LemCondExp} The map $E_{\mathbb{Y}}:C_0(\X)\rightarrow C_0(\X)$ has range $C_0(\mathbb{Y})$ and is
\begin{itemize}
\item idempotent,
\item completely positive,
\item bimodular:  \[E_{\mathbb{Y}}(bac) = bE_{\mathbb{Y}}(a)c,\qquad a\in C_0(\mathbb{X}), b,c\in C_0(\mathbb{Y}),\]
\item non-degenerate: $\lbrack C_0(\mathbb{X})C_0(\mathbb{Y})\rbrack = C_0(\mathbb{X})$.
\end{itemize}
\end{Lem}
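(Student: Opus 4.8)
The plan is to verify the four listed properties of $E_{\mathbb{Y}}(a) = (\id_{\X}\otimes \varphi_{\G})\alpha(a)$ essentially in the order stated, using the coaction property, the invariance of the Haar state, and the Podle\'s density condition. First I would establish that $E_{\mathbb{Y}}$ is well-defined and completely positive: since $\alpha$ is a $^*$-homomorphism (hence completely positive) and $\id_{\X}\otimes \varphi_{\G}$ is a completely positive \emph{slice map} (the Haar state $\varphi_{\G}$ being a state), the composite $E_{\mathbb{Y}}$ is completely positive and contractive as a map $C_0(\X)\to C_0(\X)$; one should note the slice map $\id_{\X}\otimes\varphi_{\G}$ lands in the first factor, so $E_{\mathbb{Y}}$ genuinely maps into $C_0(\X)$.

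Next I would identify the range and prove idempotency, which I expect to be the conceptual core. The key computation is that for any $a$, the element $E_{\mathbb{Y}}(a)$ is $\G$-invariant, i.e.\ $\alpha(E_{\mathbb{Y}}(a)) = E_{\mathbb{Y}}(a)\otimes 1_{\G}$. To see this I would apply $\alpha$ to $E_{\mathbb{Y}}(a)$ and use the coaction property $(\alpha\otimes\id_{\G})\alpha = (\id_{\X}\otimes\Delta)\alpha$ together with left invariance of the Haar state, $(\varphi_{\G}\otimes\id_{\G})\Delta = \varphi_{\G}(\cdot)1_{\G}$, applied to the second and third legs; this collapses the two group-algebra legs into a scalar times $1_{\G}$ and yields invariance. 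Conversely, if $a\in C_0(\mathbb{Y})$ then $\alpha(a)=a\otimes 1_{\G}$, so $E_{\mathbb{Y}}(a) = a\,\varphi_{\G}(1_{\G}) = a$ since $\varphi_{\G}$ is a state. Combining these two facts shows simultaneously that $\mathrm{range}(E_{\mathbb{Y}})=C_0(\mathbb{Y})$ and that $E_{\mathbb{Y}}^2 = E_{\mathbb{Y}}$, since $E_{\mathbb{Y}}(a)$ lands in $C_0(\mathbb{Y})$ and $E_{\mathbb{Y}}$ fixes that subalgebra.

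For bimodularity I would use that $\alpha$ is an algebra homomorphism, so for $b,c\in C_0(\mathbb{Y})$ we have $\alpha(bac) = (b\otimes 1_{\G})\alpha(a)(c\otimes 1_{\G})$; applying $\id_{\X}\otimes\varphi_{\G}$ and pulling the scalars $b,c$ in the first leg outside the slice map gives $bE_{\mathbb{Y}}(a)c$ directly. The one point requiring a little care is justifying that $b\otimes 1_{\G}$ and $c\otimes 1_{\G}$ can be moved outside the functional $\id_{\X}\otimes\varphi_{\G}$, which is the elementary module property of the slice map over its first tensor factor. Finally, for non-degeneracy I would deduce $\lbrack C_0(\mathbb{X})C_0(\mathbb{Y})\rbrack = C_0(\mathbb{X})$ from the Podle\'s condition; the main obstacle of the whole lemma lies here, since one must connect the abstract density $\lbrack\alpha(C_0(\X))(1_{\X}\otimes C(\G))\rbrack = C_0(\X)\otimes C(\G)$ to a statement about the fixed-point subalgebra. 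I would apply the slice map $\id_{\X}\otimes\varphi_{\G}$ (or slices by more general functionals $\id_{\X}\otimes\omega$) to the Podle\'s identity: slicing $\alpha(x)(1_{\X}\otimes h)$ produces elements of the form (something in $C_0(\X)$) times $E_{\mathbb{Y}}$-images, and a careful choice shows every element of $C_0(\X)$ is approximated by products of an element of $C_0(\X)$ with an element of $C_0(\mathbb{Y})$, possibly after an approximate-unit argument. I expect this last density step to be where the real work sits, as the other three properties are essentially formal consequences of the coaction axioms and Haar invariance.
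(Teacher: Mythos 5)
Your treatment of the first three bullets is correct and essentially reproduces the paper's own proof: complete positivity because $E_{\Y}$ is the composition of the $^*$-homomorphism $\alpha$ with the slice map $\id_{\X}\otimes\varphi_{\G}$; range contained in $C_0(\Y)$ via the coaction property and Haar invariance (note that what is actually used is the invariance $(\id_{\G}\otimes\varphi_{\G})\Delta = \varphi_{\G}(\cdot)1_{\G}$ on the \emph{last} leg, rather than the formula $(\varphi_{\G}\otimes\id_{\G})\Delta=\varphi_{\G}(\cdot)1_{\G}$ you quote --- harmless, since the Haar state is two-sided invariant); idempotency because $E_{\Y}$ fixes $C_0(\Y)$; and bimodularity from $\alpha(bac)=(b\otimes 1_{\G})\alpha(a)(c\otimes 1_{\G})$ together with first-leg modularity of the slice map.

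The genuine gap is in the non-degeneracy step, which you rightly flag as the crux but whose mechanism you misdescribe. Slicing the Podle\'{s} identity does \emph{not} produce elements of the form (element of $C_0(\X)$) times ($E_{\Y}$-image): one has $(\id_{\X}\otimes\varphi_{\G})(\alpha(x)(1_{\X}\otimes h)) = (\id_{\X}\otimes\varphi_{\G}(\cdot\,h))\alpha(x)$, and these are precisely the spectral-subspace elements used later to prove density of $\mathscr{O}_{\G}(\X)$; they are not visibly products involving fixed-point elements, and no careful choice of $x$ and $h$ makes them so. The argument must run in the opposite direction. Fix $b\in C_0(\X)$ and a bounded approximate unit $(u_i)$ of $C_0(\X)$, and use the algebraic identity $E_{\Y}(u_i)\,b = (\id_{\X}\otimes\varphi_{\G})\bigl(\alpha(u_i)(b\otimes 1_{\G})\bigr)$ --- here it is essential that the second tensor factor of $b\otimes 1_{\G}$ is $1_{\G}$, so that the slice map factors the product in this way. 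By the Podle\'{s} condition, $b\otimes 1_{\G}$ is a norm limit of finite sums $\sum_k\alpha(x_k)(1_{\X}\otimes h_k)$; since $\|\alpha(u_i)\|\leq 1$ and $\alpha(u_i)\alpha(x_k)=\alpha(u_ix_k)\rightarrow\alpha(x_k)$, an $\varepsilon/3$-argument gives $\alpha(u_i)(b\otimes 1_{\G})\rightarrow b\otimes 1_{\G}$ in norm, hence $E_{\Y}(u_i)b\rightarrow b$. As $E_{\Y}(u_i)\in C_0(\Y)$, this shows $\lbrack C_0(\Y)C_0(\X)\rbrack = C_0(\X)$, and taking adjoints yields $\lbrack C_0(\X)C_0(\Y)\rbrack = C_0(\X)$. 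This approximate-unit trick is exactly the paper's proof, and without it (or some substitute) your plan for the fourth bullet does not go through.
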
 
The nondegeneracy can be interpreted as the condition that `Every point of $\mathbb{X}$ is in an orbit (that is, lies over a point of $\mathbb{Y}$)'.  
\begin{proof} 
\begin{itemize}
\item To see that $E_{\mathbb{Y}}(C_0(\mathbb{X})) \subseteq C_0(\mathbb{Y})$, we compute \begin{eqnarray*} \alpha(E_{\mathbb{Y}}(a)) &=& \alpha\big{(} (\id_{\X}\otimes \varphi_{\G})\alpha(a)\big{)} \\
&=&(\id_{\X}\otimes \id_{\X}\otimes \varphi_{\G})((\alpha\otimes \id_{\G})\alpha(a)) \\&=& (\id_{\X}\otimes \id_{\X}\otimes \varphi_{\G})((\id_{\X}\otimes \Delta)\alpha(a)) \\ & = & (\id_{\X}\otimes \varphi_{\G})(\alpha(a))\otimes 1_{\mathbb{G}} \\ &=& E_{\mathbb{Y}}(a)\otimes 1_{\mathbb{G}}.\end{eqnarray*}
\item Trivially, $E_{\mathbb{Y}}(b) = b$ for $b\in C_0(\mathbb{Y})$, so in particular $E_{\mathbb{Y}}$ idempotent.
\item The map $E_{\mathbb{Y}}$ is completely positive since states and $^*$-homomorphisms are completely positive.
\item Trivially, $E_{\mathbb{Y}}$ is $C_0(\Y)$-bimodular.
\item Non-degeneracy can be shown as follows: if $(u_{i})_{i}$ is a bounded approximate unit for $C_0(\X)$, then
\[\forall b\in C_0(\X), \quad E_{\Y}(u_{i})b = (\id_{\X}\otimes \varphi_{\G})(\alpha(u_{i})(b\otimes 1_{\G})) \underset{i\rightarrow\infty}{\rightarrow}  b,\]
since $b\otimes 1_{\G} \in \lbrack \alpha(C_0(\X))(1_{\X}\otimes C(\G))\rbrack$.
\end{itemize}
\end{proof}

\begin{Rem} Any map $E_{\Y}: C_0(\X)\rightarrow C_0(\Y)\subseteq C_0(\X)$ of a C$^*$-algebra onto a C$^*$-subalgebra satisfying all conditions in Lemma \ref{LemCondExp} will be called a \emph{conditional expectation}.
\end{Rem} 

We can now prove the claim in Example \ref{ExaHom}: if $a\in C_0(\X/\G)$, then \begin{eqnarray*} \beta(a) &=& \beta(E_{\X/\G}(a)) \\ &=& \beta((\id_{\mathbb{X}}\otimes \varphi_{\G})\alpha(a))\\ &=& (\id_{\mathbb{H}}\otimes \id_{\X}\otimes \varphi_{\G})((\beta\otimes \id_{\G})\alpha(a))\\ &=& (\id_{\mathbb{H}}\otimes \id_{\X}\otimes \varphi_{\G})((\id_{\mathbb{H}}\otimes \alpha)\beta(a)) \\ &=& (\id_{\mathbb{H}}\otimes E_{\X/\G})\beta(a),\end{eqnarray*} which lies in $C(\mathbb{H})\otimes C_0(\X/\G)$. Obviously, $\beta_{\mid C_0(\X/\G)}$ satisfies the coaction property, and it satisfies the Podle\'{s} condition since, by the above calculation \begin{multline*} \left[ (C(\mathbb{H})\otimes 1)\beta(C_0(\X/\G))\right] = (\id_{\mathbb{H}}\otimes E_{\X/\G})\left[(C(\mathbb{H})\otimes 1_{\X})\beta(C_0(\X))\right]  \\ = (\id_{\mathbb{H}}\otimes E_{\X/\G})(C(\mathbb{H})\otimes C_0(\X)) = C(\mathbb{H})\otimes C_0(\X/\G).\end{multline*}

Let us now look at some more examples of actions and their associated quantum orbit spaces. 

\begin{Exa} If $G$ is a compact Hausdorff group and $G\overset{\alpha}{\curvearrowright} C_0(\mathbb{X})$, then \[E_{\mathbb{X}/G}(a) = \int_G \alpha_g(a)\rd \mu(g),\] where $\mu$ is the normalized Haar measure of $G$.
\end{Exa}

\begin{Exa} If $G$ is a compact group, $X$ a locally compact space with $X\overset{\alpha}{\curvearrowleft} G$, then $E_{X/G}$ is indeed integration over orbits, \[E_{X/G}(f)(xG) = \int_G f(xg)\rd \mu(g).\]
\end{Exa}

\begin{Exa} For the action $S^1 \curvearrowright \mathcal{O}(\Hsp)$ determined by $\alpha_z(\xi) = z\xi$, we have  \begin{eqnarray*} E_{\Y}(\xi_1\ldots \xi_N\eta_1^*\ldots \eta_M^*) &=& \int_{S^1} z^{N-M} (\xi_1\ldots \xi_N\eta_1^*\ldots \eta_M^*) \rd z 
\\ &=& \delta_{M,N}\xi_1\ldots \xi_N\eta_1^*\ldots \eta_M^* .\end{eqnarray*}
\end{Exa}

We now define, for an action of a compact quantum group, the notion of an \emph{isotypical component}.

\begin{Def} Let $\mathbb{X}\overset{\alpha}{\curvearrowleft} \mathbb{G}$, and let $\pi$ be a $\mathbb{G}$-representation.

The \emph{intertwiner space} between $\pi$ and $\alpha$ is defined as \[\Mor(\pi,\alpha) = \{T: \Hsp_{\pi} \rightarrow C_0(\mathbb{X})\mid \alpha(T\xi) = (T\otimes \id_{\G})\delta_{\pi}(\xi)\}.\]

When $\pi$ is irreducible, we call \emph{$\pi$-isotypical component} (or \emph{$\pi$-spectral subspace}) the subspace \[C_0(\mathbb{X})_{\pi} = \textrm{lin.~ span } \{T\xi \mid \xi\in \Hsp_{\pi}, T\in \Mor(\pi,\alpha)\}\subseteq C_0(\mathbb{X}).\]
\end{Def} 

Note that by its definition each $C_0(\mathbb{X})_{\pi}$ is a $C_0(\mathbb{Y})$-bimodule with \[\alpha(C_0(\X)_{\pi})\subseteq C_0(\X)_{\pi}\otimes C(\G)_{\pi}.\]

Note further that, when $\X = \G$, we have indeed that $C(\G)_{\pi}$ is the same space as was introduced in Section \ref{SecPrel}. Namely, for each $\xi\in \Hsp_{\pi}$, we have that \[T_{\xi}: \Hsp_{\pi}\mapsto C(\G)_{\pi},\quad \eta\mapsto (\xi^*\otimes \id_{\G})\delta_{\pi}(\eta)=U_{\pi}(\xi,\eta)\] is in $\Mor(\pi,\Delta)$. Conversely, put \begin{equation}\label{EqElChi} \chi_{\pi} = \sum_i \mathrm{Tr}(Q_{\pi})U_{\pi}(e_i,Q_{\pi}^{-1}e_i),\end{equation} with $e_i$ an orthonormal basis for $\Hsp_{\pi}$. Then for $\xi,\eta\in \Hsp_{\pi}$, \[\varphi_{\G}(U_{\pi}(\xi,\eta)\chi_{\pi}^*) = \langle \xi,\eta\rangle,\] while $\varphi_{\G}(h\chi_{\pi}^*)= 0$ for $h\in C(\G)_{\pi'}$ with $\pi'$ inequivalent with $\pi$. Hence, for $T\in \Mor(\pi,\Delta)$, \[T\xi = (\id_{\G} \otimes \varphi_{\G})(\Delta(T\xi)(1_{\G}\otimes \chi_{\pi}^*)),\]which lies in $C(\G)_{\pi}$ by the orthogonality and finite-dimensionality of the $C(\G)_{\pi'}$, and the density of $\mathscr{O}(\G)$ in $C(\G)$.

\begin{Lem}\label{LemClosSpec} Each $C_0(\X)_{\pi}$ is closed in $C_0(\X)$ for the C$^*$-algebra norm.
\end{Lem}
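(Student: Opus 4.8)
The plan is to realise $C_0(\X)_{\pi}$ as the range of a bounded idempotent on $C_0(\X)$, since the range of a bounded idempotent is automatically closed (it coincides with the kernel of $\id-P$, a continuous map). The natural candidate is the \emph{spectral projection}
\[
P_{\pi}:C_0(\X)\to C_0(\X),\qquad P_{\pi}(a)=(\id_{\X}\otimes\varphi_{\G})\big(\alpha(a)(1_{\X}\otimes\chi_{\pi}^*)\big),
\]
where $\chi_{\pi}$ is the element from \eqref{EqElChi}. This is clearly linear and bounded (by $\|\chi_{\pi}\|$), being a slice of the $^*$-homomorphism $\alpha$ followed by right multiplication with a fixed element and the state $\varphi_{\G}$.

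First I would check that $P_{\pi}$ acts as the identity on $C_0(\X)_{\pi}$. For $T\in\Mor(\pi,\alpha)$ and $\xi\in\Hsp_{\pi}$ one has $\alpha(T\xi)=(T\otimes\id_{\G})\delta_{\pi}(\xi)$, so, writing $\delta_{\pi}(e_j)=\sum_i e_i\otimes U_{\pi}(e_i,e_j)$ in an orthonormal basis $\{e_i\}$ and using the reproducing relation $\varphi_{\G}(U_{\pi}(e_i,e_j)\chi_{\pi}^*)=\langle e_i,e_j\rangle=\delta_{ij}$ recorded after Theorem~\ref{TheoWorChar}, a one-line computation gives $P_{\pi}(T\xi)=T\xi$. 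In particular $C_0(\X)_{\pi}$ is contained in the range of $P_{\pi}$.

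The heart of the argument is the opposite inclusion. Here I would first establish the intertwining identity
\[
\alpha\circ P_{\pi}=(\id_{\X}\otimes E_{\pi})\circ\alpha,\qquad E_{\pi}(h)=(\id_{\G}\otimes\varphi_{\G})\big(\Delta(h)(1_{\G}\otimes\chi_{\pi}^*)\big),
\]
obtained by pulling $\alpha$ through the slice map and invoking the coaction property $(\alpha\otimes\id_{\G})\alpha=(\id_{\X}\otimes\Delta)\alpha$ (with $\alpha$ extended to multipliers, so that $\alpha(1_{\X})=1_{\X}$). As in the computation for $\X=\G$ in Section~\ref{SecPrel}, $E_{\pi}$ is precisely the projection of $C(\G)$ onto the finite-dimensional coalgebra $C(\G)_{\pi}$; hence $\alpha(P_{\pi}(a))\in C_0(\X)\otimes C(\G)_{\pi}$. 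Expanding in the basis $u_{ij}=U_{\pi}(e_i,e_j)$ of $C(\G)_{\pi}$, write $\alpha(P_{\pi}(a))=\sum_{i,j}b_{ij}\otimes u_{ij}$ with $b_{ij}\in C_0(\X)$. Applying the coaction property once more and comparing coefficients (using $\Delta(u_{ij})=\sum_k u_{ik}\otimes u_{kj}$ together with the linear independence of the $u_{ij}$, valid since $\pi$ is irreducible) yields $\alpha(b_{mn})=\sum_i b_{in}\otimes u_{im}$; thus for each $n$ the map $T_n:e_m\mapsto b_{mn}$ lies in $\Mor(\pi,\alpha)$, and every $b_{ij}=T_j e_i$ belongs to $C_0(\X)_{\pi}$.

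It then remains to recover $P_{\pi}(a)$ itself from these coefficients, and I expect this to be the main obstacle. One is tempted to apply the counit $\varepsilon$ to the $\G$-leg of $\alpha(P_{\pi}(a))$, which would instantly give $P_{\pi}(a)=\sum_i b_{ii}$; but at this stage $\varepsilon$ is only available on $\mathscr{O}(\G)$ and the compatibility $(\id_{\X}\otimes\varepsilon)\alpha=\id$ on the algebraic core has not yet been established, so I would instead use the reproducing functional a second time. Applying the defining slice $(\id_{\X}\otimes\varphi_{\G})(\,\cdot\,(1_{\X}\otimes\chi_{\pi}^*))$ to the expansion of $\alpha(P_{\pi}(a))$ and using $\varphi_{\G}(u_{ij}\chi_{\pi}^*)=\delta_{ij}$ gives $P_{\pi}^2(a)=\sum_i b_{ii}\in C_0(\X)_{\pi}$. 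Moreover $P_{\pi}$ is idempotent: combining the identity $\alpha\circ P_{\pi}=(\id_{\X}\otimes E_{\pi})\circ\alpha$ with the fact that the functional $\psi=\varphi_{\G}(\,\cdot\,\chi_{\pi}^*)$ annihilates every $C(\G)_{\pi'}$ with $\pi'\not\cong\pi$ (so that $\psi\circ E_{\pi}=\psi$ on all of $C(\G)$ by density) one obtains $P_{\pi}^2=P_{\pi}$. Hence $P_{\pi}(a)=P_{\pi}^2(a)\in C_0(\X)_{\pi}$, so the range of $P_{\pi}$ equals $C_0(\X)_{\pi}$, which is therefore closed. The only genuinely delicate points are this recovery step (bypassing the not-yet-available counit) and the care required over the extension of $\alpha$ to multipliers in the non-unital case.
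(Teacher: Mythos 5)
Your proof is correct, and it takes a genuinely different route to the key inclusion than the paper does. Both arguments are built on the same spectral map (your $P_{\pi}$ is exactly the paper's $E_{\pi}$ of \eqref{DefEpi}), and both reduce closedness to identifying $C_0(\X)_{\pi}$ with its range or fixed-point set; the difference lies in how one proves $P_{\pi}(C_0(\X))\subseteq C_0(\X)_{\pi}$. The paper does this in a single computation: for $a\in C_0(\X)$ and $\eta\in\Hsp_{\pi}$ it defines $T:\xi\mapsto(\id_{\X}\otimes\varphi_{\G})\bigl(\alpha(a)(1_{\X}\otimes U_{\pi}(\xi,\eta)^*)\bigr)$ and verifies $T\in\Mor(\pi,\alpha)$ directly using \emph{strong left invariance} of $\varphi_{\G}$; since $\chi_{\pi}$ is a linear combination of the $U_{\pi}(\xi,\eta)$, the inclusion follows at once, and with it the fixed-point characterisation $C_0(\X)_{\pi}=\{a\mid E_{\pi}(a)=a\}$. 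You instead avoid strong left invariance entirely: you push $\alpha$ through the slice to get $\alpha\circ P_{\pi}=(\id_{\X}\otimes E_{\pi})\circ\alpha$, use that $E_{\pi}$ projects $C(\G)$ onto $C(\G)_{\pi}$ (orthogonality relations plus density — note the relevant computation for $\X=\G$ occurs just above the lemma, not in the preliminaries), and extract intertwiners by comparing coefficients in the basis $u_{ij}$; the price is that you only control $\alpha(P_{\pi}(a))$ and must recover $P_{\pi}(a)$ itself, which you do correctly by slicing a second time and proving idempotency of $P_{\pi}$ separately — a legitimate workaround for the counit not yet being available on the analytic level, a subtlety you are right to flag. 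The paper's argument buys brevity and needs no idempotency; yours buys independence from strong left invariance (only the orthogonality relations and the coaction identity are used) and makes the idempotency of the spectral projection explicit. The multiplier-extension care you mention at the end is needed equally in the paper's own computation, so it is not a defect specific to your approach.
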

\begin{proof} Define $\chi_{\pi}\in C(\G)_{\pi}$ as in \eqref{EqElChi}, and write \begin{equation}\label{DefEpi}E_{\pi}(a) =   (\id_{\X}\otimes \varphi_{\G})(\alpha(a)(1_{\X}\otimes \chi_{\pi}^*)).\end{equation} We claim that \[C_0(\X)_{\pi} = \{a\in C_0(\X)\mid E_{\pi}(a) = a\},\] from which the closedness immediately follows. 

Indeed, if $T\in \Mor(\pi,\alpha)$, then $E_{\pi}(T\xi) = T\xi$ is immediate. 

Conversely, for $a\in C_0(\X)$ and $\eta\in \Hsp_{\pi}$, consider the (linear!) map \[T: \Hsp_{\pi} \rightarrow C_0(\X),\quad \xi \mapsto  (\id_{\X}\otimes \varphi_{\G})(\alpha(a)(1_{\X}\otimes U_{\pi}(\xi,\eta)^*)).\] Let $\{e_i\}$ be an orthonormal basis of $\Hsp_{\pi}$. By strong left invariance of $\varphi_{\G}$, \begin{eqnarray*} \alpha(T\xi) &=& (\id_{\X}\otimes \id_{\G}\otimes \varphi_{\G})((\id_{\X}\otimes \Delta)\alpha(a) (1_{\X}\otimes 1_{\X} \otimes U_{\pi}(\xi,\eta)^*)) \\ &=& \sum_i (\id_{\X}\otimes \id_{\G}\otimes \varphi_{\G})(\alpha(a)_{13}(1_{\X}\otimes U_{\pi}(e_i,\xi) \otimes U_{\pi}(e_i,\eta)^*))\\ &=& \sum_i T(e_i)\otimes U_{\pi}(e_i,\xi) \\ &=& (T\otimes \id_{\G})\delta_{\pi}(\xi),\end{eqnarray*}  hence $T\in \Mor(\pi,\alpha)$. Since $\chi_{\pi}$ is a linear combination of $U_{\pi}(\xi,\eta)$'s, it follows that $E_{\pi}(C_0(\X)) \subseteq C_0(\X)_{\pi}$.
\end{proof} 

\begin{Def} Let $\mathbb{X}\overset{\alpha}{\curvearrowleft} \mathbb{G}$. Then we define the \emph{Podle\'{s} subalgebra} or \emph{algebraic core} of $C_0(\X)$ to be the set \[\mathscr{O}_{\mathbb{G}}(\mathbb{X}) = \textrm{linear span }\{C_0(\mathbb{X})_{\pi}\mid \pi \textrm{ irreducible}\}\subseteq C_0(\mathbb{X}).\]  
\end{Def} 

\begin{Theorem} The linear space $\mathscr{O}_{\G}(\X)$ is a $^*$-algebra, unital if $\X$ compact. 
\end{Theorem}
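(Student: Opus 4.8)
The plan is to establish the three assertions—closure under multiplication, closure under the $^*$-operation, and the existence of a unit when $\X$ is compact—one at a time, in each case reducing to a statement about the intertwiner spaces $\Mor(\pi,\alpha)$ and then invoking semisimplicity.

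The crucial preliminary observation for multiplication is that if $\sigma$ is an \emph{arbitrary} (not necessarily irreducible) $\G$-representation and $T\in\Mor(\sigma,\alpha)$, then $T(\Hsp_{\sigma})\subseteq\mathscr{O}_{\G}(\X)$. Indeed, by semisimplicity one writes $\sigma$ as a direct sum of irreducibles via isometries $v_k\in\Mor(\rho_k,\sigma)$ with $\sum_k v_kv_k^*=\id$; composing intertwiners gives $Tv_k\in\Mor(\rho_k,\alpha)$, so that $T\xi=\sum_k (Tv_k)(v_k^*\xi)$ lies in $\sum_k C_0(\X)_{\rho_k}\subseteq\mathscr{O}_{\G}(\X)$. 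Granting this, I would take $a=T_1\xi$ and $b=T_2\eta$ with $T_i\in\Mor(\pi_i,\alpha)$ and form the map $R\colon\Hsp_{\pi_1}\otimes\Hsp_{\pi_2}\to C_0(\X)$, $\zeta\otimes\omega\mapsto(T_1\zeta)(T_2\omega)$. Since $\alpha$ is a homomorphism and $\delta_{\pi_1\otimes\pi_2}(\zeta\otimes\omega)=\zeta_{(0)}\otimes\omega_{(0)}\otimes\zeta_{(1)}\omega_{(1)}$, a direct computation gives $R\in\Mor(\pi_1\otimes\pi_2,\alpha)$, whence $ab=R(\xi\otimes\eta)\in\mathscr{O}_{\G}(\X)$. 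Bilinearity then yields closure under products.

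For the $^*$-operation I would use the conjugate representation $\bar{\pi}$, which is again irreducible, together with the identity $\delta_{\bar{\pi}}(\xi^*)=\delta_{\pi}(\xi)^*$. Given $T\in\Mor(\pi,\alpha)$, I define $S\colon\Hsp_{\bar{\pi}}\to C_0(\X)$ by $S(\xi^*)=(T\xi)^*$, which is linear because $\xi\mapsto\xi^*$ is antilinear. As $\alpha$ is $^*$-preserving, applying $*$ to $\alpha(T\xi)=(T\otimes\id_{\G})\delta_{\pi}(\xi)$ shows $\alpha(S(\xi^*))=(S\otimes\id_{\G})\delta_{\bar{\pi}}(\xi^*)$, so $S\in\Mor(\bar{\pi},\alpha)$ and $(T\xi)^*\in C_0(\X)_{\bar{\pi}}\subseteq\mathscr{O}_{\G}(\X)$. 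For the unital statement, I would first identify the isotypical component of the trivial representation $\pi_0$ with the fixed-point algebra: the intertwiner condition for $T\in\Mor(\pi_0,\alpha)$ reads $\alpha(T1)=T1\otimes 1_{\G}$, so $C_0(\X)_{\pi_0}=C_0(\Y)$. When $\X$ is compact it remains to show $\alpha(1_{\X})=1_{\X}\otimes 1_{\G}$: the projection $\alpha(1_{\X})$ is a left unit for $\alpha(C(\X))$, hence for $\lbrack\alpha(C(\X))(1_{\X}\otimes C(\G))\rbrack$, which by the Podle\'{s} condition equals $C(\X)\otimes C(\G)$, and evaluating this left unit at $1_{\X}\otimes 1_{\G}$ forces $\alpha(1_{\X})=1_{\X}\otimes 1_{\G}$, so $1_{\X}\in C_0(\Y)\subseteq\mathscr{O}_{\G}(\X)$.

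I expect the reduction lemma in the multiplication step to be the main point. The individual intertwiner computations are routine, but the genuine input is semisimplicity: it is what lets us pass from the reducible tensor product $\pi_1\otimes\pi_2$ to its irreducible constituents, and the accompanying verification that composing an intertwiner of representations with an element of $\Mor(\sigma,\alpha)$ again lands in an intertwiner space is exactly where the structure theory of $\G$-representations is used.
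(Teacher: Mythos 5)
Your proof is correct and follows essentially the same route as the paper's: the semisimplicity reduction to arbitrary (reducible) representations, the map $m\circ(T_1\otimes T_2)\in\Mor(\pi_1\otimes\pi_2,\alpha)$ for products, the map $T^{\dag}\in\Mor(\bar{\pi},\alpha)$ for adjoints, and the trivial representation for the unit. The only difference is that you spell out details the paper compresses, notably the verification via the Podle\'{s} condition that $\alpha(1_{\X})=1_{\X}\otimes 1_{\G}$, which the paper treats as immediate.
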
 
\begin{proof}
If $\X$ is compact, then clearly $\mathscr{O}_{\G}(\X)$ contains the unit since with $\pi_{\epsilon}$ denoting the trivial representation \[\delta_{\epsilon}: \C\rightarrow \C\otimes C(\G),\quad 1\mapsto 1\otimes 1_{\G}\] we have that \[\eta: \C\rightarrow C(\G),\quad 1\mapsto 1_{\G}\] lies in $\Mor(\pi_{\epsilon},\alpha)$. 

In general, we have by linearity and semisimplicity that \[\mathscr{O}_{\mathbb{G}}(\mathbb{X}) = \{T\xi \mid \pi\textrm{ a }\G\textrm{-representation}, \xi\in \Hsp_{\pi}, T\in \Mor(\pi,\alpha)\}\subseteq C_0(\mathbb{X}).\] 
If then $a = T\xi$ and $b = T'\eta$, and $m$ the multiplication map from $\mathscr{O}_{\G}(\X)\atimes \mathscr{O}_{\G}(\X)$ to $\mathscr{O}_{\G}(\X)$, we have \[ab = m(T\xi\otimes T'\eta),\] where $m\circ (T\otimes T')$ in $\Mor(\pi_1\otimes \pi_2,\alpha)$ since $\alpha$ is a homomorphism.

Finally,  if $a = T\xi$, then \[a^* = T^{\dag}(\xi^*),\] where $T^{\dag}:\eta^*\mapsto (T\eta)^*$ is in $\Mor(\bar{\pi},\alpha)$ since $\alpha$ is $^*$-preserving.
\end{proof} 

For example, by the remark above Lemma \ref{LemClosSpec}, we have $\mathscr{O}(\G) = \mathscr{O}_{\G}(\G)$. 

\begin{Prop} Let $\mathbb{X}\overset{\alpha}{\curvearrowleft} \mathbb{G}$. Then $\alpha$ restricts to a Hopf $^*$-algebraic right coaction \[\alpha_{\alg}: \mathscr{O}_{\mathbb{G}}(\mathbb{X})\rightarrow \mathscr{O}_{\mathbb{G}}(\mathbb{X})\underset{\alg}{\otimes} \mathscr{O}(\mathbb{G}).\]
\end{Prop}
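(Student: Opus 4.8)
The plan is to restrict the given C$^*$-coaction $\alpha$ to the algebraic core $\mathscr{O}_{\G}(\X)$ and to verify the purely algebraic Hopf $^*$-coaction axioms; the genuinely new point, one that cannot even be formulated at the C$^*$-level, is the counit condition. I would organise the whole verification around the explicit behaviour of $\alpha$ on an intertwiner.

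First I would record that $\alpha$ already maps $\mathscr{O}_{\G}(\X)$ into the \emph{algebraic} tensor product. For $\pi$ a $\G$-representation, $T\in\Mor(\pi,\alpha)$, $\xi\in\Hsp_{\pi}$, and $\{e_i\}$ an orthonormal basis of $\Hsp_{\pi}$, the defining relation $\alpha(T\xi)=(T\otimes\id_{\G})\delta_{\pi}(\xi)$ together with $\delta_{\pi}(\xi)=\sum_i e_i\otimes U_{\pi}(e_i,\xi)$ yields the finite sum $\alpha(T\xi)=\sum_i T(e_i)\otimes U_{\pi}(e_i,\xi)$, in which each $T(e_i)$ lies in $C_0(\X)_{\pi}\subseteq\mathscr{O}_{\G}(\X)$ and each $U_{\pi}(e_i,\xi)$ lies in $C(\G)_{\pi}\subseteq\mathscr{O}(\G)$. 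By linearity and semisimplicity this gives $\alpha_{\alg}(\mathscr{O}_{\G}(\X))\subseteq\mathscr{O}_{\G}(\X)\underset{\alg}{\otimes}\mathscr{O}(\G)$, which is exactly the inclusion already noted after the definition of the isotypical components.

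The remaining algebraic structure comes essentially for free by restriction. That $\alpha_{\alg}$ is a $^*$-homomorphism is inherited from $\alpha$, since $\mathscr{O}_{\G}(\X)$ is a $^*$-subalgebra (preceding theorem) and the two algebraic tensor products embed multiplicatively and $^*$-compatibly into the corresponding minimal C$^*$-tensor products. The coaction identity $(\alpha_{\alg}\otimes\id_{\G})\alpha_{\alg}=(\id_{\X}\otimes\Delta)\alpha_{\alg}$ is simply the restriction of the coaction property of $\alpha$; both sides make sense and land in $\mathscr{O}_{\G}(\X)\underset{\alg}{\otimes}\mathscr{O}(\G)\underset{\alg}{\otimes}\mathscr{O}(\G)$ precisely because $\Delta(\mathscr{O}(\G))\subseteq\mathscr{O}(\G)\underset{\alg}{\otimes}\mathscr{O}(\G)$, $\mathscr{O}(\G)$ being a Hopf $^*$-algebra.

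The one genuinely new ingredient, which I expect to be the main point, is the counit condition $(\id_{\X}\otimes\varepsilon)\alpha_{\alg}=\id_{\X}$: this is unavailable at the C$^*$-level because $\varepsilon$ need not extend continuously to $C(\G)$. Applying $\id_{\X}\otimes\varepsilon$ to the finite sum above and using $\varepsilon(U_{\pi}(e_i,\xi))=\langle e_i,\xi\rangle$ gives $\sum_i\langle e_i,\xi\rangle\,T(e_i)=T\xi$ by linearity of $T$. This step is exactly where passing to the algebraic core pays off, as the counit, which is undefined on $C_0(\X)\otimes C(\G)$, becomes available on $\mathscr{O}_{\G}(\X)\underset{\alg}{\otimes}\mathscr{O}(\G)$ and recovers the identity, upgrading the mere Podle\'{s} density condition into the honest counit axiom.
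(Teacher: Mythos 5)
Your proof is correct and follows essentially the same route as the paper's: both reduce to elements $a=T\xi$ with $T\in\Mor(\pi,\alpha)$, use the intertwining relation $\alpha(T\xi)=(T\otimes\id_{\G})\delta_{\pi}(\xi)$ to see that the image lies in $\mathscr{O}_{\G}(\X)\underset{\alg}{\otimes}\mathscr{O}(\G)$, inherit the $^*$-homomorphism and coaction properties by restriction, and obtain counitality from the comodule structure of $\delta_{\pi}$. The only cosmetic difference is that you expand $\delta_{\pi}(\xi)=\sum_i e_i\otimes U_{\pi}(e_i,\xi)$ in an orthonormal basis and use $\varepsilon(U_{\pi}(e_i,\xi))=\langle e_i,\xi\rangle$, whereas the paper invokes the abstract counit identity $(\id_{\Hsp_{\pi}}\otimes\varepsilon)\delta_{\pi}=\id_{\Hsp_{\pi}}$ directly.
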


 \begin{proof}
 For $a = T\xi$ with $\xi\in \Hsp_{\pi}$ and $T\in \Mor(\pi,\alpha)$, we have 
 $a\in C_0(\mathbb{X})_{\pi}$ and \[\alpha(a) = \alpha(T\xi) = (T\otimes \id_{\G})\delta_{\pi}(\xi)\in C_0(\mathbb{X})_{\pi}\underset{\alg}{\otimes} C(\mathbb{G})_{\pi} \subseteq  \mathscr{O}_{\mathbb{G}}(\mathbb{X})\underset{\alg}{\otimes} \mathscr{O}(\mathbb{G}).\]
The fact that $\alpha_{\alg}$ satisfies the coaction property is immediate. To see that
$\alpha_{\alg}$ is counital, take $a = T\xi$ with $\xi\in \Hsp_{\pi}$ and $T\in \Mor(\pi,\alpha)$. Then 
\[(\id_{\X}\otimes \varepsilon)\alpha(T\xi) = T((\id_{\Hsp_{\pi}}\otimes \varepsilon)\delta_{\pi}(\xi)) = T\xi.\]
  \end{proof}

The following theorem is part of \cite[Theorem 1.5]{Pod95}.

\begin{Theorem}\label{TheoDensPod} Let  $\mathbb{X}\overset{\alpha}{\curvearrowleft} \mathbb{G}$. Then $\mathscr{O}_{\mathbb{G}}(\mathbb{X})$ is dense in $C_0(\mathbb{X})$. 
\end{Theorem}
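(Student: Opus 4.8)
The plan is to approximate an arbitrary $a\in C_0(\X)$ by elements produced by slicing the coaction against matrix coefficients, using the Podle\'{s} condition to manufacture the approximation and the computation already carried out in Lemma \ref{LemClosSpec} to guarantee that the result lands in the algebraic core.

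First I would record the relevant slice map: for $h\in \mathscr{O}(\G)$ and $a\in C_0(\X)$ set $\Phi_h(a) = (\id_{\X}\otimes \varphi_{\G})(\alpha(a)(1_{\X}\otimes h))$. I claim $\Phi_h(a)\in \mathscr{O}_{\G}(\X)$. Since every element of $\mathscr{O}(\G)$ is, by semisimplicity and the identity $U_{\pi}(\xi,\eta)^* = U_{\bar{\pi}}((Q_{\pi}^{-1}\xi)^*,\eta^*)$, a finite linear combination of elements of the form $U_{\pi}(\xi,\eta)^*$ with $\pi$ irreducible, it suffices to treat $h = U_{\pi}(\xi,\eta)^*$. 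But for fixed $\eta$ the computation in the proof of Lemma \ref{LemClosSpec} shows precisely that $\xi\mapsto (\id_{\X}\otimes \varphi_{\G})(\alpha(a)(1_{\X}\otimes U_{\pi}(\xi,\eta)^*))$ defines an intertwiner $T\in \Mor(\pi,\alpha)$, whence $\Phi_h(a) = T\xi \in C_0(\X)_{\pi}\subseteq \mathscr{O}_{\G}(\X)$, and the general case follows by linearity.

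Next I would invoke the Podle\'{s} condition. Since $a\otimes 1_{\G}\in C_0(\X)\otimes C(\G) = \lbrack \alpha(C_0(\X))(1_{\X}\otimes C(\G))\rbrack$, and since $\mathscr{O}(\G)$ is dense in $C(\G)$ (so that this closed span is unchanged if $C(\G)$ is replaced by $\mathscr{O}(\G)$), given $\varepsilon>0$ there are finitely many $b_i\in C_0(\X)$ and $h_i\in \mathscr{O}(\G)$ with $\|a\otimes 1_{\G} - \sum_i \alpha(b_i)(1_{\X}\otimes h_i)\| < \varepsilon$. Now apply $\id_{\X}\otimes \varphi_{\G}$, which is a completely positive contraction on the minimal tensor product and satisfies $(\id_{\X}\otimes \varphi_{\G})(a\otimes 1_{\G}) = a$. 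This yields
\[\Big\| a - \sum_i \Phi_{h_i}(b_i)\Big\| = \Big\|(\id_{\X}\otimes \varphi_{\G})\Big(a\otimes 1_{\G} - \sum_i \alpha(b_i)(1_{\X}\otimes h_i)\Big)\Big\| \leq \varepsilon.\]
By the first step $\sum_i \Phi_{h_i}(b_i)\in \mathscr{O}_{\G}(\X)$, so $a$ lies in the closure of $\mathscr{O}_{\G}(\X)$; as $a$ and $\varepsilon$ were arbitrary, density follows.

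The only step requiring real work is verifying that slicing against a single matrix coefficient produces an element of the algebraic core, and this is exactly the intertwiner identity established in Lemma \ref{LemClosSpec}. Consequently the genuinely new ingredients are merely the combination of the Podle\'{s} density with the contractivity of the Haar-state slice map, together with the harmless passage from $C(\G)$ to $\mathscr{O}(\G)$ afforded by density; I would expect no serious obstacle beyond keeping careful track of which matrix coefficients span $\mathscr{O}(\G)$ under the $*$-operation.
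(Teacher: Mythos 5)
Your proof is correct and follows essentially the same route as the paper: the paper likewise combines the Podle\'{s} condition with density of $\mathscr{O}(\G)$ in $C(\G)$ to write $C_0(\X)$ as the closed span of the slices $(\id_{\X}\otimes\varphi_{\G})(\alpha(a)(1_{\X}\otimes h))$, $h\in\mathscr{O}(\G)$, and then invokes the intertwiner computation from Lemma \ref{LemClosSpec} to place these slices in $\mathscr{O}_{\G}(\X)$. You have merely made explicit two points the paper leaves implicit, namely the contractivity of the slice map $\id_{\X}\otimes\varphi_{\G}$ and the fact that the adjoints $U_{\pi}(\xi,\eta)^*$ span $\mathscr{O}(\G)$.
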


\begin{proof} Since 
\[\lbrack \alpha(C_0(\mathbb{X}))(1_{\X}\otimes C(\mathbb{G}))\rbrack = C_0(\mathbb{X})\otimes C(\mathbb{G})\supseteq C_0(\mathbb{X})\otimes \C,\] and since $\mathscr{O}(\G)$ is dense in $C(\G)$, we have that \[C_0(\mathbb{X}) =\lbrack \{(\id_{\X}\otimes \varphi_{\G})(\alpha(a)(1_{\X}\otimes h))\mid a\in C_0(\mathbb{X}), h\in \mathscr{O}(\mathbb{G})\}\rbrack.\] But, as follows from the proof of Lemma \ref{LemClosSpec}, we have \[\{(\id_{\X}\otimes \varphi_{\G})(\alpha(a)(1_{\X}\otimes h))\mid a\in C_0(\mathbb{X}), h\in \mathscr{O}(\mathbb{G})\} \subseteq \mathscr{O}_{\G}(\X).\]
\end{proof}

\begin{Lem} Let $\X\overset{\alpha}{\curvearrowleft} \G$. Then $E_{\X/\G}$ is faithful on $\mathscr{O}_{\G}(\X)$: \[\forall a\in \mathscr{O}_{\G}(\X), \quad E_{\X/\G}(a^*a) = 0 \quad \Rightarrow \quad a = 0.\]
\end{Lem}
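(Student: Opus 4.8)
The plan is to reduce the faithfulness of $E_{\X/\G}$ to the faithfulness of the Haar state $\varphi_{\G}$ on $\mathscr{O}(\G)$ (Lemma \ref{LemPhiFaith}), funneled through the orthogonality relations of Theorem \ref{TheoWorChar}(4). The two ingredients I would combine are: distinct isotypical components are $\varphi_{\G}$-orthogonal, and a sum of positive elements of a C$^*$-algebra vanishes only if each summand does.

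First I would reduce to a single isotypical component. Write $a = \sum_{\pi} a_{\pi}$ as a finite sum with $a_{\pi}\in C_0(\X)_{\pi}$ ranging over inequivalent irreducibles $\pi$, and recall $\alpha(a_{\pi})\in C_0(\X)_{\pi}\otimes C(\G)_{\pi}$. Since $\alpha$ is a $^*$-homomorphism, $E_{\X/\G}(a^*a)=(\id_{\X}\otimes \varphi_{\G})(\alpha(a)^*\alpha(a))$, and every cross term $(\id_{\X}\otimes \varphi_{\G})(\alpha(a_{\pi})^*\alpha(a_{\pi'}))$ with $\pi\not\cong\pi'$ produces only factors $\varphi_{\G}(g^*h)$ with $g\in C(\G)_{\pi}$, $h\in C(\G)_{\pi'}$, which vanish by the orthogonality of $C(\G)_{\pi}$ and $C(\G)_{\pi'}$ for the pairing $\langle g,h\rangle=\varphi_{\G}(g^*h)$. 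Hence $E_{\X/\G}(a^*a)=\sum_{\pi}E_{\X/\G}(a_{\pi}^*a_{\pi})$, a sum of positive elements of $C_0(\Y)$; if it is zero each summand is zero, so it suffices to treat $a\in C_0(\X)_{\pi}$ for a single irreducible $\pi$.

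Next I would compute within one component. Fix an orthonormal basis $\{e_i\}$ of $\Hsp_{\pi}$; absorbing coefficient vectors into intertwiners, any $a\in C_0(\X)_{\pi}$ can be written $a=\sum_i S_i e_i$ with $S_i\in\Mor(\pi,\alpha)$. Using $\alpha(S_i e_i)=\sum_p S_i e_p\otimes U_{\pi}(e_p,e_i)$ together with the orthogonality relation $\varphi_{\G}(U_{\pi}(e_p,e_i)^*U_{\pi}(e_q,e_k))=\Tr(Q_{\pi})^{-1}\langle e_q,Q_{\pi}^{-1}e_p\rangle\,\delta_{ik}$, a direct expansion yields
\[ E_{\X/\G}(a^*a)=\frac{1}{\Tr(Q_{\pi})}\sum_{i}\sum_{p,q}\langle e_q,Q_{\pi}^{-1}e_p\rangle\,(S_i e_p)^*(S_i e_q). \]
The crux is to repackage this as a manifestly positive combination. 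Since $Q_{\pi}$ is positive and invertible, I factor $Q_{\pi}^{-1}=R^*R$ with $R=Q_{\pi}^{-1/2}$; setting $R_{rp}=\langle e_r,Re_p\rangle$ and $w_{i,r}=\sum_p R_{rp}\,S_i e_p\in C_0(\X)$, the sum collapses to
\[ E_{\X/\G}(a^*a)=\frac{1}{\Tr(Q_{\pi})}\sum_{i,r} w_{i,r}^*w_{i,r}. \]
If this vanishes then each $w_{i,r}^*w_{i,r}=0$, hence $w_{i,r}=0$; as $R$ is invertible the vectors $\sum_p R_{rp}e_p$ span $\Hsp_{\pi}$, so $w_{i,r}=S_i\!\left(\sum_p R_{rp}e_p\right)=0$ for all $r$ forces $S_i=0$ for every $i$, and therefore $a=0$.

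I expect the positivity repackaging to be the only delicate point. The weights $\langle e_q,Q_{\pi}^{-1}e_p\rangle$ are off-diagonal, so positivity cannot be read off termwise; it is precisely the factorization of $Q_{\pi}^{-1}$, combined with the invertibility of $Q_{\pi}$, that converts the orthogonality relations into the required faithfulness of $E_{\X/\G}$ on the algebraic core. Everything else is bookkeeping with Sweedler/$U_{\pi}$-notation and the standard fact about sums of positive elements.
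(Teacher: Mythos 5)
Your proposal takes a genuinely different route from the paper, and its architecture is sound, but the key ``collapse'' identity contains a conjugation error that must be repaired before the proof is complete.

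On the comparison: the paper's proof is global and short. For any positive functional $\omega$ on $C_0(\X)$, the slice $(\omega\otimes\id_{\G})\alpha(a^*a)$ is an element of $\mathscr{O}(\G)$ that is positive in $C(\G)$ and killed by $\varphi_{\G}$, hence zero by the positive-element statement of Lemma \ref{LemPhiFaith}; since such slices separate the points of the minimal tensor product, $\alpha(a^*a)=0$, and applying the counit to the second leg (legitimate because $\alpha(a^*a)\in \mathscr{O}_{\G}(\X)\aotimes\mathscr{O}(\G)$) gives $a=0$. So the paper never decomposes $a$, never invokes the orthogonality relations, and uses nothing about $Q_{\pi}$. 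Your route is component-by-component: the reduction to a single isotypical $\pi$ via $\varphi_{\G}$-orthogonality of the $C(\G)_{\pi}$ is correct (the paper records exactly this observation right after the lemma), the expansion of $E_{\X/\G}(a^*a)$ via Theorem \ref{TheoWorChar} is correct, and converting the resulting quadratic expression into a sum of squares using positivity of $Q_{\pi}^{-1}$ is the right idea. What your approach buys is an explicit formula for $E_{\X/\G}$ on each $C_0(\X)_{\pi}$, essentially the Peter--Weyl-type orthogonality the paper only develops later in the homogeneous setting (Corollary \ref{CorOrthErg}); the cost is length and dependence on the modular data.

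On the repair: with the paper's convention $\langle\xi,T\eta\rangle=\xi^*T\eta$ and your $R_{rp}=\langle e_r,Re_p\rangle$, your vectors $w_{i,r}=\sum_p R_{rp}S_ie_p$ satisfy
\[\sum_r w_{i,r}^*w_{i,r}=\sum_{p,q}\langle e_p,Q_{\pi}^{-1}e_q\rangle\,(S_ie_p)^*(S_ie_q),\]
whereas your expansion of $E_{\X/\G}(a^*a)$ carries the transposed coefficients $\langle e_q,Q_{\pi}^{-1}e_p\rangle$. These two coefficient matrices are complex conjugates of one another, so the claimed equality fails for a generic orthonormal basis, and the proof as written does not close. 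The fix is one conjugation: set
\[w_{i,r}=\sum_p \overline{R_{rp}}\,S_ie_p = S_i\bigl(Q_{\pi}^{-1/2}e_r\bigr),\]
so that $\sum_r R_{rp}\overline{R_{rq}} = \langle Re_q,Re_p\rangle = \langle e_q,Q_{\pi}^{-1}e_p\rangle$ and the collapse identity holds; the rest of your argument (each $w_{i,r}=0$, the vectors $Q_{\pi}^{-1/2}e_r$ span $\Hsp_{\pi}$, hence every $S_i=0$ and $a=0$) then goes through verbatim. Even simpler: choose $\{e_p\}$ to be an eigenbasis of $Q_{\pi}$, so that $\langle e_q,Q_{\pi}^{-1}e_p\rangle = \mu_p\delta_{pq}$ with $\mu_p>0$ and no factorization is needed at all.
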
 
\begin{proof} Assume that $a\in \mathscr{O}_{\G}(\X)$ with  $E_{\X/\G}(a^*a) = 0$. For $\omega$ a positive functional on $C_0(\X)$, we then have \[0 = \omega (E_{\X/\G}(a^*a)) = \varphi_{\G}((\omega\otimes \id_{\G})\alpha(a^*a)).\]
Since $(\omega\otimes \id_{\G})\alpha(a^*a)\in \mathscr{O}(\G)$ is positive in $C(\G)$, it follows from Lemma \ref{LemPhiFaith} that \[(\omega\otimes \id_{\G})\alpha(a^*a)=0.\]
Hence, as we are working with the spatial tensor product, $\alpha(a^*a)= 0$. Applying the counit to the second leg, we get $a^*a=0$, and so $a=0$.
\end{proof} 

From the fact that $\alpha(C_0(\X)_{\pi})\subseteq C_0(\X)_{\pi}\otimes C(\G)_{\pi}$, we have for $\pi\ncong \pi'$ that \[E_{\X/\G}(a^*b)=0,\quad a\in C_0(\X)_{\pi}, b\in C_0(\X)_{\pi'}.\] Hence,  \[\mathscr{O}_{\G}(\X) = \underset{\pi\in  \textnormal{\textit{Irrep}}(\G)}\sum^{\!\!\oplus}\,\, C_0(\X)_{\pi},\] where the direct sum is over a maximal family of non-equivalent irreducible representations of $\G$. 

In general the coaction map $\alpha$ associated to an action of a compact quantum group need not be faithful. The following results are taken from \cite{Sol11}. 

\begin{Exa}\label{ExSolNI} Let $\Gamma$ be a non-amenable discrete group, so that $C^*_u(\Gamma)\ncong C^*_{\red}(\Gamma)$. Then \[\Delta: C^*_u(\Gamma)\rightarrow C^*_u(\Gamma)\otimes C^*_{\red}(\Gamma),\quad \lambda_g\mapsto \lambda_g\otimes \lambda_g\] defines a continuous action $\widehat{\Gamma}_u\curvearrowleft \widehat{\Gamma}_{\red}$, but is non-injective by Fell's absorption principle.
\end{Exa}

However, one can always get rid of this nuissance in a canonical way.

\begin{Lem} Let $\X\overset{\alpha}{\curvearrowleft} \G$. Define $C_0(\X') = C_0(\X)/\Ker(\alpha)$. Then $\X\overset{\alpha}{\curvearrowleft} \G$ descends to a continuous action $\X'\overset{\alpha'}{\curvearrowleft} \G$, with $\alpha'$ is injective.
\end{Lem}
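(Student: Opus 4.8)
The plan is to show that the quotient map $q\colon C_0(\X)\to C_0(\X') = C_0(\X)/\Ker(\alpha)$ carries the coaction $\alpha$ down to a well-defined, injective coaction $\alpha'$ satisfying the Podle\'{s} condition. The natural candidate is to define $\alpha'$ so that the diagram commutes, i.e.\ $\alpha'\circ q = (q\otimes \id_{\G})\circ \alpha$. For this to make sense I first need to check that $\Ker(\alpha)$ is mapped into $\Ker(q\otimes \id_{\G})$, which is exactly the statement that $(q\otimes \id_{\G})\alpha(\Ker(\alpha)) = 0$; this is immediate since $\alpha(\Ker(\alpha)) = 0$ by definition. Hence $\alpha'$ descends to a $^*$-homomorphism $\alpha'\colon C_0(\X')\to C_0(\X')\otimes C(\G)$.

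The coaction property for $\alpha'$ follows by a routine diagram chase from the coaction property for $\alpha$, using that $q$ is surjective so that elements of $C_0(\X')$ are of the form $q(a)$; one applies both $(\alpha'\otimes \id_{\G})\alpha'$ and $(\id_{\X'}\otimes \Delta)\alpha'$ to $q(a)$, rewrites each in terms of $\alpha$ and $q$, and invokes the coaction property of $\alpha$. The Podle\'{s} condition for $\alpha'$ is inherited from that of $\alpha$: applying the surjective map $q\otimes \id_{\G}$ to the identity $\lbrack \alpha(C_0(\X))(1_{\X}\otimes C(\G))\rbrack = C_0(\X)\otimes C(\G)$ and using that $\alpha'(q(a)) = (q\otimes \id_{\G})\alpha(a)$, one obtains $\lbrack \alpha'(C_0(\X'))(1_{\X'}\otimes C(\G))\rbrack = C_0(\X')\otimes C(\G)$, where I use that the image of a closed linear span of products under a surjective $^*$-homomorphism has dense, hence (being a closed subspace of a C$^*$-algebra it equals) full image.

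The main content, and the step I expect to be the real obstacle, is proving that $\alpha'$ is \emph{injective}. The key observation is that $\Ker(\alpha)$ is precisely the largest ideal on which $\alpha$ vanishes, so that once we quotient by it the induced map should separate points. Concretely, suppose $\alpha'(q(a)) = 0$, that is $(q\otimes \id_{\G})\alpha(a) = 0$, meaning $\alpha(a)\in \Ker(q)\otimes C(\G) = \Ker(\alpha)\otimes C(\G)$ (using injectivity of the minimal tensor product on the left leg). I then want to conclude $a\in \Ker(\alpha)$, i.e.\ $\alpha(a) = 0$. The natural tool is to apply the coaction property: since $\alpha(a)\in \Ker(\alpha)\otimes C(\G)$, applying $\alpha\otimes \id_{\G}$ annihilates it, so $(\id_{\X}\otimes \Delta)\alpha(a) = (\alpha\otimes \id_{\G})\alpha(a) = 0$; since $\Delta$ is injective (part of our standing assumption on compact quantum groups) and the tensor product is spatial, this forces $\alpha(a) = 0$, giving $a\in \Ker(\alpha)$ and hence $q(a) = 0$.

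Throughout I would work with the minimal (spatial) tensor product and use the injectivity of $\id\otimes \pi$ and of $\pi\otimes \id$ for injective $^*$-homomorphisms $\pi$, together with injectivity of $\Delta$; the delicate point is the clean identification $\Ker(q\otimes \id_{\G}) = \Ker(\alpha)\otimes C(\G)$ at the C$^*$-level, for which I would invoke the standard fact that for the minimal tensor product the kernel of $q\otimes \id$ equals $\Ker(q)\otimes C(\G)$ when $q$ is a quotient $^*$-homomorphism.
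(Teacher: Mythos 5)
Your construction of $\alpha'$, the verification of the coaction property, and the deduction of the Podle\'{s} condition from surjectivity of $q\otimes\id_{\G}$ are all fine, and your strategy for injectivity (combine the coaction identity with injectivity of $\Delta$) is the same as the paper's. However, the step you yourself flag as delicate is a genuine gap: the ``standard fact'' that $\Ker(q\otimes\id_{\G}) = \Ker(q)\otimes C(\G)$ for the \emph{minimal} tensor product is false in general. The inclusion $\Ker(q)\otimes C(\G)\subseteq \Ker(q\otimes\id_{\G})$ always holds, but the reverse inclusion is exactly the assertion that tensoring the exact sequence $0\to\Ker(q)\to C_0(\X)\to C_0(\X')\to 0$ with $C(\G)$ preserves exactness. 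For the spatial tensor product this can fail; it holds for \emph{every} quotient map $q$ precisely when $C(\G)$ is an exact C$^*$-algebra (for the maximal tensor product it would be automatic, but that is not the tensor product used here). Compact quantum groups in the generality of these notes need not have exact function algebras: the paper explicitly allows $\G = \widehat{\Gamma}_u$ with $C(\G) = C^*_u(\Gamma)$, and by a theorem of Wassermann the full group C$^*$-algebra of a free group is not exact. So, as written, the crucial implication ``$(q\otimes\id_{\G})\alpha(a)=0 \Rightarrow (\alpha\otimes\id_{\G})\alpha(a)=0$'' is not justified.

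The gap can be closed, and the repair is precisely the paper's argument: replace the kernel identification by slice maps, which do interact well with the minimal tensor product. From $(q\otimes\id_{\G})\alpha(a)=0$ one gets, for every $\omega\in C(\G)^*$,
\[ q\big((\id_{\X}\otimes\omega)\alpha(a)\big) = (\id_{\X'}\otimes\omega)\big((q\otimes\id_{\G})\alpha(a)\big) = 0,\]
so that $(\id_{\X}\otimes\omega)\alpha(a)\in\Ker(q)=\Ker(\alpha)$. Applying $\alpha$ and using the coaction identity,
\[ 0 = \alpha\big((\id_{\X}\otimes\omega)\alpha(a)\big) = (\id_{\X}\otimes\id_{\G}\otimes\omega)\big((\id_{\X}\otimes\Delta)\alpha(a)\big).\]
Since the slice maps $\id_{\X}\otimes\id_{\G}\otimes\omega$, with $\omega$ ranging over $C(\G)^*$, jointly separate the points of a minimal tensor product, this gives $(\id_{\X}\otimes\Delta)\alpha(a)=0$, and injectivity of $\Delta$ (hence of $\id_{\X}\otimes\Delta$ on the spatial tensor product) yields $\alpha(a)=0$, i.e.\ $q(a)=0$. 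With this substitution your proof coincides with the paper's; no exactness hypothesis on $C(\G)$ is needed.
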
 

The proof will use in an essential way that $\Delta$ is assumed to be injective!

\begin{proof}It is immediate that $\alpha'$ is a well-defined coaction satisfying the Podle\'{s} condition. To prove injectivity, write $\rho: C_0(\X)\rightarrow C_0(\X)/\Ker(\alpha)$ for the canonical projection map, so that by definition \[\alpha'(\rho(a)) = (\rho\otimes \id_{\G})\alpha(a).\] Assume that $\alpha'(\rho(a))=(\rho\otimes \id_{\G})\alpha(a)=0$. Then for any $\omega \in C(\G)^*$, we have $\rho((\id_{\X}\otimes \omega)\alpha(a))=0$, hence \[ 0 =  \alpha((\id_{\X}\otimes \omega)\alpha(a))\\ = (\id_{\X}\otimes \id_{\G}\otimes \omega)((\id_{\X}\otimes \Delta)\alpha(a)).\] Since $\omega$ was arbitrary, and since $\Delta$ is injective by assumption, we have $\alpha(a)=0$, and hence $\rho(a)=0$.
\end{proof} 

It is easy to see that the natural map $\mathscr{O}_{\G}(\X) \rightarrow C_0(\X')$ is injective. The following proposition shows that we in fact have an isomorphism $\mathscr{O}_{\G}(\X) \cong \mathscr{O}_{\G}(\X')$.

\begin{Prop}  With $C_0(\X') = C_0(\X)/\Ker(\alpha)$, one has $\mathscr{O}_{\G}(\X) = \mathscr{O}_{\G}(\X')$. 
\end{Prop}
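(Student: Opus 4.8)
The plan is to show the two algebraic cores coincide by identifying $\mathscr{O}_{\G}(\X)$ with its image in $C_0(\X')$ and checking that this image is exactly $\mathscr{O}_{\G}(\X')$. Recall the quotient map $\rho: C_0(\X)\to C_0(\X')$ intertwines the coactions in the sense $\alpha'\circ\rho = (\rho\otimes\id_{\G})\circ\alpha$. The key structural tool is the projection $E_{\pi}$ from the proof of Lemma \ref{LemClosSpec}: for each irreducible $\pi$ we have $E_{\pi}(a) = (\id_{\X}\otimes\varphi_{\G})(\alpha(a)(1_{\X}\otimes\chi_{\pi}^*))$, and $C_0(\X)_{\pi}$ is precisely the fixed-point set of $E_{\pi}$. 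Since $E_{\pi}$ is defined purely through $\alpha$, $\varphi_{\G}$ and multiplication by a fixed element of $\mathscr{O}(\G)$, it is compatible with $\rho$: one checks directly that $\rho\circ E_{\pi} = E_{\pi}'\circ\rho$, where $E_{\pi}'$ is the analogous operator built from $\alpha'$. This single naturality identity is what drives everything.

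First I would establish injectivity of $\rho$ on $\mathscr{O}_{\G}(\X)$ (which the text already asserts is easy): if $a\in\mathscr{O}_{\G}(\X)$ and $\rho(a)=0$, then $\alpha(a) = 0$ would follow if we can recover $\alpha$ from $\alpha'$, but more directly, for $a\in C_0(\X)_{\pi}$ we have $a = E_{\pi}(a)$, and applying $\rho$ gives $\rho(a) = E_{\pi}'(\rho(a)) = E_{\pi}'(0) = 0$ only tells us $\rho(a)=0$; to get $a=0$ I instead use that $\alpha(a) = (T\otimes\id_{\G})\delta_{\pi}(\xi)$ for $a = T\xi$, and $\rho(a)=0$ forces $(\rho\otimes\id)\alpha(a)=0$, hence $(T'\otimes\id)\delta_{\pi}(\xi) = 0$ where $T' = \rho\circ T$; applying the counit recovers $T'\xi = \rho(a) = 0$, so this is circular and I should instead note that $\rho$ injective on each $C_0(\X)_{\pi}$ follows because $\Ker(\alpha)$ meets $C_0(\X)_{\pi}$ trivially: if $a\in C_0(\X)_{\pi}$ with $\alpha(a)=0$ then applying $(\id_{\X}\otimes\varepsilon)$ gives $a=0$. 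Summing over the orthogonal isotypical components (orthogonal for $E_{\X/\G}$) then yields injectivity on all of $\mathscr{O}_{\G}(\X)$.

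Next I would prove the two inclusions. For $\rho(\mathscr{O}_{\G}(\X))\subseteq\mathscr{O}_{\G}(\X')$: if $a = T\xi$ with $T\in\Mor(\pi,\alpha)$, then $\rho\circ T\in\Mor(\pi,\alpha')$ by the intertwining relation, so $\rho(a) = (\rho\circ T)\xi\in C_0(\X')_{\pi}$. For the reverse inclusion $\mathscr{O}_{\G}(\X')\subseteq\rho(\mathscr{O}_{\G}(\X))$: take $b\in C_0(\X')_{\pi}$, so $b = E_{\pi}'(b)$. Choose any preimage $c\in C_0(\X)$ with $\rho(c)=b$, and set $a = E_{\pi}(c)\in C_0(\X)_{\pi}\subseteq\mathscr{O}_{\G}(\X)$. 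The naturality identity gives $\rho(a) = \rho(E_{\pi}(c)) = E_{\pi}'(\rho(c)) = E_{\pi}'(b) = b$, so $b\in\rho(\mathscr{O}_{\G}(\X))$.

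The main obstacle is cleanly verifying the naturality identity $\rho\circ E_{\pi} = E_{\pi}'\circ\rho$, which requires that $\rho$ commute with the slice $(\id_{\X}\otimes\varphi_{\G})$ and with right multiplication by $1_{\X}\otimes\chi_{\pi}^*$; the first holds because slicing by a functional on the second leg commutes with any $^*$-homomorphism applied to the first leg, and the second because $\chi_{\pi}\in\mathscr{O}(\G)$ is untouched by $\rho$. Combining injectivity with the two inclusions shows $\rho$ restricts to a bijection $\mathscr{O}_{\G}(\X)\xrightarrow{\sim}\mathscr{O}_{\G}(\X')$, which under the identification of $\mathscr{O}_{\G}(\X)$ with its image (made legitimate precisely by injectivity) gives the claimed equality $\mathscr{O}_{\G}(\X) = \mathscr{O}_{\G}(\X')$.
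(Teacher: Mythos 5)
Your proof is correct and takes essentially the same route as the paper: the forward inclusion via $\rho\circ T\in\Mor(\pi,\alpha')$, and surjectivity by lifting $b\in C_0(\X')_{\pi}$ to a preimage $c$ and replacing it by $E_{\pi}(c)$, using the naturality $\rho\circ E_{\pi}=E_{\pi}'\circ\rho$, which the paper compresses into the phrase ``by equivariance of $\rho$''. The only remark is that your injectivity paragraph contains a self-acknowledged circular false start before landing on the correct argument (that $\Ker(\alpha)$ meets the algebraic core trivially, by applying the counit); the final version is fine, and indeed the counit argument works on all of $\mathscr{O}_{\G}(\X)$ at once, so the appeal to orthogonality of isotypical components is not needed.
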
 
\begin{proof} The natural injection $\mathscr{O}_{\G}(\X) \rightarrow C_0(\X')$ clearly has range in $\mathscr{O}_{\G}(\X')$, as the $^*$-homomorphism $\rho:C_0(\X) \rightarrow C_0(\X')$ intertwines the coactions. If however $b\in C_0(\X')_{\pi}$ for an irreducible representation $\pi$, pick $a\in C_0(\X)$ with $\rho(a) =b$. Then using the map $E_{\pi}$ of \eqref{DefEpi}, we have $\rho(E_{\pi}(a)) =b$ by equivariance of $\rho$. It follows that we may assume $a\in C_0(\X)_{\pi} \subseteq \mathscr{O}_{\G}(\X)$. Hence the map $\mathscr{O}_{\G}(\X) \rightarrow \mathscr{O}_{\G}(\X')$ is also surjective.  
\end{proof}

\section{Universal and reduced actions}

In this section, we show that to any action of a compact quantum group can be associated canonically an action of its universal completion and its reduced companion. The results in this section are taken from \cite{Li09}. 

\begin{Prop}\label{PropUniLi} Let $\mathbb{X}\overset{\alpha}{\curvearrowleft} \mathbb{G}$. Then the Podle\'{s} $^*$-algebra $\mathscr{O}_{\mathbb{G}}(\mathbb{X})$ admits a universal C$^*$-algebra $C_0(\mathbb{X}_u)$.
\end{Prop}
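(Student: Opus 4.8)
The plan is to verify the defining condition directly: for each $a\in\mathscr{O}_{\G}(\X)$ I must produce a constant $C_a\geq 0$ bounding $\|\theta(a)\|$ uniformly over all $^*$-representations $\theta$ of $\mathscr{O}_{\G}(\X)$ on Hilbert spaces $H$. Since $\mathscr{O}_{\G}(\X)$ is the algebraic direct sum of the irreducible isotypical components $C_0(\X)_{\pi}$, and each element of $C_0(\X)_{\pi}$ is by definition a finite linear combination of vectors $T\xi$ with $T\in\Mor(\pi,\alpha)$ and $\xi\in\Hsp_{\pi}$, the triangle inequality reduces the problem to finding a uniform bound for a single element of the form $a=T\xi$ with $\pi$ irreducible.

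The key structural observation is that the trivial isotypical component is the whole fixed point algebra. Indeed $\chi_{\pi_{\epsilon}}=1_{\G}$, so $E_{\pi_{\epsilon}}=E_{\X/\G}$, and hence by Lemma~\ref{LemClosSpec} one has $C_0(\X)_{\pi_{\epsilon}}=\{a\mid E_{\X/\G}(a)=a\}=C_0(\Y)$. In particular the C$^*$-algebra $C_0(\Y)$ sits inside $\mathscr{O}_{\G}(\X)$, and any $^*$-representation $\theta$ restricts there to a $^*$-homomorphism of the C$^*$-algebra $C_0(\Y)$ into $B(H)$, hence is automatically contractive on $C_0(\Y)$. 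The strategy is therefore to dominate $aa^*$ by an element of $C_0(\Y)$ and to let the free boundedness on $C_0(\Y)$ finish the argument.

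Fix an orthonormal basis $\{e_i\}$ of $\Hsp_{\pi}$, write $w_i=Te_i\in C_0(\X)_{\pi}$ and $\xi=\sum_i\xi_ie_i$, so that $a=\sum_i\xi_iw_i$, and set $P=\sum_iw_iw_i^*$. First I would check, using $\alpha(w_i)=\sum_jw_j\otimes U_{\pi}(e_j,e_i)$ together with the orthogonality relation $\varphi_{\G}(U_{\pi}(e_j,e_i)U_{\pi}(e_k,e_i)^*)=\delta_{jk}(Q_{\pi})_{ii}/\Tr(Q_{\pi})$ and $\sum_i(Q_{\pi})_{ii}=\Tr(Q_{\pi})$, that $E_{\X/\G}(P)=P$; thus $P\in C_0(\Y)\subseteq\mathscr{O}_{\G}(\X)$. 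Second, the scalar matrix $\|\xi\|^2 I-(\xi_i\overline{\xi_j})_{ij}$ is positive, say $=\sum_m(\mu^{(m)}_i\overline{\mu^{(m)}_j})_{ij}$, which yields the algebraic identity
\[\|\xi\|^2P-aa^*=\sum_{i,j}\big(\|\xi\|^2\delta_{ij}-\xi_i\overline{\xi_j}\big)w_iw_j^*=\sum_mb_mb_m^*,\qquad b_m=\sum_i\mu^{(m)}_iw_i\in\mathscr{O}_{\G}(\X).\]
Applying $\theta$ gives $\theta(a)\theta(a)^*\leq\|\xi\|^2\theta(P)$, whence $\|\theta(a)\|^2\leq\|\xi\|^2\|\theta(P)\|\leq\|\xi\|^2\|P\|$ by contractivity of $\theta$ on $C_0(\Y)$. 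This produces the bound $C_a=\|\xi\|\,\|P\|^{1/2}$ for $a=T\xi$, independent of $\theta$, and summing over a fixed decomposition handles general $a$.

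The main obstacle is the correct identification of the dominating element and the proof that it is $\G$-invariant: everything hinges on the computation $E_{\X/\G}(P)=P$, which is precisely where the orthogonality relations of Theorem~\ref{TheoWorChar} enter, and on the observation that the fixed point algebra $C_0(\Y)$ is literally contained in the algebraic core, forcing representations to be bounded there for free. Once these two points are in place, the C$^*$-algebraic Cauchy--Schwarz inequality does the rest.
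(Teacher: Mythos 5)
Your proof is correct and follows essentially the same route as the paper: your $P=\sum_i w_iw_i^*$ is exactly the paper's invariant element $x_T=\sum_i T(e_i)T(e_i)^*$, and both arguments rest on the observation that this element lies in the fixed-point C$^*$-algebra $C_0(\Y)\subseteq\mathscr{O}_{\G}(\X)$, on which every $^*$-representation is automatically contractive. The only cosmetic differences are that the paper verifies invariance by computing $\alpha(x_T)=x_T\otimes 1_{\G}$ directly from unitarity of $U_{\pi}$ (no Haar state or orthogonality relations needed), and obtains the final bound entrywise via $\|\lambda(T(e_i))\|^2\leq\|x_T\|$ and the triangle inequality rather than through your matrix-positivity refinement.
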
 

\begin{proof} Choose an irreducible representation $\pi$ and a morphism $T\in \Mor(\pi,\alpha)$. Then for $\{e_i\}$ an orthonormal basis of $\Hsp_{\pi}$, we have \[\delta_{\pi}(e_i) = \sum_{j} e_j \otimes U_{\pi}(e_j,e_i),\] and so, with $x_T = \sum_iT(e_i)T(e_i)^*$, we have \begin{eqnarray*} \alpha(x_T) &=& \sum_{i,j,k} T(e_j)T(e_k)^*\otimes U_{\pi}(e_j,e_i)U_{\pi}(e_k,e_i)^* \\&=& \sum_j T(e_j)T(e_j)^* \otimes 1_{\G} \\ &=& x_T\otimes 1_{\G}. \end{eqnarray*} It follows that
\[x_{T} \in C_0(\mathbb{X}/\mathbb{G}),\] and hence \[\|\lambda(T(e_i))\|^2\leq \|x_T\|, \quad \textrm{for all } ^*\textrm{-representations }\lambda\textrm{ of }\mathscr{O}_{\G}(\X).\]
Since $\mathscr{O}_{\mathbb{G}}(\mathbb{X})$ is the linear span of $\{T\xi\mid \pi,T\in \Mor(\pi,\alpha), \xi \in \Hsp_{\pi}\}$, we obtain \[\forall a\in\mathscr{O}_{\mathbb{G}}(\X),\quad \|a\|_u = \sup\{\|\lambda(a)\|\mid \lambda \;^*\textrm{-representation of }\mathscr{O}_{\G}(\X)\} < \infty.\]
\end{proof} 

\begin{Theorem} Let $\mathbb{X}\overset{\alpha}{\curvearrowleft}\mathbb{G}$. Then $\alpha_{\alg}$ extends to an action \[\alpha_u: C_0(\mathbb{X}_u) \rightarrow C_0(\mathbb{X}_u)\otimes C(\mathbb{G}_u)\] with $\alpha_u$ injective. Moreover, we have
\begin{itemize}
\item $C_0(\Y_u) = C_0(\Y)$, where $\Y_u = \X_u/\G_u$ and $\Y= \X/\G$,
\item $\mathscr{O}_{\G_u}(\X_u) = \mathscr{O}_{\G}(\X)$.
\end{itemize}
\end{Theorem}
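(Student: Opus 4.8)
The plan is to construct $\alpha_u$ via the universal property and then verify the two claimed identities by tracking how the isotypical decomposition behaves under completion. First I would recall that by Proposition~\ref{PropUniLi} the algebraic core $\mathscr{O}_{\G}(\X)$ admits a universal C$^*$-envelope $C_0(\X_u)$, and that $\mathscr{O}(\G) = \mathscr{O}(\G_u)$ admits the universal completion $C(\G_u)$. The algebraic coaction $\alpha_{\alg}: \mathscr{O}_{\G}(\X) \to \mathscr{O}_{\G}(\X) \aotimes \mathscr{O}(\G)$ composed with the inclusions into $C_0(\X_u) \otimes C(\G_u)$ is a $^*$-homomorphism, so by the universal property of $C_0(\X_u)$ it extends to $\alpha_u: C_0(\X_u) \to C_0(\X_u)\otimes C(\G_u)$. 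The coaction property holds by continuity from the algebraic level, and the Podle\'{s} condition follows exactly as in Lemma~\ref{LemAlgtoAn}, using the computation $\alpha(a_{(0)})(1_{\X}\otimes S(a_{(1)})) = a\otimes 1_{\G}$ valid on the Hopf $^*$-algebraic core.

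For injectivity of $\alpha_u$, I would exploit the conditional-expectation machinery. The point is that the $\pi$-isotypical projection $E_{\pi}$ from \eqref{DefEpi} is defined purely in terms of $\alpha$ and the fixed element $\chi_{\pi}\in\mathscr{O}(\G)$; applying the counit (or more carefully, a slice by a functional detecting $\chi_{\pi}$) to $\alpha_u(a)$ recovers enough of $a$ to conclude that if $\alpha_u(a)=0$ then all isotypical components of $a$ vanish. Concretely, for $a$ in the dense core $\mathscr{O}_{\G}(\X)$ one has $(\id\otimes\varepsilon)\alpha_u(a)=a$ by counitality of $\alpha_{\alg}$, so $\alpha_u(a)=0$ forces $a=0$ on the core; combined with the fact that $\alpha_u$ is a bounded $^*$-homomorphism whose kernel is a closed ideal, one argues the kernel meets the dense core trivially and hence is zero. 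Here the injectivity of $\Delta$ on $C(\G_u)$ plays the same role it did in the earlier descent lemma.

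To prove $\mathscr{O}_{\G_u}(\X_u) = \mathscr{O}_{\G}(\X)$ I would show that the isotypical components are unchanged: since $\mathscr{O}_{\G}(\X)$ is dense in $C_0(\X_u)$ by construction and the projections $E_{\pi}$ extend continuously to $C_0(\X_u)$ (they are given by slicing $\alpha_u$ against $\chi_{\pi}^*$), the image $E_{\pi}(C_0(\X_u))$ equals the closure of $E_{\pi}(\mathscr{O}_{\G}(\X)) = C_0(\X)_{\pi}$, which is finite-rank over $C_0(\Y)$ and already closed by Lemma~\ref{LemClosSpec}. Thus each isotypical component of $\alpha_u$ coincides with $C_0(\X)_{\pi}$, and taking the linear span over all irreducible $\pi$ gives $\mathscr{O}_{\G_u}(\X_u) = \mathscr{O}_{\G}(\X)$ as claimed. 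The identity $C_0(\Y_u)=C_0(\Y)$ is then the $\pi$ trivial case of this statement, since $C_0(\Y)=C_0(\X)_{\pi_\varepsilon}$ is the fixed-point algebra and the fixed points of $\alpha_u$ are computed by the same slice $E_{\X/\G}=(\id\otimes\varphi_{\G})\alpha_u$.

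The main obstacle I anticipate is the injectivity argument and the precise claim $C_0(\Y_u)=C_0(\Y)$: one must be careful that the fixed-point algebra does not grow when passing to the universal completion, i.e.\ that no new $\G_u$-invariant elements appear in $C_0(\X_u)$ beyond those already in $C_0(\X)$. The clean way to control this is to observe that $E_{\X/\G}$ restricts to a faithful conditional expectation on the core (the Lemma preceding this section), that it extends to $C_0(\X_u)$, and that its range is generated by the invariant elements $x_T$ used in the proof of Proposition~\ref{PropUniLi}; these $x_T$ already lie in $C_0(\X/\G)\subseteq C_0(\X)$, and since the universal norm on them agrees with their original C$^*$-norm (they live in the genuine C$^*$-algebra $C_0(\X)$), the completion does not enlarge $C_0(\Y)$. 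Making the norm-comparison rigorous—that $\|\cdot\|_u$ restricted to $C_0(\Y)$ is the original norm—is the subtle point, and I would handle it by noting that the quotient action $\X'$ embeds $C_0(\Y)$ faithfully and furnishes a $^*$-representation of the core realizing the original norm on invariants.
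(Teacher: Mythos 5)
Your construction of $\alpha_u$ follows the paper (via Lemma \ref{LemAlgtoAn} and universality), but the two verification steps both contain genuine gaps. First, the injectivity argument fails as written: from $(\id_{\X}\otimes\varepsilon)\alpha_u(a)=a$ on the core you conclude that $\Ker(\alpha_u)$ is a closed ideal meeting the dense core trivially ``and hence is zero''. That implication is false in general, and the paper's own Example \ref{ExSolNI} is a counterexample: for $\Gamma$ non-amenable, the coaction $C^*_u(\Gamma)\rightarrow C^*_u(\Gamma)\otimes C^*_{\red}(\Gamma)$, $\lambda_g\mapsto\lambda_g\otimes\lambda_g$, has nonzero kernel even though that kernel intersects the dense core $\C\lbrack\Gamma\rbrack$ trivially. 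What rescues the universal case --- and what the paper actually uses --- is that the counit, being a $^*$-character of $\mathscr{O}(\G)$, extends to a \emph{bounded} character on the universal completion $C(\G_u)$; hence the identity $(\id_{\X}\otimes\varepsilon)\alpha_u=\id$ extends by continuity from the core to all of $C_0(\X_u)$, and injectivity holds everywhere at once. The injectivity of $\Delta$, which you invoke, plays no role here; it is the boundedness of $\varepsilon$ on $C(\G_u)$ that is decisive, and its failure on other completions is exactly why Example \ref{ExSolNI} exists.

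Second, in the spectral-subspace argument you identify $E_{\pi}(C_0(\X_u))$ with the closure of $C_0(\X)_{\pi}$ and declare the latter ``already closed by Lemma \ref{LemClosSpec}''. But that lemma gives closedness of $C_0(\X)_{\pi}$ in $C_0(\X)$, i.e.\ in the original norm, whereas the closure you need to control is taken in the universal norm of $C_0(\X_u)$, which a priori dominates (and could strictly dominate) the original norm on $C_0(\X)_{\pi}$; this norm comparison is precisely the content to be proved, not an input. (Your side claim that $C_0(\X)_{\pi}$ is ``finite-rank over $C_0(\Y)$'' is also unjustified for general, non-homogeneous actions.) Your final paragraph does sense this problem, but only for the invariant part, where one can use that any $^*$-representation of the core is automatically contractive on the C$^*$-subalgebra $C_0(\Y)$; for nontrivial $\pi$ the isotypical component is not a C$^*$-algebra, so that trick is unavailable. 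The paper closes the gap differently: having first established $C_0(\Y_u)=C_0(\Y)$ (on which the quotient map $\pi_u:C_0(\X_u)\rightarrow C_0(\X)$ is the identity), it takes $a\in C_0(\X_u)_{\pi}$ with $\pi_u(a)=0$, notes that $a^*a\in\mathscr{O}_{\G_u}(\X_u)$ and $\pi_u(E_{\Y_u}(a^*a))=0$, hence $E_{\Y_u}(a^*a)=0$, and then invokes the faithfulness of $E_{\Y_u}$ on the core to conclude $a=0$; this injectivity of $\pi_u$ on each $C_0(\X_u)_{\pi}$, combined with $\pi_u(C_0(\X_u)_{\pi})=C_0(\X)_{\pi}$ and $C_0(\X)_{\pi}\subseteq C_0(\X_u)_{\pi}$, yields $\mathscr{O}_{\G_u}(\X_u)=\mathscr{O}_{\G}(\X)$. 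You should rework both steps along these lines.
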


\begin{proof}We have already proven that the action $\X_u\overset{\alpha_u}{\curvearrowleft}\G_u$ is a well-defined action of $\G_u$ in Lemma \ref{LemAlgtoAn}.

Note that the counit on $\mathscr{O}(\G)$ extends to $C_0(\G_u)$. As $\alpha_{\alg}$ is a Hopf algebra coaction, we obtain \[(\id_{\X}\otimes \varepsilon)\alpha_u(a) = a,\quad \forall a\in C_0(\X_u),\]  and in particular $\alpha_u$ injective. 

Let us now show that $C_0(\X_u)$ has the same spectral subspaces as $C_0(\X)$. Write 
\[\pi_u: C_0(\X_u)\rightarrow C_0(\X),\quad \pi_u: C(\G_u)\rightarrow C(\G).\]  Then by construction, one has \[(\pi_u\otimes \pi_u)\circ \alpha_u = \alpha\circ \pi_u.\]
From this it follows immediately that for all irreducible representations $\pi$ of $\G$, one has \[\pi_u(C_0(\X_u)_{\pi})= C_0(\X)_{\pi}.\] 

What remains to show is that $\pi_u$ is injective on each $C_0(\X_u)_{\pi}$. Now if $a_n \in \mathscr{O}_{\G}(\X)$ and \[a_n\underset{n\rightarrow \infty}{\rightarrow} b\in C_0(\Y_u),\] we get that \[b_n := E_{\Y}(a_n) = (\id_{\X}\otimes \varphi_{\G})\alpha(a_n)\underset{n\rightarrow \infty}{\rightarrow} (\id_{\X_u}\otimes \varphi_{\G_u})\alpha_u(b) =b.\]  As $C_0(\Y)$ is a C$^*$-algebra and $b_n \in C_0(\Y)$, we deduce $b\in C_0(\Y)$. This already shows $C_0(\Y_u) = C_0(\Y)$. 

Assume now that $a\in C_0(\X_u)_{\pi}$ with $\pi_u(a) =0$. Then \[0=\alpha(\pi_u(a^*a)) = (\pi_u\otimes \pi_u)\alpha_u(a^*a) \in \mathscr{O}_{\G}(\X)\underset{\mathrm{alg}}{\otimes} \mathscr{O}(\G).\] 

Applying $(\id_{\X}\otimes \varphi_{\G})$, we see that \[\pi_u \big{(}E_{\Y_u}(a^*a)\big{)}= 0\quad \Rightarrow \quad E_{\Y_u}(a^*a)=0.\]
But as $E_{\Y_u}$ is faithful on $\mathscr{O}_{\G_u}(\X_u)$, we obtain $a=0$.
\end{proof}

We now turn to the construction of the \emph{reduced} C$^*$-algebra associated to an action.

\begin{Def}\label{DefRightYMod} For $C_0(\Y) \subseteq C_0(\X)$ a non-degenerate C$^*$-subalgebra and \[E_{\mathbb{Y}}: C_0(\X)\rightarrow C_0(\Y)\] a faithful conditional expectation, we endow $C_0(\X)$ with the pre-Hilbert $C_0(\mathbb{Y})$-module structure \[\langle a,b\rangle_{\mathbb{Y}} = E_{\mathbb{Y}}(a^*b).\] We denote $L^2_{\Y}(\X)$ for the Hilbert $C_0(\Y)$-module obtained as the completion of $C_0(\X)$ with respect to the norm \[\|a\|_{\Y} = \sqrt{\|\langle a,a\rangle_{\Y}\|}.\]  
\end{Def} 

\begin{Lem}\label{LemIntroImplUn} Let $\mathbb{X}\overset{\alpha}{\curvearrowleft} \mathbb{G}$ with $\mathbb{Y} = \mathbb{X}/\mathbb{G}$. 
Then \begin{equation}\label{EqDefU}  \mathscr{O}_{\mathbb{G}}(\mathbb{X}) \underset{\alg}{\otimes} \mathscr{O}(\mathbb{G})\rightarrow  \mathscr{O}_{\G}(\mathbb{X})   \underset{\alg}{\otimes} \mathscr{O}(\mathbb{G}), \quad a\otimes g\mapsto \alpha(a)(1_{\X}\otimes g)\end{equation} completes to a unitary map \[U_{\alpha}: L^2_{\mathbb{Y}}(\mathbb{X}) \otimes L^2(\mathbb{G}) \rightarrow  L^2_{\mathbb{Y}}(\mathbb{X})\otimes L^2(\mathbb{G}).\]
\end{Lem}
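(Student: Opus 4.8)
The plan is to verify that the stated algebraic map preserves the relevant $C_0(\Y)$-valued inner product and is a bijection of a dense submodule onto itself, and then to invoke the standard fact that an inner-product-preserving surjection of Hilbert $C_0(\Y)$-modules extends to a unitary. First I would note that, since $\alpha$ restricts to the Hopf $^*$-algebraic coaction $\alpha_{\alg}$ of the previous proposition, the assignment $a\otimes g\mapsto \alpha(a)(1_{\X}\otimes g)$ does send $\mathscr{O}_{\G}(\X)\underset{\alg}{\otimes}\mathscr{O}(\G)$ into itself; and this algebraic tensor product is dense in $L^2_{\Y}(\X)\otimes L^2(\G)$, because $\mathscr{O}_{\G}(\X)$ is dense in $C_0(\X)$ (Theorem \ref{TheoDensPod}) hence in $L^2_{\Y}(\X)$ (as $\|a\|_{\Y}\leq \|a\|$), while $\mathscr{O}(\G)$ is dense in $L^2(\G)$ by construction. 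Thus it suffices to work entirely at the algebraic level.

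The core computation is the isometry property. On the algebraic tensor product the $C_0(\Y)$-valued inner product is $\langle X,Y\rangle = (E_{\Y}\otimes \varphi_{\G})(X^*Y)$, since $(E_{\Y}\otimes \varphi_{\G})\big((c^*\otimes p^*)(d\otimes q)\big) = E_{\Y}(c^*d)\,\varphi_{\G}(p^*q)$. Using that $\alpha$ is a $^*$-homomorphism, I would compute, for $a,b\in\mathscr{O}_{\G}(\X)$ and $g,h\in\mathscr{O}(\G)$ and writing $c=a^*b$,
\[\langle U_{\alpha}(a\otimes g),U_{\alpha}(b\otimes h)\rangle = (E_{\Y}\otimes \varphi_{\G})\big((1_{\X}\otimes g^*)\alpha(c)(1_{\X}\otimes h)\big).\]
Here I would expand $E_{\Y}\otimes\varphi_{\G} = (\id_{\X}\otimes\varphi_{\G}\otimes\varphi_{\G})(\alpha\otimes\id_{\G})$ and use the coaction property $(\alpha\otimes\id_{\G})\alpha=(\id_{\X}\otimes\Delta)\alpha$; writing $\alpha(c)=c_{(0)}\otimes c_{(1)}$ and $(\id_{\X}\otimes\Delta)\alpha(c)=c_{(0)}\otimes c_{(1)(1)}\otimes c_{(1)(2)}$, this reduces the right-hand side to $c_{(0)}\,\varphi_{\G}(c_{(1)(1)})\,\varphi_{\G}(g^*c_{(1)(2)}h)$. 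The crucial input is then the left invariance of the Haar state, $(\varphi_{\G}\otimes\id_{\G})\Delta = \varphi_{\G}(\cdot)1_{\G}$, which collapses the first Haar integral against $\Delta(c_{(1)})$: one has $\sum\varphi_{\G}(c_{(1)(1)})\,c_{(1)(2)} = \varphi_{\G}(c_{(1)})1_{\G}$, so $\varphi_{\G}(c_{(1)(1)})\varphi_{\G}(g^*c_{(1)(2)}h)=\varphi_{\G}(c_{(1)})\varphi_{\G}(g^*h)$. Hence the inner product equals $c_{(0)}\,\varphi_{\G}(c_{(1)})\,\varphi_{\G}(g^*h) = E_{\Y}(a^*b)\,\varphi_{\G}(g^*h)=\langle a\otimes g,\, b\otimes h\rangle$, proving that the map preserves inner products.

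For surjectivity I would reuse the identity already established in the proof of Lemma \ref{LemAlgtoAn}, namely $\alpha(a_{(0)})(1_{\X}\otimes S(a_{(1)}))=a\otimes 1_{\G}$ for $a\in\mathscr{O}_{\G}(\X)$. Given any generator $a\otimes g$, applying the map to $\sum a_{(0)}\otimes S(a_{(1)})g\in\mathscr{O}_{\G}(\X)\underset{\alg}{\otimes}\mathscr{O}(\G)$ yields
\[\sum\alpha(a_{(0)})(1_{\X}\otimes S(a_{(1)}))(1_{\X}\otimes g)=(a\otimes 1_{\G})(1_{\X}\otimes g)=a\otimes g,\]
so the map is onto $\mathscr{O}_{\G}(\X)\underset{\alg}{\otimes}\mathscr{O}(\G)$. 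An inner-product-preserving map is automatically injective, $C_0(\Y)$-linear and isometric for the module norm, and being surjective onto a dense submodule it extends to a unitary $U_{\alpha}$ of the completions. The only delicate step is the isometry computation, and within it the appeal to left invariance of $\varphi_{\G}$ to eliminate the superfluous comultiplication leg; the rest is bookkeeping with the coaction and Sweedler notation.
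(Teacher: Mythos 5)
Your proof is correct and follows essentially the same route as the paper: the isometry comes from the invariance identity $(E_{\Y}\otimes \id_{\G})\alpha(a) = E_{\Y}(a)\otimes 1_{\G}$ (which the paper states as a preliminary remark and you derive inline from the coaction property and invariance of $\varphi_{\G}$), and the surjectivity uses the same antipode trick $b\otimes h = \alpha(b_{(0)})(1_{\X}\otimes S(b_{(1)})h)$ already exploited in Lemma \ref{LemAlgtoAn}. No gaps.
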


\begin{proof} Note that \[(E_{\mathbb{Y}}\otimes \id_{\G})\alpha(a) = E_{\mathbb{Y}}(a)\otimes 1_{\G},\quad a\in C_0(\mathbb{X}).\] Then the map in the lemma is isometric by the following computation: for $a,b\in \mathscr{O}_{\G}(\X)$ and $g,h\in \mathscr{O}(\G)$, we have \begin{eqnarray*}  \langle \alpha(a)(1_{\X}\otimes g),\alpha(b)(1_{\X}\otimes h)\rangle_{\Y}  &=& (E_{\mathbb{Y}}\otimes \varphi_{\G})((1_{\X}\otimes g^*)\alpha(a^*b)(1_{\X}\otimes h)) \\&=& \varphi_{\G}(g^*h)E_{\mathbb{Y}}(a^*b)   \\ &=& \langle a\otimes g,b\otimes h\rangle_{\Y}.
\end{eqnarray*} 
The surjectivity follows from the surjectivity of the map in \eqref{EqDefU}, which is a consequence of $\alpha_{\alg}$ being a coaction by a Hopf algebra: \[b \otimes h = \varepsilon(b_{(1)})b_{(0)} \otimes h = b_{(0)} \otimes b_{(1)}(S(b_{(2)})h),\quad b\in \mathscr{O}_{\G}(\X),h\in \mathscr{O}(\G).\]
\end{proof} 

\begin{Lem} The  non-degenerate $^*$-homomorphism \[\pi_{\red}:C_0(\mathbb{X})\rightarrow \mathcal{L}(L^2_{\mathbb{Y}}(\mathbb{X}))\] obtained as the closure of the left multiplication map for $C_0(\X)$ satisfies \[(\pi_{\red}\otimes \pi_{\red})\alpha(a) = U_{\alpha}(\pi_{\red}(a)\otimes 1_{L^2(\G)})U_{\alpha}^*,\quad a\in C_0(\X)\] Moreover, $\pi_{\red}$ is injective on $\mathscr{O}_{\G}(\X)$. 
\end{Lem}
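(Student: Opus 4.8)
The plan is to handle the two assertions in turn, starting with the covariance identity. First I would observe that, although $U_\alpha$ was only defined on the algebraic tensor product $\mathscr{O}_{\G}(\X)\underset{\alg}{\otimes}\mathscr{O}(\G)$, its defining formula persists on a larger domain: since $\|\cdot\|_{\Y}\leq\|\cdot\|$ and $\mathscr{O}_{\G}(\X)$ is dense in $C_0(\X)$ (Theorem \ref{TheoDensPod}), continuity gives $U_\alpha(c\otimes g)=\alpha(c)(1_{\X}\otimes g)$ for every $c\in C_0(\X)$ and $g\in\mathscr{O}(\G)$. In particular the vectors $\alpha(b)(1_{\X}\otimes g)=U_\alpha(b\otimes g)$, with $b\in\mathscr{O}_{\G}(\X)$ and $g\in\mathscr{O}(\G)$, span a dense subspace of $L^2_{\Y}(\X)\otimes L^2(\G)$, because $U_\alpha$ is unitary. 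It therefore suffices to check the operator identity on such vectors.

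On these vectors the computation is direct. Applying the right-hand side, $U_\alpha^*$ sends $\alpha(b)(1_{\X}\otimes g)$ back to $b\otimes g$, then $\pi_{\red}(a)\otimes 1$ produces $ab\otimes g$, and finally $U_\alpha$ returns $\alpha(ab)(1_{\X}\otimes g)$. Applying the left-hand side, $(\pi_{\red}\otimes\pi_{\red})\alpha(a)$ acts as left multiplication by $\alpha(a)$ on $L^2_{\Y}(\X)\otimes L^2(\G)$, so it sends $\alpha(b)(1_{\X}\otimes g)$ to $\alpha(a)\alpha(b)(1_{\X}\otimes g)=\alpha(ab)(1_{\X}\otimes g)$, using that $\alpha$ is a homomorphism. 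The two outputs agree, and since both sides are bounded operators coinciding on a dense set, the identity holds for all $a\in C_0(\X)$.

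For the injectivity of $\pi_{\red}$ on $\mathscr{O}_{\G}(\X)$, the idea is to reduce to the already-established faithfulness of $E_{\Y}$ on $\mathscr{O}_{\G}(\X)$. Suppose $a\in\mathscr{O}_{\G}(\X)$ with $\pi_{\red}(a)=0$. Choosing a bounded approximate unit $(u_i)$ for $C_0(\X)$ and viewing the $u_i$ as vectors in $L^2_{\Y}(\X)$, the vector $\pi_{\red}(a)u_i$ is the class of $au_i$, hence is zero for every $i$. On the other hand $au_i\to a$ in the C$^*$-norm, and since $\|\cdot\|_{\Y}\leq\|\cdot\|$ this convergence also holds in $L^2_{\Y}(\X)$; thus the class of $a$ in $L^2_{\Y}(\X)$ vanishes, i.e.\ $\langle a,a\rangle_{\Y}=E_{\Y}(a^*a)=0$. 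Faithfulness of $E_{\Y}$ on $\mathscr{O}_{\G}(\X)$ then forces $a=0$.

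None of this is deep; the only points requiring care are bookkeeping ones. The first is ensuring the formula for $U_\alpha$ genuinely extends to $c\in C_0(\X)$, so that the dense family of test vectors is available—this rests on the contractive comparison $\|\cdot\|_{\Y}\leq\|\cdot\|$ together with density of $\mathscr{O}_{\G}(\X)$. The second, and the one I expect to be the real (if mild) obstacle, is keeping straight the identification between elements of $C_0(\X)$, their classes as vectors in the Hilbert module $L^2_{\Y}(\X)$, and the operators $\pi_{\red}(\cdot)$, so that statements like ``$\pi_{\red}(a)u_i$ is the class of $au_i$'' are unambiguous; once that dictionary is fixed, both arguments are routine.
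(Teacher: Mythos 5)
Your proposal is correct and follows essentially the same route as the paper's proof: you verify the intertwining identity $U_{\alpha}(\pi_{\red}(a)\otimes 1) = (\pi_{\red}\otimes\pi_{\red})(\alpha(a))U_{\alpha}$ on the dense span coming from $\mathscr{O}_{\G}(\X)\underset{\alg}{\otimes}\mathscr{O}(\G)$, with the only cosmetic difference being where the continuity argument is placed (you extend the defining formula of $U_{\alpha}$ to $c\in C_0(\X)$ and then treat all $a\in C_0(\X)$ at once, whereas the paper checks the relation for $a$ in the algebraic core and extends in $a$). Your injectivity argument via an approximate unit is exactly the ``straightforward'' deduction from the faithfulness of $E_{\Y}$ on $\mathscr{O}_{\G}(\X)$ that the paper invokes.
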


\begin{proof}  As $E_{\Y}$ is positive, it follows that \[E_{\Y}(y^*x^*xy)\leq \|x\|^2 E_{\Y}(y^*y),\quad \forall x,y\in C_0(\X).\] This implies that $\pi_{\red}$ is well-defined on $C_0(\X)$.

To see that $U_{\alpha}$ implements $\alpha$, we check that, for $a\in \mathscr{O}_{\G}(\X)$,  \[U_{\alpha}(\pi_{\red}(a)\otimes 1_{\G}) = (\pi_{\red}\otimes \pi_{\red})(\alpha_{\alg}(a))U_{\alpha}\quad \textrm{on }\mathscr{O}_{\G}(\X)\aotimes \mathscr{O}(\G).\]

Finally, as $E_{\Y}$ is faithful on $\mathscr{O}_{\G}(\mathbb{X})$, it follows straightforwardly that $\pi_{\red}$ is injective on $\mathscr{O}_{\G}(\X)$.
\end{proof}

\begin{Theorem} Let $\X\overset{\alpha}{\curvearrowleft}\G$ and write \[C_0(\mathbb{X}_{\red}) = \pi_{\red}(C_0(\mathbb{X})).\] Then \[\alpha_{\red}: C_0(\mathbb{X}_{\red})\rightarrow  C_0(\mathbb{X}_{\red})\otimes  C(\mathbb{G}_{\red})\subseteq \mathcal{L}(L^2_{\mathbb{Y}}(\mathbb{X})\otimes L^2(\mathbb{G})),\]\[ a \mapsto U_{\alpha}(\pi_{\red}(a)\otimes 1)U_{\alpha}^*\] defines an injective right coaction $\mathbb{X}_{\red}\overset{\alpha_{\red}}{\curvearrowleft}\mathbb{G}_{\red}$.

Moreover,
\begin{itemize}
\item $C_0(\Y_{\red}) = C_0(\Y)$,
\item  $\mathscr{O}_{\G_{\red}}(\mathbb{X}_{\red}) = \mathscr{O}_{\mathbb{G}}(\mathbb{X})$.
\end{itemize}
\end{Theorem}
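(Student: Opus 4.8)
The plan is to transport every defining property of $\alpha$ across the surjective, equivariant $^*$-homomorphism $\pi_{\red}$, leaning on the single structural input supplied by the preceding lemma: the intertwining relation $(\pi_{\red}\otimes\pi_{\red})\alpha=\alpha_{\red}\circ\pi_{\red}$, where $\alpha_{\red}(x)=U_{\alpha}(x\otimes 1)U_{\alpha}^{*}$. Indeed, most of the genuine analytic work has already been front-loaded into the construction of the unitary $U_{\alpha}$ and of $\pi_{\red}\colon C_0(\X)\to\mathcal{L}(L^2_{\Y}(\X))$, so what remains is largely bookkeeping through this relation.

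First I would settle that $\alpha_{\red}$ is a $^*$-homomorphism with the advertised target. As conjugation by the unitary $U_{\alpha}$, the map $x\mapsto U_{\alpha}(x\otimes 1)U_{\alpha}^{*}$ is a $^*$-homomorphism on all of $\mathcal{L}(L^2_{\Y}(\X))$; restricting to $C_0(\X_{\red})$, the intertwining relation gives $\alpha_{\red}(\pi_{\red}(a))=(\pi_{\red}\otimes\pi_{\red})\alpha(a)\in C_0(\X_{\red})\otimes C(\G_{\red})$ for $a\in C_0(\X)$, and since $\pi_{\red}(C_0(\X))=C_0(\X_{\red})$ and the target algebra is closed, $\alpha_{\red}$ maps $C_0(\X_{\red})$ into $C_0(\X_{\red})\otimes C(\G_{\red})$. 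Injectivity is immediate, as $U_{\alpha}(x\otimes 1)U_{\alpha}^{*}=0$ forces $x\otimes 1_{L^2(\G)}=0$, hence $x=0$. The coaction identity is checked on the dense set $\pi_{\red}(C_0(\X))$ by composing the intertwining relation with the coaction property of $\alpha$ and the fact that $\pi_{\red}\colon C(\G)\to C(\G_{\red})$ is $\Delta$-preserving. For the Podle\'{s} condition I would apply the surjective $^*$-homomorphism $\pi_{\red}\otimes\pi_{\red}$ to $[\alpha(C_0(\X))(1_{\X}\otimes C(\G))]=C_0(\X)\otimes C(\G)$; using $(\pi_{\red}\otimes\pi_{\red})(\alpha(a)(1\otimes g))=\alpha_{\red}(\pi_{\red}(a))(1\otimes\pi_{\red}(g))$ and continuity, the image $C_0(\X_{\red})\otimes C(\G_{\red})$ lands inside $[\alpha_{\red}(C_0(\X_{\red}))(1_{\X_{\red}}\otimes C(\G_{\red}))]$, whence equality.

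For the two ``Moreover'' clauses the crucial auxiliary fact is that the reduced Haar state pulls back along $\pi_{\red}$ to $\varphi_{\G}$, i.e. $\varphi_{\G_{\red}}\circ\pi_{\red}=\varphi_{\G}$. Together with equivariance this yields $E_{\Y_{\red}}\circ\pi_{\red}=\pi_{\red}\circ E_{\Y}$, where $E_{\Y_{\red}}=(\id_{\X_{\red}}\otimes\varphi_{\G_{\red}})\alpha_{\red}$. Taking ranges and using surjectivity of $\pi_{\red}$ and continuity of the expectations gives $C_0(\Y_{\red})=E_{\Y_{\red}}(C_0(\X_{\red}))=\pi_{\red}(E_{\Y}(C_0(\X)))=\pi_{\red}(C_0(\Y))$. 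That $\pi_{\red}$ is isometric on $C_0(\Y)$ follows by evaluating $\pi_{\red}(b^{*}b)$ on an approximate unit of $C_0(\X)$ inside $L^2_{\Y}(\X)$ and using that $E_{\Y}$ restricts to the identity on $C_0(\Y)$, giving $\|\pi_{\red}(b)\|\geq\|b\|$; hence $\pi_{\red}$ restricts to an isomorphism $C_0(\Y)\cong C_0(\Y_{\red})$. For the spectral subspaces, equivariance gives $\pi_{\red}(C_0(\X)_{\pi})\subseteq C_0(\X_{\red})_{\pi}$, while the same pull-back of $\varphi_{\G}$ yields $\pi_{\red}\circ E_{\pi}=E_{\pi}^{\red}\circ\pi_{\red}$ for the spectral projections $E_{\pi}$ of \eqref{DefEpi} and their reduced analogues; lifting any $b\in C_0(\X_{\red})_{\pi}$ through $\pi_{\red}$ and applying $E_{\pi}$ produces a preimage in $C_0(\X)_{\pi}$, so $\pi_{\red}(C_0(\X)_{\pi})=C_0(\X_{\red})_{\pi}$. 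Summing over irreducibles and invoking the injectivity of $\pi_{\red}$ on $\mathscr{O}_{\G}(\X)$ established in the previous lemma gives $\mathscr{O}_{\G_{\red}}(\X_{\red})=\mathscr{O}_{\G}(\X)$.

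I expect the main obstacle to be the orbit-space identification. Unlike the universal case, where the comparison map descends \emph{from} the completion onto $C_0(\X)$, here $C_0(\X_{\red})$ is a \emph{quotient}, so one cannot simply restrict an inclusion and must instead realise $C_0(\Y_{\red})$ as the range of the reduced conditional expectation and match it with $\pi_{\red}(C_0(\Y))$. The one genuinely delicate point throughout is keeping $\pi_{\red}$ injective on $C_0(\Y)$ and on $\mathscr{O}_{\G}(\X)$, which rests entirely on the faithfulness of $E_{\Y}$ together with the identity $\varphi_{\G_{\red}}\circ\pi_{\red}=\varphi_{\G}$; the remaining verifications are a mechanical transport of the axioms through the equivariant surjection.
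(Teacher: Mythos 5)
Your proposal is correct and follows essentially the same route as the paper: both rest on the unitary implementation $U_{\alpha}$ and the intertwining relation $(\pi_{\red}\otimes\pi_{\red})\circ\alpha=\alpha_{\red}\circ\pi_{\red}$ to get the coaction, injectivity and Podle\'{s} condition, and both prove the ``Moreover'' clauses by lifting elements of $C_0(\X_{\red})_{\pi}$ through $\pi_{\red}$ and averaging with the maps $E_{\pi}$ of \eqref{DefEpi} (the paper's $\chi_{\pi}$-trick), combined with the faithfulness of $\pi_{\red}$ on $\mathscr{O}_{\G}(\X)$. The only cosmetic difference is that you treat $C_0(\Y_{\red})=C_0(\Y)$ by a separate conditional-expectation and approximate-unit argument, whereas the paper obtains it as the trivial-representation case of the spectral-subspace equality $C_0(\X)_{\pi}=C_0(\X_{\red})_{\pi}$ (and your isometry step could be shortened: an injective $^*$-homomorphism of C$^*$-algebras is automatically isometric).
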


\begin{proof} As $U_{\alpha}$ implements the coaction $\alpha_{\alg}$ on $\mathscr{O}_{\G}(\X)$, it follows immediately that $\alpha_{\red}$ is a well-defined coaction satisfying the Podle\'{s} condition. Also its injectivity is immediate from the defining formula.

Write \[\pi_{\red}: C_0(\X)\rightarrow C_0(\X_{\red})\] for the canonical quotient map. Then we have by construction that \[(\pi_{\red}\otimes \pi_{\red})\circ\alpha = \alpha_{\red}\circ \pi_{\red}.\]

As $\pi_{\red}$ is faithful on $\mathscr{O}_{\mathbb{G}}(\X)$, we have $ \mathscr{O}_{\mathbb{G}}(\mathbb{X})\subseteq \mathscr{O}_{\G_{\red}}(\mathbb{X}_{\red})$. On the other hand, if $\pi$ is an irreducible representation and $T\in \Mor(\pi,\alpha_{\red})$, $\xi\in \Hsp_{\pi}$, choose $a\in C_0(\X)$ with $\pi_{\red}(a) = T\xi$. Then with $\chi_{\pi}$ the element defined by \eqref{EqElChi}, and \[b = (\id_{\X}\otimes \varphi_{\G})(\alpha(a)(1_{\X}\otimes \chi_{\pi}^*)),\] we have $b\in C_0(\X)_{\pi}$ and \[\pi_{\red}(b) = (\id_{\X_{\red}}\otimes \varphi_{\G_{\red}})(\alpha_{\red}(\pi_{\red}(a))(1_{\X_{\red}}\otimes \chi_{\pi}^*)) = T\xi.\] This shows that we have $C_0(\X)_{\pi} = C_0(\X_{\red})_{\pi}$, and in particular $\mathscr{O}_{\mathbb{G}}(\mathbb{X})= \mathscr{O}_{\G_{\red}}(\mathbb{X}_{\red})$ and $C_0(\Y_{\red}) = C_0(\Y)$.
\end{proof}

To end, let us consider the case where there is only one completion of $\mathscr{O}_{\G}(\X)$, which must hence necessarily coincide with the original $C_0(\X)$. Recall that $\G$ is called \emph{coamenable} if the canonical surjection map $C(\G_u) \rightarrow C(\G_{\red})$ is an isomorphism. This is equivalent  to $C(\G_u)$ having a faithful Haar state or $C(\G_{\red})$ having a bounded counit, see \cite{BMT01}.

\begin{Prop} Let $\X\curvearrowleft \G$. If $\G$ is coamenable, then $C_0(\X) = C_0(\X_{\red}) = C_0(\X_u)$. 
\end{Prop}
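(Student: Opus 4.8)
The plan is to exploit that $C_0(\X)$, $C_0(\X_{\red})$ and $C_0(\X_u)$ are all C$^*$-completions of the single $^*$-algebra $\mathscr{O}_{\G}(\X)$, tied together by the canonical surjective $^*$-homomorphisms $C_0(\X_u)\overset{\pi_u}{\twoheadrightarrow}C_0(\X)\overset{\pi_{\red}}{\twoheadrightarrow}C_0(\X_{\red})$, all of which restrict to the identity on this common dense core. Since a surjective $^*$-homomorphism is an isomorphism as soon as it is injective, it suffices to prove that the composite $q=\pi_{\red}\circ\pi_u\colon C_0(\X_u)\to C_0(\X_{\red})$ is injective: for then $\pi_u$ must be injective, hence an isomorphism, and consequently $\pi_{\red}$ is an isomorphism as well, giving $C_0(\X)=C_0(\X_{\red})=C_0(\X_u)$.

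To analyse $q$, I would compare the conditional expectations onto the orbit space. Recall that $C_0(\Y_u)=C_0(\Y)=C_0(\Y_{\red})$ and that $q$ restricts to the identity on $C_0(\Y)$. Using that $q$ intertwines the coactions, $(q\otimes q_{\G})\alpha_u=\alpha_{\red}\circ q$ with $q_{\G}\colon C(\G_u)\to C(\G_{\red})$ the canonical map, together with the compatibility of Haar states $\varphi_{\G_{\red}}\circ q_{\G}=\varphi_{\G_u}$, one obtains the identity $E_{\Y_u}=E_{\Y}\circ q$, where $E_{\Y}$ on the right denotes the conditional expectation $(\id_{\X}\otimes\varphi_{\G_{\red}})\alpha_{\red}$ onto $C_0(\Y)$. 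Hence, for $a\in C_0(\X_u)$ with $q(a)=0$, we get $E_{\Y_u}(a^*a)=E_{\Y}\big(q(a)^*q(a)\big)=0$. Thus injectivity of $q$ follows once I show that $E_{\Y_u}$ is \emph{faithful} on all of $C_0(\X_u)$, not merely on the core $\mathscr{O}_{\G}(\X)$.

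It is precisely here that coamenability enters. By hypothesis $C(\G_u)\to C(\G_{\red})$ is an isomorphism, which is equivalent to the Haar state $\varphi_{\G_u}$ being faithful on $C(\G_u)$. Given this, let $b=a^*a\geq 0$ satisfy $E_{\Y_u}(b)=(\id_{\X}\otimes\varphi_{\G_u})\alpha_u(b)=0$. The element $\alpha_u(b)=\alpha_u(a)^*\alpha_u(a)$ is positive in the minimal tensor product $C_0(\X_u)\otimes C(\G_u)$, so I would invoke the faithfulness of the slice map $\id\otimes\varphi_{\G_u}$ for a faithful state: for any positive functional $\omega$ on $C_0(\X_u)$, the element $(\omega\otimes\id_{\G})\alpha_u(b)$ is positive in $C(\G_u)$ and satisfies $\varphi_{\G_u}\big((\omega\otimes\id_{\G})\alpha_u(b)\big)=\omega\big(E_{\Y_u}(b)\big)=0$, whence it vanishes by faithfulness of $\varphi_{\G_u}$; since positive functionals separate the legs of the spatial tensor product, $\alpha_u(b)=0$. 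As $\alpha_u$ is injective, being counital by the extension of the counit of $\mathscr{O}(\G)$ to $C(\G_u)$, we conclude $b=a^*a=0$ and $a=0$. This establishes the faithfulness of $E_{\Y_u}$, hence the injectivity of $q$, completing the argument.

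The step I expect to be the crux is the faithfulness of the slice map $\id\otimes\varphi_{\G_u}$ on the minimal tensor product: this is the one place where the faithfulness of the Haar state, and therefore the coamenability hypothesis, is genuinely used, and it rests on the separation property of the spatial tensor product rather than on a purely formal manipulation with the core.
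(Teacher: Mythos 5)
Your proof is correct and follows essentially the same route as the paper's: coamenability gives faithfulness of the Haar state on $C(\G_u)$, which yields faithfulness of the conditional expectation $E_{\Y_u}$ on all of $C_0(\X_u)$ (via the slice-map argument in the spatial tensor product together with injectivity of $\alpha_u$), and this forces the canonical surjections $C_0(\X_u)\rightarrow C_0(\X)\rightarrow C_0(\X_{\red})$ to be injective. The paper compresses this into one sentence, calling the last implication immediate; you have simply written out the details it leaves to the reader.
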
  
\begin{proof} In fact, the faithfulness of the Haar state on $C(\G_u)$ implies that the conditional expectation $C_0(\X_u) \rightarrow C_0(\X/\G)$ is faithful, which immediately entails the proposition.
\end{proof}

If the compact quantum group $\G$ is coamenable, we also have the following preservation of nuclearity.

\begin{Theorem}[\cite{DLRZ02}] Let $\X\curvearrowleft \G$. If $\G$ is coamenable, then $C_0(\X)$ is a nuclear C$^*$-algebra if and only if $C_0(\X/\G)$ is a nuclear C$^*$-algebra.
\end{Theorem}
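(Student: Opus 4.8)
The plan is to leverage the structural relationship between $C_0(\X)$ and its quotient $C_0(\Y) = C_0(\X/\G)$ provided by the conditional expectation $E_{\Y}$, together with the coamenability hypothesis. Since $\G$ is coamenable, the previous proposition gives $C_0(\X) = C_0(\X_{\red})$, so $C_0(\X)$ carries the Hilbert $C_0(\Y)$-module structure of Definition \ref{DefRightYMod} with $E_{\Y}$ faithful, and left multiplication embeds $C_0(\X)$ faithfully into $\mathcal{L}(L^2_{\Y}(\X))$. The key observation is that $C_0(\X)$ is thereby realized concretely as a C$^*$-algebra of adjointable operators on a Hilbert $C_0(\Y)$-module, which is precisely the setting in which one can transfer nuclearity between a C$^*$-algebra and the base algebra over which the module lives.

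First I would establish the easy direction: if $C_0(\X)$ is nuclear, then $C_0(\Y) = C_0(\X)^{\G}$ is nuclear. Here $C_0(\Y)$ is the image of the completely positive idempotent map $E_{\Y}$, i.e.\ a corner-like object; more robustly, $C_0(\Y) \subseteq C_0(\X)$ is the range of a faithful conditional expectation, and using the bimodularity and complete positivity of $E_{\Y}$ from Lemma \ref{LemCondExp} one checks that nuclearity passes to the expected subalgebra. For the converse — the substantive direction — I would assume $C_0(\Y)$ nuclear and aim to show $C_0(\X)$ is nuclear. The strategy is to exhibit $C_0(\X)$ as sitting inside $\mathcal{L}(L^2_{\Y}(\X))$ and to use the fact, due to the coamenability of $\G$, that the whole construction is governed by finitely many spectral subspaces in a controlled way: each isotypical component $C_0(\X)_{\pi}$ is a finitely generated projective Hilbert $C_0(\Y)$-module (this is where $x_T = \sum_i T(e_i)T(e_i)^* \in C_0(\Y)$ from the proof of Proposition \ref{PropUniLi} enters, giving the finite "frames"), so $L^2_{\Y}(\X)$ is a countably generated Hilbert $C_0(\Y)$-module.

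The mechanism I would invoke is Kasparov's stabilization theorem: a countably generated Hilbert $C_0(\Y)$-module $L^2_{\Y}(\X)$ embeds as a complemented submodule of the standard module $\ell^2 \otimes C_0(\Y)$, whence $\mathcal{L}(L^2_{\Y}(\X))$ is a corner $p(\mathcal{L}(\ell^2 \otimes C_0(\Y)))p$ of the multiplier algebra of $\mathcal{K} \otimes C_0(\Y)$. Since $C_0(\Y)$ is nuclear, $\mathcal{K}\otimes C_0(\Y)$ is nuclear, and one argues that $C_0(\X)$, being a C$^*$-subalgebra of this corner that is moreover non-degenerately and faithfully represented and compatible with $E_{\Y}$, inherits nuclearity. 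The faithful conditional expectation onto the nuclear subalgebra $C_0(\Y)$, together with the finite-dimensionality of each spectral piece over $C_0(\Y)$, lets one approximate the identity on $C_0(\X)$ by completely positive finite-rank maps factoring through matrix algebras over $C_0(\Y)$, which is exactly the characterization of nuclearity via the completely positive approximation property.

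The hard part will be making the converse direction rigorous: one must verify that the completely positive approximation property genuinely transfers from $C_0(\Y)$ to $C_0(\X)$ through the Hilbert-module picture, and in particular that the approximating maps can be chosen to respect the module structure and remain completely positive and contractive on all of $C_0(\X)$, not merely on the algebraic core $\mathscr{O}_{\G}(\X)$. This requires controlling the passage from the dense $^*$-subalgebra $\mathscr{O}_{\G}(\X) = \bigoplus_{\pi}^{\oplus} C_0(\X)_{\pi}$ to its closure, ensuring the finite-rank module frames assemble into a bounded approximate factorization. Coamenability is essential precisely at the point where one identifies $C_0(\X)$ with $C_0(\X_{\red})$ so that the faithfulness of $E_{\Y}$ and the concrete representation on $L^2_{\Y}(\X)$ are available; without it the completion $C_0(\X)$ could fail to sit faithfully inside $\mathcal{L}(L^2_{\Y}(\X))$ and the argument would break down.
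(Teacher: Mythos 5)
The paper gives no proof of this theorem at all (it refers to \cite{DLRZ02}, adding only that one may assume $\X$ compact via the one-point compactification), so your proposal must stand on its own merits, and its substantive direction does not. The mechanism you propose — realize $C_0(\X)$ inside $\mathcal{L}(L^2_{\Y}(\X))$, identify this via Kasparov stabilization with a corner of $M(\mathcal{K}\otimes C_0(\Y))$, and conclude that $C_0(\X)$ "inherits nuclearity" — is invalid. Nuclearity does not pass to C$^*$-subalgebras (every separable C$^*$-algebra embeds into the nuclear Cuntz algebra $\mathcal{O}_2$), and the ambient algebra $\mathcal{L}(\ell^2\otimes C_0(\Y))=M(\mathcal{K}\otimes C_0(\Y))$ is itself non-nuclear: for $\Y$ a point it is exactly $B(\ell^2)$, which contains every separable C$^*$-algebra whatsoever. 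So membership of $C_0(\X)$ in the adjointable operators on a Hilbert module over a nuclear algebra carries no information, and no care with frames can repair a conclusion drawn from that membership. Two secondary problems: the claim that each $C_0(\X)_{\pi}$ is finitely generated projective over $C_0(\Y)$ is false for non-homogeneous actions (for the rotation action of $S^1$ on the closed disk, the nontrivial isotypical components are each isomorphic to the non-projective $C\lbrack 0,1\rbrack$-module $C_0((0,1\rbrack)$), and countable generation of $L^2_{\Y}(\X)$, needed for stabilization, requires separability hypotheses absent from the statement. Your easy direction is correct, though "one checks" conceals the Choi--Effros--Connes theorem: one passes to biduals, where $E_{\Y}^{**}$ exhibits $C_0(\Y)^{**}$ as the range of a norm-one projection on the injective $C_0(\X)^{**}$; note this direction needs no coamenability at all, since $E_{\Y}$ always exists.

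Here is the decisive way to see that your converse cannot be patched within its own framework: you invoke coamenability only to identify $C_0(\X)$ with $C_0(\X_{\red})$, so that $E_{\Y}$ is faithful and $C_0(\X)$ acts faithfully on $L^2_{\Y}(\X)$. But \emph{every} reduced action of \emph{every} compact quantum group has these properties. If your argument were correct, it would therefore prove that nuclearity of $C_0(\X/\G)$ implies nuclearity of $C_0(\X_{\red})$ for arbitrary $\G$. This is false: take $\Gamma = F_2$, let $\G = \widehat{\Gamma}$, and let $C(\X) = C^*_r(F_2)$ with the translation coaction $\Delta$ of $\G_{\red}$ on itself. This action is homogeneous, so $C(\X/\G)=\C$ is nuclear; the conditional expectation is the faithful Haar trace and the representation on $L^2_{\Y}(\X)=L^2(\G)$ is faithful; yet $C^*_r(F_2)$ is not nuclear, precisely because $F_2$ is not amenable. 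Hence coamenability must enter the approximation argument itself, not merely the choice of completion. That is what \cite{DLRZ02} supplies: coamenability (amenability of $\widehat{\G}$) provides an invariant mean, equivalently Fej\'{e}r-type nets of finitely supported completely positive multipliers converging pointwise to the identity, and it is this averaging — used to transport completely positive approximations (or injectivity of biduals) from $C_0(\Y)$ through the finite spectral bands of $C_0(\X)$ — that drives the proof. Your proposal contains no ingredient playing that role, and the counterexample shows none of its stated ingredients can substitute for it.
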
 

For the proof we refer to \cite{DLRZ02}. Note that by replacing $\X$ with its one-point compactification, we may assume that $\X$ is compact.

\section{Actions and coactions, crossed and smash products}

In this section, we will consider the notion \emph{dual} to that of action, called \emph{$\G$-coaction} or \emph{$\widehat{\G}$-action}. We will also consider the algebras associated to actions and coactions, called respectively the \emph{crossed} and \emph{smash} products. The results in this section are well-known in various contexts, such as algebraic (see e.g.\ \cite[Section 1.6]{Maj95}), C$^*$-algebraic (see e.g.\ \cite[Section 9]{Tim08}) and von Neumann algebraic (see e.g.\ \cite[Section 2]{Vae01}), but we will give a slightly idiosynchratic treatment which blends the algebraic and operator algebraic approaches. 

\begin{Def} Let $\G$ be a compact quantum group. A left $\mathscr{O}(\G)$-module $^*$-algebra $(\mathscr{O}(\X),\rhd)$ consists of a $^*$-algebra $\mathscr{O}(\X)$, endowed with a unital $\mathscr{O}(\G)$-module structure $\rhd$ such that \[h\rhd (ab) = (h_{(1)}\rhd a)(h_{(2)}\rhd b)\quad \textrm{and}\quad (h\rhd a)^* = S(h)^*\rhd a^*,\]for all $a,b\in \mathscr{O}(\X)$ and $h\in \mathscr{O}(\G)$. 
\end{Def}

We will use the following terminology. 

\begin{Term}We say that a left $\mathscr{O}(\G)$-module $^*$-algebra $\mathscr{O}(\X)$ corresponds to a right $\widehat{\G}$-action $\X\curvearrowleft \widehat{\G}$. We also refer to the latter as a \emph{left $\G$-coaction}.
\end{Term}

We will explain some of the terminology in the next examples. 

\begin{Exa} Let $\Gamma$ be a discrete group. Write $C(\widehat{\Gamma}_u) = C^*_u(\Gamma)$, so that $\mathscr{O}(\widehat{\Gamma}_u) = \mathscr{O}(\widehat{\Gamma}) = \C\lbrack \Gamma\rbrack$ is the group algebra of $\Gamma$. Then a left $\mathscr{O}(\widehat{\Gamma})$-module $^*$-algebra structure on some $^*$-algebra $\mathscr{O}(\X)$ corresponds precisely to a left action of $\Gamma$ on the $^*$-algebra $\mathscr{O}(\X)$. If then moreover $X$ is a locally compact space, a left action of $\Gamma$ on $C_0(X)$ is nothing but a \emph{right} action $X\curvearrowleft \Gamma$.  
\end{Exa}

\begin{Exa} Let $\G$ be a \emph{finite} quantum group, that is, $\G$ is a compact quantum group with $\mathscr{O}(\G)$ finite-dimensional. Then the dual $\mathscr{O}(\widehat{\G}) = \mathscr{O}(\G)^*$ is a again a compact quantum group with respect to the Hopf $^*$-algebra structure \[(\omega\eta)(a) = (\omega\otimes \eta)\Delta(a),\quad \omega^*(a) = \overline{\omega(S(a)^*)},\quad \widehat{\Delta}(\omega)(a\otimes b) = \omega(ab).\] It is easy to check that, for $\mathscr{O}(\X)$ a $^*$-algebra, there is a one-to-one correspondence between $\widehat{\G}$-actions in the sense of Section \ref{SecAc} and $\widehat{\G}$-actions in the above sense, by \[\alpha: \mathscr{O}(\X)\rightarrow \mathscr{O}(\X)\otimes \mathscr{O}(\widehat{\G})\] \[\Updownarrow\]\[h \rhd a = (\id\otimes h)\alpha(a),\quad a\in \mathscr{O}(\X),h\in \mathscr{O}(\G),\] where we identified $\mathscr{O}(\G) = \mathscr{O}(\G)^{**}$.  
\end{Exa}

\begin{Exa} Let $\G$ be a compact quantum group. Then we have the canonical right $\widehat{\G}$-action $\G\curvearrowleft \widehat{\G}$, given by the $\mathscr{O}(\G)$-module $^*$-algebra structure \[h\rhd f = h_{(1)}fS(h_{(2)}),\quad h,f\in \mathscr{O}(\G).\]We refer to this as the (right) \emph{conjugate action} of $\widehat{\G}$. 
\end{Exa}

There is no need to restrict $\mathscr{O}(\X)$ to be a purely algebraic object, as will follow from the proof of the following lemma. 

\begin{Lem} Let $\G$ be a compact quantum group and $C_0(\X)$ a C$^*$-algebra. If $C_0(\X)$ is a left $\mathscr{O}(\G)$-module $^*$-algebra, we have \[l_h: C_0(\X)\rightarrow C_0(\X),\; a\mapsto h\rhd a\] bounded, for all $h\in \mathscr{O}(\G)$. 
\end{Lem}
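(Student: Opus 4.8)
The plan is to deduce the boundedness of every $l_h$ from the automatic contractivity of $^*$-homomorphisms between C$^*$-algebras, by bundling the maps $l_{U_{\pi}(e_i,e_j)}$ attached to a single representation into one matrix-valued map. Note first that it suffices to treat $h$ of the form $U_{\pi}(e_i,e_j)$: since $\mathscr{O}(\G)$ is the linear span of the matrix coefficients $U_{\pi}(\xi,\eta)$, and since for a fixed $\pi$ with orthonormal basis $\{e_i\}$ and $\xi=\sum_i\xi_ie_i$, $\eta=\sum_j\eta_je_j$ one has $U_{\pi}(\xi,\eta)=\sum_{i,j}\overline{\xi_i}\,\eta_j\,U_{\pi}(e_i,e_j)$, every $l_h$ is a finite linear combination of the maps $l_{U_{\pi}(e_i,e_j)}$ and will be bounded as soon as these are.

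Fixing $\pi$ with $n=\dim(\Hsp_{\pi})$, I would define \[T:C_0(\X)\rightarrow M_n(\C)\otimes C_0(\X),\qquad T(a)=\big(U_{\pi}(e_i,e_j)\rhd a\big)_{i,j},\] and the key step is to verify that $T$ is a $^*$-homomorphism. Multiplicativity is where the module-algebra axiom does its work: applying $h\rhd(ab)=(h_{(1)}\rhd a)(h_{(2)}\rhd b)$ to $h=U_{\pi}(e_i,e_j)$ and inserting the coproduct formula $\Delta(U_{\pi}(e_i,e_j))=\sum_k U_{\pi}(e_i,e_k)\otimes U_{\pi}(e_k,e_j)$ turns the Sweedler sum into the matrix product, giving $T(ab)_{ij}=\sum_k\big(U_{\pi}(e_i,e_k)\rhd a\big)\big(U_{\pi}(e_k,e_j)\rhd b\big)=(T(a)T(b))_{ij}$. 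For $^*$-preservation, the axiom $(h\rhd a)^*=S(h)^*\rhd a^*$ together with the identity $S(U_{\pi}(e_i,e_j))^*=U_{\pi}(e_j,e_i)$ yields $\big(U_{\pi}(e_i,e_j)\rhd a\big)^*=U_{\pi}(e_j,e_i)\rhd a^*$, which is exactly the conjugate-transpose on $M_n(C_0(\X))$, so $T(a^*)=T(a)^*$.

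Since $M_n(\C)\otimes C_0(\X)$ is a C$^*$-algebra and every $^*$-homomorphism between C$^*$-algebras is contractive, I conclude $\|T(a)\|\leq\|a\|$ for all $a\in C_0(\X)$. Each $U_{\pi}(e_i,e_j)\rhd a$ is a matrix entry of $T(a)$, hence $\|l_{U_{\pi}(e_i,e_j)}(a)\|\leq\|T(a)\|\leq\|a\|$, so every $l_{U_{\pi}(e_i,e_j)}$ is bounded, and therefore so is every $l_h$. There is essentially no obstacle once the map $T$ is written down; the only real content is recognizing the matrix assembly, and I note that the argument touches the module structure only through the C$^*$-norm, which is precisely why $\mathscr{O}(\X)$ need not be assumed to be a purely algebraic object.
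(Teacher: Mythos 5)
Your proof is correct and is essentially the paper's own argument: the matrix-valued map $T$ you construct is exactly the map $\gamma_{\pi}(a) = U_{\pi}\rhd a = \sum_{i,j} e_{ij}\otimes\left(U_{\pi}(e_i,e_j)\rhd a\right)$ used in the paper, with multiplicativity coming from the corepresentation (coproduct) formula and $^*$-preservation from $S(U_{\pi}(e_i,e_j))^* = U_{\pi}(e_j,e_i)$. Both proofs then conclude by the automatic contractivity of $^*$-homomorphisms between C$^*$-algebras and the fact that the coefficients $U_{\pi}(e_i,e_j)$ span $\mathscr{O}(\G)$.
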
 

\begin{proof} Let $\pi$ be a unitary representation of $\G$, and let $\{e_i\}$ be an orthonormal basis of $\Hsp_{\pi}$. Write $e_{ij}$ for the standard matrix units in $B(\Hsp_{\pi})$ with respect to this basis. Then from the corepresentation property of $U_{\pi}$ and the definining properties of a module $^*$-algebra, we obtain that the map \[\gamma_{\pi}: C_0(\X) \rightarrow B(\Hsp_{\pi})\otimes C_0(\X),\quad a\mapsto U_{\pi}\rhd a  =\sum_{i,j} e_{ij}\otimes \left(U_{\pi}(e_i,e_j)\rhd a\right)\] is a unital $^*$-homomorphism. Hence in particular \[\|U_{\pi}\rhd a\|\leq \|a\|,\quad \textrm{for all }a\in C_0(\X).\] But this implies that $\|U_{\pi}(e_i,e_j)\rhd a\|\leq \|a\|$ for all $i,j$ and all $a\in C_0(\X)$. As the $U_{\pi}(e_i,e_j)$ span $\mathscr{O}(\G)$ as a vector space, the lemma is proven.
\end{proof} 

The same argument allows to prove the next corollary.

\begin{Cor}\label{CorExtModUn} Assume that the $^*$-algebra $\mathscr{O}(\mathbb{X})$ admits a universal C$^*$-envelope $C_0(\mathbb{X}_u)$. Then any $\mathscr{O}(\G)$-module $^*$-algebra structure on $\mathscr{O}(\X)$ extends to an $\mathscr{O}(\G)$-module $^*$-algebra structure on $C_0(\mathbb{X}_u)$.
\end{Cor}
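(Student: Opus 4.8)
The plan is to mimic the proof of the preceding lemma, but to carry the boundedness estimate out at the level of the universal norm $\|\cdot\|_u$ rather than a fixed C$^*$-norm. Fix a unitary representation $\pi$ of $\G$ with orthonormal basis $\{e_i\}$ of $\Hsp_{\pi}$ and matrix units $e_{ij}$. Exactly the computation in the preceding lemma — which uses only the corepresentation identity for $U_{\pi}$ together with the module $^*$-algebra axioms, and is hence purely algebraic — shows that
\[\gamma_{\pi}: \mathscr{O}(\X) \to B(\Hsp_{\pi}) \underset{\alg}{\otimes} \mathscr{O}(\X), \quad a \mapsto \sum_{i,j} e_{ij}\otimes \left(U_{\pi}(e_i,e_j)\rhd a\right)\]
is a unital $^*$-homomorphism of $^*$-algebras. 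The goal is then to deduce that each map $l_h: a \mapsto h\rhd a$ is bounded for $\|\cdot\|_u$, so that it extends continuously to $C_0(\X_u)$.

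The key step is to estimate $\|U_{\pi}(e_i,e_j)\rhd a\|_u$. Given any $^*$-representation $\lambda: \mathscr{O}(\X)\to B(\Hsp)$ on a Hilbert space, the composite $\mu = (\id_{B(\Hsp_{\pi})}\otimes \lambda)\circ \gamma_{\pi}$ is again a $^*$-homomorphism into the C$^*$-algebra $B(\Hsp_{\pi}\otimes \Hsp)$, that is, a $^*$-representation of $\mathscr{O}(\X)$; hence $\|\mu(a)\|\leq \|a\|_u$ by the very definition of the universal norm. Since $\lambda(U_{\pi}(e_i,e_j)\rhd a)$ is, up to the obvious compressions by the minimal projections $e_{ii}\otimes 1$, a matrix entry of $\mu(a)$, we obtain $\|\lambda(U_{\pi}(e_i,e_j)\rhd a)\|\leq \|\mu(a)\|\leq \|a\|_u$. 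Taking the supremum over all such $\lambda$ yields $\|U_{\pi}(e_i,e_j)\rhd a\|_u \leq \|a\|_u$.

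Because the elements $U_{\pi}(e_i,e_j)$, as $\pi$ ranges over the representations of $\G$, linearly span $\mathscr{O}(\G)$, every $l_h$ with $h\in \mathscr{O}(\G)$ is a finite linear combination of contractions for $\|\cdot\|_u$ and is hence bounded; it thus extends uniquely to a bounded operator $l_h$ on $C_0(\X_u)$. It then remains to check that $(h,a)\mapsto h\rhd a := l_h(a)$ is a left $\mathscr{O}(\G)$-module $^*$-algebra structure on $C_0(\X_u)$. All the defining identities — unitality, the module law $h\rhd(g\rhd a) = (hg)\rhd a$, the Leibniz rule $h\rhd(ab) = (h_{(1)}\rhd a)(h_{(2)}\rhd b)$ and the $^*$-compatibility $(h\rhd a)^* = S(h)^*\rhd a^*$ — hold on the dense subalgebra $\mathscr{O}(\X)$ and involve only the continuous operations of multiplication, involution and the bounded maps $l_h$; they therefore pass to $C_0(\X_u)$ by continuity. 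No genuine obstacle arises beyond this. The one point deserving care is the reduction in the key step, namely that precomposition of $\gamma_{\pi}$ with an arbitrary $^*$-representation of $\mathscr{O}(\X)$ again produces a $^*$-representation whose norm is a priori controlled by $\|\cdot\|_u$; this is what replaces the automatic contractivity of $^*$-homomorphisms between C$^*$-algebras used in the C$^*$-algebraic lemma.
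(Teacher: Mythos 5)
Your proof is correct and is precisely the argument the paper intends: the paper proves the corollary by remarking that ``the same argument'' as in the preceding lemma applies, and your write-up carries out exactly that adaptation, replacing the automatic contractivity of $\gamma_{\pi}$ between C$^*$-algebras by composing $\gamma_{\pi}$ with an arbitrary $^*$-representation and invoking the definition of the universal norm. The continuity argument for transporting the module $^*$-algebra identities to $C_0(\X_u)$ is also as expected, so nothing is missing.
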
 

We now define the notion of \emph{smash product} with respect to a right $\widehat{\G}$-action.

\begin{Def} Assume that we have a left $\mathscr{O}(\G)$-module $^*$-algebra $\mathscr{O}(\X)$. The \emph{algebraic smash product} $^*$-algebra \[\mathscr{O}(\X\rtimes \widehat{\G}) = \widehat{\G}\ltimes \mathscr{O}(\X) = \mathscr{O}(\X)\# \mathscr{O}(\G)\] is defined as the tensor product vector space $\mathscr{O}(\X)\aotimes \mathscr{O}(\G)$, equipped with the product \[(a\otimes h)(b\otimes g) = a(h_{(1)}\rhd b)\otimes h_{(2)}g\] and the $^*$-structure \[(a\otimes h)^* = (h_{(1)}^*\rhd a^*)\otimes h_{(2)}^*.\]
\end{Def} 

\begin{Exe} Show that $\mathscr{O}(\X)\#\mathscr{O}(\G)$ is an associative $^*$-algebra.
\end{Exe}

\begin{Not} For $\mathscr{O}(\X)$ a left $\mathscr{O}(\G)$-module $^*$-algebra, $a\in \mathscr{O}(\X)$ and $h\in \mathscr{O}(\G)$, we write \[ah = a\otimes h = `(a\otimes 1_{\G})(1_{\X}\otimes h)\textrm{'}\quad \textrm{in }\mathscr{O}(\X)\#\mathscr{O}(\G).\]
We further write \[ha = (h_{(1)}\rhd a)\otimes h_{(2)} = `(1_{\X}\otimes h)(a\otimes 1_{\G})\textrm{'}\quad \textrm{in }\mathscr{O}(\X)\#\mathscr{O}(\G).\]\end{Not}
\begin{Lem}Let $\X\curvearrowleft\widehat{\G}$. For all $a\in \mathscr{O}(\X)$ and $h\in \mathscr{O}(\G)$, one has
\begin{enumerate}
\item $(ah)^* = h^*a^*$,
\item $ah = h_{(2)}(S^{-1}(h_{(1)})\rhd a)$. 
\end{enumerate}
\end{Lem}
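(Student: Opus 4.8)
The plan is to prove both identities by direct computation, unwinding the two notational conventions $ah = a\otimes h$ and $ha = (h_{(1)}\rhd a)\otimes h_{(2)}$ against the defining product and $^*$-structure of the smash product, and then invoking the standard Hopf-algebraic compatibilities: that $\Delta$ is $^*$-preserving, coassociativity, the antipode axioms, and the counit axiom.

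For (1), I would read off the right-hand side $h^*a^*$ from the convention for $ha$ with $h$ replaced by $h^*$ and $a$ by $a^*$, giving $h^*a^* = ((h^*)_{(1)}\rhd a^*)\otimes (h^*)_{(2)}$. Since $(\mathscr{O}(\G),\Delta)$ is a Hopf $^*$-algebra, $\Delta$ commutes with $^*$, so $(h^*)_{(1)}\otimes (h^*)_{(2)} = (h_{(1)})^*\otimes (h_{(2)})^*$, turning this into $((h_{(1)})^*\rhd a^*)\otimes (h_{(2)})^*$. This is precisely $(a\otimes h)^* = (ah)^*$ as dictated by the definition of the $^*$-structure. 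So (1) amounts to bookkeeping, the only genuine input being $\Delta(h^*) = \Delta(h)^*$.

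For (2), I would expand the right-hand side $h_{(2)}(S^{-1}(h_{(1)})\rhd a)$. Writing $\Delta(h) = h_{[1]}\otimes h_{[2]}$ for the outer coproduct and applying the convention $k\,b = (k_{(1)}\rhd b)\otimes k_{(2)}$ with $k = h_{[2]}$ and $b = S^{-1}(h_{[1]})\rhd a$, this becomes $((h_{[2]})_{(1)}\rhd (S^{-1}(h_{[1]})\rhd a))\otimes (h_{[2]})_{(2)}$. The module-algebra axiom merges the two nested actions into $((h_{[2]})_{(1)}S^{-1}(h_{[1]}))\rhd a$, and coassociativity lets me rename $h_{[1]}\otimes (h_{[2]})_{(1)}\otimes (h_{[2]})_{(2)}$ as the triple coproduct $h_{(1)}\otimes h_{(2)}\otimes h_{(3)}$, yielding $((h_{(2)}S^{-1}(h_{(1)}))\rhd a)\otimes h_{(3)}$. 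The crux is then the identity $\sum h_{(2)}S^{-1}(h_{(1)})\otimes h_{(3)} = 1_{\G}\otimes h$; combined with the linearity of $k\mapsto (k\rhd a)$ in the first leg, this collapses the expression to $(1_{\G}\rhd a)\otimes h = a\otimes h = ah$.

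The main obstacle, and the only step that is not pure bookkeeping, is establishing $\sum h_{(2)}S^{-1}(h_{(1)})\otimes h_{(3)} = 1_{\G}\otimes h$ while keeping the iterated Sweedler indices under control. I would obtain it from the one-sided antipode relation $\sum k_{(2)}S^{-1}(k_{(1)}) = \varepsilon(k)1_{\G}$, which holds because $S^{-1}$ is the antipode of the co-opposite Hopf algebra: applying it to the inner coproduct of the first leg of $(\Delta\otimes\id)\Delta(h)$ collapses $\sum (h_{(1)})_{(2)}S^{-1}((h_{(1)})_{(1)})$ to $\varepsilon(h_{(1)})1_{\G}$, after which the counit axiom $\sum \varepsilon(h_{(1)})h_{(2)} = h$ finishes the computation.
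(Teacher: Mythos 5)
Your proof is correct, and since the paper leaves this lemma as an exercise, your direct computation---unwinding the conventions $ah=a\otimes h$ and $ha=(h_{(1)}\rhd a)\otimes h_{(2)}$ against the smash-product structure, then using $\Delta(h^*)=\Delta(h)^*$, coassociativity, the merging of nested actions, and the co-opposite antipode identity $h_{(2)}S^{-1}(h_{(1)})=\varepsilon(h)1_{\G}$ followed by the counit axiom---is exactly the intended argument. One minor terminological remark: collapsing $h_{(2)}\rhd\bigl(S^{-1}(h_{(1)})\rhd a\bigr)$ to $\bigl(h_{(2)}S^{-1}(h_{(1)})\bigr)\rhd a$ uses the module axiom (associativity of the action), not the module-algebra axiom, but this does not affect the validity of the proof.
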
 
\begin{proof} Exercise. 
\end{proof}

\begin{Lem} Assume that $\X\curvearrowleft \widehat{\G}$ with $\X$ a locally compact quantum space. Then $\mathscr{O}(\X\rtimes \widehat{\G})$ admits a universal C$^*$-envelope.
\end{Lem}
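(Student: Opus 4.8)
The plan is to verify the defining estimate for a universal C$^*$-envelope by hand: for each fixed element $\omega \in \mathscr{O}(\X \rtimes \widehat{\G})$ I must exhibit a constant $C_\omega \geq 0$ with $\|\pi(\omega)\| \leq C_\omega$ for \emph{every} $^*$-representation $\pi$ of the smash product on a Hilbert space. Since a general element is a finite sum $\omega = \sum_i a_i \otimes h_i$ with $a_i \in \mathscr{O}(\X)$ and $h_i \in \mathscr{O}(\G)$, the triangle inequality reduces everything to bounding $\|\pi(a \otimes h)\|$ for a single elementary tensor in terms of $a$ and $h$ separately.

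First I would locate two auxiliary representations sitting inside $\pi$. The map $j_{\X}\colon a \mapsto a \otimes 1_{\G}$ is a $^*$-homomorphism into the smash product: from $\Delta(1_{\G}) = 1_{\G} \otimes 1_{\G}$, $1_{\G}^* = 1_{\G}$ and unitality of the module action one reads off $(a \otimes 1_{\G})(b \otimes 1_{\G}) = ab \otimes 1_{\G}$ and $(a \otimes 1_{\G})^* = a^* \otimes 1_{\G}$. Hence $\rho := \pi \circ j_{\X}$ is a $^*$-representation of the C$^*$-algebra $C_0(\X)$, which is automatically contractive, so $\|\rho(a)\| \leq \|a\|$. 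Dually, the symbols $1_{\X} \otimes h$ act as two-sided multipliers of the smash product, via $(1_{\X} \otimes h)(b \otimes g) = (h_{(1)} \rhd b) \otimes h_{(2)}g$ on the left and $(b \otimes g)(1_{\X} \otimes h) = b \otimes gh$ on the right, and these multipliers form a unital $^*$-subalgebra of the multiplier algebra isomorphic to $\mathscr{O}(\G)$.

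Next I would pass to the non-degenerate part of $\pi$. Any $^*$-representation vanishes on the orthogonal complement of its essential subspace $\overline{[\pi(\mathscr{O}(\X \rtimes \widehat{\G}))H]}$, so it suffices to treat $\pi$ non-degenerate, in which case $\pi$ extends to a unital $^*$-homomorphism $\bar{\pi}$ on the multiplier algebra. Then $v(h) := \bar{\pi}(1_{\X} \otimes h)$ is a $^*$-representation of $\mathscr{O}(\G)$, and since $\mathscr{O}(\G)$ admits the universal C$^*$-envelope $C(\G_u)$, one has the uniform bound $\|v(h)\| \leq \|h\|_u$. Because $a \otimes h = (a \otimes 1_{\G})(1_{\X} \otimes h)$ in the multiplier algebra, this gives $\pi(a \otimes h) = \rho(a)\,v(h)$ and therefore $\|\pi(a \otimes h)\| \leq \|a\|\,\|h\|_u$. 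Summing over $i$ yields $\|\pi(\omega)\| \leq \sum_i \|a_i\|\,\|h_i\|_u =: C_\omega$, a bound independent of $\pi$, which is exactly the required estimate.

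The main obstacle is the possible non-unitality of $\mathscr{O}(\X)$ in the locally compact setting: there the symbol $1_{\X} \otimes h$ does not literally denote an element of the smash product, so the clean factorisation $a \otimes h = (a \otimes 1_{\G})(1_{\X} \otimes h)$ only makes sense in the multiplier algebra. This is precisely why the reduction to a non-degenerate representation—where $\pi$ extends to multipliers—is the one genuinely necessary step; everything else is bookkeeping with the two building-block representations $\rho$ and $v$ together with the triangle inequality.
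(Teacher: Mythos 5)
Your proof is correct and follows essentially the same route as the paper: restrict to the non-degenerate part, let $\mathscr{O}(\G)$ act through the symbols $1_{\X}\otimes h$ on the essential subspace (your multiplier extension is exactly the paper's $^*$-representation of $\mathscr{O}(\G)$ on the pre-Hilbert space $\pi(\mathscr{O}(\X\rtimes\widehat{\G}))\Hsp$), and then bound $\pi(a\otimes h)=\rho(a)v(h)$ by $\|a\|\,\|h\|_u$. The only point to make explicit is that $v(h)$ is a priori just a densely defined operator on the essential subspace, so the estimate $\|v(h)\|\leq\|h\|_u$ rests on the fact, invoked in the paper, that any $^*$-representation of $\mathscr{O}(\G)$ on a pre-Hilbert space is automatically bounded (as $\mathscr{O}(\G)$ is spanned by entries of unitary matrices), rather than directly on the universal property of $C(\G_u)$, which quantifies only over bounded representations.
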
 

We will denote this universal C$^*$-algebra as $C_0(\X\rtimes_u \widehat{\G})$.

\begin{proof} If $\pi$ is a non-degenerate $^*$-representation of $\mathscr{O}(\X\rtimes \widehat{\G})$ on a Hilbert space $\Hsp$, we have a $^*$-representation $\pi$ of $\mathscr{O}(\G)$ on the pre-Hilbert space $\pi(\mathscr{O}(\X\rtimes \widehat{\G}))\Hsp$ by \[\pi(h) \pi(bg)\xi = \pi(hbg)\xi,\]  and then \[\pi(ah)\xi= \pi(a)\pi(h)\xi,\quad \forall a\in C_0(\X),h\in \mathscr{O}(\G),\xi \in \pi(\mathscr{O}(\X\rtimes \widehat{\G}))\Hsp.\] As any $^*$-representation of $\mathscr{O}(\G)$ on a pre-Hilbert space is automatically bounded, we obtain \[\|\pi(ah)\|\leq \|a\|\|h\|_u, \quad a\in C_0(\X),h\in \mathscr{O}(\G).\]
\end{proof}

However, it is not clear if $\mathscr{O}(\X\rtimes \widehat{\G})$ is actually a \emph{good} $^*$-algebra, that is, if \[ \mathscr{O}(\X\rtimes \widehat{\G})\subseteq C_0(\X\rtimes_u \widehat{\G}).\] This is the question we deal with next, see Corollary \ref{CorGoodAlg}. 

\begin{Prop}\label{PropDualAct} Assume $\mathscr{O}(\X)$ is a left $\mathscr{O}(\G)$-module $^*$-algebra. Then $\X\rtimes \widehat{\G}\curvearrowleft \G$ by the Hopf $^*$-algebra coaction
\[\alpha: \mathscr{O}(\X\rtimes \widehat{\G}) \rightarrow \mathscr{O}(\X\rtimes \widehat{\G})\otimes \mathscr{O}(\G),\quad ah\mapsto ah_{(1)}\otimes h_{(2)}.\] 
\end{Prop}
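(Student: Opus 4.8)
The plan is to verify directly that the proposed map $\alpha$ defines a Hopf $^*$-algebraic right coaction of $(\mathscr{O}(\G),\Delta)$ on the smash product $\mathscr{O}(\X\rtimes\widehat{\G})$. There are four things to check: that $\alpha$ is well-defined and multiplicative, that it is $^*$-preserving, that the coaction property $(\id\otimes\Delta)\alpha=(\alpha\otimes\id)\alpha$ holds, and that it is counital $(\id\otimes\varepsilon)\alpha=\id$. Since everything here is purely algebraic, continuity and density conditions are not in play; this is genuinely a coaction in the Hopf-algebraic sense, and the Podle\'{s}-type density will follow automatically for any universal completion by Lemma~\ref{LemAlgtoAn}.

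\textbf{Multiplicativity and the $^*$-structure.}
First I would check that $\alpha$ respects the product. Using the smash product multiplication $(a\otimes h)(b\otimes g)=a(h_{(1)}\rhd b)\otimes h_{(2)}g$, I compute $\alpha(ah)\alpha(bg)$ on the one hand and $\alpha\big((ah)(bg)\big)$ on the other. The key point is that the comultiplication $\Delta$ is an algebra homomorphism and that the module action enters $\alpha$ only through the second tensor leg via $\Delta(h)$; the coassociativity of $\Delta$ together with the defining relation $h\rhd(ab)=(h_{(1)}\rhd a)(h_{(2)}\rhd b)$ will make the two expressions agree after matching Sweedler indices. For the $^*$-structure I would use $(a\otimes h)^*=(h_{(1)}^*\rhd a^*)\otimes h_{(2)}^*$ and the fact that $\Delta$ is $^*$-preserving, so that $\alpha$ commutes with taking adjoints.

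\textbf{Coaction property and counitality.}
These two are the cleanest steps. Applying $\id\otimes\Delta$ to $\alpha(ah)=ah_{(1)}\otimes h_{(2)}$ gives $ah_{(1)}\otimes h_{(2)}\otimes h_{(3)}$ by coassociativity, and applying $\alpha\otimes\id$ gives the same expression, so the coaction property reduces to coassociativity of $\Delta$. For counitality, $(\id\otimes\varepsilon)\alpha(ah)=ah_{(1)}\varepsilon(h_{(2)})=ah$ by the counit axiom. Both follow immediately once multiplicativity is in place.

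\textbf{Main obstacle.}
I expect the only real bookkeeping to be in the multiplicativity check, where one must carefully track the interaction between the Sweedler legs of $\Delta(h)$ coming from the coaction and those coming from the smash product multiplication $h_{(1)}\rhd b$. The subtle point is confirming that $\alpha(ah)$, written as $ah_{(1)}\otimes h_{(2)}$, is interpreted as an element of $\mathscr{O}(\X\rtimes\widehat{\G})\otimes\mathscr{O}(\G)$ where the first leg is the smash-product element $a\otimes h_{(1)}$; once this convention is fixed, the verification is a direct Sweedler computation leaning on coassociativity and the module-algebra axiom, with no genuine conceptual difficulty.
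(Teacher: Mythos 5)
Your proposal is correct: the paper leaves this proposition as an exercise, and your direct verification (multiplicativity, $^*$-compatibility via $\Delta$ being a $^*$-homomorphism, coassociativity, counitality, with the first tensor leg of $\alpha(ah)$ read as the smash-product element $a\otimes h_{(1)}$) is exactly the intended argument. The key computation you outline does close up as claimed: both $\alpha(ah)\alpha(bg)$ and $\alpha\bigl((ah)(bg)\bigr)$ reduce to $a(h_{(1)}\rhd b)h_{(2)}g_{(1)}\otimes h_{(3)}g_{(2)}$, and similarly $\alpha\bigl((ah)^*\bigr)=\alpha(ah)^*=(h_{(1)}^*\rhd a^*)h_{(2)}^*\otimes h_{(3)}^*$.
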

\begin{proof}
Exercise.
\end{proof}

We refer to $\alpha$ as the \emph{dual action} of $\G$ on $\X\rtimes \widehat{\G}$. The following corollary follows from Lemma \ref{LemAlgtoAn}. 

\begin{Cor} If $\X\curvearrowleft \widehat{\G}$ with $\X$ a locally compact quantum space, then $\X\rtimes_u \widehat{\G}\curvearrowleft \G$.
\end{Cor}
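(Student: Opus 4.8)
The plan is to recognise the statement as a direct instance of Lemma \ref{LemAlgtoAn}, applied to the smash product $^*$-algebra rather than to $\mathscr{O}(\X)$ itself. First I would record that all the hypotheses of that lemma are already available: by the Lemma preceding Proposition \ref{PropDualAct}, the $^*$-algebra $\mathscr{O}(\X\rtimes\widehat{\G})$ admits a universal C$^*$-envelope, which is by definition $C_0(\X\rtimes_u\widehat{\G})$; and by Proposition \ref{PropDualAct} this $^*$-algebra carries the Hopf $^*$-algebraic (dual) coaction
\[\alpha:\mathscr{O}(\X\rtimes\widehat{\G})\rightarrow\mathscr{O}(\X\rtimes\widehat{\G})\atimes\mathscr{O}(\G),\quad ah\mapsto ah_{(1)}\otimes h_{(2)}.\]
Thus $\mathscr{O}(\X\rtimes\widehat{\G})$ plays exactly the role of $\mathscr{O}(\X)$ in Lemma \ref{LemAlgtoAn}, and the desired object $\X\rtimes_u\widehat{\G}$ is the corresponding universal envelope.

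Next I would invoke Lemma \ref{LemAlgtoAn}. Its conclusion extends $\alpha$ to a $^*$-homomorphism $\alpha_u$ on the universal envelope satisfying the coaction property and the Podle\'{s} condition, which is precisely an action $\X\rtimes_u\widehat{\G}\curvearrowleft\G$. The one point deserving a word is the second leg: Lemma \ref{LemAlgtoAn} is phrased with $C(\G_u)$ there, whereas the corollary asks for $C(\G)$. But the extension argument uses only that the range of $\alpha$ lies in $\mathscr{O}(\X\rtimes\widehat{\G})\atimes\mathscr{O}(\G)$ together with the inclusion $\mathscr{O}(\G)\subseteq C(\G)$; hence $\alpha$ is already a $^*$-homomorphism into the C$^*$-algebra $C_0(\X\rtimes_u\widehat{\G})\otimes C(\G)$, and the universal property of $C_0(\X\rtimes_u\widehat{\G})$ produces the bounded extension $\alpha_u$ valued in $C(\G)$. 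The coaction property passes to the closure by continuity, and the Podle\'{s} density is obtained exactly as in Lemma \ref{LemAlgtoAn}, from the identity $\alpha(x_{(0)})(1\otimes S(x_{(1)}))=x\otimes 1_{\G}$, valid for any Hopf-algebra coaction via the antipode and counit.

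The argument is essentially bookkeeping, so I do not expect a serious obstacle; the only subtlety worth flagging is that at this stage we do not yet know whether $\mathscr{O}(\X\rtimes\widehat{\G})$ is a \emph{good} $^*$-algebra, i.e.\ whether it embeds into $C_0(\X\rtimes_u\widehat{\G})$. The plan deliberately avoids needing this: density of the image of $\mathscr{O}(\X\rtimes\widehat{\G})$ in its envelope is all that the extension and the Podle\'{s} computation require, and injectivity is irrelevant to producing the coaction.
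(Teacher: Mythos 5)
Your proposal is correct and is essentially the paper's own proof: the paper justifies the corollary by exactly this reduction, namely applying Lemma \ref{LemAlgtoAn} to the dual coaction of Proposition \ref{PropDualAct} on $\mathscr{O}(\X\rtimes\widehat{\G})$, whose universal C$^*$-envelope exists by the lemma preceding that proposition. Your added remark about landing the second leg in $C(\G)$ rather than $C(\G_u)$ is a harmless refinement rather than a different route, since one may equivalently compose the coaction from Lemma \ref{LemAlgtoAn} with the canonical surjection $C(\G_u)\twoheadrightarrow C(\G)$, which preserves both the coaction property and the Podle\'{s} condition.
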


\begin{Prop}\label{PropCrossImp}Let $\X$ be a locally compact quantum space, and $\X\curvearrowleft \widehat{\G}$. Put \[E_{\X}: \mathscr{O}(\X\rtimes \widehat{\G}) \rightarrow C_0(\X),\quad x\mapsto (\id_{\X}\otimes \varphi_{\G})\alpha(x),\] with $X\rtimes \widehat{\G}\overset{\alpha}{\curvearrowleft} \G$. Then \[\langle x,y\rangle_{\X} = E_{\X}(x^*y)\] defines a pre-Hilbert $C_0(\X)$-module structure on $\mathscr{O}(\X\rtimes\widehat{\G})$. Moreover, its completion $L^2_{\X}(\X\rtimes \widehat{\G})$ satisfies \[L^2_{\X}(\X\rtimes \widehat{\G})\cong L^2(\G)\otimes C_0(\X),\] isometrically as right Hilbert $C_0(\X)$-modules, by means of the map \[V: \mathscr{O}(\X\rtimes\widehat{\G}) \rightarrow L^2(\G)\otimes C_0(\X),\quad ha \mapsto h\otimes a.\]
\end{Prop}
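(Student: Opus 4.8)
The plan is to route everything through the candidate map $V$: once I show that $V$ carries the sesquilinear form $\langle\,\cdot\,,\,\cdot\,\rangle_{\X}$ to the standard inner product of the exterior tensor product $L^2(\G)\otimes C_0(\X)$, the remaining assertions (positivity, definiteness, right $C_0(\X)$-linearity, and the isomorphism of completions) all follow formally from the fact that $L^2(\G)\otimes C_0(\X)$ is already a genuine Hilbert $C_0(\X)$-module. As a preliminary step I would record that $E_{\X}$ really lands in $C_0(\X)$: using the dual action $\alpha(ah)=ah_{(1)}\otimes h_{(2)}$ and left-invariance of the Haar state, $E_{\X}(ah)=(\id_{\X}\otimes\varphi_{\G})(ah_{(1)}\otimes h_{(2)})=\varphi_{\G}(h)\,a$, so that $E_{\X}$ is the $\mathscr{O}(\X)$-valued slice $\sum_i a_ih_i\mapsto\sum_i\varphi_{\G}(h_i)a_i$. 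I would also note that $V$ is a well-defined linear isomorphism of $\mathscr{O}(\X\rtimes\widehat{\G})$ onto $\mathscr{O}(\G)\aotimes\mathscr{O}(\X)$, since the straightening map $h\otimes a\mapsto ha=(h_{(1)}\rhd a)\otimes h_{(2)}$ is a linear bijection (inverted by $ah=h_{(2)}(S^{-1}(h_{(1)})\rhd a)$ from the preceding lemma).

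The computational heart is the identity $\langle ha,h'a'\rangle_{\X}=\varphi_{\G}(h^*h')\,a^*a'$. First I would use $(ha)^*=a^*h^*$, which follows from the preceding lemma (apply $(ah)^*=h^*a^*$ with $a,h$ replaced by $a^*,h^*$). Multiplying out $(a^*h^*)(h'a')$ with the smash-product rule $(a\otimes h)(b\otimes g)=a(h_{(1)}\rhd b)\otimes h_{(2)}g$ and the module-algebra property $g\rhd(k\rhd a')=(gk)\rhd a'$ gives $\sum a^*\bigl((h^*_{(1)}h'_{(1)})\rhd a'\bigr)\otimes h^*_{(2)}h'_{(2)}$, and then applying $E_{\X}$ turns the second leg into $\sum\varphi_{\G}(h^*_{(2)}h'_{(2)})\,a^*\bigl((h^*_{(1)}h'_{(1)})\rhd a'\bigr)$.

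The one nontrivial point — and what I expect to be the main obstacle — is seeing that this surviving module action collapses to the trivial one. Here I would use that $\Delta$ is an algebra homomorphism, so $h^*_{(1)}h'_{(1)}\otimes h^*_{(2)}h'_{(2)}=\Delta(h^*h')$, together with left-invariance $(\id_{\G}\otimes\varphi_{\G})\Delta(k)=\varphi_{\G}(k)1_{\G}$, to obtain $\sum\varphi_{\G}(h^*_{(2)}h'_{(2)})\,(h^*_{(1)}h'_{(1)})=\varphi_{\G}(h^*h')1_{\G}$. Since the action is linear in its left argument and $1_{\G}\rhd a'=a'$, the whole expression reduces to $\varphi_{\G}(h^*h')\,a^*a'$, as claimed. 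It is precisely the interplay of multiplicativity of $\Delta$ with the invariance of $\varphi_{\G}$ that makes the group-algebra–level intuition (where the Haar state forces the two degrees to coincide) go through in the general quantum setting.

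It then remains to assemble the conclusion. The right-hand side $\varphi_{\G}(h^*h')a^*a'$ is exactly $\langle h\otimes a,h'\otimes a'\rangle$ in $L^2(\G)\otimes C_0(\X)$, so by sesquilinearity $\langle x,y\rangle_{\X}=\langle Vx,Vy\rangle$ for all $x,y\in\mathscr{O}(\X\rtimes\widehat{\G})$. In particular $\langle x,x\rangle_{\X}=\langle Vx,Vx\rangle\geq 0$ and vanishes only when $Vx=0$, i.e.\ (by injectivity of $V$) only when $x=0$; conjugate symmetry and right $C_0(\X)$-linearity are inherited in the same way, noting $V((ha)c)=V(h(ac))=h\otimes ac=(Vha)c$. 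Thus $\langle\,\cdot\,,\,\cdot\,\rangle_{\X}$ is a genuine pre-Hilbert $C_0(\X)$-module structure and $V$ preserves inner products, hence extends to an isometry of completions. Finally $V$ has dense range: $\mathscr{O}(\G)$ is dense in $L^2(\G)$ and $\mathscr{O}(\X)$ is dense in $C_0(\X)$, so $\mathscr{O}(\G)\aotimes\mathscr{O}(\X)$ is dense in $L^2(\G)\otimes C_0(\X)$, and the isometric extension of $V$ is therefore a unitary isomorphism $L^2_{\X}(\X\rtimes\widehat{\G})\cong L^2(\G)\otimes C_0(\X)$ of right Hilbert $C_0(\X)$-modules.
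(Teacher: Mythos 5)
Your proof is correct and follows essentially the same route as the paper: both rest on the single computation $\langle ha,gb\rangle_{\X}=\varphi_{\G}(h^*g)\,a^*b$, obtained from the smash-product relations together with multiplicativity of $\Delta$ and invariance of $\varphi_{\G}$, after which the identification with the standard inner product on $L^2(\G)\otimes C_0(\X)$ via $V$ does the rest. The extra details you supply (bijectivity of the straightening map, definiteness via injectivity of $V$, right $C_0(\X)$-linearity, density of the range) only make explicit what the paper's proof leaves implicit.
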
 

\begin{proof}
Note that the range of $E_{\X}$ indeed lies in $C_0(\X)$, since \begin{eqnarray*} (\id_{\X}\otimes \varphi_{\G})(\alpha(ha)) &=& (\id_{\X}\otimes \varphi_{\G})(\Delta(h)(a\otimes 1_{\G})) \\ &=& \varphi_{\G}(h)a.\end{eqnarray*}

Now we compute \begin{eqnarray*} \langle ha,gb\rangle_{\X} &=& (\id_{\X}\otimes \varphi_{\G})((a^*\otimes 1_{\G})\Delta(h^*g)(b\otimes 1_{\G}))\\ &=& \varphi_{\G}(h^*g)a^*b \\ &=& \langle h\otimes a,g\otimes b \rangle \\ &=& \langle V(ha),V(gb)\rangle.\end{eqnarray*} It follows that indeed $\mathscr{O}(\X\rtimes \widehat{\G})$ is a pre-Hilbert $C_0(\X)$-module, and that $V$ induces a unitary map  \[L^2_{\X}(\X\rtimes \widehat{\G})\cong L^2(\G)\otimes C_0(\X)\] which is clearly right $C_0(\X)$-linear.
\end{proof} 

\begin{Theorem} Let $\G$ be a compact quantum group, $\X$ a locally compact quantum space, and let $\X\curvearrowleft\widehat{\G}$.  Then the left multiplication map of $\mathscr{O}(\X\rtimes \widehat{\G})$ on itself extends to an injective $^*$-homomorphism \[\pi_{\red}:\mathscr{O}(\X\rtimes \widehat{\G})\rightarrow \mathcal{L}(L^2_{\X}(\X\rtimes \widehat{\G})).\]
\end{Theorem}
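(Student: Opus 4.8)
The plan is to define $\pi_{\red}(x)$ as the operator of left multiplication by $x\in\mathscr{O}(\X\rtimes\widehat{\G})$ on the pre-Hilbert $C_0(\X)$-module $\mathscr{O}(\X\rtimes\widehat{\G})$, and to prove two things separately: that each such $L_x$ is bounded and \emph{adjointable} (so that $\pi_{\red}$ lands in $\mathcal{L}(L^2_{\X}(\X\rtimes\widehat{\G}))$ and is a $^*$-homomorphism), and that $x\mapsto L_x$ is injective. Throughout I would transport everything through the unitary $V\colon L^2_{\X}(\X\rtimes\widehat{\G})\to L^2(\G)\otimes C_0(\X)$, $V(ha)=h\otimes a$, of Proposition \ref{PropCrossImp}. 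Since every element of $\mathscr{O}(\X\rtimes\widehat{\G})$ is a finite sum of elements $ah$, boundedness reduces to understanding left multiplication by the two generating operations: multiplying by $b\in C_0(\X)$ on the left, and prepending $h\in\mathscr{O}(\G)$.

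For the $\mathscr{O}(\G)$-part, a direct normal-ordering computation gives $g\cdot(ha)=(gh)a$, so under $V$ the operation ``prepend $g$'' becomes $T_g=V^*(\pi_{\red}^{\G}(g)\otimes 1)V$, where $\pi_{\red}^{\G}$ is the reduced representation of $C(\G)$ on $L^2(\G)$ from Section \ref{SecPrel}; this is bounded and adjointable with adjoint $T_{g^*}$. For the $C_0(\X)$-part I would exploit positivity of the $C_0(\X)$-valued inner product. Writing $d=(\|b\|^2\cdot 1-b^*b)^{1/2}\in M(C_0(\X))$ and noting that $dx\in\mathscr{O}(\X\rtimes\widehat{\G})$, one gets $\langle bx,bx\rangle_{\X}=\|b\|^2\langle x,x\rangle_{\X}-\langle dx,dx\rangle_{\X}\le \|b\|^2\langle x,x\rangle_{\X}$, whence $\|L_b\|\le\|b\|$ and $L_b$ is adjointable with adjoint $L_{b^*}$. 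Since $L_{ah}=L_a\circ T_h$, every $L_x$ is a finite sum of products of bounded adjointable operators, so $\pi_{\red}$ is well defined into $\mathcal{L}(L^2_{\X}(\X\rtimes\widehat{\G}))$; that it is a $^*$-homomorphism then follows from associativity of the product together with the identity $\langle L_x y,z\rangle_{\X}=E_{\X}(y^*x^*z)=\langle y,L_{x^*}z\rangle_{\X}$.

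For injectivity I would first observe that $\|\cdot\|_{\X}$ is a genuine norm on $\mathscr{O}(\X\rtimes\widehat{\G})$: before completion, $V$ is the linear isomorphism onto $\mathscr{O}(\G)\underset{\alg}{\otimes}C_0(\X)\subseteq L^2(\G)\otimes C_0(\X)$ induced by the inclusion $\mathscr{O}(\G)\hookrightarrow L^2(\G)$, which is injective because $\varphi_{\G}$ is faithful on $\mathscr{O}(\G)$ (Lemma \ref{LemPhiFaith}); hence $\|x\|_{\X}=\|V(x)\|=0$ forces $x=0$. Next, for a bounded approximate unit $(u_i)$ of $C_0(\X)$, the computation $(ha)(u_i1_{\G})=h(au_i)$ shows that under $V$ right multiplication by $u_i1_{\G}$ is $1\otimes R_{u_i}$, so that $x(u_i1_{\G})\to x$ in $\|\cdot\|_{\X}$; thus $(u_i1_{\G})$ is a right approximate unit for the module norm. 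Finally, if $\pi_{\red}(x)=0$ then $x(u_i1_{\G})=L_x(u_i1_{\G})=0$ for every $i$, so $\|x\|_{\X}=\lim_i\|x(u_i1_{\G})\|_{\X}=0$, and therefore $x=0$.

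The main obstacle is the boundedness of left multiplication by the coefficient algebra $C_0(\X)$: unlike right multiplication, it is not automatically bounded on a pre-Hilbert module, and the clean way around it is the positivity argument above, which uses only that the $C_0(\X)$-valued inner product is positive and that $E_{\X}$ is $C_0(\X)$-bimodular. The remaining work is essentially bookkeeping with the two normal orderings $ah$ and $ha$ and the transport map $V$, which I would carry out using the product and $^*$-formulas recorded before the statement.
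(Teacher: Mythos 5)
Your proposal is correct and follows essentially the same route as the paper's proof: the same decomposition of left multiplication into the $C_0(\X)$-part and the $\mathscr{O}(\G)$-part, with the former controlled by positivity of $E_{\X}$ (your square-root trick with $d=(\|b\|^2\cdot 1-b^*b)^{1/2}$ is exactly the argument the paper compresses into ``by positivity of $E_{\X}$'') and the latter transported through $V$, where it becomes left multiplication on $L^2(\G)$ tensored with the identity on $C_0(\X)$. The only point where you genuinely diverge is injectivity: the paper applies $\pi_{\red}(x)$ to the single vector $x^*$, obtaining $\langle xx^*,xx^*\rangle_{\X}=0$, hence $xx^*=0$, and then applies $E_{\X}$ to get $\langle x^*,x^*\rangle_{\X}=0$ and $x=0$; you instead apply $\pi_{\red}(x)$ to the approximate-unit vectors $u_i1_{\G}$ and use that $x(u_i1_{\G})\rightarrow x$ in $\|\cdot\|_{\X}$. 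Both mechanisms are valid and both ultimately rest on the same fact, which the paper uses silently but you rightly make explicit: that $\|\cdot\|_{\X}$ is a genuine norm on $\mathscr{O}(\X\rtimes\widehat{\G})$, because $V$ is injective on the algebraic level by faithfulness of $\varphi_{\G}$ on $\mathscr{O}(\G)$. Your variant has the small added benefit of exhibiting non-degeneracy of $\pi_{\red}$ along the way, which the paper asserts without comment.
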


\begin{proof}We will use the notation from Proposition \ref{PropCrossImp}.

 For $a\in C_0(\X)$ and $x\in \mathscr{O}(\X\rtimes \widehat{\G})$, we have that \[E_{\X}(x^*a^*ax)\leq \|a^*a\|E_{\X}(x^*x),\] by positivity of $E_{\X}$.
 Hence, writing $L_ax = ax$, we have \[\|L_ax\|_{\X} \leq \|a\|\|x\|_{\X}.\] It is furthermore easy to see that \[\langle y,ax\rangle_{\X} = \langle a^*y,x\rangle_{\X},\] hence $L_a$ is adjointable with  $L_a^* = L_{a^*}$. Thus $L_a \in \mathcal{L}(L^2_{\X}(\X\rtimes \widehat{\G}))$.
 
On the other hand, for $h\in \mathscr{O}(\G)$ we see that \[VL_h = (L_h\otimes 1)V\quad \textrm{on }\mathscr{O}(\X\rtimes \widehat{\G}),\] where on the right $L_h$ denotes the operation of left multiplication with $h$ on $L^2(\G)$.  This proves the existence of $L_h\in  \mathcal{L}(L^2_{\X}(\X\rtimes \widehat{\G}))$. It is then easily seen that we obtain a (non-degenerate) $^*$-homomorphism \[\pi_{\red}: \mathscr{O}(\X\rtimes \widehat{\G})\rightarrow \mathcal{L}(L^2_{\X}(\X\rtimes \widehat{\G})),\quad ah \mapsto L_aL_h.\]

To see that it is injective, assume that $\pi_{\red}(x) = 0$.
Then $\langle xx^*,xx^*\rangle_{\X}= 0$, and hence $xx^*=0$. Applying $E_{\X}$, we obtain $\langle x^*,x^*\rangle_{\X}= 0$, hence $x=0$.
\end{proof}

\begin{Cor}\label{CorGoodAlg} Let $\X$ be a locally compact quantum space, and let $\X\curvearrowleft\widehat{\G}$.  Then \[\mathscr{O}(\X\rtimes \widehat{\G})\subseteq C_0(\X\rtimes_u\widehat{\G}).\] 
\end{Cor}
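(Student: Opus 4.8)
The plan is to unwind what the inclusion actually asserts. By the construction of the universal C$^*$-envelope, the smash product carries a canonical $^*$-homomorphism $\iota:\mathscr{O}(\X\rtimes\widehat{\G})\to C_0(\X\rtimes_u\widehat{\G})$ whose kernel is precisely the ideal $I=\{x\mid\|x\|_u=0\}$, where $\|x\|_u=\sup\{\|\pi(x)\|\mid\pi\text{ a }^*\text{-representation of }\mathscr{O}(\X\rtimes\widehat{\G})\text{ on a Hilbert space}\}$. Hence the asserted inclusion is equivalent to the statement that $\|\cdot\|_u$ is a genuine norm, that is, $\|x\|_u>0$ for every nonzero $x$. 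To establish this it suffices to exhibit a single $^*$-representation of $\mathscr{O}(\X\rtimes\widehat{\G})$ on a Hilbert space that is injective, since then $\|x\|_u\geq\|\pi(x)\|>0$ for all $x\neq0$.

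First I would invoke the preceding theorem, which provides the injective $^*$-homomorphism $\pi_{\red}:\mathscr{O}(\X\rtimes\widehat{\G})\to\mathcal{L}(L^2_{\X}(\X\rtimes\widehat{\G}))$ into the adjointable operators on the Hilbert $C_0(\X)$-module $L^2_{\X}(\X\rtimes\widehat{\G})$. This is not yet a Hilbert-space representation, which is the only real gap to bridge. The key observation is that $\mathcal{L}(L^2_{\X}(\X\rtimes\widehat{\G}))$ is itself a C$^*$-algebra, so by the Gelfand--Naimark theorem it admits a faithful, hence isometric, $^*$-representation $\sigma:\mathcal{L}(L^2_{\X}(\X\rtimes\widehat{\G}))\to B(\Hsp)$ on some Hilbert space $\Hsp$.

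The composite $\sigma\circ\pi_{\red}:\mathscr{O}(\X\rtimes\widehat{\G})\to B(\Hsp)$ is then a $^*$-representation on a Hilbert space, and it is injective, being a composition of two injective $^*$-homomorphisms. Consequently $\|x\|_u\geq\|\sigma(\pi_{\red}(x))\|>0$ whenever $x\neq0$, so $I=0$ and $\iota$ is injective, which is exactly the claimed inclusion.

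The only subtle point, and the main obstacle, is the passage from the Hilbert-module representation $\pi_{\red}$ to a Hilbert-space representation, which is precisely the arena in which the universal norm is measured; this is resolved cleanly by Gelfand--Naimark applied to the C$^*$-algebra of adjointable operators, after which everything reduces to composing injective maps. One could alternatively dilate $\pi_{\red}$ directly through the interior tensor product $L^2_{\X}(\X\rtimes\widehat{\G})\otimes_{\rho}\Hsp_0$ against a faithful representation $\rho$ of $C_0(\X)$, relying on the faithfulness of $E_{\X}$ (which itself follows from Proposition \ref{PropCrossImp} together with the faithfulness of $\varphi_{\G}$ on $\mathscr{O}(\G)$ recorded in Lemma \ref{LemPhiFaith}), but the Gelfand--Naimark route is the shortest.
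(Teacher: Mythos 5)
Your proof is correct and is essentially the paper's own argument: the corollary appears there as an immediate consequence of the preceding theorem, whose injective $^*$-homomorphism $\pi_{\red}:\mathscr{O}(\X\rtimes\widehat{\G})\rightarrow\mathcal{L}(L^2_{\X}(\X\rtimes\widehat{\G}))$ is exactly the separating representation you invoke, so that the universal seminorm $\|\cdot\|_u$ (already known to be finite) has trivial kernel. Your only addition is to spell out the routine Gelfand--Naimark step passing from adjointable operators on the Hilbert $C_0(\X)$-module to operators on an honest Hilbert space, which the paper leaves implicit.
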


The preceding discussion also allows us to define a reduced smash product, in the following way.

\begin{Def} Let $\X$ be a locally compact quantum space, and let $\X\curvearrowleft\widehat{\G}$.  We define \[C_0(\X \rtimes_{\red}\widehat{\G}) = \Big{\lbrack} \pi_{\red}\big{(}\mathscr{O}(\X\rtimes \widehat{\G})\big{)}\Big{\rbrack}\subseteq \mathcal{L}(L^2_{\X}(\X\rtimes \widehat{\G})).\]
\end{Def}

\begin{Exe} Show that, with \[V: L^2_{\X}(\X\rtimes\widehat{\G}) \rightarrow L^2(\G)\otimes C_0(\X),\quad ha \mapsto h\otimes a,\] we have for $h,g\in \mathscr{O}(\G)$ and $a,b\in C_0(\X)$ that 
\[V\pi_{\red}(ha)V^*(g\otimes b) = hg_{(2)}\otimes \Big{(}S^{-1}(g_{(1)})\rhd a\Big{)}b.\]
\end{Exe}

\begin{Lem}\label{LemFixSmash} Assume $\X$ is a locally compact quantum space and $\X\curvearrowleft \widehat{\G}$. Then 
\begin{itemize}
\item  $(\X\rtimes_u \widehat{\G})/\G = \X$, 
\item $(\X\rtimes_{\red} \widehat{\G})/\G = \X$,
\item $\mathscr{O}_{\G}(\X\rtimes_u \widehat{\G}) = \mathscr{O}(\X\rtimes \widehat{\G})$,
\item $\mathscr{O}_{\G}(\X\rtimes_{\red} \widehat{\G}) = \mathscr{O}(\X\rtimes \widehat{\G})$.
\end{itemize}
\end{Lem}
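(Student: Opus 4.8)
The plan is to exploit the fact that the dual coaction $\alpha$ of Proposition \ref{PropDualAct} acts only on the Hopf-algebra leg: under the vector-space identification $\mathscr{O}(\X\rtimes\widehat{\G}) = C_0(\X)\underset{\alg}{\otimes}\mathscr{O}(\G)$, one has $\alpha(a\otimes h)=a\otimes h_{(1)}\otimes h_{(2)}$, that is $\alpha=\id_{C_0(\X)}\otimes\Delta$. Since $\mathscr{O}(\G)=\bigoplus_{\pi}C(\G)_{\pi}$ and each $C(\G)_{\pi}$ is a sum of copies of the comodule $\pi$ for the regular coaction $\Delta$ (as in the discussion giving $\mathscr{O}(\G)=\mathscr{O}_{\G}(\G)$), the $\pi$-isotypical subspace of the algebraic smash product is exactly $C_0(\X)\underset{\alg}{\otimes}C(\G)_{\pi}$, and the conditional expectation onto the trivial ($\pi_{\epsilon}$) component is the map $E_{\X}$ of Proposition \ref{PropCrossImp}, with $E_{\X}(a\otimes h)=\varphi_{\G}(h)a$ and hence $E_{\X}(C_0(\X)\otimes C(\G)_{\pi})=\delta_{\pi,\pi_{\epsilon}}\,C_0(\X)$. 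I would first settle the universal completion, reading off bullets 1 and 3, and then transfer everything to the reduced completion.

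For the universal case, recall from the corollary to Proposition \ref{PropDualAct} that $\alpha$ extends to a genuine action $\alpha_u$ on $C_0(\X\rtimes_u\widehat{\G})$ and that $\mathscr{O}(\X\rtimes\widehat{\G})\subseteq C_0(\X\rtimes_u\widehat{\G})$ by Corollary \ref{CorGoodAlg}. The goal is $C_0(\X\rtimes_u\widehat{\G})_{\pi}=C_0(\X)\otimes C(\G)_{\pi}$ for each irreducible $\pi$. The inclusion $\supseteq$ is immediate, these elements being $\pi$-isotypical. For $\subseteq$ I would use density together with closedness: the map $E_{\pi}$ of \eqref{DefEpi} is bounded, idempotent, with range the isotypical component, and since $\mathscr{O}(\X\rtimes\widehat{\G})$ is dense (Theorem \ref{TheoDensPod}) and $E_{\pi}$ preserves the algebraic smash product, $E_{\pi}(\mathscr{O}(\X\rtimes\widehat{\G}))=C_0(\X)\otimes C(\G)_{\pi}$ is dense in $C_0(\X\rtimes_u\widehat{\G})_{\pi}$. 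It thus suffices to show $C_0(\X)\otimes C(\G)_{\pi}$ is norm-closed. Fixing a basis $h_1,\dots,h_m$ of the finite-dimensional space $C(\G)_{\pi}$ and, using faithfulness of $\varphi_{\G}$ on $\mathscr{O}(\G)$ (Lemma \ref{LemPhiFaith}), a $\varphi_{\G}$-dual system $g^{(1)},\dots,g^{(m)}\in\mathscr{O}(\G)$ with $\varphi_{\G}(h_jg^{(i)})=\delta_{ij}$, I define coefficient maps $\Phi_i(x)=E_{\X}\big(x(1_{\X}\otimes g^{(i)})\big)$ into $C_0(\X)$. Each $\Phi_i$ is bounded (right multiplication by the multiplier $1_{\X}\otimes g^{(i)}$, followed by the contractive $E_{\X}$) and, using $(a\otimes h)(1_{\X}\otimes g)=a\otimes hg$, satisfies $\Phi_i(\sum_j a_jh_j)=a_i$. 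Hence the reconstruction map $(a_i)_i\mapsto\sum_i a_ih_i$ is bounded above and bounded below, so $C_0(\X)\otimes C(\G)_{\pi}$ is complete, hence closed. Being closed and dense in $C_0(\X\rtimes_u\widehat{\G})_{\pi}$, it equals it. Summing over irreducibles gives $\mathscr{O}_{\G}(\X\rtimes_u\widehat{\G})=\mathscr{O}(\X\rtimes\widehat{\G})$ (bullet 3), and the case $\pi=\pi_{\epsilon}$ gives $C_0((\X\rtimes_u\widehat{\G})/\G)=C_0(\X)$ (bullet 1).

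For the reduced completion I would not repeat the computation but identify $C_0(\X\rtimes_{\red}\widehat{\G})$ with the reduced companion of the action $\X\rtimes_u\widehat{\G}\overset{\alpha_u}{\curvearrowleft}\G$. Indeed, by bullet 1 the quotient of that action is $\X$ and its natural conditional expectation onto the quotient is precisely $E_{\X}$; therefore the Hilbert module built from $E_{\X}$ coincides with the module $L^2_{\X}(\X\rtimes\widehat{\G})$ of Proposition \ref{PropCrossImp} (both are the completion of the common dense $^*$-algebra $\mathscr{O}(\X\rtimes\widehat{\G})$ with respect to the same $C_0(\X)$-valued inner product), and the two left-multiplication representations agree on this dense subalgebra. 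Consequently the reduced companion of $\X\rtimes_u\widehat{\G}\curvearrowleft\G$ is exactly $C_0(\X\rtimes_{\red}\widehat{\G})$, carrying the reduced dual action. Bullets 2 and 4 then follow directly from the reduced actions theorem of the previous section, which gives $C_0(\Y_{\red})=C_0(\Y)$ and $\mathscr{O}_{\G_{\red}}(\X_{\red})=\mathscr{O}_{\G}(\X)$ for the reduced companion of any action (noting $\mathscr{O}_{\G_{\red}}=\mathscr{O}_{\G}$ since the core depends only on $\mathscr{O}(\G)=\mathscr{O}(\G_{\red})$).

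The main obstacle is the closedness of $C_0(\X)\otimes C(\G)_{\pi}$ inside the completion: this is exactly where the coefficient-extraction maps $\Phi_i$, built from $E_{\X}$ and a $\varphi_{\G}$-dual system in $\mathscr{O}(\G)$, are indispensable, and it is the only point at which faithfulness of the Haar state is used. A secondary, more bookkeeping, obstacle is the identification of the reduced smash product with the reduced companion of the universal one, which rests on the equalities $\Y=\X$ and $E_{\X/\G}=E_{\X}$ obtained for bullet 1.
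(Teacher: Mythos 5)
The paper's own ``proof'' of this lemma is literally the word \emph{Exercise}, so there is no argument of record to compare yours against; judged on its own terms, your proposal is correct and is a legitimate solution of that exercise. The two pillars of your argument are exactly the right ones: (i) at the universal level, the $\pi$-spectral subspace of $C_0(\X\rtimes_u\widehat{\G})$ is squeezed between the dense image $E_{\pi}(\mathscr{O}(\X\rtimes\widehat{\G}))=C_0(\X)\aotimes C(\G)_{\pi}$ (using \eqref{DefEpi} and Lemma \ref{LemClosSpec}) and its closure, so everything reduces to closedness of $C_0(\X)\aotimes C(\G)_{\pi}$, which your coefficient maps $\Phi_i$ deliver; (ii) at the reduced level, $C_0(\X\rtimes_{\red}\widehat{\G})$ is the reduced companion of $\X\rtimes_u\widehat{\G}\overset{\alpha_u}{\curvearrowleft}\G$, because both Hilbert modules are (separation-)completions of $\mathscr{O}(\X\rtimes\widehat{\G})$ for the same $C_0(\X)$-valued inner product ($E_{\Y}$ restricts to the $E_{\X}$ of Proposition \ref{PropCrossImp} by bullet 1), and the left multiplication representations agree there; the theorem on reduced actions of Section 4 then yields bullets 2 and 4.

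Three small points should be made explicit to close the argument fully. First, boundedness of right multiplication by $1_{\X}\otimes g^{(i)}$ in the universal norm is not automatic from the word ``multiplier'': it follows from the proof that $\mathscr{O}(\X\rtimes\widehat{\G})$ admits a universal C$^*$-envelope, where every non-degenerate $^*$-representation $\pi$ was shown to come with a $^*$-representation of $\mathscr{O}(\G)$ satisfying $\pi(x(1_{\X}\otimes g))=\pi(x)\pi(g)$ and $\|\pi(g)\|\leq \|g\|_{C(\G_u)}$. Second, your $\Phi_i$ land in $C_0(\X)\otimes 1_{\G}$, and you need this subspace to be complete for its norm to serve as the target of a bounded-below reconstruction map; this holds because $a\mapsto a\otimes 1_{\G}$ is an injective $^*$-homomorphism of C$^*$-algebras --- injective by Corollary \ref{CorGoodAlg} --- hence isometric. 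Third, the density of $\mathscr{O}(\X\rtimes\widehat{\G})$ in $C_0(\X\rtimes_u\widehat{\G})$ is by the very construction of the universal envelope (completion of the $^*$-algebra), not by Theorem \ref{TheoDensPod}: the latter concerns density of the Podle\'{s} subalgebra, whose identification with $\mathscr{O}(\X\rtimes\widehat{\G})$ is precisely what you are proving, so citing it there would be circular in appearance even though the fact you use is trivially true. With these repairs, which are routine, the proof stands.
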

\begin{proof}
Exercise.
\end{proof}

Our next goal is to define the crossed product with respect to an action of a compact quantum group. We first recall the definition and basic structure theory of the dual $^*$-algebra.

\begin{Def} Let $\G$ be a compact quantum group. Then we define \[\mathscr{O}(\widehat{\G}) = \{\varphi_{\G}(\,\cdot\, h)\mid h\in \mathscr{O}(\G)\}\subseteq \mathscr{O}(\G)^*,\] with $\mathscr{O}(\G)^*$ the vector space dual to $\mathscr{O}(\G)$. 
\end{Def}

Note that we have as well $\mathscr{O}(\widehat{\G}) = \{\varphi_{\G}(h\,\cdot\,)\mid h\in \mathscr{O}(\G)\}$, by making use of the modular automorphism $\sigma$.

\begin{Lem}The vector space $\mathscr{O}(\widehat{\G})$ is a $^*$-algebra for \[(\omega\cdot \theta)(h) = (\omega\otimes \theta)\Delta(h),\quad \omega^*(h) = \overline{\omega(S(h)^*)}.\] In fact, with $\{\pi\}$ a maximal collection of mutually inequivalent unitary representations of $\G$, we have 
 \[ \mathscr{O}(\widehat{\G})\cong \oplus_{\pi} B(\Hsp_{\pi}),\quad \omega \mapsto \sum_{\pi} (\id_{\Hsp_{\pi}}\otimes \omega)\delta_{\pi}.\]
\end{Lem}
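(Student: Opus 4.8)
The plan is to prove the whole lemma at once by showing that the candidate map
\[
F:\mathscr{O}(\widehat{\G})\to \bigoplus_{\pi}B(\Hsp_{\pi}),\qquad F(\omega)=\sum_{\pi}(\id_{\Hsp_{\pi}}\otimes\omega)\delta_{\pi},
\]
is a linear bijection that carries the convolution product to componentwise operator composition and the $^*$-operation to componentwise operator adjoint. Since the algebraic (finitely supported) direct sum $\bigoplus_\pi B(\Hsp_\pi)$ is visibly a $^*$-algebra, this simultaneously shows that $\mathscr{O}(\widehat{\G})$ is closed under both operations and is a $^*$-algebra isomorphic to $\bigoplus_\pi B(\Hsp_\pi)$. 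In coordinates, with $\{e_i\}$ an orthonormal basis of $\Hsp_\pi$ and $\delta_\pi(e_j)=\sum_i e_i\otimes U_\pi(e_i,e_j)$, the $\pi$-component of $F(\omega)$ is the operator with matrix $[\omega(U_\pi(e_i,e_j))]_{i,j}$.

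First I would identify $\mathscr{O}(\widehat{\G})$ with the finitely supported functionals. The assignment $\Phi\colon h\mapsto \varphi_{\G}(\,\cdot\,h)$ is injective on $\mathscr{O}(\G)$: if $\varphi_{\G}(ah)=0$ for all $a$, then choosing $a=\sigma^{-1}(h^*)$ and using the modular identity $\varphi_\G(ab)=\varphi_\G(b\sigma(a))$ of Theorem \ref{TheoWorChar} gives $\varphi_\G(hh^*)=0$, whence $h=0$ by Lemma \ref{LemPhiFaith}. Next, for $h\in C(\G)_\pi$ and $a\in C(\G)_{\pi'}$, writing $a=(a^*)^*$ with $a^*\in C(\G)_{\overline{\pi'}}$ and invoking the orthogonality of inequivalent isotypical components together with $C(\G)_\pi^*=C(\G)_{\bar\pi}$ (which follows from $U_\pi(\xi,\eta)^*=U_{\bar\pi}((Q_\pi^{-1}\xi)^*,\eta^*)$) shows $\varphi_\G(ah)=0$ unless $\pi'\cong\bar\pi$. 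Hence $\mathscr{O}(\widehat{\G})\subseteq\bigoplus_\pi C(\G)_\pi^{\,*}$, and comparing dimensions componentwise ($\dim C(\G)_\pi=\dim(\Hsp_\pi)^2=\dim C(\G)_{\bar\pi}^{\,*}$) with injectivity of $\Phi$ upgrades this to an equality. Since $\{U_\pi(e_i,e_j)\}_{i,j}$ is a basis of $C(\G)_\pi$, the restriction of $F$ to each $C(\G)_\pi^{\,*}$ is the tautological isomorphism onto $B(\Hsp_\pi)$, so $F$ is a linear bijection onto $\bigoplus_\pi B(\Hsp_\pi)$.

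The two remaining points are compatibility of $F$ with the product and with the $^*$. For the product I would apply $\id_{\Hsp_\pi}\otimes\omega\otimes\theta$ to the comodule identity $(\id\otimes\Delta)\delta_\pi=(\delta_\pi\otimes\id)\delta_\pi$ and recall $(\omega\cdot\theta)(h)=(\omega\otimes\theta)\Delta(h)$, which yields
\[
(\id_{\Hsp_\pi}\otimes(\omega\cdot\theta))\delta_\pi=\big((\id_{\Hsp_\pi}\otimes\omega)\delta_\pi\big)\circ\big((\id_{\Hsp_\pi}\otimes\theta)\delta_\pi\big),
\]
i.e.\ $F(\omega\cdot\theta)=F(\omega)F(\theta)$; concretely this is just the statement that $\Delta(U_\pi(e_i,e_j))=\sum_k U_\pi(e_i,e_k)\otimes U_\pi(e_k,e_j)$ encodes matrix multiplication. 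For the $^*$, using $\omega^*(h)=\overline{\omega(S(h)^*)}$ and $S(U_\pi(e_i,e_j))^*=U_\pi(e_j,e_i)$ gives $\omega^*(U_\pi(e_i,e_j))=\overline{\omega(U_\pi(e_j,e_i))}$, which is exactly the $(i,j)$ entry of the operator adjoint of $F(\omega)_\pi$ relative to the inner product making $\pi$ unitary, so $F(\omega^*)=F(\omega)^*$.

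The main obstacle is bookkeeping rather than depth. One must (a) pin down $\mathscr{O}(\widehat{\G})$ as the \emph{algebraic} direct sum $\bigoplus_\pi C(\G)_\pi^{\,*}$, so that $F$ genuinely lands in $\bigoplus_\pi B(\Hsp_\pi)$ rather than the full product — this is precisely where faithfulness of $\varphi_\G$ and the orthogonality relations are indispensable — and (b) keep the index conventions consistent so that the convolution identity reads as matrix multiplication in the correct order and the $^*$ matches the adjoint for the unitary inner product on $\Hsp_\pi$. Once these are fixed, closure of $\mathscr{O}(\widehat{\G})$ under $\cdot$ and $^*$ is automatic (it also follows directly from $C(\G)_\pi$ being a subcoalgebra and from $S(C(\G)_\pi)^*=C(\G)_\pi$), and transporting the $^*$-algebra structure of $\bigoplus_\pi B(\Hsp_\pi)$ along the bijection $F$ completes the proof.
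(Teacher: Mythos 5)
The paper states this lemma without proof (it is quoted as a standard Peter--Weyl type fact about the discrete dual), so there is no argument of the paper to compare yours against; judged on its own, your proof is correct and complete. The two points where such a proof can silently go wrong are both handled properly: (a) you actually identify $\mathscr{O}(\widehat{\G})$ with the \emph{algebraic} direct sum $\oplus_{\pi} C(\G)_{\pi}^{\,*}$, using injectivity of $h\mapsto \varphi_{\G}(\,\cdot\,h)$ (via the modular automorphism of Theorem \ref{TheoWorChar} and faithfulness of $\varphi_{\G}$ from Lemma \ref{LemPhiFaith}), the orthogonality of inequivalent isotypical components together with $C(\G)_{\pi}^{*}=C(\G)_{\bar{\pi}}$, and the dimension count $\dim C(\G)_{\pi}=\dim(\Hsp_{\pi})^{2}$ --- without this step the map $F$ would only land in the full product $\prod_{\pi}B(\Hsp_{\pi})$ and closure of $\mathscr{O}(\widehat{\G})$ under convolution and the $^{*}$-operation would not follow; and (b) your index conventions are consistent, so that $\Delta(U_{\pi}(e_i,e_j))=\sum_k U_{\pi}(e_i,e_k)\otimes U_{\pi}(e_k,e_j)$ really gives $F(\omega\cdot\theta)=F(\omega)F(\theta)$ (a homomorphism, not an anti-homomorphism) and $S(U_{\pi}(e_i,e_j))^{*}=U_{\pi}(e_j,e_i)$ gives $F(\omega^{*})=F(\omega)^{*}$. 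One cosmetic remark: the collection $\{\pi\}$ in the statement should of course be read as a maximal collection of mutually inequivalent \emph{irreducible} unitary representations, which is how you (correctly) use it throughout.
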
 

Note that $\mathscr{O}(\widehat{\G})$ is non-unital, unless $\G$ is finite! 

\begin{Lem} Let $\X$ be a locally compact quantum space, and $\X\overset{\alpha}{\curvearrowleft}\G$. Let $\Y = \X/\G$. There is a unique non-degenerate $^*$-representation \[l:\mathscr{O}(\widehat{\G}) \rightarrow \mathcal{L}(L^2_{\Y}(\X))\]such that \[ l_{\omega}(a) =  (\id_{\X}\otimes \omega)\alpha(a),\quad a\in \mathscr{O}_{\G}(\X),\omega \in \mathscr{O}(\widehat{\G}).\]
\end{Lem}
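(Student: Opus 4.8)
The statement asks for a unique non-degenerate $^*$-representation $l: \mathscr{O}(\widehat{\G}) \rightarrow \mathcal{L}(L^2_{\Y}(\X))$ satisfying $l_{\omega}(a) = (\id_{\X}\otimes \omega)\alpha(a)$ on the algebraic core $\mathscr{O}_{\G}(\X)$. The plan is to define the operators $l_{\omega}$ directly by the prescribed formula on the dense submodule $\mathscr{O}_{\G}(\X) \subseteq L^2_{\Y}(\X)$, verify that each $l_{\omega}$ is an adjointable, bounded, right $C_0(\Y)$-linear operator, check that $\omega \mapsto l_{\omega}$ is a $^*$-homomorphism, and finally argue non-degeneracy. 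Uniqueness will then be automatic, since $\mathscr{O}_{\G}(\X)$ is dense in $L^2_{\Y}(\X)$ by construction and a bounded operator is determined by its values on a dense subspace.

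First I would fix $\omega = \varphi_{\G}(\,\cdot\,h)$ for some $h\in \mathscr{O}(\G)$ and observe that $l_{\omega}$ maps $\mathscr{O}_{\G}(\X)$ into itself, since $\alpha$ restricts to the Hopf $^*$-algebraic coaction $\alpha_{\alg}$ and applying a functional to the second leg of $\alpha(a)\in \mathscr{O}_{\G}(\X)\aotimes \mathscr{O}(\G)$ lands back in $\mathscr{O}_{\G}(\X)$. The crucial structural feature is compatibility with the conditional expectation: because $(E_{\Y}\otimes \id_{\G})\alpha = E_{\Y}(\,\cdot\,)\otimes 1_{\G}$, one computes that $l_{\omega}$ is right $C_0(\Y)$-linear, and the inner product $\langle l_{\omega}(a), b\rangle_{\Y} = E_{\Y}((l_{\omega}(a))^*b)$ can be manipulated using the coaction property together with the convolution $^*$-structure on $\mathscr{O}(\widehat{\G})$. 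I expect the adjoint to be $l_{\omega}^* = l_{\omega^*}$, which I would establish by a direct calculation using $\omega^*(h) = \overline{\omega(S(h)^*)}$ and the identity $S(S(h)^*)^* = h$, pushing the functional across the inner product via strong left invariance of $\varphi_{\G}$.

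The main obstacle will be proving \emph{boundedness} of $l_{\omega}$ as an operator on the Hilbert module $L^2_{\Y}(\X)$, which is not merely a matter of applying a continuous functional, since the module norm is governed by $E_{\Y}$ rather than the C$^*$-norm of $C_0(\X)$. The clean route is to use the isomorphism $\mathscr{O}(\widehat{\G})\cong \oplus_{\pi} B(\Hsp_{\pi})$ from the preceding lemma: it suffices to bound the operators corresponding to matrix units, and these act by the components of $\alpha$ landing in the isotypical subspaces $C_0(\X)_{\pi}$. One can realize the action of $\mathscr{O}(\widehat{\G})$ through the unitary $U_{\alpha}$ of Lemma~\ref{LemIntroImplUn}, which implements $\alpha$ on $L^2_{\Y}(\X)\otimes L^2(\G)$; concretely, $l_{\omega}$ is obtained by slicing $U_{\alpha}$ against the vector state data encoded by $\omega$ on the $L^2(\G)$-leg, and boundedness then follows from the fact that $U_{\alpha}$ is unitary. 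This reduction to $U_{\alpha}$ is what makes the argument work on the module rather than on $C_0(\X)$ itself.

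Finally, to see that $l$ is a $^*$-\emph{homomorphism}, I would verify $l_{\omega}l_{\theta} = l_{\omega\cdot\theta}$ on $\mathscr{O}_{\G}(\X)$ using the coaction property $(\alpha\otimes\id_{\G})\alpha = (\id_{\X}\otimes\Delta)\alpha$: applying $\id_{\X}\otimes\omega\otimes\theta$ to both sides and recalling $(\omega\cdot\theta)(h) = (\omega\otimes\theta)\Delta(h)$ yields the multiplicativity directly. For non-degeneracy, I would show $\lbrack l(\mathscr{O}(\widehat{\G}))\mathscr{O}_{\G}(\X)\rbrack$ is dense in $L^2_{\Y}(\X)$; this follows because the slices $(\id_{\X}\otimes\omega)\alpha(a)$ as $\omega$ ranges over $\mathscr{O}(\widehat{\G})$ recover all of $\mathscr{O}_{\G}(\X)$, by the same counit/antipode manipulation used in the proof of Lemma~\ref{LemClosSpec} and Theorem~\ref{TheoDensPod}, together with the density of $\mathscr{O}_{\G}(\X)$ in the module.
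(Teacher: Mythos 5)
Your proposal is correct, and its overall skeleton matches the paper's: define $l_{\omega}$ by the given formula on the dense core, get multiplicativity from the coaction property, get $l_{\omega}^{*}=l_{\omega^{*}}$ from strong left invariance, and get non-degeneracy from the fact that for each $a\in\mathscr{O}_{\G}(\X)$ some $\omega$ (built from the elements $\chi_{\pi}$ of \eqref{EqElChi}) acts as the identity on $a$. The genuine divergence is in the key step, boundedness on the Hilbert module. The paper proves it by a direct Cauchy--Schwarz/Kadison-type estimate: $(\id_{\X}\otimes\omega)(x)^{*}(\id_{\X}\otimes\omega)(x)\leq\|\omega\|\,(\id_{\X}\otimes|\omega|)(x^{*}x)$, then positivity and invariance of $E_{\Y}$ give $\|l_{\omega}(a)\|_{\Y}\leq\|\omega\|\,\|a\|_{\Y}$. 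You instead realize $l_{\omega}$ as a compression of the implementing unitary $U_{\alpha}$ of Lemma \ref{LemIntroImplUn}: writing $\omega=\varphi_{\G}(h^{*}\,\cdot\,)$, one checks on $\mathscr{O}_{\G}(\X)$ that $l_{\omega}=(1\otimes h\xi_{\G})^{*}\,U_{\alpha}\,(1\otimes\xi_{\G})$, where $1\otimes\xi$ denotes the adjointable insertion map $L^{2}_{\Y}(\X)\rightarrow L^{2}_{\Y}(\X)\otimes L^{2}(\G)$; this yields boundedness \emph{and} adjointability of each $l_{\omega}$ in one stroke, without invoking the polar decomposition $|\omega|$ of a functional or the operator-valued Cauchy--Schwarz inequality. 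What each approach buys: yours is structurally cleaner and makes adjointability automatic, at the cost of leaning on the prior unitarity result (and on the identity $\mathscr{O}(\widehat{\G})=\{\varphi_{\G}(h\,\cdot\,)\mid h\in\mathscr{O}(\G)\}$ via the modular automorphism); the paper's estimate is more self-contained and gives the sharp bound $\|l_{\omega}\|\leq\|\omega\|$. Note that in either approach the identification of the adjoint as $l_{\omega^{*}}$ still requires strong left invariance -- the paper does this first for $\X=\G$ and then transfers to general $\X$ via the equivariance $\alpha(l_{\omega}a)=(\id_{\X}\otimes l_{\omega})\alpha(a)$, whereas your direct computation on general $\X$ also works but needs the same identity in disguise.
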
 
Recall that $L^2_{\Y}(\X)$ is the completion of $\mathscr{O}_{\G}(\X)$ with respect to \[\|a\|_{\Y}= \|E_{\Y}(a^*a)\|^{1/2}.\]
\begin{proof}

We first check that $l_{\omega}$ is well-defined and bounded: for $a\in C_0(\X)$ and $\omega \in \mathscr{O}(\widehat{\G})$, we compute \begin{eqnarray*} \|l_{\omega}(a)\|_{\Y}^2 &=&\| E_{\Y}((\id_{\X}\otimes \omega)(\alpha(a))^*(\id_{\X}\otimes \omega)(\alpha(a)))\|\\ &\leq & \|\omega\|\| E_{\Y}\left( (\id_{\X}\otimes |\omega|)\alpha(a^*a)\right) \|\\ &=& \|\omega\|^2 \|E_{\Y}(a^*a)\| \\ &=& \|\omega\|^2\|a\|_{\Y}^2.
\end{eqnarray*}
It is then clear that $l$ is a representation. To see that it is $^*$-preserving, we first observe that, with $g,h\in \mathscr{O}(\G)$, we have in the case $\X = \G$ that 
\[ \langle g,l_{\omega}h\rangle = \langle l_{\omega^*}g,h\rangle\] in $L^2(\G)$, by strong left invariance. In general, it then follows that $l_{\omega}^* = l_{\omega^*}$ by using that \[\alpha(l_{\omega}a) = (\id_{\X}\otimes l_{\omega})\alpha(a),\quad a\in \mathscr{O}_{\G}(\X).\]  

Finally, to see that the representation is non-degenerate, we use that for any $a\in \mathscr{O}_{\G}(\X)$, there exists an $\omega \in \mathscr{O}(\widehat{\G})$ such that $l_{\omega}a = a$, using for example the elements defined by \eqref{EqElChi}. 
\end{proof} 

In the following, we write $L_a = \pi_{\red}(a)$ for the operator of left multiplication with $a\in C_0(\X)$ on $L^2_{\Y}(\X)$. 

\begin{Lem} Assume $\X\curvearrowleft \G$. For $a\in \mathscr{O}_{\G}(\X)$ and $\omega\in \mathscr{O}(\widehat{\G})$, \[l_{\omega}L_a = L_{a_{(0)}}l_{\omega(a_{(1)}\,\cdot\,)} \quad \textrm{in } \mathcal{L}(L^2_{\Y}(\X)).\]
\end{Lem}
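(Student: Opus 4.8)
The plan is to verify the identity on the dense subspace $\mathscr{O}_{\G}(\X)\subseteq L^2_{\Y}(\X)$ and then upgrade it to an equality of operators by a boundedness-and-density argument. Both sides are a priori elements of $\mathcal{L}(L^2_{\Y}(\X))$: the operators $L_a=\pi_{\red}(a)$ and $l_{\omega}$ have already been shown to be bounded and adjointable, so $l_{\omega}L_a$ is too, while the right-hand side $L_{a_{(0)}}l_{\omega(a_{(1)}\,\cdot\,)}$ is a \emph{finite} sum (because $\alpha(a)\in \mathscr{O}_{\G}(\X)\aotimes \mathscr{O}(\G)$) of products of such operators, once one knows that each functional $\omega(a_{(1)}\,\cdot\,)\colon h\mapsto \omega(a_{(1)}h)$ genuinely lies in $\mathscr{O}(\widehat{\G})$. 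Since agreement on the dense core together with uniform boundedness forces equality in $\mathcal{L}(L^2_{\Y}(\X))$, the whole statement reduces to a bookkeeping computation.

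First I would settle the membership $\omega(a_{(1)}\,\cdot\,)\in\mathscr{O}(\widehat{\G})$, which is what makes the right-hand side meaningful. Writing $\omega=\varphi_{\G}(\,\cdot\, h_0)$ for some $h_0\in\mathscr{O}(\G)$ and invoking the modular property $\varphi_{\G}(xy)=\varphi_{\G}(y\sigma(x))$ of Theorem \ref{TheoWorChar}, one computes, for each Sweedler component, $\omega(a_{(1)}h)=\varphi_{\G}(a_{(1)}hh_0)=\varphi_{\G}\big(h\,h_0\sigma(a_{(1)})\big)$, so that $\omega(a_{(1)}\,\cdot\,)=\varphi_{\G}\big(\,\cdot\, h_0\sigma(a_{(1)})\big)$ lies in $\mathscr{O}(\widehat{\G})$ because $\sigma$ is an algebra automorphism of $\mathscr{O}(\G)$. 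I would also record that both sides preserve the algebraic core: $l_{\omega}$ maps $C_0(\X)_{\pi}$ into itself since $\alpha(C_0(\X)_{\pi})\subseteq C_0(\X)_{\pi}\otimes C(\G)_{\pi}$, while $L_a$ preserves $\mathscr{O}_{\G}(\X)$ as the latter is a $^*$-algebra.

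Then comes the main computation, applied to an arbitrary $b\in\mathscr{O}_{\G}(\X)$. Because $\alpha$ is a $^*$-homomorphism, $\alpha(ab)=\alpha(a)\alpha(b)=a_{(0)}b_{(0)}\otimes a_{(1)}b_{(1)}$, so on the one hand $l_{\omega}L_a(b)=l_{\omega}(ab)=(\id_{\X}\otimes\omega)\alpha(ab)=a_{(0)}b_{(0)}\,\omega(a_{(1)}b_{(1)})$, and on the other hand $L_{a_{(0)}}l_{\omega(a_{(1)}\,\cdot\,)}(b)=a_{(0)}\big(b_{(0)}\,\omega(a_{(1)}b_{(1)})\big)=a_{(0)}b_{(0)}\,\omega(a_{(1)}b_{(1)})$, using $l_{\omega(a_{(1)}\,\cdot\,)}(b)=(\id_{\X}\otimes\omega(a_{(1)}\,\cdot\,))\alpha(b)=b_{(0)}\,\omega(a_{(1)}b_{(1)})$. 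The two expressions coincide, so the operators agree on $\mathscr{O}_{\G}(\X)$.

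There is no deep obstacle here: the identity is essentially formal, being the operator-theoretic shadow of $\alpha$ being a homomorphism together with the Sweedler expansion of $l_{\omega}$. The only genuine care needed is the verification that $\omega(a_{(1)}\,\cdot\,)$ stays inside $\mathscr{O}(\widehat{\G})$ — handled above by the modular automorphism — and the routine observation that equality on the dense core $\mathscr{O}_{\G}(\X)$, combined with the boundedness of all the operators involved, yields the asserted equality in $\mathcal{L}(L^2_{\Y}(\X))$.
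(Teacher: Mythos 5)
Your proof is correct and complete: the paper leaves this lemma as an exercise, and your argument — verifying the identity on the dense core $\mathscr{O}_{\G}(\X)$ via the homomorphism property of $\alpha$ and the defining formula for $l_{\omega}$, then upgrading by boundedness and density — is exactly the intended one. The one non-routine point, that $\omega(a_{(1)}\,\cdot\,)$ remains in $\mathscr{O}(\widehat{\G})$, you handle properly with the modular automorphism, in line with the paper's own remark that $\mathscr{O}(\widehat{\G}) = \{\varphi_{\G}(h\,\cdot\,)\mid h\in \mathscr{O}(\G)\}$.
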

\begin{proof} Exercise.
\end{proof}

\begin{Def} Let $\X\overset{\alpha}{\curvearrowleft} \G$ be an action of $\G$ on the locally compact quantum space $\X$. The \emph{crossed product $^*$-algebra} \[\mathscr{O}(\mathbb{X}\rtimes \G)\] is the vector space  $\mathscr{O}_{\mathbb{G}}(\X)\aotimes \mathscr{O}(\widehat{\G})$ with the following $^*$-algebra structure:   \[(a\otimes\omega)(b\otimes\theta) = ab_{(0)}\otimes \omega(b_{(1)}\,\cdot\,)\theta,\quad (a\otimes\omega)^*= a^*_{(0)}\otimes\omega^*(a_{(1)}^*\,\cdot\,).\] 
\end{Def} 

Again, we leave it to the reader to check that, for example, the multiplication is associative. 

As in the case of smash products, we will use the shorthand notation \[a\omega = a\otimes \omega, \quad \omega a = a_{(0)}\otimes \omega(a_{(1)}\,\cdot\,)\] for $\omega \in \mathscr{O}(\widehat{\G}),a\in \mathscr{O}_{\G}(\X)$. Then we have for example that \[(a\omega)^* = \omega^* a^*.\] 

\begin{Lem} The universal C$^*$-envelope $C_0(\X\rtimes_u \G)$ of $\mathscr{O}(\X\rtimes \G)$ exists.
\end{Lem}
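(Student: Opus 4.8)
The plan is to show that $\mathscr{O}(\X\rtimes\G)$ admits a universal C$^*$-envelope by bounding the norm of each element in an arbitrary $^*$-representation. Following the pattern already established for the smash product $\mathscr{O}(\X\rtimes\widehat{\G})$, the key observation is that the crossed product is generated, as an algebra, by a copy of $C_0(\X)$ (via $a\mapsto a\otimes$ suitable unit-substitute) and a copy of the dual $^*$-algebra $\mathscr{O}(\widehat{\G})\cong\oplus_\pi B(\Hsp_\pi)$. Since $\mathscr{O}(\widehat{\G})$ is a direct sum of matrix algebras, any $^*$-representation of it is automatically bounded; in fact on each block $B(\Hsp_\pi)$ a $^*$-representation has norm at most the operator norm in $B(\Hsp_\pi)$. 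This is the crucial structural input, and it is exactly analogous to the step in the smash-product lemma where ``any $^*$-representation of $\mathscr{O}(\G)$ on a pre-Hilbert space is automatically bounded.''

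First I would take an arbitrary $^*$-representation $\pi$ of $\mathscr{O}(\X\rtimes\G)$ on a Hilbert space $\Hsp$, which may be assumed non-degenerate. Restricting $\pi$ to the subalgebra $\mathscr{O}(\widehat{\G})$, I obtain a $^*$-representation of $\oplus_\pi B(\Hsp_\pi)$, hence a uniform bound $\|\pi(\omega)\|\le \|\omega\|_{\oplus B(\Hsp_\pi)}$ for every $\omega\in\mathscr{O}(\widehat{\G})$. Restricting $\pi$ to the copy of $\mathscr{O}_{\G}(\X)$ gives a $^*$-representation of that $^*$-algebra, and since $\mathscr{O}_{\G}(\X)$ sits inside the C$^*$-algebra $C_0(\X)$ (being the Podle\'{s} core), one has $\|\pi(a)\|\le\|a\|$ for $a\in\mathscr{O}_{\G}(\X)$. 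Using the shorthand $a\omega=a\otimes\omega$ and the multiplicativity of $\pi$, every generator $a\omega$ then satisfies $\|\pi(a\omega)\|=\|\pi(a)\pi(\omega)\|\le\|a\|\,\|\omega\|_{\oplus B(\Hsp_\pi)}$.

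Since a general element of $\mathscr{O}(\X\rtimes\G)=\mathscr{O}_{\G}(\X)\aotimes\mathscr{O}(\widehat{\G})$ is a finite sum of such generators $a\omega$, the triangle inequality furnishes a bound $\|\pi(x)\|\le C_x$ with $C_x$ depending only on $x$ and not on the representation $\pi$. This verifies the defining condition for the existence of a universal C$^*$-envelope, and the lemma follows. \textbf{The main obstacle} is technical rather than conceptual: one must be careful that the restriction of $\pi$ to $\mathscr{O}(\widehat{\G})$ really is a genuine (possibly degenerate) $^*$-representation of the non-unital algebra $\oplus_\pi B(\Hsp_\pi)$ and that the relation $a\omega=a\otimes\omega$ is compatible with the product, so that the factorization $\pi(a\omega)=\pi(a)\pi(\omega)$ holds. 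This requires unwinding the crossed-product multiplication rule $(a\omega)(b\theta)=ab_{(0)}\otimes\omega(b_{(1)}\,\cdot\,)\theta$ to confirm that $\pi(a)$ and $\pi(\omega)$ individually make sense; once the generators are identified correctly, the boundedness estimate is immediate from the semisimple (finite-dimensional block) structure of $\mathscr{O}(\widehat{\G})$.
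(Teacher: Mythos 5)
Your overall strategy (factor $\pi(a\omega)=\pi(a)\pi(\omega)$ and bound each factor separately) is the same as the paper's, but as written it rests on a false structural claim. You begin by ``restricting $\pi$ to the subalgebra $\mathscr{O}(\widehat{\G})$'' and ``to the copy of $\mathscr{O}_{\G}(\X)$'', but \emph{neither of these is a subalgebra} of $\mathscr{O}(\X\rtimes\G)=\mathscr{O}_{\G}(\X)\aotimes\mathscr{O}(\widehat{\G})$. The dual algebra $\mathscr{O}(\widehat{\G})\cong\oplus_{\pi}B(\Hsp_{\pi})$ has no unit unless $\G$ is finite, so there is no embedding $a\mapsto a\otimes 1$; and since $\X$ is only locally compact, $\mathscr{O}_{\G}(\X)$ may also be non-unital, so there is no embedding $\omega\mapsto 1\otimes\omega$ either. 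Both algebras act only as \emph{multipliers} of the crossed product. Hence $\pi(a)$ and $\pi(\omega)$ must be constructed, on the dense subspace $\pi(\mathscr{O}(\X\rtimes\G))\Hsp$, by $\pi(a)\pi(x)\xi:=\pi(ax)\xi$ and $\pi(\omega)\pi(x)\xi:=\pi(\omega x)\xi$, and one must prove these are well defined and bounded. This is not the routine verification you postpone to your final paragraph: for $a\in\mathscr{O}_{\G}(\X)$ the required relative estimate $\|\pi(ax)\xi\|\leq C_a\|\pi(x)\xi\|$ is exactly where the work lies, and the paper obtains it by re-running the argument of Proposition \ref{PropUniLi} (the key point being that $\sum_i T(e_i)T(e_i)^*$ is invariant, hence lies in the C$^*$-algebra $C_0(\X/\G)\subseteq\mathscr{O}_{\G}(\X)$, on which any $^*$-representation by operators on a pre-Hilbert space is automatically contractive). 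Deferring this step leaves the actual content of the proof missing.

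There is a second, independent error: your bound $\|\pi(a)\|\leq\|a\|$, justified by ``$\mathscr{O}_{\G}(\X)$ sits inside the C$^*$-algebra $C_0(\X)$'', is not valid reasoning. A $^*$-representation of a dense $^*$-subalgebra of a C$^*$-algebra need not be bounded by the ambient C$^*$-norm: for instance $\mathscr{O}(\G)\subseteq C(\G_{\red})$, yet when $\G$ is not coamenable $\mathscr{O}(\G)$ admits $^*$-representations (those factoring through $C(\G_u)$) whose norms strictly exceed the reduced norm. The bound that is actually available at this point in the paper is $\|\pi(a)\|\leq\|a\|_u$, where $\|\cdot\|_u$ is the universal norm of $\mathscr{O}_{\G}(\X)$, whose finiteness is precisely the content of Proposition \ref{PropUniLi}; this is the bound the paper's proof invokes. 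With these two repairs --- multiplier extension in place of ``restriction'', and the universal norm in place of the $C_0(\X)$-norm --- your factorization and triangle-inequality argument does complete the proof, and then coincides with the paper's. The half of your argument concerning $\mathscr{O}(\widehat{\G})$ is sound in spirit: once $\pi(\omega)$ has been defined, its boundedness is immediate because every $\omega$ lies in a finite-dimensional C$^*$-subalgebra of $\mathscr{O}(\widehat{\G})$.
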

\begin{proof} If $\pi$ is a non-degenerate $^*$-representation of $\mathscr{O}(\X\rtimes \G)$, it follows from a similar argument as in the proof of Proposition \ref{PropUniLi} that for any $a\in \mathscr{O}_{\G}(\X)$, there exists $C_a>0$ such that \[\|\pi(ax)\xi\|\leq C_a\|\pi(x)\xi\|,\quad \forall x\in \mathscr{O}(\X\rtimes \G),\xi\in \Hsp_{\pi}.\] Hence there exists a non-degenerate $^*$-representation \[\pi:\mathscr{O}_{\G}(\X)\rightarrow \Hsp_{\pi}\] such that \[\pi(a)\pi(x) = \pi(ax),\quad a\in \mathscr{O}_{\G}(\X),x\in \mathscr{O}(\X\rtimes \G).\] Similarly, as any $\omega \in \mathscr{O}(\widehat{\G})$ lies in a finite-dimensional C$^*$-algebra, there exists a non-degenerate \[\pi: \mathscr{O}(\widehat{\G})\rightarrow B(\Hsp_{\pi})\] such that \[\pi(\omega)\pi(x) = \pi(\omega x),\quad \forall \omega\in \mathscr{O}(\widehat{\G}),x\in \mathscr{O}(\X\rtimes \G).\] It is then clear that in fact \[\pi(a \omega) = \pi(a)\pi(\omega),\quad a\in \mathscr{O}_{\G}(\X),\omega \in \mathscr{O}(\widehat{\G}),\] and so \[\|\pi(a \omega)\|\leq \|a\|_u\|\omega\|_{\mathscr{O}(\widehat{\G})}.\]
\end{proof}

\begin{Def} For $\X\curvearrowleft \G$, we define the \emph{full (or universal) crossed product C$^*$-algebra} as the universal C$^*$-envelope $C_0(\X\rtimes_u \G)$ of $\mathscr{O}(\X\rtimes \G)$.
\end{Def}

A similar construction is again possible on the reduced level. 

\begin{Def} The \emph{reduced crossed product C$^*$-algebra} $C_0(\X\rtimes_{\red} \G)$ is  \[ \big{\lbrack}  L_{\alpha(a)}(1\otimes l_{\omega})\mid a\in C_0(\X),\omega\in \mathscr{O}(\widehat{\G})\big{\rbrack}\subseteq \mathcal{L}(L^2_{\Y}(\X)\otimes L^2(\G)).\]
\end{Def}

\begin{Lem} We have that $C_0(\X\rtimes_{\red} \G)$ is a C$^*$-algebra, with \[\pi: \mathscr{O}(\X\rtimes \G)\rightarrow C_0(\X\rtimes_{\red} \G),\quad  a\omega \mapsto L_{\alpha(a)}  (1\otimes l_{\omega})\] an injective $^*$-homomorphism.  
\end{Lem}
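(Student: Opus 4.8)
The plan is to check, in turn, that $\pi$ is multiplicative and $^*$-preserving, to deduce that its image is a $^*$-subalgebra whose norm closure is exactly $C_0(\X\rtimes_{\red}\G)$, and finally to prove that $\pi$ is injective on $\mathscr{O}(\X\rtimes\G)$; the last point is the only substantial one.

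The computational engine is a covariance relation on $L^2_{\Y}(\X)\otimes L^2(\G)$. Writing $L_{\alpha(b)}=(\pi_{\red}\otimes\pi_{\red})\alpha(b)=\pi_{\red}(b_{(0)})\otimes\pi_{\red}(b_{(1)})$ and applying the identity $l_{\omega}L_a=L_{a_{(0)}}l_{\omega(a_{(1)}\,\cdot\,)}$ of the previous lemma in the special case $\X=\G$ (where it reads $l_{\omega}\pi_{\red}(g)=\pi_{\red}(g_{(1)})l_{\omega(g_{(2)}\,\cdot\,)}$ on $L^2(\G)$) to the second leg, together with the coaction property $(\id_{\X}\otimes\Delta)\alpha=(\alpha\otimes\id_{\G})\alpha$, I obtain
\[(1\otimes l_{\omega})L_{\alpha(b)}=L_{\alpha(b_{(0)})}(1\otimes l_{\omega(b_{(1)}\,\cdot\,)}),\qquad b\in\mathscr{O}_{\G}(\X),\ \omega\in\mathscr{O}(\widehat{\G}).\]
Feeding this into $\pi(a\omega)\pi(b\theta)=L_{\alpha(a)}(1\otimes l_{\omega})L_{\alpha(b)}(1\otimes l_{\theta})$ and using that $(\pi_{\red}\otimes\pi_{\red})\alpha$ and $l$ are homomorphisms gives $\pi(a\omega)\pi(b\theta)=L_{\alpha(ab_{(0)})}(1\otimes l_{\omega(b_{(1)}\,\cdot\,)\theta})=\pi((a\omega)(b\theta))$. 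Likewise, since $\alpha$ is $^*$-preserving one has $L_{\alpha(a)}^{*}=L_{\alpha(a^{*})}$ and $(1\otimes l_{\omega})^{*}=1\otimes l_{\omega^{*}}$, and applying the covariance relation once more to $(1\otimes l_{\omega^{*}})L_{\alpha(a^{*})}$ yields $\pi(a\omega)^{*}=\pi((a\omega)^{*})$. Thus $\pi$ is a $^*$-homomorphism, so $\pi(\mathscr{O}(\X\rtimes\G))$ is a $^*$-subalgebra of $\mathcal{L}(L^2_{\Y}(\X)\otimes L^2(\G))$ and its norm closure is a C$^*$-algebra. Since $a\mapsto L_{\alpha(a)}$ is norm-contractive (being $\pi_{\red}\otimes\pi_{\red}$ composed with the $^*$-homomorphism $\alpha$) and $\mathscr{O}_{\G}(\X)$ is dense in $C_0(\X)$ by Theorem \ref{TheoDensPod}, the closed span of the generators $L_{\alpha(a)}(1\otimes l_{\omega})$ with $a\in C_0(\X)$ equals that with $a\in\mathscr{O}_{\G}(\X)$, i.e.\ $\overline{\pi(\mathscr{O}(\X\rtimes\G))}=C_0(\X\rtimes_{\red}\G)$, which is therefore the asserted C$^*$-algebra.

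For injectivity I would mimic the treatment of the smash product. Let $\omega\mapsto\widehat{\omega}$ be the canonical (GNS) embedding $\mathscr{O}(\widehat{\G})\hookrightarrow L^2(\G)$, which is injective and realizes $l$ as left multiplication, $l_{\omega'}\widehat{\omega}=\widehat{\omega'\omega}$. Define $\Psi_{0}(a\omega)=a\otimes\widehat{\omega}\in L^2_{\Y}(\X)\otimes L^2(\G)$ and $\Psi=U_{\alpha}\circ\Psi_{0}$, with $U_{\alpha}$ the unitary of Lemma \ref{LemIntroImplUn}, so that $\Psi(a\omega)=\alpha(a)(1\otimes\widehat{\omega})$. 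A direct computation, using the covariance relation, its $\X=\G$ version, the coaction property and $l_{\omega'}\widehat{\omega}=\widehat{\omega'\omega}$, establishes the intertwining identity $\pi(x)\Psi(y)=\Psi(xy)$ for $x,y\in\mathscr{O}(\X\rtimes\G)$. Now $\Psi$ is injective: $U_{\alpha}$ is unitary, and $\Psi_{0}$ is the algebraic tensor product of the injective maps $\mathscr{O}_{\G}(\X)\hookrightarrow L^2_{\Y}(\X)$ (injective because $E_{\Y}$ is faithful on $\mathscr{O}_{\G}(\X)$) and $\mathscr{O}(\widehat{\G})\hookrightarrow L^2(\G)$. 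Hence if $\pi(x)=0$ then $\Psi(xy)=\pi(x)\Psi(y)=0$, so $xy=0$ for all $y\in\mathscr{O}(\X\rtimes\G)$; taking $y$ to be a right local unit (the sum of the units of the finitely many blocks $B(\Hsp_{\pi})$ of $\mathscr{O}(\widehat{\G})$ supporting $x$) forces $x=0$.

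I expect the real work to sit in this last step. Verifying $\pi(x)\Psi(y)=\Psi(xy)$ demands careful bookkeeping of the iterated coaction: after applying the covariance relations it collapses, via the antipode identity $S(b_{(1)})b_{(2)}=\varepsilon(b_{(1)})1$, precisely to the multiplicativity $l_{\omega'}\widehat{\omega}=\widehat{\omega'\omega}$. One must therefore be confident of the properties of the embedding $\mathscr{O}(\widehat{\G})\hookrightarrow L^2(\G)$ that realizes $l$ as the regular representation, which is where the modular data $Q_{\pi}$ enter and which I would isolate as a preliminary standard fact rather than reprove here.
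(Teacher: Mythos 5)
Your first two steps are sound and in fact supply details the paper dismisses as ``easily verified'': the covariance relation $(1\otimes l_{\omega})L_{\alpha(b)}=L_{\alpha(b_{(0)})}(1\otimes l_{\omega(b_{(1)}\,\cdot\,)})$ is correct, it gives multiplicativity and $^*$-preservation, and the density argument identifying $\overline{\pi(\mathscr{O}(\X\rtimes\G))}$ with $C_0(\X\rtimes_{\red}\G)$ is fine. The auxiliary map $\Psi$ also exists as claimed: an embedding $\mathscr{O}(\widehat{\G})\hookrightarrow L^2(\G)$ with $l_{\omega'}\widehat{\omega}=\widehat{\omega'\omega}$ can be written down explicitly, e.g.\ $\varphi_{\G}(\,\cdot\,h)\mapsto S^{-1}(h)\xi_{\G}$, using strong left invariance (no modular data are needed), $\Psi_0$ is injective into the exterior tensor product, and your intertwining identity $\pi(x)\Psi(y)=\Psi(xy)$ checks out. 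So you have correctly reduced injectivity of $\pi$ to the purely algebraic claim that $x\,\mathscr{O}(\X\rtimes\G)=0$ implies $x=0$.

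The gap is in your one-line disposal of that claim. Your proposed right local unit is $1_{\X}\otimes p$, with $p$ a finite sum of minimal central projections of $\mathscr{O}(\widehat{\G})$, but this element lies in $\mathscr{O}(\X\rtimes\G)=\mathscr{O}_{\G}(\X)\aotimes\mathscr{O}(\widehat{\G})$ only when $1_{\X}\in\mathscr{O}_{\G}(\X)$, i.e.\ when $\X$ is compact; the lemma is stated for locally compact $\X$. Nor can one substitute genuine local units: already for trivial $\G$ and $\X=\R$ one has $\mathscr{O}(\X\rtimes\G)=C_0(\R)$, where no non-zero $a$ admits $b$ with $ab=a$. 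The implication $x\,\mathscr{O}(\X\rtimes\G)=0\Rightarrow x=0$ is still true, but proving it is exactly where the work lies: taking $y=b\otimes p$ with $p$ adapted to the finitely many blocks involved gives $\sum_i a_ib_{(0)}\otimes\omega_i(b_{(1)}\,\cdot\,)=0$ for all $b\in\mathscr{O}_{\G}(\X)$, and one must then untwist the coaction with the antipode (using, say, that the elements $b_{(0)}\otimes b_{(1)}h$ span $\mathscr{O}_{\G}(\X)\aotimes\mathscr{O}(\G)$) and finally use density of $\mathscr{O}_{\G}(\X)$ in $C_0(\X)$ to conclude that each coefficient $a_i$ vanishes. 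This antipode-untwisting is precisely the paper's proof, which dispenses with $\Psi$ altogether: from $\sum_i L_{\alpha(a_i)}(1\otimes l_{\omega_i})=0$ it extracts $\sum_i a_{i(0)}\otimes a_{i(1)}l_{\omega_i}h=0$ for all $h$, applies $\alpha\otimes\id_{\G}$, multiplies $S$ of the second leg against the third to get $\sum_i a_i\otimes l_{\omega_i}h=0$, and finishes by linear independence. So your detour through $\Psi$ is viable, but it does not spare you that key computation; as written, your final step fails in the non-compact case.
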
 
\begin{proof}
It is easily verified that $\pi$ is a $^*$-homomorphism. To see that it is injective, assume that $\sum_i L_{\alpha(a_i)}(1\otimes l_{\omega_i}) = 0$. Then \[\forall h\in \mathscr{O}(\G),\quad \sum_i a_{i(0)}\otimes a_{i(1)}l_{\omega_i} h = 0,\] hence \[\forall h\in \mathscr{O}(\G),\quad \sum_i a_{i(0)}\otimes a_{i(1)}\otimes a_{i(2)}l_{\omega_i} h = 0,\] and \[ \sum_i a_{i(0)}\otimes S(a_{i(1)})a_{i(2)}l_{\omega_i} h = \sum_i a_i \otimes l_{\omega_i} h = 0.\]
\end{proof}

In fact, we will show that one always has $C_0(\X\rtimes_u\G)\cong C_0(\X\rtimes_{\red}\G)$. We will need some preparations. 

We will in the following drop the notation $l$, and let $\mathscr{O}(\widehat{\G})$ act directly on $L^2(\G)$ or $L_{\Y}^2(\X)$. We also let $\mathscr{O}(\X\rtimes \G)$ act directly on $L^2_{\Y}(\X)\otimes L^2(\G)$. 

We will denote, for $\pi$ an irreducible representation of $\G$, \[p_{\pi} = \varphi_{\G}(\,\cdot\,\chi_{\pi}^*)\in \mathscr{O}(\widehat{\G}),\] which is a minimal central projection in $\mathscr{O}(\widehat{\G})$ by the defining property \eqref{EqElChi} of $\chi_{\pi}$. Note that $p_{\pi}L^2(\G)$ is then a finite-dimensional vector space. 

\begin{Lem}\label{LemBiDuCom} There exists on \[\mathscr{O}_{\G}(\X)\otimes \left( p_{\pi}B(L^2(\G))p_{\pi'}\right) \subseteq \mathcal{L}(L^2_{\Y}(\X)\otimes L^2(\G))\] a unique $\mathscr{O}(\G)$-comodule structure $\alpha_{\pi,\pi'}$ such that \[x\otimes (y\xi_{\G})(z\xi_{\G})^* \mapsto x_{(0)} \otimes (y_{(2)}\xi_{\G})(z_{(2)}\xi_{\G})^*\otimes S^{-1}(y_{(1)})x_{(1)}S^{-1}(z_{(1)})^*\] for $x\in \mathscr{O}_{\G}(\X), y\xi_{\G}\in p_{\pi}L^2(\G)$ and $z\in p_{\pi'}L^2(\G)$.  Moreover, the fixed point subspace of this comodule is precisely $p_{\pi}\mathscr{O}(\X\rtimes \G)p_{\pi'}$. 
\end{Lem}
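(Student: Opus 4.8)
The plan is to build $\alpha_{\pi,\pi'}$ as a linear map, check the comodule axioms by Sweedler calculus, and then identify its fixed points with the corner $p_\pi\mathscr{O}(\X\rtimes\G)p_{\pi'}$. For existence and uniqueness, recall that $p_\rho$ projects $L^2(\G)$ onto $C(\G)_\rho\xi_{\G}$, so $p_\pi B(L^2(\G))p_{\pi'}$ is the finite-dimensional space spanned by the rank operators $(y\xi_{\G})(z\xi_{\G})^*$ with $y\in C(\G)_\pi$, $z\in C(\G)_{\pi'}$, and via $(y\xi_{\G})\otimes\overline{(z\xi_{\G})}\mapsto(y\xi_{\G})(z\xi_{\G})^*$ it is identified with $p_\pi L^2(\G)\otimes\overline{p_{\pi'}L^2(\G)}$. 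The prescribed assignment is linear in $x$ and in $y$ (through $\Delta(y)=y_{(1)}\otimes y_{(2)}$) and conjugate-linear in $z$, and since $\Delta(C(\G)_\rho)\subseteq C(\G)_\rho\otimes C(\G)_\rho$ the middle leg $(y_{(2)}\xi_{\G})(z_{(2)}\xi_{\G})^*$ again lands in $p_\pi B(L^2(\G))p_{\pi'}$. Because $\xi_{\G}$ is separating for $\mathscr{O}(\G)$ by faithfulness of $\varphi_{\G}$ (Lemma \ref{LemPhiFaith}), the right-hand side depends only on the vectors $y\xi_{\G},z\xi_{\G}$; hence it factors through the above identification and defines a linear map $\alpha_{\pi,\pi'}$, necessarily unique since these rank operators span.

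Next I would verify the comodule axioms. Counitality $(\id\otimes\varepsilon)\alpha_{\pi,\pi'}=\id$ follows from $\varepsilon\circ S^{-1}=\varepsilon$, from $\varepsilon(b^*)=\overline{\varepsilon(b)}$, and from the counit laws $x_{(0)}\varepsilon(x_{(1)})=x$, $\varepsilon(y_{(1)})y_{(2)}=y$ and $\varepsilon(z_{(1)})z_{(2)}=z$, the scalar $\varepsilon(S^{-1}(y_{(1)})x_{(1)}S^{-1}(z_{(1)})^*)=\varepsilon(y_{(1)})\varepsilon(x_{(1)})\overline{\varepsilon(z_{(1)})}$ being reabsorbed into the first two legs. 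For coassociativity I would expand both $(\id\otimes\id\otimes\Delta)\alpha_{\pi,\pi'}$ and $(\alpha_{\pi,\pi'}\otimes\id)\alpha_{\pi,\pi'}$. Applying $\Delta\circ S^{-1}=(S^{-1}\otimes S^{-1})\circ\varsigma\Delta$ together with $\Delta(b^*)=\Delta(b)^*$ to the third leg on the one side, and coassociativity of $\alpha$ and of $\Delta$ (yielding $x_{(0)}\otimes x_{(1)}\otimes x_{(2)}$ and the triple coproducts of $y$ and $z$) on the other, both sides collapse to the common four-leg expression with middle-leg operator $(y_{(3)}\xi_{\G})(z_{(3)}\xi_{\G})^*$ and last two legs $S^{-1}(y_{(2)})x_{(1)}S^{-1}(z_{(2)})^*$ and $S^{-1}(y_{(1)})x_{(2)}S^{-1}(z_{(1)})^*$.

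For the fixed-point description I would realise both spaces inside $\mathcal{L}(L^2_{\Y}(\X)\otimes L^2(\G))$. Representing the crossed product by $a\omega\mapsto L_{\alpha(a)}(1\otimes l_\omega)=\sum L_{a_{(0)}}\otimes L_{a_{(1)}}l_\omega$ and compressing on the left and right by $1\otimes p_\pi$ and $1\otimes p_{\pi'}$ shows that $p_\pi\mathscr{O}(\X\rtimes\G)p_{\pi'}\subseteq\mathscr{O}_{\G}(\X)\otimes(p_\pi B(L^2(\G))p_{\pi'})$, where the latter is embedded untwistedly as $L_x\otimes T$. I would then introduce the averaging map $P=(\id\otimes\varphi_{\G})\circ\alpha_{\pi,\pi'}$, which by right invariance of $\varphi_{\G}$ and the comodule property is an idempotent whose range is exactly the fixed-point subspace. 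The claim then reduces to two computations: that each compressed crossed-product generator is fixed by $\alpha_{\pi,\pi'}$, and conversely that $P$ carries every $x\otimes(y\xi_{\G})(z\xi_{\G})^*$ back into $p_\pi\mathscr{O}(\X\rtimes\G)p_{\pi'}$, for which one rewrites $l_\omega(v\xi_{\G})=\bigl(v_{(1)}\omega(v_{(2)})\bigr)\xi_{\G}$ and invokes strong left invariance of $\varphi_{\G}$.

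I expect this final matching to be the main obstacle. The difficulty is purely in the bookkeeping relating the $S^{-1}$-twisted third leg of $\alpha_{\pi,\pi'}$ to the concrete reduced action $L_{a_{(1)}}l_\omega$ of the crossed product. It is precisely the antipode identities together with strong left invariance (the lemma on $(\id\otimes\varphi_{\G})(\Delta(a)(1\otimes h))$) that make the relevant sums telescope, ensuring that the averaging projection $P$ lands inside the crossed-product corner and not merely in the ambient tensor product; once both inclusions are in place, idempotency of $P$ forces equality.
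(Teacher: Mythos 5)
Your proposal is correct, and on the existence/uniqueness/comodule part it essentially coincides with the paper, which simply notes that the vectors $U_{\pi}(e_i,e_j)\xi_{\G}$ form a basis of $p_{\pi}L^2(\G)$ (so the formula is well defined and determines the map) and declares the comodule axioms immediate. Where you genuinely diverge is in identifying the fixed points. The paper works directly with the fixed-point equation: starting from $\alpha_{\pi,\pi'}(v)=v\otimes 1_{\G}$, it tensors up with additional legs, applies antipodes to two of them and multiplies them into the last leg, and then uses exactly the identity you flag --- $(S(z_{(1)}^*)\xi_{\G})(z_{(2)}\xi_{\G})^*=l_{\varphi_{\G}(z^*\,\cdot\,)}$, a consequence of strong left invariance --- to rewrite the fixed element as $\sum_i x_{i(0)}\otimes x_{i(1)}l_{\omega_i}$, hence inside the corner; the reverse inclusion (corner elements are fixed) is left as an exercise. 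You instead prove that exercise direction explicitly and replace the paper's manipulation by the averaging idempotent $P=(\id\otimes\id\otimes\varphi_{\G})\circ\alpha_{\pi,\pi'}$, whose range is the fixed-point space (this step is sound: it needs only invariance of $\varphi_{\G}$ and coassociativity), and you then show that $P$ sends each generator $x\otimes(y\xi_{\G})(z\xi_{\G})^*$ into the corner. That computation does close: applying strong left invariance in the form $a_{(1)}\varphi_{\G}(a_{(2)}h)=S^{-1}(h_{(1)})\varphi_{\G}(ah_{(2)})$ and the rank-one identity above, $P(x\otimes(y\xi_{\G})(z\xi_{\G})^*)$ collapses to a finite sum $\sum_k \alpha(x_k')(1\otimes l_{\nu_k})$ with $x_k'=(\id_{\X}\otimes\mu_k)\alpha(x)$, where one also needs the small remark (made in the paper right after $\mathscr{O}(\widehat{\G})$ is defined) that functionals of the form $\varphi_{\G}(a\,\cdot\,b)$ lie in $\mathscr{O}(\widehat{\G})$ via the modular automorphism. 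The trade-off is clear: the paper's route is shorter, because conditioning on fixedness lets the antipode identities collapse the element at once, and it only writes out one inclusion; your route costs more bookkeeping (you must evaluate $P$ on all generators, and your logic genuinely requires both inclusions, so the ``exercise'' direction cannot be skipped), but in exchange it produces an explicit equivariant projection onto $p_{\pi}\mathscr{O}(\X\rtimes\G)p_{\pi'}$, in the same spirit as the averaging maps $E_{\Y}$ and $E_{\pi}$ used elsewhere in the paper, which is a cleaner structural statement and reusable, for instance, in the closedness argument of Theorem \ref{TheoUnicrossC}.
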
 

Here we mean by `fixed point subspace' the space of elements satisfying $\alpha_{\pi,\pi'}(x) = x\otimes 1_{\G}$. 

\begin{proof} If $e_i^{(\pi)}$ denotes an orthonormal basis of $\Hsp_{\pi}$, then it is clear that the $U_{\pi}(e_i^{(\pi)},e_j^{(\pi)})\xi_{\G}$ form a basis of $p_{\pi}L^2(\G)$, and hence $\alpha_{\pi,\pi'}$ is a well-defined map, which is immediately seen to be a comodule map. Assume now that \[\sum_i x_i\otimes (y_i\xi_{\G})(z_i\xi_{\G})^* \in  \mathscr{O}_{\G}(\X)\otimes \left( p_{\pi}B(L^2(\G))p_{\pi'}\right),\] with \[\alpha_{\pi,\pi'} \left(\sum_i x_i\otimes (y_i\xi_{\G})(z_i\xi_{\G})^*\right) = \sum_i x_i\otimes (y_i\xi_{\G})(z_i\xi_{\G})^* \otimes 1_{\G}.\] Then also \begin{multline*} \sum_i x_{i(0)}\otimes (y_{i(3)}\xi_{\G})(z_{i(3)}\xi_{\G})^* \otimes y_{i(2)}\otimes z_{i(2)}^* \otimes S^{-1}(y_{i(1)}) x_{i(1)}S^{-1}(z_{i(1)})^* \\ = \sum_i x_i\otimes (y_{i(2)}\xi_{\G})(z_{i(2)}\xi_{\G})^* \otimes y_{i(1)}\otimes z_{i(1)}^*\otimes 1_{\G}.\end{multline*} Applying the antipode to the third and fourth factor, and multiplying them respectively to the left and right of the last factor, we obtain \[\sum_i x_{i(0)}\otimes (y_i\xi_{\G})(z_i\xi_{\G})^* \otimes x_{i(1)} = \sum_i x_i\otimes (y_{i(2)}\xi_{\G})(z_{i(2)}\xi_{\G})^* \otimes y_{i(1)}z_{i(1)}^*.\]
Hence \[\sum_i x_{i(0)} \otimes (S(x_{i(1)}) y_i\xi_{\G})(z_i\xi_{\G})^*  = \sum_i x_i \otimes (S(z_{i(1)}^*)\xi_{\G})(z_{i(2)}\xi_{\G})^*.\] But writing $\omega_i = \varphi_{\G}(z_i^*\cdot \,)$,  an easy computation using strong left invariance shows \[ (S(z_{i(1)}^*)\xi_{\G})(z_{i(2)}\xi_{\G})^* = l_{\omega_i}.\]

Hence \[\sum_i x_{i(0)} \otimes (S(x_{i(1)}) y_i\xi_{\G})(z_i\xi_{\G})^* = \sum_i x_i \otimes l_{\omega_i},\] and \begin{multline*} \sum_i x_i \otimes (y_i\xi_{\G})(z_i\xi_{\G})^* \\=  \sum_i x_{i(0)} \otimes x_{i(1)}S(x_{i(2)}) (y_i\xi_{\G})(z_i\xi_{\G})^* \\= \sum_i x_{i(0)} \otimes x_{i(1)}l_{\omega_i}.\end{multline*} This shows that the fixed point subspace of $\mathscr{O}_{\G}(\X)\otimes \left( p_{\pi}B(L^2(\G))p_{\pi'}\right)$ lies in $p_{\pi}\mathscr{O}(\X\rtimes \G)p_{\pi'}$. We leave the reverse inclusion as an exercise to the reader. 
\end{proof}

\begin{Theorem}\label{TheoUnicrossC} The space $\mathscr{O}(\X\rtimes \G)$ has a unique C$^*$-completion. 
\end{Theorem}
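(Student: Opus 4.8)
The plan is to identify the unique completion by showing that the canonical surjective $^*$-homomorphism $\Phi\colon C_0(\X\rtimes_u\G)\to C_0(\X\rtimes_{\red}\G)$, which restricts to the identity on $\mathscr{O}(\X\rtimes\G)$, is injective. Since $\|\cdot\|_{\red}\leq\|\cdot\|_u$ always, injectivity of $\Phi$ is exactly the assertion that $\mathscr{O}(\X\rtimes\G)$ carries a single C$^*$-norm. The engine of the reduction is the family of minimal central projections $p_{\pi}=\varphi_{\G}(\,\cdot\,\chi_{\pi}^*)\in\mathscr{O}(\widehat{\G})$: they are mutually orthogonal projections in the multiplier algebra of each completion, their finite partial sums form an approximate unit, and by $\mathscr{O}(\widehat{\G})\cong\oplus_{\pi}B(\Hsp_{\pi})$ every element of $\mathscr{O}(\X\rtimes\G)$ is a finite sum $\sum p_{\pi}xp_{\pi'}$.

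First I would reduce to the corners. The kernel $\ker\Phi$ is a closed two-sided ideal; if it were nonzero, choose $0\neq a\in\ker\Phi$, so that $0\neq a^*a\in\ker\Phi$. Because $\ker\Phi$ is invariant under multiplication by the multiplier projections $p_{\pi}$, each $p_{\pi}a^*ap_{\pi'}$ lies in $\ker\Phi$, while the net of finite sums of these corner pieces converges strictly to $a^*a$; hence at least one corner piece is nonzero. Thus it suffices to prove that $\Phi$ is injective on each corner $p_{\pi}C_0(\X\rtimes_u\G)p_{\pi'}$. A further application of the C$^*$-identity, $\|\Phi(y)\|^2=\|\Phi(y^*y)\|$ with $y^*y\in p_{\pi'}C_0(\X\rtimes_u\G)p_{\pi'}$, shows that injectivity on all \emph{diagonal} corners $p_{\pi}C_0(\X\rtimes_u\G)p_{\pi}$ already forces injectivity on every corner, and hence globally.

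The heart of the matter is therefore the diagonal corner. By Lemma \ref{LemBiDuCom} (with $\pi'=\pi$), the dense $^*$-subalgebra $p_{\pi}\mathscr{O}(\X\rtimes\G)p_{\pi}$ is precisely the fixed-point $^*$-algebra of the comodule $\alpha_{\pi,\pi}$ inside $\mathscr{O}_{\G}(\X)\otimes\bigl(p_{\pi}B(L^2(\G))p_{\pi}\bigr)$, where $p_{\pi}L^2(\G)$ is finite-dimensional. The universal and reduced diagonal corners are then two C$^*$-completions of this one fixed-point algebra, and the aim is to see they coincide. The key observation is that the corner's C$^*$-norm is governed entirely by the quotient C$^*$-algebra $C_0(\Y)$, $\Y=\X/\G$: the conditional expectation $E_{\Y}=(\id_{\X}\otimes\varphi_{\G})\alpha$ (tensored with the identity on the finite-dimensional $B(p_{\pi}L^2(\G))$ and restricted to fixed points) provides a faithful $C_0(\Y)$-valued conditional expectation, and finite-dimensionality of $p_{\pi}L^2(\G)$ makes the corner finitely generated as a $C_0(\Y)$-module. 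One thus exhibits the fixed-point algebra as the compact operators $\mathscr{K}_{C_0(\Y)}(\mathcal{E})$ on a single finitely generated Hilbert $C_0(\Y)$-module $\mathcal{E}$ assembled from the spectral subspaces $C_0(\X)_{\sigma}$. Because both $C_0(\Y)$ and the spectral subspaces are completion-independent — we have $C_0(\Y_u)=C_0(\Y)=C_0(\Y_{\red})$ and the spectral subspaces are preserved by the quotient maps — this Hilbert module, and hence $\mathscr{K}_{C_0(\Y)}(\mathcal{E})$, is literally the same C$^*$-algebra in the universal and reduced pictures. Consequently $\Phi$ is an isomorphism on each diagonal corner, which by the reductions above yields global injectivity.

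The main obstacle I anticipate is precisely this last identification: turning the abstract fixed-point $^*$-algebra of Lemma \ref{LemBiDuCom} into operators on a fixed Hilbert $C_0(\Y)$-module, so that its norm is manifestly dictated by the uniquely-completed $C_0(\Y)$ rather than by the ambiguous completions $C_0(\X_u)$ versus $C_0(\X_{\red})$. The two ingredients that make this possible are the finite-dimensionality of $p_{\pi}L^2(\G)$ (which supplies finite generation of the module and lets the tensor factor be handled with a unique C$^*$-norm) and the faithfulness of $E_{\Y}$ on $\mathscr{O}_{\G}(\X)$ established earlier (which makes the $C_0(\Y)$-valued inner product nondegenerate). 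Everything else — the ideal argument, the passage from off-diagonal to diagonal corners, and the compatibility of the $p_{\pi}$ with $\Phi$ — is routine once these structural facts are in place.
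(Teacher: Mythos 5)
Your reductions---cutting by the projections $p_{\pi}$, passing from global injectivity to corners, and from off-diagonal to diagonal corners via the C$^*$-identity---are sound and parallel the opening move of the paper's proof. The genuine gap is exactly the step you call the heart of the matter: the claim that the diagonal corner $p_{\pi}\mathscr{O}(\X\rtimes\G)p_{\pi}$ can be exhibited as $\mathcal{K}_{C_0(\Y)}(\mathcal{E})$ for some Hilbert $C_0(\Y)$-module $\mathcal{E}$ assembled from spectral subspaces. This is false, and not merely for lack of a natural construction: no such module can exist in general. Take the classical quotient-type homogeneous action $S^2 = T\backslash SU(2) \curvearrowleft SU(2)$, with $T$ the maximal torus, so that $C(\Y)=\C$. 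By Theorem \ref{TheoBocaAtom} and Corollary \ref{CorEqModQuot}, the crossed product decomposes as $\oplus_{n\in\Z}B_0(\Hsp_n)$ with summands indexed by the characters of $T$; since the spin-$\frac12$ representation $\pi$ of $SU(2)$ restricts to $T$ as $\chi_1\oplus\chi_{-1}$, the projection $p_{\pi}$ cuts a nonzero (two-dimensional) piece out of exactly the two summands $n=\pm1$, so the diagonal corner at $\pi$ is $M_2(\C)\oplus M_2(\C)$. But $\mathcal{K}_{\C}(\mathcal{E})$ is the algebra of compact operators on a Hilbert space, which is simple; so the corner is not of the form $\mathcal{K}_{C_0(\Y)}(\mathcal{E})$ for \emph{any} $\mathcal{E}$. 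The structural reason is that for non-free actions the corner fails to be Morita equivalent to (an ideal of) $C_0(\Y)$ through the action: already for a trivial action the invariant vectors of $p_{\pi}L^2(\G)\otimes C_0(\X)$ vanish while the corner $B(\Hsp_{\pi})$ does not. The corner is governed by the \emph{equivariant} representation theory of $\X$, not by $C_0(\Y)$ alone. Relatedly, your assertion that finite-dimensionality of $p_{\pi}L^2(\G)$ makes the corner finitely generated over $C_0(\Y)$ is unjustified: one needs the Frobenius reciprocity argument (which you never invoke) even to see that only finitely many spectral subspaces contribute, and spectral subspaces need not be finitely generated over $C_0(\Y)$.

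What does work, and is what the paper proves, is that each corner $p_{\pi}\mathscr{O}(\X\rtimes\G)p_{\pi'}$ is \emph{closed} in $C_0(\X\rtimes_{\red}\G)$, hence a complete C$^*$-algebra: by Frobenius reciprocity it lies in the finite direct sum of the subspaces $\mathscr{O}_{\G}(\X)_{\pi''}\otimes\bigl(p_{\pi}B(L^2(\G))p_{\pi'}\bigr)$ over the finitely many $\pi''\subseteq\pi\otimes\overline{\pi'}$, each of which is closed by (the proof of) Lemma \ref{LemClosSpec}, and inside this closed subspace the corner is the fixed-point set of the continuous comodule structure of Lemma \ref{LemBiDuCom}, a closed condition. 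Once the corners are complete, uniqueness of the C$^*$-norm follows corner by corner, since a $^*$-algebra admitting a complete C$^*$-norm admits no other. Note finally that your opening equivalence---injectivity of $\Phi$ is ``exactly'' uniqueness of the C$^*$-norm---is not a valid principle by itself: for $\C\lbrack\Z\rbrack$ the universal and reduced norms coincide, yet restriction to any infinite proper closed subset of $S^1$ yields a different C$^*$-norm. What rules out such phenomena here is precisely the completeness of the corners, which your $\mathcal{K}_{C_0(\Y)}(\mathcal{E})$ identification was meant to deliver and which must instead be proven along the lines above.
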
 

\begin{proof} As $\mathscr{O}(\X\rtimes \G) = \underset{p}{\bigcup}\,p\mathscr{O}(\X\rtimes \G)p$ with $p$ ranging over all central projections in $\mathscr{O}(\widehat{\G})$, it is sufficient to show that $p\mathscr{O}(\X\rtimes \G)p$ is a C$^*$-algebra for any such $p$. In turn, it is hence sufficient to show that $p_{\pi}\mathscr{O}(\X\rtimes \G)p_{\pi'}$ is a closed subspace of $C_0(\X\rtimes_{\red} \G)$ for all irreducible $\pi,\pi'$. 

However, if $a\in C(\G)_{\pi''}$, then \[p_{\pi}ap_{\pi''} = a_{(0)}\varphi_{\G}(a_{(1)}\,\cdot\,\chi_{\pi}^*)\varphi_{\G}(\,\cdot \,\chi_{\pi'}^*),\] which is zero unless $\pi$ is a subrepresentation of $\pi''\otimes \pi'$. By Frobenius reciprocity, the latter only happens when $\pi''\subseteq \pi\otimes \overline{\pi'}$. As there are only finitely many such $\pi''$ up to equivalence, we deduce that $p_{\pi}\mathscr{O}(\X\rtimes \G)p_{\pi'}$ lies in a finite direct sum of $\mathscr{O}_{\G}(\X)_{\pi''}\otimes \left( p_{\pi}B(L^2(\G))p_{\pi'}\right)$. However, by the proof of Lemma \ref{LemClosSpec} the latter space is closed in $C_0(\X)\otimes \left( p_{\pi}B(L^2(\G))p_{\pi'}\right)$. As $p_{\pi}\mathscr{O}(\X\rtimes \G)p_{\pi'}$ arises as the fixed point subspace of $\mathscr{O}_{\G}(\X)\otimes \left( p_{\pi}B(L^2(\G))p_{\pi'}\right)$ for a continuous comodule structure by Lemma \ref{LemBiDuCom}, it follows that $p_{\pi}\mathscr{O}(\X\rtimes \G)p_{\pi'}$ is closed. 
\end{proof} 

In the following, we will hence write simply $\X\rtimes \G$ for the crossed product. The following proposition is dual to Proposition \ref{PropDualAct}. 

\begin{Prop} Let $\X\overset{\alpha}{\curvearrowleft}\G$. There is a unique action of $\widehat{\G}$ on $\X\rtimes \G$ such that \[h \rhd (a\omega) = a\omega(\,\cdot\,h),\quad h\in \mathscr{O}(\G),\omega\in \mathscr{O}(\widehat{\G}),a\in \mathscr{O}_{\X}(\G).\]
\end{Prop}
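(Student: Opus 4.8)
The plan is to read the proposed formula as defining, for each $h\in\mathscr{O}(\G)$, a linear endomorphism $h\rhd(-)$ of the underlying vector space $\mathscr{O}_{\G}(\X)\aotimes\mathscr{O}(\widehat{\G})$ which fixes the first leg and right-translates the functional in the second leg by $h$. Writing $R_h\omega = \omega(\,\cdot\,h)$ for $\omega\in\mathscr{O}(\widehat{\G})$, I would first record that $R_h$ maps $\mathscr{O}(\widehat{\G})$ into itself: if $\omega = \varphi_{\G}(\,\cdot\,k)$ then $R_h\omega = \varphi_{\G}(\,\cdot\,hk)\in\mathscr{O}(\widehat{\G})$, so $h\rhd(a\omega) = a\otimes R_h\omega$ is well defined and linear. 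Since this formula already prescribes $h\rhd(-)$ on every element of $\mathscr{O}(\X\rtimes\G)$, the uniqueness assertion is immediate, and it remains only to check that $(h,x)\mapsto h\rhd x$ is a left $\mathscr{O}(\G)$-module $^*$-algebra structure, that is, a right $\widehat{\G}$-action in the sense of the Terminology above. The whole verification rests on three elementary properties of $R_h$: (a) $R_hR_g = R_{hg}$, which gives the unital left-module axiom; (b) $R_h$ commutes with left translation $\omega\mapsto\omega(b\,\cdot\,)$, and is comultiplicative over the convolution product, $(\rho\theta)(\,\cdot\,h) = \rho(\,\cdot\,h_{(1)})\,\theta(\,\cdot\,h_{(2)})$, which follows at once from $\Delta$ being an algebra homomorphism; and (c) $[\omega(\,\cdot\,h)]^* = \omega^*(\,\cdot\,S(h)^*)$, which one deduces from the definition $\omega^*(k) = \overline{\omega(S(k)^*)}$ together with the standard identity $S(S(h)^*)^* = h$.

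For the module-algebra axiom $h\rhd(xy) = (h_{(1)}\rhd x)(h_{(2)}\rhd y)$ I would compute both sides on generators $x = a\omega$, $y = b\theta$ using the crossed-product product $(a\omega)(b\theta) = ab_{(0)}\otimes\omega(b_{(1)}\,\cdot\,)\theta$. Applying $R_h$ to $xy$ and expanding the convolution via (b) yields $ab_{(0)}\otimes\omega(b_{(1)}\,\cdot\,h_{(1)})\,\theta(\,\cdot\,h_{(2)})$, where $\omega(b_{(1)}\,\cdot\,h_{(1)})$ denotes the functional $k\mapsto\omega(b_{(1)}kh_{(1)})$. Substituting $\omega\mapsto R_{h_{(1)}}\omega$ and $\theta\mapsto R_{h_{(2)}}\theta$ into the product and using that left and right translation commute produces exactly the same element, so the two sides agree. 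For the $^*$-axiom $(h\rhd x)^* = S(h)^*\rhd x^*$, taking $x = a\omega$ and applying the $^*$-structure $(a\omega)^* = a^*_{(0)}\otimes\omega^*(a_{(1)}^*\,\cdot\,)$ I would check that both sides reduce, via (c) and the commutation of left and right translation, to the single element $a^*_{(0)}\otimes\omega^*(a_{(1)}^*\,\cdot\,S(h)^*)$.

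Finally, having constructed the $\mathscr{O}(\G)$-module $^*$-algebra structure on the $^*$-algebra $\mathscr{O}(\X\rtimes\G)$, I would invoke Corollary \ref{CorExtModUn} to extend it automatically to the C$^*$-completion, noting that $\mathscr{O}(\X\rtimes\G)$ admits a universal C$^*$-envelope which is in fact its unique C$^*$-completion by Theorem \ref{TheoUnicrossC}. I expect the statement itself to be essentially routine once the groundwork is laid; the only real obstacle is careful bookkeeping of how the right translation $R_h$ threads through the convolution product and the $^*$-operation of $\mathscr{O}(\widehat{\G})$, and once identities (a)--(c) are isolated the remaining verifications are mechanical. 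One could alternatively present this as the formal dual of Proposition \ref{PropDualAct}, but the direct computation above seems the most transparent route.
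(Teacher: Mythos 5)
Your proposal is correct and takes essentially the same route as the paper: the paper's proof simply notes that $\rhd$ is clearly a unital $\mathscr{O}(\G)$-module structure, leaves the module $^*$-algebra axioms to the reader (precisely the verifications you carry out via the translation operators $R_h$ and your identities (a)--(c)), and then extends to the C$^*$-level by Corollary \ref{CorExtModUn}. Your write-up just fills in the bookkeeping the paper omits, including the appeal to Theorem \ref{TheoUnicrossC} for the uniqueness of the C$^*$-completion.
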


\begin{proof} It is clear that $\rhd$ defines a unital $\mathscr{O}(\G)$-module on $\mathscr{O}(\X\rtimes \G)$. We leave it to the reader to check that it is then a module $^*$-algebra. It then follows that it extends to a module $^*$-algebra structure on $C_0(\X\rtimes \G)$ by Corollary \ref{CorExtModUn}.
\end{proof}

It follows that one can iterate by taking crossed products with alternatingly $\G$ and $\widehat{\G}$. However, this process essentially stabilizes after the second step. In the following, we will write $B_{00}(L^2(\G))$ for the $^*$-algebra of finite rank operators on $L^2(\G)$ with respect to the subspace $\mathscr{O}(\G)\xi_{\G}$. This means that elements of $B_{00}(L^2(\G))$ are linear combinations of rank one operators of the form $(y\xi_{\G})(z\xi_{\G})^*$ for $y,z\in \mathscr{O}(\G)$. 

The following theorem goes under the name of the `Takesaki-Takai-Baaj-Skandalis duality'.

\begin{Theorem}\label{TheoTakTak} Let $\X$ be a locally compact quantum space, and $\X\curvearrowleft \G$. Then \[\mathscr{O}(\X\rtimes \G\rtimes \widehat{\G}) \cong \mathscr{O}_{\G}(\X)\aotimes B_{00}(L^2(\G))\] equivariantly, where $ \mathscr{O}_{\G}(\X)\aotimes B_{00}(L^2(\G))  \curvearrowleft  \G$ by \[x \otimes (y\xi_{\G})(z\xi_{\G})^* \mapsto x_{(0)} \otimes (y_{(2)}\xi_{\G})(z_{(2)}\xi_{\G})^*\otimes S^{-1}(y_{(1)})x_{(1)}S^{-1}(z_{(1)})^*.\]

Similarly, if $\X\curvearrowleft \widehat{\G}$, then \[\mathscr{O}(\X\rtimes \widehat{\G}\rtimes \G) \cong  B_{00}(L^2(\G))\aotimes\mathscr{O}_{\G}(\X)\] equivariantly, where $B_{00}(L^2(\G))\aotimes \mathscr{O}_{\G}(\X) \curvearrowleft  \widehat{\G} $ by \[h \rhd (((y\xi_{\G})(z\xi_{\G})^*)\otimes x) = (yS^{-1}(h_{(1)})\xi_{\G})(z\sigma(h_{(3)}^*)\xi_{\G})^* \otimes S^{-2}(h_{(2)})\rhd x.\]
\end{Theorem}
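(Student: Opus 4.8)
The plan is to reduce the statement to a concrete identity on $L^2(\G)$ and then ``twist it up'' by $\mathscr{O}_{\G}(\X)$. First I would unwind the definitions: since $\X\rtimes\G$ carries the dual $\widehat{\G}$-action $h\rhd(a\omega)=a\,\omega(\,\cdot\,h)$, the iterated crossed product $\X\rtimes\G\rtimes\widehat{\G}$ is by definition the smash product of $\mathscr{O}(\X\rtimes\G)$ with $\widehat{\G}$, so that as a vector space
\[
\mathscr{O}(\X\rtimes\G\rtimes\widehat{\G})=\mathscr{O}_{\G}(\X)\aotimes\mathscr{O}(\widehat{\G})\aotimes\mathscr{O}(\G),
\]
its product being governed by the crossed-product relation $\omega b=b_{(0)}\omega(b_{(1)}\,\cdot\,)$ together with the smash relation coming from $\rhd$. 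In this product the newly adjoined copy of $\mathscr{O}(\G)$ interacts only with $\mathscr{O}(\widehat{\G})$, never with $\mathscr{O}_{\G}(\X)$. By Proposition \ref{PropDualAct} the relevant $\G$-action on the left-hand side is the dual one, $a\omega h\mapsto a\omega h_{(1)}\otimes h_{(2)}$.

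Next I would isolate the ``baby case'' $\mathscr{O}_{\G}(\X)=\C$, which amounts to the purely group-theoretic fact that $\mathscr{O}(\widehat{\G})\rtimes\widehat{\G}\cong B_{00}(L^2(\G))$. Concretely, let $\mathscr{O}(\widehat{\G})$ act on $L^2(\G)$ by the left regular representation $l$ and let $\mathscr{O}(\G)$ act by (modular-twisted) right multiplication $R_h$; since each $l_{\omega}$ is finite rank, every product $l_{\omega}R_h$ lies in $B_{00}(L^2(\G))$. Using the explicit formula $l_{\varphi_{\G}(z^*\,\cdot\,)}=(S(z_{(1)}^*)\xi_{\G})(z_{(2)}\xi_{\G})^*$ from the proof of Lemma \ref{LemBiDuCom}, the orthogonality relations of Theorem \ref{TheoWorChar} show that as $\omega$ and $h$ range over their algebras the operators $l_{\omega}R_h$ span all of $B_{00}(L^2(\G))$, and that the smash-product relations between $\mathscr{O}(\widehat{\G})$ and $\mathscr{O}(\G)$ correspond to composition of these operators. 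This identifies the baby case as a $^*$-isomorphism.

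I would then globalise by defining
\[
\Phi:\mathscr{O}(\X\rtimes\G\rtimes\widehat{\G})\rightarrow\mathscr{O}_{\G}(\X)\aotimes B_{00}(L^2(\G)),\qquad a\omega h\mapsto a_{(0)}\otimes L_{a_{(1)}}\,l_{\omega}\,R_h,
\]
where $L_{a_{(1)}}$ is left multiplication by $a_{(1)}\in\mathscr{O}(\G)$. The point of splitting $a$ as $a_{(0)}\otimes a_{(1)}$ via $\alpha$ is exactly that it converts the twisted crossed-product multiplication into the untwisted tensor product on $\mathscr{O}_{\G}(\X)\aotimes B_{00}(L^2(\G))$; on the part with $h=1_{\G}$ this is precisely the embedding $a\omega\mapsto a_{(0)}\otimes L_{a_{(1)}}l_{\omega}$ of $\mathscr{O}(\X\rtimes\G)$ as the fixed-point subspace described in Lemma \ref{LemBiDuCom}. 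Multiplicativity, $^*$-preservation and bijectivity of $\Phi$ then reduce to the baby case combined with that lemma. For equivariance I would compute $\gamma\circ\Phi$ directly, where $\gamma$ denotes the displayed $\G$-action on $\mathscr{O}_{\G}(\X)\aotimes B_{00}(L^2(\G))$, and compare it with $(\Phi\otimes\id_{\G})$ applied to $a\omega h\mapsto a\omega h_{(1)}\otimes h_{(2)}$; here Lemma \ref{LemBiDuCom} does the essential work, since $\gamma$ is literally the globalisation of the comodule maps $\alpha_{\pi,\pi'}$ and its fixed points are $\mathscr{O}(\X\rtimes\G)$. Finally, the dual statement follows by interchanging the roles of $\G$ and $\widehat{\G}$, the powers $S^{-1}, S^{-2}, \sigma$ in the formulas being the bookkeeping of this exchange.

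The step I expect to be the genuine obstacle is verifying that $\Phi$ is multiplicative. This is where one must pin down the precise commutation relations on $L^2(\G)$ among left multiplication $L_h$, the dual representation $l_{\omega}$, and right multiplication $R_h$, all of which involve the antipode and the non-traciality of $\varphi_{\G}$ (hence the modular automorphism $\sigma$); making the twists consistent is exactly what forces the $S^{-1}$ in the displayed $\G$-action. The closely related surjectivity onto $B_{00}(L^2(\G))$, though conceptually clear from the Peter--Weyl decomposition, likewise requires a careful use of the orthogonality relations to invert the coproduct legs.
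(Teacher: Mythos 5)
Your proposal is correct and follows essentially the same route as the paper: your modular-twisted right multiplication $R_h$ is precisely the paper's $UhU^*$ (with $U$ the unitary involution $h\xi_{\G}\mapsto (f*S(h))\xi_{\G}$ built from the Woronowicz character), so your map $\Phi(a\omega h)=a_{(0)}\otimes L_{a_{(1)}}l_{\omega}R_h$ coincides with the paper's $a\omega h\mapsto \alpha(a)(1\otimes l_{\omega}UhU^*)$, and both arguments rest on the same ingredients, namely the commutation relation $R_h l_{\omega}=l_{\omega(\,\cdot\,h_{(1)})}R_{h_{(2)}}$, the rank-one formula for $l_{\varphi_{\G}(g^*\,\cdot\,)}$ together with the orthogonality relations for spanning $B_{00}(L^2(\G))$, the untwisting of $\alpha$ for bijectivity, and Lemma \ref{LemBiDuCom} for equivariance. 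The only divergence is the second statement, where the paper runs the analogous computation concretely on $L^2_{\X}(\X\rtimes\widehat{\G})$ via the unitary $V$ of Proposition \ref{PropCrossImp} rather than appealing to a formal interchange of $\G$ and $\widehat{\G}$; but since the paper itself leaves those details to the reader, this does not amount to a substantive difference in approach.
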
 

\begin{proof} We will only sketch a proof, leaving missing details to the reader.

Let $\X\curvearrowleft \G$. Recall the Woronowicz character $f$ from Theorem \ref{TheoWorChar}. Write \[U: \mathscr{O}(\G)\xi_{\G}\rightarrow \mathscr{O}(\G)\xi_{\G},\quad h\xi_{\G} \mapsto (f*S(h))\xi_{\G}.\] Using the properties of $f$, it is easy to see that $U$ is a unitary involution. Moreover, if $g,h\in \mathscr{O}(\G)$, we have \[UhU^* g\xi_{\G} = g(f*S(h))\xi_{\G}.\] It follows that $U\mathscr{O}(\G)U^*$ commutes elementwise with $\mathscr{O}(\G)$, and moreover an easy computation reveals that \[UhU^* l_{\omega} = l_{\omega(\,\cdot\, h_{(1)})}Uh_{(2)}U^*\] on $L^2(\G)$. It follows that we obtain a $^*$-homomorphism \[\mathscr{O}(\X\rtimes \G\rtimes \widehat{\G}) \rightarrow \mathscr{O}_{\G}(\X)\aotimes B(L^2(\G)),\quad a\omega h \mapsto \alpha(a)(1\otimes l_{\omega}UhU^*).\] A further easy computation shows that \[l_{\varphi_{\G}(g^*\,\cdot)} = (S(g_{(1)}^*)\xi_{\G})(g_{(2)}\xi_{\G})^*,\quad h,g\in \mathscr{O}(\G).\] It follows that the above $^*$-homomorphism lands in $\mathscr{O}_{\G}(\X) \aotimes B_{00}(L^2(\G))$. 

To see that it is injective, assume that $\sum a_i\omega_i h_i$ is sent to zero. We may assume here that the $a_i$ are linearly independent. However, then \[\sum_i a_i\otimes l_{\omega_i}Uh_iU^* = \sum_i a_{i(0)(0)} \otimes S(a_{i(0)(1)})a_{i(1)}l_{\omega_i}Uh_iU^* = 0,\] so all $l_{\omega_i}Uh_iU^*=0$. It is easy to check that the latter implies $\omega_i\otimes h_i=0$. 

Using that the $l_{\omega}UhU^*$ span $B_{00}(L^2(\G))$, a similar argument allows to conclude that the $^*$-homomorphism is surjective. 

Finally, to see that the $^*$-homomorphism is equivariant, note that we can write the coaction as \[x\otimes y \mapsto x_{(0)} \otimes \left(\widehat{W}^*(y\otimes x_{(1)})\widehat{W}\right),\] with $\widehat{W}$ the unitary defined by \[\widehat{W}(x\xi_{\G}\otimes \xi) = x_{(2)}\xi_{\G}\otimes x_{(1)}\xi.\] By what was already done in Lemma \ref{LemBiDuCom}, we are only left with checking that this implements the proper formula for the coaction on the $\mathscr{O}(\G)$-part of $\mathscr{O}(\X\rtimes \G\rtimes \widehat{\G})$. We leave this computation to the reader. 

Similarly, let $\X\curvearrowleft \widehat{\G}$. Then on $L^2_{\X}(\X\rtimes \widehat{\G})$, we have an action of $\mathscr{O}(\X\rtimes \widehat{\G}\rtimes \G)$ where the $\mathscr{O}(\X\rtimes \G)$-part acts by left multiplication and where \[l_{\omega}(ha) = \omega(h_{(2)})h_{(1)}a,\quad \omega \in \mathscr{O}(\widehat{\G}).\] Translating this to a representation on $\mathscr{O}(\G)\xi_{\G}\otimes \mathscr{O}_{\G}(\X)$ by means of the unitary $V$ from Proposition \ref{PropCrossImp}, we find after simplification the $^*$-homomorphism \[\mathscr{O}(\X\rtimes \widehat{\G}\rtimes \G)\rightarrow B_{00}(L^2(\G))\aotimes \mathscr{O}_{\G}(\X),\] \[hb\varphi_{\G}(\,\cdot g) \mapsto (hS^{-1}(g_{(1)})\xi_{\G})(\sigma(g_{(3)}^*)\xi_{\G})^* \otimes S^{-2}(g_{(2)})\rhd b.\] We leave it to the reader to check that this is an equivariant isomorphism. 
\end{proof}

By the universal property and the nuclearity of the C$^*$-algebra of compact operators $B_0(L^2(\G))$ on $L^2(\G)$, it follows that one then also has an equivariant $^*$-isomorphism \[C_0(\X\rtimes \G\rtimes_u \widehat{\G}) \cong C_0(\X_u) \otimes B_0(L^2(\G)).\] From this, one can then also conclude that 
\[C_0(\X\rtimes \G\rtimes_{\red}\widehat{\G}) \cong C_0(\X_{\red}) \otimes B_0(L^2(\G)).\] In fact, the above is only a very concrete instance of a theorem for \emph{regular} locally compact quantum groups, see \cite{BS93} and \cite[Section 9]{Tim08} where the result is proven in the context of multiplicative unitaries.

\section{Homogeneous actions}\label{SecHom}

In this section and the next, we will treat two specific kinds of actions, namely \emph{homogeneous} and \emph{free} actions. The results of the current section are taken from \cite{Pod95}, \cite{Boc95} and \cite{BDV06}.  We assume in this section that $\X$ is a \emph{compact} quantum space, as this condition is automatically required by the following definition.  

\begin{Def} An action $\mathbb{X}\overset{\alpha}{\curvearrowleft} \mathbb{G}$ is called \emph{homogeneous} (or \emph{ergodic}) if \[C(\X/\G) = \C1_{\X}.\]
\end{Def}

It is clear that if $X$ is a compact Hausdorff space and $G$ a compact Hausdorff group, then $X\overset{\alpha}{\curvearrowleft} G$ is homogeneous if and only if $\alpha$ is transitive (in the ordinary sense).

The notion of homogeneity is preserved under passing to universal or reduced actions. 

\begin{Lem} If $\mathbb{X}\overset{\alpha}{\curvearrowleft} \G$ is homogeneous, then also $\alpha_u$ and $\alpha_{\red}$ are homogeneous.
\end{Lem}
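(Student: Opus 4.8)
The plan is to deduce this directly from the two structure theorems on universal and reduced actions already established in Section 4, since homogeneity is by definition a statement purely about the quantum orbit space, and those theorems assert precisely that the orbit space C$^*$-algebra is unchanged under passage to $\X_u$ or $\X_{\red}$.

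More concretely, first I would recall that since we are in the compact setting, $C(\X/\G)$ is a unital C$^*$-algebra and homogeneity reads $C(\X/\G) = \C 1_{\X}$. For the universal action, the Theorem on $\alpha_u$ gives $C_0(\Y_u) = C_0(\Y)$ with $\Y_u = \X_u/\G_u$ and $\Y = \X/\G$; for the reduced action, the corresponding Theorem on $\alpha_{\red}$ gives $C_0(\Y_{\red}) = C_0(\Y)$ with $\Y_{\red} = \X_{\red}/\G_{\red}$. Combining these with the hypothesis yields
\[
C(\X_u/\G_u) = C(\X/\G) = \C 1_{\X}, \qquad C(\X_{\red}/\G_{\red}) = C(\X/\G) = \C 1_{\X},
\]
which is exactly the assertion that $\alpha_u$ and $\alpha_{\red}$ are homogeneous.

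The only point that needs a word of care is that the identifications above are identifications of \emph{unital} C$^*$-algebras, so that the one-dimensional space $\C 1_{\X}$ really corresponds to $\C 1_{\X_u}$, respectively $\C 1_{\X_{\red}}$. This is immediate because the canonical maps $\pi_u\colon C(\X_u)\rightarrow C(\X)$ and $\pi_{\red}\colon C(\X)\rightarrow C(\X_{\red})$ are unital $^*$-homomorphisms, so units are sent to units and the fixed-point subalgebras match up as unital subalgebras. I expect no genuine obstacle here: the substantive work — the equality of orbit spaces across completions — has already been carried out in the theorems of Section 4, and this lemma is a direct corollary, so the proof amounts to quoting those two identities and observing unital compatibility.
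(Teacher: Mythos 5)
Your proposal is correct and follows exactly the paper's own proof, which simply invokes the identities $\X/\G = \X_u/\G_u = \X_{\red}/\G_{\red}$ established in the two theorems of Section 4. Your additional remark about unitality of $\pi_u$ and $\pi_{\red}$ is a harmless elaboration of the same argument.
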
 

\begin{proof} We have that \[\mathbb{Y} = \mathbb{X}/\mathbb{G} = \mathbb{X}_u/\mathbb{G}_u = \mathbb{X}_{\red}/\mathbb{G}_{\red}.\]
\end{proof} 

Since $\Y$ reduces to a point in the case of homogeneous actions, we obtain in particular that the conditional expectation $E_{\Y}$ becomes a \emph{state} on $C(\X)$. We introduce the adapted notation in the next definition. 

\begin{Def} Let $\X\overset{\alpha}{\curvearrowleft} \G$ be a homogeneous action. Then we define $\varphi_{\X}$ as the unique state on $C(\X)$ such that \[\forall a\in C(\X),\quad \varphi_{\X}(a)1_{\X} = (\id_{\X}\otimes \varphi_{\G})\alpha(a).\]
\end{Def} 

The following lemma follows immediately from the definition of $\varphi_{\X}$ and the invariance of $\varphi_{\G}$ on $C(\G)$. 

\begin{Lem}Let $\X\overset{\alpha}{\curvearrowleft} \G$ homogeneous.Then the state $\varphi_{\X}$ is invariant: \[\forall a\in C(\X),\quad (\varphi_{\X}\otimes \id_{\G})\alpha(a) = \varphi_{\X}(a)1_{\G}.\]
\end{Lem}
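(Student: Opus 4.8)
The plan is to exploit the coaction property together with the (two-sided) invariance of the Haar state $\varphi_{\G}$, inserting an auxiliary copy of $C(\G)$ on which $\varphi_{\G}$ can act. Write $F(a) = (\varphi_{\X}\otimes \id_{\G})\alpha(a)\in C(\G)$; the goal is to show $F(a) = \varphi_{\X}(a)1_{\G}$.

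First I would record two preliminary identities. By the very definition of $\varphi_{\X}$ together with ergodicity, $(\id_{\X}\otimes \varphi_{\G})\alpha(a) = E_{\Y}(a) = \varphi_{\X}(a)1_{\X}$; applying $\varphi_{\X}$ and using that it is a \emph{state} (so that $\varphi_{\X}(1_{\X})=1$, which is where compactness of $\X$ enters) yields $(\varphi_{\X}\otimes \varphi_{\G})\alpha(a) = \varphi_{\X}(a)$. Second, I would invoke the invariance of the Haar state in the form $(\varphi_{\G}\otimes \id_{\G})\Delta(h) = \varphi_{\G}(h)1_{\G}$ for all $h\in C(\G)$.

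The heart of the argument is then to evaluate the expression $(\varphi_{\X}\otimes \varphi_{\G}\otimes \id_{\G})(\alpha\otimes \id_{\G})\alpha(a)$ in two different ways. On the one hand, collapsing the middle leg first via $(\id_{\X}\otimes \varphi_{\G})\alpha = \varphi_{\X}(\,\cdot\,)1_{\X}$ and then applying $\varphi_{\X}$ to the outer leg shows that this equals $\varphi_{\X}(a_{(0)})a_{(1)} = F(a)$. On the other hand, rewriting $(\alpha\otimes \id_{\G})\alpha = (\id_{\X}\otimes \Delta)\alpha$ by the coaction property and then using $(\varphi_{\G}\otimes \id_{\G})\Delta(a_{(1)})=\varphi_{\G}(a_{(1)})1_{\G}$ on the last two legs shows that the same expression equals $\varphi_{\X}(a_{(0)})\varphi_{\G}(a_{(1)})1_{\G} = (\varphi_{\X}\otimes\varphi_{\G})\alpha(a)\,1_{\G} = \varphi_{\X}(a)1_{\G}$, where the final step uses the first preliminary identity. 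Equating the two evaluations gives $F(a) = \varphi_{\X}(a)1_{\G}$, which is exactly the claim.

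The computations are all routine Sweedler bookkeeping; the only points requiring care are tracking which of the three tensor legs each functional acts on, and making sure to invoke the genuine invariance of $\varphi_{\G}$ rather than the coaction property a second time. Indeed, the main conceptual obstacle is precisely that the coaction property \emph{alone} produces only the tautology $\Delta(F(a)) = \Delta(F(a))$, so homogeneity is exploited (through $\varphi_{\X}(1_{\X})=1$) and the genuine new input is the left-invariance of the Haar state, which is what forces $F(a)$ to be the scalar $\varphi_{\X}(a)1_{\G}$.
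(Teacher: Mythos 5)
Your argument is correct and is precisely the one the paper has in mind: the paper states that the lemma ``follows immediately from the definition of $\varphi_{\X}$ and the invariance of $\varphi_{\G}$'', and your two evaluations of $(\varphi_{\X}\otimes \varphi_{\G}\otimes \id_{\G})(\alpha\otimes \id_{\G})\alpha(a)$ — once via $E_{\Y}(\cdot)=\varphi_{\X}(\cdot)1_{\X}$ and once via coassociativity plus left-invariance of the Haar state — are exactly that sketch written out. The only cosmetic remark is that the Sweedler manipulations should be read as identities of bounded slice maps on the minimal tensor product, which is how you implicitly use them, so nothing is missing.
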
 

We now turn to special classes of homogeneous actions. The terminology is taken from \cite{Pod95}. 

Recall first that a compact quantum subgroup $\mathbb{H}\subseteq \mathbb{G}$ corresponds to a surjective $^*$-homomorphism $\pi_{\mathbb{H}}: C(\G)\rightarrow C(\mathbb{H})$ such that \[(\pi_{\mathbb{H}}\otimes \pi_{\mathbb{H}})\circ \Delta_{\G} = \Delta_{\mathbb{H}}\circ \pi_{\mathbb{H}}.\] Then automatically $\pi_{\mathbb{H}}(\mathscr{O}(\G)) = \mathscr{O}(\mathbb{H})$, and $\pi_{\mathbb{H}}\circ S_{\G} = S_{\mathbb{H}}\circ \pi_{\mathbb{H}}$ for the respective antipodes. 

\begin{Def} Let $\mathbb{X}\overset{\alpha}{\curvearrowleft} \mathbb{G}$. One calls $\alpha$ of \emph{quotient type} if there exists a compact quantum subgroup $\mathbb{H}\subseteq \mathbb{G}$ with corresponding quotient map $\pi_{\mathbb{H}}: C(\G)\rightarrow C(\mathbb{H})$ and a $^*$-isomorphism 
\[\theta: C(\mathbb{X}) \rightarrow C(\mathbb{H}\backslash\mathbb{G}) = \{g\in C(\mathbb{G})\mid (\pi_{\mathbb{H}}\otimes \id_{\G}) \Delta(g) = 1_{\mathbb{H}}\otimes g\}\] such that \[(\theta\otimes \id_{\G})\circ \alpha = \Delta \circ \theta.\]
\end{Def}

Note that $\mathbb{H}\backslash \G$ has a natural right $\G$-action, implemented by the coaction of $C(\G)$, as we are in the situation of Example \ref{ExaHom} with $\X = \G$ and $\mathbb{H}$ acting on $\mathbb{G}$ by \[(\pi_{\mathbb{H}}\otimes \id_{\G})\circ \Delta: C(\G) \mapsto C(\mathbb{H})\otimes C(\G).\] 

\begin{Lem}\label{LemQuotHom} If $\X\overset{\alpha}{\curvearrowleft}\G$ is of quotient type, then $\alpha$ is homogeneous.
\end{Lem}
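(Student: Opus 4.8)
The plan is to transport the fixed-point computation through the isomorphism $\theta$ and reduce everything to the statement that the translation action $\G \overset{\Delta}{\curvearrowleft} \G$ is itself homogeneous. By definition, $C(\X/\G)$ is the fixed-point algebra $\{a \in C(\X)\mid \alpha(a) = a\otimes 1_{\G}\}$. Since $(\theta\otimes \id_{\G})\circ\alpha = \Delta\circ\theta$ and $\theta$ is a unital $^*$-isomorphism of $C(\X)$ onto $C(\mathbb{H}\backslash\G)$, an element $a\in C(\X)$ is $\alpha$-invariant if and only if $g=\theta(a)\in C(\mathbb{H}\backslash\G)$ satisfies $\Delta(g)=g\otimes 1_{\G}$. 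Hence it suffices to show that the only $g\in C(\G)$ with $\Delta(g)=g\otimes 1_{\G}$ are the scalar multiples of $1_{\G}$.

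This last claim --- the key step --- I would establish in one line from the invariance of the Haar state. Applying $(\id_{\G}\otimes \varphi_{\G})$ to both sides of $\Delta(g)=g\otimes 1_{\G}$, the right-hand side becomes $\varphi_{\G}(1_{\G})\,g = g$, while the left-hand side equals $(\id_{\G}\otimes \varphi_{\G})\Delta(g)=\varphi_{\G}(g)1_{\G}$ by invariance of $\varphi_{\G}$ (which can also be read off from the strong left invariance lemma by taking $a=1_{\G}$). Thus $g=\varphi_{\G}(g)1_{\G}\in \C 1_{\G}$, so the $\Delta$-fixed elements of $C(\G)$ are exactly $\C 1_{\G}$.

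To conclude, I would observe that $1_{\G}$ lies in $C(\mathbb{H}\backslash\G)$, since $(\pi_{\mathbb{H}}\otimes \id_{\G})\Delta(1_{\G})=1_{\mathbb{H}}\otimes 1_{\G}$, and that $\theta$ is unital, so $\theta^{-1}(1_{\G})=1_{\X}$. Combining this with the previous paragraph, the $\alpha$-fixed elements of $C(\X)$ are precisely $\theta^{-1}(C(\mathbb{H}\backslash\G)\cap \C 1_{\G})=\theta^{-1}(\C 1_{\G})=\C 1_{\X}$, which is the desired equality $C(\X/\G)=\C 1_{\X}$.

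I do not expect a genuine obstacle here: all the content is in the fact that the comultiplication, viewed as an action of $\G$ on itself, is ergodic, and this is immediate from Haar-state invariance. The only points needing a little care are the bookkeeping through $\theta$ --- that it carries the fixed-point algebra bijectively onto the $\Delta$-fixed elements of $C(\mathbb{H}\backslash\G)$ --- and the remark that $\C 1_{\G}$ already sits inside $C(\mathbb{H}\backslash\G)$, so that restricting to the quotient-type subalgebra cannot enlarge the fixed-point set.
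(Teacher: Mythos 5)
Your proposal is correct and follows essentially the same route as the paper: the entire content is the observation that applying $(\id_{\G}\otimes \varphi_{\G})$ to $\Delta(g) = g\otimes 1_{\G}$ forces $g = \varphi_{\G}(g)1_{\G}$, which is exactly the paper's one-line proof. The additional bookkeeping you spell out (transporting the fixed-point algebra through the unital isomorphism $\theta$ and noting $1_{\G}\in C(\mathbb{H}\backslash\G)$) is left implicit in the paper but is the same argument.
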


\begin{proof} If $\mathbb{H}$ is a compact quantum subgroup and $g\in C(\mathbb{H}\backslash \G)$ satisfies $\Delta(g) = g\otimes 1_{\G}$, then by applying $(\id_{\G}\otimes \varphi_{\G})$ we see that $g = \varphi_{\G}(g)1_{\G}$. 
\end{proof}

\begin{Lem}\label{LemEmbUn} If $\alpha$ is of quotient type, then $\alpha_u$ is of quotient type.
\end{Lem}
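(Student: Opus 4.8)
The plan is to push the quotient-type datum through the universal-completion functor at the level of the common algebraic core, and then to recognise the resulting C$^*$-algebra as an honest homogeneous space of $\G_u$. Write $A:=\mathscr{O}(\mathbb{H}\backslash\G)=\{h\in\mathscr{O}(\G)\mid(\pi_{\mathbb{H}}\otimes\id_\G)\Delta(h)=1_{\mathbb{H}}\otimes h\}$, the algebraic core of the $\G$-action on $C(\mathbb{H}\backslash\G)$. Since $\theta$ is a $^*$-isomorphism with $(\theta\otimes\id_\G)\alpha=\Delta\theta$, it restricts to a coequivariant $^*$-isomorphism $\mathscr{O}_\G(\X)\xrightarrow{\cong}A$. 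I would then define $\mathbb{H}_u\subseteq\G_u$ by extending the Hopf $^*$-algebra surjection $\pi_{\mathbb{H}}\colon\mathscr{O}(\G)\to\mathscr{O}(\mathbb{H})$ to $\pi_{\mathbb{H}_u}\colon C(\G_u)\to C(\mathbb{H}_u)$, and set $C(\mathbb{H}_u\backslash\G_u)=\{g\in C(\G_u)\mid(\pi_{\mathbb{H}_u}\otimes\id)\Delta_u(g)=1\otimes g\}$; by Example~\ref{ExaHom} (with $\X=\G_u$) this is a $\G_u$-invariant C$^*$-subalgebra carrying the residual right action $\Delta_u$. As $\mathscr{O}(\G)=\mathscr{O}(\G_u)$ and $\pi_{\mathbb{H}_u}$ agrees with $\pi_{\mathbb{H}}$ there, its $\G_u$-core is again $A$, which is dense in it by Theorem~\ref{TheoDensPod}.

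Thus $C(\X_u)$ and $C(\mathbb{H}_u\backslash\G_u)$ are two C$^*$-completions of the one core $A$ (identified via $\theta$), and $C(\X_u)$ is the universal one by Proposition~\ref{PropUniLi}. Composing $\theta$ with the inclusion $A\subseteq C(\G_u)$ and extending by universality yields an equivariant $^*$-homomorphism $\theta_u\colon C(\X_u)\to C(\G_u)$ whose image is the closure of $A$, that is $C(\mathbb{H}_u\backslash\G_u)$. Hence $\theta_u\colon C(\X_u)\twoheadrightarrow C(\mathbb{H}_u\backslash\G_u)$ is a surjective equivariant $^*$-homomorphism restricting to $\theta$ on cores. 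Everything up to here is routine; the entire content of the lemma is the \emph{injectivity} of $\theta_u$.

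Injectivity is equivalent to the statement that the C$^*$-norm inherited from $C(\G_u)$ is already the maximal C$^*$-norm on the right coideal subalgebra $A$, i.e.\ that every $^*$-representation $\lambda$ of $A$ is dominated by $\|\cdot\|_{C(\G_u)}$. I would obtain this by extending $\lambda$ to a $^*$-representation of $\mathscr{O}(\G)$ (which is then automatically $\|\cdot\|_{C(\G_u)}$-bounded) by Rieffel induction along the canonical $\mathbb{H}$-averaging conditional expectation $E_{\mathbb{H}}\colon\mathscr{O}(\G)\to A$, $g\mapsto(\varphi_{\mathbb{H}}\pi_{\mathbb{H}}\otimes\id_\G)\Delta(g)$, which satisfies $E_{\mathbb{H}}(1)=1$ and is an $A$-bimodule map. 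One forms $\mathscr{O}(\G)\otimes_{A}\Hsp$ with $\langle b_1\otimes\xi_1,b_2\otimes\xi_2\rangle=\langle\xi_1,\lambda(E_{\mathbb{H}}(b_1^*b_2))\xi_2\rangle$; because $E_{\mathbb{H}}(1)=1$ the subspace $1\otimes\Hsp$ reproduces $\lambda$ isometrically and is invariant, so that the induced representation $\widetilde\lambda$ of $\mathscr{O}(\G)$ gives $\|\lambda(a)\|\le\|\widetilde\lambda(a)\|\le\|a\|_{C(\G_u)}$, as wanted.

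The hard part is exactly the positivity of this induced form, namely $(\id\otimes\lambda)\big[E_{\mathbb{H}}(b_i^*b_j)\big]\ge 0$ for \emph{every} $\lambda$. This does not follow from positivity of $E_{\mathbb{H}}$ alone: that only yields positivity in $C(\mathbb{H}_u\backslash\G_u)$, hence only for those $\lambda$ that already factor through the norm we are trying to reach, so a naive induction is circular. This is precisely the point where the structure of quantum homogeneous spaces must enter. Since $\mathscr{O}(\G)$ is cosemisimple, by Takeuchi's theorem it is a faithfully flat Hopf--Galois extension of the coideal subalgebra $A$; I would use this to produce a positive quasi-basis for $E_{\mathbb{H}}$ and verify the required positivity block-by-block along the Peter--Weyl decomposition $\mathscr{O}(\G)=\bigoplus_\pi C(\G)_\pi$, each block being finite-dimensional, where all C$^*$-norms coincide. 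Granting injectivity, $\theta_u$ is the required equivariant $^*$-isomorphism $C(\X_u)\cong C(\mathbb{H}_u\backslash\G_u)$, so that $\alpha_u$ is again of quotient type.
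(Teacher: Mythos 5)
Your proposal takes the same route as the paper's own proof: extend $\theta$ by universality to $\theta_u\colon C(\X_u)\to C(\G_u)$, identify its image with $C(\mathbb{H}_u\backslash\G_u)$ (the paper does this by approximating elements of $C(\mathbb{H}_u\backslash\G_u)$ through $E_{\mathbb{H}\backslash\G}$, you via Theorem \ref{TheoDensPod}; both work), and prove injectivity by inducing a faithful representation $\pi_u$ of $C(\X_u)$ along $E_{\mathbb{H}\backslash\G}=(\varphi_{\mathbb{H}}\circ\pi_{\mathbb{H}}\otimes\id_{\G})\Delta$. Your diagnosis of where the difficulty sits is accurate, and in fact sharper than the paper's own text: the paper justifies positivity of the induced form $\langle g\otimes\xi,h\otimes\eta\rangle=\langle\xi,(\pi_u\circ\theta^{-1})(E_{\mathbb{H}\backslash\G}(g^*h))\eta\rangle$ with the single phrase ``by the complete positivity of $E_{\mathbb{H}\backslash\G}$'', and, as you observe, complete positivity of $E_{\mathbb{H}\backslash\G}$ at the C$^*$-level only makes the matrices $(E_{\mathbb{H}\backslash\G}(g_i^*g_j))_{ij}$ positive for the norm that $A=\mathscr{O}_{\G}(\mathbb{H}\backslash\G)$ inherits from $C(\G_u)$, whereas $\pi_u\circ\theta^{-1}$ is a priori continuous only for the universal norm on $A$, which dominates that norm; positivity does not lift along the quotient from the universal completion, so the naive appeal is circular. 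This point genuinely needs an argument, both in your proposal and in the paper.

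However, your proposed repair does not close the gap, so the proposal is incomplete at its decisive step. Faithful flatness of $\mathscr{O}(\G)$ over $A$ yields a dual basis, i.e.\ finitely many $u_k,v_k$ on each $\mathbb{H}$-isotypical piece with $x=\sum_k u_kE_{\mathbb{H}\backslash\G}(v_kx)$, but \emph{not} a symmetric one with $v_k=u_k^*$; upgrading a quasi-basis to a positive (Watatani-type) quasi-basis is exactly as strong as the positivity you are trying to prove, so as stated this step is circular again. Nor does ``all C$^*$-norms coincide on finite-dimensional blocks'' help: positivity of a matrix over $A$ in an \emph{arbitrary} $^*$-representation of $A$ is not a statement internal to any finite-dimensional subspace of $A$. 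What does close the gap is an algebraic Parseval frame: finitely many algebraic vectors $\zeta_k$ in each $\mathbb{H}$-isotypical component $\mathscr{F}$ of $C(\G)$ with $x=\sum_k\zeta_kE_{\mathbb{H}\backslash\G}(\zeta_k^*x)$ for all $x\in\mathscr{F}$. Granting this, right $A$-linearity and $^*$-compatibility of $E_{\mathbb{H}\backslash\G}$ give
\[E_{\mathbb{H}\backslash\G}(g_i^*g_j)=\sum_k E_{\mathbb{H}\backslash\G}(\zeta_k^*g_i)^*\,E_{\mathbb{H}\backslash\G}(\zeta_k^*g_j),\]
so each matrix $(E_{\mathbb{H}\backslash\G}(g_i^*g_j))_{ij}$ is a finite sum of Gram matrices with entries in $A$ itself, hence positive in every $^*$-representation of $A$. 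The existence of such frames is the real content; it follows from the finite-dimensionality of spectral subspaces and the equivariant module theory of this section (in the notation of Theorem \ref{TheoEqHilb}, $\mathcal{K}_{\G}(\mathscr{F})$ is finite-dimensional with unit $\id_{\mathscr{F}}$, and operators of the form $(\id\otimes\varphi_{\G})(\alpha(\eta)\alpha(\eta)^*)$ with $\eta$ in a spectral subspace are finite sums of rank-one operators $x\mapsto\zeta E_{\mathbb{H}\backslash\G}(\zeta^*x)$ with $\zeta$ algebraic), and no circularity arises since those results do not depend on the present lemma. Some such argument must be supplied before the induction — yours or the paper's — is legitimate.
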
 
\begin{proof} Let $\theta: C(\X) \rightarrow C(\mathbb{H}\backslash \G)\subseteq C(\G)$ be equivariant, for some compact quantum subgroup $\mathbb{H}\subseteq \G$. Note that for $a\in  \mathscr{O}_{\G}(\X)$, we have \[\theta(a) = (\varepsilon \otimes \id_{\G})\Delta(\theta(a))=  (\varepsilon\circ \theta\otimes \id_{\G})\alpha(a).\]  Hence we obtain that $\theta(\mathscr{O}_{\G}(\X)) \subseteq \mathscr{O}(\G)$, and a universal $^*$-homomorphism \[\theta_u: C(\X_u) \rightarrow C(\G_u).\] 

Let us show that this map is injective. Consider the map \[E_{\mathbb{H}\backslash \G}: \mathscr{O}(\G) \rightarrow \mathscr{O}_{\G}(\mathbb{H}\backslash \G),\quad g \mapsto (\varphi_{\mathbb{H}}\circ \pi_{\mathbb{H}}\otimes \id_{\G})\Delta(g),\] which indeed has the above range since the restriction to each $C(\G)_{\pi}$ is a right $\mathscr{O}(\G)$-comodule map into $C(\mathbb{H}\backslash \G)$.  Let $(\Hsp_u,\pi_u)$ be a faithful $^*$-representation of $C(\X_u)$. Then by the complete positivity of $E_{\mathbb{H}\backslash \G}$, we can make a new Hilbert space $\Ind_{\G}(\Hsp_u)$ by separation-completion of $\mathscr{O}(\G)\aotimes \Hsp_{u}$ with respect to \[\langle g\otimes \xi,h\otimes \eta \rangle = \langle \xi, (\pi_u \circ\theta^{-1})(E_{\mathbb{H}\backslash \G}(g^*h))\eta\rangle.\] It comes with a representation $\widetilde{\pi}_u$ of $C(\G_u)$ such that \[\widetilde{\pi}_u(g)(h\otimes \xi) = gh\otimes \xi,\quad g,h\in \mathscr{O}(\G),\xi\in \Hsp_u.\] Now an easy computation shows that \[\widetilde{\pi}_u\theta_u(a)(1_{\G}\otimes \xi) = \theta(a)\otimes \xi = 1_{\G}\otimes \pi_u(a)\xi,\quad a\in \mathscr{O}_{\G}(\X).\] It follows that also \[\widetilde{\pi}_u\theta_u(a)(1_{\G}\otimes \xi) = 1_{\G}\otimes \pi_u(a)\xi,\quad a\in C(\X_u).\] Since $\xi\mapsto 1_{\G}\otimes \xi$ is an isometric inclusion of $\Hsp_u$ into $\Ind(\Hsp_u)$, it follows that $\theta_u$ is injective. 

Let us now show that $\X_u \curvearrowleft \G_u$ is of quotient type. By universality, we have that $\mathbb{H}_u \subseteq \G_u$. With $\pi_{\mathbb{H},u}$ the corresponding quotient map, we then have \[1_{\mathbb{H}_u}\otimes \theta_u(a) = (\pi_{\mathbb{H},u}\otimes \id)\Delta(\theta_u(a))\] for $a\in \mathscr{O}_{\G}(\X)$, so by continuity we conclude that $C(\X_u) \subseteq C(\mathbb{H}_u \backslash \G_u)$. To obtain equality, we have to show that $\mathbb{H}_u \backslash \G_u = (\mathbb{H}\backslash \G)_u$. But choose $x \in C(\mathbb{H}_u \backslash \G_u)$, and choose a sequence $x_n\in \mathscr{O}(\G)$ such that $x_n \rightarrow x$. Then \[(\varphi_{\mathbb{H}}\pi \otimes \id_{\G})\Delta(x_n) \rightarrow (\varphi_{\mathbb{H}_u}\pi_{\mathbb{H},u} \otimes \id_{\G})\Delta(x) = x.\] As the left hand side elements are in $\mathscr{O}_{\G}(\mathbb{H}\backslash \G)$ for each $n$, we obtain $C((\mathbb{H}\backslash \G)_u) = C(\mathbb{H}_u\backslash \G_u)$. 
\end{proof}

On the other hand, it is easy to see that a homogeneous action is not always of quotient type. For example, let $\pi$ be an irreducible representation of $\G$ with dimension at least two. Then $\Ad_{\pi}$ is clearly a homogeneous action. However, it can not be of quotient type, as any $\mathscr{O}(\mathbb{H}\backslash \G)$ admits a character, the counit of $\mathscr{O}(\G)$. 

As another example, consider $\G$ non-coamenable, so that $C(\G_{u}) \neq C(\G_{\red})$. Then $C(\G_{\red})$ does not admit any characters, so $\G_{\red}\curvearrowleft \G_{\red}$ is not of quotient type! Of course, this latter example can be avoided by slightly relaxing the definition of quantum subgroup, but it turns out that, in general, the notion of quotient type is much too strong anyhow. A much larger class of homogeneous actions is obtained as follows.

\begin{Def} Let $\mathbb{X}\overset{\alpha}{\curvearrowleft} \mathbb{G}$. One calls $\alpha$ \emph{embeddable} if there exists a faithful $^*$-homomorphism \[\theta: C(\mathbb{X}) \hookrightarrow C(\mathbb{G})\] such that \[(\theta\otimes \id)\circ \alpha = \Delta \circ \theta.\]
\end{Def}

Clearly, any action of quotient type is embeddable. On the other hand, by considering adjoint actions one sees again that homogeneous does not imply embeddable. When seen as subalgebras of $C(\G)$, embeddable actions are also referred to as \emph{(right) coideal C$^*$-subalgebras}.

\begin{Lem} Let $\mathbb{X}\overset{\alpha}{\curvearrowleft} \G$ be embeddable. Then also $\alpha_{\red}$ is embeddable.
\end{Lem}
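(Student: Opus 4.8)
The plan is to produce a faithful, equivariant $^*$-homomorphism $\theta_{\red}\colon C(\X_{\red})\hookrightarrow C(\G_{\red})$ by reducing the given embedding $\theta\colon C(\X)\hookrightarrow C(\G)$. Throughout I write $\pi_{\red}$ for the reduction map of $C(\X)$ and $\pi_{\red}^{\G}$ for that of $C(\G)$. First I would record two preliminary facts. Since $\X$ is compact and $\alpha(1_{\X})=1_{\X}\otimes 1_{\G}$, equivariance gives $\Delta(\theta(1_{\X}))=\theta(1_{\X})\otimes 1_{\G}$; applying $(\id_{\G}\otimes\varphi_{\G})$ and using invariance of the Haar state shows $\theta(1_{\X})\in\C 1_{\G}$, so as $\theta$ is a nonzero $^*$-homomorphism it is unital, $\theta(1_{\X})=1_{\G}$. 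Next, applying $\theta$ to the defining identity $\varphi_{\X}(a)1_{\X}=(\id_{\X}\otimes\varphi_{\G})\alpha(a)$ and using $\Delta\circ\theta=(\theta\otimes\id)\circ\alpha$ together with invariance of $\varphi_{\G}$, I obtain $\varphi_{\X}=\varphi_{\G}\circ\theta$. Because $\alpha$ is homogeneous, $E_{\Y}=\varphi_{\X}$, so $L^2_{\Y}(\X)$ is the GNS space of $\varphi_{\X}$ and $\pi_{\red}$ is its GNS representation; likewise $C(\G_{\red})$ is the GNS picture of $\varphi_{\G}$.

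The heart of the argument is to identify $\ker\pi_{\red}$ on $C(\X)$ with $N_{\X}=\{a\in C(\X)\mid \varphi_{\X}(a^*a)=0\}$. The inclusion $\ker\pi_{\red}\subseteq N_{\X}$ is immediate. For the reverse I invoke the faithfulness of $\varphi_{\G}$ on $C(\G_{\red})$, i.e. $\ker\pi_{\red}^{\G}=\{x\mid \varphi_{\G}(x^*x)=0\}$. If $a\in N_{\X}$, then $\varphi_{\G}(\theta(a)^*\theta(a))=\varphi_{\X}(a^*a)=0$, so $\theta(a)\in\ker\pi_{\red}^{\G}$. Since $\ker\pi_{\red}^{\G}$ is a two-sided ideal, $\theta(ab)=\theta(a)\theta(b)\in\ker\pi_{\red}^{\G}$ for every $b\in C(\X)$, whence $\varphi_{\X}(b^*a^*ab)=\varphi_{\G}(\theta(ab)^*\theta(ab))=0$. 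This says precisely that $\pi_{\red}(a)$ annihilates the image of every $b\in C(\X)$ in $L^2_{\Y}(\X)$, i.e. $a\in\ker\pi_{\red}$. Hence $\ker\pi_{\red}=N_{\X}$, equivalently $\varphi_{\X}$ is faithful on $C(\X_{\red})$. I expect this transport of faithfulness from $\G$ to $\X$ through $\theta$ to be the only genuinely non-formal step.

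Finally, the composite $\pi_{\red}^{\G}\circ\theta\colon C(\X)\to C(\G_{\red})$ is a $^*$-homomorphism whose kernel equals $\{a\mid \varphi_{\G}(\theta(a)^*\theta(a))=0\}=N_{\X}=\ker\pi_{\red}$, so it descends to an injective $^*$-homomorphism $\theta_{\red}\colon C(\X_{\red})\to C(\G_{\red})$ with $\theta_{\red}\circ\pi_{\red}=\pi_{\red}^{\G}\circ\theta$, giving the required faithfulness. For equivariance I would verify $\Delta\circ\theta_{\red}=(\theta_{\red}\otimes\id)\circ\alpha_{\red}$ on the dense range $\pi_{\red}(C(\X))$: evaluating on $\pi_{\red}(a)$ and using $\Delta\circ\pi_{\red}^{\G}=(\pi_{\red}^{\G}\otimes\pi_{\red}^{\G})\circ\Delta$, $\alpha_{\red}\circ\pi_{\red}=(\pi_{\red}\otimes\pi_{\red}^{\G})\circ\alpha$ and the equivariance of $\theta$, both sides collapse to $(\pi_{\red}^{\G}\theta\otimes\pi_{\red}^{\G})\alpha(a)$; continuity then extends the identity to all of $C(\X_{\red})$. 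This exhibits $\theta_{\red}$ as an embedding realizing $\alpha_{\red}$, so $\alpha_{\red}$ is embeddable.
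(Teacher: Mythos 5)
Your argument is correct, and it reaches the conclusion by a genuinely different mechanism than the notes. The notes work at the Hilbert-space level: embeddability gives an inclusion of GNS spaces $L^2(\X)\subseteq L^2(\G)$ matching cyclic vectors, and the right regular representation $\rho(g)=J_{\G}\sigma_{-i/2}(g)^*J_{\G}$, $g\in\mathscr{O}(\G)$, is used to propagate the vanishing $\pi_{\red}(\theta(a))\xi_{\G}=0$ to $\pi_{\red}(\theta(a))=0$ via the dense subspace $\mathscr{O}(\G)\xi_{\G}$ --- modular theory inlined. You instead stay at the C$^*$-level: you quote the preliminary fact $\Ker(\pi_{\red})=\{x\in C(\G)\mid\varphi_{\G}(x^*x)=0\}$ and replace the right-multiplication trick by the observation that this kernel is a two-sided ideal, so $\theta(a)\in\Ker(\pi_{\red}^{\G})$ forces $\varphi_{\X}(b^*a^*ab)=\varphi_{\G}(\theta(ab)^*\theta(ab))=0$ for all $b$, spreading the vanishing on the $\X$-side rather than the $\G$-side. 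Your route buys two things the notes compress into ``it follows that $\theta$ descends to an equivariant map'': an equality of kernels (not merely the one inclusion needed for descent), hence an explicit proof that $\theta_{\red}$ is injective, and an explicit check of equivariance on the dense range. The notes' route buys self-containedness, since it essentially re-proves, in the form needed, the faithfulness statement you cite as a black box. One cosmetic point: you assert homogeneity of $\alpha$ rather than deriving it, but your own unitality argument already yields it (an $\alpha$-fixed $a$ has $\Delta(\theta(a))=\theta(a)\otimes 1_{\G}$, so $\theta(a)\in\C 1_{\G}$, whence $a\in\C 1_{\X}$ by injectivity of $\theta$); the notes make the same tacit use of homogeneity the moment they invoke $\varphi_{\X}$.
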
 
\begin{proof} 
Let $L^2(\X)$ be the GNS-space of $C(\X)$ with respect to the invariant state $\varphi_{\X}$. By embeddability, we have that the natural inclusion $C(\X)\rightarrow C(\G)$ gives rise to an embedding $L^2(\X) \subseteq L^2(\G)$, sending GNS-vector to GNS-vector. 

If now $g\in \mathscr{O}(\G)$, then there exists a unique bounded operator $\rho(g)$ on $L^2(\G)$ such that \[\rho(g)h\xi_{\G} = hg\xi_{\G},\quad h\in \mathscr{O}(\G).\] Namely, with $J_{\G}$ the anti-unitary \[J_{\G}h\xi_{\G} = \sigma_{i/2}(h)^*\xi_{\G},\] we have \[\rho(g) = J_{\G}\sigma_{-i/2}(g)^*J_{\G}.\] Hence if $a\in C(\X)$ is an element which vanishes in $C(\X_{\red})$, then for $g\in \mathscr{O}(\G)$, we have \[\pi_{\red}(\theta(a))g\xi_{\G} = \rho(g) \pi_{\red}(\theta(a))\xi_{\G} = 0.\] It follows that $\theta$ descends to an equivariant map $C(\X_{\red}) \rightarrow C(\G_{\red})$, and hence $\alpha_{\red}$ is embeddable. 
\end{proof} 

Embeddable actions can be detected by the existence of at least one classical point (in the universal setting). 

\begin{Lem} Let $\mathbb{X}\overset{\alpha}{\curvearrowleft} \G$ be homogeneous. The following are equivalent: 
\begin{enumerate}
\item $\alpha_{\red}$ is embeddable.
\item $C(\mathbb{X}_u)$ has a character.
\end{enumerate}
\end{Lem}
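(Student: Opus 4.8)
The plan is to prove the two implications separately, exploiting that for a \emph{homogeneous} action the conditional expectation $E_{\Y}$ collapses to the invariant state $\varphi_{\X}$, so that $L^2_{\Y}(\X)$ is just the GNS space $L^2(\X)$ of $\varphi_{\X}$ with cyclic vector $\xi_{\X}$, and $\pi_{\red}$ is the associated GNS (left multiplication) representation. Throughout I use that homogeneity passes to $\alpha_u$ (so that $(\id_{\X}\otimes \varphi_{\G})\alpha_u(a)=\varphi_{\X}(a)1_{\X_u}$) and that the Haar state is faithful on $C(\G_{\red})$.

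For $(2)\Rightarrow(1)$, given a character $\chi\colon C(\X_u)\to\C$ I would set
\[\theta=(\chi\otimes \id_{\G})\circ \alpha_u\colon C(\X_u)\to C(\G_u).\]
Since $\chi\otimes \id_{\G}$ and $\alpha_u$ are $^*$-homomorphisms, so is $\theta$, and using $\Delta\circ(\chi\otimes\id_{\G})=(\chi\otimes\id_{\G}\otimes\id_{\G})(\id_{\X}\otimes\Delta)$ together with the coaction property $(\id_{\X}\otimes\Delta)\alpha_u=(\alpha_u\otimes\id_{\G})\alpha_u$ one gets the equivariance $(\theta\otimes\id_{\G})\alpha_u=\Delta\circ\theta$. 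The crucial point is that $\theta$ intertwines the invariant states: applying $\id_{\X}\otimes\varphi_{\G}$ gives $\varphi_{\G}(\theta(a))=\chi((\id_{\X}\otimes\varphi_{\G})\alpha_u(a))=\varphi_{\X}(a)$. Hence $a\xi_{\X}\mapsto\theta(a)\xi_{\G}$ extends to an isometry $W\colon L^2(\X)\to L^2(\G)$ with $W\pi_{\red}(a)=\pi_{\red}(\theta(a))W$. Because $\varphi_{\G}$ is faithful on $C(\G_{\red})$, the state-intertwining forces $\pi_{\red}(\theta(a))=0$ whenever $\pi_{\red}(a)=0$, so $\theta$ descends to an equivariant $^*$-homomorphism $\theta_{\red}\colon C(\X_{\red})\to C(\G_{\red})$; the injectivity of the isometry $W$ then yields injectivity of $\theta_{\red}$ (from $\theta_{\red}(x)=0$ we get $Wx=\theta_{\red}(x)W=0$, hence $x=0$), so $\alpha_{\red}$ is embeddable.

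For $(1)\Rightarrow(2)$, suppose $\theta\colon C(\X_{\red})\hookrightarrow C(\G_{\red})$ is a faithful equivariant embedding. Equivariance makes $\theta$ preserve spectral subspaces: for $T\in\Mor(\pi,\alpha_{\red})$ one checks $\theta\circ T\in\Mor(\pi,\Delta)$, so $\theta$ maps $C(\X_{\red})_{\pi}=C(\X)_{\pi}$ into $C(\G)_{\pi}\subseteq\mathscr{O}(\G)$. Thus $\theta$ restricts to a unital $^*$-homomorphism $\mathscr{O}_{\G}(\X)\to\mathscr{O}(\G)$ between algebraic cores, on which the counit is defined, and $\chi:=\varepsilon\circ\theta$ is a nonzero unital $^*$-homomorphism $\mathscr{O}_{\G}(\X)\to\C$, i.e.\ a character on the core. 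Since $C(\X_u)$ is the universal C$^*$-envelope of $\mathscr{O}_{\G}(\X)$ and a character is a bounded $^*$-representation, $\chi$ extends to a character on $C(\X_u)$.

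The main obstacle lies on the reduced side of $(2)\Rightarrow(1)$: the map $\theta=(\chi\otimes\id_{\G})\alpha_u$ is a priori only defined at the universal level, which is exactly where the character lives, and one must show it genuinely descends to and \emph{embeds} the reduced algebra. This is where faithfulness of the reduced Haar state (for well-definedness of $\theta_{\red}$) and the isometry $W$ arising from state-preservation (for injectivity) do the real work, rather than any direct faithfulness of $\varphi_{\X}$ on $C(\X_{\red})$. In $(1)\Rightarrow(2)$ the only point needing care is that equivariance really lands $\theta$ inside $\mathscr{O}(\G)$ on the core, so that the (generally unbounded) counit may legitimately be applied.
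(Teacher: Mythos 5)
Your proof is correct and takes essentially the same route as the paper: for $(2)\Rightarrow(1)$ both define $\theta=(\chi\otimes \id)\circ\alpha_u$, use homogeneity to get the state identity $\varphi_{\G}\circ\theta=\varphi_{\X}$ on the core, and then invoke faithfulness of the states on the reduced completions to descend to an embedding $C(\X_{\red})\hookrightarrow C(\G_{\red})$ (your GNS isometry $W$ is just an explicit rendering of the paper's terse final step). For $(1)\Rightarrow(2)$ both use equivariance to see that the algebraic core $\mathscr{O}_{\G}(\X)$ lands in $\mathscr{O}(\G)$, apply the counit to get a character, and extend it to $C(\X_u)$ by the universal property.
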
 
\begin{proof}
Assume first that $\alpha_{\red}$ is embeddable. Then we have $\theta(\mathscr{O}_{\G}(\X)) \subseteq \mathscr{O}(\G)$, leading to a $^*$-homomorphism $\theta_u: C(\X_u)  \rightarrow C(\G_u)$. Hence $\varepsilon \circ \theta_u$ is a character on $C(\X_u)$.
 
Conversely, assume that $C(\mathbb{X}_u)$ has a character $\chi$. We obtain an equivariant $^*$-homomorphism \[\theta_u: C(\mathbb{X}_u)\rightarrow C(\mathbb{G}_u),\quad a\mapsto (\chi\otimes \id)\circ \alpha_u.\] This leads to an equivariant $^*$-homomorphism \[\theta_{\alg}: \mathscr{O}_{\G}(\mathbb{X})\rightarrow \mathscr{O}(\mathbb{G}),\]
 and \[\varphi_{\G}(\theta_{\alg}(a)^*\theta_{\alg}(a)) = \chi(E_{\Y}(a^*a)).\] But, by homogeneity, $E_{\Y}$ takes values in $\C1_{\X}$, so \[E_{\Y}(a^*a)= \chi(E_{\Y}(a^*a))1_{\X}.\]Hence $\theta_{\red}: C(\X_{\red}) \hookrightarrow C(\G_{\red})$, since $E_{\Y}$ is faithful on $C(\X_{\red})$.
\end{proof}

As an example, let us consider Woronowicz's \emph{quantum $SU(2)$-groups} $SU_q(2)$ \cite{Wor87}. These are determined by a parameter $q\in \R\setminus\{0\}$, and the associated Hopf $^*$-algebra is the universal unital $^*$-algebra generated by two elements $\alpha,\gamma$ such that \[U = \begin{pmatrix} \alpha & -q\gamma^*\\ \gamma & \alpha^*\end{pmatrix}\] is unitary. The comultiplication is uniquely determined by requiring that $U$ is a unitary corepresentation. We note that $SU_q(2)$ is coamenable, hence there is only one C$^*$-completion of $SU_q(2)$. 

It can be shown, just as in the case $q=1$ which gives the classical group $SU(2)$, that the irreducible representations of $SU(2)$ can be labelled by the halfintegers $\frac{1}{2}\N$, in such a way that $U_0$ is the trivial representation, $U_{1/2}= U$ and \[U_{n}\otimes U_{1/2} = U_{n-\frac{1}{2}}\oplus U_{n+\frac{1}{2}}, \quad n\in \frac{1}{2}\N\setminus \{0\}.\] For example, the `spin $1$-representation' $U_1$ can be shown to be the 3-dimensional representation given by the unitary corepresentation \[U_1 = \begin{pmatrix} \alpha^2 & q(1+q^2)^{1/2} \gamma^*\alpha &  q^2(\gamma^*)^2 \\ -(1+q^2)^{1/2} \gamma \alpha & 1- (1+q^2)\gamma^*\gamma & q(1+q^2)^{1/2} \alpha^*\gamma^* \\ \gamma^2 & -(1+q^2)^{1/2} \alpha^*\gamma & (\alpha^*)^2\end{pmatrix}.\]

Choose now $x\in \R$ and write \[\begin{pmatrix} \omega_{-1} & \omega_0 & \omega_{1}\end{pmatrix} = \begin{pmatrix} \mathrm{sgn}(q)|q|^{-1/2} & \frac{|q|^x -|q|^{-x}}{(|q|+|q|^{-1})^{1/2}}& -|q|^{1/2}\end{pmatrix}U_1.\] Then it is easily seen that the unital algebra generated by $\omega_{-1},\omega_0$ and $\omega_1$ is in fact a $^*$-algebra, and that this is a right coideal $^*$-subalgebra $\mathscr{O}(S^2_{q,x})$ of $\mathscr{O}(SU_q(2))$. We call $S^2_{q,x}$ the \emph{Podle\'{s} sphere} at parameter $x$, see \cite{Pod87} (with a different parametrisation) and \cite{MNW91}.  

If one writes \[X = \omega_1, \quad Y = X^*,\quad  Z = (|q|+|q|^{-1})^{-1/2}\omega_0 - \frac{|q|^x-|q|^{-x}}{|q|+|q|^{-1}},\] then a tedious computation shows that these variables satisfy the relations $X^* = Y$, $Z^* = Z$, $XZ=q^2ZX$ and \[X^*X = (1-|q|^{x-1}Z)(1+|q|^{-x-1}Z),\quad XX^* = (1-|q|^{x+1}Z)(1+|q|^{-x+1}Z).\] One can show that these are in fact \emph{universal} relations for the $^*$-algebra $\mathscr{O}(S^2_{q,x})$. 

We claim that these quantum homogeneous spaces are not of quotient type. Indeed, as morphisms between Hopf algebras preserve the antipode, one sees that coideal subalgebras of quotient type are invariant under the antipode squared. However, for $\mathscr{O}(SU_q(2))$ one has \[S^2(\alpha) = \alpha,\quad S^2(\gamma) = q^2\gamma,\] and from the commutation relations one sees that this can never be implemented by an automorphism of $\mathscr{O}(S^2_{q,x})$.

However, one can consider the case `$x\rightarrow \infty$', leading to the coideal subalgebra $\mathscr{O}(S^2_{q,\infty})$ generated by  \[\begin{pmatrix} \omega_{-1} & \omega_0 & \omega_{1}\end{pmatrix} = \begin{pmatrix} 0 & 1& 0\end{pmatrix}U_1.\] It is not difficult to verify that then \[\mathscr{O}(S^2_{q,\infty}) = \mathscr{O}(S^1\backslash SU_q(2)),\] where the circle group $S^1$, whose associated Hopf $^*$-algebra is the Lorentz algebra $\C\lbrack z,z^{-1}\rbrack$ with $^*$-structure $z^* = z^{-1}$, is a quotient group of $SU_q(2)$ by \[\pi_{S^1}\begin{pmatrix} \alpha & -q\gamma^* \\ \gamma & \alpha^*\end{pmatrix} = \begin{pmatrix} z & 0 \\ 0 & z^{-1}\end{pmatrix}.\] If we write \[ X = -\frac{q}{(1+q^2)^{1/2}}\omega_{-1},\quad Y=X^*,\quad Z = -(q+q^{-1})^{-1}(\omega_0-1),\] then we find the universal relations $X=Y^*$, $Z^*=Z$, $XZ = q^2ZX$ and \[X^*X = -q^{-2}Z^2+q^{-1}Z,\quad XX^* = -q^2Z^2+qZ.\]

Let us now return again to general homogeneous actions $\X\curvearrowleft \G$. Consider an irreducible representation $\pi$ of $\G$. Our aim will be to prove Boca's result \cite[Theorem 17]{Boc95} that $C(\X)_{\pi}$ is finite-dimensional, with dimension bounded by $\dim_q(\pi)$. (In fact, Boca had the upper bound $\dim_q(\pi)^2$, which was improved to the optimal bound $\dim_q(\pi)$ in \cite{Tom08} and \cite{BDV06}).

For $T\in \Mor(\pi,\alpha)$ and $\xi\in \Hsp_{\pi}$, we will write \[T\xi = U_{\pi}(T,\xi).\] Then for $S,T \in \Mor(\pi,\alpha)$, it is easy to check that \[\sum_{i} U_{\pi}(S,e_i) U_{\pi}(T,e_i)^* \in C(\X/\G) = \C1_{\X},\] where $\{e_i\}$ is an orthonormal basis of $\Hsp_{\pi}$.  It follows that $\Mor(\pi,\alpha)$ is in a natural way a pre-Hilbert space with \[\langle T,S\rangle = \sum_i U_{\pi}(S,e_i)U_{\pi}(T,e_i)^*.\] 

We split the proof of Boca's result into a qualitative and a quantitative part, and base our proof on the approaches of \cite{BDV06} and \cite{Tom08}, see also \cite{DCY13} for the qualitative part.  

\begin{Theorem}\label{TheoFinBoc} If $\mathbb{X}\underset{\alpha}{\curvearrowleft} \G$ is homogeneous, then all isotypical components $C(\mathbb{X})_{\pi}$ are finite-dimensional.
\end{Theorem}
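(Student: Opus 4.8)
The plan is to reduce the statement to the finiteness of the multiplicity space $\Mor(\pi,\alpha)$. Since $C(\X)_{\pi}$ is by definition the linear span of $\{T\xi\mid T\in\Mor(\pi,\alpha),\ \xi\in\Hsp_{\pi}\}$ and $\dim\Hsp_{\pi}<\infty$, we have $\dim C(\X)_{\pi}\leq \dim\Hsp_{\pi}\cdot\dim\Mor(\pi,\alpha)$, so it suffices to prove $\dim\Mor(\pi,\alpha)<\infty$. By homogeneity the pairing $\langle S,T\rangle=\sum_i U_{\pi}(T,e_i)U_{\pi}(S,e_i)^*$ introduced above takes values in $C(\X/\G)=\C 1_{\X}$, and one checks it is a genuine positive-definite inner product on $\Mor(\pi,\alpha)$ (positivity uses that $\langle T,T\rangle=0$ forces every $Te_i=0$). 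Thus $\Mor(\pi,\alpha)$ is a pre-Hilbert space, and I must bound its dimension.

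First I would extract numerical information from the invariant state $\varphi_{\X}$ by slicing the coaction with $\varphi_{\G}$ and invoking the orthogonality relations of Theorem \ref{TheoWorChar}. A direct computation of $\alpha(U_{\pi}(T,\xi)U_{\pi}(S,\eta)^*)$ gives the ``forward'' pairing \[\varphi_{\X}\big(U_{\pi}(T,\xi)U_{\pi}(S,\eta)^*\big)=\frac{\langle\eta,Q_{\pi}\xi\rangle}{\Tr(Q_{\pi})}\,\langle S,T\rangle,\] while the same computation in the opposite order yields the ``backward'' pairing \[\varphi_{\X}\big(U_{\pi}(S,\eta)^*U_{\pi}(T,\xi)\big)=\frac{\langle\eta,\xi\rangle}{\Tr(Q_{\pi})}\,(S,T)_{R},\qquad (S,T)_{R}\,1_{\X}=\sum_k U_{\pi}(S,e_k)^*\,U_{\pi}(T,Q_{\pi}^{-1}e_k).\] Since the left-hand side is manifestly scalar, the element $\sum_k U_{\pi}(S,e_k)^*U_{\pi}(T,Q_{\pi}^{-1}e_k)$ must lie in $C(\X/\G)=\C1_{\X}$, so $(\cdot,\cdot)_{R}$ is a second sesquilinear form on $\Mor(\pi,\alpha)$; faithfulness of $\varphi_{\X}$ on $\mathscr{O}_{\G}(\X)$ (which follows from the faithfulness of $E_{\Y}$ established earlier) shows it is again positive-definite. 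Hence there is a positive, invertible operator $H$ on the pre-Hilbert space $(\Mor(\pi,\alpha),\langle\cdot,\cdot\rangle)$ with $(S,T)_{R}=\langle S,HT\rangle$.

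The finiteness will then come from estimating the two traces of $H$. The antilinear isomorphism $T\mapsto\widetilde{T}$, $\widetilde{T}(\xi^*)=(T\xi)^*$, identifies $\Mor(\pi,\alpha)$ with $\Mor(\bar\pi,\alpha)$, and a short calculation with the inner product on $\Hsp_{\bar\pi}=\Hsp_{\pi}^*$ shows that under this identification the $R$-form of $\pi$ becomes the $\langle\cdot,\cdot\rangle$-form of $\bar\pi$; consequently the comparison operator for $\bar\pi$ is, up to this identification, $H^{-1}$. Using the normalisation $\Tr(Q_{\pi})=\Tr(Q_{\pi}^{-1})=\dim_q(\pi)$ together with the orthogonality relations, the key estimate I would aim for is that $\Tr(H)$ and $\Tr(H^{-1})$ are both finite, each controlled by the quantum dimension (this is where Boca's bound $\dim_q(\pi)^2$, sharpened in \cite{BDV06,Tom08} to $\dim_q(\pi)$, enters). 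Once both $\Tr(H)<\infty$ and $\Tr(H^{-1})<\infty$ are known, $\Mor(\pi,\alpha)$ must be finite-dimensional, since a positive operator on an infinite-dimensional Hilbert space cannot have both itself and its inverse of finite trace. The main obstacle is precisely this trace estimate: because $\varphi_{\X}$ is not tracial, relating the forward and backward pairings quantitatively requires controlling the modular behaviour of $\varphi_{\X}$ on the isotypical components, which is governed by the Woronowicz characters $Q_{\pi}^{z}$; this rigidity input is the heart of the argument and is what ultimately forces the multiplicities to be finite.
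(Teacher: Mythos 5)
Your reduction to $\dim\Mor(\pi,\alpha)<\infty$ and your two pairings are sound, and your route is genuinely different from the paper's: the paper argues qualitatively, restricting the Galois isometry $\mathcal{G}_{\alpha}$ to isotypical pieces and showing that the identity operator of $C(\X)_{\bar{\pi}}\xi_{\X}$ is Hilbert--Schmidt, whereas you aim directly at the quantitative multiplicity bound of \cite{BDV06,Tom08}. Your backward pairing is indeed legitimate without any finiteness assumption: using the relation $\sum_k U_{\pi}(\xi,e_k)^*U_{\pi}(\eta,Q_{\pi}^{-1}e_k)=\langle Q_{\pi}^{-1}\eta,\xi\rangle 1_{\G}$ (unitarity of $U_{\bar{\pi}}$), one checks that $\sum_k U_{\pi}(S,e_k)^*U_{\pi}(T,Q_{\pi}^{-1}e_k)$ is $\alpha$-invariant, hence scalar by homogeneity, and both of your $\varphi_{\X}$-formulas then follow from Theorem \ref{TheoWorChar}. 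The genuine gap is that the decisive step is never performed: you explicitly defer ``the key estimate I would aim for'' and close by calling it ``the main obstacle''. Moreover you misplace its source: you say it requires controlling the modular behaviour of $\varphi_{\X}$, but the modular automorphism $\sigma_{\X}$ and the operators $F_{\pi}$ of Corollary \ref{CorPosF} are constructed in the paper only \emph{after}, and by means of, finite-dimensionality of the isotypical components, so any argument routed through the modular theory of $\varphi_{\X}$ is circular here; what is needed, and available, is only the modular data of $\varphi_{\G}$ (the $Q_{\pi}$ and the orthogonality relations) together with a coisometry trick. A second defect: on a pre-Hilbert space of unknown dimension, a positive-definite form need not be representable as $(S,T)_R=\langle S,HT\rangle$ for a positive invertible operator $H$ admitting a spectral decomposition and meaningful traces (Riesz representation requires completeness and boundedness of the form, which is part of what is to be proved), so your closing argument ``$\Tr(H)<\infty$ and $\Tr(H^{-1})<\infty$ force finite dimension'' is not available as stated.

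Both defects are repaired by quantifying over finite orthonormal systems, where the missing estimate becomes elementary. If $T_1,\dots,T_n$ is a $\langle\cdot,\cdot\rangle$-orthonormal system and $d=\dim(\Hsp_{\pi})$, the $n\times d$ matrix $U^{(n)}=\bigl(U_{\pi}(T_k,e_j)\bigr)_{k,j}$ over $C(\X)$ satisfies $U^{(n)}(U^{(n)})^*=1$ by the very definition of $\langle\cdot,\cdot\rangle$, hence $\|(U^{(n)})^*U^{(n)}\|\leq 1$; applying the unital positive map $\id_{M_d}\otimes\varphi_{\X}$ entrywise and using your backward-pairing formula turns $(U^{(n)})^*U^{(n)}$ into $\Tr(Q_{\pi})^{-1}\bigl(\sum_k(T_k,T_k)_R\bigr)I_d$, whence
\[\sum_{k=1}^{n}(T_k,T_k)_R\;\leq\;\Tr(Q_{\pi})=\dim_q(\pi),\]
with no finiteness assumption on $\Mor(\pi,\alpha)$. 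The same estimate applied to $\bar{\pi}$ and transported through your antilinear identification $T\mapsto T^{\dag}$ (which, as you correctly note, exchanges the two forms) gives $\sum_k\langle T_k,T_k\rangle\leq\dim_q(\bar{\pi})=\dim_q(\pi)$ for every finite $(\cdot,\cdot)_R$-orthonormal system. Now on any $n$-dimensional subspace $V_0\subseteq\Mor(\pi,\alpha)$ the comparison operator $H_0$ does exist; its eigenvalues $\lambda_i$ satisfy $\sum_i\lambda_i\leq\dim_q(\pi)$ and $\sum_i\lambda_i^{-1}\leq\dim_q(\pi)$ by the two estimates, so $2n\leq\sum_i(\lambda_i+\lambda_i^{-1})\leq 2\dim_q(\pi)$, i.e.\ $n\leq\dim_q(\pi)$. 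Since $n$ was arbitrary, $\Mor(\pi,\alpha)$ is finite-dimensional, with Boca's sharp bound as a bonus. As written, however, your proposal stops exactly where this work has to begin.
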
 

\begin{proof} We first show that the Hilbert space norm and the operator norm on $C(\X)_{\pi}$ are equivalent.  Of course, \[\langle a\xi_{\X},a\xi_{\X}\rangle \leq \|a\|^2,\quad a\in C(\X)_{\pi}.\] On the other hand, let $\chi_{\pi}$ be the element defined by \eqref{EqElChi}, and write $\rho_{\pi} = \sigma(\chi_{\pi})$. Then we have for $a\in C(\X)_{\pi}$ that \[a = (\id_{\X}\otimes \varphi_{\G})(\alpha(a)(1_{\X}\otimes \chi_{\pi}^*)) = (\id_{\X}\otimes \varphi_{\G})((1_{\X}\otimes \rho_{\pi}^*)\alpha(a)),\] hence \begin{eqnarray*} a^*a  &=& ((\id_{\X}\otimes \varphi_{\G})((1_{\X}\otimes \rho_{\pi}^*)\alpha(a)))^*(\id_{\X}\otimes \varphi_{\G})((1_{\X}\otimes \rho_{\pi}^*)\alpha(a))  \\ &\leq &  (\id_{\X}\otimes \varphi_{\G})(\alpha(a)^* (1_{\X}\otimes \rho_{\pi}\rho_{\pi}^*)\alpha(a))  \\ &\leq & \|\rho_{\pi}\|^2 (\id_{\X}\otimes \varphi_{\G})\alpha(a^*a) \\ &=& \|\rho_{\pi}\|^2  \langle a\xi_{\X},a\xi_{\X}\rangle.\end{eqnarray*} 

In particular, the pre-Hilbert spaces $C(\X)_{\pi}\xi_{\X}$ come equipped with bounded anti-linear involutions \[\mathcal{S}: C(\X)_{\pi}\xi_{\X}\rightarrow C(\X)_{\bar{\pi}}\xi_{\X},\quad a\xi_{\X}\mapsto a^*\xi_{\X}.\] We will now show that the unit operator on $C(\X)_{\pi}\xi_{\X}$ is compact, hence $C(\X)_{\pi}$ finite-dimensional. 

Consider the isometry \[\mathcal{G}_{\alpha}: L^2(\X)\otimes L^2(\X) \rightarrow L^2(\X)\otimes L^2(\G),\quad a\xi_{\X}\otimes \eta \mapsto \alpha(a)(\eta\otimes \xi_{\G}).\]  It is easily seen that this operator splits into isometries \[\mathcal{G}_{\pi}: C(\X)_{\pi}\xi_{\X}\otimes L^2(\X) \rightarrow L^2(\X)\otimes C(\G)_{\pi}\xi_{\G}.\]  Write \[\sum_i \mathcal{S}^*\xi_i\otimes \eta_i = \mathcal{G}_{\pi}^*(\xi_{\X}\otimes \rho_{\pi}\xi_{\G}),\] where $\xi_i \in C(\X)_{\bar{\pi}}$ and $\eta_i \in L^2(\X)$.  Then we compute, for $w\in C(\X)_{\pi}$ and $\eta\in L^2(\X)$, \begin{eqnarray*} \langle \sum_{i} \mathcal{S}^*\xi_i \otimes \eta_i,w\xi_{\X}\otimes \eta\rangle &=& \langle \xi_{\X}\otimes \rho_{\pi}\xi_{\G},\alpha(w)(\eta\otimes \xi_{\G})\rangle \\ &=& \langle \xi_{\X},w_{(0)}\eta\rangle \varphi_{\G}(\rho_{\pi}^*w_{(1)}) \\ &=&  \langle \xi_{\X},w_{(0)}\eta\rangle \varphi_{\G}(w_{(1)}\chi_{\pi}^*) \\ &=& \langle \xi_{\X},w\eta\rangle\\ &=& \langle w^*\xi_{\X},\eta\rangle.\end{eqnarray*} On the other hand, we also have \begin{eqnarray*}  \langle  \sum_{i} \mathcal{S}^*\xi_i \otimes \eta_i,w\xi_{\X}\otimes \eta\rangle &=& \sum_i \langle \mathcal{S}^*\xi_i,w\xi_{\X}\rangle \langle\eta_i,\eta\rangle \\ &=& \left\langle \sum_i \langle \xi_i,w^*\xi_{\X}\rangle \eta_i,\eta\right\rangle.\end{eqnarray*} It follows that \[\zeta = \sum_i \langle \xi_i,\zeta\rangle \eta_i,\quad \forall \zeta\in C(\X)_{\bar{\pi}}\xi_{\X},\] and hence the unit operator on $C(\X)_{\bar{\pi}}\xi_{\X}$ is Hilbert-Schmidt.
\end{proof} 

The above theorem implies in particular that $\mathscr{O}_{\G}(\X)$ admits a modular automorphism. Indeed, if $\pi$ and $\pi'$ are not equivalent, then \[\varphi_{\X}(ab^*) = \varphi_{\X}(b^*a) = 0,\quad a\in C(\X)_{\pi},b\in C(\X)_{\pi'}.\] It follows that we can find on each (finite-dimensional!) $C(\X)_{\pi}$ a linear map  \[\sigma_{\X}: C(\X)_{\pi}\rightarrow C(\X)_{\pi}\] such that \[\varphi_{\X}(ab^*) = \varphi_{\X}(b^*\sigma_{\X}(a)),\quad a,b\in C(\X)_{\pi}.\] The map $\sigma_{\X}$ then extends to a linear endomorphism of $\mathscr{O}_{\G}(\X)$, and the faithfulness of $\varphi_{\X}$ allows one to deduce that $\sigma_{\X}$ is an automorphism satisfying \[\sigma_{\X}(a^*) = \sigma_{\X}^{-1}(a)^*.\] One then has \[\varphi_{\X}(ab) = \varphi_{\X}(b\sigma_{\X}(a)),\quad \forall a,b\in \mathscr{O}_{\G}(\X).\] 

\begin{Lem}\label{LemModAut} For $\X\overset{\alpha}{\curvearrowleft} \G$ homogeneous, one has \[\alpha(\sigma_{\X}(a)) = (\sigma_{\X}\otimes S^{-2})\alpha(a),\quad a\in \mathscr{O}_{\G}(\X).\] 
\end{Lem}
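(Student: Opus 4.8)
The plan is to slice off the $\G$-leg so that the stated tensor identity becomes a family of identities for the representation $l\colon\mathscr{O}(\widehat{\G})\to\mathcal{L}(L^2(\X))$ of the dual algebra from Section 5; here $L^2(\X)$ is the GNS space of the invariant state $\varphi_{\X}$, which for homogeneous $\alpha$ coincides with $L^2_{\Y}(\X)$, and $l_{\mu}(a)=(\id_{\X}\otimes\mu)\alpha(a)$. A direct computation gives $(\id_{\X}\otimes\mu)\alpha(\sigma_{\X}(a))=l_{\mu}(\sigma_{\X}(a))$ and $(\id_{\X}\otimes\mu)(\sigma_{\X}\otimes S^{-2})\alpha(a)=\sigma_{\X}(l_{\mu\circ S^{-2}}(a))$. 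Since the functionals in $\mathscr{O}(\widehat{\G})$ separate the points of $\mathscr{O}(\G)$ (faithfulness of $\varphi_{\G}$), and since $\mathscr{O}(\widehat{\G})$ is stable under $\mu\mapsto\mu\circ S^{\pm2}$ (because $\varphi_{\G}\circ S^{\pm2}=\varphi_{\G}$), the lemma is equivalent to the operator identity
\[l_{\mu}\circ\sigma_{\X}=\sigma_{\X}\circ l_{\mu\circ S^{-2}},\qquad \mu\in\mathscr{O}(\widehat{\G}).\]

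Next I would peel off $\sigma_{\X}$ using faithfulness of $\varphi_{\X}$ on $\mathscr{O}_{\G}(\X)$ together with the modular relation $\varphi_{\X}(xy)=\varphi_{\X}(y\sigma_{\X}(x))$. Testing the displayed identity against $\varphi_{\X}(b^{*}\,\cdot\,)$ for arbitrary $b\in\mathscr{O}_{\G}(\X)$, the right-hand side becomes $\varphi_{\X}(b^{*}\sigma_{\X}(l_{\mu\circ S^{-2}}(a)))=\varphi_{\X}(l_{\mu\circ S^{-2}}(a)\,b^{*})$, while for the left-hand side I would use the $^*$-representation property $l_{\mu}^{*}=l_{\mu^{*}}$ from Section 5 and the elementary adjoint formula $l_{\eta}(d)^{*}=l_{\eta^{\#}}(d^{*})$, with $\eta^{\#}(h)=\overline{\eta(h^{*})}$, to obtain $\varphi_{\X}(b^{*}l_{\mu}(\sigma_{\X}(a)))=\varphi_{\X}(a\,l_{\mu^{*}}(b)^{*})=\varphi_{\X}(a\,l_{\mu\circ S^{-1}}(b^{*}))$. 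Here I invoke the Hopf $^*$-identity $S(h^{*})^{*}=S^{-1}(h)$, which yields $(\mu^{*})^{\#}=\mu\circ S^{-1}$. Writing $c=b^{*}$ and $\nu=\mu\circ S^{-1}$ (so that $\mu\circ S^{-2}=\nu\circ S^{-1}$), the whole statement is thereby reduced to the single ``integration by parts'' relation
\[\varphi_{\X}(a\,l_{\nu}(c))=\varphi_{\X}(l_{\nu\circ S^{-1}}(a)\,c),\qquad a,c\in\mathscr{O}_{\G}(\X),\ \nu\in\mathscr{O}(\widehat{\G}).\]

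Finally I would prove this last relation directly, with no recourse to matrix coefficients. Interpreting $\varphi_{\X}(x^{*}y)=\langle x,y\rangle$ as the inner product of $L^2(\X)$, I have $\varphi_{\X}(a\,l_{\nu}(c))=\langle a^{*},l_{\nu}(c)\rangle=\langle l_{\nu^{*}}(a^{*}),c\rangle=\varphi_{\X}(l_{\nu^{*}}(a^{*})^{*}c)$, and the adjoint formula together with $(\nu^{*})^{\#}=\nu\circ S^{-1}$ gives $l_{\nu^{*}}(a^{*})^{*}=l_{\nu\circ S^{-1}}(a)$, which is exactly the claimed equality. The only genuinely delicate point in the argument is the bookkeeping of the antipode powers: one must check that the composite of the functional involution $\nu\mapsto\nu^{*}$ (which carries an $S$), the algebra involution $h\mapsto h^{*}$, and the identity $S(h^{*})^{*}=S^{-1}(h)$ conspire to produce precisely $S^{-2}$ in the conclusion rather than $S^{2}$; everything else is formal. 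As a consistency check I would specialise to $\X=\G$, $\alpha=\Delta$, where $\varphi_{\X}=\varphi_{\G}$ and $\sigma_{\X}=\sigma$, so that the assertion reads $\Delta\circ\sigma=(\sigma\otimes S^{-2})\circ\Delta$; evaluating on a matrix coefficient $U_{\pi}(\xi,\eta)$ using $\sigma(U_{\pi}(\xi,\eta))=U_{\pi}(Q_{\pi}\xi,Q_{\pi}\eta)$ and $S^{-2}(U_{\pi}(\xi,\eta))=U_{\pi}(Q_{\pi}^{-1}\xi,Q_{\pi}\eta)$ confirms the placement of the antipode.
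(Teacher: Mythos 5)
Your argument is correct, but it takes a genuinely different route from the paper's. The paper stays entirely inside Hopf-algebraic calculus: it pairs both sides of the identity against functionals $\varphi_{\X}(b\,\cdot\,)\otimes\varphi_{\G}(g\,\cdot\,)$ with $b\in\mathscr{O}_{\G}(\X)$, $g\in\mathscr{O}(\G)$, and two short Sweedler computations --- inserting $S^{-1}(b_{(2)})b_{(1)}=\varepsilon(b_{(1)})1_{\G}$ on one side and $b_{(1)}S(b_{(2)})=\varepsilon(b_{(1)})1_{\G}$ on the other, then using invariance of $\varphi_{\X}$ and the modular property --- show that both pairings equal $\varphi_{\X}(ab_{(0)})\,\varphi_{\G}(gS^{-1}(b_{(1)}))$; faithfulness of $\varphi_{\X}$ and $\varphi_{\G}$ then finishes. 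You instead outsource the invariance input to the Section 5 machinery: after slicing off the $\G$-leg by $\mu\in\mathscr{O}(\widehat{\G})$ and the $\X$-leg by $\varphi_{\X}(b^{*}\,\cdot\,)$, everything is driven by the adjunction $l_{\mu}^{*}=l_{\mu^{*}}$ of the dual representation on $L^{2}(\X)=L^{2}_{\Y}(\X)$ (which is where strong left invariance is hidden), combined with the formal identities $l_{\eta}(d)^{*}=l_{\eta^{\#}}(d^{*})$ and $(\mu^{*})^{\#}=\mu\circ S^{-1}$ and the modular property of $\varphi_{\X}$. Your route buys conceptual transparency and reuse of established machinery --- the covariance of $\sigma_{\X}$ becomes an ``integration by parts'' rule for the $\mathscr{O}(\widehat{\G})$-action, with the antipode powers tracked automatically by the two involutions $*$ and $\#$ --- while the paper's route buys brevity and self-containedness, needing nothing beyond the definition of $\sigma_{\X}$, invariance of $\varphi_{\X}$, and faithfulness.

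One point you should make explicit: in the final reduction you apply your integration-by-parts relation with $\nu=\mu\circ S^{-1}$, and your proof of that relation (through $l_{\nu}^{*}=l_{\nu^{*}}$) is only valid for $\nu\in\mathscr{O}(\widehat{\G})$, whereas you recorded stability of $\mathscr{O}(\widehat{\G})$ only under $\mu\mapsto\mu\circ S^{\pm2}$. Stability under $\mu\mapsto\mu\circ S^{-1}$ does hold, and in fact follows from your own formula $\mu\circ S^{-1}=(\mu^{*})^{\#}$: $\mathscr{O}(\widehat{\G})$ is a $^*$-algebra, hence stable under $\mu\mapsto\mu^{*}$, and it is stable under $\eta\mapsto\eta^{\#}$ since $\varphi_{\G}(\,\cdot\,h)^{\#}=\varphi_{\G}(h^{*}\,\cdot\,)$ and the paper notes that $\mathscr{O}(\widehat{\G})=\{\varphi_{\G}(h\,\cdot\,)\mid h\in\mathscr{O}(\G)\}$ as well. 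With that one sentence added, your proof is complete.
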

\begin{proof} Take $a,b\in \mathscr{O}_{\G}(\X)$ and $g\in \mathscr{O}(\G)$. Then, on the one hand, \begin{eqnarray*} \varphi_{\X}(b \sigma_{\X}(a)_{(0)}) \varphi_{\G}(g\sigma_{\X}(a)_{(1)}) &=& \varphi_{\X}(b_{(0)}\sigma_{\X}(a)_{(0)}) \varphi_{\G}(gS^{-1}(b_{(2)})b_{(1)}\sigma_{\X}(a)_{(1)}) \\ &=& \varphi_{\X}(b_{(0)} \sigma_{\X}(a)) \varphi_{\G}(gS^{-1}(b_{(1)})) \\ &=& \varphi_{\X}(ab_{(0)})\varphi_{\G}(gS^{-1}(b_{(1)})).\end{eqnarray*} On the other hand, \begin{eqnarray*} \varphi_{\X}(b \sigma_{\X}(a_{(0)})) \varphi_{\G}(gS^{-2}(a_{(1)})) &=&  \varphi_{\X}(a_{(0)}b) \varphi_{\G}(gS^{-2}(a_{(1)})) \\ &=& \varphi_{\X}(a_{(0)}b_{(0)}) \varphi_{\G}(gS^{-2}(a_{(1)}b_{(1)}S(b_{(2)}))) \\ &=&  \varphi_{\X}(ab_{(0)}) \varphi_{\G}(gS^{-1}(b_{(1)})).\end{eqnarray*} By faithfulness of $\varphi_{\X}$ and $\varphi_{\G}$, this implies the result.  
\end{proof} 
\begin{Cor}\label{CorPosF} There exists an invertible positive operator $F_{\pi}$ on $\Mor(\pi,\alpha)$ such that \[\sigma_{\X}(U_{\pi}(T,\xi)) = U_{\pi}(F_{\pi}T,Q_{\pi}\xi).\]
\end{Cor}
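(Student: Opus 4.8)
The plan is to exploit the twisted equivariance of $\sigma_{\X}$ recorded in Lemma \ref{LemModAut}, namely $\alpha\circ\sigma_{\X} = (\sigma_{\X}\otimes S^{-2})\circ\alpha$, together with the identification $C(\X)_{\pi}\cong \Mor(\pi,\alpha)\otimes \Hsp_{\pi}$ via $T\otimes\xi\mapsto U_{\pi}(T,\xi)$. This space is finite-dimensional by Theorem \ref{TheoFinBoc}, and $\sigma_{\X}$ preserves it since $\sigma_{\X}$ preserves isotypical components. The goal is to show that, under this identification, $\sigma_{\X}$ restricts to an operator of the product form $F_{\pi}\otimes Q_{\pi}$.

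First I would compute $S^{-2}$ on matrix coefficients using the Woronowicz characters of Theorem \ref{TheoWorChar}. Since $S^{-2}=\tau_i$ and $f^z$ satisfies $(\id_{\Hsp_{\pi}}\otimes f^z)U_{\pi}=Q_{\pi}^z$, a short computation from $\Delta(U_{\pi}(\mu,\nu))=\sum_k U_{\pi}(\mu,e_k)\otimes U_{\pi}(e_k,\nu)$ gives $f^z * U_{\pi}(\mu,\nu)=U_{\pi}(\mu,Q_{\pi}^z\nu)$ and $U_{\pi}(\mu,\nu)*f^z=U_{\pi}(Q_{\pi}^z\mu,\nu)$, whence $S^{-2}(U_{\pi}(\mu,\nu))=f*U_{\pi}(\mu,\nu)*f^{-1}=U_{\pi}(Q_{\pi}^{-1}\mu,Q_{\pi}\nu)$. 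Next, I would insert this into Lemma \ref{LemModAut} applied to $U_{\pi}(T_a,e_i)$, expanding $\alpha(U_{\pi}(T,\xi))=\sum_i U_{\pi}(T,e_i)\otimes U_{\pi}(e_i,\xi)$ and writing $\sigma_{\X}(U_{\pi}(T_a,e_i))=\sum_{b,j}c^{(a,i)}_{(b,j)}U_{\pi}(T_b,e_j)$ in a basis $\{T_a\}$ of $\Mor(\pi,\alpha)$. Matching the coefficients of $U_{\pi}(T_b,e_p)\otimes U_{\pi}(e_k,e_q)$ on both sides produces a relation whose diagonal part $p=k$ forces $c^{(a,i)}_{(b,j)}=(F_{\pi})_{ba}(Q_{\pi})_{ji}$ for a well-defined linear map $F_{\pi}$ on $\Mor(\pi,\alpha)$; that is, $\sigma_{\X}(U_{\pi}(T,\xi))=U_{\pi}(F_{\pi}T,Q_{\pi}\xi)$. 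Conceptually, this is Schur's lemma: $S^{-2}$ twists $\delta_{\pi}$ precisely so that $Q_{\pi}$ intertwines it, and the $Q_{\pi}^{-1}$-corrected $\sigma_{\X}$ becomes a genuine comodule endomorphism of the irreducible comodule $\pi$, hence acts only on the multiplicity space. This coefficient-matching, pinning down the product form $F_{\pi}\otimes Q_{\pi}$, is the main obstacle and the computational heart of the argument.

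It remains to establish invertibility and positivity of $F_{\pi}$. Invertibility is immediate, since $\sigma_{\X}$ is an automorphism and $Q_{\pi}$ is invertible. For the metric properties I would first derive the $\varphi_{\X}$-orthogonality relation
\[\varphi_{\X}(U_{\pi}(T,\xi)U_{\pi}(S,\eta)^*) = \frac{\langle \eta, Q_{\pi}\xi\rangle}{\Tr(Q_{\pi})}\,\langle S,T\rangle,\]
obtained by applying $\varphi_{\X}(\cdot)1_{\X}=(\id_{\X}\otimes\varphi_{\G})\alpha$ to $U_{\pi}(T,\xi)U_{\pi}(S,\eta)^*$ and invoking the Woronowicz orthogonality relations of Theorem \ref{TheoWorChar}. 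Combining this with the modular identity $\varphi_{\X}(ab)=\varphi_{\X}(b\sigma_{\X}(a))$, the rule $\sigma_{\X}(a^*)=\sigma_{\X}^{-1}(a)^*$, and the established form $\sigma_{\X}^{-1}(U_{\pi}(S,\eta))=U_{\pi}(F_{\pi}^{-1}S,Q_{\pi}^{-1}\eta)$ yields $\langle S,T\rangle=\langle F_{\pi}^{-1}S,F_{\pi}T\rangle$ for all $S,T$, which forces $F_{\pi}=F_{\pi}^*$.

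Finally, positivity follows by testing against the faithful positive state $\varphi_{\X}$: taking $a=U_{\pi}(T,\xi)$ and rewriting $\varphi_{\X}(a^*a)=\varphi_{\X}(a\,\sigma_{\X}^{-1}(a)^*)$ via the orthogonality relation above gives
\[\varphi_{\X}(a^*a) = \frac{\|\xi\|^2}{\Tr(Q_{\pi})}\,\langle F_{\pi}^{-1}T,T\rangle.\]
Since $\varphi_{\X}$ is faithful, the left-hand side is strictly positive for $T\neq 0$ and $\xi\neq 0$, so $F_{\pi}^{-1}$ is positive definite with respect to $\langle\cdot,\cdot\rangle$; as $F_{\pi}$ is self-adjoint and invertible, this gives $F_{\pi}>0$, completing the proof.
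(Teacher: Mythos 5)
Your proof is correct, and it runs on the same two ingredients as the paper's argument --- Lemma \ref{LemModAut} and the identity $S^{-2}(U_{\pi}(\mu,\nu)) = U_{\pi}(Q_{\pi}^{-1}\mu,Q_{\pi}\nu)$ --- but the execution differs at both stages, in ways worth comparing. For existence, the paper stays at the operator level and never expands in bases: it \emph{defines} $(F_{\pi}T)(\xi) = \sigma_{\X}(T(Q_{\pi}^{-1}\xi))$ as a linear map $\Hsp_{\pi}\rightarrow C(\X)$, verifies in a few lines via Lemma \ref{LemModAut} that this map is equivariant, hence lies in $\Mor(\pi,\alpha)$, and then the identity $\sigma_{\X}(U_{\pi}(T,\xi)) = U_{\pi}(F_{\pi}T,Q_{\pi}\xi)$ and the invertibility of $F_{\pi}$ hold by construction. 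Your coefficient-matching (Schur-type) argument reaches the same product form, but it is only valid if the elements $U_{\pi}(T_b,e_j)$ are linearly independent, i.e.\ if the map $\Mor(\pi,\alpha)\otimes\Hsp_{\pi}\rightarrow C(\X)_{\pi}$, $T\otimes\xi\mapsto T\xi$, is injective --- the definition of $C(\X)_{\pi}$ only gives surjectivity, and this injectivity is nowhere established in the paper before this corollary. It is true: if $\sum_b T_b\xi_b=0$ with $\{T_b\}$ linearly independent, applying $(\id_{\X}\otimes \varphi_{\G})\bigl(\alpha(\,\cdot\,)(1_{\X}\otimes U_{\pi}(e_l,\eta)^*)\bigr)$ and the orthogonality relations of Theorem \ref{TheoWorChar} yields $\sum_b \langle \eta,Q_{\pi}\xi_b\rangle T_b=0$ for all $\eta$, whence each $\xi_b=0$; but your write-up must include this, and the paper's operator-level definition is precisely what sidesteps the issue. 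For positivity, the paper computes directly $\langle F_{\pi}^{-1}T,T\rangle = \Tr(Q_{\pi}A)\geq 0$, where $A_{ij}=\varphi_{\X}(U_{\pi}(T,e_j)^*U_{\pi}(T,e_i))$ is a positive matrix; since the multiplicity space is a complex inner product space, this inequality alone already forces $F_{\pi}^{-1}$ to be self-adjoint, so no separate self-adjointness step is needed. Your detour through the $\varphi_{\X}$-orthogonality relation is also sound and not circular (the first relation of Corollary \ref{CorOrthErg} is proved without any reference to $F_{\pi}$), and it has the mild virtue of producing that relation and the self-adjointness of $F_{\pi}$ as explicit byproducts, at the cost of a longer argument.
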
 

\begin{proof} Define $F_{\pi}T$ by \[(F_{\pi}T)(\xi) =  \sigma_{\X}(T(Q_{\pi}^{-1}\xi)),\] which makes sense as a linear map from $\Hsp_{\pi}$ to $C(\X)$. If we can show that it is equivariant, we have shown that $F_{\pi}$ exists as an invertible linear map. 

From the defining property of $f$ and $Q_{\pi}$ in Definition \ref{TheoWorChar}, we have $S^{-2}(U_{\pi}(\xi,\eta)) = U_{\pi}(Q_{\pi}^{-1}\xi,Q_{\pi}\eta)$. Hence  \begin{eqnarray*}  \alpha((F_{\pi}T)(\xi)) &=& \alpha(\sigma_{\X}(T(Q_{\pi}^{-1}\xi))) \\ &=& (\sigma_{\X}\otimes S^{-2})\alpha(T(Q_{\pi}^{-1}\xi)) \\ &=& (\sigma_{\X} \circ T\otimes S^{-2})\delta_{\pi}(Q_{\pi}^{-1}\xi) \\ &=& (\sigma_{\X} \circ T \circ Q_{\pi}^{-1}\otimes \id_{\G})\delta_{\pi}(\xi) \\ &=& (F_{\pi}T\otimes \id_{\G})\delta_{\pi}(\xi).\end{eqnarray*}

Finally, to see that $F_{\pi}$ is positive, fix $T \in \Mor(\pi,\alpha)$ and an orthonormal basis $\{e_i\}$ of $\Hsp_{\pi}$, and write \[A_{ij} =  \varphi_{\X}\left(U_{\pi}(T,e_j)^*U_{\pi}(T,e_i)\right).\] Then $A$ is a positive matrix. We now compute
\begin{eqnarray*}\langle F_{\pi}^{-1}T,T\rangle &=& \sum_i \varphi_{\X}(U_{\pi}(T,e_i)(\sigma_{\X}^{-1}(U_{\pi}(T,Q_{\pi}e_i)))^*) \\&=& \sum_i \varphi_{\X}(U_{\pi}(T,Q_{\pi}e_i)^*U_{\pi}(T,e_i)) \\&=& \sum_{i,j} \langle Q_{\pi}e_i,e_j\rangle \varphi_{\X}(U_{\pi}(T,e_j)^*U_{\pi}(T,e_i)) \\&=& Tr(Q_{\pi}A) \\& \geq &0.\end{eqnarray*} 
\end{proof} 

Note that since the $F_{\pi}$ are positive, we can define \[\sigma_z^{\X}(U_{\pi}(T,\xi)) = U_{\pi}(F_{\pi}^{iz}T,Q_{\pi}^{iz}\xi),\quad z\in \C.\] As then, for $n\in \Z$ and $a,b\in \mathscr{O}_{\G}(\X)$, \[\sigma_{-in}^{\X}(ab) = \sigma_{\X}^n(ab) = \sigma_{\X}^n(a)\sigma_{\X}^n(b) = \sigma_{-in}^{\X}(a)\sigma_{-in}^{\X}(b),\] it follows by analyticity that the $\sigma_z^{\X}$ form a complex one-parametergroup of algebra automorphisms of $\mathscr{O}_{\G}(\X)$. A similar analyticity argument shows that \[\sigma_z^{\X}(a)^* = \sigma_{\bar{z}}^{\X}(a^*),\quad a\in \mathscr{O}_{\G}(\X).\]

\begin{Cor}\label{CorOrthErg} With respect to $\langle a,b\rangle = \varphi_{\X}(a^*b)$, the $C(\X)_{\pi}$ are mutually orthogonal for non-equivalent $\pi$. Moreover, \[\varphi_{\X}(U_{\pi}(T,\xi)U_{\pi}(S,\eta)^*) = \frac{\langle S,T\rangle \langle \eta,Q_{\pi}\xi\rangle}{\Tr(Q_{\pi})}\]\[\varphi_{\X}(U_{\pi}(S,\eta)^* U_{\pi}(T,\xi)) = \frac{\langle S,F_{\pi}^{-1}T\rangle \langle \eta,\xi\rangle}{\Tr(Q_{\pi})}.\] 
\end{Cor}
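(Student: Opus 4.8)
The plan is to reduce both claims to the $\G$-orthogonality relations of Theorem~\ref{TheoWorChar}, the definition of the inner product on $\Mor(\pi,\alpha)$, and the modular data of Lemma~\ref{LemModAut} and Corollary~\ref{CorPosF}. The orthogonality of $C(\X)_\pi$ and $C(\X)_{\pi'}$ for $\pi\ncong\pi'$ is immediate: by homogeneity $\varphi_\X(a)1_\X = E_{\X/\G}(a)$, so the assertion is precisely the identity $E_{\X/\G}(a^*b)=0$ already recorded for $a\in C(\X)_\pi$, $b\in C(\X)_{\pi'}$, which followed from $\alpha(C(\X)_\pi)\subseteq C(\X)_\pi\otimes C(\G)_\pi$ together with the orthogonality of the coefficient spaces $C(\G)_\pi$.

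For the first displayed formula I would apply $\alpha$ to the product $U_\pi(T,\xi)U_\pi(S,\eta)^*$. Fixing an orthonormal basis $\{e_i\}$ of $\Hsp_\pi$ and using $\delta_\pi(\xi)=\sum_i e_i\otimes U_\pi(e_i,\xi)$, equivariance of $T$ and $S$ gives $\alpha(U_\pi(T,\xi))=\sum_i U_\pi(T,e_i)\otimes U_\pi(e_i,\xi)$ and likewise for $S\eta$, so that
\[
\alpha\big(U_\pi(T,\xi)U_\pi(S,\eta)^*\big)=\sum_{i,j}U_\pi(T,e_i)U_\pi(S,e_j)^*\otimes U_\pi(e_i,\xi)U_\pi(e_j,\eta)^*.
\]
Applying $\id_\X\otimes\varphi_\G$ and invoking $\varphi_\G(U_\pi(e_i,\xi)U_\pi(e_j,\eta)^*)=\delta_{ij}\langle\eta,Q_\pi\xi\rangle/\Tr(Q_\pi)$ collapses the double sum to $\big(\sum_i U_\pi(T,e_i)U_\pi(S,e_i)^*\big)\langle\eta,Q_\pi\xi\rangle/\Tr(Q_\pi)$. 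Recognizing the bracketed sum as $\langle S,T\rangle\,1_\X$ from the definition of the $\Mor(\pi,\alpha)$-inner product then yields the first identity.

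For the second formula I would bootstrap from the first via modularity. Applying $\varphi_\X(ab)=\varphi_\X(b\,\sigma_\X(a))$ with $a=U_\pi(S,\eta)^*$ and $b=U_\pi(T,\xi)$, then using $\sigma_\X(a^*)=\sigma_\X^{-1}(a)^*$ together with $\sigma_\X^{-1}(U_\pi(S,\eta))=U_\pi(F_\pi^{-1}S,Q_\pi^{-1}\eta)$ from Corollary~\ref{CorPosF}, turns the left-hand side into $\varphi_\X\big(U_\pi(T,\xi)\,U_\pi(F_\pi^{-1}S,Q_\pi^{-1}\eta)^*\big)$, to which the first identity applies directly. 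This produces $\langle F_\pi^{-1}S,T\rangle\,\langle Q_\pi^{-1}\eta,Q_\pi\xi\rangle/\Tr(Q_\pi)$; self-adjointness of $F_\pi$ gives $\langle F_\pi^{-1}S,T\rangle=\langle S,F_\pi^{-1}T\rangle$, and self-adjointness of $Q_\pi$ gives $\langle Q_\pi^{-1}\eta,Q_\pi\xi\rangle=\langle\eta,\xi\rangle$, completing the proof.

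The computation is essentially bookkeeping; the single genuine idea is to obtain the second relation from the first through $\sigma_\X$. The points demanding care are the conjugate-linear slot of the $\Mor(\pi,\alpha)$-inner product and the exact placement of $T$ and $S$, which I would verify against the convention $\langle T,S\rangle=\sum_i U_\pi(S,e_i)U_\pi(T,e_i)^*$ to be sure the bracketed sum is $\langle S,T\rangle$ rather than $\langle T,S\rangle$, as well as the orientation of $Q_\pi$ versus $Q_\pi^{-1}$ in the two relations.
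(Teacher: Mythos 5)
Your proof is correct and follows essentially the same route as the paper: expand $\alpha$ of the product, collapse the double sum with the $\varphi_{\G}$-orthogonality relations of Theorem \ref{TheoWorChar}, identify the remaining sum as the $\Mor(\pi,\alpha)$-inner product, and derive the second relation from the first via $\varphi_{\X}(ab)=\varphi_{\X}(b\sigma_{\X}(a))$ and Corollary \ref{CorPosF}. The paper compresses the second relation into a one-sentence remark, so your write-up merely supplies the details (correctly, including the self-adjointness of $F_{\pi}$ and $Q_{\pi}$ and the placement of $S$ and $T$ in the conjugate-linear slot).
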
 
\begin{proof} The first orthogonality relation follows from \begin{eqnarray*}&& \hspace{-2cm}  \varphi_{\X}(U_{\pi}(T,\xi)U_{\pi}(S,\eta)^*) \\ &=& \sum_{k,l}\varphi_{\X}(U_{\pi}(T,e_k)U_{\pi}(S,e_l)^*)\varphi_{\G}(U_{\pi}(e_k,\xi)U_{\pi}(e_l,\eta)^*) \\ &=& \sum_k \varphi_{\X}(U_{\pi}(T,e_k)U_{\pi}(S,e_k)^*)\frac{\langle \eta,Q_{\pi}\xi\rangle}{\Tr(Q_{\pi})} \\ &=& \frac{\langle S,T\rangle \langle \eta,Q_{\pi}\xi\rangle}{\Tr(Q_{\pi})}.\end{eqnarray*} 
Also the mutual orthogonality of the different $C(\X)_{\pi}$ follows from this computation. The second orthogonality relation follows from using the concrete formula and defining property of the modular automorphism.
\end{proof} 

\begin{Def} The \emph{quantum multiplicity} of $\pi$ in $C(\X)$ is defined as \[\mathrm{mult}_q(\pi) = \sqrt{\mathrm{Tr}(F_{\pi})\mathrm{Tr}(F_{\pi}^{-1})}.\]
\end{Def} 

We denote $\mathrm{mult}(\pi) = \dim(\Mor(\pi,\alpha))$. 

\begin{Theorem}\label{TheoEstpi} Let $\X\curvearrowleft \G$ be homogeneous. Then for each irreducible representation $\pi$ of $\G$, one has \[\mult(\pi) \leq \mult_q(\pi) \leq \dim_q(\pi).\] 
\end{Theorem}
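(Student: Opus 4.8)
The first inequality is pure linear algebra. Since $F_{\pi}$ is a positive invertible operator on the $\mult(\pi)$-dimensional Hilbert space $\Mor(\pi,\alpha)$, applying the Cauchy--Schwarz inequality for the Hilbert--Schmidt pairing to $F_{\pi}^{1/2}$ and $F_{\pi}^{-1/2}$ gives $\mult(\pi) = \Tr(F_{\pi}^{1/2}F_{\pi}^{-1/2}) \leq \Tr(F_{\pi})^{1/2}\Tr(F_{\pi}^{-1})^{1/2} = \mult_q(\pi)$. Everything therefore rests on $\mult_q(\pi)\leq\dim_q(\pi)$, and the plan is to prove the two separate estimates $\Tr(F_{\pi}^{-1})\leq\dim_q(\pi)$ and $\Tr(F_{\pi})\leq\dim_q(\pi)$, then multiply them and take the square root.

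The key device is a co-isometry over $C(\X)$ built from an orthonormal basis $\{T_a\}_{a=1}^{\mult(\pi)}$ of $\Mor(\pi,\alpha)$ (for $\langle T,S\rangle = \sum_i U_{\pi}(S,e_i)U_{\pi}(T,e_i)^*$) and an orthonormal basis $\{e_i\}$ of $\Hsp_{\pi}$. I would set $v_i = \sum_a U_{\pi}(T_a,e_i)\otimes \lvert a\rangle \in C(\X)\otimes\C^{\mult(\pi)}$ and assemble the $v_i$ as the columns of a matrix $V$ over $C(\X)$. The crucial point is that \emph{ergodicity} makes the $C(\X/\G)$-valued inner products scalar: $\sum_i U_{\pi}(T_a,e_i)U_{\pi}(T_b,e_i)^* = \langle T_b,T_a\rangle 1_{\X} = \delta_{ab}1_{\X}$, so that $VV^* = \sum_i v_iv_i^* = 1_{\X}\otimes I_{\mult(\pi)}$. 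Hence $V$ is a co-isometry and $P := V^*V$ is a projection in $M_{\dim\Hsp_{\pi}}(C(\X))$, with entries $(P)_{ij} = \sum_a U_{\pi}(T_a,e_i)^* U_{\pi}(T_a,e_j)$.

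Now I would apply the unital positive map $\id\otimes\varphi_{\X}$ and invoke the second orthogonality relation of Corollary \ref{CorOrthErg}: since $\varphi_{\X}(U_{\pi}(T_a,e_i)^* U_{\pi}(T_a,e_j)) = \langle T_a,F_{\pi}^{-1}T_a\rangle\langle e_i,e_j\rangle/\dim_q(\pi)$, summing over $a$ gives $(\id\otimes\varphi_{\X})(P) = (\Tr(F_{\pi}^{-1})/\dim_q(\pi))\,I_{\dim\Hsp_{\pi}}$. From $0\leq P\leq 1$ and positivity of $\id\otimes\varphi_{\X}$ we get $(\id\otimes\varphi_{\X})(P)\leq I$, i.e. $\Tr(F_{\pi}^{-1})\leq\dim_q(\pi)$. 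For the companion bound I would apply this inequality to the conjugate representation $\bar{\pi}$ and then identify $\Tr(F_{\bar{\pi}}^{-1})=\Tr(F_{\pi})$: evaluating the relations $\sigma_{\X}(U_{\pi}(T,\xi)) = U_{\pi}(F_{\pi}T,Q_{\pi}\xi)$ and $\sigma_{\X}(a^*)=\sigma_{\X}^{-1}(a)^*$ of Corollary \ref{CorPosF} on $U_{\pi}(T,\xi)^* = U_{\bar{\pi}}(\bar{T},\xi^*)$, and using $Q_{\bar{\pi}}\xi^* = (Q_{\pi}^{-1}\xi)^*$, shows that $F_{\bar{\pi}}$ is the conjugate of $F_{\pi}^{-1}$ by the antilinear isomorphism $T\mapsto\bar{T}$, hence has the same spectrum as $F_{\pi}^{-1}$; with $\dim_q(\bar{\pi})=\dim_q(\pi)$ this yields $\Tr(F_{\pi})\leq\dim_q(\pi)$. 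Multiplying, $\mult_q(\pi)^2 = \Tr(F_{\pi})\Tr(F_{\pi}^{-1})\leq\dim_q(\pi)^2$.

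The main obstacle is choosing the correct positive element. The naive attempt forms, for a single normalized $T$, the projection $[U_{\pi}(T,e_i)^* U_{\pi}(T,e_j)]_{ij}$, but after $\varphi_{\X}$ this only delivers the eigenvalue bound $\langle T,F_{\pi}^{-1}T\rangle\leq\dim_q(\pi)$, and summing such bounds over an eigenbasis of $F_{\pi}$ loses a spurious factor $\mult(\pi)^2$ (giving only $\Tr(F_{\pi})\Tr(F_{\pi}^{-1})\leq\mult(\pi)^2\dim_q(\pi)^2$). The decisive idea is instead to sum over the \emph{multiplicity} index $a$ \emph{inside} the matrix entries, so that ergodicity collapses $\sum_i v_iv_i^*$ to $1_{\X}\otimes I_{\mult(\pi)}$; this is exactly what upgrades ``$P$ is a projection'' into the sharp trace bound $\Tr(F_{\pi}^{-1})\leq\dim_q(\pi)$.
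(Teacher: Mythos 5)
Your proposal is correct and follows essentially the same route as the paper: your coisometry $V$ is exactly the rectangular matrix $U_{\X,\pi}=(U_{\pi}(T_i,e_j))_{i,j}$ used there, both arguments deduce $\Tr(F_{\pi}^{-1})\leq \dim_q(\pi)$ by applying $\varphi_{\X}$ entrywise to the contraction $V^*V$ and invoking the orthogonality/modular relations, and both transfer this to $\Tr(F_{\pi})\leq \dim_q(\pi)$ via the conjugate-representation identity $\Tr(F_{\bar{\pi}})=\Tr(F_{\pi}^{-1})$ coming from the dagger map $T\mapsto T^{\dag}$. The only cosmetic difference is that you identify $(\id\otimes\varphi_{\X})(V^*V)$ as an exact scalar matrix using Corollary \ref{CorOrthErg}, where the paper instead bounds $\Tr(Q_{\pi}B)\leq \Tr(Q_{\pi})\|B\|$ for the positive matrix $B=\sum_k A^{(k)}$.
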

\begin{proof} Since $F_{\pi}$ is a positive, invertible operator, the inequality $\mult(\pi)\leq \mult_q(\pi)$ is immediate. 

On the other hand, for $T\in \Mor(\pi,\alpha)$, write \[T^{\dag}(\xi^*) = T(\xi)^*,\quad \xi \in \Hsp_{\pi}.\] Then clearly $T^{\dag}\in \Mor(\bar{\pi},\alpha)$. We  have \[U_{\pi}(T,\xi)^* = U_{\bar{\pi}}(T^{\dag},\xi^*),\] and so \begin{multline*} U_{\bar{\pi}}(F_{\bar{\pi}}(T^{\dag}),Q_{\bar{\pi}}(\xi^*)) = \sigma_{\X} (U_{\bar{\pi}}(T^{\dag},\xi^*)) = (\sigma_{\X}^{-1}(U_{\pi}(T,\xi)))^*   \\= U_{\pi}(F_{\pi}^{-1}T,Q_{\pi}^{-1}\xi)^* = U_{\bar{\pi}}((F_{\pi}^{-1}T)^{\dag},(Q_{\pi}^{-1}\xi)^*).\end{multline*}
Since we know $\dim_q(\pi) = \Tr(Q_{\bar{\pi}}) = \Tr(Q_{\pi}^{-1})$, we must have also $\Tr(F_{\bar{\pi}}) = \Tr(F_{\pi}^{-1})$.  

It hence suffices to show that $\Tr(F_{\pi}^{-1})\leq \Tr(Q_{\pi})$. But choose an orthonormal basis $T_i$ in  $\Mor(\pi,\alpha)$, and an orthonormal basis $\{e_j\}$ in $\Hsp_{\pi}$. Write $U_{\X,\pi} = \left(U_{\pi}(T_i,e_j)\right)_{i,j}$, which is a rectangular matrix over $C(\X)$. Then \[\left(U_{\X,\pi}U_{\X,\pi}^*\right)_{i,k} = \sum_{j} U_{\pi}(T_i,e_j)U_{\pi}(T_k,e_j)^* = \langle T_k,T_i\rangle = \delta_{i,k}.\] It follows that $U_{\X,\pi}$ is a coisometry, and so $\|U_{\X,\pi}\|\leq 1$.

But write \[A_{ij}^{(k)} = \varphi_{\X}\left(U_{\pi}(T_k,e_j)^*U_{\pi}(T_k,e_i)\right).\] Then from the proof of Corollary \ref{CorPosF}, we find that \begin{eqnarray*} \Tr(F_{\pi}^{-1}) &=& \sum_k \langle F_{\pi}^{-1}T_k,T_k\rangle \\ &=& \sum_k \Tr(Q_{\pi}A^{(k)}) \\ &\leq & \Tr(Q_{\pi}) \|\sum_k A^{(k)}\| \\ &\leq & \Tr(Q_{\pi}) \|(\sum_k (U_{\pi}(T_k,e_j))^*U_{\pi}(T_k,e_i))_{j,i}\| \\ &=& \Tr(Q_{\pi}) \|U_{\X,\pi}^*U_{\X,\pi}\| \\ &\leq & \Tr(Q_{\pi}).\end{eqnarray*}
\end{proof} 

Note that in general, there is no connection between $\mult(\pi)$ and $\dim(\pi) = \dim(\Hsp_{\pi})$, the classical dimension of $\pi$. Indeed, one can construct examples where $\mult(\pi)>\dim(\pi)$, see Section \ref{SecTor}. 

When $\G = G$ is a compact Hausdorff group, one can show that $\varphi_{\X}$ must be tracial, see \cite{HLS81}. See also \cite{Pin13} for more information concerning growth rates of the $\mult_{q}(\pi)$.

Another consequence of Boca's theorem is the atomic character of the crossed product $C_0(\X\rtimes \G)$. 

\begin{Theorem}[\cite{Boc95}]\label{TheoBocaAtom} Let $\X\overset{\alpha}{\curvearrowleft} \G$ be homogeneous. Then there exists a set $I$ and Hilbert spaces $\Hsp_i$ for $i\in I$ such that \[C_0(\X\rtimes \G) \cong \oplus_{i\in I} B_0(\Hsp_i)\] 
\end{Theorem}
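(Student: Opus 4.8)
The plan is to exhibit $A := C_0(\X\rtimes\G)$ as a C$^*$-algebra with dense socle and then invoke the classical structure theorem identifying such algebras (the \emph{dual}, or \emph{annihilator}, C$^*$-algebras) with $c_0$-direct sums $\oplus_i B_0(\Hsp_i)$. Throughout I use that, by Theorem \ref{TheoUnicrossC}, $A$ is the unique C$^*$-completion of $\mathscr{O}(\X\rtimes\G)$, and that since $\X$ is compact the algebra $\mathscr{O}_\G(\X)$ is unital. Recall $\mathscr{O}(\widehat{\G})\cong\oplus_\pi B(\Hsp_\pi)$ with minimal central projections $p_\pi=\varphi_{\G}(\,\cdot\,\chi_\pi^*)$; the map $\omega\mapsto 1_\X\otimes\omega$ is a $^*$-homomorphism into $\mathscr{O}(\X\rtimes\G)$, so each $p_\pi$ yields a projection in $A$ which I again denote $p_\pi$.

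First I would establish the block decomposition $\mathscr{O}(\X\rtimes\G)=\bigoplus_{\pi,\pi'}p_\pi\,\mathscr{O}(\X\rtimes\G)\,p_{\pi'}$ as an algebraic direct sum over irreducibles. Right multiplication is easy: any $\omega\in\mathscr{O}(\widehat{\G})$ is supported on finitely many $\pi'$, so $a\omega=(a\omega)p_{\pi'}$ for a suitable $\pi'$. For the left factor I would use that, as functionals on $\mathscr{O}(\G)$, one has $\sum_\pi p_\pi=\varepsilon$: on $C(\G)_{\pi''}$ only $p_{\pi''}$ survives, and $\varphi_{\G}(U_{\pi''}(\xi,\eta)\chi_{\pi''}^*)=\langle\xi,\eta\rangle=\varepsilon(U_{\pi''}(\xi,\eta))$ by the defining property \eqref{EqElChi} of $\chi_\pi$. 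Writing $a=\sum_\rho a_\rho$ with $a_\rho\in C(\X)_\rho$ and using $\sum_\pi p_\pi(a_{\rho(1)}\,\cdot\,)=\varepsilon(a_{\rho(1)}\,\cdot\,)=\varepsilon(a_{\rho(1)})\varepsilon$ together with the counit axiom $a_{\rho(0)}\varepsilon(a_{\rho(1)})=a_\rho$, one obtains $\sum_\pi p_\pi(a\omega)=a\omega$, a finite sum, giving the decomposition. Crucially, in the \emph{homogeneous} case each block is finite-dimensional: by the argument in the proof of Theorem \ref{TheoUnicrossC} the space $p_\pi\mathscr{O}(\X\rtimes\G)p_{\pi'}$ embeds into a finite direct sum of spaces $C(\X)_{\pi''}\otimes(p_\pi B(L^2(\G))p_{\pi'})$, each $C(\X)_{\pi''}$ is finite-dimensional by Boca's Theorem \ref{TheoFinBoc}, and $p_\pi L^2(\G),p_{\pi'}L^2(\G)$ are finite-dimensional. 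Being finite-dimensional, each block is closed, so $p_\pi\mathscr{O}(\X\rtimes\G)p_{\pi'}=p_\pi A p_{\pi'}$.

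Next I would show the socle of $A$ is dense. Each corner $p_\pi A p_\pi$ is a finite-dimensional C$^*$-algebra, hence a direct sum of full matrix algebras; pick a minimal projection $q\leq p_\pi$ in it. A corner computation shows $q$ is minimal in $A$: for $a\in A$, writing $q=p_\pi q=qp_\pi$ gives $qaq=q(p_\pi a p_\pi)q\in q(p_\pi A p_\pi)q=\C q$. Decomposing $p_\pi=\sum_k q_k$ into such minimal projections, every $x=p_\pi x\in p_\pi A$ is a finite sum $\sum_k q_k x$ of elements of minimal right ideals, hence lies in the socle. Therefore $\mathscr{O}(\X\rtimes\G)=\bigoplus_{\pi,\pi'}p_\pi A p_{\pi'}$ is contained in $\mathrm{soc}(A)$, and as $\mathscr{O}(\X\rtimes\G)$ is dense in $A$, the socle is dense.

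Finally I would invoke the structure theorem for C$^*$-algebras with dense socle (equivalently, dual C$^*$-algebras): such an algebra is isometrically $^*$-isomorphic to a $c_0$-direct sum $\oplus_{i\in I}B_0(\Hsp_i)$ for some family of Hilbert spaces, which is exactly the claim. The main obstacle is the finite-dimensionality of the blocks $p_\pi A p_{\pi'}$; this is the only place where homogeneity is genuinely used (through Boca's Theorem \ref{TheoFinBoc}), and without it the same corners are merely closed, giving an AF-type algebra rather than a $c_0$-sum of compacts. A more hands-on alternative that avoids citing the structure theorem is to verify that the representation $a\omega\mapsto L_a l_\omega$ on $L^2(\X)$ has image in the finite-rank operators (again by Theorem \ref{TheoFinBoc}) and is faithful on $A$; then $A$ embeds as a C$^*$-subalgebra of $B_0(L^2(\X))$, and every such subalgebra decomposes as $\oplus_i B_0(\Hsp_i)$. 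Establishing faithfulness of that representation is the delicate point of this second route.
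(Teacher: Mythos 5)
Your main argument is correct, and its core is the same as the paper's: everything hinges on the finite-dimensionality of the corners $p_\pi C_0(\X\rtimes\G)p_{\pi'}$, obtained by combining the embedding from the proof of Theorem \ref{TheoUnicrossC} with Boca's Theorem \ref{TheoFinBoc}, together with the fact that the $p_\pi$ act as local units on $\mathscr{O}(\X\rtimes\G)$. Where you diverge is the endgame. The paper asserts, leaving the verification to the reader, that finite-dimensionality of these corners allows one to realize $C_0(\X\rtimes\G)$ as a C$^*$-subalgebra of the compact operators on some Hilbert space, and then quotes the structure of such subalgebras. You instead check that every minimal projection $q$ of the finite-dimensional C$^*$-algebra $p_\pi A p_\pi$ stays minimal in $A$ (the corner computation $qaq=q(p_\pi a p_\pi)q\in\C q$ is exactly right), deduce $\mathscr{O}(\X\rtimes\G)\subseteq\mathrm{soc}(A)$, and invoke the classical theorem that a C$^*$-algebra with dense socle is a $c_0$-direct sum of elementary C$^*$-algebras. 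This is a legitimate and self-contained way to finish; in effect your socle argument is a worked-out version of precisely the step the paper leaves to the reader, trading the construction of a faithful representation by compacts for a citation of the dual/annihilator C$^*$-algebra theorem.

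A warning about your ``more hands-on alternative'': the representation $a\omega\mapsto L_a l_\omega$ on $L^2(\X)$ is in general \emph{not} faithful, so that route cannot be repaired. Already for the trivial action of $\G$ on a one-point space, the crossed product is the $c_0$-sum $\oplus_\pi B(\Hsp_\pi)$ while $L^2(\X)=\C$, and $l_{p_\pi}$ acts there by the scalar $p_\pi(1_\G)=\varphi_{\G}(\chi_\pi^*)=0$ for every nontrivial $\pi$, so the representation kills all but one block. This is consistent with Theorem \ref{TheoFreeEq}, which identifies this map being an isomorphism onto $\mathcal{K}(L^2_{\Y}(\X))$ with \emph{freeness} of the action, a property a homogeneous action need not have. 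A faithful representation by compacts does exist, but one must take something like the direct sum of the representations $\pi_{\mathscr{E}}$ on $L^2(\mathscr{E}^*)$ over all irreducible equivariant Hilbert $C(\X)$-modules $\mathscr{E}$, not the single representation on $L^2(\X)$.
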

\begin{proof} It follows from the proof of Theorem \ref{TheoUnicrossC} and the finite-dimensionality of the isotypical components that, for each projection $p$ which is a finite sum of distinct $p_{\pi}$'s, the $^*$-algebra $pC_0(\X\rtimes \G)p$ is a finite-dimensional C$^*$-algebra. We leave it to the reader to check that this implies that $C_0(\X\rtimes \G)$ can be realized as a C$^*$-subalgebra of a C$^*$-algebra of compact operators on a Hilbert space, and must hence be of the above form. 
\end{proof}

One can ask if the index set $I$ has any other meaning, apart from being the set of equivalence classes of irreducible representations of $C_0(\X\rtimes \G)$. It turns out that there is indeed another interpretation of $I$ as classifying the equivariant Hilbert modules of $C(\X)$. This is sometimes referred to as the \emph{Green-Julg isomorphism}, see \cite{Ver02} and \cite{Tom08}. 

\begin{Def}[\cite{BS93}] Let $\X\overset{\alpha}{\curvearrowleft} \G$ be an action of $\G$ on the locally compact quantum space $\X$. A \emph{$\G$-equivariant (right) Hilbert $C_0(\X)$-module} consists of a (right) Hilbert $C_0(\X)$-module $\mathscr{E}$ together with a coaction \[\alpha: \mathscr{E}\rightarrow \mathscr{E}\otimes C(\G)\] satisfying
\begin{itemize}
\item the coassociativity condition \[(\id_{\mathscr{E}}\otimes \Delta)\alpha = (\alpha\otimes \id_{\G})\alpha,\] 
\item  the density condition \[\lbrack \alpha(\mathscr{E})(1_{\X}\otimes C(\G))\rbrack = \mathscr{E}\otimes C(\G),\] 
\item the compatibility condition \[\langle \alpha(\xi),\alpha(\eta)\rangle_{\X\times \G} = \alpha(\langle \xi,\eta\rangle_{\X}).\]
\end{itemize}
\end{Def} 
Here $\mathscr{E}\otimes C(\G)$ is seen as the external tensor product of respectively the Hilbert $C_0(\X)$-module $\mathscr{E}$ and the Hilbert $C(\G)$-module $C(\G)$. 

\begin{Lem} If $\mathscr{E}$ is a $\G$-equivariant Hilbert $C_0(\X)$-module, then \[\alpha(\xi a) = \alpha(\xi)\alpha(a),\quad \xi\in \mathscr{E},a\in C_0(\X).\] 
\end{Lem}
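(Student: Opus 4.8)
The plan is to use the compatibility condition to show that the element $\alpha(\xi a) - \alpha(\xi)\alpha(a)$ is orthogonal to a dense subspace of $\mathscr{E}\otimes C(\G)$, and then to invoke the density condition to conclude that it vanishes. This is the standard ``inner products determine the map'' argument for Hilbert modules.

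First I would fix $\xi\in\mathscr{E}$ and $a\in C_0(\X)$ and set $\zeta = \alpha(\xi a) - \alpha(\xi)\alpha(a)$, where $\alpha(a)\in C_0(\X)\otimes C(\G)$ acts on $\mathscr{E}\otimes C(\G)$ through the right-module structure of the external tensor product. The goal is $\zeta = 0$, which in a Hilbert module is equivalent to $\langle \zeta,\zeta\rangle_{\X\times\G}=0$.

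The core computation is to pair $\zeta$ against the elements $\alpha(\eta)(1_{\X}\otimes g)$ with $\eta\in\mathscr{E}$ and $g\in C(\G)$. Using that the $(C_0(\X)\otimes C(\G))$-valued inner product is module-linear in the second variable and module-adjoint in the first, together with the compatibility condition $\langle\alpha(\xi),\alpha(\eta)\rangle_{\X\times\G} = \alpha(\langle\xi,\eta\rangle_{\X})$ and the fact that $\alpha$ restricted to $C_0(\X)$ is a $^*$-homomorphism, I would compute
\[
\langle \alpha(\xi a), \alpha(\eta)(1_{\X}\otimes g)\rangle_{\X\times\G}
= \alpha\big(\langle \xi a,\eta\rangle_{\X}\big)(1_{\X}\otimes g)
= \alpha(a)^*\,\langle\alpha(\xi),\alpha(\eta)\rangle_{\X\times\G}\,(1_{\X}\otimes g),
\]
where the last step uses $\langle \xi a,\eta\rangle_{\X}=a^*\langle\xi,\eta\rangle_{\X}$ and $\alpha(a^{*}\cdot)=\alpha(a)^{*}\alpha(\cdot)$. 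On the other hand, the module-adjoint identity $\langle \zeta' m,\,\cdot\,\rangle_{\X\times\G} = m^{*}\langle\zeta',\,\cdot\,\rangle_{\X\times\G}$ with $m=\alpha(a)$ gives the same expression for $\langle \alpha(\xi)\alpha(a), \alpha(\eta)(1_{\X}\otimes g)\rangle_{\X\times\G}$. Subtracting, $\langle \zeta, \alpha(\eta)(1_{\X}\otimes g)\rangle_{\X\times\G}=0$ for all $\eta$ and $g$.

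Finally I would invoke the density condition $\lbrack\alpha(\mathscr{E})(1_{\X}\otimes C(\G))\rbrack = \mathscr{E}\otimes C(\G)$: since $\zeta$ is orthogonal to the dense set of elements $\alpha(\eta)(1_{\X}\otimes g)$, by linearity and continuity of the inner product it is orthogonal to all of $\mathscr{E}\otimes C(\G)$, in particular to $\zeta$ itself, so $\langle\zeta,\zeta\rangle_{\X\times\G}=0$ and hence $\zeta=0$. The only point requiring care is the bookkeeping of the left/right module conventions and the $^*$-operations in the external tensor product module; the argument is otherwise routine and I anticipate no serious obstacle.
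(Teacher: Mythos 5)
Your proof is correct and follows essentially the same route as the paper: both pair the two expressions against the dense set $\alpha(\mathscr{E})(1_{\X}\otimes C(\G))$, use the compatibility condition together with the fact that $\alpha$ is a $^*$-homomorphism on $C_0(\X)$, and conclude by the density condition. The only cosmetic difference is that you place the difference $\zeta$ in one slot and invoke $\langle\zeta,\zeta\rangle_{\X\times\G}=0$, whereas the paper directly compares inner products of the two terms against the dense set.
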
 

\begin{proof} Note that for $g\in C(\G)$, $\xi,\eta\in \mathscr{E}$ and $a\in C_0(\X)$, one has \begin{multline*} \langle \alpha(\xi)(1\otimes g),\alpha(\eta a)\rangle_{\X\times \G} = (1_{\X}\otimes g^*)\alpha(\langle \alpha(\xi),a\eta\rangle_{\X}) \\=\langle \alpha(\xi)(1\otimes g),\alpha(a)\alpha(\eta)\rangle_{\X\times \G}.\end{multline*} By the density condition, we find $\alpha(\eta a) = \alpha(\eta)\alpha(a)$. 
\end{proof}

Classically, the notion of equivariant Hilbert $C_0(\X)$-module corresponds to that of \emph{equivariant Hilbert space bundle} over $\X$.  

We will now see that in the case of $\X$ homogeneous, there is a one-to-one correspondence between $C_0(\X\rtimes \G)$-representations and equivariant Hilbert $C_0(\X)$-modules.

We begin by showing that any equivariant Hilbert module is a direct sum of \emph{irreducible} ones. 

\begin{Def} Let $\X\curvearrowleft \G$. An equivariant Hilbert module $\mathscr{E}$ is called \emph{irreducible} if any equivariant Hilbert $C_0(\X)$-submodule is either $\{0\}$ or $\mathscr{E}$. It is called \emph{indecomposable} if it is not isomorphic to the direct sum of two non-trivial equivariant Hilbert $C_0(\X)$-modules. 
\end{Def}

\begin{Theorem}\label{TheoEqHilb} Assume $\X\curvearrowleft \G$ homogeneous. Then any equivariant Hilbert module $\mathscr{E}$ is a direct sum of indecomposable ones. Moreover, if $\mathscr{E}$ is indecomposable, then it is irreducible, finitely generated projective as a $C(\X)$-module, and with finite-dimensional isotypical components.
\end{Theorem}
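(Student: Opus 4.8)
The plan is to transport to the module $\mathscr{E}$ the spectral machinery developed for $C(\X)$ in Sections 3 and 6, and then to reduce all four assertions to a single structural statement about an endomorphism $C^{*}$-algebra. First I would equip the algebraic core of $\mathscr{E}$ with an averaged scalar product $\langle\langle\xi,\eta\rangle\rangle = \varphi_{\X}(\langle\xi,\eta\rangle_{\X})$, which is positive-definite and whose completion $H_{\mathscr{E}}$ carries a unitary corepresentation $V$ of $\G$ induced by $\alpha$ (here the invariance of $\varphi_{\X}$ plays the role it played in constructing $L^{2}_{\Y}(\X)$). Exactly as in Lemma~\ref{LemClosSpec} and Theorem~\ref{TheoDensPod}, the maps $E^{\mathscr{E}}_{\pi}(\xi) = (\id_{\mathscr{E}}\otimes\varphi_{\G})(\alpha(\xi)(1\otimes\chi_{\pi}^{*}))$ cut out closed isotypical components $\mathscr{E}_{\pi}$ with $\alpha(\mathscr{E}_{\pi})\subseteq\mathscr{E}_{\pi}\otimes C(\G)_{\pi}$, whose span $\mathscr{E}_{0}=\oplus_{\pi}\mathscr{E}_{\pi}$ is a dense right $\mathscr{O}_{\G}(\X)$-submodule of $\mathscr{E}$; moreover $\overline{\mathscr{E}_{\pi}}$ is the $\pi$-isotypical subspace $\Hsp_{\pi}\otimes\Mor(\pi,\alpha_{\mathscr{E}})$ of $(H_{\mathscr{E}},V)$.

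Next I would pass to the unital $C^{*}$-algebra $B=\mathrm{End}_{\G}(\mathscr{E})$ of equivariant adjointable operators. Its self-adjoint projections correspond bijectively to orthogonally complemented equivariant Hilbert submodules, so both the decomposition of $\mathscr{E}$ into indecomposables and the implication "indecomposable $\Rightarrow$ irreducible" follow once one knows that $B$ is \emph{atomic}, i.e. a $c_{0}$-direct sum $\oplus_{j}M_{n_{j}}(\C)$ of full matrix algebras over $\C$: then $1_{B}=\sum_{j}p_{j}$ with $p_{j}$ minimal, the $p_{j}\mathscr{E}$ are mutually orthogonal equivariant submodules with $\mathscr{E}=\oplus_{j}p_{j}\mathscr{E}$, each is indecomposable and irreducible by minimality of $p_{j}$, and an indecomposable $\mathscr{E}$ forces a single block with $n_{j}=1$, whence $B=\C\,\id$. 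Since $B$ commutes with $V$ and preserves each isotypical subspace, it embeds into $\prod_{\pi}B(\Mor(\pi,\alpha_{\mathscr{E}}))$; atomicity and finiteness of the $n_{j}$ must come from finite-dimensionality of the multiplicity spaces $\Mor(\pi,\alpha_{\mathscr{E}})$ on each atom.

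The main obstacle is precisely this finiteness, and it is where homogeneity enters. I would adapt the Hilbert--Schmidt argument of Theorem~\ref{TheoFinBoc}: averaging the rank-one Hilbert-module operators $\theta_{\xi,\eta}(\zeta)=\xi\langle\eta,\zeta\rangle_{\X}$ over the coaction (again using $\chi_{\pi}$ and $\rho_{\pi}=\sigma(\chi_{\pi})$) produces equivariant finite-rank projections in $B$ whose supremum is $1_{\mathscr{E}}$, which already yields the atomic block structure. Restricted to a single irreducible summand, the generating-vector step of Theorem~\ref{TheoFinBoc} goes through with the GNS vector $\xi_{\X}$ replaced by a cyclic vector of the atom, and the orthogonality relations of Corollary~\ref{CorOrthErg} give $\dim\Mor(\pi,\alpha_{\mathscr{E}})\le\dim_{q}(\pi)$ there. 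This simultaneously completes the proof of atomicity and establishes that an indecomposable $\mathscr{E}$ has finite-dimensional isotypical components. One may alternatively realise $H_{\mathscr{E}}$ as a representation space of $C_{0}(\X\rtimes\G)$ and read the statement off its atomic structure, Theorem~\ref{TheoBocaAtom}, but I would keep the argument self-contained.

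Finally, for finite generation and projectivity of an indecomposable (hence irreducible) $\mathscr{E}$: choose a nonzero, now finite-dimensional, component $\mathscr{E}_{\pi_{0}}$ with basis $\xi_{1},\dots,\xi_{m}$. The closed submodule $\mathscr{N}=[\{\xi_{k}a\mid a\in C(\X)\}]$ is equivariant, so $\mathscr{N}=\mathscr{E}$ by irreducibility and $\mathscr{E}$ is topologically finitely generated. A suitably $Q_{\pi_{0}}$-weighted sum of the $\theta_{\xi_{k},\xi_{k}}$ is equivariant by Corollary~\ref{CorOrthErg}, hence lies in $B=\C\,\id$; being nonzero and positive it is invertible, exhibiting $\id_{\mathscr{E}}$ as a compact operator on the Hilbert module. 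A finitely generated Hilbert $C^{*}$-module over the unital algebra $C(\X)$ whose identity is compact is finitely generated projective, which gives the last assertion.
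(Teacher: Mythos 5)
Your isotypical set-up and your averaging construction (which is the same as the paper's operator $(\id_{\mathscr{E}}\otimes \varphi_{\G})(\alpha(\eta)\alpha(\eta)^*)$) are fine, but the central step of your plan contains a genuine gap, and the structure you assert for $B=\mathrm{End}_{\G}(\mathscr{E})$ is actually false in general. Indecomposable summands can occur with infinite multiplicity: $\mathscr{E}=\ell^{2}(\N)\otimes C(\X)$, with coaction $\id\otimes\alpha$, is a perfectly good equivariant Hilbert module whose equivariant endomorphism algebra contains $B(\ell^{2}(\N))\otimes 1$, so $B$ is not a $c_{0}$-direct sum of \emph{finite-dimensional} matrix algebras and $1_{B}$ is not a norm-convergent sum of minimal projections. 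More seriously, the implication ``equivariant finite-rank projections with supremum $1_{\mathscr{E}}$ already yields the atomic block structure'' is not valid on general grounds: a finite-rank projection need not dominate any minimal projection of the compacts (for $\mathscr{E}=C([0,1])$ over itself, $\id=\theta_{1,1}$ is rank one, yet $\mathcal{K}(\mathscr{E})\cong C([0,1])$ has no projections at all). What would make the implication true is finite-dimensionality of the \emph{equivariant} compacts on the range of such a projection --- the one place where homogeneity must be used --- but in your plan that finite-dimensionality is only established ``restricted to a single irreducible summand'', i.e.\ after the atoms exist, while the atoms can only be produced from the finite-dimensionality. Your argument is circular at exactly the point where Boca's theorem has to enter.

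The paper's proof supplies the missing step, in the correct logical order, by localizing first. For a finite-dimensional subcomodule $V\subseteq\mathscr{E}_{\pi}$, let $\mathscr{F}$ be the Hilbert $C(\X)$-submodule it generates. By Frobenius reciprocity, $\mathscr{F}_{\rho}$ receives contributions only from $(\id_{\mathscr{E}}\otimes\varphi_{\G})\bigl(\alpha(V)\alpha(C(\X)_{\theta})(1_{\X}\otimes\chi_{\rho}^{*})\bigr)$ with $\theta$ a subrepresentation of $\bar{\rho}\otimes\pi$; there are finitely many such $\theta$ up to equivalence, and each $C(\X)_{\theta}$ is finite-dimensional by Theorem \ref{TheoFinBoc} --- homogeneity is used here, \emph{before} any decomposition is available. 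Hence every $\mathscr{F}_{\rho}$ is finite-dimensional, and since $\mathscr{F}$ is generated over $C(\X)$ by the finite-dimensional space $\mathscr{F}_{\pi}$, every equivariant compact operator on $\mathscr{F}$ is determined by its restriction to $\mathscr{F}_{\pi}$; thus $\mathcal{K}_{\G}(\mathscr{F})$ is a finite-dimensional C$^{*}$-algebra, whose unit is $\id_{\mathscr{F}}$ by the averaging argument. Only at this point do minimal projections, complementation of $\mathscr{F}$ in $\mathscr{E}$, and decomposition into finitely many indecomposables become available; a maximality argument then gives the global decomposition, and your final paragraph (finite generation and projectivity from compactness of the identity) goes through. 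If you want to keep your global formulation, replace ``$B$ is a $c_{0}$-sum of finite matrix algebras'' by the correct local statement: every non-zero equivariant Hilbert submodule contains a non-zero complemented submodule whose equivariant compacts form a finite-dimensional C$^{*}$-algebra.
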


\begin{proof} First note that we can define the notion of $\pi$-isotypical component $\mathscr{E}_{\pi}$ for $\mathscr{E}$, with $\pi$ an irreducible $\G$-representation. Exactly the same proof as for Lemma \ref{LemClosSpec} shows that each $\mathscr{E}_{\pi}$ is closed in norm, and the same proof as for Theorem \ref{TheoDensPod} shows that their linear span is dense in $\mathscr{E}$. 

Choose now some $\pi$ for which $\mathscr{E}_{\pi}$ is non-zero, and let $V$ be a non-trivial, finite-dimensional $C(\G)$-subcomodule of $\mathscr{E}_{\pi}$.  Let $\mathscr{F}$ be the Hilbert $C(\X)$-submodule spanned by $V$. We claim that $\alpha$ restricts to $\mathscr{F}$, and that the isotypical components of $\mathscr{F}$ are finite-dimensional.

The fact that $\alpha$ restricts to $\mathscr{F}$ follows immediately from \[\alpha(V\mathscr{O}_{\G}(\X)) \subseteq (V\mathscr{O}_{\G}(\X))\aotimes \mathscr{O}(\G).\] To see that the spectral subspaces of $\mathscr{F}$ are finite-dimensional, note that by a similar argument as in Lemma \ref{LemClosSpec} \[\mathscr{F}_{\rho} = \left\lbrack  (\id_{\mathscr{E}}\otimes \varphi_{\G})(\alpha(V)\alpha(\mathscr{O}_{\G}(\X))(1_{\X}\otimes \chi_{\rho}^*))\right\rbrack.\] However, since $\alpha(V) \subseteq V\otimes C(\G)_{\pi}$, we have $(\id_{\mathscr{E}}\otimes \varphi_{\G})(\alpha(\eta)\alpha(a)(1_{\X}\otimes \chi_{\rho}^*))$ non-zero for some $a\in C(\X)_{\theta}$ only if $\rho$ is a subrepresentation of $\pi\otimes \theta$, which by Frobenius reciprocity means that $\theta$ is a subrepresentation of $\bar{\rho}\otimes \pi$. As the $C(\X)_{\theta}$ are finite-dimensional, this shows that $\mathscr{F}_{\rho}$ is finite-dimensional. 

We now show that $\mathscr{F}$ is complemented in $\mathscr{E}$ as an equivariant Hilbert $C(\X)$-module, and that $\mathscr{F}$ is a (finite) direct sum of indecomposable equivariant Hilbert $C(\X)$-modules. This will clearly imply that $\mathscr{E}$ is a direct sum of indecomposable $C(\X)$-modules.

In fact, let \[\mathcal{K}_{\G}(\mathscr{F}) = \{T\in \mathcal{K}(\mathscr{F})\mid \alpha(T\xi) = (T\otimes \id_{\G})\alpha(\xi)\},\] where $\mathcal{K}(\mathscr{F})$ is the C$^*$-algebra of compact operators on $\mathscr{F}$. We claim that $\mathcal{K}_{\G}(\mathscr{F})$ is a finite-dimensional C$^*$-algebra. Clearly, it is a norm-closed algebra, while \begin{eqnarray*} \langle \alpha(T^*\eta),\alpha(\xi)(1_{\X}\otimes g)\rangle_{\X\times \G}  &=& \alpha(\langle \eta,T\xi\rangle_{\X})(1_{\X}\otimes g)  \\ &=&  \langle \alpha(\eta),\alpha(T\xi)(1_{\X}\otimes g)\rangle_{\X\times \G}\\ &=& \langle \alpha(\eta),(T\otimes \id_{\G})\alpha(\xi)(1_{\X}\otimes g)\rangle_{\X\times \G} \\ &=& \langle (T^*\otimes \id_{\G})\alpha(\eta),\alpha(\xi)(1_{\X}\otimes g)\rangle_{\X\times \G}\end{eqnarray*} shows that it is a C$^*$-algebra. As the elements in $\mathcal{K}_{\G}(\mathscr{F})$ are $C(\X)$-linear and preserve the isotypical components, it follows that each element is determined by its restriction to the finite-dimensional space $\mathscr{F}_{\pi}$, hence $\mathcal{K}_{\G}(\mathscr{F})$ is finite-dimensional.

Let $p$ be the unit in $\mathcal{K}_{\G}(\mathscr{F})$. We want to show that $p$ is the unit operator on $\mathscr{F}$. However, by considering $\id_{\mathscr{F}}-p$, it is sufficient to show that $\mathcal{K}_{\G}(\mathscr{F})$ is non-zero. For this, take $\eta \in \mathscr{F}_{\pi}$ a non-zero vector, so $\alpha(\eta) \in \mathscr{F}_{\pi}\otimes C(\G)_{\pi}$. Let \[T = (\id_{\X}\otimes \varphi_{\G})(\alpha(\eta)\alpha(\eta)^*) \in \mathcal{K}(\mathscr{F}).\] Then clearly $T$ is compact, while an easy computation shows that \[\alpha(T\xi) = (T\otimes \id_{\G})\alpha(\xi),\quad \forall \xi\in \mathscr{E}.\] If $T = 0$, then $\alpha(\eta)\alpha(\eta)^*=0$ by faithfulness of $\varphi_{\G}$ on $\mathscr{O}(\G)$, and hence $\alpha(\eta)=0$.  But as $\mathscr{F}_{\pi}$ is an $\mathscr{O}(\G)$-comodule, we then have $\eta = (\id_{\mathscr{F}}\otimes \varepsilon)\alpha(\eta) = 0$, a contradiction. 

To prove the remainder of the theorem, let $\mathscr{E}$ be indecomposable. If $\mathscr{F}$ is an equivariant submodule of $\mathscr{E}$, it follows from the above that it is complemented in $\mathscr{E}$. Hence $\mathscr{E}$ is irreducible. It is finitely generated projective as $\mathcal{K}(\mathscr{E})$ has a unit by the above arguments. 
\end{proof} 

The next lemma provides a particular way to construct equivariant Hilbert $C(\X)$-modules, which afterwards we will show produces \emph{all} of them. 

\begin{Lem} Let $\Hsp_{\pi}$ be a representation of $\G$. Then $\Hsp_{\pi}\otimes C(\X)$ is a $\G$-equivariant Hilbert $C(\X)$-module for the Hilbert $C(\X)$-module structure \[\langle \xi\otimes a,\eta\otimes b\rangle_{\X} = \langle \xi,\eta\rangle a^*b\] and the coaction \[\alpha_{\pi}(\xi\otimes a) = \delta_{\pi}(\xi)_{13}\alpha(a)_{23}.\] 
\end{Lem}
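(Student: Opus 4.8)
The plan is to check the three defining axioms of a $\G$-equivariant Hilbert module one by one, after first recording that $\Hsp_{\pi}\otimes C(\X)$ really is a Hilbert $C(\X)$-module: since $\Hsp_{\pi}$ is finite-dimensional this is merely the free module $C(\X)^{\dim\Hsp_{\pi}}$ with the stated $C(\X)$-valued inner product, so nothing is at stake there. Throughout I would write the coaction in Sweedler notation as $\alpha_{\pi}(\xi\otimes a)=\xi_{(0)}\otimes a_{(0)}\otimes \xi_{(1)}a_{(1)}$, reading $\delta_{\pi}(\xi)_{13}$ into legs $1,3$ and $\alpha(a)_{23}$ into legs $2,3$ of $\Hsp_{\pi}\otimes C(\X)\otimes C(\G)$.

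For coassociativity I expect a purely formal verification. Applying $\id\otimes\Delta$ and using that $\Delta$ is a homomorphism gives $\xi_{(0)}\otimes a_{(0)}\otimes \xi_{(1)}a_{(1)}\otimes\xi_{(2)}a_{(2)}$, where the doubled indices refer to the comodule coassociativity of $\delta_{\pi}$ and the coaction property of $\alpha$. Applying $\alpha_{\pi}\otimes\id_{\G}$ instead and invoking the same two coassociativity identities produces the same expression, so the two sides agree.

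For the compatibility condition I would compute the external-tensor-product inner product of $\alpha_{\pi}(\xi\otimes a)$ and $\alpha_{\pi}(\eta\otimes b)$ directly: it equals $\langle\xi_{(0)},\eta_{(0)}\rangle\, a_{(0)}^*b_{(0)}\otimes a_{(1)}^*\xi_{(1)}^*\eta_{(1)}b_{(1)}$. The crucial point is that summing the $\xi,\eta$ indices first and using that $\delta_{\pi}$ is isometric, $\sum\langle\xi_{(0)},\eta_{(0)}\rangle\,\xi_{(1)}^*\eta_{(1)}=\langle\xi,\eta\rangle 1_{\G}$ (which is exactly $\delta_{\pi}(\xi)^*\delta_{\pi}(\eta)$), collapses the middle factor and leaves $\langle\xi,\eta\rangle\,a_{(0)}^*b_{(0)}\otimes a_{(1)}^*b_{(1)}$, which is $\alpha(\langle\xi,\eta\rangle a^*b)$ because $\alpha$ is a $^*$-homomorphism. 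Hence the compatibility identity holds.

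The density condition is where the real work sits, and I would handle it at the algebraic level and then pass to closures. On $\Hsp_{\pi}\underset{\alg}{\otimes}\mathscr{O}_{\G}(\X)\underset{\alg}{\otimes}\mathscr{O}(\G)$ consider the map $\Phi(\xi\otimes a\otimes g)=\alpha_{\pi}(\xi\otimes a)(1_{\X}\otimes g)=\xi_{(0)}\otimes a_{(0)}\otimes\xi_{(1)}a_{(1)}g$, and propose the candidate inverse $\Psi(\xi\otimes a\otimes g)=\xi_{(0)}\otimes a_{(0)}\otimes S(\xi_{(1)}a_{(1)})g$ built from the antipode, exactly as in the proof of Lemma \ref{LemAlgtoAn}. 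A Sweedler computation shows $\Phi\Psi=\id$: after re-expanding via coassociativity one meets $\sum a_{(1)}S(a_{(2)})=\varepsilon(a_{(1)})1$ and then $\sum\xi_{(1)}S(\xi_{(2)})=\varepsilon(\xi_{(1)})1_{\G}$, which together with the comodule counit $(\id\otimes\varepsilon)\delta_{\pi}=\id$ return $\xi\otimes a\otimes g$. Thus $\Phi$ is onto the dense subspace $\Hsp_{\pi}\underset{\alg}{\otimes}\mathscr{O}_{\G}(\X)\underset{\alg}{\otimes}\mathscr{O}(\G)$, so finite sums $\sum_k\alpha_{\pi}(\xi_k\otimes a_k)(1_{\X}\otimes g_k)$ exhaust a dense subspace of $\Hsp_{\pi}\otimes C(\X)\otimes C(\G)$; since the reverse inclusion is obvious, the Podle\'{s}-type density follows. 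The main obstacle is precisely the bookkeeping in this last step --- keeping the antipode acting correctly on the product $\xi_{(1)}a_{(1)}$ and tracking which counit/antipode identity discharges the $\delta_{\pi}$-legs versus the $\alpha$-legs --- rather than any conceptual difficulty.
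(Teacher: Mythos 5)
Your proof is correct and complete; the paper itself gives no argument here (its proof reads simply ``Exercise''), so your verification fills that gap in exactly the intended way. In particular you identify the two key points correctly: the isometry $\delta_{\pi}(\xi)^*\delta_{\pi}(\eta)=\langle \xi,\eta\rangle 1_{\G}$ collapses the middle factor in the compatibility computation, and the antipode trick of Lemma \ref{LemAlgtoAn}, applied to the combined leg $\xi_{(1)}a_{(1)}$, inverts the Galois-type map on $\Hsp_{\pi}\aotimes \mathscr{O}_{\G}(\X)\aotimes \mathscr{O}(\G)$, after which the Podle\'{s} density condition follows from the density of $\mathscr{O}_{\G}(\X)$ in $C(\X)$ (Theorem \ref{TheoDensPod}) and of $\mathscr{O}(\G)$ in $C(\G)$.
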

\begin{proof} Exercise.
\end{proof} 

\begin{Theorem}\label{TheoTensEq} Assume $\X\overset{\alpha}{\curvearrowleft} \G$ homogeneous. Then any irreducible $\G$-equivariant Hilbert $C(\X)$-module arises as a $\G$-equivariant Hilbert submodule of $\Hsp_{\pi}\otimes C(\X)$ for some irreducible $\G$-representation $\pi$. 
\end{Theorem}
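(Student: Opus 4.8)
The plan is to produce, from a nonzero isotypical component of $\mathscr{E}$, an explicit isometric equivariant embedding into $\Hsp_{\pi}\otimes C(\X)$. Since $\mathscr{E}$ is nonzero, the argument in Theorem \ref{TheoEqHilb} shows that some isotypical component $\mathscr{E}_{\pi}$ is nonzero for an irreducible $\pi$, so there is a nonzero intertwiner $T\colon \Hsp_{\pi}\to\mathscr{E}$ satisfying $\alpha(T\xi)=(T\otimes\id_{\G})\delta_{\pi}(\xi)$. I regard $T$ also as the right $C(\X)$-linear map $T\colon \Hsp_{\pi}\otimes C(\X)\to\mathscr{E}$, $\xi\otimes a\mapsto (T\xi)a$. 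The candidate embedding is its adjoint $\Phi=T^{*}\colon\mathscr{E}\to\Hsp_{\pi}\otimes C(\X)$, which for an orthonormal basis $\{e_{i}\}$ of $\Hsp_{\pi}$ is given explicitly by $\Phi(\zeta)=\sum_{i}e_{i}\otimes\langle Te_{i},\zeta\rangle_{\X}$. A one-line check using $\langle\xi\otimes a,\eta\otimes b\rangle_{\X}=\langle\xi,\eta\rangle a^{*}b$ confirms that $\Phi$ is right $C(\X)$-linear and is indeed $T^{*}$.

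Next I would verify equivariance. Unwinding $\alpha_{\pi}(\xi\otimes a)=\delta_{\pi}(\xi)_{13}\alpha(a)_{23}$ and using the compatibility condition $\alpha(\langle Te_{i},\zeta\rangle_{\X})=\langle\alpha(Te_{i}),\alpha(\zeta)\rangle_{\X\times\G}$ together with $\alpha(Te_{i})=\sum_{k}Te_{k}\otimes U_{\pi}(e_{k},e_{i})$, the computation reduces to the orthogonality identity $\sum_{i}U_{\pi}(e_{j},e_{i})U_{\pi}(e_{k},e_{i})^{*}=\delta_{jk}1_{\G}$, which is just unitarity of $U_{\pi}$. Both $\alpha_{\pi}(\Phi(\zeta))$ and $(\Phi\otimes\id_{\G})\alpha(\zeta)$ then collapse to $\sum_{j}e_{j}\otimes\langle Te_{j},\zeta_{(0)}\rangle_{\X}\otimes\zeta_{(1)}$, giving equivariance. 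The crucial computation is the inner product: using $\langle Te_{i},\zeta\rangle_{\X}^{*}=\langle\zeta,Te_{i}\rangle_{\X}$ one finds
\[
\langle\Phi(\zeta),\Phi(\zeta')\rangle_{\X}=\sum_{i}\langle\zeta,Te_{i}\rangle_{\X}\langle Te_{i},\zeta'\rangle_{\X}=\langle\zeta,(TT^{*})\zeta'\rangle_{\X},
\]
so $\Phi$ is isometric up to the positive finite-rank operator $P:=TT^{*}$ on $\mathscr{E}$.

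The final step, and the place where irreducibility enters decisively, is to show $P$ is a positive scalar. Since both $T$ and $T^{*}=\Phi$ are equivariant, $P=TT^{*}$ lies in $\mathcal{K}_{\G}(\mathscr{E})$ (it is finite rank, hence compact). By Theorem \ref{TheoEqHilb}, $\mathcal{K}_{\G}(\mathscr{E})$ is a finite-dimensional C$^{*}$-algebra; any equivariant projection in it would cut out a complemented equivariant submodule, so irreducibility forbids nontrivial projections and forces $\mathcal{K}_{\G}(\mathscr{E})=\C\,\id_{\mathscr{E}}$. Hence $P=\lambda\,\id_{\mathscr{E}}$, with $\lambda>0$ because $T\neq0$ gives $TT^{*}\neq0$. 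Then $\lambda^{-1/2}\Phi$ is an isometric equivariant adjointable map, whose image is a complemented equivariant Hilbert $C(\X)$-submodule of $\Hsp_{\pi}\otimes C(\X)$ isomorphic to $\mathscr{E}$, as desired. I expect the only real subtleties to be the bookkeeping in the equivariance computation and the reduction $\mathcal{K}_{\G}(\mathscr{E})=\C$; both rest squarely on results already established, the homogeneity hypothesis entering through Theorem \ref{TheoEqHilb}, whose finite-dimensionality conclusion is exactly what makes the no-projections argument conclusive.
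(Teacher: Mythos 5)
Your proof is correct, and it takes a recognizably different, in fact more economical, route than the paper's. Both arguments rest on the same two pillars: adjointability of equivariant maps between finitely generated projective Hilbert $C(\X)$-modules, and the consequence of Theorem \ref{TheoEqHilb} that irreducibility forces $\mathcal{K}_{\G}(\mathscr{E})=\C\,\id_{\mathscr{E}}$, so that a nonzero positive equivariant compact operator is a positive scalar and the relevant map can be rescaled to an equivariant isometry. The difference is in how the embedding is built. The paper picks a nonzero vector $\eta\in\mathscr{E}_{\pi}$ and maps $\mathscr{E}\rightarrow C(\G)_{\bar{\pi}}\otimes C(\X)$ via $\xi\mapsto \eta_{(1)}^*\otimes\eta_{(0)}^*\xi$, where $C(\G)_{\bar{\pi}}$ carries the twisted comodule structure $g\mapsto g_{(2)}\otimes S^{-1}(g_{(1)})$; after the scaling argument it must then still invoke the orthogonality relations to decompose $C(\G)_{\bar{\pi}}$ into copies of $\Hsp_{\pi}$ before landing in $\Hsp_{\pi}\otimes C(\X)$. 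You instead dualize the intertwiner $T\in\Mor(\pi,\alpha)$ itself, so that $\Phi=T^*$ maps $\mathscr{E}$ directly into $\Hsp_{\pi}\otimes C(\X)$ and the final decomposition step disappears entirely. The two constructions are close relatives: taking $\eta=Te_j$, the paper's map is, up to normalization of the inner product on $C(\G)_{\bar{\pi}}$, your $\Phi$ composed with the comodule embedding $e_k\mapsto U_{\pi}(e_k,e_j)^*$ of $\Hsp_{\pi}$ into $C(\G)_{\bar{\pi}}$. What your version buys is an explicit closed formula for the isometry (up to the scalar $\lambda^{-1/2}$) and one fewer reduction; the small price is that the unitarity computation $\sum_i U_{\pi}(e_j,e_i)U_{\pi}(e_k,e_i)^*=\delta_{jk}1_{\G}$ must be done up front to get equivariance of $\Phi$, whereas the paper defers all use of orthogonality to its last step. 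Your appeals to Theorem \ref{TheoEqHilb} — for the density of the isotypical components, the finite-dimensionality of $\mathcal{K}_{\G}(\mathscr{E})$, and the no-nontrivial-projections argument giving $\mathcal{K}_{\G}(\mathscr{E})=\C\,\id_{\mathscr{E}}$ — are exactly how the paper itself uses that theorem, so there is no gap there.
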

 
\begin{proof} Let $\mathscr{E}$ be an irreducible equivariant Hilbert $C(\X)$-module. Take any representation $\pi$ such that $\mathscr{E}_{\pi}\neq \{0\}$, and any non-zero $\eta\in \mathscr{E}_{\pi}$. Endow $C(\G)_{\bar{\pi}}$ with any Hilbert space norm making the comodule map \[C(\G)_{\bar{\pi}}\rightarrow C(\G)_{\bar{\pi}}\otimes C(\G),\quad g\mapsto g_{(2)}\otimes S^{-1}(g_{(1)})\] unitary. Then it is easy to check that the linear map \[T: \mathscr{E}\rightarrow C(\G)_{\bar{\pi}}\otimes C(\X),\quad\xi \mapsto \eta_{(1)}^*\otimes \eta_{(0)}^*\xi\] is a right $C(\X)$-linear map such that \[\alpha_{\bar{\pi}}(T\xi) = (T\otimes \id_{\G})\alpha(\xi).\] As $T$ is a linear map between projective, finitely generated Hilbert $C(\X)$-modules, it must necessarily be adjointable, and the same proof as in Theorem \ref{TheoEqHilb} shows that $T^*$ is again $\G$-equivariant. It follows that $T^*T \in \mathcal{K}_{\G}(\mathscr{E}) = \C\id_{\mathscr{E}}$. As $T$ is clearly non-zero, it follows that we can scale $T$ such that it is an isometry. This realizes $\mathscr{E}$ as a $\G$-equivariant Hilbert submodule of $C(\G)_{\bar{\pi}}\otimes C(\X)$. 

Now by the orthogonality relations in $C(\G)$ we can conclude that $C(\G)_{\bar{\pi}}$ is a direct sum of the irreducible unitary representations $\Hsp_{\pi}$. It then follows that $\mathscr{E}$ must also embed as a $\G$-equivariant Hilbert $C(\X)$-submodule of $\Hsp_{\pi}\otimes C(\X)$.  
\end{proof}

Assume now that $\mathscr{E}$ is an irreducible $\G$-equivariant Hilbert $C(\X)$-module for $\X\overset{\alpha}{\curvearrowleft} \G$ homogeneous. It follows from the above proofs that we can endow $\mathscr{E}^* = \mathcal{K}(\mathscr{E},C(\X))$ with an inner product such that \[\langle \xi^*,\eta^* \rangle \id_{\mathscr{E}} = (\id_{\mathscr{E}}\otimes \varphi_{\G})(\alpha(\xi)\alpha(\eta)^*).\]  We let $L^2(\mathscr{E}^*)$ be the separation-completion of $\mathscr{E}^*$ with the above inner product. 

\begin{Theorem} Let $\X\overset{\alpha}{\curvearrowleft} \G$ homogeneous, and let $\mathscr{E}$ be an irreducible $\G$-equivariant Hilbert $C(\X)$-module. Then there exists a unique irreducible $^*$-representation $\pi_{\mathscr{E}}$ of $C_0(\X\rtimes \G)$ on $L^2(\mathscr{E}^*)$ such that \[\pi_{\mathscr{E}}(a\omega) \eta^* = a(\id_{\mathscr{E}^*}\otimes \omega)(\alpha(\eta)^*),\quad a\in \mathscr{O}_{\G}(\X),\omega \in \mathscr{O}(\widehat{\G}),\eta\in \mathscr{E}.\] Moreover, any irreducible $C_0(\X\rtimes \G)$-representation arises in this way, and two irreducible equivariant Hilbert $C(\X)$-modules are isomorphic if and only if the associated $C_0(\X\rtimes \G)$-representations are equivalent. 
\end{Theorem}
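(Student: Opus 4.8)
The plan is to build the representation $\pi_{\mathscr{E}}$ first on the dense algebraic subspace $\mathscr{E}^*_{\alg}$ spanned by the $\eta^*$ with $\eta$ in the algebraic core of $\mathscr{E}$. Since as a vector space $\mathscr{O}(\X\rtimes\G) = \mathscr{O}_{\G}(\X)\aotimes\mathscr{O}(\widehat{\G})$, the formula $\pi_{\mathscr{E}}(a\omega)\eta^* = a\,(\id_{\mathscr{E}^*}\otimes\omega)(\alpha(\eta)^*)$ defines $\pi_{\mathscr{E}}(a\omega)$ unambiguously as a linear endomorphism of $\mathscr{E}^*_{\alg}$, the right-hand side staying in $\mathscr{E}^*_{\alg}$ because the core is $\alpha$-invariant and stable under left multiplication by $\mathscr{O}_{\G}(\X)$. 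First I would check multiplicativity $\pi_{\mathscr{E}}((a\omega)(b\theta)) = \pi_{\mathscr{E}}(a\omega)\pi_{\mathscr{E}}(b\theta)$, a Sweedler computation from the crossed-product product $(a\omega)(b\theta)=ab_{(0)}\otimes\omega(b_{(1)}\,\cdot\,)\theta$ and the comodule identity $(\id\otimes\Delta)\alpha=(\alpha\otimes\id)\alpha$ for $\mathscr{E}$. Then I would verify $\pi_{\mathscr{E}}((a\omega)^*)=\pi_{\mathscr{E}}(a\omega)^*$ using the defining inner product $\langle\xi^*,\eta^*\rangle\id_{\mathscr{E}} = (\id_{\mathscr{E}}\otimes\varphi_{\G})(\alpha(\xi)\alpha(\eta)^*)$ and the strong left invariance of $\varphi_{\G}$; this $*$-compatibility is the one genuinely computational point. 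Boundedness is then cheap, since $\pi_{\mathscr{E}}(a)$ is left multiplication by $a$ on the Hilbert-module dual (norm $\leq\|a\|$) and each $\omega$ lies in a single finite-dimensional block of $\mathscr{O}(\widehat{\G})$; so $\pi_{\mathscr{E}}$ extends to $L^2(\mathscr{E}^*)$ and, as $\mathscr{O}(\X\rtimes\G)$ has a unique C$^*$-completion by Theorem \ref{TheoUnicrossC}, to all of $C_0(\X\rtimes\G)$. Uniqueness is immediate, the formula fixing $\pi_{\mathscr{E}}$ on a dense subalgebra.

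For irreducibility I would identify the commutant $\pi_{\mathscr{E}}(C_0(\X\rtimes\G))'$. An operator on $L^2(\mathscr{E}^*)$ commuting with the left $C(\X)$-multiplications and with the $\widehat{\G}$-part $\pi_{\mathscr{E}}(\mathscr{O}(\widehat{\G}))$ corresponds, via the right $\mathcal{K}(\mathscr{E})$-action on $\mathscr{E}^*=\mathcal{K}(\mathscr{E},C(\X))$ by precomposition, to an equivariant compact operator on $\mathscr{E}$: such an operator commutes with the (opposite-sided) left $C(\X)$-action and, by equivariance, with the coaction. Since $\mathscr{E}$ is irreducible, $\mathcal{K}_{\G}(\mathscr{E})=\C\id_{\mathscr{E}}$ by Theorem \ref{TheoEqHilb} (as used in the proof of Theorem \ref{TheoTensEq}), so the commutant is trivial and $\pi_{\mathscr{E}}$ is irreducible.

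The serious part is surjectivity together with the bijectivity on isomorphism classes, which requires inverting the construction. I would recover a module from a representation $\sigma$ of $C_0(\X\rtimes\G)$ by exploiting that $C(\X)$ and $\mathscr{O}(\widehat{\G})$ sit inside $M(C_0(\X\rtimes\G))$ and that the isotypical pieces $\sigma(p_{\rho})\Hsp$ are finite-dimensional by the atomicity argument of Theorem \ref{TheoBocaAtom}; assembling these with the dual $\widehat{\G}$-coaction and a $C(\X)$-valued inner product yields an equivariant Hilbert $C(\X)$-module $\mathscr{E}_{\sigma}$, finitely generated projective with finite-dimensional spectral subspaces by Theorem \ref{TheoEqHilb}. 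I would then check $\pi_{\mathscr{E}_{\sigma}}\cong\sigma$ and $\mathscr{E}_{\pi_{\mathscr{E}}}\cong\mathscr{E}$, noting that a module isomorphism manifestly intertwines the associated representations, while conversely an intertwiner of representations restricts to a $C(\X)$-linear equivariant map of the reconstructed modules. A cleaner route to surjectivity alone is to observe that the reduced representation of $C_0(\X\rtimes\G)$ on $L^2_{\Y}(\X)\otimes L^2(\G)$ is faithful, hence meets every atom of $\oplus_i B_0(\Hsp_i)$, and to decompose it, via Peter--Weyl for $L^2(\G)$ and Theorem \ref{TheoTensEq}, into the representations $\pi_{\mathscr{E}}$ attached to the irreducible equivariant submodules of the $\Hsp_{\pi}\otimes C(\X)$.

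The main obstacle is exactly this inverse correspondence: producing from an abstract irreducible $C_0(\X\rtimes\G)$-representation a genuine equivariant Hilbert module and confirming that the two assignments are mutually inverse. The Takesaki--Takai--Baaj--Skandalis duality (Theorem \ref{TheoTakTak}), trivializing the double crossed product as $\mathscr{O}_{\G}(\X)\aotimes B_{00}(L^2(\G))$, together with the finite-dimensionality of all isotypical components in the homogeneous case, is what makes the reconstruction tractable; the delicate work is keeping track of the coaction and inner-product normalizations through these identifications.
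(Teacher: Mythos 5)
Your proposal is correct in substance, and the parts of it that actually close the proof coincide with the paper's own argument: the construction and boundedness of $\pi_{\mathscr{E}}$ (left as an exercise in the paper), irreducibility via identifying the commutant with $\mathcal{K}_{\G}(\mathscr{E})=\C\id_{\mathscr{E}}$, and, for the hard direction, your ``cleaner route'': the representation of $C_0(\X\rtimes\G)$ on $L^2(\X)\otimes L^2(\G)$ is faithful, hence by the atomic decomposition of Theorem \ref{TheoBocaAtom} it contains every irreducible representation; it splits over the Peter--Weyl blocks $L^2(\X)\otimes \Hsp_{\bar{\pi}}$, and each block is unitarily equivalent to $L^2((\Hsp_{\pi}\otimes C(\X))^*)$, hence, decomposing $\Hsp_{\pi}\otimes C(\X)$ into irreducible equivariant submodules by Theorem \ref{TheoEqHilb}, to a direct sum of representations of the form $L^2(\mathscr{E}_i^*)$. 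That is exactly the paper's proof of surjectivity.

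Two caveats. First, the route you lead with --- reconstructing an equivariant Hilbert module $\mathscr{E}_{\sigma}$ from an abstract irreducible representation $\sigma$ --- is the one genuinely incomplete point: the finite-dimensional isotypical pieces $\sigma(p_{\rho})\Hsp$ together with the dual coaction give a graded vector space with a $C(\X)$-action, but you give no recipe for the $C(\X)$-valued inner product, and producing one is essentially the Green--Julg theorem rather than a routine check. The paper never inverts the construction, and you need not either, since your alternative route already yields surjectivity. Second, you make the implication ``equivalent representations $\Rightarrow$ isomorphic modules'' depend on these reconstructed modules, which you therefore do not have; instead, the commutant argument you already use for irreducibility applies verbatim to an intertwiner $U\colon L^2(\mathscr{E}^*)\rightarrow L^2(\mathscr{F}^*)$: it intertwines the images of the central projections $p_{\rho}\in \mathscr{O}(\widehat{\G})$, hence maps each finite-dimensional isotypical subspace $\mathscr{E}_{\rho}^*$ into $\mathscr{F}_{\rho}^*$, and it intertwines the $C(\X)$-actions, hence induces a $C(\X)$-linear equivariant map $\mathscr{E}\rightarrow\mathscr{F}$. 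That this induced map is adjointable (indeed lies in $\mathcal{K}(\mathscr{E},\mathscr{F})$) is where finite generation enters, by writing $\id_{\mathscr{E}}$ as a finite sum of rank-one operators as in the proof of Theorem \ref{TheoEqHilb}; this is the same point you gloss over in the irreducibility step when you assert that a commutant element ``corresponds to an equivariant compact operator.'' With these two repairs your plan is precisely the paper's proof.
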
 

\begin{proof} We leave it as an exercise to check that $\pi_{\mathscr{E}}$ exists. 

To see that it is irreducible, assume that $T$ is a $C_0(\X\rtimes \G)$-intertwiner on $L^2(\mathscr{E}^*)$. Then necessarily $T$ must preserve the finite-dimensional isotypical components $\mathscr{E}_{\pi}^* \subseteq \mathscr{E}^*$. Hence $T$ induces a $\mathscr{O}_{\G}(\X)$-linear map on $\sum_{\pi} \mathscr{E}_{\pi}$ by $T'\xi = (T\xi^*)^*$. By the proof of Theorem \ref{TheoEqHilb}, we can write the unit operator on $\mathscr{E}$ as a finite sum $\sum_i c_{ij}\eta_i\eta_j^*$ with $\eta_i$ in a fixed $\mathscr{E}_{\pi}$ and $c_{ij}\in \C$. It follows that for any $\xi\in \sum_{\pi} \mathscr{E}_{\pi}$, we have \[T'\xi =  \sum_i c_{ij}(T'\eta_i)\langle \eta_j,\xi\rangle_{\X},\] so that $T'$ extends to an operator in $\mathcal{K}(\mathscr{E})$. As this extension is clearly still $\G$-equivariant by continuity, the irreducibility of $\mathscr{E}$ implies that $T'$ is a scalar. 

The same argument shows that inequivalent equivariant Hilbert $C(\X)$-modules produce inequivalent $C_0(\X\rtimes \G)$-representations. 

Finally, to see that any irreducible $C_0(\X\rtimes \G)$-representation arises in this way, recall that by definition $C_0(\X\rtimes \G)$ is faithfully represented on the Hilbert space $L^2(\X)\otimes L^2(\G)$ by \[a\omega \mapsto \alpha(a)(\id_{\X}\otimes l_{\omega}).\] However, conjugating this representation by means of the unitary $U_{\alpha}^*$ from Lemma \ref{LemIntroImplUn}, we see that it is equivalent to the representation $\theta$ where \[\theta(a\omega)(b\xi_{\X}\otimes g\xi_{\G}) = \omega(b_{(1)}g_{(2)})ab_{(0)}\xi_{\X}\otimes g_{(1)}\xi_{\G},\quad b\in \mathscr{O}_{\G}(\X),g\in \mathscr{O}(\G).\] In particular, we see that this representation restricts to each $L^2(\X)\otimes C(\G)_{\pi}$, and that the latter are direct sums of $^*$-representations on $L^2(\X)\otimes \Hsp_{\pi}$ by \[\theta_{\pi}(a\omega)(b\xi_{\X}\otimes \eta) = \omega(b_{(1)}\eta_{(1)})ab_{(0)}\xi_{\X}\otimes \eta_{(0)},\quad b\in \mathscr{O}_{\G}(\X),\eta\in \Hsp_{\pi}.\]

It is hence sufficient to show that each $L^2(\X)\otimes V_{\bar{\pi}}$ is equivalent to a direct sum of representations of the form $L^2(\mathscr{E}^*)$. However, consider the $\G$-equivariant Hilbert $C(\X)$-module $\Hsp_{\pi}\otimes C(\X)$. Then we can separate and complete $(\Hsp_{\pi}\otimes C(\X))^*$ into a Hilbert space $L^2((\Hsp_{\pi}\otimes C(\X))^*)$ with $C_0(\X\rtimes \G)$-representation by means of the inner product \[\langle (\xi\otimes a)^*,(\eta\otimes b)^*\rangle\id_{\Hsp_{\pi}} = \langle a^*\xi_{\X},b^*\xi_{\X}\rangle_{L^2(\X)} \langle \xi^*,\eta^*\rangle_{\Hsp_{\bar{\pi}}}\] and the $^*$-representation \[\theta_{\pi}^*(a\omega)((\xi\otimes b)^*) = \omega(b_{(1)}^*\xi_{(1)}^*)\xi_{(0)}^*\otimes ab_{(0)}^*,\]
where $a,b\in \mathscr{O}_{\G}(\X),\xi,\eta\in \Hsp_{\pi},\omega \in \mathscr{O}(\widehat{\G})$. By this construction, we have $L^2((\Hsp_{\pi}\otimes C(\X))^*)\cong \oplus L^2(\mathscr{E}_i^*)$ as $C_0(\X\rtimes \G)$-representations if $\Hsp_{\pi}\otimes C(\X)\cong \oplus \mathscr{E}_i$. However, we also have that the map \[L^2((\Hsp_{\pi}\otimes C(\X))^*) \rightarrow L^2(\X)\otimes \Hsp_{\bar{\pi}},\quad (\xi\otimes a)^* \mapsto a^*\xi_{\X}\otimes \xi^*\] is a unitary equivalence of the $C_0(\X\rtimes \G)$-representations $\theta_{\pi}^*$ and $\theta_{\bar{\pi}}$. This concludes the proof. 
\end{proof} 

As an example, consider a quantum homogeneous space of quotient type, \[\X = \mathbb{H}\backslash \G.\] Denote \[\pi_{\mathbb{H}}: C(\G) \rightarrow C(\mathbb{H})\] for the quotient map. Define, for $\pi$  a representation of $\mathbb{H}$,
\[ \Hsp_{\pi} \underset{\mathbb{H}}{\square} C(\G) = \{x\in \Hsp_{\pi}\otimes C(\G)\mid (\delta_{\pi}\otimes \id_{\G})x = (\id_{\pi}\otimes (\pi_{\mathbb{H}}\otimes \id_{\G})\Delta)x\},\] which is a closed subspace of the Hilbert $C(\G)$-module $\Hsp_{\pi}\otimes C(\G)$.  Then we find that \[1_{\mathbb{H}}\otimes x^*y = ((\pi_{\mathbb{H}}\otimes \id_{\G})\Delta)x^*y,\quad x,y\in \Hsp_{\pi}\otimes C(\G),\] so that $x^*y \in C(\mathbb{H}\backslash \G)$. Together with the coaction \[x \mapsto (\id_{\pi}\otimes \Delta)x,\quad x \in \Hsp_{\pi} \underset{\mathbb{H}}{\square} C(\G),\] this turns $\Hsp_{\pi} \underset{\mathbb{H}}{\square} C(\G)$ into a $\G$-equivariant right Hilbert $C(\mathbb{H}\backslash \G)$-module. 

\begin{Lem}If $\pi$ is irreducible as an $\mathbb{H}$-representation, then $\Hsp_{\pi} \underset{\mathbb{H}}{\square} C(\G)$ is irreducible as a $\G$-equivariant right Hilbert $C(\mathbb{H}\backslash \G)$-module. Moreover, two such $\G$-equivariant Hilbert $C(\mathbb{H}\backslash \G)$-modules are equivalent if and only if the corresponding $\mathbb{H}$-representations are equivalent.
\end{Lem}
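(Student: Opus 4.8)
The plan is to reduce both halves of the lemma to a single natural identification. Writing $\mathscr{E}_{\pi}=\Hsp_{\pi}\underset{\mathbb{H}}{\square}C(\G)$ and letting $\Mor_{\G}(\mathscr{E}_{\pi},\mathscr{E}_{\pi'})$ denote the space of $\G$-equivariant, adjointable, $C(\mathbb{H}\backslash\G)$-linear operators $\mathscr{E}_{\pi}\to\mathscr{E}_{\pi'}$, I would establish the $*$-linear isomorphism
\[\Phi\colon\Mor_{\mathbb{H}}(\pi,\pi')\;\xrightarrow{\ \sim\ }\;\Mor_{\G}(\mathscr{E}_{\pi},\mathscr{E}_{\pi'}),\qquad T\mapsto(T\otimes\id_{\G})|_{\mathscr{E}_{\pi}},\]
where $\Mor_{\mathbb{H}}(\pi,\pi')$ is the space of $\mathbb{H}$-intertwiners. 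That $\Phi$ is well defined is a direct check: if $T$ intertwines the $\mathbb{H}$-comodule maps $\delta_{\pi}$ and $\delta_{\pi'}$, then $T\otimes\id_{\G}$ commutes with both operators occurring in the defining relation of the cotensor product, so it carries $\mathscr{E}_{\pi}$ into $\mathscr{E}_{\pi'}$; it visibly commutes with the coaction $\id\otimes\Delta$, is $C(\mathbb{H}\backslash\G)$-linear, and has adjoint $T^{*}\otimes\id_{\G}$, so $\Phi$ is $*$-preserving (and multiplicative when $\pi=\pi'$). For injectivity I would use the evaluation map $\mathrm{ev}=\id\otimes\varepsilon$ on the algebraic core $\Hsp_{\pi}\underset{\mathbb{H}}{\square}\mathscr{O}(\G)$: cosemisimplicity of $\mathbb{H}\subseteq\G$ makes $\mathrm{ev}$ surjective onto $\Hsp_{\pi}$, and since $\mathrm{ev}\circ(T\otimes\id_{\G})=T\circ\mathrm{ev}$, the vanishing of $\Phi(T)$ forces $T=0$.

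Granting in addition that $\Phi$ is surjective, both conclusions follow formally. Taking $\pi=\pi'$ irreducible, Schur's lemma over $\mathbb{H}$ gives $\Mor_{\mathbb{H}}(\pi,\pi)=\C\,\id$, so the algebra of $\G$-equivariant endomorphisms of $\mathscr{E}_{\pi}$ equals $\C\,\id$ and hence contains no nontrivial projection. By Theorem~\ref{TheoEqHilb} every $\G$-equivariant Hilbert submodule of $\mathscr{E}_{\pi}$ is complemented, so a proper nonzero submodule would produce such a projection; therefore $\mathscr{E}_{\pi}$ is indecomposable, hence irreducible. For the classification, an isomorphism $\mathscr{E}_{\pi}\cong\mathscr{E}_{\pi'}$ of $\G$-equivariant Hilbert $C(\mathbb{H}\backslash\G)$-modules is precisely a unitary element of $\Mor_{\G}(\mathscr{E}_{\pi},\mathscr{E}_{\pi'})$; as $\Phi$ is a $*$-isomorphism, this happens if and only if $\Mor_{\mathbb{H}}(\pi,\pi')$ contains a unitary, that is, if and only if $\pi\cong\pi'$ as $\mathbb{H}$-representations.

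The genuine content, and the step I expect to be the main obstacle, is the surjectivity of $\Phi$: every $\G$-equivariant $C(\mathbb{H}\backslash\G)$-linear $S\colon\mathscr{E}_{\pi}\to\mathscr{E}_{\pi'}$ must come from an $\mathbb{H}$-intertwiner. By equivariance $S$ preserves the $\G$-isotypical components and so restricts to the algebraic cotensor products $\Hsp_{\pi}\underset{\mathbb{H}}{\square}\mathscr{O}(\G)$. I see two routes. The first extends scalars along $C(\mathbb{H}\backslash\G)\subseteq C(\G)$ using a Hopf--Galois/faithful-coflatness isomorphism $\mathscr{E}_{\pi}\otimes_{C(\mathbb{H}\backslash\G)}C(\G)\cong\Hsp_{\pi}\otimes C(\G)$; then $S\otimes\id$ becomes a $\G$-equivariant, $C(\G)$-linear endomorphism of $\Hsp_{\pi}\otimes C(\G)$, which is necessarily of the form $T\otimes\id$ with $T\in\mathrm{End}(\Hsp_{\pi})$ (the coaction-fixed elements of $C(\G)$ being scalars), and the residual compatibility with $\pi_{\mathbb{H}}$ forces $T\in\Mor_{\mathbb{H}}(\pi,\pi')$. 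The second route computes multiplicities directly: Frobenius reciprocity identifies $\Mor(\rho,\alpha_{\mathscr{E}_{\pi}})\cong\Mor_{\mathbb{H}}(\rho|_{\mathbb{H}},\pi)$ for each irreducible $\G$-representation $\rho$, exhibiting $\mathscr{E}_{\pi}$ as $\Ind_{\mathbb{H}}^{\G}\pi$ and recasting the problem as the full faithfulness of induction.

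In either approach the true input is the faithful (co)flatness of $C(\G)$ over the coideal subalgebra $C(\mathbb{H}\backslash\G)$: this is what guarantees that an equivariant endomorphism cannot escape the image of $\Phi$, and it is where the quotient-type hypothesis is really used. I would therefore isolate that flatness statement first (for $\mathscr{O}(\G)$ over $\mathscr{O}(\mathbb{H}\backslash\G)$, where cosemisimplicity makes it tractable), then deduce the Galois isomorphism, and only afterwards run the short formal arguments of the preceding two paragraphs; the remaining verifications (well-definedness, $*$-compatibility, surjectivity of $\mathrm{ev}$) are routine.
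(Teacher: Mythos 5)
Your plan is, in substance, the paper's own proof: the paper establishes exactly the bijectivity of your $\Phi$, by showing that for an \emph{arbitrary} $\mathbb{H}$-representation $\pi$ every $\G$-equivariant compact operator on $\Hsp_{\pi}\underset{\mathbb{H}}{\square}C(\G)$ is of the form $T'\otimes \id_{\G}$ with $T'\in\mathrm{End}(\pi)$ (applied to $\pi\oplus\pi'$ this yields both irreducibility and the classification), and your Route 1 — extend to $\Hsp_{\pi}\otimes C(\G)$, use that the $\Delta$-invariant elements of $C(\G)$ are scalar to get $T'\otimes 1_{\G}$, then use the residual $\mathbb{H}$-compatibility to see $T'$ is an $\mathbb{H}$-intertwiner — is step for step the paper's argument.

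The gap is that the one statement carrying all the weight is never proven: your Hopf--Galois/faithful-coflatness isomorphism, equivalently the claim that $\Hsp_{\pi}\underset{\mathbb{H}}{\square}C(\G)$ generates $\Hsp_{\pi}\otimes C(\G)$ densely as a right $C(\G)$-module. You defer it as ``tractable by cosemisimplicity'', but cosemisimplicity alone does not give it; the essential input is the \emph{surjectivity} of $\pi_{\mathbb{H}}\colon\mathscr{O}(\G)\to\mathscr{O}(\mathbb{H})$ (the quotient-type hypothesis) together with invariance of $\varphi_{\mathbb{H}}$. The paper supplies this by one explicit computation: for $\xi\in\Hsp_{\pi}$ and $g\in\mathscr{O}(\G)$, the averaged element $\varphi_{\mathbb{H}}\bigl(\xi_{(1)}\pi_{\mathbb{H}}(S(g_{(1)}))\bigr)\,\xi_{(0)}\otimes g_{(2)}$ lies in $\Hsp_{\pi}\underset{\mathbb{H}}{\square}C(\G)$; right-multiplying such elements by $\mathscr{O}(\G)$ and using surjectivity of $\pi_{\mathbb{H}}$ (and bijectivity of the antipode), one obtains all elements $\varphi_{\mathbb{H}}(\xi_{(1)}h)\,\xi_{(0)}\otimes k$ with $h\in\mathscr{O}(\mathbb{H})$, $k\in\mathscr{O}(\G)$, which span $\Hsp_{\pi}\aotimes\mathscr{O}(\G)$; in particular the first legs of the cotensor product span $\Hsp_{\pi}$. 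Note that this last fact is also precisely what your injectivity step requires: the surjectivity of $\mathrm{ev}=\id\otimes\varepsilon$ onto $\Hsp_{\pi}$ rests on the same unproven statement (if $\pi_{\mathbb{H}}$ were not surjective, the cotensor product could even vanish), so as written \emph{both} the injectivity and the surjectivity of $\Phi$ hang on the missing computation. Once that single elementary lemma is inserted, the rest of your argument goes through and coincides with the paper's.
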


\begin{proof} It suffices to show that, for $\pi$ a general $\mathbb{H}$-representation, the only $\G$-equivariant compact operators on $\Hsp_{\pi} \underset{\mathbb{H}}{\square} C(\G)$ are of the form \[ \xi \mapsto (T'\otimes\id_{\G}) \xi\] for $T'\in \mathrm{End}(\pi)$.  

Now, such a $\G$-equivariant compact operator can be represented as left multiplication with an element \[T \in B(\Hsp_{\pi})\otimes C(\G).\]

We claim that \begin{equation}\label{EqEqT}(\id_{\pi}\otimes \Delta)T  = T\otimes 1_{\G}.\end{equation} For this, one first verifies by a small computation that elements of the form \begin{equation}\label{EqFormInv}\varphi_{\mathbb{H}}(\xi_{(1)}\pi_{\mathbb{H}}(S(g_{(1)}))) \xi_{(0)}\otimes g_{(2)}\end{equation} are in $\Hsp_{\pi}\underset{\mathbb{H}}{\square} C(\G)$ for any $\xi\in \Hsp_{\pi}$ and $g\in \mathscr{O}(\G)$. As $\mathscr{O}(\G) \rightarrow \mathscr{O}(\mathbb{H})$ is surjective, it follows that the right $C(\G)$-module spanned by $\Hsp_{\pi}\underset{\mathbb{H}}{\square} C(\G)$ is dense in $\Hsp_{\pi}\otimes C(\G)$. As then \eqref{EqEqT} holds when interpreted as left multiplication operators on $\Hsp_{\pi}\underset{\mathbb{H}}{\square} C(\G)$, it holds as well as left multiplication operators on $\Hsp_{\pi}\otimes C(\G)$, implying \eqref{EqEqT}. 
 
Applying $\varphi_{\G}$ to the last leg of \eqref{EqEqT}, we find that \[T = T'\otimes 1_{\G} \in B(\Hsp_{\pi})\otimes 1_{\G}.\] 

Note now that the linear span of \begin{equation}\label{EqSpanFirst}\{(\id_{\pi}\otimes \omega)x \mid x\in \Hsp_{\pi}\underset{\mathbb{H}}{\square} C(\G),\omega\in C(\G)^*\}\end{equation} determines a subcomodule of $\Hsp_{\pi}$. By irreducibility of $\Hsp_{\pi}$, it must be either zero or $\Hsp_{\pi}$. However, since $\pi_{\mathbb{H}}: C(\G) \rightarrow C(\mathbb{H})$ is surjective, the elements of the form \eqref{EqFormInv} can not all be zero, showing that the linear span of \eqref{EqSpanFirst} is equal to $\Hsp_{\pi}$. 

Take now $\sum_i e_i \otimes f_i \in \Hsp_{\pi}\underset{\mathbb{H}}{\square} C(\G)$. As $T$ preserves $\Hsp_{\pi} \underset{\mathbb{H}}{\square} C(\G)$, we find \[\sum_i \delta_{\pi}(T'e_i) \otimes f_i = \sum_i T'e_i \otimes (\pi_{\mathbb{H}}\otimes \id_{\G})\Delta(f_i) = \sum_i (T'\otimes \id_{\G})\delta_{\pi}(e_i)\otimes f_i.\] As the first leg of $\Hsp_{\pi}\underset{\mathbb{H}}{\square} C(\G)$ spans $\Hsp_{\pi}$, we deduce that $T'$ is a selfintertwiner of $\pi$.
\end{proof}

We will show later, see Corollary \ref{CorEqModQuot} that \emph{all} irreducible $\G$-equivariant Hilbert $C(\mathbb{H}\backslash \G)$-modules arise in this way.

Let us now return to general homogeneous actions. Our next goal is to define a proper type of weak isomorphism between them. We first recall the following definition.

\begin{Def} We call two C$^*$-algebras $C_0(\X)$ and $C_0(\Y)$ \emph{strongly Morita equivalent} if there exists a full right Hilbert $C_0(\X)$-module $\mathscr{E}$ together with an isomorphism $C_0(\Y) \cong \mathcal{K}(\mathscr{E})$. 
\end{Def}

Recall that a Hilbert $C_0(\X)$-module $\mathscr{E}$ is called full if $\lbrack \langle \mathscr{E},\mathscr{E}\rangle_{\X}\rbrack = C_0(\X)$. 

The above definition can be upgraded to $\G$-spaces. 

\begin{Def} We say that two actions $\X\overset{\alpha_{\X}}{\curvearrowleft} \G$ and $\mathbb{Y}\overset{\alpha_{\Y}}{\curvearrowleft} \G$ are \emph{$\G$-equivariantly (strongly) Morita equivalent} if there exists a $\G$-equivariant full right Hilbert $C_0(\X)$-module $\mathscr{E}$ together with an isomorphism $C_0(\Y) \cong \mathcal{K}(\mathscr{E})$ such that \[\alpha_{\Y}(y)\alpha_{\mathscr{E}}(\xi) = \alpha_{\mathscr{E}}(y\xi),\quad \forall y\in C_0(\Y),\xi\in \mathscr{E}.\]  
\end{Def}

Here we have surpressed the notation for the identification of $C_0(\Y)$ with $\mathcal{K}(\mathscr{E})$. We call $\mathscr{E}$ as above a \emph{$\G$-equivariant Hilbert equivalence bimodule} between $C_0(\X)$ and $C_0(\Y)$.  

\begin{Prop} Let $\X\overset{\alpha}{\curvearrowleft}\G$ be homogeneous. Let $\mathscr{E}$ be an irreducible $\G$-equivariant Hilbert $C_0(\X)$-module. Then writing $C(\Y) := \mathcal{K}(\mathscr{E})$, there exists a unique homogeneous action $\Y \overset{\alpha}{\curvearrowleft} \G$ such that $\mathscr{E}$ is a $\G$-equivariant Hilbert equivalence bimodule between $C_0(\X)$ and $C_0(\Y)$.
\end{Prop}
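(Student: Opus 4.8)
The plan is to transport the coaction $\alpha_{\mathscr{E}}$ on $\mathscr{E}$ to a coaction on $C(\Y)=\mathcal{K}(\mathscr{E})$ by ``conjugation'', using that $\alpha_{\mathscr{E}}$ is isometric with range dense in the sense of the Podle\'{s} condition. Writing $\theta_{\xi,\eta}$ for the rank-one operator $\zeta\mapsto \xi\langle \eta,\zeta\rangle_{\X}$, which span a dense $^*$-subalgebra of $\mathcal{K}(\mathscr{E})$, I set
\[\alpha_{\Y}(\theta_{\xi,\eta})=\theta_{\alpha_{\mathscr{E}}(\xi),\alpha_{\mathscr{E}}(\eta)}\in \mathcal{K}(\mathscr{E}\otimes C(\G))\cong \mathcal{K}(\mathscr{E})\otimes C(\G),\]
the last identification holding because $C(\G)$ is unital. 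First I would record the defining relation $\alpha_{\Y}(T)\alpha_{\mathscr{E}}(\zeta)=\alpha_{\mathscr{E}}(T\zeta)$ for $T\in\mathcal{K}(\mathscr{E})$, $\zeta\in\mathscr{E}$: this is immediate on rank-ones from the compatibility condition $\langle\alpha_{\mathscr{E}}(\eta),\alpha_{\mathscr{E}}(\zeta)\rangle_{\X\times\G}=\alpha_{\mathscr{E}}(\langle\eta,\zeta\rangle_{\X})$. Since $\lbrack\alpha_{\mathscr{E}}(\mathscr{E})(1_{\X}\otimes C(\G))\rbrack=\mathscr{E}\otimes C(\G)$, this relation shows that a finite sum $\sum_i\theta_{\alpha_{\mathscr{E}}(\xi_i),\alpha_{\mathscr{E}}(\eta_i)}$ is the zero operator whenever $\sum_i\theta_{\xi_i,\eta_i}=0$, so $\alpha_{\Y}$ is well defined; being a $^*$-homomorphism on a dense $^*$-subalgebra it extends to $C(\Y)$.

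Next I would check that $\alpha_{\Y}$ is an action. Coassociativity $(\alpha_{\Y}\otimes\id_{\G})\alpha_{\Y}=(\id\otimes\Delta)\alpha_{\Y}$ follows by applying the rank-one formula twice and invoking coassociativity of $\alpha_{\mathscr{E}}$. For the Podle\'{s} condition I would pass to the $\G$-finite core: with $\mathscr{E}_{\mathrm{alg}}=\bigoplus_{\pi}\mathscr{E}_{\pi}$ (dense in $\mathscr{E}$ by Theorem \ref{TheoEqHilb}), the span $\mathscr{A}$ of the $\theta_{\xi,\eta}$ with $\xi,\eta\in\mathscr{E}_{\mathrm{alg}}$ is a dense $^*$-subalgebra of $\mathcal{K}(\mathscr{E})$, and since $\alpha_{\mathscr{E}}(\mathscr{E}_{\pi})\subseteq \mathscr{E}_{\pi}\aotimes C(\G)_{\pi}$ one gets $\alpha_{\Y}(\mathscr{A})\subseteq \mathscr{A}\aotimes\mathscr{O}(\G)$, the comodule being counital because $\alpha_{\mathscr{E}}$ is counital on each $\mathscr{E}_{\pi}$. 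Hence $\alpha_{\Y}$ restricts to a counital Hopf $^*$-algebraic coaction on $\mathscr{A}$, and the antipode computation from Lemma \ref{LemAlgtoAn} yields the density condition.

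It remains to see that $\alpha_{\Y}$ is homogeneous and that $\mathscr{E}$ is an equivalence bimodule. The fixed point algebra $C(\Y/\G)=\{T\mid \alpha_{\Y}(T)=T\otimes 1_{\G}\}$ is, by the defining relation, exactly $\mathcal{K}_{\G}(\mathscr{E})$; by irreducibility of $\mathscr{E}$ (cf.\ the finite-dimensionality argument in Theorem \ref{TheoEqHilb}) this algebra has only the trivial projections and so equals $\C\id_{\mathscr{E}}$, giving homogeneity. The compatibility $\alpha_{\Y}(y)\alpha_{\mathscr{E}}(\xi)=\alpha_{\mathscr{E}}(y\xi)$ is the defining relation, and $C(\Y)\cong\mathcal{K}(\mathscr{E})$ holds by construction, so only fullness of $\mathscr{E}$ needs checking. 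Here I would use ergodicity: $I=\lbrack\langle\mathscr{E},\mathscr{E}\rangle_{\X}\rbrack$ is a $\G$-invariant closed two-sided ideal, which meets $\mathscr{O}_{\G}(\X)$ in a $\G$-invariant ideal containing a nonzero positive element $a$; then $\varphi_{\X}(a)1_{\X}=E_{\X/\G}(a)$ again lies in this ideal, and faithfulness of $\varphi_{\X}$ on $\mathscr{O}_{\G}(\X)$ forces $1_{\X}\in I$, i.e.\ $I=C(\X)$.

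Finally, uniqueness is forced: any action making $\mathscr{E}$ an equivariant equivalence bimodule must satisfy $\alpha_{\Y}'(y)\alpha_{\mathscr{E}}(\xi)=\alpha_{\mathscr{E}}(y\xi)$, and this determines $\alpha_{\Y}'(y)$ on the dense set $\alpha_{\mathscr{E}}(\mathscr{E})(1\otimes C(\G))$. The main obstacle I anticipate is not any single step but the bookkeeping required to verify cleanly that $\alpha_{\Y}$ lands in $\mathcal{K}(\mathscr{E})\otimes C(\G)$ and satisfies the Podle\'{s} condition, together with the ergodicity input needed for fullness; homogeneity is essentially handed to us by $\mathcal{K}_{\G}(\mathscr{E})=\C\id_{\mathscr{E}}$.
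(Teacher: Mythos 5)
Your proposal is correct and follows essentially the same route as the paper: you define the coaction on $\mathcal{K}(\mathscr{E})$ through the compatibility relation $\alpha_{\Y}(T)\alpha_{\mathscr{E}}(\zeta)=\alpha_{\mathscr{E}}(T\zeta)$ (the paper phrases this via rank-one operators $\alpha(\mathscr{E})\alpha(\mathscr{E})^*$ and finitely generated projectivity), verify the Podle\'{s} condition by the same antipode trick on the algebraic core, obtain homogeneity from $\mathcal{K}_{\G}(\mathscr{E})=\C\id_{\mathscr{E}}$, and prove fullness by slicing the $\G$-invariant ideal $\lbrack\langle\mathscr{E},\mathscr{E}\rangle_{\X}\rbrack$ with $\varphi_{\G}$ and using faithfulness, exactly as in the paper.
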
 
\begin{proof} As $\mathscr{E}$ is irreducible, we know already that $\mathscr{E}$ is a finitely generated projective $C(\X)$-module. Hence we can define a unique coaction \[\alpha: \mathcal{K}(\mathscr{E})\rightarrow \mathcal{K}(\mathscr{E})\otimes C(\G)\] such that $\alpha(T\xi) = \alpha(T)\alpha(\xi)$ for all $\xi\in \mathscr{E}$ and $T\in \mathcal{K}(\mathscr{E})$. 

Clearly $\alpha$ satisfies the coaction property. To see that it satisfies the Podle\'{s} condition, note that for $\xi\in \mathscr{E}_{\pi}$ for some irreducible $\G$-representation $\pi$, we have \[\xi\otimes 1_{\G} = \xi_{(0)}\otimes S^{-1}(\xi_{(2)})\xi_{(1)}.\] Hence $\lbrack (\id_{\mathscr{E}}\otimes C(\G))\alpha(\mathscr{E})\rbrack = \mathscr{E}\otimes C(\G)$, and so \[\lbrack (\id_{\mathscr{E}}\otimes C(\G))\alpha(\mathcal{K}(\mathscr{E}))\rbrack = \lbrack (\id_{\mathscr{E}}\otimes C(\G))\alpha(\mathscr{E})\alpha(\mathscr{E})^*\rbrack = \mathcal{K}(\mathscr{E})\otimes C(\G).\] By irreducibility of $\mathscr{E}$, it follows also immediately that, writing $C(\Y)  = \mathcal{K}(\mathscr{E})$, the action $\Y \overset{\alpha}{\curvearrowleft} \G$ is homogeneous. 

The only thing which remains to be shown is that $\mathscr{E}$ is full as a right Hilbert $C(\X)$-module. However, write $I = \langle \mathscr{E},\mathscr{E}\rangle_{\X}$, which is a $\G$-invariant closed 2-sided ideal in $C(\X)$. Then for a non-zero $x\in I_{\pi}$ for some irreducible $\G$-representation $\pi$, also $(\id_{\X}\otimes \varphi_{\G})\alpha(x^*x) \in I$ non-zero. As the latter element is a non-zero multiple of  $1_{\X}$ by homogeneity of $\X\curvearrowleft \G$, we find $I = C(\X)$. 
\end{proof}

For example, in \cite{DC12} it was shown that the family of Podle\'{s} spheres is closed under $SU_q(2)$-equivariant Morita equivalence, and that two Podle\'{s} spheres $S_{q,x}^2$ and $S_{q,y}^2$ are $SU_q(2)$-equivariantly Morita equivalent if and only if $x-y \in \Z$ or $x+y\in \Z$. 

To end, let us discuss \emph{fusion rules} for $\G$-equivariant Hilbert modules. These were introduced for ergodic compact Hausdorff group actions on von Neumann algebras in \cite{Was89}, and extended to compact quantum groups in \cite{Tom08}. 

\begin{Def} Let $\X\overset{\alpha}{\curvearrowleft} \G$ be a homogeneous action. Let $\{\mathscr{E}_i\} _{i\in I}$ be a maximal collection of irreducible $\G$-equivariant Hilbert $C(\X)$-modules. For $\pi$ a $\G$-representation, we call the matrix $M_{\alpha}(\pi)$ such that \[\Hsp_{\pi}\otimes \mathscr{E}_i \cong \oplus_{i,j} M_{\alpha}(\pi)_{i,j} \mathscr{E}_j\] the \emph{fusion matrix} for $\pi$.
\end{Def} 

Note that since the isotypical components of $\Hsp_{\pi}\otimes \mathscr{E}_i$ are finite-dimensional by Theorem \ref{TheoEqHilb}, we have $M_{\alpha}(\pi)_{i,j}=0$ for all but a finite number of $j$ when $i$ is fixed. Symmetrically, we have that $M(\pi)_{i,j}=0$ for all but a finite number of $i$ when $j$ is fixed, by the following proposition.

\begin{Prop} If $\mathscr{E}$ and $\mathscr{F}$ are irreducible $\G$-equivariant Hilbert $C(\X)$-modules, and $\pi$ an irreducible $\G$-representation, then $\mathscr{E}$ appears as an equivariant submodule of $\Hsp_{\pi}\otimes \mathscr{F}$ if and only if $\mathscr{F}$ appears as an equivariant submodule of $\Hsp_{\bar{\pi}}\otimes \mathscr{E}$. 
\end{Prop}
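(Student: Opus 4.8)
The plan is to translate the two "appears as an equivariant submodule" conditions into the nonvanishing of equivariant morphism spaces, and then to establish a Frobenius-type adjunction through the conjugate representation $\bar{\pi}$. Throughout, for two $\G$-equivariant Hilbert $C(\X)$-modules $\mathscr{M},\mathscr{N}$ write $\Mor_{\G}(\mathscr{M},\mathscr{N})$ for the space of adjointable, $\G$-equivariant right $C(\X)$-module maps $\mathscr{M}\to\mathscr{N}$. Recall first that $\Hsp_{\pi}\otimes\mathscr{F}$ is itself a $\G$-equivariant Hilbert $C(\X)$-module, with coaction $\xi\otimes m\mapsto\delta_{\pi}(\xi)_{13}\,\alpha_{\mathscr{F}}(m)_{23}$, exactly as in the lemma preceding Theorem \ref{TheoTensEq} (the case $\mathscr{F}=C(\X)$). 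By Theorem \ref{TheoEqHilb} every equivariant module is a direct sum of irreducibles, and for an irreducible $\mathscr{E}$ one has $\mathcal{K}_{\G}(\mathscr{E})=\C\,\id_{\mathscr{E}}$; hence by Schur's lemma the multiplicity of $\mathscr{E}$ in a module $\mathscr{M}$ equals $\dim\Mor_{\G}(\mathscr{E},\mathscr{M})$, and appearing as a submodule is equivalent to $\Mor_{\G}(\mathscr{E},\mathscr{M})\neq 0$. It therefore suffices to prove
\[\dim\Mor_{\G}(\mathscr{E},\Hsp_{\pi}\otimes\mathscr{F})=\dim\Mor_{\G}(\mathscr{F},\Hsp_{\bar{\pi}}\otimes\mathscr{E}).\]

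Next I would set up the adjunction $\Mor_{\G}(\mathscr{E},\Hsp_{\pi}\otimes\mathscr{F})\cong\Mor_{\G}(\Hsp_{\bar{\pi}}\otimes\mathscr{E},\mathscr{F})$, expressing that $\Hsp_{\bar{\pi}}\otimes(-)$ is left adjoint to $\Hsp_{\pi}\otimes(-)$. Using the conjugate representation $\bar{\pi}$ from Theorem \ref{TheoWorChar} together with the standard solutions $R_{\pi}\in\Mor(\pi_{\epsilon},\bar{\pi}\otimes\pi)$ and $\bar{R}_{\pi}\in\Mor(\pi_{\epsilon},\pi\otimes\bar{\pi})$ of the conjugate equations, I would send $T\in\Mor_{\G}(\mathscr{E},\Hsp_{\pi}\otimes\mathscr{F})$ to
\[\widetilde{T}=(R_{\pi}^{*}\otimes\id_{\mathscr{F}})(\id_{\Hsp_{\bar{\pi}}}\otimes T)\colon\Hsp_{\bar{\pi}}\otimes\mathscr{E}\longrightarrow\mathscr{F},\]
with inverse $S\mapsto(\id_{\Hsp_{\pi}}\otimes S)(\bar{R}_{\pi}\otimes\id_{\mathscr{E}})$; here $R_{\pi},\bar{R}_{\pi}$ are read as equivariant, adjointable module maps after tensoring with the relevant identities. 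The two composites collapse, by the zig-zag identities $(R_{\pi}^{*}\otimes\id_{\Hsp_{\bar{\pi}}})(\id_{\Hsp_{\bar{\pi}}}\otimes\bar{R}_{\pi})=\id_{\Hsp_{\bar{\pi}}}$ and $(\id_{\Hsp_{\pi}}\otimes R_{\pi}^{*})(\bar{R}_{\pi}\otimes\id_{\Hsp_{\pi}})=\id_{\Hsp_{\pi}}$, to the respective identity maps, so the correspondence is a linear isomorphism. Equivariance and $C(\X)$-linearity of $\widetilde{T}$ follow from those of $T$ and from $R_{\pi},\bar{R}_{\pi}$ being representation intertwiners, while adjointability is automatic because all modules in sight are finitely generated projective by Theorem \ref{TheoEqHilb}.

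Finally I would pass from $\Mor_{\G}(\Hsp_{\bar{\pi}}\otimes\mathscr{E},\mathscr{F})$ to $\Mor_{\G}(\mathscr{F},\Hsp_{\bar{\pi}}\otimes\mathscr{E})$ via the Hilbert-module adjoint $T\mapsto T^{*}$. This is well defined since an equivariant adjointable map between finitely generated projective equivariant modules has an equivariant adjoint — precisely the computation already carried out in the proof of Theorem \ref{TheoEqHilb} — and it is a conjugate-linear bijection, hence preserves dimensions. Chaining the three identifications gives the displayed equality, and the statement is symmetric in $\mathscr{E},\mathscr{F}$ because $\overline{\bar{\pi}}\cong\pi$, which closes the proof.

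The main obstacle will be the adjunction step: verifying that the duality morphisms of $\mathrm{Rep}(\G)$ genuinely promote to equivariant, adjointable $C(\X)$-module maps after tensoring with identities, and that the contraction $\widetilde{T}$ is equivariant and $C(\X)$-linear rather than merely $\C$-linear. The delicate bookkeeping is getting the leg orderings and the $Q_{\pi}$-normalisations of $R_{\pi},\bar{R}_{\pi}$ to match so that the composites are \emph{exactly} the identities; once this is in place, the first and last steps are formal.
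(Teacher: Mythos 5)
Your proposal is correct and is essentially the paper's own argument in categorical dress: the paper takes an isometric $T\colon\mathscr{F}\to\Hsp_{\bar{\pi}}\otimes\mathscr{E}$, passes to the equivariant adjoint $T^{*}$ (your step 3), and composes with $\xi\mapsto\sum_i e_i\otimes e_i^{*}\otimes\xi$ — which is exactly your $\bar{R}_{\pi}\otimes\id_{\mathscr{E}}$ — to get a non-zero equivariant map $\mathscr{E}\to\Hsp_{\pi}\otimes\mathscr{F}$, then invokes irreducibility (your Schur-type step) to upgrade it to an isometric embedding. The only difference is presentational: you set up the full Frobenius adjunction with both zig-zag identities and a dimension equality, whereas the paper proves just the one non-vanishing implication and gets the converse by symmetry, so no gap either way.
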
 
\begin{proof} Let $T$ be a $\G$-equivariant isometric map $\mathscr{F}\rightarrow \Hsp_{\bar{\pi}}\otimes \mathscr{E}$. Since both target and range are finitely generated projective $C(\X)$-modules, $T$ has an adjoint $T^*$ which is again $\G$-equivariant. It is then easy to check that we have, with $\{e_i\}$ an orthonormal basis of $\Hsp_{\pi}$, a  $\G$-equivariant right $C(\X)$-linear map \[\mathscr{E}\rightarrow \Hsp_{\pi}\otimes \Hsp_{\bar{\pi}}\otimes \mathscr{E},\quad \xi\mapsto \sum_i e_i\otimes e_i^*\otimes \xi.\] But then also \[\mathscr{E}\rightarrow \Hsp_{\pi}\otimes \mathscr{F},\quad \xi\mapsto \sum_i e_i \otimes T^*(e_i^*\otimes \xi)\] a non-zero $\G$-equivariant right $C(\X)$-linear map. By irreducibility of $\mathscr{E}$, the latter map must be a scalar multiple of a $\G$-equivariant isometric imbedding. 
\end{proof}

By the above finiteness property, we can multiply the matrices $M(\pi)$, and it is then easy to see that \[M(\pi)M(\pi') = M(\pi\otimes \pi'),\quad M(\pi\oplus \pi') = M(\pi)+M(\pi').\] 

As an example, let us consider the fusion rules for homogeneous actions of quotient type.

\begin{Prop}\label{PropFusSub} Let $\mathbb{H}\subseteq \G$ be an inclusion of compact quantum groups. If $\pi$ is a $\G$-representation, and $\pi'$ an $\mathbb{H}$-representation, then \[\Hsp_{\pi}\otimes (\Hsp_{\pi'} \underset{\mathbb{H}}{\square} C(\G)) \underset{\theta}{\cong} (\Hsp_{\pi_{\mid \mathbb{H}}}\otimes \Hsp_{\pi'})\underset{\mathbb{H}}{\square} C(\G).\] 
\end{Prop}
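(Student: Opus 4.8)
The plan is to exhibit the isomorphism $\theta$ by an explicit formula and then check, one property at a time, that it is an isomorphism of $\G$-equivariant Hilbert $C(\mathbb{H}\backslash\G)$-modules. Classically the tensor identity $\Hsp_\pi\otimes\Ind_{\mathbb{H}}^{\G}(\Hsp_{\pi'})\cong\Ind_{\mathbb{H}}^{\G}(\Hsp_{\pi\mid\mathbb{H}}\otimes\Hsp_{\pi'})$ is realised by the map sending a section $g\mapsto x(g)$ to $g\mapsto \pi(g)\zeta\otimes x(g)$. Mimicking this, for $\zeta\in\Hsp_{\pi}$ with $\delta_{\pi}(\zeta)=\zeta_{(0)}\otimes\zeta_{(1)}$ and for $x=\sum_i\xi_i\otimes g_i\in\Hsp_{\pi'}\underset{\mathbb{H}}{\square}C(\G)\subseteq\Hsp_{\pi'}\otimes C(\G)$ I would set
\[\theta(\zeta\otimes x)=\sum_i\zeta_{(0)}\otimes\xi_i\otimes\zeta_{(1)}g_i\in(\Hsp_{\pi}\otimes\Hsp_{\pi'})\otimes C(\G).\]
All computations are carried out on the algebraic level (with $g_i\in\mathscr{O}(\G)$, where Sweedler notation and $S$ make sense), and then extended to the Hilbert-module completions once isometry is in hand.

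The first and most delicate step is to verify that $\theta(\zeta\otimes x)$ really lies in $(\Hsp_{\pi\mid\mathbb{H}}\otimes\Hsp_{\pi'})\underset{\mathbb{H}}{\square}C(\G)$, that is, that it satisfies the defining intertwining relation for the tensor $\mathbb{H}$-comodule $\delta^{\mathbb{H}}_{\pi\otimes\pi'}(\xi\otimes\xi')=\xi_{(0)}\otimes\xi'_{(0)}\otimes\pi_{\mathbb{H}}(\xi_{(1)})\xi'_{(1)}$ on $\Hsp_{\pi}\otimes\Hsp_{\pi'}$. Writing this relation out and applying $\delta^{\mathbb{H}}_{\pi\otimes\pi'}$ to the first two legs on one side and $(\pi_{\mathbb{H}}\otimes\id_{\G})\Delta$ to the $C(\G)$-leg on the other, the two sides are matched using exactly three ingredients: coassociativity of $\delta_{\pi}$ to split $\Delta(\zeta_{(1)})=\zeta_{(1)}\otimes\zeta_{(2)}$, multiplicativity of the quotient map $\pi_{\mathbb{H}}$, and the defining cotensor relation $\delta_{\pi'}(\xi_i)\otimes g_i=\xi_i\otimes(\pi_{\mathbb{H}}\otimes\id_{\G})\Delta(g_i)$ satisfied by $x$. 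The key substitution is to use this last identity to convert the $C(\mathbb{H})$-leg produced by $\delta_{\pi'}$ into a $\pi_{\mathbb{H}}$-image of part of the $C(\G)$-leg. I expect this leg-chasing identity to be the main obstacle, though it is bookkeeping rather than a genuine difficulty.

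The remaining properties are routine. Right $C(\mathbb{H}\backslash\G)$-linearity is immediate, since the module action is by right multiplication on the $C(\G)$-leg and $\theta(\zeta\otimes(x\cdot b))=\theta(\zeta\otimes x)\cdot b$. For the module-valued inner product $\langle\sum_i\xi_i\otimes g_i,\sum_j\eta_j\otimes h_j\rangle=\sum_{i,j}\langle\xi_i,\eta_j\rangle g_i^*h_j$, a direct computation of $\langle\theta(\zeta\otimes x),\theta(\zeta'\otimes x')\rangle$ collapses the $\Hsp_{\pi}$-contribution by the unitarity identity $\langle\zeta_{(0)},\zeta'_{(0)}\rangle\,\zeta_{(1)}^*\zeta'_{(1)}=\langle\zeta,\zeta'\rangle1_{\G}$, giving $\langle\zeta,\zeta'\rangle\langle x,x'\rangle$; hence $\theta$ is isometric, so injective, and extends to the completions. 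Equivariance against the diagonal coactions (on $\Hsp_{\pi}\otimes\mathscr{E}$ the coaction combines $\delta_{\pi}$ with $(\id\otimes\Delta)$ on $\mathscr{E}$, as in the lemma constructing $\Hsp_{\pi}\otimes C(\X)$ as an equivariant module) follows from one more application of coassociativity of $\delta_{\pi}$. Finally, for surjectivity I would write down the inverse
\[\theta^{-1}(\zeta\otimes\xi'\otimes g)=\zeta_{(0)}\otimes\bigl(\xi'\otimes S(\zeta_{(1)})g\bigr),\]
using the antipode to untwist the $\Hsp_{\pi}$-leg, check by the dual leg-computation that it lands in $\Hsp_{\pi}\otimes(\Hsp_{\pi'}\underset{\mathbb{H}}{\square}C(\G))$, and confirm $\theta\theta^{-1}=\id$ and $\theta^{-1}\theta=\id$ via the antipode--counit identities $\zeta_{(1)}S(\zeta_{(2)})=\varepsilon(\zeta_{(1)})1_{\G}=S(\zeta_{(1)})\zeta_{(2)}$. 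This establishes that $\theta$ is a unitary isomorphism of $\G$-equivariant Hilbert $C(\mathbb{H}\backslash\G)$-modules.
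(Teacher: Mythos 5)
Your proposal is correct and is exactly the paper's proof: the paper defines the same map $\theta(\xi\otimes\sum_i\eta_i\otimes g_i)=\sum_i\xi_{(0)}\otimes\eta_i\otimes\xi_{(1)}g_i$ and leaves the verification to the reader, which is precisely the leg-chasing you carry out (cotensor relation via coassociativity and the defining relation of $\square$, isometry via $\langle\zeta_{(0)},\zeta'_{(0)}\rangle\zeta_{(1)}^*\zeta'_{(1)}=\langle\zeta,\zeta'\rangle 1_{\G}$, and invertibility via the antipode). Your explicit inverse $\zeta\otimes\xi'\otimes g\mapsto\zeta_{(0)}\otimes\xi'\otimes S(\zeta_{(1)})g$ and the density argument for passing from $\mathscr{O}(\G)$-entries to the completion both check out.
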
 

Here $\delta_{\pi_{\mid \mathbb{H}}} = ( \id_{\Hsp_{\pi}}\otimes \pi_{\mathbb{H}})\delta_{\pi}$, with $\pi_{\mathbb{H}}:C(\G) \rightarrow C(\mathbb{H})$ the quotient map. Then $\pi_{\mid \mathbb{H}}$ is a representation of $\mathbb{H}$, which we call the restriction of $\pi$ to $\mathbb{H}$. 

\begin{proof} We leave it to the reader to check that \[\theta(\xi\otimes (\sum_i \eta_i\otimes g_i)) \mapsto \sum_i \xi_{(0)}\otimes \eta_i \otimes \xi_{(1)}g_i\] is the sought-after $\G$-equivariant isometric isomorphism. 
\end{proof} 

\begin{Cor}\label{CorEqModQuot} Let $\mathbb{H}\subseteq \G$ be an inclusion of compact quantum groups. Then any irreducible $\G$-equivariant Hilbert $C(\mathbb{H}\backslash \G)$-module is of the form $\Hsp_{\pi}\underset{\mathbb{H}}{\square}C(\G)$ for some irreducible  $\mathbb{H}$-representation $\pi$. 
\end{Cor}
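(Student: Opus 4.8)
The plan is to show that for this particular homogeneous action $\X=\mathbb{H}\backslash\G$, the ambient module appearing in Theorem \ref{TheoTensEq} is already a finite direct sum of modules of the form $\Hsp_\rho\underset{\mathbb{H}}{\square}C(\G)$, after which the irreducible module $\mathscr{E}$ is identified with one of the summands. The engine is the fusion rule of Proposition \ref{PropFusSub}, applied with the trivial $\mathbb{H}$-representation in the second slot.

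First I would record the base case. Taking the trivial $\mathbb{H}$-representation $\pi_\epsilon$, with $\Hsp_{\pi_\epsilon}=\C$ and $\delta_{\pi_\epsilon}(\lambda)=\lambda\otimes 1_\G$, the defining condition of the $\square$-construction collapses to the coideal condition $(\pi_{\mathbb{H}}\otimes\id_\G)\Delta(x)=1_{\mathbb{H}}\otimes x$, so that
\[\Hsp_{\pi_\epsilon}\underset{\mathbb{H}}{\square}C(\G)=C(\mathbb{H}\backslash\G),\]
viewed as a $\G$-equivariant Hilbert module over itself. Feeding $\pi'=\pi_\epsilon$ into Proposition \ref{PropFusSub} then yields, for any irreducible $\G$-representation $\pi$, a $\G$-equivariant isometric isomorphism
\[\Hsp_\pi\otimes C(\mathbb{H}\backslash\G)\;\cong\;\Hsp_{\pi_{\mid\mathbb{H}}}\underset{\mathbb{H}}{\square}C(\G).\]

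Next I would decompose the (finite-dimensional) restricted representation as a direct sum of irreducible $\mathbb{H}$-representations, $\pi_{\mid\mathbb{H}}\cong\bigoplus_k\rho_k$. Because the defining linear condition of the $\square$-construction respects the orthogonal decomposition of the first leg, and because the inner product $\langle\,\cdot\,,\,\cdot\,\rangle_{\mathbb{H}\backslash\G}$ vanishes between components sitting over orthogonal $\Hsp_{\rho_k}$, this gives a $\G$-equivariant orthogonal decomposition
\[\Hsp_\pi\otimes C(\mathbb{H}\backslash\G)\;\cong\;\bigoplus_k\Bigl(\Hsp_{\rho_k}\underset{\mathbb{H}}{\square}C(\G)\Bigr),\]
and by the Lemma preceding this corollary each summand $\Hsp_{\rho_k}\underset{\mathbb{H}}{\square}C(\G)$ is an irreducible $\G$-equivariant Hilbert $C(\mathbb{H}\backslash\G)$-module.

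Finally, given an arbitrary irreducible $\mathscr{E}$, Theorem \ref{TheoTensEq} realizes it as an equivariant Hilbert submodule of $\Hsp_\pi\otimes C(\mathbb{H}\backslash\G)$, hence of the right-hand side above. By Theorem \ref{TheoEqHilb} this submodule is complemented, so composing the inclusion with the (adjointable, $\G$-equivariant) projections onto the summands produces a nonzero $\G$-equivariant right $C(\mathbb{H}\backslash\G)$-linear map $S\colon\mathscr{E}\to\Hsp_{\rho_k}\underset{\mathbb{H}}{\square}C(\G)$ for at least one $k$. Since both modules are irreducible, a Schur argument closes the proof: $S$ is adjointable between finitely generated projective modules with $S^*$ again equivariant, so $S^*S\in\mathcal{K}_\G(\mathscr{E})=\C\,\id_{\mathscr{E}}$ and $SS^*\in\mathcal{K}_\G\bigl(\Hsp_{\rho_k}\underset{\mathbb{H}}{\square}C(\G)\bigr)=\C\,\id$, whence a suitable rescaling of $S$ is a $\G$-equivariant unitary. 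Thus $\mathscr{E}\cong\Hsp_{\rho_k}\underset{\mathbb{H}}{\square}C(\G)$ with $\rho_k$ irreducible. The one step deserving genuine care is confirming that the $\square$-construction is additive in its first argument and that Proposition \ref{PropFusSub} applies verbatim to $\pi'=\pi_\epsilon$; once the ambient module is exhibited as a finite sum of irreducibles of the claimed form, the descent to $\mathscr{E}$ is routine.
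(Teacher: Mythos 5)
Your proof is correct and takes essentially the same route as the paper: apply Proposition \ref{PropFusSub} with the trivial representation to identify $\Hsp_\pi\otimes C(\mathbb{H}\backslash\G)\cong \Hsp_{\pi_{\mid\mathbb{H}}}\underset{\mathbb{H}}{\square}C(\G)$, decompose the restriction into irreducible $\mathbb{H}$-representations, and then descend to the given irreducible module via Theorem \ref{TheoTensEq}. The paper compresses the additivity of the $\square$-construction into the word ``functorial'' and the final Schur-type comparison into ``the corollary follows''; your write-up merely makes those steps explicit, and correctly so.
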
 

\begin{proof} Let $\epsilon$ be the trivial representation of $\G$. Then by definition $\Hsp_{\epsilon}\underset{\mathbb{H}}{\square} C(\G) = C(\mathbb{H}\backslash \G)$. Hence by Proposition \ref{PropFusSub}, we find $\Hsp_{\pi} \otimes C(\mathbb{H}\backslash \G) \cong \Hsp_{\pi_{\mid \mathbb{H}}}\underset{\mathbb{H}}{\square} C(\G)$. Since the correspondence $\pi' \mapsto \Hsp_{\pi'} \underset{\mathbb{H}}{\square} C(\G)$ is functorial, and since any $\G$-equivariant $C(\mathbb{H}\backslash \G)$-module $\mathscr{E}$ appears as a direct summand of some $\Hsp_{\pi} \otimes C(\mathbb{H}\backslash \G)$ by Theorem \ref{TheoTensEq}, the corollary follows.
\end{proof}

It follows that we can index a maximal collection of mutually inequivalent irreducible $\G$-equivariant Hilbert $C(\mathbb{H}\backslash \G)$-modules by a maximal collection $I=\{\pi\}$ of mutually inequivalent irreducible $\mathbb{H}$-representations. The fusion rules $M_{\mathbb{H}\backslash \G}(\pi)_{\pi',\pi''}$ for $C(\mathbb{H}\backslash \G)$ are then determined by \[ \pi_{\mid \mathbb{H}} \otimes \pi' \cong \oplus_{\pi''} M_{\mathbb{H}\backslash \G}(\pi)_{\pi',\pi''} \pi'',\] where $\pi',\pi''\in I$.

\section{Free actions}

In this section, we will consider free actions of compact quantum groups. Recall that an action of a compact Hausdorff group $G$ on a locally compact space $X$ is called \emph{free} if  \[\forall x\in X,\quad \{g\in G\mid xg = x\} = \{e_G\},\] that is, the stabilizer group $G_x$ of any point is trivial.

\begin{Lem} The action $X\overset{\alpha}{\curvearrowleft} G$ is free if and only if \[\lbrack (C_0(X)\otimes 1_G)\alpha(C_0(X))\rbrack = C_0(X)\otimes C(G).\]
\end{Lem}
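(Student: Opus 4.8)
The plan is to reduce everything to a point-separation statement about the commutative C$^*$-algebra $C_0(X\times G)$ and then invoke the locally compact Stone--Weierstrass theorem. First I would use the identification $C_0(X)\otimes C(G)\cong C_0(X\times G)$ (valid since $G$ is compact, exactly as in the proof that classical group actions are Podle\'{s} actions), under which the coaction becomes $\alpha(b)(x,g) = b(x\cdot g)$ and the element $a\otimes 1_G$ becomes the function $(x,g)\mapsto a(x)$. Consequently each generator $(a\otimes 1_G)\alpha(b)$ of the left-hand side is the function $(x,g)\mapsto a(x)\,b(x\cdot g)$, and I set $B = \lbrack (C_0(X)\otimes 1_G)\alpha(C_0(X))\rbrack$. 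Since $\alpha$ is a $^*$-homomorphism and all algebras here are commutative, one checks immediately that $(a_1\otimes 1_G)\alpha(b_1)\cdot(a_2\otimes 1_G)\alpha(b_2) = (a_1a_2\otimes 1_G)\alpha(b_1b_2)$ and that the generators are closed under adjoints, so $B$ is a closed self-adjoint subalgebra of $C_0(X\times G)$.

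Next I would record that $B$ vanishes nowhere: given $(x_0,g_0)$, pick $a$ with $a(x_0)\neq 0$ and $b$ with $b(x_0\cdot g_0)\neq 0$, so that $(a\otimes 1_G)\alpha(b)$ is nonzero at $(x_0,g_0)$; this holds irrespective of freeness. The whole content therefore lies in deciding when $B$ separates the points of $X\times G$. If $x_1\neq x_2$ I can always separate $(x_1,g_1)$ from $(x_2,g_2)$: choose $a$ with $a(x_1)\neq 0=a(x_2)$ and $b$ with $b(x_1\cdot g_1)\neq 0$. If instead $x_1=x_2=x$ and $g_1\neq g_2$, then every generator takes the values $a(x)\,b(x\cdot g_1)$ and $a(x)\,b(x\cdot g_2)$ at the two points, so $B$ separates them precisely when $x\cdot g_1\neq x\cdot g_2$ (using Urysohn's lemma to produce a $b\in C_0(X)$ distinguishing two distinct points of $X$, together with some $a$ nonvanishing at $x$). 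Hence $B$ separates all points of $X\times G$ if and only if $x\cdot g_1 = x\cdot g_2$ forces $g_1=g_2$ for every $x$, which is exactly the triviality of all stabilizers, i.e.\ freeness of the action.

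Finally I would assemble the two implications. If the action is free, then $B$ is a nowhere-vanishing, point-separating, closed self-adjoint subalgebra, so the Stone--Weierstrass theorem for $C_0$ of a locally compact Hausdorff space gives $B = C_0(X\times G) = C_0(X)\otimes C(G)$, which is the asserted density condition. Conversely, if the action is not free there exist $x$ and $g_1\neq g_2$ with $x\cdot g_1 = x\cdot g_2$; then $(x,g_1)\neq(x,g_2)$ are distinct points on which every generator $(a\otimes 1_G)\alpha(b)$ agrees, hence every element of the closed span $B$ agrees there, whereas $C_0(X\times G)$ contains functions distinguishing these two points, so $B\neq C_0(X)\otimes C(G)$ and the density condition fails. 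The only points needing care are the correct (nowhere-vanishing) hypothesis in the locally compact Stone--Weierstrass theorem and the harmless use of $1_G\in C(G)$, which is legitimate because $G$ is compact; there is no serious analytic obstacle beyond this bookkeeping.
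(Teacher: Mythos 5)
Your proof is correct, and it takes a route that differs in form, though not really in underlying substance, from the paper's. The paper introduces the canonical map $\mathrm{Can}\colon X\times G\to X\times X$, $(x,g)\mapsto (x,xg)$, observes that freeness is precisely injectivity of this map, and then invokes the general duality that injectivity of such a (proper) map of locally compact Hausdorff spaces is equivalent to surjectivity of the pullback $^*$-homomorphism $C_0(X)\otimes C_0(X)\to C_0(X)\otimes C(G)$, whose image is exactly your algebra $B$ because $\mathrm{Can}(f\otimes g)=(f\otimes 1_G)\alpha(g)$. What you have done is essentially to inline the proof of that black-boxed equivalence: your Stone--Weierstrass argument (closed self-adjoint subalgebra, vanishing nowhere, separating points exactly when all stabilizers are trivial) is how one proves the forward direction of that duality, and your converse is the standard ``functions in the image cannot separate two points with the same image'' argument. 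The trade-off is this: the paper's formulation is shorter and conceptually valuable, since $\mathrm{Can}$ is the classical shadow of the Galois map $\mathcal{G}_{\alpha}$ that drives all the later quantum characterisations of freeness, so stating the lemma through $\mathrm{Can}$ sets up the generalization; your version, on the other hand, is genuinely self-contained and avoids two points the paper's two-line proof quietly glosses over, namely that $\mathrm{Can}$ must be proper for the pullback to land in $C_0(X)\otimes C(G)$ at all, and that the injectivity--surjectivity equivalence itself requires an argument (Stone--Weierstrass or Tietze). So your proof fills in real details rather than merely duplicating the paper.
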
 
 \begin{proof} The action $\alpha$ is free if and only if the continuous map \[\mathrm{Can}:X\times G\mapsto X\times X, \quad (x,g) \mapsto (x,xg)\] is injective. But this map is injective if and only if the $^*$-homomorphism \[ \mathrm{Can}:C_0(X)\otimes C_0(X) \rightarrow C_0(X)\otimes C(G),\quad  F \mapsto \left(\mathrm{Can}(F): (x,g)\mapsto F(x,xg)\right)\] is surjective. Now note that for $f,g\in C_0(X)$, we have \[\mathrm{Can}(f\otimes g) = (f\otimes 1_G)\alpha(g).\] This proves the lemma. 
 \end{proof} 

The above lemma is the inspiration for the following definition, introduced in \cite{Ell00}.

\begin{Def}Let $\X$ be a locally compact quantum space, and $\G$ a compact quantum group with $\mathbb{X}\overset{\alpha}{\curvearrowleft} \mathbb{G}$.  We call $\alpha$ a \emph{free action} if \[\lbrack (C_0(\mathbb{X})\otimes 1_{\mathbb{G}})\alpha(C_0(\mathbb{X}))\rbrack = C_0(\mathbb{X})\otimes C(\mathbb{G}).\]
\end{Def}

We now want to give several equivalent characterisations of freeness. The key will be the notion of the \emph{Galois} (or \emph{canonical}) map(s). We will in the following lemma use the interior tensor product of Hilbert bimodules, see \cite{Lan95}. Recall from Definition \ref{DefRightYMod} that if $\X \overset{\alpha}{\curvearrowleft}\mathbb{G}$ and $\Y= \X/\G$, we can form a right Hilbert $C_0(\Y)$-module $L^2_{\Y}(\X)$. This carries a representation of $C_0(\X)$ and hence also $C_0(\Y)$ by left multiplication. 

\begin{Lem} Let $\mathbb{X}\overset{\alpha}{\curvearrowleft}\mathbb{G}$ be an action of $\G$ on a locally compact quantum space $\X$, and write $\Y = \X/\G$. Then there exists an isometry of Hilbert $C_0(\Y)$-modules \[\mathcal{G}_{\alpha}:L^2_{\mathbb{Y}}(\mathbb{X})\underset{C_0(\mathbb{Y})}{\otimes} L^2_{\mathbb{Y}}(\mathbb{X}) \rightarrow  L^2_{\mathbb{Y}}(\mathbb{X})\otimes L^2(\mathbb{G}),\] called the \emph{Galois} (or \emph{canonical}) \emph{isometry}, such that \[\mathcal{G}_{\alpha}(a\otimes b) =  \alpha(a)(b\otimes 1_{\G}),\quad a,b\in C_0(\X).\]
\end{Lem}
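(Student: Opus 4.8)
The plan is to define $\mathcal{G}_{\alpha}$ by the stated formula on a dense algebraic tensor product, show it is isometric for the interior-tensor-product inner product by a direct computation, and then extend by continuity. First I would spell out the two inner products. On the interior tensor product $L^2_{\Y}(\X)\underset{C_0(\Y)}{\otimes}L^2_{\Y}(\X)$, where $C_0(\Y)$ acts on the second leg by left multiplication, one has for $a_1,a_2,b_1,b_2\in C_0(\X)$
\[\langle a_1\otimes b_1,a_2\otimes b_2\rangle = E_{\Y}\big(b_1^*\,E_{\Y}(a_1^*a_2)\,b_2\big),\]
since $E_{\Y}(a_1^*a_2)\in C_0(\Y)$ acts by multiplication. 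On the target $L^2_{\Y}(\X)\otimes L^2(\G)$, viewed as the exterior tensor product of the Hilbert $C_0(\Y)$-module with the Hilbert space $L^2(\G)$, the $C_0(\Y)$-valued inner product of $x,y\in C_0(\X)\otimes C(\G)$ is $(E_{\Y}\otimes\varphi_{\G})(x^*y)$.

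The heart of the argument is the computation
\begin{eqnarray*}
\langle \alpha(a_1)(b_1\otimes 1_{\G}),\alpha(a_2)(b_2\otimes 1_{\G})\rangle
&=& (E_{\Y}\otimes \varphi_{\G})\big((b_1^*\otimes 1_{\G})\alpha(a_1^*a_2)(b_2\otimes 1_{\G})\big)\\
&=& E_{\Y}\big(b_1^*\,(\id_{\X}\otimes \varphi_{\G})\alpha(a_1^*a_2)\,b_2\big)\\
&=& E_{\Y}\big(b_1^*\,E_{\Y}(a_1^*a_2)\,b_2\big),
\end{eqnarray*}
where the first equality uses that $\alpha$ is a $^*$-homomorphism (so $\alpha(a_1)^*=\alpha(a_1^*)$), the second uses linearity of $E_{\Y}$ together with the fact that $\varphi_{\G}$ is scalar-valued, and the third is the defining identity $E_{\Y}=(\id_{\X}\otimes\varphi_{\G})\alpha$. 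Comparing with the formula above shows that $a\otimes b\mapsto \alpha(a)(b\otimes 1_{\G})$ preserves inner products, hence is isometric.

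Next I would check compatibility with the $C_0(\Y)$-balancing: for $c\in C_0(\Y)$ we have $\alpha(c)=c\otimes 1_{\G}$, so $\alpha(ac)(b\otimes 1_{\G})=\alpha(a)(cb\otimes 1_{\G})$, giving $\mathcal{G}_{\alpha}(ac\otimes b)=\mathcal{G}_{\alpha}(a\otimes cb)$ (and in any case the isometry property already forces balanced-null vectors to zero, so the map descends to the interior tensor product). Right $C_0(\Y)$-linearity is clear from the definition, so $\mathcal{G}_{\alpha}$ is a Hilbert $C_0(\Y)$-module map. Since $\mathscr{O}_{\G}(\X)$ is dense in $C_0(\X)$, hence in $L^2_{\Y}(\X)$, and $\mathscr{O}(\G)$ is dense in $C(\G)$, hence in $L^2(\G)$, the algebraic tensor product is dense in the domain and the isometry extends uniquely to all of $L^2_{\Y}(\X)\underset{C_0(\Y)}{\otimes}L^2_{\Y}(\X)$.

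The main obstacle is the bookkeeping needed to make $\alpha(a)(b\otimes 1_{\G})$ a bona fide element of the completed module $L^2_{\Y}(\X)\otimes L^2(\G)$ for general $a,b\in C_0(\X)$, since $\alpha(a)$ lives in the C$^*$-tensor product and need not be a finite sum. I would deal with this by first carrying out the computation on $\mathscr{O}_{\G}(\X)\aotimes\mathscr{O}(\G)$, where $\alpha(a)$ is genuinely a finite sum of elementary tensors and every step above is literal, and only afterwards pass to the completion using the contractivity just established together with the density statements; alternatively one notes that the contractions $C_0(\X)\to L^2_{\Y}(\X)$ and $C(\G)\to L^2(\G)$ induce a contraction $C_0(\X)\otimes C(\G)\to L^2_{\Y}(\X)\otimes L^2(\G)$ into which $\alpha(a)(b\otimes 1_{\G})$ maps.
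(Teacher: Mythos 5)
Your proof is correct and takes essentially the same route as the paper: the heart in both cases is the identical computation $(E_{\Y}\otimes\varphi_{\G})\big((b_1^*\otimes 1_{\G})\alpha(a_1^*a_2)(b_2\otimes 1_{\G})\big)=E_{\Y}\big(b_1^*\,E_{\Y}(a_1^*a_2)\,b_2\big)$, resting on $E_{\Y}=(\id_{\X}\otimes\varphi_{\G})\alpha$ and the fact that $\varphi_{\G}$ is scalar-valued. The extra bookkeeping you provide (descent through the $C_0(\Y)$-balancing, right $C_0(\Y)$-linearity, and the verification that $\alpha(a)(b\otimes 1_{\G})$ defines an element of the completed module) only makes explicit what the paper's one-line proof leaves implicit.
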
 
\begin{proof} This follows from the simple computation
\begin{eqnarray*} &&\hspace{-2cm}\langle  \alpha(c)(d\otimes 1_{\G}), \alpha(a)(b\otimes 1_{\G}) \rangle_{\Y} \\ &=& (E_{\mathbb{Y}}\otimes \varphi_{\G})((d^*\otimes 1_{\G})\alpha(c^*a)(b\otimes 1_{\G}))  \\ &=& E_{\mathbb{Y}}(d^*E_{\mathbb{Y}}(c^*a)b) \\ &=& \langle c\otimes d,a\otimes b\rangle_{\Y}.\end{eqnarray*}
\end{proof}

The following theorem gives several equivalent characterisations of freeness. We will not present the proof here, for which we refer to the literature, see \cite{DCY13}. 

We recall that $\mathcal{K}(\mathscr{E})$ denotes the C$^*$-algebra of compact operators on a Hilbert C$^*$-module. We denote by $\mathcal{L}(\mathscr{E})$ the C$^*$-algebra of all adjointable operators. 

\begin{Theorem}\label{TheoFreeEq} Let $\X$ be a locally compact quantum space, endowed with a compact quantum group action $\mathbb{X}\overset{\alpha}{\curvearrowleft} \mathbb{G}$. Then the following are equivalent. 
\begin{enumerate}
\item The action is free.
\item The Galois map $\mathcal{G}_{\alpha}$ is unitary.
\item The natural $^*$-homomorphism \[C_0(\X\rtimes \G) \rightarrow  \mathcal{L}(L^2_{\Y}(\X)),\quad a\omega\mapsto L_al_{\omega}\] is a $^*$-isomorphism $C_0(\X\rtimes \G) \rightarrow  \mathcal{K}(L^2_{\Y}(\X))$, i.e.\ the action is \emph{saturated}. 
\end{enumerate}
\end{Theorem}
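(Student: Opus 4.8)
The plan is to establish the cyclic implications $(1)\Rightarrow(2)\Rightarrow(3)\Rightarrow(1)$, since each transition isolates a single piece of analytic content and the three statements are naturally linked through the Galois isometry $\mathcal{G}_{\alpha}$ and the crossed product representation.

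For $(1)\Rightarrow(2)$, recall that $\mathcal{G}_{\alpha}$ is already an isometry by the preceding lemma, so only surjectivity (equivalently, density of the range) needs to be checked. I would compute the range on elementary tensors: $\mathcal{G}_{\alpha}(a\otimes b)=\alpha(a)(b\otimes 1_{\G})$, and observe that the closed linear span of such elements, as $a,b$ range over $C_0(\X)$, is precisely the image in $L^2_{\Y}(\X)\otimes L^2(\G)$ of $\lbrack \alpha(C_0(\X))(C_0(\X)\otimes 1_{\G})\rbrack$. By the freeness hypothesis this equals the image of $C_0(\X)\otimes C(\G)$ (after a flip, $\lbrack (C_0(\X)\otimes 1_{\G})\alpha(C_0(\X))\rbrack$ and $\lbrack \alpha(C_0(\X))(C_0(\X)\otimes 1_{\G})\rbrack$ have the same closure, using that $\alpha$ is a $^*$-homomorphism and taking adjoints). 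Since $\mathscr{O}(\G)\xi_{\G}$ is dense in $L^2(\G)$ and $C_0(\X)$ is dense in $L^2_{\Y}(\X)$, the range of $\mathcal{G}_{\alpha}$ is dense, hence $\mathcal{G}_{\alpha}$ is a unitary.

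For $(2)\Rightarrow(3)$, I would relate the Galois unitary to the reduced crossed product acting on $L^2_{\Y}(\X)$. The $^*$-homomorphism $a\omega\mapsto L_a l_{\omega}$ always lands in $\mathcal{L}(L^2_{\Y}(\X))$ and is injective by the faithfulness arguments already in hand. The content is that its range is exactly $\mathcal{K}(L^2_{\Y}(\X))$ when $\mathcal{G}_{\alpha}$ is unitary. The key computation is that the rank-one operators $\theta_{a,b}\colon\xi\mapsto a\langle b,\xi\rangle_{\Y}=a\,E_{\Y}(b^*\xi)$, which generate $\mathcal{K}(L^2_{\Y}(\X))$, can be rewritten in terms of $L_a$, $l_{\omega}$, and the Galois map: using $E_{\Y}=(\id_{\X}\otimes\varphi_{\G})\alpha$ and the defining formula for $l_{\omega}$, one expresses $\theta_{a,b}$ as a combination of products $L_{c}l_{\omega}$ via the identity $\mathcal{G}_{\alpha}^*\mathcal{G}_{\alpha}=\id$. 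Unitarity of $\mathcal{G}_{\alpha}$ is precisely what guarantees both that the crossed product elements are compact (range contained in $\mathcal{K}$) and that every compact operator is reached (range equal to $\mathcal{K}$).

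For $(3)\Rightarrow(1)$, saturation forces the image of $C_0(\X\rtimes\G)$ to be all of $\mathcal{K}(L^2_{\Y}(\X))$, and I would read off freeness by evaluating compact operators against $C_0(\X)$. Concretely, the elements $L_a l_{\omega}$ with $\omega\in\mathscr{O}(\widehat{\G})$ acting on vectors coming from $C_0(\X)$ produce, through $l_{\omega}(b)=(\id_{\X}\otimes\omega)\alpha(b)$, exactly the slices $(\id_{\X}\otimes\omega)\bigl(\alpha(C_0(\X))(C_0(\X)\otimes 1_{\G})\bigr)$; density of these in $\mathcal{K}$ together with surjectivity of the functionals $\omega$ onto $\mathscr{O}(\widehat{\G})$ yields back the freeness condition $\lbrack (C_0(\X)\otimes 1_{\G})\alpha(C_0(\X))\rbrack=C_0(\X)\otimes C(\G)$. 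The main obstacle I anticipate is the $(2)\Rightarrow(3)$ step: identifying the range of the crossed product representation with the \emph{compact} operators (rather than merely a subalgebra of $\mathcal{L}$) requires carefully tracking how the interior tensor product structure of $L^2_{\Y}(\X)\underset{C_0(\Y)}{\otimes}L^2_{\Y}(\X)$ transports rank-one operators through $\mathcal{G}_{\alpha}$, and verifying the module-compactness of the resulting operators. Since the excerpt explicitly defers the full argument to \cite{DCY13}, I would at this point cite that reference for the technical verification rather than reproduce it in detail.
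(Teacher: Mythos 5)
A preliminary remark: the paper does not prove Theorem \ref{TheoFreeEq} at all --- it explicitly defers the entire argument to \cite{DCY13} --- so your proposal must be judged on its own terms rather than against an in-paper proof. Your implication $(1)\Rightarrow(2)$ is correct and essentially complete: taking adjoints converts the Ellwood condition into $\lbrack \alpha(C_0(\X))(C_0(\X)\otimes 1_{\G})\rbrack = C_0(\X)\otimes C(\G)$, the natural map $C_0(\X)\otimes C(\G)\rightarrow L^2_{\Y}(\X)\otimes L^2(\G)$ is contractive with dense range, and an isometry of Hilbert C$^*$-modules with dense range is unitary. But this is the easy direction, and the two steps carrying the real content have genuine gaps. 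In $(2)\Rightarrow(3)$, injectivity of $a\omega\mapsto L_a l_{\omega}$ is \emph{not} ``already in hand'': the paper's faithfulness results concern the representation of $\mathscr{O}(\X\rtimes\G)$ on $L^2_{\Y}(\X)\otimes L^2(\G)$, not on $L^2_{\Y}(\X)$, and injectivity on $L^2_{\Y}(\X)$ genuinely depends on freeness (one can get it from $(2)$ by checking that $\mathcal{G}_{\alpha}$ intertwines $\pi(x)\otimes_{C_0(\Y)}\id$ on the interior tensor product with the faithful reduced representation $\pi_{\red}(x)$, and then using surjectivity of $\mathcal{G}_{\alpha}$ --- but you never address it). Worse, your proposed mechanism for the range identification, ``via the identity $\mathcal{G}_{\alpha}^*\mathcal{G}_{\alpha}=\id$'', cannot be the right one: that identity is just the isometry property, which holds for \emph{every} action, so if it sufficed to rewrite the rank-one operators as crossed-product elements, every action would be saturated. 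The workable route is through the contractive assignment $x\mapsto S_x$, $S_x\xi = (\id_{\X}\otimes\varphi_{\G})(x\alpha(\xi))$, for which $\theta_{a,b}=S_{(a\otimes 1_{\G})\alpha(b^*)}$ and $L_c l_{\varphi_{\G}(\cdot\,h)} = S_{c\otimes\sigma^{-1}(h)}$; this converts one spanning family into the other \emph{provided} one has the C$^*$-norm density of statement $(1)$, whereas unitarity of $\mathcal{G}_{\alpha}$ only yields density in the much weaker Hilbert-module norm. Your cycle therefore silently invokes $(2)\Rightarrow(1)$, which is precisely the hard analytic upgrade the theorem is about, and it is nowhere addressed.

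The step $(3)\Rightarrow(1)$ is not repairable as written, because your sketch uses only surjectivity onto $\mathcal{K}(L^2_{\Y}(\X))$ and never the injectivity required in statement $(3)$. The trivial action $\alpha(a)=a\otimes 1_{\G}$ (which satisfies the Podle\'{s} condition) defeats the sketched argument: there $\Y=\X$, $L^2_{\Y}(\X)=C_0(\X)$ and $l_{\varphi_{\G}(\cdot\,h)}=\varphi_{\G}(h)\id$, so the image of the crossed product is the closed span of the $L_a$, which is exactly $\mathcal{K}(L^2_{\Y}(\X))\cong C_0(\X)$ --- yet the action is not free. Equivalently, the assignment $x\mapsto S_x$ is very far from injective, so density of the sliced operators does not let you ``read off'' density of $(C_0(\X)\otimes 1_{\G})\alpha(C_0(\X))$ inside $C_0(\X)\otimes C(\G)$; any correct proof of $(3)\Rightarrow(1)$ must use the isomorphism property, not just the range condition. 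In sum, the only implication you actually prove is $(1)\Rightarrow(2)$; for the other two, the heuristics offered would fail as stated, and falling back on \cite{DCY13} --- exactly as the paper itself does for the whole theorem --- leaves them as genuine gaps rather than routine verifications.
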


The notion of saturated action was introduced in \cite{Phi87} for $\G$ an ordinary compact group, and the equivalence of 1.\ and 3.\ above was proven in this case in \cite{Wah10}.

In particular, we deduce from the theorem that freeness does not depend on which concrete completion of $\mathscr{O}_{\G}(\X)$ one is using.

\begin{Exa} Let $\mathbb{H}\subseteq \mathbb{G}$ be a compact quantum subgroup 
by \[\pi_{\mathbb{H}}: C(\G)\twoheadrightarrow C(\mathbb{H}).\] Then the action $\G\overset{\alpha}{\curvearrowleft} \mathbb{H}$, given by \[\alpha = (\id_{\G}\otimes \pi_{\mathbb{H}})\circ \Delta: C(\mathbb{G})\rightarrow C(\G)\otimes C(\mathbb{H}),\] is free.  
\end{Exa} 
\begin{proof} Exercise. 
\end{proof}

The following lemma provides a `trivial' class of free actions. 

\begin{Lem}\label{LemCrossFree} Let $\X$ be a locally compact space, and assume $\X\curvearrowleft \widehat{\G}$. Then $(\X\rtimes \widehat{\G})\curvearrowleft \G$ is free.
\end{Lem}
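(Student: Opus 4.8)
The plan is to verify the freeness condition
\[
\bigl[(C_0(\X\rtimes\widehat{\G})\otimes 1_{\G})\,\alpha(C_0(\X\rtimes\widehat{\G}))\bigr]
= C_0(\X\rtimes\widehat{\G})\otimes C(\G)
\]
directly on the algebraic core, using the concrete description of the dual action from Proposition \ref{PropDualAct}, namely $\alpha(ah)=ah_{(1)}\otimes h_{(2)}$ on $\mathscr{O}(\X\rtimes\widehat{\G})$. By Theorem \ref{TheoFreeEq} freeness is independent of the chosen completion, so it suffices to check the density statement at the level of the dense $^*$-subalgebra $\mathscr{O}(\X\rtimes\widehat{\G})=\mathscr{O}(\X)\aotimes\mathscr{O}(\G)$; more precisely I would show that the algebraic span $(\mathscr{O}(\X\rtimes\widehat{\G})\otimes 1_{\G})\alpha(\mathscr{O}(\X\rtimes\widehat{\G}))$ already equals $\mathscr{O}(\X\rtimes\widehat{\G})\aotimes\mathscr{O}(\G)$, and then pass to closures.

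First I would compute, for $a,b\in\mathscr{O}(\X)$ and $g,h\in\mathscr{O}(\G)$, the generic product
\[
(ag\otimes 1_{\G})\,\alpha(bh)
= (ag\otimes 1_{\G})(bh_{(1)}\otimes h_{(2)})
= ag\,bh_{(1)}\otimes h_{(2)},
\]
where the product $ag\cdot bh_{(1)}$ is taken in the smash product $\mathscr{O}(\X)\#\mathscr{O}(\G)$. The key algebraic identity is the Hopf-algebraic trick already used in the proof of Lemma \ref{LemAlgtoAn} and Lemma \ref{LemIntroImplUn}: using the antipode one can solve for an arbitrary elementary tensor. Concretely, to produce a target element $x\otimes k$ with $x\in\mathscr{O}(\X\rtimes\widehat{\G})$ and $k\in\mathscr{O}(\G)$, I would write $k=\varepsilon(k_{(1)})k_{(2)}=k_{(1)}S(k_{(2)})k_{(3)}$ and exploit that the second leg of $\alpha$ ranges over all of $\mathscr{O}(\G)$ as $h$ varies, while the factor $1_{\X}\otimes g$ (with $g$ chosen via the antipode) absorbs the unwanted part of the first leg. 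This is exactly the computation
\[
\alpha(b_{(0)})(1\otimes S(b_{(1)})\,k)\ \text{type manipulation},
\]
showing the span is all of $\mathscr{O}(\X\rtimes\widehat{\G})\aotimes\mathscr{O}(\G)$.

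The main obstacle, and the only place needing genuine care, is bookkeeping the noncommutative multiplication in the smash product: the product $(a\otimes g)(b\otimes h)=a(g_{(1)}\rhd b)\otimes g_{(2)}h$ intertwines the $\mathscr{O}(\G)$-module structure $\rhd$ with the coproduct, so the Sweedler indices must be tracked carefully to confirm that the module action does not obstruct surjectivity. I expect that the surjectivity argument from Lemma \ref{LemIntroImplUn}, where one writes $b\otimes h=b_{(0)}\otimes b_{(1)}(S(b_{(2)})h)$, adapts essentially verbatim once one notes that $1_{\X}\otimes\mathscr{O}(\G)\subseteq\mathscr{O}(\X\rtimes\widehat{\G})$ and that $\alpha$ restricted to this copy of $\mathscr{O}(\G)$ is just the coproduct $\Delta$. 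Finally I would close up: since $\mathscr{O}(\X\rtimes\widehat{\G})$ is dense in every completion and the algebraic identity holds on the nose, the closed span equals $C_0(\X\rtimes\widehat{\G})\otimes C(\G)$, establishing freeness.
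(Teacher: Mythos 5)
Your strategy is the same as the paper's: verify the Ellwood condition on the algebraic core of the smash product via the Hopf-algebraic antipode trick, then pass to closed spans. But there is a genuine gap at exactly the point you dismiss as routine bookkeeping, and it concerns the generality in which the lemma is stated. Your surjectivity step rests on the inclusion $1_{\X}\otimes\mathscr{O}(\G)\subseteq\mathscr{O}(\X\rtimes\widehat{\G})$, i.e.\ on the availability of elements $1_{\X}\# k$ on which $\alpha$ restricts to $\Delta$, so that an identity of the type $x\otimes k=(xS(k_{(1)})\otimes 1_{\G})\,\alpha(1_{\X}\# k_{(2)})$ exhibits an arbitrary elementary tensor inside the span. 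Since $\X$ is only \emph{locally} compact, $C_0(\X)$ need not have a unit, and then the smash product $\mathscr{O}(\X\rtimes\widehat{\G})=C_0(\X)\aotimes\mathscr{O}(\G)$ contains no copy of $\mathscr{O}(\G)$ at all: elements $k\in\mathscr{O}(\G)$ act only as multipliers, so $\alpha(1_{\X}\# k_{(2)})$ is not of the form $\alpha(z)$ with $z\in\mathscr{O}(\X\rtimes\widehat{\G})$, and your identity proves membership in a multiplier algebra rather than in the span $(\mathscr{O}(\X\rtimes\widehat{\G})\otimes 1_{\G})\alpha(\mathscr{O}(\X\rtimes\widehat{\G}))$. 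As written, your argument is complete only for compact $\X$. (A smaller imprecision: the identities you quote from Lemma \ref{LemAlgtoAn} and Lemma \ref{LemIntroImplUn} manipulate the \emph{second} leg, whereas the Ellwood condition only permits multiplication by $A\otimes 1_{\G}$ in the \emph{first} leg; the mirrored identity above is what is actually needed, though that adaptation is indeed routine.)

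The paper's proof repairs precisely this point. Instead of the nonexistent elements $1_{\X}\# k$, it multiplies $\alpha(\mathscr{O}(\X\rtimes\widehat{\G}))$ by honest elements $gb=(g_{(1)}\rhd b)\# g_{(2)}$ of the smash product, with $g\in\mathscr{O}(\G)$ and $b\in C_0(\X)$. Writing $\alpha(a\# h)=(a\otimes 1_{\G})\Delta(h)$ in multiplier form, the resulting span consists of the elements $a\,h_{(1)}g\,b\otimes h_{(2)}$; the antipode trick, applied now to the adjacent pair $(h,g)$ through the bijectivity of $h\otimes g\mapsto\Delta(h)(g\otimes 1_{\G})$, identifies this span with $C_0(\X)\,\mathscr{O}(\G)\,C_0(\X)\aotimes\mathscr{O}(\G)$. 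The price for avoiding the unit is the extra right-hand factor of $C_0(\X)$, which must then be absorbed: since $a\,k\,b=a(k_{(1)}\rhd b)\# k_{(2)}$ and $\lbrack C_0(\X)C_0(\X)\rbrack=C_0(\X)$ (Cohen factorization, or simply density plus the estimate $\|c\#k\|\leq\|c\|\,\|k\|_u$), one gets $\lbrack C_0(\X)\mathscr{O}(\G)C_0(\X)\rbrack=C_0(\X\rtimes\widehat{\G})$, and the Ellwood condition follows after closing up. So your outline is salvageable, but the non-unital step is a genuinely missing ingredient rather than a verbatim adaptation.
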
 

In the above, $\X\rtimes \widehat{\G}$ refers to any $
\G$-equivariant completion of $\mathscr{O}(\X\rtimes \widehat{\G})$.

\begin{proof} We verify the Ellwood condition:
\begin{align*} \lbrack \alpha(C_0(\X\rtimes\widehat{\G}))(C_0(\X\rtimes \widehat{\G})\otimes 1_{\G})\rbrack  &\supseteq \alpha(\mathscr{O}(\X\rtimes\widehat{\G}))(\mathscr{O}(\X\rtimes \widehat{\G})\otimes 1_{\G}) \\ &\supseteq (\mathscr{O}_{\G}(\X)\otimes 1_{\G})\Delta(\mathscr{O}(\G))(\mathscr{O}(\G)\mathscr{O}_{\G}(\X)\otimes 1_{\G}) \\ &= \mathscr{O}_{\G}(\X)\mathscr{O}(\G)\mathscr{O}_{\G}(\X)\aotimes \mathscr{O}(\G) \\ &= \mathscr{O}(\X\rtimes \widehat{\G})\aotimes \mathscr{O}(\G). \end{align*} 
\end{proof}

The following theorem from \cite{BDH13} connects the above analytic theory to the algebraic theory of Galois actions. For more details on the latter, we refer to \cite{Sch04}. 

\begin{Theorem}\label{TheoGalAnAlg} Let $\X$ be a compact quantum space, and $\X\overset{\alpha}{\curvearrowleft} \G$. Then the action is free if and only if the \emph{algebraic} Galois map \[G_{\alpha}: \mathscr{O}_{\G}(\X)\underset{C(\mathbb{Y})}{\otimes} \mathscr{O}_{\G}(\mathbb{X}) \rightarrow  \mathscr{O}_{\G}(\X)\otimes \mathscr{O}(\mathbb{G}),\quad a\otimes b\mapsto \alpha(a)(b\otimes 1_{\G})\]  is an isomorphism. 
\end{Theorem}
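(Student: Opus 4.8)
The plan is to split the equivalence into an analytic half (freeness $\Leftrightarrow$ surjectivity of $G_{\alpha}$) and a purely algebraic half (surjectivity $\Leftrightarrow$ bijectivity), isolating the real work in the first. Write $\mathscr{A}=\operatorname{Im}(G_{\alpha})=\operatorname{span}\{\alpha(a)(b\otimes 1_{\G})\mid a,b\in\mathscr{O}_{\G}(\X)\}$. Since $\alpha(c)=c\otimes 1_{\G}$ for $c\in C(\Y)=\{c\in\mathscr{O}_{\G}(\X)\mid\alpha(c)=c\otimes 1_{\G}\}$, the map $G_{\alpha}$ is well defined on the balanced tensor product, and because $G_{\alpha}(a\otimes bc)=G_{\alpha}(a\otimes b)(c\otimes 1_{\G})$ the image $\mathscr{A}$ is a right $(\mathscr{O}_{\G}(\X)\otimes 1_{\G})$-module. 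Consequently $G_{\alpha}$ is surjective if and only if $1_{\X}\otimes v\in\mathscr{A}$ for every matrix coefficient $v\in\mathscr{O}(\G)$, since then $c\otimes v=(1_{\X}\otimes v)(c\otimes 1_{\G})\in\mathscr{A}$ for all $c$. Moreover $\overline{\mathscr{A}}=C_0(\X)\otimes C(\G)$ is equivalent to freeness: applying the isometric fibrewise involution $x\otimes h\mapsto x^{*}\otimes h^{*}$ turns $\mathscr{A}$ into $\operatorname{span}\{(b^{*}\otimes 1_{\G})\alpha(a^{*})\}$, whose closed span is exactly the Ellwood space $[(C_0(\X)\otimes 1_{\G})\alpha(C_0(\X))]$. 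This already yields the easy implication: if $G_{\alpha}$ is bijective then $\mathscr{A}=\mathscr{O}_{\G}(\X)\underset{\alg}{\otimes}\mathscr{O}(\G)$, which is dense in $C_0(\X)\otimes C(\G)$ by Theorem \ref{TheoDensPod} and density of $\mathscr{O}(\G)$, so $\overline{\mathscr{A}}=C_0(\X)\otimes C(\G)$ and $\alpha$ is free.

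For the substantial direction, assume $\alpha$ free, so $\mathscr{A}$ is dense. By the reduction above it suffices to show $1_{\X}\otimes v\in\mathscr{A}$ (not merely in $\overline{\mathscr{A}}$) for each $v\in C(\G)_{\pi}$, $\pi$ irreducible. Let $P_{\pi}\colon C(\G)\to C(\G)_{\pi}$, $g\mapsto(\id\otimes\varphi_{\G})(\Delta(g)(1_{\G}\otimes\chi_{\pi}^{*}))$ be the bounded projection onto the $\pi$-isotypical component, with $\chi_{\pi}$ as in \eqref{EqElChi}. Then $\id_{C_0(\X)}\otimes P_{\pi}$ is bounded, fixes $1_{\X}\otimes v$, and carries $\overline{\mathscr{A}}$ into $\overline{\mathscr{A}_{\pi}}$, where $\mathscr{A}_{\pi}=(\id\otimes P_{\pi})\mathscr{A}=\operatorname{span}\{\alpha(a)(b\otimes 1_{\G})\mid a\in C(\X)_{\pi},\ b\in\mathscr{O}_{\G}(\X)\}\subseteq\mathscr{O}_{\G}(\X)\otimes C(\G)_{\pi}$. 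Hence $1_{\X}\otimes v\in\overline{\mathscr{A}_{\pi}}$, and everything reduces to the claim that $\mathscr{A}_{\pi}$ is \emph{already} norm-closed in $C_0(\X)\otimes C(\G)_{\pi}$: granting this, $1_{\X}\otimes v\in\mathscr{A}_{\pi}\subseteq\mathscr{A}$, and surjectivity follows.

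The closedness of $\mathscr{A}_{\pi}$ is the main obstacle, and the plan is to realize it as a spectral subspace of a continuous coaction and invoke the closedness of isotypical components. Expanding $a=Te_{j}$ with $T\in\Mor(\pi,\alpha)$ and using $\alpha(Te_{j})=\sum_{i}(Te_{i})\otimes U_{\pi}(e_{i},e_{j})$, one sees in the basis $\{U_{\pi}(e_{i},e_{j})\}$ of $C(\G)_{\pi}$ that $\mathscr{A}_{\pi}$ is assembled, column by column, from the single space $V=\operatorname{span}\{(Te_{i}\,b)_{i}\mid T\in\Mor(\pi,\alpha),\ b\in\mathscr{O}_{\G}(\X)\}\subseteq C_0(\X)\otimes\Hsp_{\pi}$. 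By the same bookkeeping as in the proof of Lemma \ref{LemBiDuCom}, this $V$ is the fixed-point subspace of an explicit continuous coaction on $C_0(\X)\otimes B(\Hsp_{\pi})$ built from $\alpha$ and $\Ad_{\pi}$; the argument in the proof of Lemma \ref{LemClosSpec}, used precisely as in the proof of Theorem \ref{TheoUnicrossC}, shows that such spectral subspaces are norm-closed. Hence $V$, and therefore $\mathscr{A}_{\pi}$, is closed. This is the step where care is genuinely needed: it is what converts the density furnished by freeness into an exact algebraic equality.

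Finally, surjectivity upgrades to bijectivity by pure algebra, yielding injectivity for free. The antipode of $\mathscr{O}(\G)$ is bijective, with $S^{-1}(h)=S(h^{*})^{*}$. By the theory of Hopf--Galois extensions (\cite{Sch04}), for a comodule algebra over a Hopf algebra with bijective antipode surjectivity of the canonical map forces bijectivity: surjectivity supplies a translation map $v\mapsto v^{[1]}\otimes v^{[2]}$ with $G_{\alpha}(v^{[1]}\otimes v^{[2]})=1_{\X}\otimes v$, and $a\otimes v\mapsto a\,v^{[1]}\otimes v^{[2]}$ is then a two-sided inverse of $G_{\alpha}$, the verification resting on the standard translation-map identities, which in turn use bijectivity of $S$. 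Taking $B=C(\Y)$ as the coinvariant subalgebra this applies directly, so $G_{\alpha}$ is an isomorphism, completing the proof that freeness is equivalent to $G_{\alpha}$ being an isomorphism.
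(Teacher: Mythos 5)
Your overall skeleton (easy direction via density; reduction of surjectivity to $1_{\X}\otimes v\in\mathscr{A}$; passage to isotypical components via $\id\otimes P_{\pi}$) is sound, and indeed the paper offers no proof of its own here, referring to \cite{BDH13}. But the step you yourself identify as the crux --- the claim that $\mathscr{A}_{\pi}$, equivalently $V$, is norm-closed --- is false, and it fails already for the simplest free action: $\X=\G$ acting on itself by $\Delta$, with $C(\G)$ infinite-dimensional. For the trivial representation $\epsilon$ one has $C(\G)_{\epsilon}=\C 1_{\G}$, so $\mathscr{A}_{\epsilon}=\mathrm{span}\{\Delta(1)(b\otimes 1_{\G})\mid b\in \mathscr{O}(\G)\}=\mathscr{O}(\G)\otimes \C 1_{\G}$, a dense \emph{proper} subspace of $C(\G)\otimes \C 1_{\G}$, hence not closed. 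The same happens for every irreducible $\pi$: writing $u_{ij}=U_{\pi}(e_i,e_j)$, the element of $\mathscr{A}_{\pi}$ with data $(b_{kj})$ has $(i,j)$-component $\sum_k u_{ki}b_{kj}$, i.e.\ $\mathscr{A}_{\pi}$ is the image of $M_d(\mathscr{O}(\G))$ under left multiplication by the transposed matrix $U^t$; since $U^t$ is invertible over $\mathscr{O}(\G)$ (a left inverse is $(S^{-1}(u_{ki}))_{ik}$, because $\sum_k S^{-1}(u_{ki})u_{jk}=\delta_{ij}$), one gets $\mathscr{A}_{\pi}\cong M_d(\mathscr{O}(\G))\subseteq M_d(C(\G))$: dense, not closed. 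Consistently, $V$ cannot be the fixed-point space of any continuous coaction, since such spaces are kernels of the bounded maps $z\mapsto \delta(z)-z\otimes 1_{\G}$ and therefore automatically closed. The analogy with Lemma \ref{LemBiDuCom} breaks exactly here: there the products are $\alpha(x)(1\otimes l_{\omega})$ with the second factor ranging over a \emph{finite-dimensional} corner $p_{\pi}B(L^2(\G))p_{\pi'}$ in the group leg, whereas in $\mathscr{A}_{\pi}$ the factor $b\otimes 1_{\G}$ ranges over the infinite-dimensional $\mathscr{O}_{\G}(\X)$ sitting in the same leg as the first leg of $\alpha(a)$, and $\alpha(a)(b\otimes 1_{\G})$ is not fixed by the diagonal coaction unless $b$ is invariant. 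Note the theorem itself is not contradicted: in the example above $1_{\G}\otimes u_{ij}$ does lie in the non-closed space $\mathscr{A}_{\pi}$; closedness is simply the wrong mechanism. The proof in \cite{BDH13} converts approximate solutions into exact ones differently, by a correction argument (an element close to the identity in a matrix algebra over $C(\Y)$ is invertible), not by closedness of the algebraic image.

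A second, smaller problem: the final ``pure algebra'' upgrade from surjectivity to bijectivity is circular as written. A linear section $v\mapsto v^{[1]}\otimes v^{[2]}$ of $G_{\alpha}$ over $1_{\X}\otimes v$ does give a right inverse (with your conventions it should be $a\otimes v\mapsto v^{[1]}\otimes v^{[2]}a$ rather than $a\,v^{[1]}\otimes v^{[2]}$); but verifying that it is a \emph{left} inverse rests on the identity $a_{(1)}{}^{[1]}\otimes a_{(1)}{}^{[2]}a_{(0)}=a\otimes 1$ in $\mathscr{O}_{\G}(\X)\underset{C(\Y)}{\otimes}\mathscr{O}_{\G}(\X)$, whose standard proof applies $G_{\alpha}$ to both sides and cancels --- i.e.\ it presupposes the injectivity you are trying to prove. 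What is true, and what should be invoked, is Schneider's theorem: for a comodule algebra over a Hopf algebra with bijective antipode which is injective as a comodule (automatic here, $\mathscr{O}(\G)$ being cosemisimple by the Haar state), surjectivity of the canonical map implies bijectivity. So that step is correct in substance but must be cited as a theorem, not rederived from translation-map identities.
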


In the above theorem, $\mathscr{O}_{\G}(\X)\underset{C(\mathbb{Y})}{\otimes} \mathscr{O}_{\G}(\mathbb{X})$ is the ordinary algebraic balanced tensor product (over the algebra $C(\Y)$). It is not clear if the above theorem still holds for $\X$ non-compact.

\section{Quantum torsors}\label{SecTor}

Freeness is in a sense at the opposite of homogeneity. Nevertheless, it turns out that in the quantum setting, there is a very interesting class of non-trivial actions which are both free and homogeneous. 

\begin{Def} An action $\mathbb{X}\overset{\alpha}{\curvearrowleft} \mathbb{G}$ on a compact quantum space is called a \emph{quantum torsor} (or \emph{Galois object})  if 
\begin{enumerate}
\item $\alpha$ is free,
\item $\alpha$ is homogeneous, and
\item $C(\X)\neq \{0\}$.
\end{enumerate}
\end{Def} 

In the classical context, these actions are very easy to describe. 

\begin{Lem} If $X$ is a compact Hausdorff space, and $X\overset{\alpha}{\curvearrowleft} G$ a (quantum) torsor for a compact Hausdorff group $G$, then there exists an equivariant homeomorphism $X\cong G$, where $G$ acts on $G$ by right translation. 
\end{Lem}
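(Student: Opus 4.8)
The plan is to translate the three hypotheses into their classical geometric meaning and then produce the equivariant homeomorphism as the orbit map of a single point.

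First I would unwind the definitions. By the remark following the definition of homogeneity, for $X$ a compact Hausdorff space and $G$ a compact Hausdorff group the action is homogeneous if and only if it is transitive in the ordinary sense. By the lemma characterising free actions---whose proof identifies freeness with injectivity of the canonical map $\mathrm{Can}\colon (x,g)\mapsto (x,xg)$---the action is free if and only if every stabiliser $G_x=\{g\in G\mid xg=x\}$ is trivial. Finally $C(X)\neq\{0\}$ simply means $X\neq\emptyset$. Thus the hypotheses assert precisely that $G$ acts continuously, freely and transitively on the nonempty compact Hausdorff space $X$.

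Next I would fix a base point $x_0\in X$ and consider the orbit map $\phi\colon G\to X$, $g\mapsto x_0\cdot g$. It is continuous because the action map $G\times X\to X$ is continuous, surjective by transitivity, and injective by freeness: if $x_0g=x_0h$ then $x_0(gh^{-1})=x_0$, so $gh^{-1}\in G_{x_0}=\{e\}$ and hence $g=h$. To upgrade this continuous bijection to a homeomorphism I would invoke the standard fact that a continuous bijection from a compact space onto a Hausdorff space is automatically a homeomorphism (it is a closed map), which is exactly where the compactness of $G$ and the Hausdorff property of $X$ are used. Equivariance is then immediate: for $h\in G$ one has $\phi(gh)=x_0(gh)=(x_0g)\cdot h=\phi(g)\cdot h$, so $\phi$ intertwines the right-translation action of $G$ on itself with the given action on $X$, giving the desired equivariant homeomorphism $X\cong G$.

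I do not expect any real obstacle here: the only substantive points are correctly invoking the two dictionary entries (homogeneous $\Leftrightarrow$ transitive and free $\Leftrightarrow$ trivial stabilisers) established earlier, after which the orbit-map argument is entirely routine. The single step genuinely relying on the standing compact/Hausdorff assumptions is the passage from a continuous bijection to a homeomorphism.
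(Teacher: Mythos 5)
Your proof is correct; the paper in fact states this lemma without any proof, and your argument supplies exactly the standard reasoning it leaves implicit. The two dictionary entries you invoke are both established earlier in the paper (the remark after the definition of homogeneity gives homogeneous $\Leftrightarrow$ transitive, and the lemma opening the section on free actions gives the Ellwood condition $\Leftrightarrow$ trivial stabilisers for classical actions), after which the orbit-map argument, using that a continuous bijection from a compact space to a Hausdorff space is a homeomorphism, is complete and correctly placed.
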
 

However, this lemma is no longer true in the quantum setting! The most famous example is the \emph{quantum torus}.

\begin{Exa}
Let $\theta \in [0,2\pi]$. Put \[C(\mathbb{T}_{\theta}^2) = C^*(U,V\mid U,V \textnormal{\textit{unitary, }} UV = e^{i\theta} VU.\},\] where $\mathbb{T}_{\theta}^2$ is called a \emph{quantum torus} (at parameter $\theta$). Then $\mathbb{T}_{\theta}^2 \curvearrowleft \mathbb{T}^2$ by \[\alpha_{(w,z)}(U) = wU,\quad \alpha_{(w,z)}V = zV.\] 
We claim that this is a free and homogeneous action. Indeed, by the commutation relations, any element in $C(\mathbb{T}_{\theta}^2)$ can be approximated by a linear combination of elements of the form $U^nV^m$ with $m,n\in \Z$. But an easy computation shows that \[\int_{\mathbb{T}^2} \alpha_{(w,z)}(U^nV^m) \rd w\rd z = \delta_{m,0}\delta_{n,0}.\] It follows that the conditional expectation \[E: C(\mathbb{T}_{\theta}^2) \rightarrow C(\mathbb{T}_{\theta}^2/\mathbb{T}^2)\] has range in the scalars, and so $C(\mathbb{T}_{\theta}^2/\mathbb{T}^2) = \C$. This shows that the action is homogeneous. 

To see that it is free, let us write $u,v$ for the canonical generators of $C(\mathbb{T}^2)$. Then it is easy to see that the coaction associated to $\mathbb{T}_{\theta}^2 \curvearrowleft \mathbb{T}^2$  is given by \[\alpha: C(\mathbb{T}_{\theta}^2) \rightarrow C(\mathbb{T}_{\theta}^2) \otimes C(\mathbb{T}^2),\quad U^mV^n \mapsto U^mV^n \otimes u^m v^n.\] Hence, by the commutation relations between $U$ and $V$, we have \[\lbrack \alpha(C(\mathbb{T}_{\theta}^2))(C(\mathbb{T}^2_{\theta})\otimes 1)\rbrack \supseteq \{U^k V^l \otimes u^mv^n\mid k,l,m,n\in \Z\}.\] As the latter set has dense linear span in $C(\mathbb{T}_{\theta}^2)\otimes C(\mathbb{T}^2)$, it follows that $\alpha$ is free. 

Finally, to have that $\mathbb{T}_{\theta}^2$ is really a quantum torsor, we have to check that $\mathbb{T}_{\theta}^2$ is not trivial. But consider on $l^2(\Z\times \Z)$ the  unitary operators \[\mathbf{U}e_{m,n} = e_{m+1,n},\quad \mathbf{V}e_{m,n} = e^{-im \theta} e_{m,n+1}.\] Then it is easily checked that $\mathbf{U}$ and $\mathbf{V}$ satisfy the relations of the generators $U$ and $V$ of $C(\mathbb{T}_{\theta}^2)$. Hence $\mathbb{T}_{\theta}^2$ is not trivial. 
\end{Exa} 

The above  is an instance of a general construction, whereby Galois objects are obtained from unitary 2-cocycles on discrete (quantum) groups.

\begin{Def} Let $\G$ be a compact quantum group. A (normalized) \emph{unitary 2-cocycle} on $\widehat{\G}$ is a functional \[\omega: \mathscr{O}(\G)\aotimes \mathscr{O}(\G) \rightarrow \C\] such that, with respect to the convolution $^*$-algebra structure on the dual of $\mathscr{O}(\G)\aotimes \mathscr{O}(\G)$, the functional $\omega$ is a unitary, such that the 2-cocycle condition is satisfied, meaning that for all $g,h,k\in \mathscr{O}(\G)$, \[\omega(g_{(1)}h_{(1)},k)\omega(g_{(2)},h_{(2)}) =  \omega(g,h_{(1)}k_{(1)})\omega(h_{(2)},k_{(2)}),\] and such that $\omega$ is normalized, meaning that \[\omega(1_{\G},g) = \omega(g,1_{\G}) = \varepsilon(g),\quad \forall g\in \mathscr{O}(\G).\] 
\end{Def}

Note that in the case of $\G = \widehat{\Gamma}$ with $\Gamma$ a discrete group, this reduces to the ordinary notion of a cocycle: with $\omega_{g,h} = \omega(g,h)$ for $g,h\in \Gamma \subseteq \C\lbrack \Gamma\rbrack = \mathscr{O}(\widehat{\Gamma})$, the defining relations of $\omega_{g,h}$ say that each $|\omega_{g,h}| = 1$ with \[\omega_{gh,k}\omega_{g,h} = \omega_{g,hk}\omega_{h,k},\quad \forall g,h,k\in \Gamma.\]

\begin{Lem}\label{LemRepDefCoc} Let $\G$ be a compact quantum group, and let $\omega$ be a unitary 2-cocycle for $\G$. Define $\mathscr{O}(\G)_{\omega} = \mathscr{O}(\G_{\omega})$ to be the vector space $\mathscr{O}(\G)$ equipped with the new product \[g\underset{\omega}{\cdot} h = \omega(g_{(2)},h_{(2)}) g_{(1)}h_{(1)}\] and the $^*$-structure \[g^{\#} = \chi_{\omega}(g_{(2)}^*) g_{(1)}^*,\] where $\chi_{\omega}$ is the functional \[\chi_{\omega}: \mathscr{O}(\G)\rightarrow \C,\quad g\mapsto \omega^*(S^{-1}(g_{(2)}),g_{(1)})=\overline{\omega(g_{(2)}^*,S(g_{(1)})^*)}.\] Then $\mathscr{O}(\G)_{\omega}$ is a unital $^*$-algebra with unit $1_{\G}$.
\end{Lem}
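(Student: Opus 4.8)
The plan is to verify, in order, associativity and unitality of the twisted product $\underset{\omega}{\cdot}$, and then the three defining properties of the candidate involution: antilinearity, involutivity, and anti-multiplicativity. Everything is purely algebraic, so I would first record what the hypotheses say concretely in Sweedler notation. The coproduct on $\mathscr{O}(\G)\otimes\mathscr{O}(\G)$ sends $g\otimes h$ to $(g_{(1)}\otimes h_{(1)})\otimes(g_{(2)}\otimes h_{(2)})$, so unitarity of $\omega$ reads $\omega(g_{(1)},h_{(1)})\omega^{*}(g_{(2)},h_{(2)})=\varepsilon(g)\varepsilon(h)=\omega^{*}(g_{(1)},h_{(1)})\omega(g_{(2)},h_{(2)})$, while the adjoint on the dual is $\omega^{*}(g,h)=\overline{\omega(S(g)^{*},S(h)^{*})}$.

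For associativity I would expand $(g\underset{\omega}{\cdot}h)\underset{\omega}{\cdot}k$ and $g\underset{\omega}{\cdot}(h\underset{\omega}{\cdot}k)$ directly. Since $g\underset{\omega}{\cdot}h=\omega(g_{(2)},h_{(2)})g_{(1)}h_{(1)}$ and $\Delta$ is multiplicative, both expressions collapse to $g_{(1)}h_{(1)}k_{(1)}$ multiplied by a scalar, and the two scalars are precisely the two sides of the $2$-cocycle identity with $g,h,k$ replaced by $g_{(2)},h_{(2)},k_{(2)}$; hence the cocycle condition yields associativity. Unitality is immediate from $\Delta(1_{\G})=1_{\G}\otimes 1_{\G}$ and the normalization $\omega(1_{\G},g)=\omega(g,1_{\G})=\varepsilon(g)$, which reduce both $1_{\G}\underset{\omega}{\cdot}g$ and $g\underset{\omega}{\cdot}1_{\G}$ to $\varepsilon(g_{(2)})g_{(1)}=g$.

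The substantive part is the involution. Antilinearity is clear since $g\mapsto g^{*}$ is antilinear and $\chi_{\omega}$ is linear. I would organise the computation around two auxiliary formulas for $\chi_{\omega}$, both obtained from the Hopf $^{*}$-algebra relation $S(g^{*})^{*}=S^{-1}(g)$: the equivalent form $\chi_{\omega}(g)=\omega^{*}(S^{-1}(g_{(2)}),g_{(1)})$ already stated in the lemma, and the conjugate formula $\overline{\chi_{\omega}(g^{*})}=\omega(g_{(2)},S^{-1}(g_{(1)}))$. Using $\Delta(g^{\#})=\chi_{\omega}(g_{(3)}^{*})\,g_{(1)}^{*}\otimes g_{(2)}^{*}$, expanding $(g^{\#})^{\#}$ reduces the involutivity $(g^{\#})^{\#}=g$ to the single scalar identity $\chi_{\omega}(g_{(1)})\,\overline{\chi_{\omega}(g_{(2)}^{*})}=\varepsilon(g)$; rewriting the two factors through the formulas above turns this into a consequence of the unitarity relation $\omega^{*}*\omega=\varepsilon\otimes\varepsilon$ once an antipode is moved across the tensor legs.

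The hardest step is anti-multiplicativity, $(g\underset{\omega}{\cdot}h)^{\#}=h^{\#}\underset{\omega}{\cdot}g^{\#}$. Expanding the left-hand side produces the coefficient $\overline{\omega(g_{(3)},h_{(3)})}\,\chi_{\omega}(h_{(2)}^{*}g_{(2)}^{*})$ in front of $h_{(1)}^{*}g_{(1)}^{*}$, while the right-hand side produces $\chi_{\omega}(h_{(3)}^{*})\chi_{\omega}(g_{(3)}^{*})\,\omega(h_{(2)}^{*},g_{(2)}^{*})$ in front of the same element. Matching them therefore amounts to a scalar identity relating $\chi_{\omega}$ evaluated on a product to the twisted product of its separate values. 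I expect this to be the main obstacle, since it is exactly the point where the $2$-cocycle condition and the unitarity of $\omega$ must be invoked simultaneously, and it requires careful bookkeeping of the many Sweedler legs together with repeated use of $S(g^{*})^{*}=S^{-1}(g)$. Once this identity is in hand, the three properties combine to show that $\mathscr{O}(\G)_{\omega}$ is a unital $^{*}$-algebra with unit $1_{\G}$.
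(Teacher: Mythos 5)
Your reductions are set up correctly: the associativity and unit checks are routine, your Sweedler expansions of $(g\underset{\omega}{\cdot}h)^{\#}$ and of $h^{\#}\underset{\omega}{\cdot}g^{\#}$ are right, and so is the auxiliary formula $\overline{\chi_{\omega}(g^{*})}=\omega(g_{(2)},S^{-1}(g_{(1)}))$. But the proposal has a genuine gap: the two scalar identities to which you reduce the involution axioms --- $\chi_{\omega}(g_{(1)})\,\overline{\chi_{\omega}(g_{(2)}^{*})}=\varepsilon(g)$ for involutivity, and the matching of $\overline{\omega(g_{(3)},h_{(3)})}\,\chi_{\omega}(h_{(2)}^{*}g_{(2)}^{*})$ with $\chi_{\omega}(h_{(3)}^{*})\chi_{\omega}(g_{(3)}^{*})\,\omega(h_{(2)}^{*},g_{(2)}^{*})$ for anti-multiplicativity --- are exactly where the whole difficulty of the lemma lives, and you prove neither of them; you explicitly defer the second as ``the main obstacle''. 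Reducing the axioms to these identities is pure Sweedler bookkeeping; establishing them requires an intricate interplay of the $2$-cocycle identity, the unitarity of $\omega$, and the relation $S(g^{*})^{*}=S^{-1}(g)$, and is comparable in length to the entire proof. As written, the proposal is a correct plan whose core computation is missing.

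For comparison, the paper avoids the direct verification altogether by an operator-theoretic device: equip $\mathscr{O}(\G)$ with the inner product $\langle a,b\rangle=\varphi_{\G}(a^{*}b)$, use invariance of the Haar state to compute that left $\omega$-multiplication $\pi_{\omega}(g)$ admits an explicit adjoint on this pre-Hilbert space, observe that $\pi_{\omega}(g)^{*}1_{\G}=g^{\#}$, and then prove the single scalar identity
\[\overline{\omega(g_{(2)},S^{-1}(g_{(1)})S(a)^{*})}=\chi_{\omega}(g_{(2)}^{*})\,\omega(g_{(1)}^{*},a),\]
which says precisely that $\pi_{\omega}(g)^{*}=\pi_{\omega}(g^{\#})$ as operators. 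Since $g\mapsto\pi_{\omega}(g)$ is injective (evaluate at $1_{\G}$), the algebra of left multiplication operators is closed under adjoints, and both involutivity and anti-multiplicativity of $\#$ then come for free from $(T^{*})^{*}=T$ and $(ST)^{*}=T^{*}S^{*}$. If you want to salvage your direct route, you would have to prove your two identities by computations of the same type that the paper carries out for its one identity; the adjoint trick is what lets a single identity do the work of two, and that is the idea your proposal lacks.
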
 

\begin{proof} We leave it as an exercise to check that the product is associative - this is in fact equivalent with $\omega$ satisfying the cocycle identity. We also leave it as an exercise to check that $1_{\G}$ is the unit. 

It is a bit harder to check that we have a $^*$-algebra. Endow $\mathscr{O}(\G)$ with the pre-Hilbert space structure \[\langle a,b\rangle = \varphi_{\G}(a^*b).\] Then we compute that \begin{align*} \langle a,g\underset{\omega}{\cdot} b\rangle &= \omega(g_{(2)},b_{(2)}) \langle a,g_{(1)}b_{(1)}\rangle \\ &= \omega(g_{(2)},b_{(2)})\varphi_{\G}(a^*g_{(1)}b_{(1)}) \\ &= \omega(g_{(4)},S^{-1}(a_{(3)}^*g_{(3)}) a_{(2)}^*g_{(2)}b_{(2)}) \varphi_{\G}(a_{(1)}^*g_{(1)}b_{(1)}) \\ &= \omega(g_{(3)},S^{-1}(a_{(2)}^*g_{(2)}))\varphi_{\G}(a_{(1)}^*g_{(1)}b) \\ &= \left\langle \overline{\omega(g_{(3)},S^{-1}(a_{(2)}^*g_{(2)}))}g_{(1)}^*a_{(1)},b\right\rangle.\end{align*}

We see that the operation $\pi_{\omega}(g)$ of left $\omega$-multiplication with $g$ has an adjoint operator \[\pi_{\omega}(g)^* a = \overline{\omega(g_{(3)},S^{-1}(a_{(2)}^*g_{(2)}))} g_{(1)}^*a_{(1)}.\] Since $\pi_{\omega}(g)^*1_{\G} = g^{\#}$, it is now sufficient to prove that, for all $a\in \mathscr{O}(\G)$, \[\pi_{\omega}(g)^*a = g^{\#}\underset{\omega}{\cdot}a\] or alternatively, \[\overline{\omega(g_{(3)},S^{-1}(g_{(2)})S(a_{(2)})^*)}g_{(1)}^*a_{(1)} = \chi_{\omega}(g_{(3)}^*)\omega(g_{(2)}^*,a_{(2)})g_{(1)}^*a_{(1)}.\] Clearly, it is enough to prove that for all $a,g\in \mathscr{O}(\G)$,\[\overline{\omega(g_{(2)},S^{-1}(g_{(1)})S(a)^*)} = \chi_{\omega}(g_{(2)}^*)\omega(g_{(1)}^*,a).\]  But we have \begin{align*}\omega(g_{(2)},S^{-1}(g_{(1)})S(a)^*) &=  \omega(g_{(2)},S^{-1}(g_{(1)})_{(1)}S(a)^*_{(1)}) \\ &\hspace{3cm} \cdot (\omega\omega^*)(S^{-1}(g_{(1)})_{(2)},S(a)^*_{(2)})
\\ &= 
\omega(g_{(2)},S^{-1}(g_{(1)})_{(1)}S(a)^*_{(1)})\omega(S^{-1}(g_{(1)})_{(2)},S(a)^*_{(2)}) \\ &\hspace{3cm} \cdot
\omega^*(S^{-1}(g_{(1)})_{(3)},S(a)^*_{(3)})\\ &\overset{(*)}{=} \omega(g_{(2)(1)}S^{-1}(g_{(1)})_{(1)},S(a)^*_{(1)})\omega(g_{(2)(2)},S^{-1}(g_{(1)})_{(2)})\\&\hspace{3cm} \cdot 
\omega^*(S^{-1}(g_{(1)})_{(3)},S(a)^*_{(2)})\\&= \omega(g_{(4)}S^{-1}(g_{(3)}),S(a)^*_{(1)})\omega(g_{(5)},S^{-1}(g_{(2)}))\\&\hspace{3cm} \cdot 
\omega^*(S^{-1}(g_{(1)}),S(a)^*_{(2)})  \\ &\overset{(**)}{=} \omega(g_{(3)},S^{-1}(g_{(2)}))\omega^*(S^{-1}(g_{(1)}),S(a)^*) \\&= \overline{\chi_{\omega}(g_{(2)}^*)}\overline{\omega(g_{(1)}^*,a)},
\end{align*}
where in $(*)$ we used the 2-cocycle identity for $\omega$, and in $(**)$ the fact that $\omega$ is normalized. This completes the proof. 
\end{proof} 

\begin{Lem} There exists a left Hopf $^*$-algebraic coaction \[\alpha: \mathscr{O}(\G_{\omega})\rightarrow \mathscr{O}(\G)\otimes \mathscr{O}(\G_{\omega}),\quad g\mapsto \Delta(g).\] 
\end{Lem}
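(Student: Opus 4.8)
The statement asks us to verify that $\alpha=\Delta$, viewed as a map $\mathscr{O}(\G_{\omega})\to\mathscr{O}(\G)\otimes\mathscr{O}(\G_{\omega})$, is a left Hopf $^*$-algebraic coaction. This amounts to four checks: that $\alpha$ is coassociative, counital, multiplicative for the twisted products, and $^*$-preserving for the $\#$-involutions. The plan is to dispatch the two coalgebraic conditions immediately and then carry out two short Sweedler computations for the algebraic ones.

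First, since $\alpha=\Delta$ and the coalgebra structure underlying $\mathscr{O}(\G_{\omega})$ is literally that of $\mathscr{O}(\G)$ (only the product and the $^*$-operation have been twisted), the coassociativity condition $(\Delta\otimes\id)\alpha=(\id\otimes\alpha)\alpha$ and the counit condition $(\varepsilon\otimes\id)\alpha=\id$ are nothing but the coassociativity and counit axioms of $\Delta$ on $\mathscr{O}(\G)$, and hence hold for free.

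Second, multiplicativity. Here the target $\mathscr{O}(\G)\otimes\mathscr{O}(\G_{\omega})$ carries the tensor-product algebra structure in which the first leg multiplies in $\mathscr{O}(\G)$ and the second in $\mathscr{O}(\G_{\omega})$. I would compute both $\Delta(g\underset{\omega}{\cdot}h)$ and $\Delta(g)\Delta(h)$ directly. Expanding $g\underset{\omega}{\cdot}h=\omega(g_{(2)},h_{(2)})\,g_{(1)}h_{(1)}$, applying $\Delta$, and reindexing via coassociativity gives $\omega(g_{(3)},h_{(3)})\,g_{(1)}h_{(1)}\otimes g_{(2)}h_{(2)}$; the same expression emerges from $\Delta(g)\Delta(h)=g_{(1)}h_{(1)}\otimes(g_{(2)}\underset{\omega}{\cdot}h_{(2)})$ after expanding the twisted product on the second leg. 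Notably, only coassociativity of $\Delta$ is needed here: the $2$-cocycle identity, which was required for associativity of $\underset{\omega}{\cdot}$ in the previous lemma, does not enter, since the $\omega$-factors already match leg by leg.

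Third, $^*$-preservation. Using $g^{\#}=\chi_{\omega}(g_{(2)}^*)\,g_{(1)}^*$ together with the fact that $\Delta$ is a $^*$-homomorphism on $\mathscr{O}(\G)$, I would obtain $\Delta(g^{\#})=\chi_{\omega}(g_{(3)}^*)\,g_{(1)}^*\otimes g_{(2)}^*$ after reindexing. On the other hand, the involution on the target sends $g_{(1)}\otimes g_{(2)}$ to $g_{(1)}^*\otimes(g_{(2)})^{\#}$, and expanding $(g_{(2)})^{\#}=\chi_{\omega}(g_{(3)}^*)\,g_{(2)}^*$ yields the identical result. The main, and essentially the only, obstacle throughout is the careful bookkeeping of Sweedler indices; once the reindexing via coassociativity is performed consistently, all four conditions fall out.
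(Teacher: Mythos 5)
Your proposal is correct: the paper leaves this lemma as an exercise, and your direct verification (coassociativity and counit for free since the coalgebra is untwisted, then the two Sweedler computations showing multiplicativity and $^*$-compatibility, with the correct observation that the $2$-cocycle identity is never needed) is exactly the intended argument. All reindexings check out, so nothing is missing.
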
 
\begin{proof} Exercise. 
\end{proof} 

In the following, we show that $\mathscr{O}(\G_{\omega})$ has a universal C$^*$-envelope, and that $\G_{\omega}$ is a (left) quantum torsor for $\G$. 

\begin{Theorem}  Let $\G$ be a compact quantum group, equipped with a unitary 2-cocycle $\omega$ on $\widehat{\G}$. Then the universal C$^*$-envelope $C(\G_{\omega,u})$ of $\mathscr{O}(\G_{\omega,u})$ exists, and \[\alpha_u: C(\G_{\omega,u})\rightarrow C(\G_u)\otimes C(\G_{\omega,u})\] is a free and homogeneous action $\G_u \curvearrowright \G_{\omega,u}$. 
\end{Theorem}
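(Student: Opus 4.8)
The plan is to establish the three assertions—existence of the universal envelope, the coaction axioms, and finally homogeneity together with freeness—in turn, transporting the left coaction $\alpha(g)=\Delta(g)$ to a right action of $\G^{\mathrm{cop}}$ via $\alpha^{\mathrm{op}}=\varsigma\circ\alpha$, so that all the machinery of the earlier sections, which is phrased for right actions, applies verbatim.

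First, existence of $C(\G_{\omega,u})$. I would note that the coinvariants are scalar already purely algebraically: if $g\in\mathscr{O}(\G_\omega)$ satisfies $\Delta(g)=1_\G\otimes g$, then applying $\id\otimes\varepsilon$ gives $g=\varepsilon(g)1_\G$. Next, mimicking the proof of Proposition \ref{PropUniLi}, for an irreducible $\pi$ and $T\in\Mor(\pi,\alpha^{\mathrm{op}})$ I would form $x_T=\sum_i T(e_i)\underset{\omega}{\cdot}T(e_i)^{\#}$ and check, using that $\alpha^{\mathrm{op}}$ is a $^*$-homomorphism and the unitarity $\sum_i U_\pi(e_p,e_i)U_\pi(e_q,e_i)^*=\delta_{pq}1_\G$, that $\alpha^{\mathrm{op}}(x_T)=x_T\otimes 1_\G$; hence $x_T$ is a coinvariant, so $x_T=c1$ for a scalar $c$. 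For any $^*$-representation $\lambda$ of $\mathscr{O}(\G_\omega)$ one then has $\sum_i\lambda(T(e_i))\lambda(T(e_i))^*=\lambda(x_T)=c\,\lambda(1)$, whence $\|\lambda(T(e_i))\|^2\le|c|$ independently of $\lambda$. Since the $T\xi$ span $\mathscr{O}(\G_\omega)=\mathscr{O}_\G(\G_\omega)$—every matrix coefficient $U_\pi(\xi,\eta)$ lies in a spectral subspace, the map $\eta\mapsto U_\pi(\,\cdot\,,\eta)$ being a morphism as in the remark before Lemma \ref{LemClosSpec}—this furnishes the uniform bound defining $\|\cdot\|_u$, so $C(\G_{\omega,u})$ exists and is unital.

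Second, the coaction axioms and homogeneity. With the envelope in hand I would invoke Lemma \ref{LemAlgtoAn} (applied to $\G^{\mathrm{cop}}$) to conclude that $\alpha^{\mathrm{op}}$ extends to a coaction $\alpha_u$ on $C(\G_{\omega,u})$ satisfying the Podle\'{s} condition, equivalently a genuine action $\G_u\curvearrowright\G_{\omega,u}$. Homogeneity is then immediate: the map $E=(\id\otimes\varphi_\G)\alpha_u$ is continuous and, on the dense core, equals $g\mapsto\varphi_\G(g)1$ by invariance of the Haar state, so its range is the closed subspace $\C1$; thus $C(\G_{\omega,u}/\G_u)=\C1$, the action is homogeneous, and it is nonzero since $1\neq 0$.

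Finally, freeness, which I expect to be the main obstacle. Since $\G_{\omega,u}$ is compact and the orbit algebra is $\C$, Theorem \ref{TheoGalAnAlg} reduces freeness to bijectivity of the algebraic Galois map $a\otimes b\mapsto\alpha^{\mathrm{op}}(a)(b\otimes1)=(a_{(2)}\underset{\omega}{\cdot}b)\otimes a_{(1)}$ on $\mathscr{O}(\G_\omega)\otimes\mathscr{O}(\G_\omega)$ (the balanced tensor product over $C(\Y)=\C$ being the full one). The plan is to write down an explicit inverse built from the antipode $S$, to undo $\Delta$, and the convolution inverse $\omega^{-1}$ of the cocycle, to undo the twisted multiplication, just as in the untwisted case the canonical map for translation is inverted using $S$ alone; one then verifies it is a two-sided inverse. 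The verification is a cocycle-bookkeeping computation of the same flavour as, but somewhat heavier than, the one establishing the $^*$-structure in Lemma \ref{LemRepDefCoc}; alternatively one may observe that $\mathscr{O}(\G_\omega)$ is by construction a \emph{cleft} $\mathscr{O}(\G)$-Galois object, for which bijectivity of the canonical map is automatic. By the completion-independence of freeness noted after Theorem \ref{TheoFreeEq}, this yields freeness of $\alpha_u$, completing the proof that $\G_u\curvearrowright\G_{\omega,u}$ is a quantum torsor.
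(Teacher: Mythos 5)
Your proposal is correct and takes essentially the same route as the paper: scalar coinvariants plus the $x_T$-argument of Proposition \ref{PropUniLi} give the universal envelope and homogeneity, and freeness is obtained from bijectivity of the algebraic canonical map $g\otimes h\mapsto \Delta(g)(1_{\G}\otimes h)$, for which the paper simply exhibits the explicit inverse $\mathrm{Can}^{-1}(g\otimes h) = \omega^*(g_{(2)},S(g_{(3)})h_{(2)})\, g_{(1)}\otimes S(g_{(4)})h_{(1)}$ (leaving the verification to the reader), i.e.\ precisely the $S$-and-$\omega^*$ construction you sketch. Your detour through $\G^{\mathrm{cop}}$ and the cleftness alternative are only cosmetic variations on the same argument.
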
 

\begin{proof} Since $\mathscr{O}(\G_{\omega})$ is identical to $\mathscr{O}(\G)$ as a comodule, it follows that $\mathscr{O}(\G\backslash \G_{\omega}) = \C$. The proof of Proposition \ref{PropUniLi} now shows that $\mathscr{O}(\G_{\omega})$ admits a universal C$^*$-envelope. The resulting coaction $\alpha_u$ is then again homogeneous.

To see that $\alpha_u$ is free, we note that the map \[\mathrm{Can}: \mathscr{O}(\G_{\omega})\otimes \mathscr{O}(\G_{\omega})\rightarrow \mathscr{O}(\G)\otimes \mathscr{O}(\G_{\omega}),\quad g\otimes h \mapsto \alpha(g)(1_{\G}\otimes h) = g_{(1)}\otimes g_{(2)}\underset{\omega}{\cdot} h\] is bijective - we leave it to the reader to check that the inverse is given by \[\mathrm{Can}^{-1}(g,h) = \omega^*(g_{(2)},S(g_{(3)})h_{(2)}) g_{(1)}\otimes S(g_{(4)})h_{(1)}.\]
\end{proof} 

To show now that $\G_u\curvearrowright \G_{\omega,u}$ is a torsor, we have to show that $\G_{\omega,u}$ is not trivial.

\begin{Prop} Let $\G$ be a compact quantum group, equipped with a unitary cocycle $\omega$ on $\widehat{\G}$. Then there exists a unique, bounded and faithful $^*$-representation \[\pi_{\omega}:\mathscr{O}(\G_{\omega})\rightarrow B(L^2(\G)), \quad \pi_{\omega}(g)h\xi_{\G} = (g\underset{\omega}{\cdot} h)\xi_{\G},\quad  g,h\in \mathscr{O}(\G).\]
\end{Prop}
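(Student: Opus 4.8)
The plan is to build $\pi_{\omega}$ on the dense subspace $\mathscr{O}(\G)\xi_{\G}\subseteq L^2(\G)$ by the stated formula, verify that it is an algebraic $^*$-representation, prove that each $\pi_{\omega}(g)$ is bounded (the only non-formal point), and finally read off faithfulness and uniqueness. Since $\varphi_{\G}$ is faithful on $\mathscr{O}(\G)$ by Lemma \ref{LemPhiFaith}, the map $h\mapsto h\xi_{\G}$ identifies $\mathscr{O}(\G)$ with the pre-Hilbert space $\mathscr{O}(\G)\xi_{\G}$, and $\pi_{\omega}(g)$ is simply left $\underset{\omega}{\cdot}$-multiplication by $g$. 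Associativity of $\underset{\omega}{\cdot}$ (Lemma \ref{LemRepDefCoc}) gives $\pi_{\omega}(g\underset{\omega}{\cdot}g')=\pi_{\omega}(g)\pi_{\omega}(g')$, while $1_{\G}$ acts as the identity. The identity $\pi_{\omega}(g)^{*}=\pi_{\omega}(g^{\#})$ was already established inside the proof of Lemma \ref{LemRepDefCoc}, where left $\underset{\omega}{\cdot}$-multiplication by $g$ was shown to admit the formal adjoint $a\mapsto g^{\#}\underset{\omega}{\cdot}a$ relative to $\langle a,b\rangle=\varphi_{\G}(a^{*}b)$. Thus $\pi_{\omega}$ is a $^*$-representation by adjointable operators on $\mathscr{O}(\G)\xi_{\G}$.

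The heart of the argument is boundedness. Write $\Delta(g)=\sum_{k}g_{(1)}^{k}\otimes g_{(2)}^{k}$ as a finite sum, and for each $k$ set $\theta_{k}=\omega(g_{(2)}^{k},\,\cdot\,)\in\mathscr{O}(\G)^{*}$. Then for $h\in\mathscr{O}(\G)$,
\[
\pi_{\omega}(g)h\xi_{\G}=\sum_{k}\omega(g_{(2)}^{k},h_{(2)})\,g_{(1)}^{k}h_{(1)}\xi_{\G}=\sum_{k}\pi_{\red}(g_{(1)}^{k})\,l_{\theta_{k}}(h)\,\xi_{\G},
\]
where $l_{\theta}(h)=(\id_{\G}\otimes\theta)\Delta(h)$ and $\pi_{\red}$ denotes left multiplication on $L^2(\G)$. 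Each $\pi_{\red}(g_{(1)}^{k})$ is bounded. For the factors $l_{\theta_{k}}$ I would use that each $g_{(2)}^{k}$ lies in a single finite-dimensional component $C(\G)_{\pi}$, so that under $\mathscr{O}(\widehat{\G})\cong\oplus_{\sigma}B(\Hsp_{\sigma})$ the slice $\theta_{k}$ is a multiplier $\prod_{\sigma}B(\Hsp_{\sigma})$ of finite supremum norm: its $\sigma$-block is obtained by contracting the $(\pi,\sigma)$-block of $\omega$ against the fixed covector attached to $g_{(2)}^{k}$, and this block is unitary of norm one, giving a uniform bound in $\sigma$. The representation $l$ of $\mathscr{O}(\widehat{\G})$ on $L^2(\G)$ (the earlier lemma applied to $\X=\G$) satisfies $\|l_{\theta}\|\leq\|\theta\|$ and extends to a bounded representation of $M(\mathscr{O}(\widehat{\G}))=\prod_{\sigma}B(\Hsp_{\sigma})$, so each $l_{\theta_{k}}$ is bounded. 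Hence $\pi_{\omega}(g)$ is a finite sum of bounded operators and extends to $B(L^2(\G))$.

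For faithfulness and uniqueness, note that since $1_{\G}$ is the unit of $\mathscr{O}(\G_{\omega})$ one has $\pi_{\omega}(g)\xi_{\G}=(g\underset{\omega}{\cdot}1_{\G})\xi_{\G}=g\xi_{\G}$; thus $\pi_{\omega}(g)=0$ forces $g\xi_{\G}=0$, i.e.\ $\varphi_{\G}(g^{*}g)=0$, whence $g=0$ by Lemma \ref{LemPhiFaith}. Uniqueness is immediate, as the defining formula already fixes $\pi_{\omega}$ on the dense subspace $\mathscr{O}(\G)\xi_{\G}$. In particular $\pi_{\omega}$ is a nonzero faithful $^*$-representation, so $C(\G_{\omega,u})\neq\{0\}$ and $\G_{\omega,u}$ is a genuine torsor.

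The main obstacle is exactly the boundedness of the operators $l_{\theta_{k}}$; everything else is formal. The essential structural input making this work is that a \emph{unitary} $2$-cocycle has blockwise-unitary slices whose norms are uniformly bounded (by one), which is what lets the $\omega$-dependence be absorbed into a bounded multiplier of $\mathscr{O}(\widehat{\G})$ rather than producing an unbounded twist.
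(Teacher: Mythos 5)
Your proof is correct, but your treatment of the one substantive point --- boundedness --- takes a genuinely different route from the paper's. The paper, like you, obtains the algebraic $^*$-representation on the pre-Hilbert space $\mathscr{O}(\G)\xi_{\G}$ from Lemma \ref{LemRepDefCoc}, but for boundedness it simply invokes the argument of Proposition \ref{PropUniLi}: the coaction $g\mapsto \Delta(g)$ makes $\mathscr{O}(\G_{\omega})$ a $\G$-comodule $^*$-algebra which is the same comodule as $\mathscr{O}(\G)$, hence has trivial coinvariants; every element is a linear combination of elements $T(e_i)$ with $T$ an intertwiner from an irreducible representation into the coaction, and the coinvariant element $x_T=\sum_i T(e_i)\underset{\omega}{\cdot}T(e_i)^{\#}$ is therefore a scalar, so positivity gives $\|\lambda(T(e_i))\|^2\leq \|x_T\|$ in \emph{every} $^*$-representation $\lambda$ by adjointable operators. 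This soft argument proves automatic boundedness of all $^*$-representations of $\mathscr{O}(\G_{\omega})$ at once (it is also exactly what produces the universal envelope $C(\G_{\omega,u})$ in the preceding theorem), with the unitarity of $\omega$ entering only implicitly, through the fact that $\#$ is an involution. Your argument is instead concrete and specific to $\pi_{\omega}$: the factorization $\pi_{\omega}(g)=\sum_k \pi_{\red}(g_{(1)}^k)\,l_{\theta_k}$, together with the observation that unitarity of the blocks of $\omega$ makes each slice $\theta_k=\omega(g_{(2)}^k,\,\cdot\,)$ a multiplier of $\mathscr{O}(\widehat{\G})$ of finite supremum norm, acting boundedly through the regular representation. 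This buys an explicit norm estimate for $\pi_{\omega}(g)$ and bypasses the homogeneity machinery entirely; the price is the (routine, but worth writing out) check that the extension of $l$ to $\prod_{\sigma}B(\Hsp_{\sigma})$ still acts by the slice formula $h\xi_{\G}\mapsto ((\id_{\G}\otimes\theta_k)\Delta(h))\xi_{\G}$ on $\mathscr{O}(\G)\xi_{\G}$, which one sees blockwise from $\Delta(C(\G)_{\sigma})\subseteq C(\G)_{\sigma}\aotimes C(\G)_{\sigma}$, so that only the block $\theta_k p_{\sigma}\in\mathscr{O}(\widehat{\G})$ is ever used. Your faithfulness and uniqueness arguments coincide with what the paper leaves implicit.
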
 
\begin{proof} We have already proven in Lemma \ref{LemRepDefCoc} that $\pi_{\omega}$ exists as a $^*$-representation on $\mathscr{O}(\G)\xi_{\G}$. It then follows as in the proof of Proposition \ref{PropUniLi} that $\pi_{\omega}$ extends to a representation on $L^2(\G)$. 
\end{proof} 

Let us now return to general quantum torsors. The following lemma is a special case of Theorem \ref{TheoGalAnAlg}.

\begin{Lem}\label{LemTorGal} Let $\X\overset{\alpha}{\curvearrowleft}\G$ be a quantum torsor. Then the maps \[\mathrm{Can}_r: \mathscr{O}_{\G}(\X) \aotimes \mathscr{O}_{\G}(\X)\rightarrow \mathscr{O}_{\G}(\X)\aotimes \mathscr{O}(\G),\quad a\otimes b \mapsto \alpha(a)(b\otimes 1)\] and  \[\mathrm{Can}_l: \mathscr{O}_{\G}(\X) \aotimes \mathscr{O}_{\G}(\X)\rightarrow \mathscr{O}_{\G}(\X)\aotimes \mathscr{O}(\G),\quad a\otimes b \mapsto (a\otimes 1)\alpha(b)\] are bijective.  
\end{Lem}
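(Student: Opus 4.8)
The plan is to deduce both bijectivity statements from Theorem \ref{TheoGalAnAlg} together with the homogeneity of $\alpha$. Since $\X\overset{\alpha}{\curvearrowleft}\G$ is a quantum torsor, it is in particular a free and homogeneous action on a \emph{compact} quantum space, so that Theorem \ref{TheoGalAnAlg} is applicable and moreover $C(\Y)=C(\X/\G)=\C1_{\X}$. Consequently the balanced tensor product $\mathscr{O}_{\G}(\X)\underset{C(\Y)}{\otimes}\mathscr{O}_{\G}(\X)$ occurring in Theorem \ref{TheoGalAnAlg} coincides with the algebraic tensor product $\mathscr{O}_{\G}(\X)\aotimes\mathscr{O}_{\G}(\X)$ over $\C$, and the algebraic Galois map $G_{\alpha}$ is literally $\mathrm{Can}_r$. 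As $\alpha$ is free, Theorem \ref{TheoGalAnAlg} then yields at once that $\mathrm{Can}_r$ is bijective.

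It remains to handle $\mathrm{Can}_l$, and here I would exploit the $^*$-structure to reduce to the already-established bijectivity of $\mathrm{Can}_r$, the point being that conjugation interchanges the left and right canonical maps up to a flip of the two tensor legs. Concretely, using that $\alpha$ restricts to a $^*$-preserving coaction on the algebraic core, one computes for $a,b\in\mathscr{O}_{\G}(\X)$ that
\[
\big(\mathrm{Can}_r(a\otimes b)\big)^* = \big(\alpha(a)(b\otimes 1_{\G})\big)^* = (b^*\otimes 1_{\G})\alpha(a^*) = \mathrm{Can}_l(b^*\otimes a^*).
\]
Introducing the antilinear bijective involutions $J\colon z\mapsto z^*$ on $\mathscr{O}_{\G}(\X)\aotimes\mathscr{O}(\G)$ and $K\colon a\otimes b\mapsto b^*\otimes a^*$ on $\mathscr{O}_{\G}(\X)\aotimes\mathscr{O}_{\G}(\X)$, this identity reads $J\circ\mathrm{Can}_r=\mathrm{Can}_l\circ K$. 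Since $K$ is an involution, I obtain $\mathrm{Can}_l=J\circ\mathrm{Can}_r\circ K$, which is a composite of two antilinear bijections with the linear bijection $\mathrm{Can}_r$, hence a linear bijection. This settles $\mathrm{Can}_l$.

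The only points requiring care are essentially bookkeeping: checking that $J$ and $K$ are well defined as involutions of the relevant $^*$-algebras (this uses that $\mathscr{O}_{\G}(\X)\aotimes\mathscr{O}(\G)$ carries the tensor-product $^*$-structure and that $\alpha_{\alg}$ is $^*$-preserving, as recorded earlier), and observing that the composite of two antilinear maps is $\C$-linear, so that $\mathrm{Can}_l$ is genuinely linear and not merely additive. I expect no real obstacle here: the substantive content is entirely carried by Theorem \ref{TheoGalAnAlg}, which converts the analytic freeness condition into the algebraic bijectivity of the canonical map, and the passage from $\mathrm{Can}_r$ to $\mathrm{Can}_l$ is a purely formal $^*$-duality argument.
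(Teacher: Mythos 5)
Your proof is correct, but it takes a genuinely different route from the paper for the substantive half of the statement. Your treatment of $\mathrm{Can}_l$ is exactly the paper's: the identity $J\circ \mathrm{Can}_r = \mathrm{Can}_l\circ K$ is a repackaging of the relation $(b\otimes 1)\alpha(a) = \left(\alpha(a^*)(b^*\otimes 1)\right)^*$ with which the paper concludes. For $\mathrm{Can}_r$, however, you invoke Theorem \ref{TheoGalAnAlg} directly, observing (correctly) that homogeneity makes $C(\Y)=\C 1_{\X}$, so that the balanced tensor product collapses to $\mathscr{O}_{\G}(\X)\aotimes\mathscr{O}_{\G}(\X)$ and the algebraic Galois map $G_{\alpha}$ is literally $\mathrm{Can}_r$; this is precisely the sense in which the paper announces the lemma as ``a special case of Theorem \ref{TheoGalAnAlg}''. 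The catch is that Theorem \ref{TheoGalAnAlg} is quoted in these notes from \cite{BDH13} without proof, so your argument outsources all of the content to an external black box. The paper instead gives a more self-contained proof: it uses only the easy implication of Theorem \ref{TheoFreeEq} (freeness plus homogeneity give a \emph{unitary} Galois map $\mathcal{G}_{\alpha}$ on the $L^2$-level), shows that $\mathcal{G}_{\alpha}$ splits into unitaries $\mathcal{G}_{\pi}$ between isotypical pieces, and then uses the orthogonality relations and Boca's finite-dimensionality theorem (Theorem \ref{TheoFinBoc}) to verify that $\mathcal{G}_{\pi}^*(\xi_{\X}\otimes h\xi_{\G})$ lies in $C(\X)_{\pi}\aotimes C(\X)_{\bar{\pi}}$, i.e.\ that the analytic inverse maps algebraic vectors to algebraic vectors. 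That extra work buys finer information than bare bijectivity: the inverses of the canonical maps respect isotypical components, a fact the paper exploits later, in the proof of Theorem \ref{TheoUnSpec}, where $\mathrm{Can}_l^{-1}(1_{\X}\otimes U_{\pi}(e_k,\eta))$ is expanded in terms of the $U_{\pi}(T,\xi)$. Your argument buys brevity and is logically sound within the notes (Theorem \ref{TheoGalAnAlg} precedes the lemma and does not depend on it, so there is no circularity), but it is less self-contained and does not by itself yield the isotypical refinement that the subsequent results rely on.
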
 
\begin{proof} As $\X\overset{\alpha}{\curvearrowleft} \G$ is free and homogeneous, we have a unitary map \[\mathcal{G}_{\alpha}: L^2(\X) \otimes L^2(\X)\rightarrow L^2(\X)\otimes L^2(\G)\] such that \[a\xi_{\X}\otimes \eta \mapsto \alpha(a)(\eta\otimes \xi_{\G}).\] We need to show that it restricts to an isomorphism on the algebraic level. But as we have noted in the proof of Theorem \ref{TheoFinBoc}, $\mathcal{G}_{\alpha}$ splits into unitaries \[\mathcal{G}_{\pi}: C(\X)_{\pi}\xi_{\X}\otimes L^2(\X)\rightarrow L^2(\X) \otimes C(\G)_{\pi}\xi_{\G}.\] If now $h\in C(\G)_{\pi}$, we claim that $\mathcal{G}_{\pi}^*(\xi_{\X}\otimes h\xi_{\G}) \in C(\X)_{\pi}\otimes C(\X)_{\bar{\pi}}$. Indeed,  the isotypical components of $C(\X)$ are orthogonal, and for $\rho$ not equivalent with $\bar{\pi}$, we have for $b\in C(\X)_{\pi}, c\in C(\X)_{\rho}$ that \begin{multline*}\langle \mathcal{G}_{\pi}^*(1_{\X}\otimes h\xi_{\G}),b\xi_{\X}\otimes c\xi_{\X}\rangle = \langle 1_{\X}\otimes h\xi_{\G},b_{(0)}c\xi_{\X}\otimes b_{(1)}\xi_{\G}\rangle  \\ = \varphi_{\X}(b_{(0)}c)\varphi_{\G}(h^*b_{(1)}) = 0,\end{multline*} since $C(\X)_{\pi}^* = C(\X)_{\bar{\pi}}$. It follows by finite-dimensionality of the $C(\X)_{\pi}$ that we can write \[\sum_i a_i\xi_{\X}\otimes b_i\xi_{\X} = \mathcal{G}_{\pi}^*(\xi_{\X}\otimes h\xi_{\G}),\] where in the left hand side $a_i \in C(\X)_{\pi}$ and $b_i \in C(\X)_{\bar{\pi}}$. But then we have, for $x\in \mathscr{O}_{\G}(\X)$ arbitrary, \[\alpha(a_i)(b_ix\otimes 1_{\G}) = x\otimes h.\] It follows that $\mathrm{Can}_r$ is surjective. As $\mathcal{G}_{\alpha}$ is isometric, $\mathrm{Can}_r$ is also injective.

The second statement concerning $\mathrm{Can}_l$ follows immediately since \[(b\otimes 1)\alpha(a) = \left(\alpha(a^*)(b^*\otimes 1)\right)^*.\]
\end{proof}

The quantum torsor condition can also be expressed in terms of the elements $U_{\pi}(T,\xi)$, introduced in Section \ref{SecHom}.

\begin{Theorem}\label{TheoUnSpec} Let $\X\overset{\alpha}{\curvearrowleft}\G$ be a non-trivial homogeneous action. Then $\X\overset{\alpha}{\curvearrowleft} \G$ is a quantum torsor if and only if for each irreducible $\pi$, the $C(\X)$-valued matrices $(U_{\pi}(T_i,e_j))_{i,j}$ are unitary, where $\{e_j\}$ is an orthonormal basis of $\Hsp_{\pi}$ and $\{T_i\}$ an orthonormal basis of $\Mor(\pi,\alpha)$.
\end{Theorem}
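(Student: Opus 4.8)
The plan is to reduce the whole statement, for each fixed irreducible $\pi$, to a single matrix identity, and then to read off that identity from the \emph{left} canonical map, whose surjectivity turns out to be governed exactly by it. Fix $\pi$, an orthonormal basis $\{e_j\}_{j=1}^{n}$ of $\Hsp_{\pi}$ and an orthonormal basis $\{T_i\}_{i=1}^{m}$ of $\Mor(\pi,\alpha)$, and abbreviate $v_{ij}=U_{\pi}(T_i,e_j)\in C(\X)_{\pi}$ and $u_{lj}=U_{\pi}(e_l,e_j)\in C(\G)_{\pi}$. The first thing I would record is that $W=(v_{ij})_{i,j}$ is \emph{always} a coisometry: since $\{T_i\}$ is orthonormal, $\sum_{j}U_{\pi}(T_i,e_j)U_{\pi}(T_k,e_j)^{*}=\langle T_k,T_i\rangle=\delta_{ik}1_{\X}$ (this is exactly the relation used in the proof of Theorem~\ref{TheoEstpi}), i.e. $WW^{*}=I_{m}$. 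Hence $\|W\|\le 1$, so $W^{*}W\le I_{n}$, and $W$ is unitary if and only if the remaining identity $W^{*}W=I_{n}$, namely $\sum_{i}v_{ip}^{*}v_{il}=\delta_{pl}1_{\X}$, holds. Thus the theorem reduces, since homogeneity and non-triviality are assumed, to showing that $\alpha$ is free if and only if $W^{*}W=I_{n}$ for every irreducible $\pi$.

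For the bridge to freeness I would use the left canonical map $\mathrm{Can}_{l}(a\otimes b)=(a\otimes 1)\alpha(b)$. Because $\X$ is compact, Theorem~\ref{TheoGalAnAlg} identifies freeness with bijectivity of $\mathrm{Can}_{r}(a\otimes b)=\alpha(a)(b\otimes 1)$; applying the antilinear involution $\Phi\colon a\otimes b\mapsto b^{*}\otimes a^{*}$ on the source and $\Psi=*\otimes *$ on $\mathscr{O}_{\G}(\X)\aotimes\mathscr{O}(\G)$, and using that $\alpha$ is $*$-preserving, one checks $\mathrm{Can}_{l}=\Psi\circ\mathrm{Can}_{r}\circ\Phi$, so $\mathrm{Can}_{l}$ is bijective precisely when $\mathrm{Can}_{r}$ is. Moreover $\mathrm{Can}_{l}$ sends $\mathscr{O}_{\G}(\X)\aotimes C(\X)_{\pi}$ into $\mathscr{O}_{\G}(\X)\aotimes C(\G)_{\pi}$, so it splits as a direct sum of maps $\mathrm{Can}_{l}^{\pi}$, and freeness is equivalent to bijectivity of every $\mathrm{Can}_{l}^{\pi}$.

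The heart of the argument is the equivalence $\mathrm{Can}_{l}^{\pi}$ bijective $\iff$ $W^{*}W=I_{n}$. In coordinates $\mathrm{Can}_{l}^{\pi}\big(\sum_{i,j}a_{ij}\otimes v_{ij}\big)=\sum_{l,j}\big(\sum_{i}a_{ij}v_{il}\big)\otimes u_{lj}$, and since the $u_{lj}$ form a basis of $C(\G)_{\pi}$ the equations can be read off entrywise. Injectivity is automatic: from $\sum_{i}a_{ij}v_{il}=0$ for all $l$, multiplying on the right by $v_{kl}^{*}$ and summing over $l$ collapses via $WW^{*}=I_{m}$ to $a_{kj}=0$. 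For surjectivity I would argue both directions. If $W^{*}W=I_{n}$, then $a_{ij}:=\sum_{p}x_{pj}v_{ip}^{*}$ is an explicit preimage of $\sum_{l,j}x_{lj}\otimes u_{lj}$, since $\sum_{i}a_{ij}v_{il}=\sum_{p}x_{pj}\sum_{i}v_{ip}^{*}v_{il}=x_{lj}$. Conversely, surjectivity lets me solve $\mathrm{Can}_{l}^{\pi}(\cdot)=1_{\X}\otimes u_{pj_0}$; the components with $j\ne j_0$ force $a_{ij}=0$ (again by $WW^{*}=I_{m}$), while the $j_0$-component reads $\sum_{i}a_{ij_0}v_{il}=\delta_{lp}1_{\X}$, and multiplying on the right by $v_{kl}^{*}$, summing over $l$, and using $WW^{*}=I_{m}$ forces $a_{kj_0}=v_{kp}^{*}$; substituting back gives exactly $\sum_{i}v_{ip}^{*}v_{il}=\delta_{pl}1_{\X}$, i.e. $W^{*}W=I_{n}$. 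Chaining the equivalences yields free $\iff$ all $W_{\pi}$ unitary, which is the theorem; note that unitarity in particular forces $\mult(\pi)=\dim(\Hsp_{\pi})$, so every irreducible must occur.

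I expect the surjectivity half of the last equivalence to be the main obstacle, precisely because non-commutativity of $C(\X)$ defeats the naive guesses: the target identity $W^{*}W=I_{n}$ carries its adjoints on the \emph{left}, which is exactly why $\mathrm{Can}_{l}$ and not $\mathrm{Can}_{r}$ must be used (for $\mathrm{Can}_{r}$ the parallel manipulation produces the term $v_{il}v_{kl}^{*}$ in the wrong order and does not close up). The delicate observation is that the coisometry relation $WW^{*}=I_{m}$, which is free of charge from homogeneity, is simultaneously what kills the cross terms and what pins the preimage coefficients down to $v_{kp}^{*}$, thereby converting surjectivity into the desired isometry relation. A secondary technical point to verify carefully is the $*$-duality $\mathrm{Can}_{l}=\Psi\circ\mathrm{Can}_{r}\circ\Phi$, which is what lets Theorem~\ref{TheoGalAnAlg} be transferred from $\mathrm{Can}_{r}$ to $\mathrm{Can}_{l}$.
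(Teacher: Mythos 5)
Your core argument is correct and runs on the same engine as the paper's proof: everything is reduced to the left canonical map $\mathrm{Can}_l$, which splits along isotypical components, and the coisometry relation $WW^*=I_m$ (free from homogeneity) is used both to kill cross terms and to pin down preimage coefficients, so that bijectivity of $\mathrm{Can}_l^{\pi}$ becomes equivalent to the isometry relation $W^*W=I_n$. The differences are organizational: the paper gets bijectivity of $\mathrm{Can}_l$ for a torsor from its own Lemma \ref{LemTorGal} (proved via the unitary Galois isometry) and, conversely, concludes freeness from surjectivity of $\mathrm{Can}_l$ alone via the Ellwood condition, whereas you route both directions through Theorem \ref{TheoGalAnAlg} (quoted in the paper without proof) combined with the $*$-duality $\mathrm{Can}_l=\Psi\circ\mathrm{Can}_r\circ\Phi$ --- a duality the paper also records, in the form $(b\otimes 1)\alpha(a)=\left(\alpha(a^*)(b^*\otimes 1)\right)^*$. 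Your fully coordinatized solution of the linear system for the preimage of $1_{\X}\otimes u_{pj_0}$ is, if anything, more transparent than the paper's ansatz for $\mathrm{Can}_l^{-1}(1_{\X}\otimes U_{\pi}(e_k,\eta))$, at the price of leaning on the imported Theorem \ref{TheoGalAnAlg} where the paper stays self-contained.

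One genuine error, though it is confined to a side remark and nothing in your chain of equivalences uses it: unitarity of $W_{\pi}$ does \emph{not} force $\mult(\pi)=\dim(\Hsp_{\pi})$. Over a noncommutative C$^*$-algebra a rectangular matrix can be unitary; for instance the row $(V_1\,\cdots\,V_N)$ over the Cuntz algebra $\mathcal{O}_N$ is a unitary $1\times N$ matrix, since $\sum_i V_iV_i^*=1$ and $V_i^*V_j=\delta_{ij}$. This is precisely the point of the sentence following the theorem in the paper (the matrices ``need not be square, but their unitarity can still be guaranteed by the non-commutativity of $C(\X)$''): the equality $\mult(\pi)=\dim(\pi)$ for all $\pi$ characterizes the cocycle twists $\G_{\omega}$, and Section \ref{SecTor} points to torsors with $\mult(\pi)>\dim(\pi)$. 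What unitarity does force is $\mult(\pi)\geq 1$ (an empty block cannot satisfy $W^*W=I_n$), so ``every irreducible occurs'' is fine; the correct numerical consequence is the one in the corollary following the theorem, namely $\mult_q(\pi)=\dim_q(\pi)$. Delete the dimension claim and the proof stands.
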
  
Note that in general the $(U_{\pi}(T_i,e_j))_{i,j}$ need not be \emph{square} matrices, but their unitarity can still be guaranteed by the non-commutativity of $C(\X)$. 

\begin{proof} We know that each matrix $(U_{\pi}(T_i,e_j))_{i,j}$ is a coisometry. If then $\X\overset{\alpha}{\curvearrowleft}\G$ is a quantum torsor, it hence suffices to show that $(U_{\pi}(T_i,e_j))_{i,j}$ has a left inverse. But fix a non-zero $\eta\in \Hsp_{\pi}$. As follows from the proof of Lemma \ref{LemTorGal}, we can write \[\mathrm{Can}_l^{-1}(1_{\X}\otimes U_{\pi}(e_k,\eta))= \sum_{i,j}  U_{\pi}(T^{(j)}_{k,i},e_j)^*\otimes U_{\pi}(T_i,\eta)\] for certain $T^{(j)}_{k,i}\in \Mor(\pi,\alpha)$. We then have \begin{eqnarray*} && \hspace{-2cm}\mathrm{Can}_l\left(\sum_{i,j} U_{\pi}(T^{(j)}_{k,i},e_j)^*\otimes U_{\pi}(T_i,\eta)\right) \\ &=& \sum_{i,j,p} U_{\pi}(T^{(j)}_{k,i},e_j)^*U_{\pi}(T_i,e_p)\otimes U_{\pi}(e_p,\eta) \\ &=& 1_{\X}\otimes U_{\pi}(e_k,\eta).\end{eqnarray*} It follows that $(\sum_j U_{\pi}(T^{(j)}_{k,i},e_j)^*)_{k,i}$ is an inverse to $(U_{\pi}(T_i,e_j))_{i,j}$. 

Conversely, if all $(U_{\pi}(T_i,e_j))_{i,j}$ are unitary, note that, for $a\in \mathscr{O}_{\G}(\X)$, \[ \mathrm{Can}_l\left(\sum_{i} aU_{\pi}(T_i,e_k)^*\otimes U_{\pi}(T_i,e_j)\right) = a\otimes U_{\pi}(e_j,e_k).\] It follows that $\mathrm{Can}_l$ is surjective, and hence $\X\overset{\alpha}{\curvearrowleft}\G$ free. 
\end{proof} 

The above characterisation allows to give a purely numerical characterisation of quantum torsors. It also explains the terminology `action of full quantum multiplicity' for quantum torsors, used in \cite{BDV06} where the following corollary is proven.

\begin{Cor}Let $\X\overset{\alpha}{\curvearrowleft}\G$ be a homogeneous action. Then $\X$ is a quantum $\G$-torsor if and only if $\mult_q(\pi) = \dim_q(\pi)$ for all irreducible representations $\pi$ of $\G$. 
\end{Cor}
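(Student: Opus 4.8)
The plan is to reduce the whole statement to Theorem~\ref{TheoUnSpec}, which characterises quantum torsors among non-trivial homogeneous actions by the requirement that, for every irreducible $\pi$, the rectangular $C(\X)$-valued matrix $U_{\X,\pi} = (U_{\pi}(T_i,e_j))_{i,j}$ be unitary, where $\{T_i\}$ is an orthonormal basis of $\Mor(\pi,\alpha)$ and $\{e_j\}$ an orthonormal basis of $\Hsp_{\pi}$. The first observation I would record is that, by the computation in the proof of Theorem~\ref{TheoEstpi}, $U_{\X,\pi}$ is \emph{always} a coisometry, $U_{\X,\pi}U_{\X,\pi}^*=I$; hence $P_{\pi}:=U_{\X,\pi}^*U_{\X,\pi}$ is always a projection, and $U_{\X,\pi}$ is unitary precisely when $P_{\pi}=I$. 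The bridge to the numerical invariants is the second orthogonality relation of Corollary~\ref{CorOrthErg}: inserting $S=T=T_i$, $\eta=e_j$, $\xi=e_l$ and summing over the orthonormal basis $\{T_i\}$ gives
\[
\varphi_{\X}\big((P_{\pi})_{jl}\big)=\sum_i \varphi_{\X}\big(U_{\pi}(T_i,e_j)^*U_{\pi}(T_i,e_l)\big)=\delta_{jl}\,\frac{\Tr(F_{\pi}^{-1})}{\Tr(Q_{\pi})}.
\]

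For the forward implication I would assume $\X$ is a torsor, so $P_{\pi}=I$ for every $\pi$. Applying $\varphi_{\X}$ to the diagonal entries and comparing with the displayed formula yields $\Tr(F_{\pi}^{-1})=\Tr(Q_{\pi})$. Since the torsor property holds for \emph{every} irreducible representation, I apply this also to $\bar\pi$ and invoke the identities $\Tr(F_{\pi})=\Tr(F_{\bar\pi}^{-1})$ and $\Tr(Q_{\bar\pi})=\Tr(Q_{\pi})$ recorded in the proof of Theorem~\ref{TheoEstpi}, obtaining $\Tr(F_{\pi})=\Tr(Q_{\pi})$ as well. Then
\[
\mult_q(\pi)=\sqrt{\Tr(F_{\pi})\Tr(F_{\pi}^{-1})}=\Tr(Q_{\pi})=\dim_q(\pi).
\]

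For the converse I would assume $\mult_q(\pi)=\dim_q(\pi)$. By Theorem~\ref{TheoEstpi} both $\Tr(F_{\pi})$ and $\Tr(F_{\pi}^{-1})$ are $\leq\Tr(Q_{\pi})$ (the bound on $\Tr(F_{\pi})$ coming from applying $\Tr(F_{\sigma}^{-1})\leq\Tr(Q_{\sigma})$ to $\sigma=\bar\pi$ together with the same trace identities), while their product equals $\Tr(Q_{\pi})^2$; hence both equal $\Tr(Q_{\pi})$, and in particular $\Tr(F_{\pi}^{-1})=\Tr(Q_{\pi})$. The displayed formula then gives $\varphi_{\X}\big((P_{\pi})_{jl}\big)=\delta_{jl}$, i.e.\ $\varphi_{\X}\big((I-P_{\pi})_{jj}\big)=0$ for each $j$. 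To upgrade this scalar vanishing to the operator identity $P_{\pi}=I$ I would use that $I-P_{\pi}$ is itself a projection, so $(I-P_{\pi})_{jj}=\sum_l (I-P_{\pi})_{lj}^*(I-P_{\pi})_{lj}$; each summand $\varphi_{\X}\big((I-P_{\pi})_{lj}^*(I-P_{\pi})_{lj}\big)$ is nonnegative and sums to $0$, hence vanishes, and since the entries $(I-P_{\pi})_{lj}$ lie in $\mathscr{O}_{\G}(\X)$, the faithfulness of $E_{\X/\G}=\varphi_{\X}(\,\cdot\,)1_{\X}$ on $\mathscr{O}_{\G}(\X)$ forces each $(I-P_{\pi})_{lj}=0$. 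Thus $P_{\pi}=I$, so $U_{\X,\pi}$ is unitary for every $\pi$, and Theorem~\ref{TheoUnSpec} yields that $\X$ is a quantum torsor.

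The main obstacle, and the only step requiring genuine care beyond bookkeeping with the trace identities, is precisely this last deduction of $P_{\pi}=I$ from $\varphi_{\X}\big((P_{\pi})_{jj}\big)=1$: a naive argument would want $\varphi_{\X}$ faithful on the whole C$^*$-completion, which is not available for free, but the projection trick together with the algebraic faithfulness of $\varphi_{\X}$ on $\mathscr{O}_{\G}(\X)$ circumvents this entirely and keeps the argument at the purely algebraic level.
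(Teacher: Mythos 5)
Your proof is correct and follows essentially the same route as the paper's: both directions hinge on the unitarity criterion of Theorem \ref{TheoUnSpec}, the trace identities and estimates from (the proof of) Theorem \ref{TheoEstpi}, and, for the converse, the fact that $U_{\X,\pi}^*U_{\X,\pi}$ is a projection with entries in $\mathscr{O}_{\G}(\X)$, on which $\varphi_{\X}$ is faithful. Your explicit evaluation of $\varphi_{\X}\big((P_{\pi})_{jl}\big)$ via Corollary \ref{CorOrthErg} and the entrywise positivity argument simply make explicit the steps the paper compresses into ``the estimates \ldots show that $(\id_{\Hsp_{\pi}}\otimes\varphi_{\X})(U_{\X,\pi}^*U_{\X,\pi})=\id_{\Hsp_{\pi}}$'' and the projection-plus-faithfulness conclusion.
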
 

\begin{proof} If all $U_{\pi}$ are unitary, the string of inequalities at the end of the proof of Theorem \ref{TheoEstpi} turn into equalities. Hence $\mathrm{Tr}(F_{\pi}^{-1}) = \mathrm{Tr}(Q_{\pi})$ for all irreducible $\G$-representations $\pi$, and $\mult_q(\pi) = \dim_q(\pi)$. 

Conversely, if this identity holds for all $\pi$, then the estimates for $\mathrm{Tr}(F_{\pi}^{-1})$ in the proof of Theorem \ref{TheoEstpi} show that \[(\id_{\Hsp_{\pi}}\otimes \varphi_{\X})(U_{\X,\pi}^*U_{\X,\pi}) = \id_{\Hsp_{\pi}}.\]  As $U_{\X,\pi}^*U_{\X,\pi}$ is a projection, and as $\varphi_{\X}$ is faithful on $\mathscr{O}_{\G}(\X)$, it follows that $U_{\X,\pi}^*U_{\X,\pi}$ is the unit, and hence $U_{\X,\pi}$ a unitary. 
\end{proof}

It is proven in \cite[Section 4]{BDV06} that, on the other hand, we have the equality $\mult(\pi) = \dim(\pi)$ for all irreducible $\G$-representations $\pi$ if and only if $\X = \G_{\omega}$ for some unitary 2-cocycle on $\widehat{\G}$. 

A quantum torsor turns out to actually be a quantum \emph{bi}torsor in a canonical way. That is, from the quantum torsor $\X\curvearrowleft \G$ one can construct a new compact quantum group $\mathbb{H}$ and an action $\mathbb{H}\curvearrowright \X$ such that $\X$ is a left quantum $\mathbb{H}$-torsor and such that the actions of $\mathbb{H}$ and $\mathbb{G}$ commute. We will briefly discuss this construction in the following pages. We refer to \cite{Sch96} for this construction in the setting of Hopf algebras, see also \cite{Bic03}, and \cite{BDV06} for the construction in the setting of compact quantum groups by means of Tannaka-Kre$\breve{\textrm{\i}}$n techniques. 

So, let $\X\overset{\alpha}{\curvearrowleft} \G$ be a quantum torsor. Consider $\mathscr{O}_{\G}(\X^{\op}) = \mathscr{O}_{\G}(\X)^{\op}$, the opposite $^*$-algebra of $\mathscr{O}_{\G}(\X)$. We write the elements in $\mathscr{O}_{\G}(\X)^{\op}$ as $x^{\op}$, so that \[x^{\op}y^{\op} = (yx)^{\op}, \quad (x^{\op})^* = (x^*)^{\op}, \quad x,y\in \mathscr{O}_{\G}(\X).\] For $T\in \Mor(\pi,\alpha)$ and $\xi\in \Hsp_{\pi}$, we further write \[U_{\pi}(\xi,T) = (U_{\pi}(F_{\pi}^{1/2} T, Q_{\pi}^{-1/2}\xi)^*)^{\op} \in \mathscr{O}_{\G}(\X^{\op})\] Note that this formula is inspired by the formula $S(U_{\pi}(\xi,\eta))^* = U_{\pi}(\eta,\xi)$ which holds for compact quantum groups, except that the unitary antipode $R(a) = f^{1/2}*S(a)*f^{-1/2}$ has been changed by the formal operation of `taking the opposite'. To further strengthen this analogy, we will henceforth also write \[S(U_{\pi}(T,\xi)) = U_{\pi}(\xi,T)^*,\quad S(U_{\pi}(\xi,T)) = U_{\pi}(T,\xi)^*.\] Note however that, in contrast, $U_{\pi}(\xi,T)$ is antilinear in both $\xi$ and $T$!

\begin{Lem}\label{LemSBij} The maps \[S: \mathscr{O}(\X) \rightarrow  \mathscr{O}(\X^{\op}),\quad S: \mathscr{O}(\X^{\op}) \rightarrow  \mathscr{O}(\X)\] are bijective anti-homomorphisms satisfying \[S(S(x)^*)^* = x.\]
\end{Lem}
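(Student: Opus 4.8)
The plan is to treat $S$ on each isotypical component and then assemble. First I would record that, for irreducible $\pi$, the evaluation map $\Mor(\pi,\alpha)\otimes \Hsp_\pi \to C(\X)_\pi$, $T\otimes \xi \mapsto U_\pi(T,\xi)$, is a linear isomorphism: the first orthogonality relation of Corollary \ref{CorOrthErg} shows that the Gram matrix of the $U_\pi(T_i,e_j)$ with respect to $(a,b)\mapsto \varphi_{\X}(ab^*)$ equals $\Tr(Q_\pi)^{-1}\delta_{ik}\langle e_l,Q_\pi e_j\rangle$, which is non-degenerate, so these elements form a basis of $C(\X)_\pi$. Since $U_\pi(T,\xi)$ is bilinear in $(T,\xi)$ while $U_\pi(\xi,T)^*$ is also bilinear in $(T,\xi)$ (the antilinearities of $U_\pi(\xi,T)$ and of $\ast$ cancel), the prescriptions $U_\pi(T,\xi)\mapsto U_\pi(\xi,T)^*$ and $U_\pi(\xi,T)\mapsto U_\pi(T,\xi)^*$ descend to well-defined $\C$-linear maps on $C(\X)_\pi$ and on $(C(\X)_{\bar\pi})^{\op}$ respectively; extending via $\mathscr{O}_{\G}(\X)=\bigoplus_\pi C(\X)_\pi$ and $\mathscr{O}(\X^{\op})=\bigoplus_\pi (C(\X)_\pi)^{\op}$ defines the two maps $S$.

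Second, the identity $S(S(x)^*)^*=x$ is immediate on generators: for $x=U_\pi(T,\xi)$ one has $S(x)=U_\pi(\xi,T)^*$, so $S(x)^*=U_\pi(\xi,T)$, then $S(S(x)^*)=U_\pi(T,\xi)^*$ and finally $S(S(x)^*)^*=U_\pi(T,\xi)=x$; the same computation starting from $U_\pi(\xi,T)\in\mathscr{O}(\X^{\op})$ gives the identity on the opposite side. These two identities say precisely that $S\circ\ast\circ S=\ast$ in both directions. Since $\ast$ is a bijection, the first identity forces $S\colon\mathscr{O}(\X)\to\mathscr{O}(\X^{\op})$ to be injective and $S\colon\mathscr{O}(\X^{\op})\to\mathscr{O}(\X)$ to be surjective, and the second forces the reverse; hence both maps $S$ are bijective.

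The real content is anti-multiplicativity, and here I would factor $S$. Using $(x^{\op})^*=(x^*)^{\op}$ one computes $U_\pi(\xi,T)^*=(U_\pi(F_\pi^{1/2}T,Q_\pi^{-1/2}\xi))^{\op}$, so $S=\op\circ\nu$, where $\op\colon\mathscr{O}(\X)\to\mathscr{O}(\X^{\op})$ is the canonical anti-isomorphism $x\mapsto x^{\op}$ and $\nu$ is the linear map with $\nu(U_\pi(T,\xi))=U_\pi(F_\pi^{1/2}T,Q_\pi^{-1/2}\xi)$. As $\op$ is an anti-homomorphism, it suffices to prove that $\nu$ is an algebra automorphism of $\mathscr{O}_{\G}(\X)$. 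The key observation is the factorization $\nu=\rho_i\circ\sigma_{-i/2}^{\X}$, where $\rho_z:=(\id_{\X}\otimes f^{iz})\circ\alpha$ and $f^z$ are the Woronowicz characters. Indeed, from $\alpha(U_\pi(T,\xi))=\sum_j U_\pi(T,e_j)\otimes U_\pi(e_j,\xi)$ and $(\id\otimes f^{z})U_\pi=Q_\pi^{z}$ one gets $\rho_z(U_\pi(T,\xi))=U_\pi(T,Q_\pi^{iz}\xi)$, while $\sigma_{-i/2}^{\X}(U_\pi(T,\xi))=U_\pi(F_\pi^{1/2}T,Q_\pi^{1/2}\xi)$; composing yields $\nu$. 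Now $\rho_z$ is an algebra homomorphism because $\alpha$ is multiplicative and each $f^{iz}$ is a character of $\mathscr{O}(\G)$, and it is an automorphism since $\rho_z\rho_w=\rho_{z+w}$ and $\rho_0=\id$ (a one-line computation from the coaction property and $f^{iz}*f^{iw}=f^{i(z+w)}$); and $\sigma_{-i/2}^{\X}$ is an automorphism as already recorded. Hence $\nu$ is an automorphism and $S=\op\circ\nu$ is an anti-homomorphism; the anti-multiplicativity of $S\colon\mathscr{O}(\X^{\op})\to\mathscr{O}(\X)$ then follows symmetrically, or formally from the fact that $\ast\circ S$ is a homomorphism together with the identity of the previous paragraph.

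The main obstacle is exactly this anti-multiplicativity: a brute-force check would require controlling $F_{\pi\otimes\pi'}$ in terms of $F_\pi$ and $F_{\pi'}$ on the morphism spaces $\Mor(\pi\otimes\pi',\alpha)$, which do not decompose as tensor products and are awkward to manipulate directly. The factorization $\nu=\rho_i\circ\sigma_{-i/2}^{\X}$ sidesteps this entirely by writing the twist as a composite of two manifest automorphisms — the modular one-parameter group $\sigma_z^{\X}$ and the ``Woronowicz'' twist $\rho_z$ transported through the coaction — and this is the crux of the argument.
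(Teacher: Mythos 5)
Your proof is correct and takes essentially the same route as the paper: your factorization $S = \op \circ \rho_i \circ \sigma^{\X}_{-i/2}$ is exactly the paper's key identity $S(x)^{\op} = (\id_{\X}\otimes f^{-1})\alpha(\sigma^{\X}_{-i/2}(x))$, from which anti-multiplicativity follows because $\sigma^{\X}_{-i/2}$ is an algebra automorphism, $\alpha$ is multiplicative, and $f^{-1}$ is a character. The additional details you supply (well-definedness of $S$ via the orthogonality relations, bijectivity from $S(S(x)^*)^* = x$) are precisely the points the paper dismisses as immediately clear, and your verifications of them are valid.
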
 
\begin{proof} The only thing which is not immediately clear is if the $S$ are anti-homomorphisms. It is sufficient to prove this for $S$ as a map from $\mathscr{O}(\X)$ to $\mathscr{O}(\X^{\op})$. This follows from the fact that we can write \[S(x)^{\op} = (\id_{\X}\otimes f^{-1})\alpha(\sigma_{-i/2}^{\X}(x)),\]  where $f^{z}$ denote the Woronowicz characters.
\end{proof}

\begin{Lem}\label{LemAntStar} For $T\in \Mor(\pi,\alpha)$ and $\xi\in \Hsp_{\pi}$, one has \[S(U_{\pi}(T,\xi)^*) = U_{\pi}(Q_{\pi}\xi,F_{\pi}^{-1}T),\quad S(U_{\pi}(\xi,T)^*) = U_{\pi}(F_{\pi}T,Q_{\pi}^{-1}\xi).\] 
\end{Lem}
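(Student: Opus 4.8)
The plan is to reduce both equalities to an unfolding of the three defining formulas for $U_{\pi}(\xi,T)$, for $S$, and for the $^*$-operation, once we have recorded how the triple $(\Mor(\pi,\alpha),F_{\pi},Q_{\pi})$ transforms under conjugation $\pi\rightsquigarrow\bar{\pi}$. Concretely, the ingredients are the elementary star formula $U_{\pi}(T,\xi)^* = U_{\bar{\pi}}(T^{\dag},\xi^*)$ (immediate from $U_{\pi}(T,\xi)=T\xi$ and $T^{\dag}(\xi^*)=(T\xi)^*$), together with the two conjugation rules
\[Q_{\bar{\pi}}^{s}(\xi^*) = (Q_{\pi}^{-s}\xi)^*, \qquad F_{\bar{\pi}}^{s}(T^{\dag}) = (F_{\pi}^{-s}T)^{\dag}, \qquad s\in\R.\]
The first rule is the relation $Q_{\bar{\pi}}\eta^* = (Q_{\pi}^{-1}\eta)^*$ recalled after Theorem \ref{TheoWorChar}, extended to arbitrary real powers by functional calculus (the anti-linear map $\xi\mapsto\xi^*$ conjugates the positive operator $Q_{\pi}^{-1}$ onto $Q_{\bar{\pi}}$).

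The one step with genuine content is the second rule, and I would obtain it by running the computation from the proof of Theorem \ref{TheoEstpi} at the level of the analytic one-parameter group $\sigma_z^{\X}$ rather than just at $\sigma_{\X}=\sigma_{-i}^{\X}$. Applying the identity $\sigma_{i/2}^{\X}(a)^* = \sigma_{-i/2}^{\X}(a^*)$ to $a=U_{\pi}(T,\xi)$ and evaluating each side through $\sigma_z^{\X}(U_{\rho}(R,\eta))=U_{\rho}(F_{\rho}^{iz}R,Q_{\rho}^{iz}\eta)$ and the star formula yields
\[U_{\bar{\pi}}(F_{\bar{\pi}}^{1/2}T^{\dag},Q_{\bar{\pi}}^{1/2}\xi^*) = U_{\bar{\pi}}((F_{\pi}^{-1/2}T)^{\dag},(Q_{\pi}^{-1/2}\xi)^*).\]
Since the second entries already coincide by the $Q$-rule, and since $R\mapsto U_{\bar{\pi}}(R,\cdot)$ is injective (equality $R\eta=R'\eta$ for all $\eta$ forces $R=R'$ as maps $\Hsp_{\bar{\pi}}\to C(\X)$), I can cancel and read off $F_{\bar{\pi}}^{1/2}T^{\dag}=(F_{\pi}^{-1/2}T)^{\dag}$, hence the $F$-rule for all real powers. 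This is the main obstacle, though it is only a mild refinement of an argument already present in the text.

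With the three ingredients in hand, the first identity falls out by substitution: writing $U_{\pi}(T,\xi)^* = U_{\bar{\pi}}(T^{\dag},\xi^*)$, applying $S(U_{\bar{\pi}}(R,\eta)) = U_{\bar{\pi}}(\eta,R)^*$ and then the definition $U_{\bar{\pi}}(\eta,R) = (U_{\bar{\pi}}(F_{\bar{\pi}}^{1/2}R,Q_{\bar{\pi}}^{-1/2}\eta)^*)^{\op}$ gives
\[S(U_{\pi}(T,\xi)^*) = (U_{\bar{\pi}}(F_{\bar{\pi}}^{1/2}T^{\dag},Q_{\bar{\pi}}^{-1/2}\xi^*))^{\op}.\]
The conjugation rules rewrite the inner coefficient as $U_{\pi}(F_{\pi}^{-1/2}T,Q_{\pi}^{1/2}\xi)^*$, and comparison with $U_{\pi}(Q_{\pi}\xi,F_{\pi}^{-1}T) = (U_{\pi}(F_{\pi}^{-1/2}T,Q_{\pi}^{1/2}\xi)^*)^{\op}$ closes the first formula.

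For the second identity I would argue symmetrically. Here $U_{\pi}(\xi,T)^* = w^{\op}$ with $w=U_{\pi}(F_{\pi}^{1/2}T,Q_{\pi}^{-1/2}\xi)\in\mathscr{O}_{\G}(\X)$, and I would re-present $w^{\op}$ in the conjugate form $U_{\bar{\pi}}(\eta',R')$ by solving $F_{\bar{\pi}}^{1/2}R'=(F_{\pi}^{1/2}T)^{\dag}$ and $Q_{\bar{\pi}}^{-1/2}\eta'=(Q_{\pi}^{-1/2}\xi)^*$; the conjugation rules give $R'=(F_{\pi}T)^{\dag}$ and $\eta'=(Q_{\pi}^{-1}\xi)^*$. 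Applying $S(U_{\bar{\pi}}(\eta',R'))=U_{\bar{\pi}}(R',\eta')^*$ and folding back once more through the star formula $U_{\bar{\pi}}((F_{\pi}T)^{\dag},(Q_{\pi}^{-1}\xi)^*)=U_{\pi}(F_{\pi}T,Q_{\pi}^{-1}\xi)^*$ then produces $S(U_{\pi}(\xi,T)^*)=U_{\pi}(F_{\pi}T,Q_{\pi}^{-1}\xi)$, as required.
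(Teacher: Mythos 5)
Your proof is correct, and for the first identity it is step-for-step the paper's own computation: rewrite $U_{\pi}(T,\xi)^* = U_{\bar{\pi}}(T^{\dag},\xi^*)$, unfold the definition of $S$ and of $U_{\bar{\pi}}(\eta,R)$, and translate back through the conjugation rules for $Q_{\bar{\pi}}$ and $F_{\bar{\pi}}$. Your derivation of the $F$-rule is also essentially what the paper does, since the paper simply cites the computation in the proof of Theorem \ref{TheoEstpi}, which is the same modular-group argument run at $\sigma_{\X}=\sigma^{\X}_{-i}$ instead of at $\sigma^{\X}_{\pm i/2}$; your variant has the minor advantage of producing the half-power identity $F_{\bar{\pi}}^{1/2}T^{\dag}=(F_{\pi}^{-1/2}T)^{\dag}$ directly, whereas the paper states only $F_{\bar{\pi}}(T^{\dag})=(F_{\pi}^{-1}T)^{\dag}$ and uses the half-power version tacitly. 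The one genuine divergence is the second identity: the paper obtains it in one line from the first identity together with the involution property $S(S(x^*)^*)=x$ of Lemma \ref{LemSBij}, whereas you repeat the unfolding symmetrically, solving $U_{\bar{\pi}}(\eta',R')=U_{\pi}(\xi,T)^*$ for $R'=(F_{\pi}T)^{\dag}$ and $\eta'=(Q_{\pi}^{-1}\xi)^*$ (both values check out). Your route is self-contained and does not lean on Lemma \ref{LemSBij}, at the cost of an extra computation; the paper's shortcut is precisely where that earlier lemma pays off.
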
 
\begin{proof} We use again the notation $T^{\dag}(\xi^*) = T(\xi)^*$. Then it follows from the computation in the proof of Theorem \ref{TheoEstpi} that $F_{\bar{\pi}}(T^{\dag}) = (F_{\pi}^{-1}T)^{\dag}$. Hence \begin{eqnarray*} S(U_{\pi}(T,\xi)^*) &=& S(U_{\bar{\pi}}(T^{\dag},\xi^*)) \\ &=& U_{\bar{\pi}}(F_{\bar{\pi}}^{1/2}T^{\dag},Q_{\bar{\pi}}^{-1/2}\xi^*)^{\op} \\ &=& U_{\bar{\pi}}((F_{\pi}^{-1/2}T)^{\dag},(Q_{\pi}^{1/2}\xi)^*)^{\op} \\ &=& U_{\pi}(F_{\pi}^{-1/2} T,Q_{\pi}^{1/2}\xi)^{*\op} \\ &=& U_{\pi}(Q_{\pi}\xi,F_{\pi}^{-1}T).\end{eqnarray*} 
The other equation follows from $S(S(x^*)^*)=x$ for all $x\in \mathscr{O}_{\G}(\X)$. 
\end{proof}

\begin{Lem}\label{LemLeftCoac} There is a unique left action $\G\overset{\beta}{\curvearrowright} \X^{\op}$ such that \[\beta(U_{\pi}(\xi,T))= \sum_i U_{\pi}(\xi,e_i)\otimes U_{\pi}(e_i,T),\] for $e_i$ an arbitrary orthonormal basis of $\Hsp_{\pi}$. 
\end{Lem}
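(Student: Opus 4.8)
The plan is to build $\beta$ first as a Hopf $^*$-algebraic left coaction on the algebraic core $\mathscr{O}_{\G}(\X^{\op}) = \mathscr{O}_{\G}(\X)^{\op}$ and only afterwards promote it to an action. Uniqueness is automatic once $\beta$ is well defined, since $S\colon \mathscr{O}(\X)\to\mathscr{O}(\X^{\op})$ is bijective (Lemma \ref{LemSBij}) and $\mathscr{O}_{\G}(\X)$ is spanned by the $U_{\pi}(T,\xi)$, so the $U_{\pi}(\xi,T)$ span $\mathscr{O}_{\G}(\X^{\op})$. By the orthogonality relations of Corollary \ref{CorOrthErg} these spanning elements are in fact linearly independent when $T$ runs over a basis of $\Mor(\pi,\alpha)$ and $\xi$ over an orthonormal basis of $\Hsp_{\pi}$, so $C(\X^{\op})_{\pi}$ is freely spanned and it suffices to declare $\beta$ on them by the stated formula. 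For well-definedness I would record that the right-hand side $\sum_i U_{\pi}(\xi,e_i)\otimes U_{\pi}(e_i,T)$ does not depend on the orthonormal basis: $U_{\pi}(\xi,e_i)\in C(\G)$ is linear in $e_i$ while $U_{\pi}(e_i,T)$ is antilinear in $e_i$, so a unitary change of basis reassembles through $\sum_k V_{ik}\overline{V_{jk}} = \delta_{ij}$, exactly as for the comultiplication.

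The coaction property is then immediate. Applying the comultiplication formula $\Delta(U_{\pi}(\xi,\eta)) = \sum_i U_{\pi}(\xi,e_i)\otimes U_{\pi}(e_i,\eta)$ to the first leg shows that both $(\id_{\G}\otimes\beta)\beta$ and $(\Delta\otimes\id)\beta$ send $U_{\pi}(\xi,T)$ to $\sum_{i,j}U_{\pi}(\xi,e_i)\otimes U_{\pi}(e_i,e_j)\otimes U_{\pi}(e_j,T)$, giving coassociativity. The counit condition $(\varepsilon\otimes\id)\beta = \id$ follows from $\varepsilon(U_{\pi}(\xi,e_i)) = \langle\xi,e_i\rangle$ together with the antilinearity of $U_{\pi}(\,\cdot\,,T)$, which yields $\sum_i\langle\xi,e_i\rangle U_{\pi}(e_i,T) = U_{\pi}(\xi,T)$.

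The main obstacle is showing that $\beta$ is a $^*$-homomorphism. For multiplicativity I would expand a product in the opposite algebra using the definition $U_{\pi}(\xi,T) = (U_{\pi}(F_{\pi}^{1/2}T,Q_{\pi}^{-1/2}\xi)^*)^{\op}$ together with the identity $U_{\pi}(T,\eta)U_{\pi'}(T',\eta') = U_{\pi\otimes\pi'}(m\circ(T\otimes T'),\eta\otimes\eta')$, where $m$ is the multiplication of $\mathscr{O}_{\G}(\X)$. The compatibility that makes everything close up is the tensor multiplicativity of the modular data: since $\sigma_{\X}$ is an algebra automorphism with $\sigma_{\X}(U_{\pi}(T,\xi)) = U_{\pi}(F_{\pi}T,Q_{\pi}\xi)$ (Corollary \ref{CorPosF}), evaluating $\sigma_{\X}$ on a product forces $Q_{\pi\otimes\pi'} = Q_{\pi}\otimes Q_{\pi'}$ and $F_{\pi\otimes\pi'}\circ m = m\circ(F_{\pi}\otimes F_{\pi'})$; combined with multiplicativity of $\Delta$ on the $C(\G)$-legs this matches $\beta$ of the product with the product of the $\beta$'s. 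For $^*$-preservation I would use Lemma \ref{LemAntStar}, which controls $U_{\pi}(\xi,T)^*$ through $U_{\bar\pi}$, so that applying $\beta$ and the antipode relations in $C(\G)$ reproduces $(\beta(U_{\pi}(\xi,T)))^*$. An attractive alternative that sidesteps much bookkeeping is to define $\beta$ by transporting the known right coaction $\alpha$ through the anti-isomorphism $S$ and the flip, exploiting that $S_{\G}$ is an anti-coalgebra map; this lets multiplicativity and the $^*$-property be inherited from $\alpha$, at the cost of first pinning down the precise transport identity.

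Finally, to promote the algebraic left coaction to an action in the sense of Definition \ref{DefActPod}, I would note that $\mathscr{O}_{\G}(\X^{\op}) = \mathscr{O}_{\G}(\X)^{\op}$ admits a universal C$^*$-envelope $C(\X^{\op}_u)$ — representations of the opposite algebra live on conjugate Hilbert spaces with identical norm bounds, so this follows from Proposition \ref{PropUniLi} — and then invoke the left-handed version of Lemma \ref{LemAlgtoAn} to extend $\beta$ to $\beta_u\colon C(\X^{\op}_u)\to C(\G_u)\otimes C(\X^{\op}_u)$ satisfying the Podle\'{s} condition, with the analogous statement on any equivariant completion. Uniqueness then passes to the completion by density and continuity.
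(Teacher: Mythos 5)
Your proposal is correct in outline, but it takes a genuinely different and much more laborious route than the paper. The paper's proof is essentially three lines: with $R(a) = f^{1/2}*S(a)*f^{-1/2}$ the unitary antipode of $\mathscr{O}(\G)$, which is an anti-homomorphism commuting with $^*$ and satisfying $\Delta\circ R = (R\otimes R)\circ\Delta^{\op}$, one defines $\beta(x^{\op}) = R(x_{(1)})\otimes (x_{(0)})^{\op}$; the $^*$-homomorphism property, coassociativity and counitality are then inherited in one stroke from those of $\alpha$ and $R$, and the only computation left is that this map reproduces the stated formula on the $U_{\pi}(\xi,T)$ --- this is exactly where the $F_{\pi}^{1/2}$, $Q_{\pi}^{-1/2}$ twists built into the definition of $U_{\pi}(\xi,T)$ match the $f^{\pm 1/2}$ twists in $R$. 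This is precisely your ``attractive alternative'', with one essential correction: the transport must be done with $R$, not with $S_{\G}$, since $S_{\G}(h^*) = S_{\G}^{-1}(h)^*$ means the map $x^{\op}\mapsto S_{\G}(x_{(1)})\otimes (x_{(0)})^{\op}$ is not $^*$-preserving and is off by modular factors from the desired formula. Your hands-on main route does go through with the ingredients you name (Lemma \ref{LemAntStar}, Corollary \ref{CorPosF}, the opposite-algebra universal envelope, the left-handed Lemma \ref{LemAlgtoAn}), but two points need fixing or filling in: first, $Q_{\pi\otimes\pi'} = Q_{\pi}\otimes Q_{\pi'}$ is a property of $\G$ alone (multiplicativity of the Woronowicz characters), not something forced by $\sigma_{\X}$, whereas $F_{\pi\otimes\pi'}\circ m = m\circ(F_{\pi}\otimes F_{\pi'})$ does follow from $\sigma_{\X}$ being an algebra automorphism; second, since a product of spanning elements is $U_{\pi\otimes\pi'}(\xi\otimes\xi',m\circ(T\otimes T'))$ with $\pi\otimes\pi'$ reducible, you must also verify naturality of your defining formula under intertwiners, namely $U_{\rho_2}(V\xi,T) = U_{\rho_1}(\xi,TV)$ for $V\in\Mor(\rho_1,\rho_2)$ (which rests on $Q_{\rho_2}V = VQ_{\rho_1}$ and $F_{\rho_1}(TV) = (F_{\rho_2}T)V$), so that the formula extends consistently from irreducible to arbitrary representations before you can compare $\beta$ of a product with the product of the $\beta$'s. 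What your route buys is independence from the unitary-antipode machinery; what the paper's route buys is that everything except the final formula check is automatic.
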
 
\begin{proof} Let $R$ be the unitary antipode on $\mathscr{O}(\G)$, as recalled just before Lemma \ref{LemSBij}. Then it is easy to verify that $R$ is an anti-homomorphism which commutes with the $^*$-operation and such that $\Delta \circ R = (R\otimes R)\circ \Delta^{\op}$. It follows that we have a left coaction \[\beta: \mathscr{O}_{\G}(\X^{\op}) \rightarrow \mathscr{O}(\G)\aotimes \mathscr{O}_{\G}(\X^{\op}),\quad x^{\op}\mapsto R(x_{(1)}) \otimes (x_{(0)})^{\op}.\] We leave it to the reader to check that $\beta$ acts on the $U_{\pi}(\xi,T)$ as in the statement of the lemma. 
\end{proof}

\begin{Cor}\label{CorUnSpecFlip} The action $\G\overset{\beta}{\curvearrowright} \X^{\op}$ makes $\X^{\op}$ into a left quantum $\G$-torsor. Moreover, the $C(\X)$-valued matrices $(U_{\pi}(e_i,T_j))_{i,j}$ are unitary, with $\{e_i\}$ an orthonormal basis of $\Hsp_{\pi}$ and $\{T_j\}$ an orthonormal basis of $\Mor(\pi,\alpha)$.
\end{Cor}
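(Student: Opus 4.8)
The plan is to verify the three torsor axioms for $\G\overset{\beta}{\curvearrowright}\X^{\op}$ and then read off the unitarity of the matrices $(U_{\pi}(e_i,T_j))_{i,j}$, the latter being the real content. Non-triviality is immediate, since $C(\X^{\op})=C(\X)\neq\{0\}$. For homogeneity I would argue directly from the construction of $\beta$ in Lemma~\ref{LemLeftCoac}: writing $\beta(x^{\op})=R(x_{(1)})\otimes(x_{(0)})^{\op}$ with $R$ the bijective, counit-preserving unitary antipode, an element $x^{\op}$ satisfies $\beta(x^{\op})=1_{\G}\otimes x^{\op}$ precisely when $(\psi\otimes\id_{\X})\alpha(x)=\psi(1_{\G})x$ for every functional $\psi$, i.e.\ when $\alpha(x)=x\otimes1_{\G}$. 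Homogeneity of $\alpha$ then forces $x\in C(\X/\G)=\C1$, so $\beta$ is homogeneous as well.

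The heart of the matter is the unitarity of $N=(U_{\pi}(e_i,T_j))_{i,j}$ over $C(\X^{\op})$. Using $x^{\op}y^{\op}=(yx)^{\op}$ and $(x^{\op})^{\ast}=(x^{\ast})^{\op}$ together with the defining formula $U_{\pi}(\xi,T)=\big(U_{\pi}(F_{\pi}^{1/2}T,Q_{\pi}^{-1/2}\xi)^{\ast}\big)^{\op}$, I would first show that the two relations $NN^{\ast}=\id$ and $N^{\ast}N=\id$ are equivalent to the single assertion that the $\dim(\Hsp_{\pi})\times\dim\Mor(\pi,\alpha)$ matrix $A=\big(U_{\pi}(F_{\pi}^{1/2}T_j,Q_{\pi}^{-1/2}e_i)\big)_{i,j}$ over $C(\X)$ is unitary. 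The point is that $A$ is exactly the \emph{conjugate} of the matrix $M=(U_{\pi}(T_i,e_j))_{i,j}$, which is unitary by Theorem~\ref{TheoUnSpec} because $\X\overset{\alpha}{\curvearrowleft}\G$ is a torsor; here $F_{\pi}$ on $\Mor(\pi,\alpha)$ (from Corollary~\ref{CorPosF}) and $Q_{\pi}$ on $\Hsp_{\pi}$ play precisely the role that $Q_{\pi}$ plays when passing from a unitary $\G$-representation to its unitary contragredient $\bar\pi$. Accordingly I would prove $A^{\ast}A=\id$ and $AA^{\ast}=\id$ by the mechanism ``the conjugate of a unitary is unitary'': expand the matrix products and feed in the orthogonality relations of Corollary~\ref{CorOrthErg}, which carry exactly the $F_{\pi}$-- and $Q_{\pi}$--weights, together with the unitarity of $M$. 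Concretely this can be organised by applying the anti-isomorphism $S$ of Lemma~\ref{LemSBij} to the two unitarity relations of $M$, rewriting $S(U_{\pi}(T,\xi))=U_{\pi}(\xi,T)^{\ast}$ and evaluating $S(U_{\pi}(T,\xi)^{\ast})$ through Lemma~\ref{LemAntStar}, and then reorganising the resulting $F_{\pi}$-- and $Q_{\pi}$--twists into the clean relations for $N$ via Corollary~\ref{CorOrthErg}.

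Once $N$ is unitary, freeness of $\beta$ follows from the left-handed analogue of Theorem~\ref{TheoUnSpec}: mirroring its converse direction (with $\mathrm{Can}_l,\mathrm{Can}_r$ from Lemma~\ref{LemTorGal}), a short computation exhibits a preimage of each $a\otimes U_{\pi}(e_i,e_k)$ under the relevant canonical map, so that map is surjective, hence bijective, and $\beta$ is free; combined with homogeneity and non-triviality this makes $\X^{\op}$ a left quantum $\G$-torsor. The main obstacle is exactly the unitarity of $N$: the entrywise adjoint (or transpose) of a unitary matrix over a noncommutative C$^{\ast}$-algebra is \emph{not} unitary in general, so one cannot argue from the formal unitarity of $M$ alone. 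What rescues the statement is that $F_{\pi}$ and $Q_{\pi}$ have been built into the definition of $U_{\pi}(\xi,T)$ precisely to supply the modular corrections of the conjugate object; verifying that these corrections cancel against the twists produced by $S$ --- equivalently, that $S^{2}$ acts on spectral elements by the modular recipe $U_{\pi}(T,\xi)\mapsto U_{\pi}(F_{\pi}T,Q_{\pi}^{-1}\xi)$ --- is the one genuinely nontrivial bookkeeping step, and it is controlled entirely by Corollaries~\ref{CorPosF} and~\ref{CorOrthErg}.
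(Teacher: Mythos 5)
You get several things right: the reduction of unitarity of $N=(U_{\pi}(e_i,T_j))_{i,j}$ over $C(\X^{\op})$ to unitarity of the matrix $A$ over $C(\X)$, the homogeneity of $\beta$ (which is also how the paper obtains it), and the remark that freeness would follow from unitarity via a left-handed version of Theorem \ref{TheoUnSpec}. The gap is in your central mechanism. Applying the anti-homomorphism $S$ to the unitarity relations of $M=(U_{\pi}(T_i,e_j))_{i,j}$ and evaluating via Lemma \ref{LemAntStar} yields, term by term, the identity
\[\sum_i U_{\pi}(e_k,T_i)^*\,U_{\pi}(Q_{\pi}e_j,F_{\pi}^{-1}T_i)=\delta_{jk}1\]
(and a companion identity coming from $MM^*=\id$), whereas unitarity of $N$ requires the unweighted identities
\[\sum_i U_{\pi}(e_i,T_j)^*\,U_{\pi}(e_i,T_l)=\delta_{jl}1,\qquad \sum_i U_{\pi}(e_k,T_i)\,U_{\pi}(e_j,T_i)^*=\delta_{kj}1.\]
These differ not only by the $Q_{\pi}$-- and $F_{\pi}$--weights but also in which index is summed (morphism basis versus Hilbert-space basis) relative to the position of the adjoint. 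What $S$ hands you is precisely the unitarity of a $Q_{\pi},F_{\pi}$-weighted version of the entrywise adjoint of $N$ --- that is, the transported unitarity of $M$ --- and over the noncommutative algebra $C(\X^{\op})$ this does not formally imply unitarity of $N$ itself: removing the weights is exactly the statement to be proven, so the proposed ``reorganisation'' is circular. Nor can Corollary \ref{CorOrthErg} close the gap on its own: it computes values of the \emph{state} $\varphi_{\X}$, and knowing that $\varphi_{\X}$ vanishes on the entries of $N^*N-\id$ does not force those entries to vanish unless you already know they are scalar multiples of $1_{\X}$, or you have a positivity-plus-faithfulness argument available (which presupposes one of the two relations).

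The missing ingredient --- and the paper's actual key step --- is to use the homogeneity of $\beta$, which you establish in your first paragraph but never invoke for the unitarity. By the formula of Lemma \ref{LemLeftCoac} and the unitarity of $U_{\pi}$ as a corepresentation in $B(\Hsp_{\pi})\otimes C(\G)$ (input genuinely beyond the unitarity of $M$), one computes
\[\beta\Big(\sum_i U_{\pi}(e_i,T_k)^*U_{\pi}(e_i,T_l)\Big)=1_{\G}\otimes\sum_i U_{\pi}(e_i,T_k)^*U_{\pi}(e_i,T_l),\]
so by homogeneity each entry of $N^*N$ is a \emph{scalar}; only at this point do $S$, Lemma \ref{LemAntStar} and Corollary \ref{CorOrthErg} enter, to evaluate that scalar as $\langle T_k,T_l\rangle\,\Tr(Q_{\pi}^{-1})/\Tr(Q_{\pi})=\delta_{kl}$. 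With the isometry in hand, $NN^*$ is a projection, and the coisometry then follows from the state computation together with faithfulness of $\varphi_{\X}$ on $\mathscr{O}_{\G}(\X)$. Finally, since your freeness argument rests on the unitarity of $N$, it inherits the gap; note that freeness (like homogeneity) can instead be read off directly, as in the paper, from $\beta(x^{\op})=R(x_{(1)})\otimes(x_{(0)})^{\op}$, which exhibits the Galois map of $\beta$ as the Galois map of $\alpha$ composed with the bijections $R$ and $x\mapsto x^{\op}$.
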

\begin{proof} It follows from the formula for $\beta$ in the proof of Lemma \ref{LemLeftCoac} that $\beta$ is homogeneous and free. Hence $\X^{\op}$ is a left quantum $\G$-torsor.

Moreover,  we calculate that \[\beta(\sum_i U_{\pi}(e_i,T_k)^* U_{\pi}(e_i,T_l)) =  1_{\G}\otimes \sum_i U_{\pi}(e_i,T_k)^* U_{\pi}(e_i,T_l),\] so that $\sum_i U_{\pi}(e_i,T_k)^* U_{\pi}(e_i,T_l)$ is a scalar $M_{kl}$. But then, using Lemma \ref{LemAntStar} and the orthogonality relations from Corollary \ref{CorOrthErg}, we find  \begin{eqnarray*}M_{kl} &=& \varphi_{\X}\left(S(\sum_i U_{\pi}(e_i,T_k)^* U_{\pi}(e_i,T_l))^*\right) \\ &=&  \sum_i \varphi_{\X}(U_{\pi}(F_{\pi}T_k,Q_{\pi}^{-1}e_i)^*U_{\pi}(T_l,e_i)) \\ &=& \sum_i \frac{\langle F_{\pi}T_k,F_{\pi}^{-1}T_l\rangle \langle Q_{\pi}^{-1}e_i,e_i\rangle}{\Tr(Q_{\pi})} \\ &=& \langle T_k,T_l\rangle \\ &=& \delta_{kl}. 
\end{eqnarray*}
It follows that $(U_{\pi}(e_i,T_j))_{i,j}$ is an isometry. A similar calculation reveals that \[\varphi_{\X}((\sum_iU_{\pi}(e_k,T_i)U_{\pi}(e_l,T_i)^*)^{\op}) = \delta_{kl},\] so necessarily $(U_{\pi}(e_i,T_j))_{i,j}$ must be a unitary.
  \end{proof}

\begin{Lem} There exists a unital $^*$-homomorphism \[\gamma: \mathscr{O}(\G) \rightarrow \mathscr{O}_{\G}(\X^{\op})\aotimes \mathscr{O}_{\G}(\X),\quad U_{\pi}(\xi,\eta) \mapsto \sum_i U_{\pi}(\xi,T_i)\otimes U_{\pi}(T_i,\eta),\] where $T_i$ is an orthonormal basis of $\Mor(\pi,\alpha)$. 
\end{Lem}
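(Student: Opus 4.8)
The plan is to build $\gamma$ as a unital $^*$-homomorphism out of $\mathscr{O}(\G)$ by exhibiting, for each $\G$-representation $\pi$, a unitary matrix over $A:=\mathscr{O}_{\G}(\X^{\op})\aotimes\mathscr{O}_{\G}(\X)$ onto which the fundamental unitary $U_{\pi}$ is sent, and then using that the relations of $\mathscr{O}(\G)$ are generated by multiplicativity under tensor products, the intertwiner relations and unitarity. Fix orthonormal bases $\{T_i\}$ of $\Mor(\pi,\alpha)$ and $\{e_j\}$ of $\Hsp_{\pi}$, and consider the rectangular matrices $u_{\pi}=(U_{\pi}(T_i,e_j))_{i,j}$ over $\mathscr{O}_{\G}(\X)$ and $v_{\pi}=(U_{\pi}(e_i,T_j))_{i,j}$ over $\mathscr{O}_{\G}(\X^{\op})$, both of which are unitary by Theorem \ref{TheoUnSpec} and Corollary \ref{CorUnSpecFlip} (all four one-sided products are identity matrices). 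I set $W_{\pi}=(w^{(\pi)}_{ab})_{a,b}$ with $w^{(\pi)}_{ab}=\sum_i U_{\pi}(e_a,T_i)\otimes U_{\pi}(T_i,e_b)$, so that $w^{(\pi)}_{ab}$ is exactly the proposed value of $\gamma$ on $U_{\pi}(e_a,e_b)$. Writing $\tilde v_{\pi}$ and $\tilde u_{\pi}$ for the images of $v_{\pi},u_{\pi}$ in the two (commuting) tensor legs of $A$, we have $W_{\pi}=\tilde v_{\pi}\tilde u_{\pi}$, and since $\tilde v_{\pi}^{*}\tilde v_{\pi}=1_{\Mor}=\tilde u_{\pi}\tilde u_{\pi}^{*}$ while $\tilde v_{\pi}\tilde v_{\pi}^{*}=1_{\Hsp_{\pi}}=\tilde u_{\pi}^{*}\tilde u_{\pi}$, the contraction over $\Mor(\pi,\alpha)$ makes $W_{\pi}$ a genuine (square) unitary on $\Hsp_{\pi}$.

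Next I would verify that the coefficient assignment $U_{\pi}(\xi,\eta)\mapsto\sum_i U_{\pi}(\xi,T_i)\otimes U_{\pi}(T_i,\eta)$ is well defined and multiplicative. Basis-independence is immediate from the fact that $U_{\pi}(\xi,T)$ is antilinear and $U_{\pi}(T,\eta)$ linear in the $\Mor$-slot, so a unitary change of the $\{T_i\}$ cancels. For multiplicativity I expand $\gamma(U_{\pi_1}(\xi_1,\eta_1))\gamma(U_{\pi_2}(\xi_2,\eta_2))$. On the $\mathscr{O}_{\G}(\X)$-leg one uses $T^{(1)}_i\eta_1\cdot T^{(2)}_j\eta_2=U_{\pi_1\otimes\pi_2}(T^{(1)}_i\diamond T^{(2)}_j,\eta_1\otimes\eta_2)$, where $T\diamond T':=m\circ(T\otimes T')\in\Mor(\pi_1\otimes\pi_2,\alpha)$ with $m$ the multiplication map; on the $\mathscr{O}_{\G}(\X^{\op})$-leg one uses the opposite-product rule together with $Q_{\pi_1\otimes\pi_2}=Q_{\pi_1}\otimes Q_{\pi_2}$ to get $U_{\pi_1}(\xi_1,T^{(1)}_i)U_{\pi_2}(\xi_2,T^{(2)}_j)=U_{\pi}(\xi_1\otimes\xi_2,\,B_{ij})$ with $\pi=\pi_1\otimes\pi_2$ and $B_{ij}=F_{\pi}^{-1/2}(F_{\pi_1}^{1/2}T^{(1)}_i\diamond F_{\pi_2}^{1/2}T^{(2)}_j)$. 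Comparing with $\gamma(U_{\pi}(\xi,\eta))=\sum_k U_{\pi}(\xi,T_k)\otimes U_{\pi}(T_k,\eta)$ and using the (anti)linearity of the slots, multiplicativity collapses to the single identity
\[
\sum_{i,j}\,|A_{ij}\rangle\langle B_{ij}|=\id_{\Mor(\pi_1\otimes\pi_2,\alpha)},\qquad A_{ij}=T^{(1)}_i\diamond T^{(2)}_j .
\]

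This identity is the heart of the matter and is where the torsor hypothesis is decisive. Its content is that the multiplication map $\diamond\colon\Mor(\pi_1,\alpha)\aotimes\Mor(\pi_2,\alpha)\to\Mor(\pi_1\otimes\pi_2,\alpha)$ is a linear isomorphism, i.e.\ that $\Mor(-,\alpha)$ is a fibre functor, which is precisely what the bijectivity of the Galois maps in Lemma \ref{LemTorGal} supplies, together with the statement that under $\diamond$ the inner products and the modular operators $F_{\pi}$ of Corollary \ref{CorPosF} behave multiplicatively, so that $\{A_{ij}\}$ and $\{B_{ij}\}$ form a biorthogonal (dual) pair of bases and the display is a resolution of the identity. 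The compatibility of $F_{\pi}$ with $\diamond$ is to be extracted from Lemma \ref{LemModAut} and the orthogonality relations of Corollary \ref{CorOrthErg}; I expect this modularity-plus-monoidality bookkeeping to be the main obstacle, whereas in the Kac case ($F_{\pi}=\id$) the identity reduces to $\diamond$ being an isometric isomorphism.

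The remaining properties are routine once $W_{\pi}$ is in hand. Intertwiner-compatibility, $(S\otimes 1)W_{\pi_1}=W_{\pi_2}(S\otimes 1)$ for $S\in\Mor(\pi_1,\pi_2)$, follows from the naturality of $u_{\pi}$ and $v_{\pi}$ in $\pi$; unitality follows from the normalization of $\Mor(\pi_{\epsilon},\alpha)$ for the trivial representation $\pi_{\epsilon}$, giving $\gamma(1_{\G})=1_{\X^{\op}}\otimes 1_{\X}$. Finally, $^*$-preservation is obtained by writing $U_{\pi}(\xi,\eta)^{*}=U_{\bar\pi}((Q_{\pi}^{-1}\xi)^{*},\eta^{*})$ on the left and applying Lemma \ref{LemAntStar} (the action of $S$ on $U_{\pi}(\xi,T)$ and $U_{\pi}(T,\xi)$) on the right, so that $\gamma(U_{\pi}(\xi,\eta))^{*}$ matches the value of $\gamma$ computed on $\bar\pi$; equivalently, it is automatic from $W_{\pi}$ being unitary with conjugate representation realized by the $\dag$-operation $T\mapsto T^{\dag}$ on $\Mor(\pi,\alpha)$. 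Assembling these, the family $\{W_{\pi}\}$ defines the desired unital $^*$-homomorphism $\gamma$ with the stated formula on the $U_{\pi}(\xi,\eta)$.
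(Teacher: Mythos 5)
Your reduction of multiplicativity to the identity $\sum_{i,j}|A_{ij}\rangle\langle B_{ij}|=\id_{\Mor(\pi_1\otimes\pi_2,\alpha)}$ is correct, as are the unitarity of $W_{\pi}$ and the formula for $B_{ij}$; but the proof stops exactly where the lemma's content begins, and the two ingredients you invoke to settle that identity are not actually available. First, the bijectivity of $\diamond\colon\Mor(\pi_1,\alpha)\aotimes\Mor(\pi_2,\alpha)\to\Mor(\pi_1\otimes\pi_2,\alpha)$ is not "precisely what Lemma \ref{LemTorGal} supplies": that lemma gives bijectivity of $\mathrm{Can}_l,\mathrm{Can}_r$ on $\mathscr{O}_{\G}(\X)\aotimes\mathscr{O}_{\G}(\X)$, and passing from there to the fibre-functor property of the intertwiner spaces is a genuine theorem (Ulbrich/Schauenburg), not a citation. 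Second, the modular bookkeeping you defer is where the argument would actually break: from Corollary \ref{CorPosF} and multiplicativity of $\sigma_{\X}$ one does get $F_{\pi_1\otimes\pi_2}(T\diamond T')=(F_{\pi_1}T)\diamond(F_{\pi_2}T')$, but one \emph{cannot} conclude the analogous identity for the square roots, because $\diamond\circ(F_{\pi_1}^{1/2}\otimes F_{\pi_2}^{1/2})\circ\diamond^{-1}$ is a square root of the restriction of $F_{\pi_1\otimes\pi_2}$ that is not positive for the inner product of $\Mor(\pi_1\otimes\pi_2,\alpha)$ ($\diamond$ is not isometric outside the Kac case), so it need not equal $F_{\pi_1\otimes\pi_2}^{1/2}$. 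Consequently the biorthogonality $\langle B_{ij},A_{kl}\rangle=\delta_{ik}\delta_{jl}$ requires an independent computation, and Corollary \ref{CorOrthErg} does not apply directly since $\pi_1\otimes\pi_2$ is not irreducible. You flag all of this yourself as "the main obstacle"; since it is never overcome, the proposal is a reduction, not a proof.

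For comparison, the paper avoids the coefficient-wise verification entirely by a structural definition: $\gamma(g):=(S^{-1}\otimes\id)\,\mathrm{Can}_l^{-1}(1_{\X}\otimes g)$. Multiplicativity is then automatic, because the translation map $g\mapsto\mathrm{Can}_l^{-1}(1_{\X}\otimes g)$ is antimultiplicative in its first leg and multiplicative in its second, and composing with the anti-homomorphism $S^{-1}$ of Lemma \ref{LemSBij} turns the first leg into a homomorphism into $\mathscr{O}_{\G}(\X^{\op})$; the stated formula on the $U_{\pi}(\xi,\eta)$ is exactly the computation of $\mathrm{Can}_l^{-1}(1_{\X}\otimes U_{\pi}(e_k,\eta))$ already performed in the proof of Theorem \ref{TheoUnSpec}, and only the $^*$-compatibility requires a modular computation (done there with the orthonormal basis $\{(F_{\pi}^{1/2}T_i)^{\dag}\}$ of $\Mor(\bar{\pi},\alpha)$). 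If you want to rescue your approach, note that equating the two evaluations of $\mathrm{Can}_l^{-1}(1_{\X}\otimes U_{\pi_1}(e_a,\eta_1)U_{\pi_2}(e_b,\eta_2))$ --- once via the translation-map multiplicativity, once via the formula for the coefficients of $\pi_1\otimes\pi_2$ --- yields exactly your resolution of the identity; but at that point you have reconstructed the paper's proof rather than supplied an alternative one.
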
 
\begin{proof} For $g\in \mathscr{O}(\G)$, write \[g_{[1]}\otimes g_{[2]}= (S^{-1}\otimes \id)\mathrm{Can}_l^{-1}(1_{\X}\otimes g) \quad \in \mathscr{O}(\X^{\op})\aotimes \mathscr{O}(\X).\] Using the antimultiplicativity of $S$, we see that \[(gh)_{[1]}\otimes (gh)_{[2]} = g_{[1]}h_{[1]}\otimes g_{[2]}h_{[2]}.\] Now from the proof of Theorem \ref{TheoUnSpec}, it follows that \begin{eqnarray*} U_{\pi}(\xi,\eta)_{[1]} \otimes U_{\pi}(\xi,\eta)_{[2]} &=& \sum_i S^{-1}(U_{\pi}(T_i,\xi)^*) \otimes U_{\pi}(T_i,\eta) \\ &=&  \sum_i (S(U_{\pi}(T_i,\xi))^* \otimes U_{\pi}(T_i,\eta) \\ &=& \sum_i U_{\pi}(\xi,T_i)\otimes U_{\pi}(T_i,\eta),\end{eqnarray*} which shows that $\gamma$ is a well-defined homomorphism. 

To see that it respects the $^*$-structure, write again $T^{\dag}(\xi^*) = (T\xi)^*$. Then it follows from the orthogonality relations from Corollary \ref{CorOrthErg} that $\langle T^{\dag},S^{\dag}\rangle = \langle S,F_{\pi}^{-1}T\rangle$. Hence if $\{T_i\}$ is an orthonormal basis of $\Mor(\Hsp_{\pi},\alpha)$, the set $\{(F_{\pi}^{1/2}T_i)^{\dag}\}$ is an orthonormal basis of $\Mor(\bar{\pi},\alpha)$. Furthermore, by definition of the contragredient representation we can write \[U_{\pi}(\xi,\eta)^* = U_{\bar{\pi}}((Q_{\pi}^{-1}\xi)^*, \eta^*).\] Hence \[\gamma(U_{\pi}(\xi,\eta)^*) = \sum_i  U_{\bar{\pi}}((Q_{\pi}^{-1}\xi)^*,(F_{\pi}^{1/2}T_i)^{\dag})\otimes U_{\bar{\pi}}((F_{\pi}^{1/2}T_i)^{\dag},\eta^*).\] On the other hand, we have \begin{eqnarray*} \gamma(U_{\pi}(\xi,\eta))^* &=&\sum_i U_{\pi}(\xi,T_i)^*\otimes U_{\pi}(T_i,\eta)^* \\ &=& \sum_i U_{\pi}(F_{\pi}^{1/2}T_i,Q_{\pi}^{-1/2}\xi)^{\op}\otimes U_{\bar{\pi}}(T_i^{\dag},\eta^*) \\ &=&\sum_i U_{\bar{\pi}}(Q_{\bar{\pi}}^{1/2} (Q_{\pi}^{-1/2}\xi)^*,F_{\bar{\pi}}^{-1/2}(F_{\pi}^{1/2}T_i)^{\dag})\otimes U_{\bar{\pi}}(T_i^{\dag},\eta^*)\\ &=& \sum_i U_{\bar{\pi}}( (Q_{\pi}^{-1}\xi)^*,(F_{\pi}T_i)^{\dag})\otimes U_{\bar{\pi}}(T_i^{\dag},\eta^*),\end{eqnarray*} which is easily seen to be equal to the expression for $\gamma(U_{\pi}(\xi,\eta)^*)$.
\end{proof}

\begin{Def}  Let $\X\overset{\alpha}{\curvearrowleft}\G$ be a quantum torsor. For $T,T'\in \Mor(\pi,\alpha)$ with $\pi$ irreducible, we define \[U_{\pi}(T,T') =  \sum_i U_{\pi}(T,e_i)\otimes U_{\pi}(e_i,T')\in \mathscr{O}_{\G}(\X)\aotimes \mathscr{O}_{\G}(\X^{\op}),\] where $e_i$ is an arbitrary orthonormal basis of $\Hsp_{\pi}$. We define $\mathscr{O}(\mathbb{H}_{\X})$ as the vector space spanned by the $U_{\pi}(T,T')$. 
\end{Def}

\begin{Theorem} Let  $\X\overset{\alpha}{\curvearrowleft}\G$ be a quantum torsor. Then $\mathscr{O}(\mathbb{H}_{\X})$ is a unital $^*$-algebra. Moreover, there exists a unique Hopf $^*$-algebra structure $\Delta$ on $\mathscr{O}(\mathbb{H}_{\X})$ such that \[\Delta(U_{\pi}(T,T')) = \sum_i U_{\pi}(T,T_i)\otimes U_{\pi}(T_i,T'),\] for $T_i$ an orthonormal basis of $\Mor(\pi,\alpha)$. 
\end{Theorem}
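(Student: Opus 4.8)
The plan is to establish the statement in two parts: first that $\mathscr{O}(\mathbb{H}_{\X})$ is a unital $^*$-algebra, and second that the stated comultiplication $\Delta$ is well-defined and endows it with a Hopf $^*$-algebra structure. Throughout I would lean heavily on the preceding lemmas, especially the homomorphism $\gamma: \mathscr{O}(\G) \to \mathscr{O}_{\G}(\X^{\op})\aotimes \mathscr{O}_{\G}(\X)$ and the left action $\G\overset{\beta}{\curvearrowright}\X^{\op}$ from Lemma \ref{LemLeftCoac}, together with the unitarity of the matrices $(U_{\pi}(T_i,e_j))_{i,j}$ (Theorem \ref{TheoUnSpec}) and $(U_{\pi}(e_i,T_j))_{i,j}$ (Corollary \ref{CorUnSpecFlip}).

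First I would observe that $\mathscr{O}(\mathbb{H}_{\X})$ is precisely the image of $\mathscr{O}_{\G}(\X)$ under an embedding into $\mathscr{O}_{\G}(\X)\aotimes\mathscr{O}_{\G}(\X^{\op})$; more naturally, $U_{\pi}(T,T')$ lies in the subspace of $\mathscr{O}_{\G}(\X)\aotimes\mathscr{O}_{\G}(\X^{\op})$ fixed by the \emph{diagonal} $\G$-coaction $\alpha\otimes\beta$ (suitably interpreted), since summing over the orthonormal basis $\{e_i\}$ of $\Hsp_\pi$ contracts the $\G$-indices. Concretely, using $\Delta(U_\pi(\xi,\eta))=\sum_i U_\pi(\xi,e_i)\otimes U_\pi(e_i,\eta)$ and the analogous formulas for $\alpha$ and $\beta$ on the $U_\pi(T,e_i)$ and $U_\pi(e_i,T')$ factors, the dangling $\G$-legs cancel by $\sum_i U_\pi(\cdot,e_i)\otimes U_\pi(e_i,\cdot)$-type contractions, showing $\alpha$-$\beta$-invariance. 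To prove closure under multiplication and $^*$, I would show this fixed-point space is exactly $\mathscr{O}(\mathbb{H}_\X)$: the product of two invariant elements is invariant, and the $^*$-operation preserves invariance because $\alpha$ and $\beta$ are $^*$-preserving. The independence of $U_\pi(T,T')$ from the choice of orthonormal basis $\{e_i\}$ follows from unitary invariance of the contraction $\sum_i U_\pi(T,e_i)\otimes U_\pi(e_i,T')$, which is the analogue of the basis-independence of $\sum_i e_i\otimes e_i^*$. The unit is $U_{\epsilon}(T_\epsilon,T_\epsilon)=1_\X\otimes 1_{\X^{\op}}$ for the trivial representation.

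For the coproduct, the key point is well-definedness: $\Delta(U_\pi(T,T'))=\sum_i U_\pi(T,T_i)\otimes U_\pi(T_i,T')$ must be shown independent of the orthonormal basis $\{T_i\}$ of $\Mor(\pi,\alpha)$, which again follows from unitary invariance of the contraction, now over the \emph{multiplicity} index rather than the representation space. I would then verify $\Delta$ is a $^*$-homomorphism into $\mathscr{O}(\mathbb{H}_\X)\aotimes\mathscr{O}(\mathbb{H}_\X)$, coassociative, and counital with $\varepsilon(U_\pi(T,T'))=\langle T,T'\rangle$ (or $\langle T',T\rangle$, depending on conventions); the antipode should be $S(U_\pi(T,T'))=U_{\bar\pi}(\cdots)$ built from the contragredient, using the established $S$-formulas from Lemmas \ref{LemSBij} and \ref{LemAntStar}. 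The tensor-product structure of representations, $U_{\pi_1\otimes\pi_2}(\xi_1\otimes\xi_2,\eta_1\otimes\eta_2)=U_{\pi_1}(\xi_1,\eta_1)U_{\pi_2}(\xi_2,\eta_2)$, gives the multiplicativity of $\Delta$ since the $U_\pi(T,T')$ multiply like matrix coefficients.

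The main obstacle I anticipate is verifying that $\Delta$ is genuinely $^*$-preserving and lands in $\mathscr{O}(\mathbb{H}_\X)\aotimes\mathscr{O}(\mathbb{H}_\X)$ rather than the larger ambient algebra. This requires carefully tracking the twisting operators $F_\pi$ and $Q_\pi$ through the contragredient: the relation between $U_\pi(T,T')^*$ and $U_{\bar\pi}$ of the daggered intertwiners $T^\dag$ (via $F_{\bar\pi}(T^\dag)=(F_\pi^{-1}T)^\dag$, established in the proof of Theorem \ref{TheoEstpi}) must be reconciled with the orthonormality of $\{T_i\}$ under the \emph{given} inner product on $\Mor(\pi,\alpha)$ as opposed to the $F_\pi$-twisted one. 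The computation showing $\gamma$ respects $^*$ (in the preceding unnamed lemma) is the template here, and I would model the verification on it closely, expecting the same interplay of $Q_\pi^{1/2}$ and $F_\pi^{1/2}$ factors to cancel correctly.
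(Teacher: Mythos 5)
Your proposal is sound in outline and for most steps coincides with the paper's proof: the invariance description of $\mathscr{O}(\mathbb{H}_{\X})$ for the $^*$-algebra part, the counit $\varepsilon(U_{\pi}(T,T'))=\langle T',T\rangle$, and an antipode assembled from the $S$-maps of Lemmas \ref{LemSBij} and \ref{LemAntStar}, whose antipode axiom rests on unitarity of the matrices $(U_{\pi}(T_i,T_j))_{i,j}$, i.e.\ on Theorem \ref{TheoUnSpec} combined with Corollary \ref{CorUnSpecFlip}. Two corrections of formulation are needed, though. Drop the opening remark that $\mathscr{O}(\mathbb{H}_{\X})$ is the image of an embedding of $\mathscr{O}_{\G}(\X)$: per irreducible $\pi$ the relevant spans have dimensions $\mult(\pi)^2$ and $\mult(\pi)\dim(\pi)$ respectively, and these differ whenever $\X$ is not a cocycle twist of $\G$. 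More importantly, your invariance condition must be taken in the paper's equalizer form
\[\mathscr{O}(\mathbb{H}_{\X}) = \{z\in \mathscr{O}_{\G}(\X)\aotimes\mathscr{O}_{\G}(\X^{\op})\mid (\alpha\otimes\id_{\X^{\op}})z = (\id_{\X}\otimes\beta)z\},\]
not as the fixed points of a genuine ``diagonal'' coaction in which the two $\G$-legs are merged with an antipode twist (as in Lemma \ref{LemBiDuCom}): such a merged map is \emph{not} an algebra homomorphism, because the collapsed $\G$-legs of two factors do not multiply correctly in the noncommutative algebra $\mathscr{O}(\G)$. Only in the equalizer form are both sides of the defining condition $^*$-homomorphisms, which is exactly what your one-line closure argument uses.

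Where you genuinely part ways with the paper is the coproduct, and there the paper's route is strictly more efficient. You single out basis-independence of $\sum_i U_{\pi}(T,T_i)\otimes U_{\pi}(T_i,T')$ as the crux of well-definedness and then propose to redo, for $\Delta$, the $F_{\pi}$/$Q_{\pi}$ contragredient bookkeeping from the lemma on $\gamma$. That can be pushed through (linear independence of the $U_{\pi}(T_a,T_b)$, which follows from the orthogonality relations of Corollary \ref{CorOrthErg}, settles well-definedness, and the $^*$-computation is then literally the one for $\gamma$ with a passive extra leg), but it duplicates the hardest computation of the section, and your anticipated difficulties with multiplicativity (decomposing $\Mor(\pi_1\otimes\pi_2,\alpha)$ into irreducible blocks) are real in that approach. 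The paper instead observes the factorization
\[\Delta = (\id_{\X}\otimes\gamma\otimes\id_{\X^{\op}})\circ(\alpha\otimes\id_{\X^{\op}})\quad \textrm{on }\mathscr{O}(\mathbb{H}_{\X}),\]
verified against the stated formula by a short matrix-coefficient computation. Since $\alpha$ and $\gamma$ are already known $^*$-homomorphisms, this single identity delivers well-definedness, multiplicativity, $^*$-preservation, coassociativity, and the fact that $\Delta$ lands in $\mathscr{O}(\mathbb{H}_{\X})\aotimes\mathscr{O}(\mathbb{H}_{\X})$ all at once; indeed this is the very reason $\gamma$ was constructed as a $^*$-homomorphism beforehand, so use it as a black box rather than as a template.
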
 

\begin{proof} From the concrete form of the $U_{\pi}(T,T')$ and the formulas for $\alpha$ and $\beta$ on the $U_{\pi}(T,\xi)$ and $U_{\pi}(\xi,T)$ respectively, it follows straightforwardly that \[\mathscr{O}(\mathbb{H}_{\X}) = \{z\in  \mathscr{O}_{\G}(\X)\aotimes \mathscr{O}_{\G}(\X^{\op})\mid (\alpha\otimes \id_{\X^{\op}})z = (\id_{\X}\otimes \beta)z\}.\] This shows that $\mathscr{O}(\mathbb{H}_{\X})$ is a unital $^*$-algebra. 

To show that $\Delta$ is a well-defined $^*$-homomorphism, we note that \[ \sum_i U_{\pi}(T,T_i)\otimes U_{\pi}(T_i,T') = (\id_{\X}\otimes \gamma\otimes \id_{\X^{\op}})(\alpha\otimes \id_{\X^{\op}})U_{\pi}(T,T').\] It is clearly coassociative. 

Define \[\varepsilon: \mathscr{O}(\mathbb{H}_{\X})\rightarrow  \C,\quad U_{\pi}(T,T')\mapsto \langle T',T\rangle.\] Then $\varepsilon$ is a $\C$-linear map, and clearly provides a counit for $\Delta$. It is then automatically a $^*$-homomorphism.  

Finally, write \[S(U_{\pi}(T,T')) = \sum_i S(U_{\pi}(e_i,T')) \otimes S(U_{\pi}(T,e_i)).\] We claim that $S$ is a well-defined linear map from $\mathscr{O}(\mathbb{H}_{\X})$ to itself, providing an antipode for $\Delta$ on $\mathscr{O}(\mathbb{H}_{\X})$. Well-definedness follows since \[S(U_{\pi}(T,T'))^* = \sum_i U_{\pi}(T',e_i)\otimes U_{\pi}(e_i,T) = U_{\pi}(T',T).\] Theorem \ref{TheoUnSpec} and Corollary \ref{CorUnSpecFlip} now guarantee that, with $\{T_i\}$ an orthonormal basis of $\Mor(\pi,\alpha)$, the matrix $(U_{\pi}(T_i,T_j))_{i,j}$ is unitary, hence $S$ satisfies the antipode condition for $\mathscr{O}(\mathbb{H}_{\X})$. 
\end{proof}

When $\mathscr{O}(\X) = \mathscr{O}(\G)_{\omega}$ for some unitary 2-cocycle $\omega$ on $\widehat{\G}$, one can show that $\mathscr{O}(\G)_{\omega}^{\op}$ can be identified with $\mathscr{O}({}_{\omega}\G)$, which is the vector space $\mathscr{O}(\G)$ equipped with the $^*$-algebra structure \[g\underset{\omega^{-1}}{\cdot} h = \omega^*(g_{(1)},h_{(1)})g_{(2)}h_{(2)},\]\[g^{\circ} = \widetilde{\chi}_{\omega}(g_{(1)}^*)g_{(2)}^*,\quad \widetilde{\chi}_{\omega}(g) = \omega(g_{(2)},S^{-1}(g_{(1)})).\] The corresponding right action \[ \mathscr{O}({}_{\omega}\G) \rightarrow \mathscr{O}({}_{\omega}\G)\aotimes \mathscr{O}(\G),\quad g \mapsto g_{(1)}\otimes g_{(2)}\] then turns it into a right Galois object. From this, it is not difficult to see that the Hopf $^*$-algebra $\mathscr{O}(\mathbb{H}_{\X})$, defined correspondingly as above for left Galois objects, is isomorphic to the coalgebra $\mathscr{O}(\G)$ equipped with the new $^*$-algebra structure \[g\underset{\omega}{*} h = \omega^*(g_{(1)},h_{(1)})\omega(g_{(3)},h_{(3)}) g_{(2)}h_{(2)},\]\[g^{\dag} = \widetilde{\chi}_{\omega}(g_{(1)}^*)\chi_{\omega}(g_{(3)}^*)g_{(2)}^*.\]

It is not clear whether the closure of $\mathscr{O}(\mathbb{H}_{\X})$ in $C(\X)\otimes C(\X^{\op})$ defines a compact quantum group. Indeed, for this it is necessary to show that $\Delta$ extends to this closure, which is equivalent with showing that the map $\gamma$ extends to a $^*$-homomorphism \[\gamma: C(\G) \rightarrow C(\X^{\op})\otimes C(\X).\] 

However, if we are working either on the universal or the reduced level, there is no problem.

\begin{Theorem} The Hopf $^*$-algebra $\mathscr{O}(\mathbb{H}_{\X})$ admits a universal completion $C(\mathbb{H}_{\X,u})$, which has the structure of a compact quantum group such that $\mathscr{O}(\mathbb{H}_{\X})= \mathscr{O}(\mathbb{H}_{\X,u})$. The reduced C$^*$-algebra $C(\mathbb{H}_{\X,\red})$ can then be identified with the closure of $\mathscr{O}(\mathbb{H}_{\X})$ in $C(\X_{\red})\otimes C(\X^{\op}_{\red})$.
\end{Theorem}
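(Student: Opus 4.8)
The plan is to recognise $\mathscr{O}(\mathbb{H}_{\X})$ as a compact quantum group algebra (CQG algebra) in the algebraic sense of \cite{Wor95,MVD98}, and then to read off both completions from that structure. Recall from the proof of the previous theorem that, for each irreducible $\pi$ and an orthonormal basis $\{T_i\}$ of $\Mor(\pi,\alpha)$, Theorem \ref{TheoUnSpec} and Corollary \ref{CorUnSpecFlip} guarantee that the matrix $v_{\pi}=(U_{\pi}(T_i,T_j))_{i,j}$ is unitary over $\mathscr{O}(\mathbb{H}_{\X})$, while the defining formula for $\Delta$ says precisely that $v_{\pi}$ is a corepresentation, $\Delta((v_{\pi})_{ij})=\sum_k (v_{\pi})_{ik}\otimes(v_{\pi})_{kj}$. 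Since the coefficients of the $v_{\pi}$ span $\mathscr{O}(\mathbb{H}_{\X})$, the latter is a Hopf $^*$-algebra spanned by matrix coefficients of finite-dimensional unitary corepresentations.

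For the universal completion I would argue exactly as for $C_u(\G)$ and in Proposition \ref{PropUniLi}: in any $^*$-representation $\rho$ of $\mathscr{O}(\mathbb{H}_{\X})$ the matrix $(\rho((v_{\pi})_{ij}))_{i,j}$ is again unitary, so $\|\rho((v_{\pi})_{ij})\|\leq 1$; as these elements span $\mathscr{O}(\mathbb{H}_{\X})$, every element has uniformly bounded image, and the universal envelope $C(\mathbb{H}_{\X,u})$ exists. The comultiplication then extends to a unital $^*$-homomorphism $\Delta\colon C(\mathbb{H}_{\X,u})\to C(\mathbb{H}_{\X,u})\otimes C(\mathbb{H}_{\X,u})$ by universality, coassociativity passing by continuity, and the cancellation (density) conditions follow from the antipode on the dense Hopf $^*$-algebra exactly as in Lemma \ref{LemAlgtoAn}; hence $\mathbb{H}_{\X,u}$ is a compact quantum group. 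The equality $\mathscr{O}(\mathbb{H}_{\X,u})=\mathscr{O}(\mathbb{H}_{\X})$ is then the standard reconstruction statement for CQG algebras: the algebra of matrix coefficients of $\mathbb{H}_{\X,u}$ is recovered as the CQG algebra one started from.

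For the reduced part, write $B$ for the closure of $\mathscr{O}(\mathbb{H}_{\X})$ inside $C(\X_{\red})\otimes C(\X^{\op}_{\red})$. Since $\pi_{\red}$ is faithful on $\mathscr{O}_{\G}(\X)$ and on $\mathscr{O}_{\G}(\X^{\op})$, the inclusion $\mathscr{O}(\mathbb{H}_{\X})\subseteq \mathscr{O}_{\G}(\X)\aotimes \mathscr{O}_{\G}(\X^{\op})$ embeds $\mathscr{O}(\mathbb{H}_{\X})$ as a dense $^*$-subalgebra of $B$, so $B$ is a genuine C$^*$-completion with a canonical surjection $C(\mathbb{H}_{\X,u})\twoheadrightarrow B$. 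I would identify $B$ with $C(\mathbb{H}_{\X,\red})$ by exhibiting on $B$ a faithful extension of the Haar state. The candidate is $\Phi=(\varphi_{\X,\red}\otimes \varphi_{\X^{\op},\red})|_B$. Its restriction to $\mathscr{O}(\mathbb{H}_{\X})$ satisfies $\Phi(U_{\pi}(T,T'))=\sum_j \varphi_{\X}(U_{\pi}(T,e_j))\,\varphi_{\X^{\op}}(U_{\pi}(e_j,T'))$, which vanishes for every non-trivial irreducible $\pi$ because $\varphi_{\X}$ kills $C(\X)_{\pi}$ by the orthogonality relations of Corollary \ref{CorOrthErg}. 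Feeding this into the formula for $\Delta$ gives $(\Phi\otimes \id)\Delta=\Phi(\,\cdot\,)1$ on the spanning elements, so $\Phi$ is a left-invariant state and hence, by uniqueness of the Haar state, equals it.

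It remains to see that $\Phi$ is faithful on $B$ and that this forces $B\cong C(\mathbb{H}_{\X,\red})$. Faithfulness is the main technical point: $\varphi_{\X,\red}$ and $\varphi_{\X^{\op},\red}$ are faithful (being the vector states of the GNS constructions defining the reduced algebras), and the tensor product of two faithful states is faithful on the minimal tensor product. This I would prove through the slice maps, using that for $x\geq 0$ and a state $\omega$ the slice $(\id\otimes\omega)(x)$ is positive, that a faithful state detects positivity, and that the slice maps separate points of the minimal tensor product; restricting the resulting faithful state to the C$^*$-subalgebra $B$ keeps it faithful. Finally, since $\Phi$ is a faithful state on $B$ extending the Haar state $h$, its GNS representation $\pi_h$ is faithful and its GNS space coincides with $L^2(\mathbb{H}_{\X})$, whence $B\cong \overline{\pi_h(\mathscr{O}(\mathbb{H}_{\X}))}=C(\mathbb{H}_{\X,\red})$. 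The two genuinely non-formal ingredients are the CQG reconstruction equality $\mathscr{O}(\mathbb{H}_{\X})=\mathscr{O}(\mathbb{H}_{\X,u})$ and the faithfulness of the product state; by contrast, the invariance computation identifying $\Phi$ with $h$ is immediate from the orthogonality relations.
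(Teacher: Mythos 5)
Your overall strategy coincides with the paper's: the universal envelope exists because $\mathscr{O}(\mathbb{H}_{\X})$ is spanned by the entries of the unitary matrices $(U_{\pi}(T_i,T_j))_{i,j}$, and the reduced completion is identified by realizing the Haar state as the restriction of $\varphi_{\X}\otimes\varphi_{\X^{\op}}$ to the closure of $\mathscr{O}(\mathbb{H}_{\X})$ inside $C(\X_{\red})\otimes C(\X^{\op}_{\red})$. One minor difference: for the embedding $\mathscr{O}(\mathbb{H}_{\X})\subseteq C(\mathbb{H}_{\X,u})$ the paper does not invoke abstract CQG-algebra reconstruction, but simply observes that the concrete inclusion $\mathscr{O}(\mathbb{H}_{\X})\hookrightarrow C(\X)\otimes C(\X^{\op})$ extends to $C(\mathbb{H}_{\X,u})$ by universality, so the universal seminorm is already a norm; this keeps that step self-contained, whereas your route defers to the faithfulness of the algebraic Haar functional on a CQG algebra.

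There is, however, one genuine flaw in your treatment of the key analytic point. You justify the faithfulness of $\varphi_{\X,\red}$ and $\varphi_{\X^{\op},\red}$ on the reduced algebras by saying they are ``the vector states of the GNS constructions defining the reduced algebras''. That is not a valid reason: a GNS vector state need not be faithful on the norm closure of the image of the GNS representation, and faithfulness of $\varphi_{\X}$ on the dense subalgebra $\mathscr{O}_{\G}(\X)$ does not pass to the closure automatically --- for a non-tracial state there is no way to move a positive element $a$ with $a\xi_{\X}=0$ past the algebra elements in $\langle b\xi_{\X}, ac\,\xi_{\X}\rangle$ (for tracial states this works, which is why the classical statement is familiar). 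The faithfulness is true, but it needs the invariance, exactly as in the proof that the Haar state is faithful on $C(\G_{\red})$: if $a\in C(\X_{\red})$ is positive with $\varphi_{\X}(a)=0$, then $(\id_{\X}\otimes\varphi_{\G})\alpha_{\red}(a)=\varphi_{\X}(a)1_{\X}=0$; slicing with an arbitrary state $\omega$ on $C(\X_{\red})$ gives $\varphi_{\G}\left((\omega\otimes\id_{\G})\alpha_{\red}(a)\right)=0$ with $(\omega\otimes\id_{\G})\alpha_{\red}(a)\geq 0$ in $C(\G_{\red})$, so faithfulness of the reduced Haar state forces all these slices to vanish, hence $\alpha_{\red}(a)=0$, and injectivity of $\alpha_{\red}$ (proven in the paper's theorem on reduced actions) yields $a=0$. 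Alternatively one can argue via the KMS property of $\varphi_{\X}$ for the modular group $\sigma_t^{\X}$. Once this is repaired, the remainder of your argument --- faithfulness of the product state on the minimal tensor product via slice maps, restriction to the subalgebra, and the GNS identification of the closure with $C(\mathbb{H}_{\X,\red})$ --- is correct, and it usefully fills in what the paper dismisses as ``easily seen''.
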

\begin{proof} The universal completion of $\mathscr{O}(\mathbb{H}_{\X})$ exists, as it is generated by the matrix entries of the unitary matrices $(U_{\pi}(T_i,T_j))_{i,j}$. By the universal property, we have that the inclusion map from $\mathscr{O}(\mathbb{H}_{\X})$ to $C(\X)\otimes C(\X^{\op})$ extends to $C(\mathbb{H}_{\X,u})$, proving that $\mathscr{O}(\mathbb{H}_{\X})$ embeds in its universal completion. It is then again standard to show that in fact  $\mathscr{O}(\mathbb{H}_{\X})= \mathscr{O}(\mathbb{H}_{\X,u})$. 

To show that the natural inclusion map from $\mathscr{O}(\mathbb{H}_{\X})$ to $C(\X_{\red})\otimes C(\X^{\op}_{\red})$ extends to an embedding of $C(\mathbb{H}_{\X,\red})$, it suffices to show that the Haar state on $\mathscr{O}(\mathbb{H}_{\X})$ can be realized as a faithful state on $C(\X_{\red})\otimes C(\X^{\op}_{\red})$. However, this is easily seen to be achieved by the state $\varphi_{\X}\otimes \varphi_{\X^{\op}}$, where $\varphi_{\X^{\op}}(a^{\op}) =  \varphi_{\X}(a)$. 
\end{proof}

It is now easy to continue with $\mathscr{O}(\mathbb{H}_{\X})$, and to show that for example $\mathbb{X}$ has the \emph{left} $\mathbb{H}_{\X}$-action \[U_{\pi}(T,\xi) \mapsto \sum_{i} U_{\pi}(T,T_i) \otimes U_{\pi}(T_i,\xi),\] making it into a left quantum $\mathbb{H}_{\X}$-torsor. In fact, if we write \[\mathbb{G}_{rr} = \mathbb{G},\quad \mathbb{G}_{lr} = \mathbb{X}, \quad \mathbb{G}_{rl} = \mathbb{X}^{\op}, \quad \mathbb{G}_{ll} = \mathbb{H}_{\X},\] one can construct 8 unital $^*$-homomorphisms \[\Delta_{ij}^k: \mathscr{O}(\mathbb{G}_{ij}) \rightarrow \mathscr{O}(\G_{ik})\otimes \mathscr{O}(\G_{kj}),\qquad i,j,k\in \{r,l\}\] according to the above obvious pattern. The quadruple $\{\G_{ij}\}$ and the octuple $\{\Delta_{ij}^k\}$ then form a \emph{Hopf Galois-system}, see \cite{Bic03,Gru04}, and also see \cite{Bic14} for a description of the total algebra $\oplus_{i,j\in\{l,r\}} \mathscr{O}(\G_{i,j})$ as a (connected) \emph{cogroupoid}.

Of course, the compact quantum groups $\mathbb{G}$ and $\mathbb{H}_{\X}$ which are obtained above have a very close connection to each other. We will not provide proofs for the following statements, and refer to \cite{BDV06} for more information.

We first introduce the following terminology.

\begin{Def} Let $\G$ and $\mathbb{H}$ be two compact quantum groups. We call $\G$ and $\mathbb{H}$ \emph{monoidally equivalent} if there exists a quantum $\mathbb{H}$-$\G$-bitorsor $\X$, that is, a right quantum $\G$-torsor $\X\overset{\alpha}{\curvearrowleft} \G$ with a left quantum $\mathbb{H}$-torsor structure $\mathbb{H}\overset{\beta}{\curvearrowright} \X$ such that the two actions commute. 
\end{Def} 

It can be shown that $\mathbb{H}$ is uniquely determined once the quantum $\G$-torsor $\X$ has been specified - namely, it(s algebraic core) must be isomorphic to $\mathbb{H}_{\X}$. On the other hand, there can be many quantum bitorsors linking two monoidally equivalent compact quantum groups. For example, any of the quantum tori is a quantum $\mathbb{T}^2$-$\mathbb{T}^2$-bitorsor. 

When $\X$ is a quantum $\mathbb{H}$-$\G$-bitorsor, its algebraic core with respect to $\G$ is the same as the one with respect to $\mathbb{H}$, and we denote it then simply by $\mathscr{O}(\X)$.

\begin{Def} Let $\mathbb{H}\overset{\beta}{\curvearrowright}\X \overset{\alpha}{\curvearrowleft} \G$ be a quantum $\mathbb{H}$-$\G$-bitorsor. For $\Hsp_{\pi}$ a unitary $\mathbb{H}$-representation, we define \[\Ind_{\G}(\Hsp_{\pi}) = \{x \in \Hsp_{\pi}\aotimes \mathscr{O}(\X) \mid (\delta_{\pi}\otimes \id_{\X})x = (\id_{\Hsp_{\pi}}\otimes \beta)x\}.\] 
\end{Def} 

\begin{Lem} The vector space $\Ind_{\G}(\Hsp_{\pi})$ is finite-dimensional, and a unitary $\G$-representation for \[\langle \sum_i \xi_i\otimes a_i,\sum_j \eta_j\otimes b_j\rangle = \sum_{i,j} \varphi_{\mathbb{H}}(a_i^*b_j)\langle \xi_i,\eta_j\rangle,\]\[x\mapsto (\id_{\X}\otimes \alpha)x.\] 
\end{Lem}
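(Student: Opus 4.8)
The plan is to verify the three claims in turn: that $\Ind_{\G}(\Hsp_{\pi})$ is finite-dimensional, that the given inner product is well-defined and positive-definite, and that $x \mapsto (\id_{\X}\otimes \alpha)x$ is a unitary $\G$-corepresentation on it. First I would observe that $\Ind_{\G}(\Hsp_{\pi})$ is by definition the fixed-point space of the left $\mathbb{H}$-coaction $\beta$ tensored against the trivial action on $\Hsp_{\pi}$, so it sits inside $\Hsp_{\pi}\aotimes \mathscr{O}(\X)$. Since $\Hsp_{\pi}$ is finite-dimensional with orthonormal basis $\{\xi_k\}$, writing $x = \sum_k \xi_k \otimes a_k$ the condition $(\delta_{\pi}\otimes \id_{\X})x = (\id_{\Hsp_{\pi}}\otimes \beta)x$ becomes a system relating the $a_k$ to the left $\mathbb{H}$-coaction applied to them; this forces the $a_k$ to lie in the image of an isotypical-type projection, and finite-dimensionality should follow from the finite-dimensionality of the isotypical components of the left torsor $\mathbb{H}\curvearrowright \X^{\op}$ established via Theorem \ref{TheoFinBoc} (applied to the left torsor $\X$, whose components are finite-dimensional by the bitorsor structure). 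Concretely, the $\mathbb{H}$-equivariance pins $x$ down to the $\pi$-isotypical piece, which is finite-dimensional.

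Next I would check that the proposed inner product is well-defined and positive-definite. The expression $\langle \sum_i \xi_i\otimes a_i, \sum_j \eta_j\otimes b_j\rangle = \sum_{i,j}\varphi_{\mathbb{H}}(a_i^*b_j)\langle \xi_i,\eta_j\rangle$ uses the invariant state $\varphi_{\mathbb{H}}$ on $\mathscr{O}(\X)$ coming from the left homogeneous $\mathbb{H}$-action (available since $\X$ is a left $\mathbb{H}$-torsor, hence $\mathbb{H}\curvearrowright\X$ homogeneous). Positivity of this pairing reduces to the faithfulness of $\varphi_{\mathbb{H}}$ on $\mathscr{O}_{\mathbb{H}}(\X) = \mathscr{O}(\X)$, which holds because $E_{\mathbb{H}\backslash\X}$ is faithful on the algebraic core. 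The only subtlety is checking that the value does not depend on the chosen representation of the element and that the form is genuinely definite (not merely semi-definite) on $\Ind_{\G}(\Hsp_{\pi})$; both follow from faithfulness of $\varphi_{\mathbb{H}}$ together with the linear independence forced by the defining constraint.

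For the corepresentation claim I would set $\delta(x) = (\id_{\X}\otimes\alpha)x$ and verify it lands in $\Ind_{\G}(\Hsp_{\pi})\aotimes \mathscr{O}(\G)$, using that the two actions $\alpha$ and $\beta$ commute, so applying $\alpha$ in the $\mathscr{O}(\X)$-leg preserves the $\mathbb{H}$-fixed-point condition. The comodule axioms $(\id\otimes\Delta)\delta = (\delta\otimes\id)\delta$ and $(\id\otimes\varepsilon)\delta = \id$ then follow directly from the coaction property of $\alpha$ on $\mathscr{O}(\X)$. Isometry of $\delta$ with respect to the given inner product is where the invariance of $\varphi_{\mathbb{H}}$ and the compatibility of $\alpha$ and $\beta$ combine: the computation is the standard one showing $(\varphi_{\mathbb{H}}\otimes\id_{\G})\alpha(a^*b) = \varphi_{\mathbb{H}}(a^*b)1_{\G}$, which uses that $\varphi_{\mathbb{H}}$ is $\G$-invariant (it is, since $\alpha$ and $\beta$ commute and $\varphi_{\mathbb{H}}$ is the $\mathbb{H}$-average).

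The main obstacle will be the finite-dimensionality, since it requires correctly identifying $\Ind_{\G}(\Hsp_{\pi})$ with a $\pi$-isotypical component of the left torsor and invoking Boca's theorem (Theorem \ref{TheoFinBoc}) in the right form; the inner-product and corepresentation verifications are then essentially formal manipulations with the commuting coactions and the invariant state. I would carry out the finite-dimensionality argument first so that adjointability and definiteness of the inner product are available for the later steps.
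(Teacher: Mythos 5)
Your proposal is essentially correct, but there is nothing in the paper to compare it against: immediately before this lemma the author announces that no proofs will be given for these statements and refers to \cite{BDV06}. So your argument fills a gap rather than parallels an existing proof, and it does so using exactly the machinery the paper has already built. Writing $x=\sum_k \xi_k\otimes a_k$ against an orthonormal basis of $\Hsp_{\pi}$, the cotensor condition $(\delta_{\pi}\otimes \id_{\X})x=(\id_{\Hsp_{\pi}}\otimes \beta)x$ is equivalent to $\beta(a_l)=\sum_k U_{\pi}(\xi_l,\xi_k)\otimes a_k$, so the $a_k$ lie in the $\pi$-isotypical component of the homogeneous left action $\mathbb{H}\curvearrowright \X$, which is finite-dimensional by the left-handed version of Theorem \ref{TheoFinBoc}; this is your first step and it is sound. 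Your reading of $\varphi_{\mathbb{H}}$ as the invariant state on $\mathscr{O}(\X)$ is also the right one: the $\mathbb{H}$-average $(\varphi_{\mathbb{H}}\otimes\id_{\X})\beta$ defines a state which, because $\alpha$ and $\beta$ commute, is $\G$-invariant and hence (by ergodicity of $\alpha$) equals $\varphi_{\X}$; its faithfulness on the algebraic core gives definiteness, while well-definedness needs no argument at all, since the form is just the restriction to $\Ind_{\G}(\Hsp_{\pi})$ of $\langle\,\cdot\,,\,\cdot\,\rangle_{\Hsp_{\pi}}\otimes \varphi_{\X}(\,\cdot^*\,\cdot\,)$ on $\Hsp_{\pi}\aotimes\mathscr{O}(\X)$. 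The corepresentation part is as you describe: commutation of the coactions shows $(\id_{\Hsp_{\pi}}\otimes\alpha)$ preserves the cotensor condition, the comodule axioms are inherited from $\alpha$, and isometry reduces to $(\varphi_{\X}\otimes\id_{\G})\alpha(c)=\varphi_{\X}(c)1_{\G}$; since the space is finite-dimensional, isometry together with the corepresentation property gives unitarity by the lemma in Section \ref{SecPrel} on isometric corepresentations. Two small slips worth fixing: in this abstract bitorsor the left action of $\mathbb{H}$ is on $\X$ itself, not on $\X^{\op}$ (the opposite algebra only enters when one \emph{constructs} a bitorsor out of a right torsor earlier in Section \ref{SecTor}), and the phrase about ``linear independence forced by the defining constraint'' is unnecessary for either well-definedness or definiteness.
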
 

\begin{Theorem} Let $\G$ and $\mathbb{H}$ be two monoidally equivalent quantum groups, and let $\mathbb{H}\overset{\beta}{\curvearrowright}\X \overset{\alpha}{\curvearrowleft} \G$ be a quantum $\mathbb{H}$-$\G$-bitorsor. Then the map \[\Hsp_{\pi} \rightarrow \Ind_{\G}(\Hsp_{\pi})\] provides a unitary equivalence between the categories of unitary $\mathbb{H}$-representations and unitary $\G$-representations. Moreover, we have natural unitaries \[\Ind_{\G}(\Hsp_{\pi})\otimes \Ind_{\G}(\Hsp_{\pi'})\cong\Ind_{\G}(\Hsp_{\pi} \otimes \Hsp_{\pi'}) ,\]\[ (\sum_i \xi_i\otimes \eta_i)\otimes (\sum_j \eta_j\otimes b_j) \mapsto (\sum_{i,j} \xi_i\otimes\eta_j\otimes a_ib_j).\]
\end{Theorem}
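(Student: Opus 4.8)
The final statement asserts that for a quantum $\mathbb{H}$-$\G$-bitorsor $\X$, the functor $\Hsp_\pi \mapsto \Ind_{\G}(\Hsp_\pi)$ gives a unitary monoidal equivalence between $\mathrm{Rep}(\mathbb{H})$ and $\mathrm{Rep}(\G)$. Let me think about how to prove this.

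First, the functor itself. Given a unitary $\mathbb{H}$-representation $\Hsp_\pi$ with comodule map $\delta_\pi$, we form $\Ind_\G(\Hsp_\pi) = \{x \in \Hsp_\pi \atimes \mathscr{O}(\X) \mid (\delta_\pi \otimes \id_\X)x = (\id \otimes \beta)x\}$. The lemma just before tells us this is finite-dimensional and a unitary $\G$-representation via the inner product $\langle \sum_i \xi_i \otimes a_i, \sum_j \eta_j \otimes b_j\rangle = \sum \varphi_{\mathbb{H}}(a_i^* b_j)\langle \xi_i, \eta_j\rangle$ and coaction $x \mapsto (\id_\X \otimes \alpha)x$. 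So the main content is:

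(1) functoriality (an $\mathbb{H}$-intertwiner $\Hsp_\pi \to \Hsp_{\pi'}$ induces a $\G$-intertwiner of inductions, via $T \otimes \id_\X$);
(2) the monoidal structure: the natural unitaries $\Ind_\G(\Hsp_\pi) \otimes \Ind_\G(\Hsp_{\pi'}) \cong \Ind_\G(\Hsp_\pi \otimes \Hsp_{\pi'})$;
(3) the equivalence of categories claim — that the functor is fully faithful and essentially surjective.

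Let me think about each.

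For functoriality, it's clear: if $T: \Hsp_\pi \to \Hsp_{\pi'}$ is an $\mathbb{H}$-intertwiner, i.e., $\delta_{\pi'} \circ T = (T \otimes \id)\delta_\pi$, then $T \otimes \id_\X$ maps $\Ind_\G(\Hsp_\pi)$ into $\Ind_\G(\Hsp_{\pi'})$ and intertwines the $\G$-coactions. Need to check it's also adjoint-preserving / unitary on the appropriate spaces, but given the inner product structure involving $\varphi_{\mathbb{H}}$, this should follow since $T$ is an $\mathbb{H}$-intertwiner and $\varphi_{\mathbb{H}}$ is invariant.

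The monoidal structure. The claimed map is $(\sum_i \xi_i \otimes a_i) \otimes (\sum_j \eta_j \otimes b_j) \mapsto \sum_{i,j} \xi_i \otimes \eta_j \otimes a_i b_j$. Wait — I need to be careful: we need this to land in $\Ind_\G(\Hsp_\pi \otimes \Hsp_{\pi'}) \subseteq \Hsp_\pi \otimes \Hsp_{\pi'} \atimes \mathscr{O}(\X)$, meaning the element $\sum \xi_i \otimes \eta_j \otimes a_i b_j$ where $a_i b_j$ is the product in $\mathscr{O}(\X)$. The fixed-point condition needs checking: since $a_i, b_j$ carry the appropriate $\beta$-equivariance, and $\beta$ is an algebra homomorphism, the product $a_i b_j$ behaves correctly. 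The unitarity of this map is where the torsor/bitorsor conditions really enter. I'd expect to need the fact that the multiplication map $\mathscr{O}(\X) \atimes \mathscr{O}(\X) \to \mathscr{O}(\X)$ combined with the Galois/torsor property gives an orthogonality-preserving correspondence. Concretely, $\varphi_{\mathbb{H}}((a_i b_j)^* (a_k b_l))$ should factor appropriately; the key is that the torsor structure (freeness) ensures no degeneration, and homogeneity forces the relevant conditional expectations to be scalars.

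The hardest part will be establishing that the functor is a genuine equivalence of categories — specifically fully faithful and essentially surjective. Essential surjectivity means every irreducible $\G$-representation arises as $\Ind_\G(\Hsp_\pi)$ for some irreducible $\mathbb{H}$-representation $\pi$. Fullness/faithfulness means $\Mor(\pi, \pi') \cong \Mor(\Ind_\G \pi, \Ind_\G \pi')$. My plan here would be to exploit the symmetry of the bitorsor: since $\X$ is simultaneously a right $\G$-torsor and left $\mathbb{H}$-torsor, there should be an inverse functor built from the opposite torsor $\X^{\op}$ or from inducing in the other direction (from $\G$-reps to $\mathbb{H}$-reps using the left $\mathbb{H}$-structure). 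I would construct this reverse induction and show the two compositions are naturally isomorphic to the identity functors, using the unitary Galois maps $\mathrm{Can}_r, \mathrm{Can}_l$ from Lemma \ref{LemTorGal} and the unitarity of the matrices $(U_\pi(T_i, e_j))$ and $(U_\pi(e_i, T_j))$ from Theorem \ref{TheoUnSpec} and Corollary \ref{CorUnSpecFlip}. The dimension count $\mult(\pi) = \dim(\Hsp_\pi)$ (which holds precisely in the torsor case) should be the numerical shadow confirming that induction preserves the "size" of representations, making the equivalence plausible and providing the bookkeeping for essential surjectivity. Since the statement says proofs are deferred to \cite{BDV06}, I would at minimum verify functoriality and the explicit monoidal isomorphism by direct computation, and sketch the equivalence via the inverse induction functor, flagging that the detailed verification of naturality and the coherence (associativity constraint) of the monoidal structure is routine but tedious, referring to \cite{BDV06} for completeness.
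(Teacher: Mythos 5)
A preliminary remark: the paper contains no proof of this theorem at all. Immediately before the statement it says that proofs will not be provided and refers to \cite{BDV06}, so your proposal can only be measured against the argument of \cite{BDV06} (essentially Schauenburg's bi-Galois equivalence, in unitary form). In outline your strategy is that argument: functoriality of $T\mapsto T\otimes \id_{\X}$, the multiplication map as monoidal structure, and an inverse induction functor built from the opposite bitorsor $\X^{\op}$, with the Galois maps of Lemma \ref{LemTorGal} and the unitarity of the matrices $(U_{\pi}(T_i,e_j))_{i,j}$ and $(U_{\pi}(e_i,T_j))_{i,j}$ (Theorem \ref{TheoUnSpec}, Corollary \ref{CorUnSpecFlip}) supplying the natural isomorphisms of the two composites with the identity functors. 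Your mechanism for isometry of the multiplication map is also correct in spirit: for $x=\sum_i\xi_i\otimes a_i$ and $x'=\sum_k\xi'_k\otimes a'_k$ in $\Ind_{\G}(\Hsp_{\pi})$, the $\mathscr{O}(\X)$-valued pairing $\sum_{i,k}\langle\xi_i,\xi'_k\rangle\, a_i^*a'_k$ is invariant under $\beta$ (because $\delta_{\pi}$ is isometric), hence a scalar by homogeneity of the left action, and this scalar is exactly the inner product of $\Ind_{\G}(\Hsp_{\pi})$; surjectivity of the multiplication map is where freeness, i.e.\ bijectivity of the Galois maps, must enter.

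There is, however, one genuine error. You assert that $\mult(\pi)=\dim(\Hsp_{\pi})$ ``holds precisely in the torsor case'' and propose to use it as the numerical bookkeeping for essential surjectivity. This contradicts the paper itself: what characterizes torsors is equality of \emph{quantum} multiplicity and \emph{quantum} dimension, $\mult_q(\pi)=\dim_q(\pi)$ (the corollary following Theorem \ref{TheoUnSpec}), whereas $\mult(\pi)=\dim(\pi)$ for all irreducible $\pi$ holds if and only if $\X=\G_{\omega}$ for a unitary $2$-cocycle $\omega$ on $\widehat{\G}$ (end of Section \ref{SecTor}, quoting \cite[Section 4]{BDV06}). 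Moreover, induction genuinely does \emph{not} preserve classical dimension: by Theorem \ref{TheoCompOrth}, $SU_q(2)$, whose fundamental representation is $2$-dimensional, is monoidally equivalent to $O^+(F)$ with $\dim(F)\geq 3$, and under the equivalence these fundamental representations correspond. So any essential-surjectivity argument resting on ``induction preserves the size of representations'' fails. What one actually uses is that $\Mor(\sigma,\alpha)\neq 0$ for every irreducible $\G$-representation $\sigma$ (guaranteed by $\mult_q(\sigma)=\dim_q(\sigma)>0$), so that the isotypical component $C(\X)_{\sigma}$, viewed as an $\mathbb{H}$-$\G$-bicomodule, yields an $\mathbb{H}$-representation on (the conjugate of) $\Mor(\sigma,\alpha)$ which induces back to $\sigma$; the invariant preserved by the equivalence is the quantum dimension, not the dimension.
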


One can then show, in a precise way, that such `monoidal equivalences' between the representation categories of $\mathbb{H}$ and $\G$ are (up to equivalence and isomorphism) classified by the quantum $\mathbb{H}$-$\G$-bitorsors.

As an example we consider the \emph{free orthogonal quantum groups} (of irreducible type), introduced in \cite{VDW96}. These compact quantum groups generalize at the same time the $SU_q(2)$ and the $O_N^+$.

For $T$ a matrix with values in a C$^*$-algebra, we will write $\overline{T}$ for the matrix with $\overline{T}_{ij} = T_{ij}^{\;\,*}$. 

\begin{Def} Take $F\in GL_n(\C)$ such that $F\bar{F} \in \R$. The (universal) \emph{free orthogonal quantum group} $O^+(F)$ is the compact quantum group defined by the C$^*$-algebra \[C(O^+(F))= C^*(U_{ij}\mid 1\leq i,j\leq n, \,U \textnormal{\textit{ unitary}}, F\overline{U}F^{-1} = U),\] endowed with the unique coproduct such that 
 \[\Delta(U_{ij}) = \sum_k U_{ik}\otimes U_{kj}.\]
\end{Def} 

It is easily seen that the above definition makes sense: first of all, we can define a unital $^*$-algebra $\mathscr{O}(O^+(F))$ as the universal $^*$-algebra generated by the above generators and relations, and one immediately checks that it becomes a well-defined Hopf $^*$-algebra with the above coproduct. As the generators assemble into a unitary matrix, the universal enveloping C$^*$-algebra exists, and automatically defines a compact quantum group. What is however not clear is if $\mathscr{O}(O^+(F)) \subseteq C(O^+(F))$: for this one needs the Tannaka-Kre$\breve{\textrm{\i}}$n machinery. 

The condition on $F$ is made to ensure that the canonical unitary corepresentation $U$ is irreducible. Note further that $O^+(F)$ is unchanged under the transformation $F \mapsto zF$ for $z\in \C\setminus \{0\}$. We may hence assume that $F\bar{F} = \epsilon$ with $\epsilon \in \{\pm 1\}$. As $O^+(F)$ is also unchanged under the mapping $F\mapsto  GF\overline{G}^{-1}$ for $G\in \mathrm{GL}(n)$, one can easily show by a polar decomposition argument, see \cite[Section 5]{BDV06}, that one may always assume $F$ to be of the form \[F_{\mathbf{\epsilon},\mathbf{\lambda}}e_i = \epsilon_i\lambda_i e_{\bar{\imath}},\]where $i\mapsto \bar{\imath}$ is an involution on $\{1,2,\ldots,n\}$, and with $\epsilon_i \in \{+,-\}$  and $\lambda_i >0$ satisfying $\epsilon_i\epsilon_{\bar{\imath}} = \epsilon$ and $\lambda_{\bar{\imath}}\lambda_i = 1$.

If $F =I_N$ is a unit matrix, we find back the $O^+_N$. These are precisely the free orthogonal quantum groups which are of \emph{Kac type}, that is, whose Haar state is tracial. On the other hand, when $F$ is a 2-by-2-matrix one can restrict to the case of \[F_q =  \begin{pmatrix} 0  & |q|^{1/2} \\ -\sgn(q)|q|^{-1/2} & 0 \end{pmatrix},\] for some $q\in \lbrack -1,1\rbrack\setminus \{0\}$. In this case $C(O^+_u(F_q)) = C(SU_q(2))$. As remarked before, $SU_q(2)$ is coamenable, hence there is only one C$^*$-completion of $C(SU_q(2))$. This is no longer true for the $O^+(F)$ with $\dim(F) \geq 3$. 

The following theorem establishes an important relationship between all $O^+(F)$.

\begin{Theorem}\label{TheoCompOrth} The family $\{O^+(F)\}$ is complete w.r.t.~ monoidal equivalence. 
Moreover, $O^+(F_1)$ is monoidally equivalent with $O^+(F_2)$ if and only if $c_{F_1}= c_{F_2}$, where we write \[c_F = \mathrm{sign}(F\overline{F})Tr(F^*F).\]
\end{Theorem}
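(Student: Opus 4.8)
The plan is to translate everything into the language of representation categories and reduce the statement to the classification of Temperley--Lieb categories. By the Tannaka--Kre\u{\i}n reconstruction theorem for compact quantum groups (which we do not reprove here) together with the discussion preceding this theorem, two compact quantum groups $\G$ and $\mathbb{H}$ are monoidally equivalent precisely when there is a unitary tensor equivalence $\mathrm{Rep}(\G)\simeq \mathrm{Rep}(\mathbb{H})$; concretely such an equivalence is exactly what produces, and is produced by, a quantum $\mathbb{H}$-$\G$-bitorsor. Thus the entire theorem becomes a statement about the categories $\mathrm{Rep}(O^+(F))$.

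The next step is to pin down $\mathrm{Rep}(O^+(F))$. Writing $U$ for the fundamental corepresentation on $\C^n$, the relation $F\overline{U}F^{-1}=U$ says exactly that $t_F = \sum_i e_i\otimes Fe_i$ spans a morphism $t_F\in \Mor(\epsilon, U\otimes U)$, where $\epsilon$ is the trivial representation. Banica's theorem (itself a consequence of Tannaka--Kre\u{\i}n) asserts that $\mathrm{Rep}(O^+(F))$ is generated as a rigid $C^*$-tensor category by $U$ together with $t_F$, and that its morphism spaces are spanned by the planar (Temperley--Lieb) diagrams built from $t_F$, $t_F^*$ and identities; in particular the irreducibles are labelled by $\tfrac12\N$ with the $SU(2)$-type fusion rules already recalled for $SU_q(2)$. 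Two elementary computations extract the invariants: $t_F^*t_F = \Tr(F^*F)$ is the value of a closed loop, while $(t_F^*\otimes \id_U)(\id_U\otimes t_F) = F\overline{F} = \sgn(F\overline{F})\,\id_U$ (after normalising $F\overline{F}=\pm 1$) is the Frobenius--Schur sign. Hence the signed loop parameter of the Temperley--Lieb category is precisely $c_F = \sgn(F\overline{F})\Tr(F^*F)$, and the constraints $\lambda_{\bar\imath}\lambda_i=1$, $n\geq 2$ force $|c_F|\geq 2$.

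With this identification the quantitative claim is immediate in both directions. The pair consisting of the intrinsic dimension $\Tr(F^*F)$ and the Frobenius--Schur indicator $\sgn(F\overline{F})$ of the generating object is preserved by any unitary tensor equivalence, since such a functor carries standard solutions of the conjugate equations to standard solutions; this shows $c_F$ is a monoidal invariant, so $O^+(F_1)\sim O^+(F_2)$ forces $c_{F_1}=c_{F_2}$. Conversely, for $|\delta|\geq 2$ the Temperley--Lieb category with signed loop parameter $\delta$ is, up to unitary tensor equivalence, the universal rigid $C^*$-tensor category generated by one self-dual object of intrinsic dimension $|\delta|$ and Frobenius--Schur sign $\sgn(\delta)$; hence $c_{F_1}=c_{F_2}$ yields a unitary tensor equivalence $\mathrm{Rep}(O^+(F_1))\simeq \mathrm{Rep}(O^+(F_2))$, and therefore a monoidal equivalence.

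For completeness, suppose $\mathbb{H}$ is monoidally equivalent to some $O^+(F)$. Transporting $U$ along the tensor equivalence produces an irreducible self-conjugate unitary $\mathbb{H}$-representation $V$ on some $\C^{n'}$ whose morphism spaces are again the Temperley--Lieb diagrams with signed loop parameter $c_F$. Choosing a standard solution $t_V\in\Mor(\epsilon,V\otimes V)$ and writing $t_V(1)=\sum_i e_i\otimes F'e_i$ defines $F'\in \mathrm{GL}_{n'}(\C)$ with $F'\overline{F'}\in\R$ and $c_{F'}=c_F$; since $\mathbb{H}$ is generated by $V$ with no relations beyond the Temperley--Lieb ones, Tannaka--Kre\u{\i}n identifies $\mathbb{H}\cong O^+(F')$, so the family is closed under monoidal equivalence, and $F\mapsto c_F$ clearly surjects onto $(-\infty,-2]\cup[2,\infty)$. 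The genuine difficulty, which we would import rather than reprove, is twofold: the Tannaka--Kre\u{\i}n reconstruction in both directions, and Banica's theorem that the intertwiner spaces of $O^+(F)$ are \emph{exactly} the Temperley--Lieb diagrams (the hard inclusion being that there are no further morphisms); everything else is the bookkeeping of the two diagrammatic invariants $\Tr(F^*F)$ and $\sgn(F\overline{F})$.
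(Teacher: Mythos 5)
Your argument is correct in outline, and the diagrammatic computations that drive it ($t_F^*t_F=\Tr(F^*F)$, the snake identity $(t_F^*\otimes\id)(\id\otimes t_F)=F\overline{F}$, the bound $|c_F|\geq 2$) all check out; but it follows a genuinely different route from the paper. The paper in fact offers no categorical argument at all: having \emph{defined} monoidal equivalence by the existence of a bitorsor, it sketches the proof by exhibiting the candidate bitorsor concretely as the universal C$^*$-algebra $C(O^+_u(F_1,F_2))$ generated by the entries of a unitary $U$ subject to $F_1\overline{U}F_2^{-1}=U$, and defers the single hard point --- that this C$^*$-algebra is non-zero precisely when $c_{F_1}=c_{F_2}$ --- to \cite{BDV06}. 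You instead pass through the dictionary between bitorsors and unitary tensor equivalences of representation categories (itself one of the statements the paper records without proof), invoke Banica's theorem that the intertwiner spaces of $O^+(F)$ are \emph{exactly} Temperley--Lieb, and finish with the universal property of Temperley--Lieb categories with given signed loop parameter. What your route buys: completeness of the family falls out naturally (any $\mathbb{H}$ monoidally equivalent to some $O^+(F)$ has Temperley--Lieb representation category, hence is reconstructed as some $O^+(F')$), and the invariant $c_F$ acquires a transparent meaning. What the paper's route buys: it never leaves its own bitorsor framework, and it produces the linking algebra $O^+(F_1,F_2)$ explicitly, which is the object one actually uses to transport actions and make the monoidal equivalence computable; note that your imports (Tannaka--Kre$\breve{\textrm{\i}}$n reconstruction in both directions, Banica's theorem, universality of Temperley--Lieb) carry a total weight comparable to the paper's single import from \cite{BDV06}.

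One small gap worth repairing: in the ``only if'' direction you tacitly assume that the tensor equivalence matches the fundamental representation of $O^+(F_1)$ with that of $O^+(F_2)$. This needs the (easy) fusion-ring observation that $\pi_{1/2}$ is the unique irreducible whose tensor square decomposes into exactly two irreducible summands, so any equivalence must preserve it up to isomorphism; only then does preservation of quantum dimension and of the Frobenius--Schur sign give $c_{F_1}=c_{F_2}$.
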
 

In particular, $O^+(F)$ is monoidally equivalent to $SU_q(2)$ for $-q-q^{-1} = c_F$, and it follows that the irreducible representations of any $O^+(F)$ can be labeled by the half-integers $\frac{1}{2}\N$. The above also gives examples of coamenable $\G$ being monoidally equivalent to non-coamenable $\mathbb{H}$.

It is not hard to give an explicit description of the quantum bitorsor between two monoidally equivalent $O^+(F_1)$ and $O^+(F_2)$: it is given by 
\[C(O^+_u(F_1,F_2))= C^*\Big{(}_{ij}\mid \begin{array}{lll}1\leq i \leq  \textnormal{dim}(F_1),1\leq j \leq \textnormal{dim}(F_2)\\  U \textnormal{\textit{ unitary}}, F_1\overline{U}F_2^{-1} = U\end{array}\Big{)},\] with the obvious coproduct structure. The hard part consists in showing that this C$^*$-algebra is not trivial, see \cite{BDV06}.

\section{A duality between free and homogeneous actions}

In this section, we discuss a relation between freeness and homogeneity in a general context. This goes back to ideas already present in \cite{Was89}. 

Let $\X \overset{\alpha}{\curvearrowleft} \G$ be a homogeneous action, and assume that $\{\mathscr{E}_i\}_{i\in I}$ is a maximal family of mutually inequivalent $\G$-equivariant right Hilbert $\G$-modules. Write \[C_0(\X_{\stab})= \mathcal{K}(\oplus_i \mathscr{E}_i) = \oplus_{i,j} \mathcal{K}(\mathscr{E}_i,\mathscr{E}_j).\] From Section \ref{SecHom}, it easily follows that we can endow $C_0(\X_{\stab})$ with a coaction \[\alpha: C_0(\X_{\stab}) \rightarrow C_0(\X_{\stab})\otimes C(\G)\]  such that \[\alpha(\xi\eta^*) = \alpha(\xi)\alpha(\eta)^*,\quad \xi\in \mathscr{E}_j,\eta\in \mathscr{E}_i.\] 

Our goal is to show the following.

\begin{Theorem} Let $\X \overset{\alpha}{\curvearrowleft} \G$ be a homogeneous action. Then $\X_{\stab}\overset{\alpha}{\curvearrowleft} \G$ is free. 
\end{Theorem}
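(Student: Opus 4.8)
The plan is to verify the Ellwood freeness condition
\[\bigl\lbrack (C_0(\X_{\stab})\otimes 1_{\G})\alpha(C_0(\X_{\stab}))\bigr\rbrack = C_0(\X_{\stab})\otimes C(\G)\]
by working on the dense algebraic core, where everything is controlled by the explicit module structure. Since $C_0(\X_{\stab}) = \mathcal{K}(\oplus_i \mathscr{E}_i) = \oplus_{i,j}\mathcal{K}(\mathscr{E}_i,\mathscr{E}_j)$ and each $\mathscr{E}_i$ is finitely generated projective with finite-dimensional isotypical components by Theorem \ref{TheoEqHilb}, the core consists of finite sums of rank-one operators $\xi\eta^*$ with $\xi,\eta$ living in the algebraic direct sum of the spectral subspaces. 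It therefore suffices to show that for all such $\xi\in\mathscr{E}_j$, $\eta\in\mathscr{E}_i$ (both homogeneous for the coaction), the element $\xi\eta^*\otimes g$ lies in the closed span of $(C_0(\X_{\stab})\otimes 1_{\G})\alpha(C_0(\X_{\stab}))$ for every $g\in\mathscr{O}(\G)$.

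The key step is a comultiplication-splitting identity for equivariant vectors. First I would establish that for $\xi\in\mathscr{E}_\pi$ (a $\pi$-isotypical vector) one has the algebraic relation
\[\xi\otimes 1_{\G} = \xi_{(0)}\otimes S^{-1}(\xi_{(2)})\xi_{(1)},\]
exactly as used in the proof of the preceding Proposition on equivariant Hilbert equivalence bimodules; this is the place where the Hopf-algebraic antipode makes an appearance and is the analogue of the Podle\'{s}-type density computation in Lemma \ref{LemAlgtoAn}. From this one reconstructs $\xi\eta^*\otimes g$: writing $\alpha(\xi\eta^*)=\alpha(\xi)\alpha(\eta)^*=\xi_{(0)}\eta_{(0)}^*\otimes \xi_{(1)}S(\eta_{(1)})^*$ and multiplying on the left by elements $\zeta\eta_{(0)}^{\,*}\otimes 1_{\G}\in C_0(\X_{\stab})\otimes 1_{\G}$, one can use the comodule identities together with the antipode relation to collapse the $\G$-leg and recover an arbitrary $U_{\pi}(\cdot,\cdot)$, and hence (by density of $\mathscr{O}(\G)$ in $C(\G)$ and finite-dimensionality of each $C(\G)_{\pi}$) an arbitrary $g$. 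The homogeneity of $\alpha$ enters precisely in guaranteeing that the inner products $\langle\xi,\eta\rangle$ produced along the way are \emph{scalars}, so that the module bracket does not obstruct the collapse of the group leg.

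The main obstacle I anticipate is bookkeeping the two superimposed index structures at once: the C$^*$-algebra $C_0(\X_{\stab})$ is a matrix algebra $\oplus_{i,j}\mathcal{K}(\mathscr{E}_i,\mathscr{E}_j)$ over the \emph{family} of modules, so I must check that the reconstruction works not just within a single $\mathscr{E}_i$ but across off-diagonal blocks $\mathcal{K}(\mathscr{E}_i,\mathscr{E}_j)$, and that the required source vectors $\zeta$ indeed exist in the relevant $\mathscr{E}_i$. Here the completeness of the family $\{\mathscr{E}_i\}_{i\in I}$ (that it is \emph{maximal}) is essential: it ensures that every spectral vector of every $\mathscr{E}_j$ can be paired appropriately. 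Concretely I would fix a single rank-one generator $\xi\eta^*$ with $\xi\in(\mathscr{E}_j)_\pi$, $\eta\in(\mathscr{E}_i)_\pi$ and run the identity above, noting that $(\id\otimes\varphi_{\G})(\alpha(\eta)\alpha(\eta)^*)$ is a nonzero equivariant compact operator (as in the proof of Theorem \ref{TheoEqHilb}), which supplies the invertible scalar needed to normalize. Once a single $g=U_\pi(e_k,e_l)$ is recovered this way, the full density statement follows by linearity, semisimplicity, and the fact that $\mathscr{O}(\G)$ is dense in $C(\G)$, so that the closed span on the left exhausts $C_0(\X_{\stab})\otimes C(\G)$.
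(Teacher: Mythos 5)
Your overall framing (verify the Ellwood condition on the dense algebraic core, via rank-one operators $\xi\eta^*$ and the pairings $(\mathscr{E}_i^*\otimes 1_{\G})\alpha(\mathscr{E}_i)$) is the right one, but the engine of your argument fails. The identity $\xi\otimes 1_{\G} = \xi_{(0)}\otimes S^{-1}(\xi_{(2)})\xi_{(1)}$ can only be exploited by multiplying the \emph{group} leg on the left by the elements $S^{-1}(\xi_{(2)})\in\mathscr{O}(\G)$, i.e.\ by elements of $1\otimes C(\G)$; but the Ellwood set $\lbrack(C_0(\X_{\stab})\otimes 1_{\G})\alpha(C_0(\X_{\stab}))\rbrack$ is only stable under left multiplication by $C_0(\X_{\stab})\otimes 1_{\G}$. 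What that identity proves is the Podle\'{s} density condition for the coaction on $\mathcal{K}(\oplus_i\mathscr{E}_i)$ --- a property \emph{every} action has --- not freeness. A sanity check makes the gap visible: when the action is homogeneous, $C(\X)$ itself (with coaction $\alpha$) is an irreducible $\G$-equivariant Hilbert $C(\X)$-module, so if your collapse worked block by block it would show $\lbrack(C(\X)\otimes 1_{\G})\alpha(C(\X))\rbrack = C(\X)\otimes C(\G)$, i.e.\ that the homogeneous action $\X\curvearrowleft\G$ is itself free; this already fails for the action of $\G$ on a point. Relatedly, homogeneity does \emph{not} make the inner products $\langle\chi,\xi_{(0)}\rangle_{\X}$ appearing in these pairings scalar: they are $C(\X)$-valued, and must be, since the pairings have to fill up all of $C(\X)\otimes C(\G)$. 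What is scalar, by irreducibility, is only the averaged pairing $(\id\otimes\varphi_{\G})(\alpha(\xi)\alpha(\eta)^*)$.

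The missing idea is where the elements $1_{\X}\otimes U_{\pi}(v,w)$ actually come from: not from an antipode collapse inside a single abstract module, but from the \emph{standard} modules $\Hsp_{\pi}\otimes C(\X)$. There one computes
\[((v\otimes x)^*\otimes 1_{\G})\,\alpha_{\pi}(w\otimes y) = x^*y_{(0)}\otimes U_{\pi}(v,w)\,y_{(1)},\]
so it is the \emph{scalar} inner products on the finite-dimensional space $\Hsp_{\pi}$ (not homogeneity) that produce arbitrary matrix coefficients in the group leg; the antipode trick of Lemma \ref{LemAlgtoAn} then yields all $x^*y\otimes g$, which are dense in $C(\X)\otimes C(\G)$. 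To transfer this density to your family you need Theorem \ref{TheoTensEq} together with maximality: each $\Hsp_{\pi}\otimes C(\X)$ decomposes into orthogonal irreducible equivariant summands, each equivalent to some $\mathscr{E}_i$; the cross pairings between orthogonal summands vanish, and equivalent modules give identical pairing sets, so the dense span computed above lies inside $\lbrack\sum_i(\mathscr{E}_i^*\otimes 1_{\G})\alpha(\mathscr{E}_i)\rbrack$. From there the Ellwood condition for every block $\mathcal{K}(\mathscr{E}_k,\mathscr{E}_j)\otimes C(\G)$ follows in the way you indicate. This is exactly the route of the paper's proof; homogeneity enters only through the structure theory of Theorems \ref{TheoEqHilb} and \ref{TheoTensEq}, not through the density computation itself.
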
 

\begin{proof} We have to show that \[\lbrack (C_0(\X_{\stab})\otimes 1_{\G})\alpha(\X_{\stab})\rbrack = C_0(\X_{\stab})\otimes C(\G).\] For this, it is sufficient to show that \[\lbrack  \sum_i (\mathscr{E}_i^*\otimes 1_{\G})\alpha(\mathscr{E}_i)\rbrack = C(\X)\otimes C(\G).\] However, as any $\mathscr{E}_i$ appears as a $\G$-equivariant direct $C(\X)$-Hilbert module summand of some $(\Hsp_{\pi}\otimes C(\X),\alpha_{\pi})$, for $\pi$ a $\G$-representation, it is enough to show that the linear span over all $\pi$ of the \[((\Hsp_{\pi}\otimes \mathscr{O}_{\G}(\X))^*\otimes 1_{\G})\alpha_{\pi}(\Hsp_{\pi}\otimes \mathscr{O}_{\G}(\X))\] is dense in $C(\X)\otimes C(\G)$. But as these elements are of the form \[ \langle v,w_{(0)}\rangle x^*y_{(0)}\otimes w_{(1)}y_{(1)},\quad v,w\in \Hsp_{\pi},x,y\in \mathscr{O}_{\G}(\X),\] it follows that we can obtain all elements of the form $x^*y_{(0)}\otimes gy_{(1)}$ with $x,y\in \mathscr{O}_{\G}(\X)$ and $g\in \mathscr{O}(\G)$, and hence all elements of the form $x\otimes g$ with $x\in \mathscr{O}_{\G}(\X)$ and $g\in \mathscr{O}(\G)$. 
\end{proof} 

Remark that $C_0(\X_{\stab}/\G)\cong c_0(I)$, since $\mathscr{K}(\mathscr{E}_i,\mathscr{E}_j)^{\G} = \delta_{i,j}\C\id_{\mathscr{E}_i}$ by irreducibility and mutual inequivalence of the $\mathscr{E}_i$. We hence obtain the following corollary.

\begin{Cor} Any homogeneous $\X\curvearrowleft\G$ is $\G$-equivariantly Morita equivalent with a free action $\X_{\stab}\curvearrowleft \G$ such that $\X_{\stab}/\G$ is a discrete set.
\end{Cor}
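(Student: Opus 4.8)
The plan is to show that the bimodule $\mathscr{E} := \oplus_{i\in I}\mathscr{E}_i$ realizes a $\G$-equivariant strong Morita equivalence between $C(\X)$ and $C_0(\X_{\stab})$, and then to read off the remaining two assertions directly from the Theorem and the Remark immediately preceding. First I would note that $\mathscr{E}$ is a $\G$-equivariant right Hilbert $C(\X)$-module, with coaction $\alpha_{\mathscr{E}} = \oplus_i \alpha_{\mathscr{E}_i}$, and that by the very definition of $C_0(\X_{\stab})$ one has $\mathcal{K}(\mathscr{E}) = \oplus_{i,j}\mathcal{K}(\mathscr{E}_i,\mathscr{E}_j) = C_0(\X_{\stab})$. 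The coaction $\alpha$ on $C_0(\X_{\stab})$, defined by $\alpha(\xi\eta^*) = \alpha(\xi)\alpha(\eta)^*$, satisfies the compatibility condition $\alpha(T\xi) = \alpha(T)\alpha_{\mathscr{E}}(\xi)$ for $T\in C_0(\X_{\stab})$ and $\xi\in \mathscr{E}$: for $T = \eta\zeta^*$ a rank-one operator one has $T\xi = \eta\langle \zeta,\xi\rangle_{\X}$, and the claim then follows from the lemma $\alpha(\xi a) = \alpha(\xi)\alpha(a)$ together with the compatibility of $\alpha$ with the inner products, extending to all $T$ by density and continuity.

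The only genuinely new point to verify is that $\mathscr{E}$ is full as a right Hilbert $C(\X)$-module, and here I would reuse verbatim the fullness argument from the Proposition that constructs a homogeneous action on $\mathcal{K}$ of an irreducible equivariant module. Write $J = \lbrack\langle \mathscr{E},\mathscr{E}\rangle_{\X}\rbrack$, a $\G$-invariant closed two-sided ideal of $C(\X)$ containing each $\lbrack\langle \mathscr{E}_i,\mathscr{E}_i\rangle_{\X}\rbrack$. The family $\{\mathscr{E}_i\}$ is non-empty, since $C(\X)\neq\{0\}$ (because $C(\X/\G)=\C1_{\X}$) carries the non-zero equivariant module structure given by right multiplication and $\alpha$, hence admits at least one irreducible summand. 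Fixing some $i$ and a non-zero $x$ in an isotypical component of $\langle \mathscr{E}_i,\mathscr{E}_i\rangle_{\X}$, the element $(\id_{\X}\otimes \varphi_{\G})\alpha(x^*x)$ lies in $J$ and is non-zero by faithfulness of $\varphi_{\G}$ on $\mathscr{O}(\G)$; by homogeneity it is a non-zero scalar multiple of $1_{\X}$, so $1_{\X}\in J$ and thus $J = C(\X)$. Hence $\mathscr{E}$ is full, and it is a $\G$-equivariant Hilbert equivalence bimodule between $C(\X)$ and $C_0(\X_{\stab})$; this establishes the $\G$-equivariant Morita equivalence $\X \sim \X_{\stab}$.

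Finally, the freeness of $\X_{\stab}\overset{\alpha}{\curvearrowleft}\G$ is exactly the content of the preceding Theorem, while the discreteness of the quantum orbit space follows from the Remark: $C_0(\X_{\stab}/\G) = C_0(\X_{\stab})^{\G} = \oplus_{i,j}\mathcal{K}(\mathscr{E}_i,\mathscr{E}_j)^{\G} = \oplus_i \C\,\id_{\mathscr{E}_i}\cong c_0(I)$, using irreducibility and mutual inequivalence of the $\mathscr{E}_i$. Combining these gives the corollary.

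I expect no serious obstacle, as all the necessary ingredients are already in place; the only care point is bookkeeping for the possibly infinite index set $I$, namely that $\mathcal{K}(\oplus_i \mathscr{E}_i)$ is the $c_0$-direct sum $\oplus_{i,j}\mathcal{K}(\mathscr{E}_i,\mathscr{E}_j)$ and that the coaction and the fixed-point computation respect this decomposition — both of which are routine given the $c_0$-structure. One could also check directly that $\alpha$ on $C_0(\X_{\stab})$ is a bona fide coaction satisfying the Podle\'{s} condition, but this is subsumed by the Theorem, so I would simply invoke it.
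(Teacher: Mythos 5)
Your proposal is correct and follows essentially the same route as the paper: the paper obtains the corollary by combining the preceding Theorem (freeness of $\X_{\stab}\curvearrowleft\G$) with the remark that $C_0(\X_{\stab}/\G)\cong c_0(I)$, the Morita equivalence being implicit in the construction $C_0(\X_{\stab})=\mathcal{K}(\oplus_i\mathscr{E}_i)$. The details you fill in explicitly --- the compatibility $\alpha(T\xi)=\alpha(T)\alpha_{\mathscr{E}}(\xi)$ and the fullness of $\oplus_i\mathscr{E}_i$ via the homogeneity argument recycled from the Proposition on irreducible equivariant modules --- are exactly the ingredients the paper relies on, so there is no gap.
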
 

One can in fact show that this gives a one-to-one correspondence between homogeneous actions, up to equivariant Morita equivalence, and indecomposable free actions with a classical discrete set of quantum orbits, up to equivariant isomorphism. Here an indecomposable action is one which can not be written as a direct sum of two actions.

Let us discuss some examples. In \cite{DCY15}, a classification was provided of all (universal) homogeneous actions of the free orthogonal quantum groups $O^+(F)$ in terms of certain combinatorial data, which we introduce in the next definition. For full proofs of the remaining results of this section, we refer the reader to \cite{DCY15}. 

\begin{Def} Let $\delta\in \R_0$. A \emph{$\delta$-reciprocal random walk} consists of a quadruple $(\Gamma,w,\sgn,i)$ where 
 \begin{itemize}
\item $\Gamma=(\Gamma^{(0)},\Gamma^{(1)},s,t)$ is a graph with source and target maps $s$ and $t$, 
\item $w$ is a weight function $w\colon\Gamma^{(1)}\rightarrow \R_0^+$,
\item $\sgn$ a sign function $\sgn\colon\Gamma^{(1)}\rightarrow \{\pm 1\}$, 
\item $i$ is an involution  $e\mapsto \overline{e}$ on $\Gamma^{(1)}$ interchanging source and target, 
\end{itemize}
s.t.
\begin{itemize}
\item for all $e$, $w(e)w(\bar{e}) = 1$, 
\item for all $e$, $\sgn(e)\sgn(\bar{e}) = \sgn(\delta)$,
\item for all $v$, $\sum_{s(e)=v}  \frac{1}{|\delta|}w(e) = 1$.
\end{itemize}
\end{Def}

Note that if $\delta<0$, the condition $\sgn(e)\sgn(\bar{e}) = \sgn(\delta)$ implies that the set of loops at a vertex must be even. As for the terminology, the `reciprocality' refers to the reciprocality of the weight function under the involution, while the `random walk' part refers to the fact that the normalized weights $\frac{1}{|\delta|}w$ provide probability measures on each $s^{-1}(v)$, that is, we are given probabilities to leave a vertex along a certain edge. 

The first part of the following lemma is proven by straightforward estimates, while the second part is a straightforward application of Frobenius-Perron theory.

\begin{Lem} Let $(\Gamma,w,\sgn,i)$ be a $\delta$-reciprocal random walk. Let $M(\Gamma)$ be the adjacency matrix of $\Gamma$. Then \[\|M(\Gamma)\|\leq |\delta|,\] and in particular $\Gamma$ has bounded degree: 
\[ \sup_{v\in \Gamma^{(0)}} \#\{e\in \Gamma^{(1)}\mid s(e)= v\} <\infty.\] Conversely, if $\Gamma$ is a graph of bounded degree, then there exists a $\delta$-reciprocal random walk on $\Gamma$ for some $\delta>0$.
\end{Lem}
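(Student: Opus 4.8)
The plan is to treat the two halves separately: the norm estimate by a weighted Cauchy--Schwarz over the edge set, and the converse by producing a positive eigenfunction of the adjacency operator.

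For the bound $\|M(\Gamma)\|\le|\delta|$, I would realize $M=M(\Gamma)$ as an operator on $\ell^2(\Gamma^{(0)})$ and test it against finitely supported vectors $\xi,\eta$. Writing the pairing as a sum over edges,
\[
\langle M\xi,\eta\rangle = \sum_{e\in\Gamma^{(1)}} \overline{\eta(s(e))}\,\xi(t(e)),
\]
I would split each summand as $\bigl(\overline{\eta(s(e))}\,w(e)^{1/2}\bigr)\bigl(\xi(t(e))\,w(e)^{-1/2}\bigr)$ and apply Cauchy--Schwarz in $\ell^2(\Gamma^{(1)})$. The first factor contributes $\sum_e |\eta(s(e))|^2 w(e) = \sum_v |\eta(v)|^2 \sum_{s(e)=v} w(e) = |\delta|\,\|\eta\|^2$ by the random-walk normalisation. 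For the second factor I would use reciprocity $w(e)^{-1} = w(\bar e)$ together with $s(\bar e)=t(e)$ and the fact that $e\mapsto\bar e$ permutes $\Gamma^{(1)}$, re-indexing to get $\sum_e |\xi(t(e))|^2 w(e)^{-1} = \sum_{e}|\xi(s(e))|^2 w(e) = |\delta|\,\|\xi\|^2$. This yields $|\langle M\xi,\eta\rangle| \le |\delta|\,\|\xi\|\,\|\eta\|$, hence $\|M(\Gamma)\|\le|\delta|$.

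The bounded-degree statement then follows by a short estimate which bypasses the norm entirely: for an edge $e$ with $s(e)=v$, its reverse $\bar e$ satisfies $s(\bar e)=t(e)$ and $w(\bar e)\le\sum_{s(f)=t(e)} w(f)=|\delta|$, so by reciprocity $w(e)=w(\bar e)^{-1}\ge|\delta|^{-1}$. Summing over the edges out of $v$ gives $|\delta|^{-1}\#\{e:s(e)=v\}\le\sum_{s(e)=v}w(e)=|\delta|$, i.e.\ every out-degree is bounded by $|\delta|^2$; since the involution matches out-edges with in-edges, the total degree is bounded as well.

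For the converse, assume $\Gamma$ carries an involution $i$ interchanging $s$ and $t$ (equivalently, $\Gamma$ is undirected), so that $M(\Gamma)$ is a symmetric nonnegative matrix, bounded by the maximal degree $D$ as an operator on $\ell^2(\Gamma^{(0)})$. My plan is to reduce everything to finding a strictly positive $h\colon\Gamma^{(0)}\to\R_{>0}$ and a number $\delta>0$ with $M(\Gamma)h=\delta h$: setting $w(e)=h(t(e))/h(s(e))$ makes reciprocity automatic (the factors telescope under $e\mapsto\bar e$), the row sums become $(Mh)(v)/h(v)=\delta$, and choosing $\sgn\equiv 1$ satisfies $\sgn(e)\sgn(\bar e)=1=\sgn(\delta)$. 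For a finite connected graph such $(h,\delta)$ is exactly the classical Perron--Frobenius eigenpair with $\delta$ the spectral radius; in general I would invoke Perron--Frobenius theory for locally finite graphs, which provides, for every $\delta$ at least the $\ell^2$-spectral radius $\rho(M)\le D$, a positive $\delta$-eigenfunction, treating each connected component with a common value such as $\delta=D$. The main obstacle is precisely this last point: for infinite graphs the existence of a positive eigenfunction is not elementary Perron--Frobenius but rests on the theory of positive $\delta$-harmonic functions ($\rho$-recurrence and the $\delta$-Martin boundary), so care is needed to select a single $\delta$ admitting a positive eigenfunction simultaneously on every component.
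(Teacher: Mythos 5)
Your proof of the norm bound is correct and complete, and it is exactly the ``straightforward estimate'' the paper alludes to (the paper records no details for this lemma): the edge-wise Cauchy--Schwarz with the splitting $w(e)^{1/2}\cdot w(e)^{-1/2}$, the row-sum normalisation $\sum_{s(e)=v}w(e)=|\delta|$, and the re-indexing $e\mapsto\bar e$ using $w(\bar e)=w(e)^{-1}$, $s(\bar e)=t(e)$ give $|\langle M\xi,\eta\rangle|\le|\delta|\,\|\xi\|\,\|\eta\\|$. Your direct degree estimate $w(e)\ge|\delta|^{-1}$, hence $\#\{e\mid s(e)=v\}\le|\delta|^2$, is also correct.

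For the converse, your reduction --- find $\delta>0$ and $h>0$ with $M(\Gamma)h=\delta h$, put $w(e)=h(t(e))/h(s(e))$ and $\sgn\equiv 1$ --- is precisely the intended Frobenius--Perron route, and it settles the finite connected case. Two remarks on the points you leave open. First, for an infinite connected graph of bounded degree $D$ you do not need $\rho$-Martin boundary theory: exhaust $\Gamma$ by the balls $B_n$ around a base vertex $o$, let $(\rho_n,h_n)$ be the Perron eigenpair of the finite connected graph $B_n$, normalised by $h_n(o)=1$. Then $\rho_n$ increases to some $\rho_\infty\le D$ (principal submatrices), the eigenvalue equation gives the two-sided Harnack bound $D^{-d(o,v)}\le h_n(v)\le D^{d(o,v)}$, and a diagonal argument yields a pointwise limit $h$ which is strictly positive and satisfies $M(\Gamma)h=\rho_\infty h$, since for fixed $v$ all neighbours of $v$ lie in $B_n$ eventually. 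Second, your worry about several components is not a technicality: the statement as printed is in fact false for disconnected graphs. On $K_2$ each edge is the unique out-edge of its source, so $w(e)=w(\bar e)=|\delta|$ and reciprocity forces $|\delta|=1$; on $K_3$, row sums equal to $1$ force every weight to be strictly less than $1$, contradicting $w(e)w(\bar e)=1$. Hence the bounded-degree graph $K_2\sqcup K_3$ carries no $\delta$-reciprocal random walk for any $\delta$ (and an isolated vertex is an even cruder obstruction, since an empty row sum cannot equal $1$). So the converse must be read with $\Gamma$ connected, which is consistent with how the lemma is used: the classification theorem that follows it assumes connectedness. Incidentally, your method does not produce all reciprocal random walks --- on $K_3$ there are walks with $\delta\neq\rho$ whose weights have product $\neq 1$ around the $3$-cycle, so they come from no eigenfunction --- but for the existence statement this is irrelevant.
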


We recall that when considering the (irreducible) free orthogonal quantum groups $O^+(F)$, we may assume that $F$ is of the form $F_{\mathbf{\epsilon},\mathbf{\lambda}}$ as discussed above Theorem \ref{TheoCompOrth}. Here we assume fixed an involution on $\{1,2,\ldots, n\}$, signs $\{\epsilon_i\}$ and positive numbers $\lambda_i$ such that $\epsilon_i\epsilon_{\bar{\imath}}= \epsilon$ for a constant sign $\epsilon$, and $\lambda_i\lambda_{\bar{\imath}} = 1$.

\begin{Def} Write $c_{\mathbf{\epsilon},\mathbf{\lambda}} = -\epsilon \sum_i \lambda_i^2$. Let $(\Gamma,w,\sgn,i)$ be a $c_{\mathbf{\epsilon},\mathbf{\lambda}}$-reciprocal random walk. Then we define $\mathscr{O}(\X^{(\Gamma)})$ to be the universal $^*$-algebra generated by a copy of the finite support functions on $\Gamma^{(0)}$, whose Dirac functions we write $\delta_v$ for $v\in \Gamma^{(0)}$, together with a collection of generators $U_{e,i}$ for $e\in \Gamma^{(1)}$ and $1\leq i\leq n$, such that \[U_{e,i} = \delta_{s(e)}U_{e,i}\delta_{t(e)}\] and \[\sum_{g\in t^{-1}(w)} U_{g,i}^*U_{g,j} = \delta_{i,j}\delta_{w},\quad w\in \Gamma^{(0)}, 1\leq i,j\leq n,\]\[\sum_{i=1}^n U_{e,i}U_{f,i}^* = \delta_{e,f} \delta_{s(e)},\quad e,f\in \Gamma^{(1)},\]\[ U_{e,i}^* = \frac{\epsilon_i\lambda_i}{\sgn(e)\sqrt{w(e)}} U_{\bar{e},\bar{\imath}}.\]
\end{Def} 

Note that the sums in the above definition are well-defined because $\Gamma$ has bounded degree.

\begin{Lem} There exists a unique Hopf $^*$-algebraic coaction \[\alpha:\mathscr{O}(\X^{(\Gamma)}) \rightarrow \mathscr{O}(\X^{(\Gamma)}) \aotimes \mathscr{O}(O^+(F_{\mathbf{\epsilon},\mathbf{\lambda}}))\] such that $\alpha(\delta_v)= \delta_v\otimes 1_{\G}$ and \[\alpha(U_{g,i}) = \sum_{j} U_{g,j}\otimes U_{ji}.\] 
\end{Lem}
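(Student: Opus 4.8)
The plan is to exploit the universal property of $\mathscr{O}(\X^{(\Gamma)})$: since this $^*$-algebra is presented by generators and relations, to produce the $^*$-homomorphism $\alpha$ it suffices to define it on generators by the stated formulas, $\delta_v\mapsto \delta_v\otimes 1_{\G}$ and $U_{g,i}\mapsto \sum_j U_{g,j}\otimes U_{ji}$, and then verify that these images, as elements of $\mathscr{O}(\X^{(\Gamma)})\aotimes\mathscr{O}(O^+(F_{\mathbf{\epsilon},\mathbf{\lambda}}))$, satisfy every defining relation. Each image is a finite sum (the index $j$ runs over $1\le j\le n$), so no convergence issue arises, and uniqueness is automatic once existence is established, since $\alpha$ is prescribed on a generating set of a $^*$-algebra.

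First I would dispatch the easy relations. The relations among the $\delta_v$ (mutually orthogonal self-adjoint idempotents) are preserved because $\alpha(\delta_v)=\delta_v\otimes 1_{\G}$ is the original copy tensored with the unit. For the support relation $U_{e,i}=\delta_{s(e)}U_{e,i}\delta_{t(e)}$, applying $\alpha$ and using $\delta_{s(e)}U_{e,j}\delta_{t(e)}=U_{e,j}$ for every $j$ reduces it to itself. The two orthogonality relations are where the unitarity of $U=(U_{ji})$ in $O^+(F_{\mathbf{\epsilon},\mathbf{\lambda}})$ enters: applying $\alpha$ to $\sum_{g\in t^{-1}(w)}U_{g,i}^*U_{g,j}$ yields $\sum_{k,l}\big(\sum_{g}U_{g,k}^*U_{g,l}\big)\otimes U_{ki}^*U_{lj}$, and collapsing the inner sum to $\delta_{k,l}\delta_w$ leaves $\delta_w\otimes\sum_k U_{ki}^*U_{kj}=\delta_w\otimes\delta_{i,j}1_{\G}$ by $U^*U=1$; symmetrically the coisometry relation uses $UU^*=1$. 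In each case the result is exactly $\alpha$ applied to the corresponding right-hand side.

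The hard part will be the $^*$-relation $U_{e,i}^*=\frac{\epsilon_i\lambda_i}{\sgn(e)\sqrt{w(e)}}U_{\bar{e},\bar{\imath}}$. Here I would first rewrite the defining relation $F\overline{U}F^{-1}=U$ of $O^+(F_{\mathbf{\epsilon},\mathbf{\lambda}})$ in components. Using $F_{\mathbf{\epsilon},\mathbf{\lambda}}e_i=\epsilon_i\lambda_i e_{\bar{\imath}}$ together with $\epsilon_i\epsilon_{\bar{\imath}}=\epsilon$ and $\lambda_i\lambda_{\bar{\imath}}=1$, one obtains \[U_{ab}^*=\epsilon_a\epsilon_b\,\lambda_b\lambda_a^{-1}\,U_{\bar{a},\bar{b}}.\] Applying $\alpha$ to the left of the torsor relation gives $\sum_j U_{e,j}^*\otimes U_{ji}^*$; substituting the $\X^{(\Gamma)}$-relation for $U_{e,j}^*$, reindexing $j\mapsto\bar{k}$, and then substituting the component form above for $U_{\bar{k},i}^*$, one finds that the scalar factors cancel (via $\epsilon_{\bar{k}}^2=1$ and $\lambda_{\bar{k}}\lambda_{\bar{k}}^{-1}=1$), leaving precisely $\frac{\epsilon_i\lambda_i}{\sgn(e)\sqrt{w(e)}}\sum_k U_{\bar{e},k}\otimes U_{k\bar{\imath}}=\frac{\epsilon_i\lambda_i}{\sgn(e)\sqrt{w(e)}}\alpha(U_{\bar{e},\bar{\imath}})$. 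This simultaneous bookkeeping of the signs and weights from the $c_{\mathbf{\epsilon},\mathbf{\lambda}}$-reciprocal random walk and of the $O^+(F)$ relation is the genuinely delicate point.

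Once $\alpha$ exists as a unital $^*$-homomorphism, I would finish by checking the Hopf $^*$-algebraic coaction axioms, which by multiplicativity need only be verified on generators. For $\delta_v$ both the comodule and counit identities are immediate. For $U_{g,i}$ one computes, using $\Delta(U_{ji})=\sum_k U_{jk}\otimes U_{ki}$, that $(\id\otimes\Delta)\alpha(U_{g,i})=\sum_{j,k}U_{g,j}\otimes U_{jk}\otimes U_{ki}=(\alpha\otimes\id)\alpha(U_{g,i})$, while $(\id\otimes\varepsilon)\alpha(U_{g,i})=\sum_j U_{g,j}\varepsilon(U_{ji})=U_{g,i}$ since $\varepsilon(U_{ji})=\delta_{ji}$. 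This establishes existence, the coaction property, and uniqueness, completing the proof.
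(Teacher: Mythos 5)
Your proposal is correct and is exactly the argument the paper has in mind: its proof of this lemma reads simply ``Straightforward,'' leaving to the reader precisely the generators-and-relations verification you carry out. Your computations check out, including the delicate fourth relation, where the component form $U_{ab}^*=\epsilon_a\epsilon_b\lambda_b\lambda_a^{-1}U_{\bar a,\bar b}$ of $F\overline{U}F^{-1}=U$ combines with the reciprocal-random-walk relation so that the signs and weights cancel as you describe.
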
 
\begin{proof} Straightforward.
\end{proof}

One can easily verify also that  $\X^{(\Gamma)}\curvearrowleft O^+(F_{\epsilon,\lambda}$ only depends on $(\Gamma,w)$, and not on the choice of involution or sign function. 

\begin{Theorem} Let $\X \curvearrowleft O^+(F_{\mathbf{\epsilon},\mathbf{\lambda}})$ be homogeneous. Then there exists a unique $c_{\mathbf{\epsilon},\mathbf{\lambda}}$-reciprocal random walk $(\Gamma,w,\sgn,i)$ (up to isomorphism of $(\Gamma,w)$) such that \[\X_{\stab}\curvearrowleft O^+(F_{\mathbf{\epsilon},\mathbf{\lambda}}) \cong \X^{(\Gamma)}\curvearrowleft O^+(F_{\epsilon,\lambda})\] (more precisely, we have an $O^+(F_{\mathbf{\epsilon},\mathbf{\lambda}})$-equivariant $^*$-isomorphism $\mathscr{O}_{\G}(\X_{\stab}) \cong \mathscr{O}(\X^{(\Gamma)})$.) Moreover, any $c_{\mathbf{\epsilon},\mathbf{\lambda}}$-reciprocal random walk with $\Gamma$ connected arises in this way.
\end{Theorem}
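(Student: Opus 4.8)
The plan is to read off the combinatorial datum of the random walk from the fusion rules of the stabilized free action, and conversely to reconstruct the action by generators and relations. First I would replace $\X\curvearrowleft O^+(F_{\epsilon,\lambda})$ by its stabilization $\X_{\stab}\curvearrowleft\G$, which by the preceding theorem and corollary of this section is free with $C_0(\X_{\stab}/\G)\cong c_0(I)$, where $I=\{\mathscr{E}_i\}$ indexes the irreducible $\G$-equivariant Hilbert $C(\X)$-modules and $C_0(\X_{\stab})=\mathcal{K}(\oplus_i\mathscr{E}_i)$. The graph will have vertices $\Gamma^{(0)}=I$, with the minimal projections $\delta_v\in c_0(I)$ the images of the Dirac functions. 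Since for $O^+(F_{\epsilon,\lambda})$ the representation category is generated by the fundamental representation $U=U_{1/2}$ (monoidally equivalent to $SU_q(2)$ by Theorem \ref{TheoCompOrth}), the entire fusion datum is carried by the single fusion matrix $M_\alpha(U)$; I would let $\Gamma^{(1)}$ consist of $M_\alpha(U)_{i,j}$ edges from $i$ to $j$, concretely realized by a chosen orthonormal basis of the equivariant isometric embeddings $\mathscr{E}_{t(e)}\hookrightarrow\Hsp_U\otimes\mathscr{E}_{s(e)}$. These embeddings, paired against the basis of $\Hsp_U$, produce the generators $U_{e,i}$.

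Next I would equip $\Gamma$ with its weight, sign and involution. Each module carries a positive operator $F_{\mathscr{E}}$ built exactly as the operators $F_\pi$ of Corollary \ref{CorPosF}, and I would set $d_i=\Tr(F_{\mathscr{E}_i})$ and $w(e)=d_{t(e)}/d_{s(e)}$, so that $w(e)w(\bar e)=1$ is automatic. The involution $e\mapsto\bar e$ is induced by the self-duality $U\cong\bar U$ of the fundamental orthogonal representation, which interchanges source and target. The constant $c_{\epsilon,\lambda}=-\epsilon\sum_i\lambda_i^2$ enters through the reality relation $F\bar U F^{-1}=U$: since $|c_{\epsilon,\lambda}|=\dim_q(U)$, applying $\dim_q$ to the decomposition $\Hsp_U\otimes\mathscr{E}_v\cong\oplus_{s(e)=v}\mathscr{E}_{t(e)}$ yields precisely the normalization $\sum_{s(e)=v}\tfrac{1}{|\delta|}w(e)=1$, while the sign relation $\sgn(e)\sgn(\bar e)=\sgn(\delta)$ is read off from the sign of the duality pairing. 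Finite-dimensionality of the isotypical components (Theorem \ref{TheoFinBoc}) guarantees $\Gamma$ has bounded degree and all sums are finite. This shows $(\Gamma,w,\sgn,i)$ is a $c_{\epsilon,\lambda}$-reciprocal random walk.

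I would then establish the $\G$-equivariant $^*$-isomorphism $\mathscr{O}_{\G}(\X_{\stab})\cong\mathscr{O}(\X^{(\Gamma)})$. Mapping $\delta_v$ to the central projection of the $v$-th block and $U_{e,i}$ to the associated intertwiner element, one verifies the defining relations: the two isometry/coisometry identities are exactly the orthogonality relations of Corollary \ref{CorOrthErg} together with completeness of the chosen bases, and $U_{e,i}^*=\tfrac{\epsilon_i\lambda_i}{\sgn(e)\sqrt{w(e)}}U_{\bar e,\bar\imath}$ is the relation $F\bar U F^{-1}=U$ transported through the module and re-expressed via $F_{\mathscr{E}}$ and $Q_\pi$, in the spirit of Lemma \ref{LemAntStar}. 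Equivariance is immediate from $\alpha(U_{g,i})=\sum_j U_{g,j}\otimes U_{ji}$. As $\mathscr{O}(\X^{(\Gamma)})$ is the \emph{universal} $^*$-algebra on these relations, the matching of generators gives a surjection, and injectivity follows by comparing the finite dimensions of the isotypical components on the two sides, both computed from the single matrix $M_\alpha(U)$. Uniqueness of $(\Gamma,w)$ up to isomorphism holds because the construction depends only on the fusion rules, an invariant of the $\G$-equivariant Morita class of $\X$. For the converse, starting from a connected $c_{\epsilon,\lambda}$-reciprocal random walk the coaction on $\mathscr{O}(\X^{(\Gamma)})$ exists by the earlier lemma; connectedness of $\Gamma$ makes $\X^{(\Gamma)}\curvearrowleft\G$ indecomposable, hence the stabilization of a single homogeneous action, and running the extraction backwards recovers $(\Gamma,w)$.

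The main obstacle will be the bookkeeping in the third step: showing that the abstract fusion and module data produce \emph{exactly} the three relations of $\mathscr{O}(\X^{(\Gamma)})$, and in particular pinning down the scalar $\tfrac{\epsilon_i\lambda_i}{\sgn(e)\sqrt{w(e)}}$ in the conjugation relation. This requires transporting the reality operator $F=F_{\epsilon,\lambda}$ and the Woronowicz operators $Q_\pi,F_\pi$ through the self-duality of $U$ and the chosen intertwiner bases, ensuring that the signs $\epsilon_i$ and the square roots of the weights emerge consistently; this is the delicate part where the entire identification hinges on a careful tracking of the modular data.
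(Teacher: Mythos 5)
Two preliminary remarks. First, the paper itself does not prove this theorem: it is stated with an explicit pointer to \cite{DCY15} for the full proofs, and the only guidance given is the remark following the statement (vertices of $\Gamma$ label the irreducible equivariant Hilbert $C(\X)$-modules, edges are counted by the fusion matrix $M_{\alpha}(\pi_{1/2})$, weights come from the modular data). Your outline follows that same skeleton, so the issue is not the strategy but two of its concrete ingredients. The first genuine error is your weight function. You set $w(e)=d_{t(e)}/d_{s(e)}$ with $d_i=\Tr(F_{\mathscr{E}_i})$, which forces all parallel edges between a fixed pair of vertices to carry the \emph{same} weight. The paper's own torsor example contradicts this: for $(\epsilon',\lambda')$ with $c_{\epsilon',\lambda'}=c_{\epsilon,\lambda}$, the quantum torsor $O^+(F_{\epsilon',\lambda'},F_{\epsilon,\lambda})\curvearrowleft O^+(F_{\epsilon,\lambda})$ has associated graph a single vertex with $n$ loops of pairwise distinct weights $(\lambda_i')^2$, whereas any vertex-ratio formula gives every loop weight $1$ (and then the normalization $\sum_{s(e)=v}|\delta|^{-1}w(e)=1$ also fails, since $n<\sum_i(\lambda_i')^2$ unless all $\lambda_i'=1$). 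The correct weights are the eigenvalues of a positive operator living on each multiplicity space $\Mor(\mathscr{E}_j,\Hsp_{\pi_{1/2}}\otimes\mathscr{E}_i)$, and the edges must be chosen as an orthonormal basis diagonalizing that operator; since the conjugation relation $U_{e,i}^*=\frac{\epsilon_i\lambda_i}{\sgn(e)\sqrt{w(e)}}U_{\bar{e},\bar{\imath}}$ involves $\sqrt{w(e)}$, this error propagates directly into your verification of the relations in step three.

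The second gap concerns the two steps you assert rather than prove, which are exactly the hard points. For injectivity of the surjection $\mathscr{O}(\X^{(\Gamma)})\to\mathscr{O}_{\G}(\X_{\stab})$ you propose comparing dimensions of isotypical components ``both computed from the single matrix $M_{\alpha}(U)$''; but for the \emph{universal} algebra $\mathscr{O}(\X^{(\Gamma)})$ that dimension is precisely what is unknown --- a priori the universal algebra is only bounded below by its quotient, so the comparison is circular unless you supply an independent upper bound (a normal-form/spanning argument for words in the $U_{e,i}$, or the categorical reconstruction of \cite{DCY15}). Symmetrically, in the converse direction you invoke only the lemma producing the coaction on $\mathscr{O}(\X^{(\Gamma)})$; that lemma does not exclude that $\mathscr{O}(\X^{(\Gamma)})$ is zero or that the $\delta_v$ collapse, and nontriviality of universal $^*$-algebras given by generators and relations is exactly the difficulty the paper flags for $C(O^+_u(F_1,F_2))$ (``the hard part consists in showing that this C$^*$-algebra is not trivial''). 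You would further need that $\X^{(\Gamma)}\curvearrowleft\G$ is free with orbit algebra exactly $c_0(\Gamma^{(0)})$, plus the bijection between homogeneous actions up to equivariant Morita equivalence and indecomposable free actions with classical discrete orbit space, which the paper also only asserts. The proof in \cite{DCY15} settles all of these at once by Tannaka-Kre$\breve{\textrm{\i}}$n duality: the action is traded for a module C$^*$-category over the representation category of $SU_q(2)$, such categories are classified by the weighted graphs, and the algebra reconstructed from a category is automatically nonzero and of the correct size. Your generators-and-relations route is viable in outline, but without these inputs it does not close.
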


In fact, the graph $\Gamma$ associated to the homogeneous action $\X \overset{\alpha}{\curvearrowleft} O^+(F)$ is nothing but the graph whose vertices are labelled by a set $I$ parametrizing the irreducible $O^+(F)$-equivariant Hilbert $C(\X)$-modules, and where there are $M_{\alpha}(\pi_{1/2})_{i,j}$ edges from $i$ to $j$, where $\pi_{1/2}$ is the generating spin $1/2$-representation of $O^+(F)$, and where $M_{\alpha}(\pi_{1/2})$ is the matrix of fusion rules for $\X$. The precise associated weights on the graph can be obtained from the modular data of the action.

Let us look at some examples of reciprocal random walks and associated homogeneous actions.  

\begin{Exa} Write the fundamental unitary corepresentation of $O_N^+$ associated to the spin $1/2$-representation $\pi_{1/2}$ as $U^{(N)}$. Consider $O_{N-1}^+ \subseteq O_{N}^+$ by the quotient map \[C(O_{N}^+) \rightarrow C(O_{N-1}^+),\quad U^{(N)} \mapsto \begin{pmatrix} U^{(N-1)} & 0 \\ 0 & 1\end{pmatrix}.\]  Then $O_{N-1}^+\backslash O_N^+$ is an instance of a quantum homogeneous space of quotient type. Hence, by the remarks following Proposition \ref{PropFusSub}, the graph $\Gamma$ of the reciprocal random walk associated to $O^+_{N-1}\backslash O_N^+$ has vertices labelled by $\mathbb{N}$, corresponding to the irreducible representations of $O_{N-1}^+$, and has edges determined as

\begin{figure}[h]
\centerline{
\xymatrix@C=8em{ \bullet \ar@(ur,ul)[]_{1}  \ar @/^/[r]^{N-1}  & \ar @/^/[l]^{\frac{1}{N-1}} \bullet \ar@(ur,ul)[]_1 \ar @/^/[r]^{\frac{N(N-2)}{N-1}} &\bullet \ar @/^/[l]^{\frac{N-1}{N(N-2)}}  \cdots}
}
\end{figure}

since the restriction of the spin $1/2$-corepresentation $U^{(N)}$ of $O_N^+$ to $O_{N-1}^+$ splits by construction as the corepresentation $\eta\oplus U^{(N-1)}$, with $\eta$ the trivial corepresentation. The weights are then uniquely determined by the fact that the loops must have weight 1 by the reciprocality relation $w(e)w(\bar{e}) = 1$, and the weight of the other edges is determined inductively by the reciprocality and the random walk condition. It would be interesting to see if $O_{N-1}^+\backslash O_N^+$ is equivariantly isomorphic to the free quantum sphere $S^{N-1}_+$.
\end{Exa}

\begin{Exa} Assume that $(\mathbf{\epsilon}',\mathbf{\lambda}')$ is such that $c_{\mathbf{\epsilon},\mathbf{\lambda}}= c_{\mathbf{\epsilon}',\mathbf{\lambda}'}$. Then we know that there is the quantum torsor $O^+(F_{\mathbf{\epsilon}',\mathbf{\lambda}'},F_{\mathbf{\epsilon},\mathbf{\lambda}}) \curvearrowleft O^+(F_{\mathbf{\epsilon},\mathbf{\lambda}})$. Its associated reciprocal random walk is the graph with one vertex and $n$ edges, equipped with the weights $w(i) = (\lambda_i')^2$. For example, with $O^+(F_{\mathbf{\epsilon},\mathbf{\lambda}}) = SU_q(2)$ and $\mathbf{\lambda}' = (q_1,q_1^{-1},q_2,q_2^{-1})$ with $|q_1|+|q_1|^{-1}+|q_2|+|q_2|^{-1} = |q|+|q|^{-1}$ and $\mathbf{\epsilon}' = (\sgn(q),1,\sgn(q),1)$, we have the graph  
\[\xymatrix{ \bullet \ar@(ul,ur)^{q_1^{-1}} \ar@(dr,dl)^{q_2^{-1}}  \ar@(ru,rd)^{q_2} \ar@(ld,lu)^{q_1}}.\]

\end{Exa} 

\begin{Exa} For the Podle\'{s} sphere $S_{q,x}^2 \curvearrowleft SU_q(2)$, we obtain the reciprocal random walk 
\[
\xymatrix@C=8em{ \cdots \bullet \ar @/^/[r]^{\frac{q^{x}+q^{-x}}{q^{x-1}+q^{-x+1}}}  & \ar @/^/[l]^{\frac{q^{x-1}+q^{-x+1}}{q^{x}+q^{-x}}} \bullet \ar @/^/[r]^{\frac{q^{x+1}+q^{-x-1}}{q^{x}+q^{-x}}} & \bullet \ar @/^/[l]^{\frac{q^{x}+q^{-x}}{q^{x+1}+q^{-x-1}}}  \cdots}\]

\end{Exa}

We note that the embeddable quantum homogeneous spaces for $SU_q(2)$ were classified in \cite{Tom08,TomEr}. In \cite{DCY15}, it was shown that there exists $q_0<1$ such that \emph{all} quantum homogeneous spaces for $SU_q(2)$ are equivariantly $SU_q(2)$-Morita equivalent to an embeddable homogeneous $SU_q(2)$-action when $q_0<q\leq 1$. This result is obtained as a direct consequence of the fact that there exists $\delta>2$ such that any graph with norm $\leq \delta$ automatically has norm $\leq 2$.

\end{document}